\renewcommand{\@secnumfont}{\bfseries}
\theoremstyle{plain} 
 \newtheorem{theorem}[equation]{Theorem}
 \newtheorem{proposition}[equation]{Proposition}
 \newtheorem{propdef}[equation]{Proposition and Definition}
 \newtheorem{lemma}[equation]{Lemma}
 \newtheorem{corollary}[equation]{Corollary} 
 \theoremstyle{definition}
 \newtheorem{definition}[equation]{Definition}
 \newtheorem{condition}[equation]{Condition}
\theoremstyle{remark}
 \newtheorem{remark}[equation]{Remark}
 \newtheorem{example}[equation]{Example}
 \newtheorem{condition-list}[equation]{}
\def \N {\mathbb{N}}
\def \Z {\mathbb{Z}}
\def \Spf {\text{\rm Spf}}
\def \Spec {\text{\rm Spec}}
\def \Tor {\text{\rm Tor}}
\def \PD {\text{\rm PD}}
\def \Hom {\text{\rm Hom}}
\def \HIG {\text{\rm HIG}}
\def \Ker {\text{\rm Ker}}
\def \id {\text{\rm id}} 
\def \fpqc {\text{\rm fpqc}}
\def \Zar {\text{\rm Zar}}
\def \Der {\text{\rm Der}}
\def \MIC {\text{\rm MIC}}
\def \Strat {\text{\rm Strat}}
\def \Crystal{\text{\rm CR}}
\def \qnilp {\text{\rm q-nilp}}
\def \ZAR {\text{\rm ZAR}}
\def \Mod {\text{\rm\bf Mod}}
\def \CechA {\text{\rm \v{C}A}}
\def \cs {\text{\rm cs}}
\def \Tot {\text{\rm Tot}}
\def \Cov {\text{\rm Cov}}
\def \FSch {\text{\rm FSch}}
\def \hCrystal {\widehat{\text{\rm CR}}}
\def \Ob {\text{\rm Ob}\,}
\def \gr {\text{\rm gr}}
\def \ad {\text{\rm ad}}
\def \Mor {\text{\rm Mor\,}}
\def \DAlg {\text{\rm\bf DAlg}}
\def \Omegab {\Omega^{\bullet}}
\def \bis {$^{\text{bis}}$\,}
\def \projl {\underset{\leftarrow}\ell}
\def \pq {[p]_q}
\def \prodl {\prod\limits}
\def \suml {\sum\limits}
\def \fX {\mathfrak{X}}
\def \fY {\mathfrak{Y}}
\def \fS {\mathfrak{S}}
\def \fU {\mathfrak{U}}
\def \fD {\mathfrak{D}}
\def \fV {\mathfrak{V}}
\def \fa {\mathfrak{a}}
\def \fh {\mathfrak{h}}
\def \fm {\mathfrak{m}}
\def \CO {\mathcal {O}}
\def \CH {\mathcal{H}}
\def \CF {\mathcal{F}}
\def \CI {\mathcal{I}}
\def \CM {\mathcal{M}}
\def \CN {\mathcal {N}}
\def \CG {\mathcal{G}}
\def \CL {\mathcal{L}}
\def \uCF {\underline{\mathcal{F}}}
\def \uCG {\underline{\mathcal{G}}}
\def \uCH {\underline{\mathcal{H}}}
\def \uCM {\underline{\mathcal{M}}}
\def \scrS {\mathscr{S}}
\def \scrD {\mathscr{D}}
\def \hg {\widehat{g}}
\def \hD {\widehat{D}}
\def \hA {\widehat{A}}
\def \hR {\widehat{R}}
\def \hJ {\widehat{J}}
\def \hM {\widehat{M}}
\def \hf {\widehat{f}}
\def \hpartial{\widehat{\partial}}
\def \halpha {\widehat{\alpha}}
\def \hbeta {\widehat{\beta}}
\def \hvarphi {\widehat{\varphi}}
\def \hoD {\widehat{\overline{D}}}
\def \hh {\widehat{h}}
\def \hotimes {\widehat{\otimes}}
\def \tA {\widetilde{A}}
\def \tR {\widetilde{R}}
\def \tD {\widetilde{D}}
\def \tS {\widetilde{S}}
\def \tM {\widetilde{M}}
\def \tN {\widetilde{N}}
\def \tI {\widetilde{I}}
\def \ty {\widetilde{y}}
\def \ta {\widetilde{a}}
\def \te {\widetilde{e}}
\def \ts {\widetilde{s}}
\def \tb {\widetilde{b}}
\def \ttau {\widetilde{\tau}}
\def \tlt {\widetilde{t}}
\def \tT {\widetilde{T}}
\def \tLambda {\widetilde{\Lambda}}
\def \tiota {\widetilde{\iota}}
\def \tchi {\widetilde{\chi}}
\def \tuA {\widetilde{\underline{A}}}
\def \tue {\widetilde{\underline{e}}}
\def \tualpha {\widetilde{\underline{\alpha}}}
\def \tuiota {\widetilde{\underline{\iota}}}
\def \tupartial {\widetilde{\underline{\partial}}}
\def \oJ {\overline{J}}
\def \oT {\overline{T}}
\def \oS {\overline{S}}
\def \oR {\overline{R}}
\def \oA {\overline{A}}
\def \oD {\overline{D}}
\def \oT {\overline{T}}
\def \oB {\overline{B}}
\def \oP {\overline{P}}
\def \oa {\overline{a}}
\def \ob {\overline{b}}
\def \ot {\overline{t}}
\def \otau {\overline{\tau}}
\def \outau {\overline{\underline{\tau}}}
\def \osigma {\overline{\sigma}}
\def \oalpha {\overline{\alpha}}
\def \obeta {\overline{\beta}}
\def \opartial{\overline{\partial}}
\def \ofD {\overline{\mathfrak{D}}}
\def \tsigma {\widetilde{\sigma}}
\def \tx {\widetilde{x}}
\def \tD {\widetilde{D}}
\def \uD {\underline{D}}
\def \uM {\underline{M}}
\def \uA {\underline{A}}
\def \ul {\underline{l}}
\def \uc {\underline{c}}
\def \ug {\underline{g}}
\def \un {\underline{n}}
\def \ut  {\underline{t}}
\def \us {\underline{s}}
\def \ue {\underline{e}}
\def \ui {\underline{i}}
\def \ualpha {\underline{\alpha}}
\def \ubeta {\underline{\beta}}
\def \utau {\underline{\tau}}
\def \uiota {\underline{\iota}}
\def \usigma {\underline{\sigma}}
\def \utheta {\underline{\theta}}
\def \ugamma {\underline{\gamma}}
\def \uone {\underline{1}}
\def \um {\underline{m}}
\def \uj {\underline{j}}
\def \upartial {\underline{\partial}}
\def \bmI{\bm{I}}
\def \pq {[p]_q}
\DeclareSymbolFontAlphabet{\mathbbm}{bbold}
\DeclareSymbolFontAlphabet{\mathbb}{AMSb}
\newcommand{\prism} {\scalebox{0.8}{$\mathbbm \Delta$}}
\begin{document}

\title{Prismatic crystals and  $q$-Higgs fields}

\author{Takeshi Tsuji}
\address{Graduate School of Mathematical Sciences, The University of Tokyo, 
3-8-1 Komaba, Meguro-ku, Tokyo, 153-8914, Japan}
\email{t-tsuji@ms.u-tokyo.ac.jp}

\begin{abstract}
Similarly to the theory of crystalline cohomology, we give a local description of a prismatic crystal and its cohomology in terms of a $q$-Higgs module and the associated $q$-Higgs  complex 
on the  bounded prismatic envelope of an embedding into a framed smooth algebra,
when the base bounded prism is defined over the $q$-crystalline prism.
We also discuss its behavior under the scalar extension
by the Frobenius lifting and tensor products, and a
global description of the cohomology of a prismatic crystal via Zariski cohomological descent.
\end{abstract}

\maketitle

\tableofcontents

\section*{Introduction}

After the innovative work \cite{BMS} and \cite{BMS2} by B.~Bhatt, M.~Morrow, and P.~Scholze in the
integral $p$-adic Hodge theory, where they developed new cohomology theories
(whose coefficient rings are Fontaine's period ring $A_{\inf}$ and that of 
Breuil-Kisin modules $\fS=W[[u]]$) 
relating various known integral $p$-adic cohomology
theories, Bhatt and Scholze \cite{BS} introduced the notion of prisms and prismatic cohomology,
which is quite influential and has been  supplying us new and often refined ways to understand 
known integral $p$-adic cohomology theories, 
and also theories of $p$-adic Galois representations and $p$-adic local systems. 
Prismatic cohomology allows any bounded prism as a base, and the new cohomology 
theories in \cite{BMS} and \cite{BMS2} mentioned above can be reconstructed by 
taking $A_{\inf}$ and $\fS$ as a base. 

A prismatic site is a variant of a crystalline site,  and a notion of crystals is defined naturally.
The goal of this paper is to give a local description of a prismatic crystal and
its cohomology in terms of a $q$-Higgs module and its $q$-Higgs complex on the 
bounded prismatic envelope of an embedding into a framed smooth algebra,
when the base bounded prism is defined over the $q$-crystalline prism $(\Z_p[[q-1]],\pq\Z_p[[q-1]])$,
$\pq=\frac{q^p-1}{q-1}$, $\delta(q)=0\;(\Leftrightarrow \varphi(q)=q^p)$ (\cite[Example 1.3 (4)]{BS}).
 We also discuss its behavior under the scalar extension
by the Frobenius lifting and tensor products, and a
global description of the cohomology of a prismatic crystal via Zariski cohomological descent.
We remark that a local description of prismatic cohomology with constant coefficients
in terms of $q$-de Rham complexes has been already given in \cite[\S16]{BS}  via comparison 
with $q$-crystalline cohomology.

These kinds of local descriptions of a prismatic crystal and its cohomology
have already been studied in many papers. For a crystal on a relative prismatic site,
we have the following results:
by 
A.~Chatzistamatiou \cite{Chat} in terms of modules over a ring of 
$q$-differential operators; 
by M.~Morrow and the author \cite{MT} over the base $A_{\inf}$ in terms of $q$-connections
and $q$-Higgs fields (see below);
by Y.~Tian \cite{YTian} for reduced crystals (i.e.~crystals annihilated by the 
structure ideal of the base prism) over an arbitrary base bounded prism in terms of classical Higgs fields;
by B.~Bhatt and J.~Lurie \cite{BLPrismatization} under the same setting as Y.~Tian via
the relative Hodge-Tate stack; 
by A.~Ogus \cite{OgusCrysPrism} when the base bounded prism is defined over the $p$-crystalline prism
$(\Z_p,p\Z_p)$, i.e.~the case $q=1$, in terms of $p$-connections; and 
by M.~Gros, B.~Le Stum, and A.~Quir\'os in one dimensional case \cite{GLSQ}
by using a $q$-calculus developed in their preceding work.
The first two deal only with local descriptions of a crystal.
In every work except Ogus', a local description is given only over a framed smooth lifting,
whereas a local description over the bounded prismatic envelope of an embedding
into a framed smooth algebra is necessary to give a global description 
of cohomology via Zariski cohomological descent. As an application of the
global description, we prove a comparison theorem between prismatic
cohomology and $A_{\inf}$-cohomology with coefficients in a
subsequent paper \cite{TsujiPrismAinfCoh}.

We state our results on a local description of a prismatic crystal
and its cohomology, which have the same form as the
theory of crystalline cohomology. 
 Let $(R,\pq R)$ be a bounded prism over
the $q$-crystalline prism $(\Z_p[[q-1]],\pq\Z_p[[q-1]])$, let 
$\fX=\Spf(A)$ be a $p$-adic affine formal scheme over $\Spf(R/\pq R)$,
and let $\Spf(A)\hookrightarrow\fY=\Spf(B)$ be a closed immersion
into a smooth $(p,\pq)$-adic affine formal scheme over $\Spf(R)$
endowed with coordinates $t_i\in B$ $(i=1,2,\ldots, d)$.
We equip $B$ with the unique $\delta$-algebra structure
over $R$ satisfying $\delta(t_i)=0$ $(i\in \N \cap [1,d])$ 
(Proposition \ref{prop:DeltaStrExtEtSm} (2))
and assume that the pair of the $\delta$-ring $B$ and
the kernel of the surjection $B\to A$ has the bounded
prismatic envelope $D$ (Definition \ref{def:bddPrsimEnv}) 
over $(R,\pq R)$. (See Propositions \ref{prop:PrismEnvRegSeq} 
and \ref{prop:bddPrismEnvSmSmEmb} for sufficient conditions
for this assumption.)
Under this assumption, we define the notion of
a $q$-Higgs field on a $D$-module with respect to the
coordinates $t_i$ $(i\in\N\cap [1,d])$ (Definition \ref{def:qHiggsMod} (2)), 
and prove the following theorem.

\begin{theorem}[Proposition \ref{prop:CrysStratEquiv}, 
Theorem \ref{thm:StratHiggsEquiv}] \label{thm:main1}
Let $J$ be an ideal of $R$ containing a power of $pR+\pq R$.
Then there exists a canonical equivalence between
the category of crystals annihilated by $J$ on the prismatic site 
$(\fX/(R,\pq R))_{\prism}$ 
and that of $D$-modules annihilated by $J$
with quasi-nilpotent $q$-Higgs field.
\end{theorem}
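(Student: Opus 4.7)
The plan is to prove Theorem \ref{thm:main1} by factoring the equivalence through an intermediate category of \emph{stratifications} on $D$-modules, reducing the statement to two steps of classical type: (crystals) $\leftrightarrow$ (stratifications) $\leftrightarrow$ (quasi-nilpotent $q$-Higgs modules). The first equivalence corresponds to Proposition \ref{prop:CrysStratEquiv} and is largely formal, while the second is the substantive content, Theorem \ref{thm:StratHiggsEquiv}, tied to the presence of coordinates $t_i$.

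For the first step, following the pattern familiar from crystalline cohomology, I would show that a crystal $\CE$ on $(\fX/(R,\pq R))_{\prism}$ annihilated by $J$ is determined by its value $M := \CE(D)$ together with descent data along the two structure maps $p_0,p_1\colon D\to D(1)$, where $D(1)$ denotes the bounded prismatic envelope of the embedding $\Spf(A)\hookrightarrow \fY\times_{\Spf(R)}\fY$. The crystal axiom yields a $D(1)$-linear isomorphism $\varepsilon\colon p_1^*M\xrightarrow{\sim}p_0^*M$, and compatibility on the triple envelope $D(2)$ provides the cocycle identity. Essential surjectivity and full faithfulness rest on the fact that any object of the prismatic site admits, locally, a map into $D$, so that the value of the crystal elsewhere is reconstructed by pullback via $\varepsilon$; the annihilator ideal $J$ is preserved throughout since the descent diagram is $R$-linear.

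For the second step, I would exploit the framing $\delta(t_i)=0$ to give an explicit description of $D(1)$. The tensor product $B\hotimes_R B$ is framed by $\{t_i\hotimes 1\}$ and $\{1\hotimes t_i\}$, so the ideal of the diagonal is generated by $z_i := 1\hotimes t_i - t_i\hotimes 1$, and the bounded prismatic envelope realizes $D(1)$ as a $q$-PD-style completion with formal divided powers $z^{[I]}$ adapted to the element $\pq$. A stratification $\varepsilon$ then expands canonically as $\varepsilon(m)=\sum_I \theta^{[I]}(m)\,z^{[I]}$, and the first-order operators $\theta_i\colon M\to M$ constitute precisely a $q$-Higgs field in the sense of Definition \ref{def:qHiggsMod}; the cocycle relation on $D(2)$ translates into the commutation identities characterizing a $q$-Higgs module. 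Conversely, quasi-nilpotence of a given $q$-Higgs field guarantees convergence of the series $\sum_I \theta^{[I]} z^{[I]}$ in $D(1)$, furnishing the inverse functor.

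The main obstacle will be verifying the explicit structure of $D(1)$ and $D(2)$ as $q$-PD-style envelopes in the framed setting, together with the combinatorial identities matching the stratification cocycle against the defining relations of a $q$-Higgs field over the $q$-crystalline base. In particular, one must control how the element $\pq$, the induced Frobenius lift on $B\hotimes_R B$, and the $q$-shift acting on the coordinates interact with the iterated divided-power basis $z^{[I]}$, so that the assignment $\varepsilon\leftrightarrow(\theta_i)$ is well-defined and bijective precisely onto the quasi-nilpotent objects. Once this structural comparison is secured, functoriality and compatibility with the ideal $J$ follow automatically from the constructions.
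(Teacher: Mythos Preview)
Your two-step factorization through stratifications matches the paper's architecture, and your first step (crystals $\leftrightarrow$ stratifications via $D\rightrightarrows D(1)$, cocycle on $D(2)$) is exactly Proposition~\ref{prop:CrysStratEquiv}.

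The second step, however, diverges in a way that hides the main difficulty. You picture $D(1)$ as a $q$-PD envelope with a divided-power basis $z^{[I]}$ and propose to extract the $\theta_i$ as Taylor coefficients of $\varepsilon$. But the paper shows (Proposition~\ref{prop:PrismEnvNervStr}) that the natural basis of $D(1)$ over $D$ consists of \emph{$\delta$-powers} $\tau_i^{\{m\}_\delta}=\prod_n\delta^n(\tau_i)^{a_n}$; a genuine PD-structure appears only after reduction modulo $\mu=q-1$ (Proposition and Definition~\ref{propdef:DSimpPDStr}). In the $\delta$-power basis there is no transparent formula for multiplication by elements of $D$, so it is unclear how you would verify that the first-order coefficients satisfy the twisted Leibniz rule of a $q$-Higgs field, or why quasi-nilpotence of iterated $\theta_i$ should control an expansion indexed by $\delta$-iterates. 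The paper's introduction explicitly flags this obstacle: the naive construction of $\theta_{M,i}$ works only under $(q-1)$-torsion-freeness and local freeness of the crystal.

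The paper's route is different. It first constructs twisted derivations $\theta_{D(1),l;i}$ on $D(1)$ intrinsically, via the $E^\alpha$-interpretation of $\alpha$-derivations and their unique extension along \'etale maps, $\delta$-envelopes, and bounded prismatic envelopes (\S\ref{sec:TwistedDeriv}--\ref{sec:TwistedDerivDelta}, Proposition~\ref{prop:qHiggsDeriv}). The $q$-Higgs field on $M$ is then obtained by \emph{transporting} the trivial field $\id_M\otimes\theta_{D(1),1;i}$ on $p_0^*M$ across $\varepsilon$ and showing it stabilizes $M=(p_1^*M)^{\nabla=0}$ via a commutation argument on $D(2)$ (Lemma~\ref{lem:StratHiggsComp}). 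Quasi-nilpotence, and the Poincar\'e-type isomorphism $(p_0^*M)^{\nabla=0}\cong M$ needed for the inverse functor, are checked only after reduction modulo $(p,\mu)$, where your Taylor-style picture is correct and is used explicitly (Lemma~\ref{lem:StratToConnection}, Proposition~\ref{prop:HIGToStratKeyProp}). So your expansion idea does enter, but only at the reduced level; integrally the twisted derivations $\theta_{D(r),l;i}$ carry the argument.
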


We equip $(\fX/(R,\pq R))_{\prism}$ with a kind of fpqc topology
(Definition \ref{def:PrismaticSite} (1)), and write 
$u_{\fX/(R,\pq R)}$ for the projection to the Zariski topos
$(\fX/(R,\pq R))_{\prism}^{\sim}\to \fX_{\Zar}^{\sim}$ \eqref{eq:PrismToposZarProj}.
Let $v_D$ denote the morphism of topos 
$\Spf(D)_{\Zar}=\Spf(D/\pq D)_{\Zar}\to \fX_{\Zar}^{\sim}$ 
defined by the canonical morphism $A\to D/\pq D$. 

\begin{theorem}[Theorem \ref{th:CrystalCohqHiggs}]\label{thm:main2}
Let $J$ be an ideal of $R$ containing a power of $pR+\pq R$,
let $\CF$ be a crystal annihilated by $J$ on $(\fX/(R,\pq R))_{\prism}$,
and let $M$ be the $D$-module with quasi-nilpotent $q$-Higgs field
corresponding to $\CF$ by the equivalence in Theorem \ref{thm:main1}. 
Then we have a canonical isomorphism functorial in $\CF$
$$Ru_{\fX/(R,\pq R)*}\CF\cong v_{D*}(\CM\otimes_{\CO_{\fD}}q\Omega^{\bullet}_{\fD/R}),$$
where $\CM\otimes_{\CO_{\fD}}q\Omega^{\bullet}_{\fD/R}$
denotes the $q$-Higgs complex  on $\fD=\Spf(D)$ associated to $M$
\eqref{eq:qHiggsShfCpx}.
\end{theorem}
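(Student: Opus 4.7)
The plan is to mimic the classical crystalline computation: use Theorem \ref{thm:main1} to replace $\CF$ by its associated $D$-module $M$ with quasi-nilpotent $q$-Higgs field, build a \v{C}ech--Alexander style resolution of $\CF$ in $(\fX/(R,\pq R))_{\prism}$ out of self-products of $D$, and finally identify the resulting total complex with the $q$-Higgs complex $\CM\otimes_{\CO_{\fD}}q\Omega^{\bullet}_{\fD/R}$ via a $q$-Poincar\'e lemma.

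First I would construct a simplicial object $(D(\bullet),\pq D(\bullet))$ of $(\fX/(R,\pq R))_{\prism}$ whose value in degree $n$ is the bounded prismatic envelope over $(R,\pq R)$ of the diagonal embedding $\fX\hookrightarrow \fY\times_{\Spf(R)}\cdots\times_{\Spf(R)}\fY$ with $(n+1)$ factors, where each factor is equipped with the $\delta$-structure determined by $\delta(t_{i,k})=0$ on the coordinates of the $k$-th factor. The existence of these envelopes follows from Propositions \ref{prop:PrismEnvRegSeq} and \ref{prop:bddPrismEnvSmSmEmb}, and the face and degeneracy maps come from projecting out and inserting diagonally in the self-product. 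Because the topology on $(\fX/(R,\pq R))_{\prism}$ is of fpqc type (Definition \ref{def:PrismaticSite}) and $D=D(0)$ is a covering of the final object Zariski-locally on $\fX$, the standard \v{C}ech-to-derived-functor argument identifies $Ru_{\fX/(R,\pq R)*}\CF$ Zariski-locally on $\fX$ with the normalized complex associated to the cosimplicial sheaf $\CF(D(\bullet))$.

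Since $\CF$ is a crystal, the stratification used to prove Theorem \ref{thm:main1} identifies $\CF(D(n))$ with $M\otimes_{D,p_0}D(n)$, where $p_0\colon D\to D(n)$ is the map from the first factor of the self-product; the cosimplicial transition maps are those dictated by the $q$-stratification on $M$. The candidate comparison with the $q$-Higgs complex is natural, since the ideal of the diagonal in $D(1)$ is generated by the $q$-analogues $\xi_i$ of $t_i\otimes 1-1\otimes t_i$, and these $\xi_i$ are precisely the elements producing the $q$-differentials on $M$.

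The main obstacle is the quasi-isomorphism itself, a prismatic $q$-\emph{Poincar\'e lemma} asserting that $M\otimes_{D}D(\bullet)$ is homotopy equivalent to $M\otimes_D q\Omega^{\bullet}_{D/R}$ as normalized cosimplicial complexes. Following Berthelot's PD-Poincar\'e lemma and its $q$-crystalline adaptation in \cite[\S16]{BS}, I would describe $D(n)$ as the bounded prismatic envelope of a $q$-divided-power polynomial algebra in variables lifting $\xi_{i,k}=t_{i,k}-t_{i,0}$ over $D$, and construct an explicit contracting homotopy by $q$-integration in these variables. The quasi-nilpotence of the $q$-Higgs field on $M$ is then used to bootstrap from the untwisted Poincar\'e lemma to the $M$-twisted version. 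Pushing the resulting local quasi-isomorphism forward along $v_D$ and sheafifying over $\fX_{\Zar}$ yields the claimed isomorphism, with functoriality in $\CF$ inherited from the construction.
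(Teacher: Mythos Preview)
Your proposal is a reasonable alternative, but it is organized differently from the paper's proof, and the step you gloss over is precisely the one the paper works hardest to make clean.

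The paper does not compare the \v{C}ech--Alexander complex of $\CF$ directly with the $q$-Higgs complex. Instead it uses linearization (\S\ref{sec:LinCAcpx}): for any $D_n$-module $N$ there is a crystal $\CL_{A,B}(N)$ with $Ru_{\fX/R*}\CL_{A,B}(N)\cong v_{D*}\CN$ (Proposition \ref{prop:LinearizationLocCoh}), proven by a \v{C}ech--Alexander computation for $\CL_{A,B}(N)$ in which the cosimplicial object $N\otimes_D D(\bullet+1)$ carries the \emph{naive} $D$-module structure via the last factor and hence has an explicit extra-degeneracy homotopy. The $q$-Poincar\'e lemma then enters separately: at each object $(P,v)$ the complex $\CF(P)\otimes_P q\Omega^{\bullet}_{D_P/P}$ resolves $\CF(P)$ (Lemma \ref{lem:CrystalPoincareLemma}, reducing to Theorem \ref{thm:PoincareLem}), and the stratification identifies this with the linearization of $M\otimes_D q\Omega^{\bullet}_{D/R}$ (Proposition \ref{prop:CrystalResolCpxComp}). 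This gives the resolution \eqref{eq:qdRResol} and hence the theorem.

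Your route computes $Ru_{\fX/R*}\CF$ by the \v{C}ech--Alexander complex of $\CF$ itself, i.e.\ by $M\otimes_{D,p_0}D(\bullet)$ with the cosimplicial structure \emph{twisted by the stratification}, and then asks for a quasi-isomorphism to $M\otimes_D q\Omega^{\bullet}_{D/R}$. This can be done, but it is not a single Poincar\'e-type lemma: the differentials in the normalized complex of $\CF(D(\bullet))$ mix the face maps of $D(\bullet)$ with the stratification on $M$, so the comparison is not $M\otimes_D(-)$ of an untwisted statement, and ``$q$-integration'' alone does not produce a homotopy compatible with that twisted structure. In practice one must set up a bicomplex (\v{C}ech--Alexander in one direction, $q$-Higgs in the other) and use the untwisted Poincar\'e lemma and the cosimplicial homotopy on the two edges; the paper's linearization is exactly a packaging of this bicomplex. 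Also, quasi-nilpotence plays no role in the Poincar\'e step itself (Theorem \ref{thm:PoincareLem} holds for any $R/J$-module); it is used only in the equivalence of Theorem \ref{thm:main1}, so invoking it to ``bootstrap'' the Poincar\'e lemma is misplaced. The advantage of the paper's organization is that both ingredients---the Poincar\'e lemma and the \v{C}ech--Alexander acyclicity---are applied in their untwisted forms, with the stratification entering only once via Proposition \ref{prop:CrystalResolCpxComp}.
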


We also prove a variant for a complete crystal of $\CO_{\fX/R}$-modules
(Definition \ref{def:PrismaticSite} (2))
on $(\fX/(R,\pq R))_{\prism}$ 
(Theorem \ref{thm:CrystalCohqHigProj}) 
by considering the inverse system consisting of the reduction modulo
$(p,\pq)^{n+1}$ of $\CF$ for $n\in \N$. 
We prove Theorem \ref{thm:main2} along the same lines as the proof for crystalline cohomology
given in \cite{BerthelotCrisCoh} and \cite{BO}, i.e., by taking a resolution
\eqref{eq:qdRResol} of $\CF$ by a kind of  linearization
of the $q$-Higgs complex of $M$ and showing that its derived projection
to $\fX_{\Zar}$ gives the complex $v_{D*}(\CM\otimes_{\CO_{\fD}}q\Omegab_{\fD/R})$
(\eqref{eq:qdRCpxLinZarProj}, Proposition \ref{prop:LinearizationLocCoh}).
The proof of the fact that the linearization gives a resolution, i.e., a $q$-Higgs analogue
of Poincar\'e lemma is reduced to the case $(p,q-1)M=0$ and then to
the usual Poincar\'e lemma. 
(See Lemma \ref{lem:CrystalPoincareLemma}, Proposition \ref{prop:qprismEnvPL} (4), 
and Theorem \ref{thm:PoincareLem}.) 

As it is already mentioned, we also give global descriptions: Theorems \ref{th:CrystalCohqHiggsSimpl} 
and \ref{thm:CrystalCohqHiggsSimplProj}. 
In order to show them by Zariski cohomological descent on $\fX$ by constructing a simplicial 
version \eqref{eq:CrysZarProjDolCpxHC} of Theorem \ref{thm:main2}, we need 
a map between $q$-Higgs complexes with respect to scalar extension under a non-smooth homomorphism
(inducing a map between the sets of coordinates), 
for which the definition of scalar extension of a $q$-Higgs field is a little involved,
the naive pull-back map between $q$-Higgs complexes 
is not compatible with differential maps, and it requires a suitable twist by 
endomorphisms associated to the $q$-Higgs field. The twisted pull-back map between
$q$-Higgs complexes thus constructed 
is not compatible with the product morphism of $q$-Higgs complexes 
for the tensor product of two crystals;
to make it compatible, we need to consider the $q$-Higgs complex associated to
the tensor product on the bounded prismatic envelope of the diagonal immersion
$\fX\to \fY\times_{\Spf(R)}\fY$ (Remark \ref{rmk:qdRModfProdFunct}).
These are discussed under a general setting 
in \S\ref{sec:ScalarExtConnection}. See \eqref{eq:ConnPBFormula1},
the construction of \eqref{eq:dRCpxFunct}, and Proposition \ref{prop:ProddRCpxFunct}.

To give an idea  how a $q$-Higgs field and a $q$-Higgs complex look like
and to clarify problems in studying the case of a general base (lying over $\Z_p[[q-1]]$)
and a general crystal, we explain how a $q$-Higgs module and its Higgs complex
are constructed from a locally finite free crystal on the prismatic site of $\Spf(A)$ over $(R,\pq R)$ in \cite{MT}
when $R=A_{\inf}$,  $B$ is a lifting of $A$, and $t_i$ are invertible.

Let $C$ be a perfectoid field containing all $p$-power roots of unity
(e.g.~the $p$-adic completion of an algebraic closure
of a complete discrete valuation ring of mixed characteristic $(0,p)$),
let $\CO$ be its ring of integers, and let $A_{\inf}$ be Fontaine's period ring
$W(\CO^{\flat})$
($\CO^{\flat}=\varprojlim_{\N,\text{Frob}} \CO/p\CO$), 
which is equipped with a surjective ring
homomorphism $\vartheta\colon A_{\inf}\to \CO$.  
We choose and fix a compatible system of primitive $p^n$th roots
$\zeta_{n}\in \CO$, $\zeta_{n+1}^p=\zeta_n$, put
$\zeta=(\zeta_n\mod p)_{n\in\N}\in \CO^{\flat}$, and regard
$A_{\inf}$ as a prism over $\Z_p[[q-1]]$ by $q\mapsto [\zeta]$.
Then we can identify $\CO$ with the quotient $A_{\inf}/\pq A_{\inf}$ by the map 
$\vartheta\varphi^{-1}\colon A_{\inf}\to \CO$ whose kernel is $\pq A_{\inf}$. 

Let $\Spf(A)$ be a smooth $p$-adic affine formal scheme over $\Spf(\CO)$, and assume that we are given 
its smooth lifting $\Spf(B)$ over $\Spf(A_{\inf})$ with invertible coordinates $t_1,\ldots, t_d$.
By the \'etaleness of the $A_{\inf}$-algebra homomorphism 
$A_{\inf}[T_1^{\pm 1},\ldots, T_d^{\pm 1}]\to B;T_i\mapsto t_i$ modulo $(p,\pq)^{n+1}$ for each $n\in \N$,
there exists a unique automorphism $\gamma_i$ of $B$ over 
$A_{\inf}$ for each $i\in \N\cap [1,d]$ such that
$\gamma_i(t_i)=q^pt_i$ and $\gamma_i(t_j)=t_j$ $(j\neq i)$, and that 
$\gamma_i\equiv \id$ modulo $q^p-1$. They commute with each other. Since $A_{\inf}$
and hence $B$ are $(q-1)$-torsion free,  the triviality of $\gamma_i$ modulo $q^p-1$
allows us to define $A_{\inf}$-linear endomorphisms  $\theta_i=t_i^{-1}(q-1)^{-1}(\gamma_i-1)$ 
of $B$, which satisfy $\theta_i(t_i^n)=[pn]_qt_i^{n-1}$, $\theta_i(t_j^n)=0$ $(n\in \Z, j\neq i)$, where
$[m]_q$ $(m\in \Z)$ denotes the $q$-analogue $\frac{q^m-1}{q-1}$ of $m$,
 and a twisted Leibnitz rule
$\theta_i(ab)=\theta_i(a)\gamma_i(b)+a\theta_i(b)$ $(a,b\in B)$.
Equipping $B$ with the $\delta$-algebra structure over $A_{\inf}$ 
characterized by $\delta(t_i)=0\;(\Leftrightarrow \varphi(t_i)=t_i^p)$
and compatible with $\gamma_i$, 
we obtain an object $(\Spf(B)\leftarrow \Spf(B/\pq B)\xrightarrow{\id}\Spf(A))$ 
of the prismatic site of $\Spf(A)$ over $(A_{\inf},\pq A_{\inf})$
endowed with automorphisms $\gamma_i$.  When we construct a crystal 
from a $q$-Higgs module, we also use similar automorphisms of the product
$(\Spf(D(r))\leftarrow \Spf(D(r)/\pq D(r))\to\Spf(A))$ $(r=1,2)$
of $r+1$ copies of 
$(\Spf(B)\leftarrow \Spf(B/\pq B)\xrightarrow{\id}\Spf(A))$ in the prismatic site
(which is given by the bounded prismatic envelope of the completed tensor product $A(r)$ of $r+1$ copies of
$B$ over $A_{\inf}$ with respect to the kernel of the map to $A$)
and the associated twisted derivations on $D(r)$ defined as above.

Now suppose that we are 
given a locally finite free prismatic crystal $\CF$ on $\Spf(A)$ over $(A_{\inf},\pq A_{\inf})$.
Then the evaluation of $\CF$ at the object 
$(\Spf(B)\leftarrow \Spf(B/\pq B)\xrightarrow{\id}\Spf(A))$ 
above gives a finite projective
$B$-module $M$ equipped with $\gamma_i$-semilinear endomorphisms
$\gamma_{M,i}$ $(i\in\N\cap[1,d])$ commuting with each other
and trivial modulo $q-1$. The last triviality follows from the fact that
$\gamma_i$ becomes trivial on the object 
$(\Spf(B/(q-1)B)\leftarrow \Spf((B/(q-1)B)/p)\to\Spf(A))$ defined by the quotient prism $(B/(q-1)B,pB/(q-1))$
of the prism $(B,\pq B)$. The $q$-Higgs field on $M$ is defined by 
the commutative family of endomorphisms $\theta_{M,i}=t_i^{-1}(q-1)^{-1}(\gamma_{M,i}-1)$
which satisfy $\theta_{M,i}(ax)=\gamma_i(a)\theta_{M,i}(x)+\theta_i(a)x$ $(a\in B, x\in M)$. 
Note that $M$ is $(q-1)$-torsion free as $M$ is a finite projective $B$-module. 
Letting 
$q\Omega_{B/A_{\inf}}$ be the free $A$-module with a formal basis $d_{q}t_i$ $(i\in \N\cap [1,d])$
and letting $q\Omega_{B/A_{\inf}}^r$ $(r\in\N)$
denote its $r$th exterior power, we define
a complex $(M\otimes_{B}q\Omega_{B/A_{\inf}}^{\bullet},\theta^{\bullet}_M)$, which
we call the associated $q$-Higgs complex following \cite[\S2.3]{MT}, by 
$\theta^r_M(x\otimes d_qt_{i_1}\wedge\cdots\wedge d_qt_{i_r})
=\sum_{1\leq i\leq d}\theta_{M,i}(x)\otimes d_qt_i\wedge d_qt_{i_1}\wedge\cdots\wedge d_qt_{i_r}$.

\begin{remark}
If we take the reduction modulo $q-1$, i.e. the scalar extension to the crystalline prism
$(\Z_p,p\Z_p)$, then $\theta_i$ becomes
$p\frac{\partial}{\partial t_i}$ with respect to the coordinates
$t_1,\ldots, t_d$, and therefore the family $\theta_{M,i}$ becomes 
a $p$-connection\footnote{If we take the reduction modulo $\pq$, the structure ideal of 
the prism $(A_{\inf},\pq A_{\inf})$, then $\theta_i$ becomes $0$ because
$\gamma_i\equiv \id_{B}$ modulo $q^p-1$. Therefore
the family $\theta_{M,i}$ becomes a Higgs field as observed 
in the work of Y.~Tian \cite{YTian}. 
}.
In other words, the family $\theta_{M,i}$ is a $q$-deformation of a $p$-connection, and
it might be better to call it a $\pq$-$q$-connection. However
we call it a $q$-Higgs field because it appears in a $q$-analogue of the $p$-adic Simpson
correspondence in \cite{MT}.
\end{remark}

The construction of $\theta_i$ on $B$ and its variant for $D(r)$ $(r=1,2)$ 
work only when the base bounded prism is $(q-1)$-torsion free, and
that of $\theta_{M,i}$ does not work even when the base is $(q-1)$-torsion free
if we do not assume that  $\CF$ is locally finite free. For the first problem, observing
that $\theta_i$ satisfies $\theta_i(ab)=\theta_i(a)b+a\theta_i(b)+t_i(q-1)\theta_i(a)\theta_i(b)$,
we introduce a notion of a twisted derivation as follows: For an algebra $S$ over a ring $R$
and an element $\alpha$ of $S$, we define an $\alpha$-derivation 
$\partial\colon S\to S$ to be an $R$-linear map satisfying 
$\partial(1)=0$ and $\partial(ab)=\partial(a)b+a\partial(b)+\alpha\partial(a)\partial(b)$
$(a,b\in S)$ (Definition \ref{def:alphaDerivation} (1)). 
Then an $\alpha$-derivation has an interpretation as an $R$-algebra section
$s\colon S\to E^{\alpha}(S):=S[T]/(T^2-\alpha T);a\mapsto a+\partial(a)T$ of 
the $S$-homomorphism $E^{\alpha}(S)\to S;T\mapsto 0$ (Proposition \ref{prop:AlphaDerivInterpret}).
When $R$ is a $\delta$-ring and $S$ is a $\delta$-algebra over $R$,
we further introduce a notion of compatibility of $\partial$ with a $\delta$-structure
(Definition \ref{def:alphaDerivDeltaComp}),
which is interpreted as the $\delta$-compatibility of $s$ for a suitable
$\delta$-structure on $E^{\alpha}(S)$ (Proposition \ref{prop:DeltaAlphaDerivInt}).
Using this simple interpretation of $\partial$ by $s$ (instead of $\gamma=\id_S+\alpha\partial$), 
we can prove unique 
extension properties of $\alpha$-derivations along \'etale maps 
(Proposition \ref{prop:TwistDerEtaleExt}, Corollary \ref{cor:DeltaCompDerivEtExt}), 
$\delta$-envelopes (Proposition \ref{eq:DeltaEnvTwDerivExt}),  
and bounded prismatic envelopes (Proposition \ref{prop:TwDerivPrismExt}). These 
and some other properties allow us to construct a
$t_i(q-1)$-derivation $\theta_i$ on $B$, and its variant on $D(r)$ and also on the bounded 
prismatic envelope of an embedding 
into a framed smooth algebra for any base bounded prism lying over $(\Z_p[[q-1]],\pq\Z_p[[q-1]])$
(Proposition \ref{prop:qHiggsDeriv} (2)).
For the second problem, similarly to the case of crystalline site,
$M$ is equipped with the stratification $\varepsilon\colon 
M\otimes_{D,p_1}D(1)\xrightarrow{\cong} M\otimes_{D,p_0}D(1)$,
and the $q$-Higgs field on $M$ is obtained by transporting via $\varepsilon^{-1}$
the trivial $q$-Higgs field on $M\otimes_{D,p_0}D(1)$ defined by that of $D(1)$ relative to 
$p_0\colon B\to D(1)$,  and showing that it stabilizes $M\subset M\otimes_{D,p_1}D(1)$. 
For the construction  of a crystal from a $q$-Higgs module, the arguments
in \cite[\S3.2]{MT} works with a slight modification replacing automorphisms $\gamma$'s
on $D(r)$ by twisted derivations (Proposition \ref{prop:HigToStrat}).

This paper is organized as follows. 
After reviewing basic facts on $\delta$-rings in \S\ref{sec:deltarings}, 
we study in \S\ref{sec:DivDeltaEnv} the structure of a certain kind of $\delta$-envelope used 
in the construction of bounded prismatic envelopes, and then in \S\ref{sec:DPDeltaRings} 
we discuss divided powers of
an element of a $\delta$-ring not necessary $p$-torsion free, which are used 
to give a divided power structure on the reduction mod $q-1$ of 
a bounded prismatic envelope over the $q$-crystalline prism. In \S\ref{sec:PrismPrismEnv}, 
we summarize the definition of bounded prisms and 
bounded prismatic envelopes, and their basic properties, giving 
references or proofs. In \S\ref{sec:TwistedDeriv}, we introduce the notion of twisted derivations
and study its fundamental properties by using the interpretation in terms of 
section maps explained above. In \S\ref{sec:TwistedDerivDelta}, we define and study compatibility
of a twisted derivation with a $\delta$-structure, which is a key in the
construction of $q$-Higgs derivations on bounded prismatic envelopes over
the $q$-crystalline prism. In \S\ref{sec:TwDeivDivDeltaEnv}, 
we construct twisted derivations associated to coordinates
on a certain type of $\delta$-envelope considered in \S\ref{sec:DivDeltaEnv} when the
base $\delta$-ring is defined over $\Z[q]$, $\delta(q)=0$, and show that they become usual
derivations on the $p$-adically completed PD-polynomial ring when $q=1$
under a certain condition; by using the latter fact, we prove an analogue of 
Poincar\'e lemma for these twisted derivations.
The sections \ref{sec:connection} and \ref{sec:ScalarExtConnection} 
are devoted to preliminaries on an integrable connection
over a commutative family of  twisted derivations, and its 
de Rham complex; we define them in \S\ref{sec:connection}  and 
discuss scalar extension of integrable connections and its compatibility with tensor products 
and de Rham complexes in \S\ref{sec:ScalarExtConnection}. 
In \S\ref{qprismenv}, we apply results in the preceding sections to 
the bounded prismatic envelope of
an embedding into a framed smooth algebra over a bounded prism
lying over the $q$-crystalline prism. We introduce the notion of 
$q$-Higgs derivations, $q$-Higgs fields, and $q$-Higgs complexes 
on the envelope,  summarize their behavior under tensor 
products and scalar extensions, and state Poincar\'e lemma
and relation to PD-polynomial rings in this setting for later use.
We prove Theorem \ref{thm:main1}
after reviewing the definition of prismatic sites and prismatic crystals in \S\ref{sec:CrystalStratqHiggs}; 
we also discuss its compatibility with tensor products and 
the scalar extension under the Frobenius lifting.
After some preliminaries on an analogue of linearization for prismatic sites
in \S\ref{sec:LinCAcpx}, we prove Theorem \ref{thm:main2} 
and its variant for a complete crystal
together with their compatibility with tensor products and the scalar extension
under the Frobenius lifting in \S\ref{sec:PrismCohqDolb}. 
Based on the functoriality of 
the isomorphism in Theorem \ref{thm:main2} with respect to $A\leftarrow B/R$, $t_i$
studied in \S\ref{sec:PrismCohqDolbFunct}, 
we prove a global variant of Theorem \ref{thm:main2} 
and its compatibility with tensor products and the Frobenius scalar extension
in the last section \S\ref{sec:PrismCohqDolbGlobal}.
\par
\smallskip
\noindent 
{\bf Acknowledgment} \par
I would like to thank Matthew Morrow for our collaboration \cite{MT} on coefficients
for $A_{\inf}$-cohomology, which we named relative Breuil-Kisin-Fargues modules;
the comparison between BKF modules and prismatic $F$-crystals discussed in \cite{MT}
motivated me to develop a general theory describing a prismatic crystal and 
its cohomology over an arbitrary base lying over the $q$-crystalline prism in terms of $q$-Higgs
fields. This work was financially supported by JSPS Grants-in-Aid for Scientific Research, 
Grant number 20H01793.

\section{$\delta$-rings}\label{sec:deltarings}
We recall basic terminologies and properties on $\delta$-rings
(\cite{JoyalDeltaRing}, \cite[\S2]{BS}).
We fix a rational prime number $p$. Let $R$ be a commutative
ring. A {\it $\delta$-structure}
on $R$ is a map $\delta\colon R\to R$ satisfying
$\delta(0)=\delta(1)=0$ and
the following equalities for all $x$, $y\in R$.
\begin{align}
\delta(x+y)&=\delta(x)+\delta(y)-\sum_{i=1}^{p-1}
p^{-1}\binom{p}{i}x^i y^{p-i},\label{eq:DeltaStrSum}\\
\delta(xy)&=\delta(x)y^p+x^p\delta(y)+p\delta(x)\delta(y).
\label{eq:DeltaStrProd}
\end{align}
For a $\delta$-structure on $R$, one can easily deduce the 
following equalities by induction.
\begin{equation}\label{eq:DeltaStrIntegers}
\delta(n\cdot 1_R)=p^{-1}(n-n^p)\cdot1_R\quad (n\in \Z)
\end{equation}
By a {\it lifting of Frobenius on $R$}, we mean 
an endomorphism $\varphi$ of $R$ satisfying $\varphi(x)-x^p\in pR$
for every $x\in R$. If $f$ is a $\delta$-structure on $R$,
then the map $\varphi\colon R\to R$ defined by 
$\varphi(x)=x^p+p\delta(x)$ is a lifting of Frobenius
on $R$. The equality \eqref{eq:DeltaStrProd} can be rewritten as
\begin{equation}\label{eq:DeltaStrProd2}
\delta(xy)=\varphi(x)\delta(y)+\delta(x)y^p.
\end{equation}
By \eqref{eq:DeltaStrIntegers}, we have 
\begin{equation}\label{eq:DeltaFrobIntegers}
\varphi(n\cdot 1_R)=n\cdot 1_R \quad (n\in \Z).
\end{equation}
By using \eqref{eq:DeltaStrIntegers}, \eqref{eq:DeltaStrSum},
\eqref{eq:DeltaStrProd}, and Lemma \ref{lem:DeltaPower} shown later,
we obtain 
\begin{equation}\label{eq:DeltaFrobComm}
\varphi\circ \delta=\delta\circ\varphi.
\end{equation}

If $R$ is $p$-torsion free, $\delta$ is uniquely
determined by $\varphi$, and this construction gives
a bijection between the set of $\delta$-structures on 
$R$ and that of liftings of Frobenius on $R$, i.e,
if $\varphi$ is a lifting of Frobenius on $R$, then
the map $\delta\colon R\to R$ defined by
$\varphi(x)=p^{-1}(\varphi(x)-x^p)$ $(x\in R)$ 
is a $\delta$-structure on $R$. 
The algebra $\Z$ admits a unique lifting of Frobenius $\id$,
which defines the $\delta$-structure: $\delta(n)=p^{-1}(n-n^p)$
$(n\in \Z)$. 

We call a ring equipped with 
a $\delta$-structure a {\it $\delta$-ring} . A {\it $\delta$-homomorphism}
$f\colon R\to S$ between $\delta$-rings is a
homomorphism  of underlying rings compatible
with the $\delta$-structures: $f\circ\delta(x)=\delta\circ f(x)$
for all $x\in R$. For a $\delta$-ring $R$, 
a {\it $\delta$-algebra over $R$}
(or a {\it $\delta$-$R$-algebra})
is a $\delta$-ring $S$ equipped with a $\delta$-homomorphism
$R\to S$. A {\it $\delta$-ideal} of a $\delta$-ring $R$ is
an ideal $I$ of the underlying ring stable under $\delta$:
$\delta(I)\subset I$. If $I$ is a $\delta$-ideal of a $\delta$-ring $R$,
then the quotient ring $R/I$ is equipped with a unique
$\delta$-algebra structure over $R$. Let $\scrS$ be a subset
of a $\delta$-ring $R$. Then we see that
the ideal of $R$ generated by $\delta^n(\scrS)$ $(n\in \N)$ is
stable under $\delta$, whence is the smallest 
$\delta$-ideal of $R$ containing $\scrS$. 
(Note that for any ideal $J$ of $R$, the map 
$J\to R/J; x\mapsto (\delta(x)\mod J)$
is a $\varphi$-semilinear homomorphism of $R$-modules by
\eqref{eq:DeltaStrSum} and \eqref{eq:DeltaStrProd2}.)
We write $(\scrS)_{\delta}$ for this ideal and call
it {\it the $\delta$-ideal generated by $\scrS$}. We write
$I_{\delta}$ for $(I)_{\delta}$ if $I$ is an ideal of $R$.
If the ideal $I$ is generated by a subset $\scrS$ of $I$,
then we have $I_{\delta}=(\scrS)_{\delta}$
because the latter contains $I$. This implies that
if $S$ is a $\delta$-algebra over a $\delta$-ring $R$,
and $I$ is a $\delta$-ideal of $R$, then
the ideal $IS$ of $S$ generated by $I$ is a 
$\delta$-ideal of the $\delta$-ring $S$. 

By a {\it $\delta$-subring} of a $\delta$-ring $R$,
we mean a subring $R'$ of $R$ stable under the
$\delta$-structure: $\delta(R')\subset R'$. 
Similarly, a {\it $\delta$-$R$-subalgebra} of a $\delta$-algebra $S$
over a $\delta$-ring $R$ is an $R$-subalgebra $S'$ of $S$
stable under the $\delta$-structure of $S$: $\delta(S')\subset S'$.
For a subset $\scrS$ of a $\delta$-algebra $S$ over a $\delta$-ring
$R$, we have 
\begin{equation}\label{eq:DeltaGenerator}
\delta(R[\delta^m(\scrS);0\leq m\leq n])
\subset R[\delta^m(\scrS);0\leq m\leq n+1]\end{equation}
for each non-negative integer $n$. 
Note that for any $R$-subalgebra $S'$ of $S$, the
subset $\{x\in S'\vert \delta(x)\in S'\}$ of $S'$ is an $R$-subalgebra
by \eqref{eq:DeltaStrSum} and \eqref{eq:DeltaStrProd}.
This implies that the $R$-subalgebra of $S$ 
generated by $\delta^n(\scrS)$  $(n\geq 0)$ is stable under 
$\delta$, whence is the smallest $\delta$-$R$-subalgebra of $S$ 
containing $\scrS$, which is denoted by $R[\scrS]_{\delta}$ and 
called {\it the $\delta$-subalgebra over 
$R$ generated by $\scrS$}. 

Let $R$ be a commutative ring, and let $W_2(R)$ be the ring
of Witt vectors of $R$ of length $2$ with respect to the prime $p$.
Then the map $\delta\colon R\to R$ is a $\delta$-structure
if and only if the map $(1,\delta)\colon R\to W_2(R); x\mapsto (x,\delta(x))$
is a ring homomorphism. This interpretation allows us to show that
for a polynomial ring $S=R[T_{\lambda} (\lambda\in \Lambda)]$
over a $\delta$-ring $R$ and $s_{\lambda}\in S$ $(\lambda\in \Lambda)$, there
exists a unique $\delta$-$R$-algebra structure on $S$ such that $\delta(T_{\lambda})=s_{\lambda}$.
For two $\delta$-rings $(S,\delta_S)$ and $(S',\delta_{S'})$, a homomorphism
of rings $f\colon S\to S'$ is a $\delta$-homomorphism if and only if the following 
diagram commutes.
\begin{equation}
\xymatrix@C=50pt{
S\ar[d]_f\ar[r]^(.45){(1,\delta_S)}&W_2(S)\ar[d]^{W_2(f)}\\
S'\ar[r]^(.45){(1,\delta_{S'})}& W_2(S')
}
\end{equation}
If $S$ and $S'$ are $\delta$-algebras over a $\delta$-ring $R$, and 
$S$ is generated by a subset $\scrS$ of $S$ as an $R$-algebra, 
then the above
observation implies that an $R$-homomorphism $f\colon S\to S'$ is a
$\delta$-homomorphism if and only if $\delta(f(s))=f(\delta(s))$ for every 
$s\in \scrS$. For a polynomial algebra $R[T_{\lambda} (\lambda\in \Lambda)]$
over a $\delta$-ring $R$, we define a $\delta$-algebra 
$R[T_{\lambda}(\lambda\in \Lambda)]_{\delta}$ over $R$ to be the
polynomial algebra $R[T_{\lambda}][T_{\lambda,n+1}(\lambda\in \Lambda,n\in \N)]$
over $R[T_{\lambda}(\lambda\in \Lambda)]$ equipped with the $\delta$-$R$-algebra
structure defined by $\delta(T_{\lambda,n})=T_{\lambda,n+1}$ 
$(\lambda\in \Lambda, n\in \N)$, where we put $T_{\lambda,0}=T_{\lambda}$
$(\lambda\in \Lambda)$. Then any $R$-homomorphism 
$f\colon R[T_{\lambda}(\lambda\in \Lambda)]\to A$ to a $\delta$-$R$-algebra
uniquely extends to a $\delta$-$R$-homomorphism $R[T_{\lambda}(\lambda\in\Lambda)]_{\delta}
\to A$, which sends $T_{\lambda,n}$ to $\delta^n(f(T_{\lambda}))$. 
Let $S$ be an algebra over a $\delta$-ring $R$, and suppose that the $R$-algebra $S$ is 
generated by a subset $\scrS\subset S$, which gives a presentation 
$R[T_s (s\in \scrS)]/I\xrightarrow{\cong} S; (T_s\mod I)\mapsto s$ of $S$ as
a quotient of a polynomial algebra. Let $S_{\delta}$ be the
$\delta$-$R$-algebra $R[T_s (s\in \scrS)]_{\delta}/(I)_{\delta}$, which 
is equipped with an $R$-homomorphism $S\to S_{\delta}$ induced by
the homomorphism $R[T_s (s\in \scrS)]\to R[T_s (s\in \scrS)]_{\delta}$.
Then any $R$-homomorphism $f\colon S\to A$ to a $\delta$-$R$-algebra uniquely 
factors through a $\delta$-$R$-homomorphism $S_{\delta}\to A$
as follows: The composition $R[T_s (s\in  \scrS)]\to S\xrightarrow{f} A$
extends uniquely to a $\delta$-$R$-homomorphism $R[T_s (s\in \scrS)]_{\delta}\to A$,
which sends $\delta^n(x)$ $(x\in I, n\in \N)$ to $\delta^n(0)=0$. This universality shows
that $S_{\delta}$ with $S\to S_{\delta}$ is independent of the choice of $\scrS$
up to unique isomorphisms. We call a $\delta$-$R$-algebra $S_{\delta}$ with 
an $R$-homomorphism $S\to S_{\delta}$ satisfying the above universality
the {\it $\delta$-envelope of $S$ over $R$}.

Let $R$ be a $\delta$-ring, and let $I$ be an ideal of $R$ containing
$p$. Then, we see $\delta(I^{n+1})\subset I^n$ $(n\in \N)$
by induction on $n$, noting that the map $I^n\to R/I^n; a\mapsto (\delta(a)\bmod I^n)$
is a $\varphi$-semilinear map of $R$-modules and that $\varphi(I)\subset I$
by the assumption $p\in I$.
This implies 
$\delta(a+I^{n+1})\subset \delta(a)+I^n$ for $a\in R$
and $n\in \N$. Hence the composition 
$R\xrightarrow{\delta} R\to R/I^n$ factors as
$R\to R/I^{n+1}\xrightarrow{\delta_n}R/I^n$.
By taking the inverse limit of $\delta_n$, we obtain a $\delta$-structure
on $\hR=\varprojlim_nR/I^n$, which we call the $I$-adic completion 
of the $\delta$-structure on $R$. The canonical homomorphism
$R\to \hR$ is a $\delta$-homomorphism. 

\begin{definition}\label{def:formallyflat}
Let $R$ be a ring,  let $I$ be an ideal of $R$,
and let $S$ be an $R$-algebra.\par
(1) We say that $S$ is 
{\it $I$-adically \'etale (resp.~smooth, resp.~flat, resp.~faithfully flat) over 
$R$} if $S/I^nS$ is \'etale (resp.~smooth, resp.~flat, resp.~faithfully flat) over
$R/I^n$ for every positive integer $n$.\par
(2) Suppose that $S$ is $I$-adically smooth over $R$.
We say that elements $t_1,\ldots, t_d$ of $S$ are
{\it $I$-adic coordinates of $S$ over $R$} if 
$d(t_i\bmod IS)$ is a basis of the $S/IS$-module
$\Omega_{(S/IS)/(R/I)}$.
(This implies that $d(t_i\bmod I^nS)$ form a basis of 
the $(S/I^nS)$-module $\Omega_{(S/I^nS)/(R/I^n)}$,
and the $R/I^n$-homomorphism 
$R/I^n[T_1,\ldots, T_d]\to S/I^nS;T_i\mapsto (t_i\bmod I^nS)$
is \'etale for every positive integer $n$.)
\end{definition}

\begin{remark}\label{rmk:AdicallyFlatCriterion}
We have  the following criterion for $I$-adically flat homomorphisms
easily deduced from the local criteria of flatness: If $I$ is generated
by a regular sequence $a_1,\ldots, a_n$ which is also $S$-regular, 
and $S/IS$ is flat over $R/IR$, then $S$ is $I$-adically flat over $R$.
Indeed, the regularity implies that we have 
isomorphisms $R/I[X_1,\ldots, X_n]\xrightarrow{\cong}\gr^IR$
and $S/IS[X_1,\ldots,X_n]\xrightarrow{\cong} \gr^{IS}S$
sending $X_i$ to the image of $a_i$ in $I/I^2$ and in $IS/I^2S$,
respectively. Hence we have an isomorphism 
$\gr^IR\otimes_{R/I}S/IS\xrightarrow{\cong}\gr^{IS}S$, which, 
together with the flatness of $R/I\to S/IS$, implies that 
$R/I^nR\to S/I^nS$ $(n>0)$ are flat
by the local criteria of flatness.
\end{remark}

\begin{proposition}[cf.~{\cite[Lemma 2.18]{BS}}] \label{prop:DeltaStrExtEtSm}
Let $R$ be a $\delta$-ring, let $I$ be an ideal of $R$ containing
$p$, and let $S$ be an $R$-algebra $I$-adically complete
and separated.\par
(1) If $S$ is $I$-adically \'etale over $R$, then
there exists a unique $\delta$-$R$-algebra structure on $S$.\par
(2) Assume that $S$ is $I$-adically smooth over $R$,
and let $t_1,\ldots, t_d\in S$ be $I$-adic coordinates of $S$
over $R$ (Definition \ref{def:formallyflat} (2)). Then, for any $s_1,\ldots, s_d\in S$, there exists
a unique $\delta$-$R$-algebra structure on $S$ such that
$\delta(t_i)=s_i$ for every $i\in \N\cap [1,d]$. 
\end{proposition}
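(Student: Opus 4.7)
The strategy is to reformulate the problem using the ring of length-$2$ Witt vectors $W_2$. As recalled in the excerpt, giving a $\delta$-structure on a ring $A$ is equivalent to giving a ring section of the ghost projection $w_0 \colon W_2(A) \to A$, namely $a \mapsto (a, \delta(a))$. Moreover, if $R$ is a $\delta$-ring and $S$ is an $R$-algebra, a $\delta$-$R$-algebra structure on $S$ corresponds precisely to an $R$-algebra section $s_S \colon S \to W_2(S)$ of $w_0$, where $W_2(S)$ is viewed as an $R$-algebra via the composition $R \xrightarrow{(1,\delta_R)} W_2(R) \to W_2(S)$. I plan to establish existence and uniqueness of such a section via a deformation-theoretic lifting argument controlled by the $I$-adic topology.

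For Part (1), the key input is the behavior of the kernel $V(S) := \ker(w_0 \colon W_2(S) \to S)$. A direct computation using the $W_2$-multiplication formula yields $(0,a)\cdot(0,b) = (0, pab)$, so $V(S)^{n} = V(p^{n-1} S)$. Since $p \in I$, this gives $V(S)^n \subset V(I^{n-1} S)$, so the kernel ideal $V(S)$ is topologically pro-nilpotent in the $I$-adic sense. Combined with the fact that $W_2(S)$ is $I$-adically complete (as $S$ is, and $W_2(S)$ is an extension of $S$ by $V(S) \cong S$ as an additive group), the projection $w_0$ is the type of surjection for which $I$-adically étale morphisms admit unique lifts. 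Applying this to $\id_S \colon S \to S$ as an $R$-algebra map and lifting along $w_0$ produces the unique section $s_S$, hence the required $\delta$-structure.

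For Part (2), I would reduce to Part (1) by interposing an intermediate $\delta$-ring. Choose arbitrary lifts $\tilde{s}_i$ of $s_i$ under the map $R\langle T_1, \ldots, T_d\rangle \to S$ sending $T_i \mapsto t_i$ (where $R\langle T_1, \ldots, T_d\rangle$ denotes the $I$-adic completion of the polynomial ring). By the universal property of the polynomial $\delta$-algebra applied modulo each $I^{n+1}$ and passed to the limit using the $I$-adic completion of the $\delta$-structure as described earlier in the excerpt, $R\langle T_1, \ldots, T_d\rangle$ carries a unique $\delta$-$R$-algebra structure with $\delta(T_i) = \tilde{s}_i$. The coordinate hypothesis ensures that $R\langle T_1, \ldots, T_d\rangle \to S$ is $I$-adically étale, so Part (1) furnishes a unique $\delta$-$R\langle T_1,\ldots, T_d\rangle$-algebra structure on $S$, which is in particular a $\delta$-$R$-algebra structure satisfying $\delta(t_i) = s_i$ by construction. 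Uniqueness on the $S$-side follows because any $\delta$-$R$-algebra structure with $\delta(t_i) = s_i$ automatically upgrades to a $\delta$-$R\langle T_1,\ldots,T_d\rangle$-algebra structure, the map on $R\langle T_1, \ldots, T_d\rangle$ into $S$ being a $\delta$-homomorphism as this can be checked on the generators $T_i$.

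The main obstacle is that $S$ is not assumed $p$-torsion free, so the standard shortcut of first lifting Frobenius and then defining $\delta(x) = p^{-1}(\varphi(x) - x^p)$ is not available: the $W_2$-interpretation is essential. The technical heart of the argument is therefore the verification that the pair $(W_2(S), V(S))$, which is not a square-zero or even nilpotent thickening of $S$ at any finite stage, can nonetheless be handled by the lifting property of $I$-adically étale maps thanks to the identity $V(a)V(b) = V(pab)$ together with $p \in I$ and the $I$-adic completeness of $W_2(S)$. Formalizing this passage to the $I$-adic limit — constructing $s_S$ level by level modulo $V(I^{n+1}S)$ and showing both existence and uniqueness at each stage — is the step that requires the most attention.
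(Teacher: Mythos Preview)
Your approach to Part (1) is essentially the paper's: interpret $\delta$-structures as sections into $W_2$, then lift using \'etaleness level by level. One caution: the parenthetical justification that $W_2(S)$ is $I$-adically complete ``as an extension of $S$ by $V(S)\cong S$'' does not stand on its own, since an extension of $I$-adically complete abelian groups need not be $I$-adically complete for the induced $R$-module topology (the $R$-action on $V(S)$ is $\varphi$-twisted, so the $I$-adic filtration on $W_2(S)$ does not split cleanly). Your final paragraph already concedes that the real argument is the staircase construction; the paper implements exactly this via maps $S_{n+1}\to W_2(S_n)$, using that the kernel $V(S_n)$ satisfies $V(S_n)^{n+1}=V(p^nS_n)=0$.

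Part (2), however, has a genuine gap. You propose to choose lifts $\tilde s_i\in R\langle T_1,\dots,T_d\rangle$ of $s_i\in S$ along the map $T_i\mapsto t_i$, but this map is only $I$-adically \'etale, not surjective, so such lifts need not exist. For instance, with $R=\Z_p$, $I=(p)$, $S=\Z_p\langle T,T^{-1}\rangle$, $t_1=T$, and $s_1=T^{-1}$, there is no preimage of $s_1$ in $\Z_p\langle T\rangle$. Your reduction to Part (1) therefore fails in general, and with it both your existence and uniqueness arguments for (2). The paper sidesteps this by never attempting to put a $\delta$-structure on $R\langle T\rangle$ with prescribed image: instead it defines directly, at each finite level, the $R_{n+1}$-algebra map $R_{n+1}[T_1,\dots,T_d]\to W_2(S_n)$ sending $T_i\mapsto (t_i,s_i)$ (which always makes sense, no lift required), and then extends uniquely to $S_{n+1}$ by the \'etale lifting property against the nilpotent kernel of $W_2(S_n)\to S_n$. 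The fix to your argument is to abandon the intermediate $\delta$-ring and run this direct construction.
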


\begin{proof}
For a positive integer $m$, put $R_m=R/I^m$ 
and $S_m=S/I^mS$, and let $f_m$ denote the
the structure homomorphism $R_m\to S_m$.
Let $n$ be a positive integer. By the assumption
$p\in I$, the composition of 
$w\colon R\to W_2(R);x\mapsto (x,\delta(x))$ and
$W_2(R)\to W_2(R_n)$ factors as 
$R\to R_{n+1}\xrightarrow{w_n}W_2(R_n)$,
and the $(n+1)$th power of the kernel of 
$\varepsilon_n\colon W_2(S_n)\to S_n;
(x_0,x_1)\mapsto x_0$ is $0$. We see that the 
composition 
$R_{n+1}\xrightarrow{w_n}W_2(R_n)
\xrightarrow{W_2(f_n)} W_2(S_n)\xrightarrow{\varepsilon_n}S_n$
coincides with the composition $R_{n+1}\xrightarrow{f_{n+1}}
S_{n+1}\to S_n$.\par
(1) Since $f_{n+1}\colon R_{n+1}\to S_{n+1}$ is \'etale,
there exists a unique homomorphism $v_n\colon S_{n+1}\to W_2(S_n)$
making the following diagram commute.
\begin{equation*}
\xymatrix@C=50pt{
R_{n+1} \ar[r]^(.47){w_n}\ar[d]_{f_{n+1}}&
W_2(R_n)\ar[r]^{W_2(f_n)}&
W_2(S_n)\ar[d]^{\varepsilon_n}\\
S_{n+1}\ar@{-->}[urr]_{v_n}\ar[rr]&&
S_n
}
\end{equation*}
By the uniqueness, $v_n$ $(n\geq 1)$ form an inverse system,
and we obtain the claim by taking the inverse limit over $n$.\par
(2) For a positive integer $m$,
put $S_m'=R_m[T_1,\ldots, T_d]$, and let $g_m$ be
the $R_m$-homomorphism $S_m'\to S_m;T_i\mapsto t_i$. 
The composition $R_{n+1}\xrightarrow{w_n} W_2(R_n)
\xrightarrow{W_2(f_n)}W_2(S_n)$ uniquely extends
to a homomorphism $v_n'\colon S_{n+1}' \to W_2(S_n)$
sending $T_i$ to  $(t_i,s_i)$.  Since $g_{n+1}$ is \'etale,
this further extends to a unique homomorphism 
$v_n\colon S_{n+1}\to W_2(S_n)$ whose composition 
with $\varepsilon_n\colon W_2(S_n)\to S_n$ is the
projection map. 
\begin{equation*}
\xymatrix@C=50pt{
S_{n+1}'\ar[r]^{v_n'} \ar[d]_{g_{n+1}}&
W_2(S_n)\ar[d]^{\varepsilon_n}\\
S_{n+1}\ar@{-->}[ru]^{v_n}\ar[r]& S_n
}
\end{equation*}
The homomorphisms $v_n'$ $(n\geq 1)$ obviously form
an inverse system, and the uniqueness of 
$v_n$ for each $n\geq 1$ implies that $v_n$ $(n\geq 1)$
also form an inverse system. Hence we obtain
the claim by taking the inverse limit over $n$.
\end{proof}

\begin{proposition}\label{prop:DeltaCompEtExt}
Let $R$ be a $\delta$-ring, let $I$ be an ideal of $R$ containing $p$,
and let $f\colon R\to R'$ be a morphism of $\delta$-rings whose
underlying ring homomorphism is $I$-adically \'etale
(Definition \ref{def:formallyflat} (1)).
Let $S$ be a $\delta$-ring, let $g'\colon R'\to S$ be a ring homomorphism,
and put $g=g'\circ f$. If $g$ is a $\delta$-homomorphism and 
$S$ is $g(I)$-adically separated, then $g'$ is a $\delta$-homomorphism.
\end{proposition}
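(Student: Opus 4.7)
The plan is to use the Witt-vector reformulation of $\delta$-compatibility reviewed just before Definition \ref{def:formallyflat}: a ring homomorphism between two $\delta$-rings is a $\delta$-homomorphism if and only if it commutes with the ring maps $(1,\delta)\colon A\to W_2(A)$. Accordingly I set
$$\alpha:=W_2(g')\circ(1,\delta_{R'}),\qquad \beta:=(1,\delta_S)\circ g',$$
two ring maps $R'\to W_2(S)$, and the goal becomes $\alpha=\beta$. Precomposition with $f$ already matches them after one step: $\alpha\circ f=W_2(g')\circ W_2(f)\circ(1,\delta_R)=W_2(g)\circ(1,\delta_R)$ by the $\delta$-compatibility of $f$, while $\beta\circ f=(1,\delta_S)\circ g=W_2(g)\circ(1,\delta_R)$ by the $\delta$-compatibility of $g$; hence $\alpha\circ f=\beta\circ f$. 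Postcomposition with $\varepsilon\colon W_2(S)\to S$ gives $\varepsilon\circ\alpha=\varepsilon\circ\beta=g'$.

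To bridge these two pieces of information I would pass to finite-level quotients. For $n\ge 1$ set $S_n:=S/g(I)^nS$. Since $p\in I$ we have $p\in g(I)S$, so the general fact $\delta(J^{n+1})\subset J^n$ for an ideal $J$ containing $p$ (recalled just before Definition \ref{def:formallyflat}) applies both to $g(I)S\subset S$ and to $IR'\subset R'$. It follows that $\alpha$ and $\beta$ each carry $I^{n+1}R'$ into $g(I)^nS\times g(I)^nS$, which is the set-theoretic kernel of $W_2(S)\to W_2(S_n)$; they therefore induce
$$\bar\alpha_n,\ \bar\beta_n\colon R'/I^{n+1}R'\longrightarrow W_2(S_n).$$

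The decisive step is the unique lifting property of étale morphisms against nilpotent surjections. As observed inside the proof of Proposition \ref{prop:DeltaStrExtEtSm}, the $(n+1)$-st power of $\ker(\varepsilon_n\colon W_2(S_n)\to S_n)$ vanishes (one uses $p\in g(I)S$ and the formula $(0,b)(0,c)=(0,pbc)$). The $I$-adic étaleness hypothesis makes $f_{n+1}\colon R/I^{n+1}\to R'/I^{n+1}R'$ étale, so there is a unique map $v_n$ fitting into
\begin{equation*}
\xymatrix@C=45pt{R/I^{n+1}\ar[r]^(.42){W_2(\bar g_n)\circ w_n}\ar[d]_{f_{n+1}}&W_2(S_n)\ar[d]^{\varepsilon_n}\\ R'/I^{n+1}R'\ar@{-->}[ur]^{v_n}\ar[r]_(.55){\bar g'_n}&S_n,}
\end{equation*}
where $w_n$, $\bar g_n$, $\bar g'_n$ denote the evident reductions. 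Both $\bar\alpha_n$ and $\bar\beta_n$ make this diagram commute: for the upper triangle $\bar\alpha_n$ uses the $\delta$-compatibility of $f$ and $\bar\beta_n$ uses that of $g$, while the lower triangle in both cases follows from $\varepsilon\circ(1,\delta_T)=\id_T$. By uniqueness, $\bar\alpha_n=\bar\beta_n$ for every $n$, and the $g(I)$-adic separatedness of $S$ gives $\bigcap_n(g(I)^nS\times g(I)^nS)=0$, so $W_2(S)\hookrightarrow\prod_n W_2(S_n)$ is injective and $\alpha=\beta$, proving that $g'$ is a $\delta$-homomorphism. The main obstacle is purely bookkeeping, namely keeping the two reduction ideals ($I^{n+1}R'$ on the source, $g(I)^nS$ on the target) straight and checking that the nilpotence of $\ker\varepsilon_n$ is exactly what the étale lifting criterion demands.
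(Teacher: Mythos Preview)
Your proof is correct and follows essentially the same approach as the paper: both arguments compare the two compositions $W_2(g')\circ(1,\delta_{R'})$ and $(1,\delta_S)\circ g'$ by reducing modulo powers of $I$ and invoking the unique étale lift along $f_{n+1}$ into $W_2(S_n)$, whose augmentation kernel is nilpotent because $p\in I$. The only cosmetic difference is that the paper phrases the finite-level step as commutativity of a square $R'_{n+1}\to W_2(R'_n)\to W_2(S_n)$ versus $R'_{n+1}\to S_{n+1}\to W_2(S_n)$, whereas you work directly with the two induced maps $\bar\alpha_n,\bar\beta_n\colon R'/I^{n+1}R'\to W_2(S_n)$.
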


\begin{proof}
We want to show that the diagram of rings
\begin{equation*}
\xymatrix@C=50pt{
R'\ar[r]^(.45){w_{R'}}\ar[d]_{g'}&
W_2(R')\ar[d]^{W_2(g')}\\
S\ar[r]^(.45){w_S}&W_2(S),
}\tag {$*$}
\end{equation*}
is commutative, where $w_S$ (resp.~$w_{R'}$) is the
ring homomorphism defined by 
$w_S(x)=(x,\delta(x))$ (resp.~$w_{R'}(y)=(y,\delta(y))$). 
This diagram commutes after composing with 
$W_2(S)\to S;(x_0,x_1)\mapsto x_0$ (resp.~$f\colon R\to R'$
since $f$ and $g=g'\circ f$ are $\delta$-homomorphisms.)
Put $R_m=R/I^m$, $R'_m=R'/f(I^m)R'$, 
$S_m=S/g(I^m)S$ and $f_m=(f\mod I^m)$ for
a positive integer $m$. Let $n$ be a positive integer.
By the assumption $p\in I$, the above diagram
induces a diagram
\begin{equation*}
\xymatrix{
R'_{n+1}\ar[r]\ar[d]&
W_2(R'_n)\ar[d]\\
S_{n+1}\ar[r]&W_2(S_n),
}
\end{equation*}
and it is commutative
after composing with $f_{n+1}\colon R_{n+1}\to R'_{n+1}$
and $W_2(S_n)\to S_n$. Since the $(n+1)$th power
of the kernel of the last map is $0$ by $p\in I$, 
and $f_{n+1}$ is \'etale, this diagram itself is commutative. 
By taking the inverse limit over $n$, we see that
the diagram $(*)$ is also commutative since
the map $W_2(S)\to \varprojlim_nW_2(S_n)$ is
injective by assumption.
\end{proof}

\section{Divided $\delta$-envelopes}\label{sec:DivDeltaEnv}

\begin{proposition}[cf.~{\cite[Proposition 3.13]{BS}}]\label{prop:DeltaEnvRegSeq}
Let $R$ be a $\delta$-ring, let $\xi$ be an element of $R$, put
$I=pR+\xi R$, and assume that $\delta(\xi)$ mod $I$ is invertible in $R/I$.
Let $A$ be a $\delta$-$R$-algebra, and let $T_1,\ldots, T_d$ be
elements of $A$.
Put $D_0=A[S_1,\ldots, S_d]/(\xi S_i-T_i, i\in \N\cap [1,d])$,
and let $D$ be the $\delta$-envelope of $D_0$ over $A$. 
Let $\tau_i$ denote the image of $S_i$ in $D$ for $i\in \N\cap [1,d]$,
and define $\tau_i^{\{m\}_{\delta}}$ for $m\in \N$ and $i\in \N\cap [1,d]$ by
\begin{equation}\label{eq:TauDeltaPower}
\tau_i^{\{m\}_{\delta}}=\prod_{n=0}^N\delta^n(\tau_i)^{a_n},
\quad m=\sum_{n=0}^Na_np^n,\; a_n\in \N\cap [0,p-1]. 
\end{equation}

(1) $D/ID$ is a free $A/(IA+\sum_{i=1}^dT_iA)$-module
with a basis $\prod_{i=1}^d\tau_i^{\{m_i\}_{\delta}}$
mod $ID$ $((m_i)\in \N^d)$. \par
(2) For $i\in \N\cap [1,d]$ and $n\in \N$, the 
$A$-subalgebra of $D/ID$ generated by the images of 
$\delta^l(\tau_i)$ $(l\in \N\cap [0,n])$ is a free $A/(IA+\sum_{i=1}^dT_iA)$-module
with a basis $\tau_i^{\{m\}_{\delta}} \mod ID$ $(m\in \N\cap [0,p^{n+1}-1])$.\par
(3) Assume the sequence $T_1,\ldots, T_d$ is 
$A/IA$-regular,  $\Tor_1^{R/I^{N+1}}(A/I^{N+1}A,I^m/I^{m+1})=0$
for all $N\in \N$ and $m\in \N\cap [0,N]$, and
$\Tor_1^{R/I}(A/(IA+\sum_{i=1}^rT_iA),I^m/I^{m+1})=0$
for all $r\in \N\cap [1,d]$ and $m\in \N$.
Then we have $\Tor_1^{R/I^{N+1}}(D/I^{N+1}D,I^m/I^{m+1})=0$
for all $N\in \N$ and $m\in \N\cap [0,N]$.
If we further assume that $R/I\to A/(IA+\sum_{i=1}^dT_iA)$ is flat, 
then $R/I^{N+1}\to D/I^{N+1}D$ is flat for every $N\in \N$. \par
(4)  Under the assumption in (3), we further 
assume that the homomorphism $R/I\to A/(IA+\sum_{i=1}^dT_iA)$
is an isomorphism. Then, for any ideal $J$ of $R$ containing 
some power of $I$, $D/JD$ is a free $R/J$-module with 
a basis $\prod_{i=1}^d\tau_i^{\{m_i\}_{\delta}}$ mod $JD$
$((m_i)\in \N^d)$. 
\end{proposition}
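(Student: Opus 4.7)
The heart of the argument is to understand the $\delta$-ideal generated by the relations $\xi\tau_i - T_i$ modulo $I$. I would first reduce part (1) to the one-variable case of part (2): by the universal property of the $\delta$-envelope, $D$ is constructed as an iterated $\delta$-envelope, introducing the variables $\tau_1, \ldots, \tau_d$ one at a time over the $\delta$-algebra built so far. An induction on $d$ using part (2) for each new variable then yields the monomial basis of part (1), with scalars in $A/(IA + \sum_j T_j A)$; at each step, the new $T_j$ is forced to zero modulo $I$, which explains why the same quotient of $A$ ends up acting on the full product basis.

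For part (2) I would induct on $n$. The key computation, done inside $D$ using $\delta(xy) = \varphi(x)\delta(y) + \delta(x)y^p$ and $\varphi\delta = \delta\varphi$, rewrites $\delta^n(\xi\tau - T)$ as
\begin{equation*}
\varphi^n(\xi)\,\delta^n(\tau) \;-\; U_n,
\end{equation*}
with $U_0 = T$ and $U_n \in A[\tau, \delta(\tau), \ldots, \delta^{n-1}(\tau)]$ for $n \ge 1$. Since $I = pR + \xi R$ and $\varphi(I) \subset I$, we have $\varphi^n(\xi) \in I$ for all $n \ge 1$, so these relations reduce modulo $ID$ to $U_n \equiv 0$. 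Tracking how $\delta$ propagates through the Leibniz rule, one verifies that modulo $I$ the coefficient of $\delta^{n-1}(\tau)^p$ in $U_n$ is $-\varphi^{n-1}(\delta(\xi))$, which equals $-\delta(\xi)^{p^{n-1}}$ modulo $p$ and is therefore a unit in $R/I$ by hypothesis. Hence modulo $ID$, each relation $U_n \equiv 0$ can be solved for $\delta^{n-1}(\tau)^p$ as a polynomial in $\tau, \delta(\tau), \ldots, \delta^{n}(\tau)$ with each exponent in $[0, p-1]$. The base case $n=0$ uses $\xi\tau \equiv 0 \bmod ID$ to see that $T$ maps to zero in $D/ID$ and the first iterate to fix $\tau^p$ modulo $ID$; linear independence of $1, \tau, \ldots, \tau^{p-1}$ is verified by constructing an explicit $A$-algebra retract onto a free $A/(IA + TA)$-module of rank $p$. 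The inductive step is identical, with $\delta^{n+1}(\tau)$ treated as the new variable whose $p$-th power is fixed by the $(n{+}2)$nd iterate.

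For part (3), the basis of part (1) gives $D/I^{N+1}D$ a filtration by $I^m D/I^{m+1}D$ whose associated graded is $(I^m/I^{m+1}) \otimes_{R/I} (D/ID)$. Since $D/ID$ is a direct sum of copies of $A/(IA + \sum_j T_j A)$, the hypothesized Tor vanishings propagate through the filtration by standard long-exact-sequence arguments and induction on $N$. The flatness statement then follows from the local criterion (as in Remark \ref{rmk:AdicallyFlatCriterion}) applied to this filtration, using the flatness of $R/I \to A/(IA + \sum_j T_j A)$. Part (4) is deduced from (1) and (3): the hypothesized isomorphism $R/I \xrightarrow{\cong} A/(IA + \sum_j T_j A)$ upgrades the basis of $D/ID$ to a basis over $R/I$, and the flatness of $R/I^N \to D/I^N D$ allows one to lift this basis modulo any $J \supset I^N$ by a Nakayama-type argument combined with $I$-adic devissage.

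The principal obstacle is the inductive verification that $U_n$ modulo $I$ has exactly the shape claimed — namely, that the coefficient of the new $p$-th power $\delta^{n-1}(\tau)^p$ is a unit and that no $\delta^n(\tau)$-term appears. The nonlinearity of $\delta$ combined with the Frobenius twists introduced at each Leibniz step makes this a delicate but essentially mechanical bookkeeping calculation; once it is done, the parts (1), (3), and (4) follow by routine flat-base-change arguments organized around the explicit monomial basis.
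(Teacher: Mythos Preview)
Your key computation for part (2) — that $\delta^n(\xi\tau - T) = \varphi^n(\xi)\delta^n(\tau) + (\text{unit})\cdot\delta^{n-1}(\tau)^p + (\text{lower})$ modulo $I$ — is correct and matches the paper's Lemma~\ref{lem:DeltaIteration}. The iteration-over-variables reduction of (1) to the one-variable case also works, though note that part (2) as stated is about a subalgebra of the full $d$-variable envelope, not the one-variable envelope itself; what you really want is the $d=1$ case of (1). For linear independence, your ``retract'' is not explained and it is not clear how to build one, since $D/ID$ is not itself a $\delta$-ring. The paper instead filters $\overline{D} = D/ID$ by the subquotients $\overline{D}_{(n)}$ obtained by imposing the relations $\overline{P}_m^{(i)}$ for $m \le n+1$ on the polynomial ring in $\delta^{\le n}(\tau_i)$; since each $\overline{P}_{n+1}^{(i)}$ is monic of degree $p$ in $\delta^n(\tau_i)$ modulo $I$ (by your computation), each step $\overline{D}_{(n-1)} \to \overline{D}_{(n)}$ is free of rank $p^d$, and this gives linear independence for free.

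The real gap is in part (3). Your claim that ``the basis of part (1) gives $D/I^{N+1}D$ a filtration by $I^mD/I^{m+1}D$ whose associated graded is $(I^m/I^{m+1}) \otimes_{R/I} (D/ID)$'' is circular: the surjection $(I^m/I^{m+1}) \otimes_{R/I} (D/ID) \twoheadrightarrow I^mD/I^{m+1}D$ being an isomorphism is essentially the Tor vanishing you are trying to prove, and part (1) says nothing about $D$ beyond its reduction mod $I$. The paper does not induct on the $I$-adic level $N$; it inducts on the $\delta$-level $n$ in the filtration $D = \varinjlim_n D_n$ with $D_n = D_{n-1}[S_{i,n};\, i]/(P_n^{(i)};\, i)$ (before reducing mod $I$). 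The key computation shows that the sequence $(P_n^{(i)})_i$ is regular modulo $I$, and then Lemma~\ref{lem:RegSeqLift2} lifts regularity through the nilpotent extension $R/I \hookrightarrow R/I^{N+1}$, propagating the Tor vanishing from $D_{n-1}/I^{N+1}D_{n-1}$ to $D_n/I^{N+1}D_n$. This level-wise induction, together with the regularity-lifting lemma, is the missing ingredient.
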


\begin{remark}\label{rmk:DeltaEnvRegSeq}
(1) Under the setting of Proposition \ref{prop:DeltaEnvRegSeq}, 
the assumptions
in the claim (3) are satisfied if the homomorphism 
$R\to A$ is $I$-adically flat (Definition \ref{def:formallyflat} (1)) and the image 
of the sequence $T_1,\ldots, T_d$ in $A/IA$ is 
transversally regular relative to $R/I$ (\cite[D\'efinition (19.2.1)]{EGAIV}),
i.e., forms a regular sequence with quotients
$(A/IA)/\sum_{i=1}^rT_i(A/IA)$ $(r\in \N\cap [0,d])$
flat over $R/I$.\par
(2) Under the setting of Proposition \ref{prop:DeltaEnvRegSeq}, 
suppose that
we are given a homomorphism of $\delta$-rings
$R\to R'$. Let $A'$ be the $\delta$-ring
$A\otimes_RR'$, let $\xi'$ be the image of
$\xi$ in $R'$, and let $T_i'$ be the
image of $T_i$ in $A'$ for $i\in \N\cap [1,d]$. 
Similarly to $D_0$ and $D$, we define $D_0'$ and $D'$
to be $A'[S_1',\ldots, S_d']/(\xi'S_i'-T_i',i\in\N\cap [1,d])$
and its $\delta$-envelope over $A'$. Then
the $\delta$-homomorphism $A\to A';a\mapsto a\otimes 1$
extends to a homomorphism $D_0\to D_0'$ sending
$S_i$ to $S_i'$, and then to a $\delta$-homomorphism 
$D\to D'$. By the construction of $\delta$-envelope,
$D$ is the quotient of $A[S_1,\ldots, S_d]_{\delta}$
by the ideal generated by $\delta^n(\xi S_i-T_i)$
$(n\in \N, i\in \N\cap [1,d])$, and $A[S_1,\ldots, S_d]_{\delta}$
is the polynomial algebra over $A$ in variables
$\delta^n(S_i)$ $(n\in \N,i\in \N\cap [1,d])$. 
The same applies to $D'$. Hence the $\delta$-homomorphism
$D\to D'$ induces an isomorphism of $\delta$-$R'$-algebras
$D\otimes_{R}R'\xrightarrow{\cong} D'$. In particular, if 
the $\delta$-$R$-algebra $D$ satisfies the conclusion 
of the claim (4), then it also holds for the 
$\delta$-$R'$-algebra $D'$.
\end{remark}

\begin{lemma}\label{lem:DeltaIteration}
Let $R$ be a $\delta$-ring equipped with an element $\xi$,
and let $A$ be a $\delta$-$R$-algebra equipped with an element $T$.
Put $B=A[S]_{\delta}$. For an integer $n\geq -1$, let $B_n$ be the
$A$-subalgebra of $B$ generated by $\delta^m(S)$
$(m\in \N, m\leq n)$. 
(We have $B_{-1}=A$ and $\delta(B_{n-1})\subset B_n$ $(n\in \N)$
by \eqref{eq:DeltaGenerator}.). 
Then, for a positive integer $n$, $\delta^n(\xi S-T)$ is written in the form 
$$\delta^n(\xi S-T)=\varphi^n(\xi)\delta^n(S)+\sum_{\nu=0}^pc_{\nu,n}\delta^{n-1}(S)^{\nu},$$
where $c_{p,n}=(\sum_{m=0}^{n-1}p^{m(p-1)})\varphi^{n-1}(\delta(\xi))\in R$
and $c_{\nu,n}\in B_{n-2}$ $(\nu\in \N\cap [0,p-1])$.
\end{lemma}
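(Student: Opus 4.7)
The natural approach is induction on $n$. For the base case $n=1$, I would expand $\delta(\xi S - T)$ by applying \eqref{eq:DeltaStrSum} with $x=\xi S$, $y=-T$ and then \eqref{eq:DeltaStrProd2} to $\delta(\xi S) = \varphi(\xi)\delta(S) + \delta(\xi)S^p$. The resulting expression
\[\varphi(\xi)\delta(S) + \delta(\xi)S^p + \delta(-T) - \sum_{i=1}^{p-1}p^{-1}\binom{p}{i}\xi^i(-T)^{p-i}S^i\]
has coefficient $\delta(\xi) = c_{p,1}$ on $S^p$, and all lower coefficients in $A = B_{-1}$, as required.

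For the inductive step, assuming the formula at $n \geq 1$, I apply $\delta$ term by term to $\delta^n(\xi S - T)$. The leading summand yields
\[\delta(\varphi^n(\xi)\delta^n(S)) = \varphi^{n+1}(\xi)\delta^{n+1}(S) + \varphi^n(\delta(\xi))\delta^n(S)^p,\]
using \eqref{eq:DeltaStrProd2} and $\delta(\varphi^n(\xi)) = \varphi^n(\delta(\xi))$ from \eqref{eq:DeltaFrobComm}. Each remaining summand $c_{\nu,n}\delta^{n-1}(S)^\nu$ with $c_{\nu,n} \in B_{n-2}$ is handled by \eqref{eq:DeltaStrProd2} combined with the universal identity
\[\delta(y^\nu) = \sum_{k=1}^\nu \binom{\nu}{k} p^{k-1} y^{p(\nu-k)}\delta(y)^k,\]
which holds in any $\delta$-ring since it is a polynomial identity verifiable in the free $p$-torsion-free $\delta$-ring on $y$. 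Since $\delta(c_{\nu,n})$ and $\varphi(c_{\nu,n}) = c_{\nu,n}^p + p\delta(c_{\nu,n})$ both lie in $B_{n-1}$, these contributions are polynomials in $\delta^n(S)$ of degree at most $\nu \leq p$ with coefficients in $B_{n-1}$.

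Finally, the additivity corrections produced by iterating \eqref{eq:DeltaStrSum} on the full sum contribute monomials in which each original summand appears to degree at most $p-1$; since $\delta^n(S)$ occurs in only one summand (namely the leading one) and only to degree one, these corrections cannot produce $\delta^n(S)^p$, and their coefficients of lower powers of $\delta^n(S)$ lie in $B_{n-1}$. The only two contributions to $\delta^n(S)^p$ are thus $\varphi^n(\delta(\xi))$ from the leading term and $p^{p-1}\varphi(c_{p,n})$ from the $k=\nu=p$ term of the identity above, giving
\[c_{p,n+1} = \varphi^n(\delta(\xi)) + p^{p-1}\varphi(c_{p,n}) = \Bigl(1+\sum_{m=1}^{n} p^{m(p-1)}\Bigr)\varphi^n(\delta(\xi)),\]
which matches the asserted closed form. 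The main bookkeeping obstacle is checking that no correction term ever produces $\delta^n(S)^p$; this rests on the simple observation that $\delta^n(S)$ appears in only one summand and only to degree one, so every monomial arising from the correction terms carries $\delta^n(S)$ to degree at most $p-1$.
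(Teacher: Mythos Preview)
Your proof is correct and follows essentially the same inductive strategy as the paper. The only difference is organizational: the paper groups the non-leading terms into a single element $b_n=\sum_{\nu=0}^p c_{\nu,n}\delta^{n-1}(S)^\nu\in B_{n-1}$ and applies the two-term sum formula to $\varphi^n(\xi)\delta^n(S)+b_n$, then analyzes $\delta(b_n)$ modulo $B_{n-1}$; you instead keep all summands separate and track the multi-term additivity corrections. Both routes use the power identity $\delta(y^\nu)=\sum_{k=1}^\nu\binom{\nu}{k}p^{k-1}y^{p(\nu-k)}\delta(y)^k$ (which the paper records as Lemma~\ref{lem:DeltaPower}) to isolate the single $\delta^n(S)^p$ contribution coming from $\nu=p$, and both reach the same recursion $c_{p,n+1}=\varphi^n(\delta(\xi))+p^{p-1}\varphi(c_{p,n})$.
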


\begin{proof}
We prove the claim by induction on $n$. For $n=1$, we have 
\begin{align*}
\delta(\xi S-T)
&=\varphi(\xi)\delta(S)+\delta(\xi)S^p+\delta(-T)
-\sum_{\nu=1}^{p-1}\frac{1}{p}\binom{p}{\nu}
\xi^{\nu}(-T)^{p-\nu}S^\nu,
\end{align*}
which is of the desired form because $\delta(-T)$, $\xi$, and 
$T$ belong to $B_{-1}=A$. Let $n$ be
a positive integer, and suppose that the claim is true for
$\delta^n(\xi S-T)$ with $c_{p,n}\in R$ and $c_{\nu,n}\in B_{n-2}$
($\nu\in \N\cap[0,p-1]$). 
Put $b_n=\sum_{\nu=0}^{p}c_{\nu,n}\delta^{n-1}(S)^{\nu}\in 
B_{n-1}$. Then we have 
\begin{align*}
\delta^{n+1}(\xi S-T)=&\delta(\varphi^n(\xi)\delta^n(S)+b_n)\\
\quad=&\varphi^{n+1}(\xi)\delta^{n+1}(S)+\delta(\varphi^n(\xi))\delta^n(S)^p+\delta(b_n)
-\sum_{\nu=1}^{p-1}\frac{1}{p}\binom {p}{\nu}\varphi^n(\xi)^{\nu}b_n^{p-\nu}
\delta^n(S)^{\nu}.
\end{align*}
Since $\varphi^n(\xi)$ and $b_n$ belong to $B_{n-1}$ and $\delta(\varphi^n(\xi))
=\varphi^n(\delta(\xi))$ by \eqref{eq:DeltaFrobComm}, it remains to 
show that $\delta(b_n)$ is of the form 
$p^{p-1}\varphi(c_{p,n})\delta^n(S)^p+\sum_{\nu=0}^{p-1}d_{\nu,n}\delta^n(S)^{\nu}$
$(d_{\nu,n}\in B_{n-1})$. We have
$\delta(x+y)\equiv \delta(x)+\delta(y)$ mod $B_{n-1}$ 
for $x, y\in B_{n-1}$ by \eqref{eq:DeltaStrSum}, and
$\delta(xy)\equiv\varphi(x)\delta(y)$ mod $B_{n-1}$
for $x\in B_{n-2}$, $y\in B_{n-1}$ by \eqref{eq:DeltaStrProd2}. Hence we have 
$$\delta(b_n)\equiv \varphi(c_{p,n})\delta(\delta^{n-1}(S)^p)+\sum_{\nu=0}^{p-1}\varphi(c_{\nu,n})\delta(\delta^{n-1}(S)^{\nu})\mod B_{n-1}.$$
By Lemma \ref{lem:DeltaPower} below, we have $\delta(1)=0$ and 
$$\delta(\delta^{n-1}(S)^{\nu})=\sum_{j=1}^{\nu}
\binom{\nu}{j}p^{j-1}(\delta^{n-1}(S))^{p(\nu-j)}\delta^n(S)^j\quad (\nu\in \N\cap [1,p]).$$
This completes the proof because $\delta^{n-1}(S)\in B_{n-1}$,
$\varphi(c_{\nu,n})\in \varphi(B_{n-2})\subset B_{n-1}$,
and $\delta^n(S)^p$ appears only when $\nu=p$, for which the
coefficient of $\delta^n(S)^p$ is $p^{p-1}$. 
\end{proof}

\begin{lemma}\label{lem:DeltaPower}
For a $\delta$-ring $S$, $x\in S$, and a positive integer $n$,
we have 
$$\delta(x^n)=\sum_{j=1}^n\binom{n}{j}p^{j-1}x^{p(n-j)}\delta(x)^j.$$
\end{lemma}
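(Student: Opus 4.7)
The plan is to proceed by induction on $n$, using the product rule for $\delta$ in the form \eqref{eq:DeltaStrProd2}, namely $\delta(xy) = \varphi(x)\delta(y) + \delta(x)y^p$, together with the identity $\varphi(x) = x^p + p\delta(x)$.

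For the base case $n=1$ the right-hand side collapses to $\binom{1}{1}p^{0}x^{0}\delta(x) = \delta(x)$, which is immediate. For the inductive step, I would write $x^{n+1} = x\cdot x^n$ and apply \eqref{eq:DeltaStrProd2} to get
\begin{equation*}
\delta(x^{n+1}) = \varphi(x)\delta(x^n) + \delta(x)\,x^{pn}
= x^{p}\delta(x^n) + p\delta(x)\delta(x^n) + \delta(x)x^{pn}.
\end{equation*}
Substituting the inductive hypothesis for $\delta(x^n)$, the first term contributes $\sum_{j=1}^{n}\binom{n}{j}p^{j-1}x^{p(n+1-j)}\delta(x)^{j}$; the middle term, after reindexing $j \mapsto j-1$, contributes $\sum_{j=2}^{n+1}\binom{n}{j-1}p^{j-1}x^{p(n+1-j)}\delta(x)^{j}$; and the last term contributes $x^{pn}\delta(x)$.

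Combining the three: the $j=1$ coefficient becomes $n\cdot x^{pn}\delta(x) + x^{pn}\delta(x) = (n+1)x^{pn}\delta(x)$, matching $\binom{n+1}{1}$; the $j=n+1$ coefficient is $\binom{n}{n}p^{n}\delta(x)^{n+1} = \binom{n+1}{n+1}p^{n}\delta(x)^{n+1}$; and for $2 \le j \le n$ Pascal's identity $\binom{n}{j}+\binom{n}{j-1}=\binom{n+1}{j}$ produces precisely the desired coefficient $\binom{n+1}{j}p^{j-1}x^{p(n+1-j)}\delta(x)^{j}$. This closes the induction.

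There is no real obstacle; the only point requiring slight care is that one should use \eqref{eq:DeltaStrProd2} rather than trying to divide the identity $p\delta(x^n) = \varphi(x)^n - x^{pn} = (x^p + p\delta(x))^n - x^{pn}$ by $p$, since $S$ need not be $p$-torsion free. The inductive argument avoids this issue entirely and works in any $\delta$-ring.
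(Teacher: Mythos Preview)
Your proof is correct and is essentially the same as the paper's: both proceed by induction on $n$, apply the product rule for $\delta$ to $x^{n+1}=x\cdot x^n$, and combine the resulting sums via Pascal's identity. The only cosmetic difference is that the paper uses \eqref{eq:DeltaStrProd} directly rather than \eqref{eq:DeltaStrProd2} followed by expanding $\varphi(x)=x^p+p\delta(x)$, which amounts to the same computation.
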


\begin{proof} We prove the claim by induction on $n$. 
The claim obviously holds for $n=1$. Let $n$ be a positive integer
and assume that the claim holds for $\delta(x^n)$. Then we have
\begin{align*}
\delta(x^{n+1})&=\delta(x^n)x^p+x^{np}\delta(x)+p\delta(x^n)\delta(x)\\
&=\sum_{j=1}^n\binom{n}{j}p^{j-1}x^{p(n+1-j)}\delta(x)^j+x^{np}\delta(x)
+\sum_{j=1}^n\binom{n}{j}p^jx^{p(n-j)}\delta(x)^{j+1}.
\end{align*}
By changing the parameter $j$ in the last term by $j=k-1$
$(k\in \N\cap [2,n+1])$, we see that this is the sum
of $\binom{n+1}{j}p^{j-1}x^{p(n+1-j)}\delta(x)^j$ $(j\in \N\cap [1,n+1])$. 
\end{proof}

\begin{lemma}\label{lem:RegSeqLift2}
Let $S\to T$ be a homomorphism of rings, let $I$ be a nilpotent
ideal of $S$, and assume $\Tor_1^S(T,I^n/I^{n+1})=0$
for every $n\in \N$. Put $\oS=S/I$ and $\oT=T/IT$.\par
(1) Let $x$ be an element of $T$. If $x$ is $\oT$-regular,
and $\Tor_1^{\oS}(\oT/x\oT, I^n/I^{n+1})=0$ for
every $n\in \N$, then $x$ is $T$-regular and
$\Tor_1^S(T/xT,I^n/I^{n+1})=0$ for every $n\in \N$.\par
(2) Let $x_1,\ldots,x_d$ be elements of $T$. 
If the sequence $x_1,\ldots, x_d$ is $\oT$-regular,\linebreak
and $\Tor^{\oS}_1(\oT/\sum_{i=1}^rx_i\oT,I^n/I^{n+1})=0$
for every $r\in \N\cap [1,d]$ and $n\in\N$, then the sequence
$x_1,\ldots, x_d$ is $T$-regular, and 
$\Tor_1^S(T/\sum_{i=1}^dx_iT, I^n/I^{n+1})=0$ for
every $n\in \N$. 
\end{lemma}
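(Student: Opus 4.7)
The plan is to prove (1) by lifting $\oT$-regularity of $x$ up the $I$-adic filtration on $T$, and then to deduce (2) by iterating (1) along the sequence $x_1, \ldots, x_d$.

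For (1), I would first show that the natural map $T \otimes_S I^n \to T$ is injective with image $I^n T$ for every $n \in \N$, so that the filtration $\{I^n T\}$ on $T$ has associated graded pieces
\[
\gr^n_I T \;\cong\; T \otimes_S I^n/I^{n+1} \;\cong\; \oT \otimes_{\oS} I^n/I^{n+1}.
\]
This follows by descending induction on $n$: since $I$ is nilpotent, $I^N = 0$ for some $N$, and the Tor-vanishing hypothesis applied to $0 \to I^{n+1} \to I^n \to I^n/I^{n+1} \to 0$ gives injectivity of $T \otimes_S I^{n+1} \to T \otimes_S I^n$; inductively this yields $T \otimes_S I^n \hookrightarrow T$, and the image is visibly $I^n T$.

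Next, for $T$-regularity of $x$: if $0 \neq t \in T$ satisfied $xt = 0$, then by nilpotence of $I$ there is a unique $n$ with $t \in I^n T \setminus I^{n+1} T$, and the nonzero image $\bar t$ in $\gr^n_I T \cong \oT \otimes_{\oS} I^n/I^{n+1}$ would be annihilated by $x$; but tensoring the short exact sequence $0 \to \oT \xrightarrow{x} \oT \to \oT/x\oT \to 0$ with $I^n/I^{n+1}$ over $\oS$ and invoking $\Tor_1^{\oS}(\oT/x\oT, I^n/I^{n+1}) = 0$ shows that $x$ acts injectively on each graded piece, a contradiction. The vanishing $\Tor_1^S(T/xT, I^n/I^{n+1}) = 0$ then drops out of the $\Tor^S(-, I^n/I^{n+1})$ long exact sequence attached to $0 \to T \xrightarrow{x} T \to T/xT \to 0$, combined with the hypothesis on $T$ and the just-proved injectivity of $x$ on $T \otimes_S I^n/I^{n+1}$.

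Part (2) then follows by induction on $r$: writing $T^{(r)} = T/\sum_{i=1}^r x_i T$, the inductive hypothesis $\Tor_1^S(T^{(r)}, I^n/I^{n+1}) = 0$ together with the given $\oT$-regularity of $x_{r+1}$ on $T^{(r)}/I T^{(r)} = \oT/\sum_{i=1}^r x_i \oT$ and the assumed vanishing of $\Tor_1^{\oS}(\oT/\sum_{i=1}^{r+1} x_i \oT, I^n/I^{n+1})$ are precisely the hypotheses needed to apply (1) to the pair $(T^{(r)}, x_{r+1})$; the conclusion of (1) yields the $T^{(r)}$-regularity of $x_{r+1}$ and $\Tor_1^S(T^{(r+1)}, I^n/I^{n+1}) = 0$, closing the induction. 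The main obstacle is really just the bookkeeping in the first step, identifying $\gr^n_I T$ with $\oT \otimes_{\oS} I^n/I^{n+1}$; once this filtration is under control, everything else reduces to tracking kernels in the Tor long exact sequences, and the nilpotence of $I$ powers both the descending induction on $n$ and the existence of the integer $n$ used in the contradiction argument.
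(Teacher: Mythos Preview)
Your proof is correct and follows essentially the same approach as the paper. The paper establishes $\Tor_1^S(T,S/I^n)=0$ by ascending induction on $n$ (equivalent to your chain of injections $T\otimes_S I^n\hookrightarrow T$), then proves $x$ is $T/I^nT$-regular by induction on $n$ using the short exact sequences $0\to \gr^n_I T\to T/I^{n+1}T\to T/I^nT\to 0$, whereas you phrase the same filtration argument as a contradiction via the symbol of a hypothetical torsion element; both the Tor conclusion in (1) and the induction in (2) are handled identically.
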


\begin{proof}
(1) By taking $I^n/I^{n+1}\otimes_{\oS}$ of the
exact sequence $0\to \oT\xrightarrow{x}\oT \to \oT/x\oT\to 0$,
we see that $x$ is $I^n/I^{n+1}\otimes_{\oS}\oT$-regular.
By assumption on $T$, we have 
$\Tor_1^S(T,S/I^n)=0$ for all $n\in \N$. 
Hence the sequence 
$0\to I^n/I^{n+1}\otimes_ST
\to S/I^{n+1}\otimes_ST\to S/I^n\otimes_ST\to 0$
is exact, and we see that $x$ is $T/I^nT$-regular by
induction on $n$. By taking $\Tor_*^S(-,I^n/I^{n+1})$
of the exact sequence $0\to T\xrightarrow{x} T
\to T/xT\to 0$, we obtain $\Tor_1^S(T/xT,I^n/I^{n+1})=0$.\par
(2) We prove the claim by induction on $d$. 
The case $d=1$ is the claim (1). Let $d$ be a positive 
integer $\geq 2$, and assume that $x_1,\ldots, x_{d-1}$
is $T$-regular and $\Tor_1^S(T/\sum_{i=1}^{d-1}x_iT,I^n/I^{n+1})=0$.
Then, by applying (1) to $T'=T/\sum_{i=1}^{d-1}x_iT$
and the image $x'_d$ of $x_d$ in $T'$, we see that 
$x'_d$ is $T'$-regular,
and $\Tor_1^{S}(T'/x'_dT',I^n/I^{n+1})=0$. 
\end{proof}

\begin{proof}[Proof of Proposition \ref{prop:DeltaEnvRegSeq}]
Put $\Lambda=\N\cap [1,d]$  and $B=A[S_i (i\in \Lambda)]_{\delta}$.
The unique $\delta$-$A$-homomorphism $B\to D$ sending
$S_i$ to $\tau_i$ induces an isomorphism of $\delta$-$A$-algebras
$$B/(\xi S_i-T_i, i\in \Lambda)_{\delta}\xrightarrow{\cong} D.$$
Let $P_n^{(i)}$ be the element $\delta^n(\xi S_i-T_i)\in B$ for
$i\in \Lambda$ and $n\in \N$.
For an integer $n\geq -1$, let $B_n$ be the $A$-subalgebra 
of $B$ generated by $\delta^m(S_i)$ $(i\in \Lambda, m\in \N, m\leq n)$.
We have $B_{-1}=A$ and $\delta(B_{n-1})\subset B_n$ for $n\in \N$ by 
\eqref{eq:DeltaGenerator}.
The latter implies $P_n^{(i)}\in B_n$. 
Put $S_{i,n}=\delta^n(S_i)\in B$ for $i\in \Lambda$ and $n\in \N$. 
Then $B_n$ is the polynomial ring in $(n+1)d$ variables
$S_{i,m}$ $(i\in \Lambda, m\in \N, m\leq n)$.
For each integer $n\geq -1$, we define $D_n$ to be the
quotient of $B_n$ by the ideal generated by 
$P_m^{(i)}$ $(i\in \Lambda,m\in \N, m\leq n)$. 
We have $D_{-1}=B_{-1}=A$, and 
\begin{equation}\label{eq:DeltaEnv-A}
D_n=D_{n-1}[S_{i,n} (i\in\Lambda)]/(P_n^{(i)}, i\in \Lambda)
\qquad (n\in \N).
\end{equation}
Since the ideal $(\xi S_i-T_i,i\in \Lambda)_{\delta}$ of $B$
is generated by $P_n^{(i)}$ $(i\in \Lambda,n\in \N)$, the inclusion 
maps $B_n\hookrightarrow B_m\hookrightarrow B$
$(m>n\geq -1)$ induce $A$-homomorphisms
$D_n\to D_m\to D$, and then an isomorphism of $A$-algebras
\begin{equation}\label{eq:DeltaEnv-B}
\varinjlim_nD_n\xrightarrow{\cong}D.
\end{equation}

Let $\oR$, $\oA$, $\oB$, $\oB_n$, $\oD$, and $\oD_n$ denote
the reduction mod $I$ of $R$, $A$, $B$, $B_n$, $D$, and $D_n$,
respectively. We write $\oS_{i,n}$ and $\oP_n^{(i)}$ for the images
of $S_{i,n}$ and $P_n^{(i)}$ in $\oB$, and $\oT_i$ for the image of 
$T_i$ in $\oA$. Put $c=(\delta(\xi)$ mod $IR$)$\in \oR$,
which is invertible by assumption. Then, by 
Lemma \ref{lem:DeltaIteration} and $\varphi^n(\xi)\equiv \xi^{p^n}\equiv 0
\mod IR$, we have 
\begin{align}
\oP_0^{(i)}&=-\oT_i,\label{eq:DeltaEnv-Cp}\\
\oP_{n+1}^{(i)}&=c^{p^n}\oS_{i,n}^p+\sum_{\nu=0}^{p-1}c_{\nu,n}^{(i)}
\oS_{i,n}^{\nu}\quad (c_{\nu,n}\in \oB_{n-1})\label{eq:DeltaEnv-C}
\end{align}
for $i\in \Lambda$ and $n\in \N$. In particular, 
we have $\oP_n^{(i)}\in \oB_{n-1}$ for $n\in \N$.
For each integer $n\geq -1$, we define 
$\oD_{(n)}$ to be the quotient of $\oB_n$
by the ideal generated by $\oP_{m}^{(i)}$
$(i\in \Lambda, m\in \N, m\leq n+1)$. By comparing this
definition with that of $D_n$, we obtain 
\begin{equation}\label{eq:DeltaEnv-E}
\oD_n=\oD_{(n-1)}[\oS_{i,n}(i\in \Lambda)] \quad (n\in \N).
\end{equation}
By \eqref{eq:DeltaEnv-Cp}, we have 
\begin{align}
\oD_{(-1)}&=\oA/\sum_{i=1}^d \oT_i A,\label{eq:DeltaEnv-Dp}\\
\oD_{(n)}&=\oD_{(n-1)}[\oS_{i,n} (i\in \Lambda)]/(\oP_{n+1}^{(i)},i\in \Lambda)
\quad (n\in \N). \label{eq:DeltaEnv-D}
\end{align}
The inclusion maps $\oB_{n}\hookrightarrow\oB_m\to \oB$
$(m>n\geq -1)$ induce $\oA$-homomorphisms
$\oD_{(n)}\to \oD_{(m)}\to \oD$, and then an isomorphism of $\oA$-algebras
\begin{equation}\label{eq:DeltaEnv-F}
\varinjlim_n \oD_{(n)}\xrightarrow{\cong} \oD.
\end{equation}

By \eqref{eq:DeltaEnv-D},  \eqref{eq:DeltaEnv-C}, and $c\in \oR^{\times}$, 
we see that $\oD_{(n)}$ is a free $\oD_{(n-1)}$-module with a basis 
$\prod_{i=1}^d\oS_{i,n}^{m_i}$
$(m_i\in \N\cap [0,p-1])$ for $n\in \N$.
This implies the claim (1) by \eqref{eq:DeltaEnv-F} and \eqref{eq:DeltaEnv-Dp}. 

For $i\in \N\cap [1,d]$ and $n\in \Z$, $n\geq -1$,
let $\oB_n^{(i)}$ be the $\oA$-subalgebra of $\oB$ generated
by $\oS_{i,l}$ $(l\in \N, l\leq n)$, and let 
$\oD_n^{(i)}$ be the image of $\oB_n^{(i)}$ in $\oD$. 
We have $\oB_{-1}^{(i)}=\oA$. Then
the element $c_{\nu,n}^{(i)}$ $(\nu\in \N\cap [0,p-1])$ appearing
in \eqref{eq:DeltaEnv-C} belong to $\oB^{(i)}_{n-1}$ for $n\in \N$
by Lemma \ref{lem:DeltaIteration}.
Since the image of $\oP_{n+1}^{(i)}$ in $\oD$ vanishes and 
$c\in \oR^{\times}$, we have 
$\oD_n^{(i)}=\sum_{\nu=0}^{p-1}\oD_{n-1}^{(i)}(
\delta^n(\tau_i)^{\nu}\mod ID)$ in $\oD$
for $n\in \N$. By induction on $n$, we see that 
$\oD_n^{(i)}$ is generated by 
$\tau_i^{\{m\}_{\delta}}$ mod $ID$
$(m\in \N\cap [0,p^{n+1}-1])$ as an $\oA$-module.
This completes the proof of the claim (2). 

Let us prove the claim (3). By the claim (1), the assumption for
the second claim
 implies that $\oD$ is flat over $\oR$. Hence the second claim
 follows from the first one by the local criteria of flatness. For the first claim, 
by \eqref{eq:DeltaEnv-B}, it suffices to prove 
$\Tor_1^{R/I^{N+1}}(D_n/I^{N+1}D_n,I^m/I^{m+1})=0$
($N\in \N$, $m\in \N\cap [0,N]$) for all $n\geq -1$. 
By the proof of the claim (1) and \eqref{eq:DeltaEnv-Dp},
$\oD_{(n)}$ is a free $\oA/\sum_{i=1}^dT_i\oA$-module for every 
integer $n\geq -1$. Hence, by 
\eqref{eq:DeltaEnv-C} and \eqref{eq:DeltaEnv-E}, we see that 
$\oD_n/(\oP_{n+1}^{(1)},\ldots, \oP_{n+1}^{(r)})$ is a free
$\oA/\sum_{i=1}^dT_i\oA$-module for $n\in \N$
and $r\in \N\cap [1,d]$. For $n=-1$, we have
$\oD_{-1}/(\oP_0^{(1)},\ldots, \oP_0^{(r)})
=\oA/\sum_{i=1}^rT_i\oA$ $(r\in \N\cap [1,d])$ by 
\eqref{eq:DeltaEnv-Cp}. Hence, the third assumption in (3) implies
\begin{equation}\label{eq:DeltaEnv-G}
\Tor_1^{\oR}(\oD_{n-1}/(\oP_n^{(1)},\ldots, \oP_n^{(r)}),I^m/I^{m+1})=0
\quad(r\in \N\cap [1,d], m\in \N, n\in \N).
\end{equation}
By \eqref{eq:DeltaEnv-Cp} and the first assumption in (3),
the sequence $\oP_0^{(1)},\ldots, \oP_0^{(d)}$ is $\oD_{-1}=\oA$-regular.
By \eqref{eq:DeltaEnv-C}, \eqref{eq:DeltaEnv-E},  and $c\in \oR^{\times}$,
the sequence $\oP_{n+1}^{(1)},\ldots, \oP_{n+1}^{(d)}$
is $\oD_n$-regular for $n\in \N$. 
Since $\Tor_1^{R/I^{N+1}}(D_{-1}/I^{N+1}D_{-1},I^m/I^{m+1})=0$
for $m\in \N\cap [0,N]$ by $D_{-1}=A$ and the
second assumption in (3), \eqref{eq:DeltaEnv-A} and \eqref{eq:DeltaEnv-G} allow
us to prove the desired claim by induction on $n$ by 
applying Lemma \ref{lem:RegSeqLift2} (2) inductively to 
$R/I^{N+1}\to D_{n-1}[S_{i,n}(i\in \Lambda)]/I^{N+1}$, the ideal 
$IR/I^{N+1}$ of $R/I^{N+1}$, and the image
of the sequence $P_n^{(1)},\ldots, P_n^{(d)}$ in $D_{n-1}[S_{i,n}(i\in \Lambda)]/I^{N+1}$,
which is regular modulo $I$ as observed above.

Finally let us prove the claim (4). It suffices to prove the
claim when $J=I^{N+1}$ for $N\in \N$. 
By the assumption in (4),
the claim (1) is equivalent to the claim (4) for $J=I$. 
By the claim (3), the sequences $0\to I^N/I^{N+1}\otimes_{R/I} D/ID
\to D/I^{N+1}D\to D/I^ND\to 0$ $(N\geq 1)$ are exact, and
this allows us to prove the claim (4) for $J=I^{N+1}$ $(N\in\N)$
by induction on $N$. 
\end{proof}

\section{Divided powers on $\delta$-rings}\label{sec:DPDeltaRings}
We discuss a divided power of an element of a $\delta$-ring
which is not necessarily $p$-torsion free.
We begin with a preliminary proposition, which will be also used
later. 

\begin{proposition} \label{prop:DeltaEnvPTF}
Let $A$ be a $\delta$-ring, and let $T_1,\ldots, T_d$ be
elements of $A$. Put $D_0=A[S_1,\ldots, S_d]/(pS_i-T_i, i\in \N\cap [1,d])$,
and let $D$ be the $\delta$-envelope of $D_0$ over $A$. 
Assume that $A$ is $p$-torsion free, and the sequence $T_1,\ldots, T_d$
is $A/pA$-regular. Then $D$ is $p$-torsion free.
\end{proposition}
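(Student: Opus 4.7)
The plan is to write $D$ as the filtered colimit $D = \varinjlim_n D_n$ constructed in the proof of Proposition~\ref{prop:DeltaEnvRegSeq}, where $D_{-1} = A$ and $D_n = D_{n-1}[S_{i,n} : i \in \Lambda]/(P_n^{(i)} : i \in \Lambda)$ with $\Lambda = \N \cap [1,d]$, $S_{i,n} = \delta^n(S_i)$, and $P_n^{(i)} = \delta^n(pS_i - T_i)$. I will show by induction on $n \geq -1$ that every $D_n$ is $p$-torsion free; then $D$, as a filtered colimit of $p$-torsion-free rings (with possibly non-injective transition maps), will itself be $p$-torsion free, which is the claim.

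The main auxiliary input is the following standard blow-up lemma, which I would prove inline: if $C$ is a $p$-torsion-free ring and $u_1, \ldots, u_d \in C$ is a $C/pC$-regular sequence, then $E := C[S_1, \ldots, S_d]/(pS_i - u_i : i \in \Lambda)$ is $p$-torsion free. The proof goes through the Koszul complex on $pS_i - u_i$: this sequence is $C[S_i]$-regular (by induction on $d$, using a degree argument in $S_d$ over the $p$-torsion-free intermediate ring $C[S_1, \ldots, S_{d-1}]/(pS_i - u_i : i < d)$), so the Koszul complex is a free resolution of $E$. Then $E[p] = \Tor_1^{C[S_i]}(E, C[S_i]/p)$ is computed by this Koszul complex tensored with $C[S_i]/p$ and vanishes, because modulo $p$ the sequence becomes $-u_1, \ldots, -u_d$, which is $(C/pC)[S_i]$-regular by hypothesis and the flatness of polynomial extensions.

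For the induction, by Lemma~\ref{lem:DeltaIteration} applied with $\xi = p$---together with $\varphi(p) = p^p + p\delta(p) = p$ (using $\delta(p) = 1 - p^{p-1}$), so $\varphi^n(p) = p$ for every $n$---we can write $P_n^{(i)} = p\,S_{i,n} - \hat T_{i,n-1}$ with $\hat T_{i,n-1} \in B_{n-1}$ for every $n \geq 0$ (setting $\hat T_{i,-1} := T_i$). Let $\tilde T_{i,n-1} \in D_{n-1}$ denote the image of $\hat T_{i,n-1}$; then $D_n = D_{n-1}[S_{i,n}]/(pS_{i,n} - \tilde T_{i,n-1} : i \in \Lambda)$. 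By the blow-up lemma applied with $C = D_{n-1}$ (which is $p$-torsion free by the inductive hypothesis), the $p$-torsion freeness of $D_n$ will follow once we verify that $\tilde T_{1,n-1}, \ldots, \tilde T_{d,n-1}$ is a $D_{n-1}/pD_{n-1}$-regular sequence. For $n = 0$ this is the hypothesis on $T_1, \ldots, T_d$. For $n \geq 1$, the analysis in the proof of Proposition~\ref{prop:DeltaEnvRegSeq} identifies $\oD_{n-1}$ with the polynomial ring $\oD_{(n-2)}[\oS_{i,n-1} : i \in \Lambda]$ (cf.~\eqref{eq:DeltaEnv-E}), and modulo $p$ the image of $\tilde T_{i,n-1}$ equals $-\oP_n^{(i)}$. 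By \eqref{eq:DeltaEnv-C}---using that $c = \overline{\delta(p)} \equiv 1 \pmod p$ and that the coefficients $c_{\nu,n}^{(i)}$ from Lemma~\ref{lem:DeltaIteration} lie in $A[S_{i,0}, \ldots, S_{i,n-2}]$, involving only earlier $\delta$-iterates of $S_i$ and never any $S_{j,*}$ for $j \neq i$---this $\oP_n^{(i)}$ is a monic polynomial of degree $p$ in the single variable $\oS_{i,n-1}$ with coefficients in $\oD_{(n-2)}$. Since different values of $i$ yield monic polynomials in disjoint variables, successive quotients by $\oP_n^{(1)}, \ldots, \oP_n^{(d)}$ produce polynomial rings over finite free extensions of $\oD_{(n-2)}$, so the sequence is $\oD_{n-1}$-regular.

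The main obstacle is the bookkeeping in this last step: one must verify carefully from Lemma~\ref{lem:DeltaIteration} that the coefficients $c_{\nu,n}^{(i)}$ involve only the variables $S_{i,m}$ for $m \leq n-2$, so that the reductions $\oP_n^{(i)}$ really are monic polynomials in disjoint variables $\oS_{i,n-1}$ over $\oD_{(n-2)}$. Once this is established, the regularity of the sequence is immediate, and the theorem then follows by combining the blow-up lemma with the filtered-colimit step.
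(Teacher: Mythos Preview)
Your proposal is correct and follows essentially the same inductive scheme as the paper: both write $D = \varinjlim_n D_n$ via the filtration from the proof of Proposition~\ref{prop:DeltaEnvRegSeq}, and at each step use that the reductions $\oP_n^{(i)}$ are monic in the disjoint variables $\oS_{i,n-1}$ over $\oD_{(n-2)}$ (from \eqref{eq:DeltaEnv-E} and \eqref{eq:DeltaEnv-C}) to get a regular sequence modulo $p$. The only difference is in how the auxiliary ``blow-up lemma'' is justified: you argue via the Koszul resolution and a $\Tor_1$-computation, while the paper appeals to the elementary snake-lemma statement Lemma~\ref{lem:RegSeqLift1}~(2) (applied to the regular sequence $p, P_n^{(1)}, \ldots, P_n^{(d)}$ on $D_{n-1}[S_{i,n}]$), which is a slightly more direct route to the same conclusion.
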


\begin{lemma}\label{lem:RegSeqLift1}
Let $S$ be a ring.\par
(1) If a sequence $x$, $y$ in $S$ is regular, then $x$ is
$S/yS$-regular.\par
(2)
If a sequence $x, y_1,\ldots, y_d$ in $S$ is $S$-regular, 
then $x$ is $S/\sum_{i=1}^dy_iS$-regular.
\end{lemma}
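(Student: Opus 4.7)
The plan is to prove (1) by a direct element chase and then deduce (2) from (1) by induction on $d$, carefully re-using the definition of a regular sequence at each step.

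For (1), recall that by definition the regularity of the sequence $x,y$ in $S$ means that $x$ is a nonzerodivisor on $S$ and that $y$ is a nonzerodivisor on $S/xS$. Given $z\in S$ with $xz\in yS$, I would write $xz=ya$ and reduce modulo $x$: this gives $y\bar a=0$ in $S/xS$, hence $\bar a=0$ by the second condition, so $a=xb$ for some $b\in S$. Then $x(z-yb)=0$, and the regularity of $x$ on $S$ forces $z=yb\in yS$, as required.

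For (2), I would induct on $d$, with the base case $d=1$ handled by (1). For the inductive step, assume the statement holds for $d-1$, so that $x$ is a nonzerodivisor on $S':=S/\sum_{i=1}^{d-1}y_iS$. By the definition of the regularity of the sequence $x,y_1,\ldots,y_d$, the element $y_d$ is a nonzerodivisor on $S/(xS+\sum_{i=1}^{d-1}y_iS)=S'/xS'$. Hence the two-term sequence $x,y_d$ is $S'$-regular, and (1) applied to $S'$ shows that $x$ is a nonzerodivisor on $S'/y_dS'=S/\sum_{i=1}^d y_iS$, completing the induction.

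There is no real obstacle here: both parts are formal consequences of the definition together with a single modular reduction. The only point requiring minor care is the bookkeeping in the inductive step, namely checking that the hypotheses assembled from the induction hypothesis and from the definition of the original regular sequence fit together exactly into the pair needed to invoke (1) on the intermediate quotient $S'$.
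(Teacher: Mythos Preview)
Your proof is correct and follows essentially the same approach as the paper. For (1), the paper invokes the snake lemma for multiplication by $y$ on the exact sequence $0\to S\xrightarrow{x}S\to S/xS\to 0$, while you carry out the equivalent element chase by hand; for (2), both your argument and the paper's proceed by induction on $d$, applying (1) at each step to the intermediate quotient $S'=S/\sum_{i=1}^{d-1}y_iS$.
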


\begin{proof}
(1) We obtain the claim by applying the snake lemma to 
the multiplication by $y$ on the exact sequence 
$0\to S\xrightarrow{x} S\to S/xS\to 0$.\par
(2) We prove the claim by induction on $d$. The case
$d=1$ is the claim (1). Let $d$ be an integer $\geq 2$,
and assume that $x$ is $S'$-regular, where $S'=S/\sum_{i=1}^{d-1}y_iS$.
Then, since $y_d$ is $S'/xS'$-regular by assumption,
the claim (1) implies that $x$ is $S'/y_dS'$-regular.
\end{proof}

\begin{proof}[Proof of Proposition \ref{prop:DeltaEnvPTF}]
We follow the notation in the first and second paragraphs
of the proof of Proposition \ref{prop:DeltaEnvRegSeq} 
applied to $R=A$ and $\xi=p$. Note that we have
$(\delta(p) \mod pA)=1$, which is invertible, and $I=pA$. 
By \eqref{eq:DeltaEnv-B}, it suffices to show that $D_n$ is $p$-torsion
free for every $n\in \N$. We prove it by induction on $n$.
Since $B_0=A[S_{i,0} (i\in \Lambda)]$ is $p$-torsion free, and 
\eqref{eq:DeltaEnv-Cp} implies that the sequence 
$P_0^{(1)},\ldots, P_0^{(d)}$ is 
$\oB_0=\oA[\oS_{i,0} (i\in \Lambda)]$-regular, 
$D_0=B_0/(P_0^{(i)}(i\in \Lambda))$ is $p$-torsion free by 
Lemma \ref{lem:RegSeqLift1} (2). Let $n\in \N$, 
and assume that $D_n$ is $p$-torsion free. Then 
$D_n[S_{i,n+1}(i\in \Lambda)]$ is $p$-torsion free, and the sequence
$\oP_{n+1}^{(1)},\ldots, \oP_{n+1}^{(d)}$ is 
$\oD_n[\oS_{i,n+1} (i\in \Lambda)]$-regular by 
\eqref{eq:DeltaEnv-E} and \eqref{eq:DeltaEnv-C}.
Hence $D_{n+1}=D_n[S_{i,n+1} (i\in \Lambda)]/(P_{n+1}^{(i)} (i\in \Lambda))$
is $p$-torsion free by Lemma \ref{lem:RegSeqLift1} (2).
\end{proof}

For a ring $R$, let $R[T_1,\ldots, T_d]_{\PD}$ denote the
divided power polynomial ring over $R$ in $d$-variables
$T_1,\ldots, T_d$. For a $\delta$-ring $A$ over $\Z_{(p)}$, and 
$\ut=(t_1,\ldots, t_d)$, $\us=(s_1,\ldots, s_d)\in A^d$
satisfying $t_i^p=ps_i$ $(i\in \N\cap [1,d])$, we have a 
$\delta$-homomorphism 
\begin{equation}\label{eq:DeltaRingPDMap}
\Z_{(p)}[X_i,Y_i (i\in \N\cap [1,d])]_{\delta}/
(X_i^p-pY_i (i\in \N\cap [1,d]))_{\delta}\longrightarrow A
\end{equation}
sending the class $x_i$ of $X_i$ (resp.~$y_i$ of $Y_i$)
 to $t_i$ and $s_i$, respectively. 
Since the domain is $p$-torsion free by Proposition 
\ref{prop:DeltaEnvPTF}, it admits a map from $\Z_{(p)}[T_i (i\in\N\cap [1,d])]_{\PD}$
sending $T_i^{[n]}=\frac{T_i^n}{n!}$ to $\frac{x_i^n}{n!}$ for
$i\in \N\cap [1,d]$ and $n\in \N$ by \cite[Lemma 2.35]{BS}. 
Composing the two maps,  we obtain a homomorphism 
\begin{equation}\label{eq:DeltaRingPDMap2}
\pi_{\ut,\us}\colon \Z_{(p)}[T_1,\ldots, T_d]_{\PD}\longrightarrow A
\end{equation}
sending $T_i$ and $T_i^{[p]}$ to $t_i$ and $(p-1)! s_i$. 

\begin{lemma}\label{lem:DeltaRingPD}
 We keep the notation and assumption as above.\par
(1) The homomorphism $\pi_{\ut,\us}$ coincides with the
tensor product of $\pi_{t_i,s_i}$ $(i\in \N\cap [1,d])$.\par
(2) Assume $d=1$. Then the composition of $\pi_{t_1,s_1}$
with the isomorphism $\Z_{(p)}[T_1]_{\PD}
\xrightarrow{\cong} \Z_{(p)}[T_1]_{\PD};T_1^{[n]}\mapsto
(-T_1)^{[n]}$ coincides with $\pi_{-t_1,(-1)^ps_1}$. \par
(3) Assume $d=2$, and put $t=t_1-t_2$
and $s=s_1+(-1)^ps_2+\sum_{\nu=1}^{p-1}p^{-1}\binom p \nu
t_1^{\nu}(-t_2)^{p-\nu}$, which satisfy $t^p=ps$. Then the composition
of $\pi_{\ut,\us}$ and 
$$\Z_{(p)}[T_1]_{\PD}\to \Z_{(p)}[T_1,T_2]_{\PD};
T_1^{[n]}\mapsto (T_1-T_2)^{[n]}$$ coincides with $\pi_{t,s}$. \par
(4) Let $f\colon A\to A'$ be a $\delta$-homomorphism of 
$\delta$-rings, and put $\ut'=(f(t_1),\ldots, f(t_d))$
and $\us'=(f(s_1),\ldots, f(s_d))$. Then we have 
$f\circ\pi_{\us,\ut}=\pi_{\us',\ut'}$.
\end{lemma}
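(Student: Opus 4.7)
The plan is to exploit the universal characterization of $\pi_{\ut,\us}$ as the composition
\[
\Z_{(p)}[T_1,\ldots,T_d]_{\PD}\longrightarrow E_d\longrightarrow A,
\]
where $E_d:=\Z_{(p)}[X_i,Y_i\,(i\in\N\cap [1,d])]_{\delta}/(X_i^p-pY_i)_{\delta}$ is $p$-torsion free by Proposition \ref{prop:DeltaEnvPTF}, the first arrow is the canonical PD-map of \cite[Lemma~2.35]{BS} arising from $x_i^p=py_i$ in $E_d$, and the second is the universal $\delta$-$\Z_{(p)}$-homomorphism sending $X_i\mapsto t_i$, $Y_i\mapsto s_i$. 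The crucial point is that the first arrow is intrinsically determined by the data, since divided powers of $x_i$ in a $p$-torsion free $\delta$-ring are unique.

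Claim~(4) is then immediate: the universal property of $E_d$ supplies a canonical $\delta$-homomorphism between the envelopes for $(\ut,\us)$ and $(\ut',\us')$ compatible with both the PD-factorization and $f$, so $f\circ\pi_{\ut,\us}$ and $\pi_{\ut',\us'}$ are the same composition. For~(1), I would use that $E_d$ is canonically the tensor product over $\Z_{(p)}$ of the single-variable envelopes $E^{(i)}=\Z_{(p)}[X_i,Y_i]_{\delta}/(X_i^p-pY_i)_{\delta}$, and that the inclusion $E^{(i)}\hookrightarrow E_d$ preserves the canonical divided powers of $X_i$ by the uniqueness recalled above; tensoring these factorizations over $i$ and composing with $E_d\to A$ identifies $\bigotimes_i\pi_{t_i,s_i}$ with $\pi_{\ut,\us}$.

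For (2) and (3), the first step is to verify that the proposed new data still satisfies $t^p=ps$: this is trivial in~(2), and in~(3) follows from expanding $(t_1-t_2)^p$ binomially and using both $t_i^p=ps_i$ and $p\mid\binom{p}{\nu}$ for $1\le\nu\le p-1$ (so that each $p^{-1}\binom{p}{\nu}$ is an integer). With this relation in hand, the universal property of $E^{(1)}$ provides a $\delta$-homomorphism $E^{(1)}\to E_2$ (resp.~$E^{(1)}\to E^{(1)}$) sending $X\mapsto x_1-x_2$ (resp.~$-x_1$) and $Y$ to the prescribed expression in $x_i$, $y_i$; uniqueness of divided powers of $x_1-x_2$ (resp.~$-x_1$) then ensures that the PD-ring maps $\Z_{(p)}[T_1]_{\PD}\to\Z_{(p)}[T_1,T_2]_{\PD}$, $T_1^{[n]}\mapsto(T_1-T_2)^{[n]}$ (resp.~the sign involution) fit into commutative squares with these $\delta$-homomorphisms via the \cite[Lemma~2.35]{BS}-factorizations. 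Composing with $E_d\to A$ (resp.~$E^{(1)}\to A$) yields the two stated equalities. The main anticipated obstacle is carefully bookkeeping these uniqueness statements and exhibiting the intermediate $\delta$-homomorphisms between the auxiliary envelopes; once that scaffolding is in place, the binomial identity in~(3) is a routine verification.
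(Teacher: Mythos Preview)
Your proposal is correct and follows essentially the same approach as the paper: both arguments reduce the claims to exhibiting the appropriate $\delta$-homomorphisms between the auxiliary $p$-torsion free $\delta$-envelopes $E_d$ and $E^{(i)}$ (via their universal properties and the identities $(-x_1)^p=p(-1)^ps_1$, $(x_1-x_2)^p=ps$), and then use uniqueness of divided powers in $p$-torsion free rings to ensure compatibility with the PD-factorizations from \cite[Lemma~2.35]{BS}. The paper's proof is simply a terser version that records the relevant $\delta$-isomorphisms and $\delta$-homomorphisms without spelling out the uniqueness argument you make explicit.
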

\begin{proof}
The claim (4) is obvious by definition, and the claims (1) and (2) follow from
the isomorphisms of $\delta$-rings
\begin{align*}
&\Z_{(p)}[X_i,Y_i (i\in \N\cap [1,d])]_{\delta}
/(X_i^p-pY_i)_{\delta}\cong \otimes_{1\leq i\leq d, \Z_{(p)}}
\Z_{(p)}[X_i,Y_i]_{\delta}/(X_i^p-pY_i)_{\delta},\\
&\Z_{(p)}[X_1,Y_1]_{\delta}/(X_1^p-pY_1)_{\delta}
\xrightarrow{\cong}
\Z_{(p)}[X_1,Y_1]_{\delta}/(X_1^p-pY_1)_{\delta};
x_1,y_1\mapsto -x_1, (-1)^py_1.
\end{align*}
We obtain the claim (3) by composing \eqref{eq:DeltaRingPDMap}
with the $\delta$-homomorphism
$$\Z_{(p)}[X,Y]_{\delta}/(X^p-pY)_{\delta}
\to \Z_{(p)}[X_1,X_2,Y_1,Y_2]_{\delta}/(X_1^p-pY_1,X_2^p-pY_2)_{\delta}$$
sending the classes of $X$ and $Y$ to $x_1-x_2$
and $y_1+(-1)^py_2+\sum_{\nu=1}^{p-1}p^{-1}\binom p \nu x_1^{\nu}(-x_2)^{p-\nu}$.
\end{proof}

\begin{lemma}\label{lem:pDivpPower}
Let $R$ be a $\delta$-ring, and let $t_1, t_2, s_1,s_2,$ and $\tau$
be elements of $R$ satisfying $\delta(t_1)=ps_1$,
$\delta(t_2)=ps_2$, and $p\tau=t_2-t_1$.
Then we have 
$$(1-p^{p-1})\tau^p=p\left(s_2-s_1-\delta(\tau)+\sum_{\nu=1}^{p-1}
p^{-1}\binom p\nu t_1^{p-\nu}p^{\nu-1}\tau^{\nu}\right).
$$
\end{lemma}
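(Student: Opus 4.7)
The plan is to apply the addition formula \eqref{eq:DeltaStrSum} for $\delta$ directly to the relation $t_2 = t_1 + p\tau$ and then rearrange, using the product rule \eqref{eq:DeltaStrProd} to expand $\delta(p\tau)$.

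First, I would compute $\delta(p\tau)$. By \eqref{eq:DeltaStrProd}, $\delta(p\tau) = \delta(p)\tau^p + p^p\delta(\tau) + p\delta(p)\delta(\tau)$, and $\delta(p) = p^{-1}(p-p^p) = 1-p^{p-1}$ by \eqref{eq:DeltaStrIntegers}. Thus the coefficient of $\delta(\tau)$ collapses to $p^p + p(1-p^{p-1}) = p$, giving the clean identity
\begin{equation*}
\delta(p\tau) = (1-p^{p-1})\tau^p + p\,\delta(\tau).
\end{equation*}

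Next, I would apply \eqref{eq:DeltaStrSum} to $\delta(t_2) = \delta(t_1 + p\tau)$, yielding
\begin{equation*}
ps_2 = ps_1 + \delta(p\tau) - \sum_{i=1}^{p-1}p^{-1}\binom{p}{i} t_1^{i}(p\tau)^{p-i}.
\end{equation*}
Substituting the formula for $\delta(p\tau)$ and reindexing the sum by $\nu = p-i$ (so $(p\tau)^{p-i} = p^\nu\tau^\nu$ and $\binom{p}{i} = \binom{p}{\nu}$), the sum becomes $\sum_{\nu=1}^{p-1}p^{-1}\binom{p}{\nu}t_1^{p-\nu}p^{\nu}\tau^{\nu}$. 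Solving for $(1-p^{p-1})\tau^p$ and factoring out $p$ gives exactly the asserted identity.

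There is no real obstacle here; the only point requiring care is the reindexing of the binomial sum and the bookkeeping of the factor of $p$ when absorbing $p^\nu$ into $p\cdot p^{\nu-1}$. The entire proof is a short direct calculation, and no further structural input (no regular-sequence, no PD, no $\delta$-envelope machinery) is needed beyond the two defining identities \eqref{eq:DeltaStrSum}, \eqref{eq:DeltaStrProd} and the value of $\delta(p)$.
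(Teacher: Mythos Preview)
Your proof is correct and follows essentially the same approach as the paper: apply $\delta$ to $t_2 = t_1 + p\tau$ via the addition formula, expand $\delta(p\tau)$, and rearrange. The only cosmetic difference is that the paper computes $\delta(p\tau)$ via the alternative product rule \eqref{eq:DeltaStrProd2} as $\delta(p)\tau^p + \varphi(p)\delta(\tau)$ (using $\varphi(p)=p$), whereas you use \eqref{eq:DeltaStrProd} and simplify the coefficient by hand; both give $(1-p^{p-1})\tau^p + p\,\delta(\tau)$ immediately.
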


\begin{proof}
By taking the image of $t_2=t_1+p\tau$ under
$\delta$, we obtain $ps_2=ps_1+\delta(p\tau)
-\sum\limits_{\nu=1}^{p-1}p^{-1}\binom p\nu t_1^{p-\nu}(p\tau)^{\nu}$.
The claim follows from 
$\delta(p\tau)=\delta(p)\tau^p+\varphi(p)\delta(\tau)
=(1-p^{p-1})\tau^p+p\delta(\tau)$.
\end{proof}

\begin{proposition}\label{prop:DeltaPDDiff}
(1) Let $R$ be a $\delta$-$\Z_{(p)}$-algebra, and
let $t_1$, $t_2$, and $\tau_{12}$ be elements of $R$ such that
$\delta(t_1)=\delta(t_2)=0$, and $p\tau_{12}=t_2-t_1$.
Put $\tau_{21}=-\tau_{12}$. For $(i,j)=(1,2)$, $(2,1)$,
let $\sigma_{ij}$ denote 
$(1-p^{p-1})^{-1}(-\delta(\tau_{ij})+\sum_{\nu=1}^{p-1}
p^{-1}\binom p\nu t_i^{p-\nu}p^{\nu-1}\tau_{ij}^{\nu})$,
which satisfies $\tau_{ij}^p=p\sigma_{ij}$ by 
Lemma \ref{lem:pDivpPower}. Then we have 
$$\sigma_{21}=(-1)^p\sigma_{12}.$$
(2) Let $R$ be a $\delta$-$\Z_{(p)}$-algebra, and let 
$t_1$, $t_2$, $t_3$, $\tau_{12}$, and $\tau_{13}$ be elements
of $R$ such that $\delta(t_i)=0$ $(i=1,2,3)$, 
$p\tau_{12}=t_2-t_1$, and $p\tau_{13}=t_3-t_1$. 
Put $\tau_{23}=\tau_{13}-\tau_{12}$, which satisfies 
$p\tau_{13}=t_3-t_2$. 
For $(i,j)=(1,2), (1,3), (2,3)$, we define
an element $\sigma_{ij}$ of $R$ to be
$(1-p^{p-1})^{-1}(-\delta(\tau_{ij})
+\sum_{\nu=1}^{p-1}p^{-1}\binom p \nu
t_i^{p-\nu}p^{\nu-1}\tau_{ij}^{\nu})$,
which satisfies $\tau_{ij}^p=p\sigma_{ij}$ by
Lemma \ref{lem:pDivpPower}. 
Then we have 
$$\sigma_{23}=\sigma_{13}+(-1)^p\sigma_{12}
+\sum_{\nu=1}^{p-1}p^{-1}\binom p\nu\tau_{13}^{\nu}
(-\tau_{12})^{p-\nu}.$$
\end{proposition}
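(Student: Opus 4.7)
The plan is to reduce both identities to a universal $p$-torsion-free setting, where one may multiply through by $p$ and exploit the defining identity $p\sigma_{ij}=\tau_{ij}^p$ obtained from Lemma \ref{lem:pDivpPower} after clearing the unit $1-p^{p-1}$. Since $\sigma_{ij}$ is defined by a universal polynomial expression in $t_i,\tau_{ij},\delta(\tau_{ij})$ over $\Z_{(p)}[(1-p^{p-1})^{-1}]$, and both claims are functorial in the given data, it suffices to verify them in a universal $\delta$-$\Z_{(p)}$-algebra through which every concrete datum factors by a $\delta$-$\Z_{(p)}$-homomorphism.

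For (1), I would take $A_0=\Z_{(p)}[T_1,T_2]$ equipped with the unique $\delta$-structure determined by $\delta(T_i)=0$, and let $D$ be the $\delta$-envelope over $A_0$ of $A_0[S]/(pS-(T_2-T_1))$. The single element $T_2-T_1$ is $A_0/pA_0$-regular, so Proposition \ref{prop:DeltaEnvPTF} ensures that $D$ is $p$-torsion free, while $D$ carries the universal datum $(T_1,T_2,\tau)$ with $\tau$ the image of $S$. Any triple $(t_1,t_2,\tau_{12})$ as in (1) arises from a unique $\delta$-$\Z_{(p)}$-homomorphism $D\to R$, so it suffices to prove the identity in $D$. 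Applying Lemma \ref{lem:pDivpPower} once with $(t_1,t_2,\tau)=(T_1,T_2,\tau)$ and once with $(T_2,T_1,-\tau)$ (taking $s_1=s_2=0$ in both cases) yields $p\sigma_{12}=\tau^p$ and $p\sigma_{21}=(-\tau)^p=(-1)^p\tau^p=(-1)^p\,p\sigma_{12}$. Since $D$ is $p$-torsion free, $p$ can be cancelled, giving $\sigma_{21}=(-1)^p\sigma_{12}$.

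For (2), the same method applies to $A_0'=\Z_{(p)}[T_1,T_2,T_3]$ with $\delta(T_i)=0$ and $D'$ the $\delta$-envelope over $A_0'$ of $A_0'[S_{12},S_{13}]/(pS_{12}-(T_2-T_1),\,pS_{13}-(T_3-T_1))$; the sequence $T_2-T_1,T_3-T_1$ is $A_0'/pA_0'$-regular, so Proposition \ref{prop:DeltaEnvPTF} gives that $D'$ is $p$-torsion free, and it carries the universal datum with $\tau_{23}:=\tau_{13}-\tau_{12}$ automatically satisfying $p\tau_{23}=T_3-T_2$. Three applications of Lemma \ref{lem:pDivpPower} with appropriately relabeled base points give $p\sigma_{ij}=\tau_{ij}^p$ for $(i,j)\in\{(1,2),(1,3),(2,3)\}$, so after multiplying the target identity by $p$ it becomes
\begin{equation*}
\tau_{23}^p=\tau_{13}^p+(-1)^p\tau_{12}^p+\sum_{\nu=1}^{p-1}\binom{p}{\nu}\tau_{13}^\nu(-\tau_{12})^{p-\nu},
\end{equation*}
which is just the binomial expansion of $\tau_{23}^p=(\tau_{13}+(-\tau_{12}))^p$. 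Cancelling $p$ in the $p$-torsion-free ring $D'$ finishes the proof. The one nontrivial point is the reduction to a $p$-torsion-free universal setting: since $R$ itself may have $p$-torsion, one cannot divide by $p$ inside $R$, so one genuinely needs Proposition \ref{prop:DeltaEnvPTF} to produce the $p$-torsion-free ring in which the formal identity can be verified.
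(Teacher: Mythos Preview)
Your proof is correct and follows essentially the same approach as the paper: both reduce to a universal $p$-torsion-free $\delta$-$\Z_{(p)}$-algebra built as the $\delta$-envelope of a polynomial ring with the relation $pS_{ij}=T_j-T_i$ adjoined, invoke Proposition \ref{prop:DeltaEnvPTF} for $p$-torsion freeness, and then verify the identity after multiplying by $p$ using $p\sigma_{ij}=\tau_{ij}^p$ and the binomial expansion.
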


\begin{proof}
We prove the claims by reducing them to the universal case,
where $R$ is $p$-torsion free.\par
(1) Let $S$ be the polynomial algebra
$\Z_{(p)}[\tlt_1,\tlt_2]$ equipped with the $\delta$-structure
defined by $\delta(\tlt_1)=\delta(\tlt_2)=0$, put 
$\tlt_{ij}=\tlt_j-\tlt_i$, $(i,j)=(1,2), (2,1)$,
and let $\tR$ be the $\delta$-envelope of 
$S[\ttau_{12}]/(p\ttau_{12}-\tlt_{12})$
over $S$. Then the homomorphism 
$S\to R$ defined by $\tlt_i\mapsto t_i$
$(i=1,2)$ is a $\delta$-homomorphism, and 
extends uniquely to a $\delta$-homomorphism 
$f\colon \tR\to R$ sending $\ttau_{12}$
to $\tau_{12}$. Put $\ttau_{21}=-\ttau_{12}$,
and define $\tsigma_{ij}$, $(i,j)=(1,2)$, $(2,1)$
in the same way as $\sigma_{ij}$ by using 
$\ttau_{ij}$ and $\tlt_i$. Then we have
$f(\tsigma_{ij})=\sigma_{ij}$. Thus we are 
reduced to proving the claim for 
$\tR$, $\tlt_1$, and $\ttau_{ij}$. Since
$S$ is $p$-torsion free and 
$\tlt_{12}$ is $S/pS$-regular, 
$\tR$ is $p$-torsion free by Proposition \ref{prop:DeltaEnvPTF}.
Hence the claim follows from 
$p\tsigma_{21}=\ttau_{21}^p=(-1)^p\ttau_{12}^p
=(-1)^pp\tsigma_{12}$. \par
(2) Let $S$ be the polynomial algebra 
$\Z_{(p)}[\tlt_1, \tlt_2,\tlt_3]$ equipped with 
the $\delta$-structure defined by $\delta(\tlt_i)=0$
$(i=1,2,3)$, put $\tlt_{ij}=\tlt_j-\tlt_i$ for
$(i,j)=(1,2)$, $(2,3)$, $(1,3)$, 
and let $\tR$ be the $\delta$-envelope of
$S[\ttau_{12},\ttau_{13}]/(p\ttau_{12}-\tlt_{12},
p\ttau_{13}-\tlt_{13})$ over $S$.
Then the ring homomorphism $S\to R$ defined
by $\tlt_i\mapsto t_i$ $(i=1,2,3)$ is 
a $\delta$-homomorphism, and extends
uniquely to a $\delta$-homomorphism 
$f\colon \tR\to R$ sending
$\ttau_{12}$ and $\ttau_{13}$ to $\tau_{12}$ and $\tau_{13}$,
respectively. Put $\ttau_{23}=\ttau_{13}-\ttau_{12}$.
If we define $\tsigma_{ij}$
in the same way as $\sigma_{ij}$ by using 
$\ttau_{ij}$ and $\tlt_i$, 
then we have $f(\tsigma_{ij})=\sigma_{ij}$. 
Hence it suffices to prove the claim 
for $\tR$, $\tlt_i$, and $\ttau_{ij}$. 
Since $S$ is $p$-torsion free and 
the sequence $\tlt_{12}$, $\tlt_{13}$ is
$S/pS$-regular, the $\delta$-ring $\tR$
is $p$-torsion free by Proposition \ref{prop:DeltaEnvPTF}.
Hence the claim follows from 
$$p\tsigma_{23}
=(\ttau_{13}-\ttau_{12})^p
=p\tsigma_{13}+p(-1)^p\tsigma_{12}
+\sum_{\nu=1}^{p-1}\binom p\nu \ttau_{13}^{\nu}
(-\ttau_{12})^{p-\nu}.$$
\end{proof}

\section{Bounded prisms and bounded prismatic envelopes}
\label{sec:PrismPrismEnv}
We recall bounded prisms and define a variant of
prismatic envelopes for bounded prisms.

\begin{definition}\label{def:prism} (\cite[Definition 3.2, Lemma 3.7 (1)]{BS})
(1) A {\it $\delta$-pair} is a pair $(R,I)$ of a $\delta$-ring $R$ 
and an ideal $I$ of $R$. A {\it morphism of $\delta$-pairs} 
$f\colon (R,I)\to (S,J)$ is a $\delta$-homomorphism 
$f\colon R\to S$ such that $f(I)\subset J$.\par
(2) A {\it bounded prism} $(R,I)$ is a $\delta$-pair satisfying the following 
conditions.\par
(i)  $I$ is an invertible ideal.\par
(ii) $R$ is $(pR+I)$-adically complete and separated.\par
(iii) $p\in I+\varphi(I)R$.\par
(iv) $(R/I)[p^{\infty}]=(R/I)[p^N]$ $(N\gg 0)$.\par
A {\it morphism of bounded prisms} $f\colon (R,I)
\to (S,J)$ is a morphism as $\delta$-pairs.
\end{definition}

\begin{proposition} [{\cite[Lemma 3.5]{BS}}]
Let $f\colon (R,I)\to (S,J)$ be a morphism of 
bounded prisms. Then we have $J=f(R)I$.
\end{proposition}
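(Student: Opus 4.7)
The plan is to reduce the claim to a local statement about principal ideals generated by distinguished elements, and then to compare the two generators by applying $\delta$ and using the Leibniz rule.

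First, the inclusion $f(I)S \subset J$ is immediate from the definition of a morphism of $\delta$-pairs, so only $J \subset f(I)S$ requires work. Since $I$ and $J$ are invertible by condition~(i) of Definition~\ref{def:prism}, the desired equality can be checked Zariski-locally on $\Spec R$ and $\Spec S$, so I would assume $I = (d)$ and $J = (e)$ for some $d \in R$ and $e \in S$. The characterization of prism ideals in terms of distinguished elements (cf.~\cite[Lemma~3.1]{BS}) then says that $d$ and $e$ are distinguished; concretely, $\delta(d)$ is a unit in $R/(p,I)$ and $\delta(e)$ is a unit in $S/(p,J)$.

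Next, since $f(d) \in f(I) \subset J = (e)$, I would write $f(d) = e\,a$ for some $a \in S$, reducing the problem to showing $a \in S^{\times}$. Applying $\delta$ to both sides and using $\delta \circ f = f \circ \delta$ together with \eqref{eq:DeltaStrProd2} gives
\[
f(\delta(d)) \;=\; \delta(e)\,a^p + \varphi(e)\,\delta(a).
\]
Since $\varphi(e) \equiv e^p \pmod{p}$, reduction modulo $(p,J) = (p,e)$ yields
\[
f(\delta(d)) \;\equiv\; \delta(e)\cdot a^p \pmod{(p,J)}.
\]
Both $f(\delta(d))$ and $\delta(e)$ are units in $S/(p,J)$ by distinguishedness of $d$ and $e$, so $a^p$, and hence $a$, is a unit in $S/(p,J)$. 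The $(p,J)$-adic completeness of $S$ from condition~(ii) lifts this to $a \in S^{\times}$, as required.

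The main obstacle is the first step: the local reduction to a principal $I = (d)$ together with the identification of such a $d$ as a distinguished element of the $\delta$-ring $R$. Once this geometric/algebraic input is in place, the remainder is a short computation with the Leibniz rule for $\delta$ combined with a standard lifting-of-units argument using $(p,J)$-adic completeness; the bounded torsion condition~(iv) plays no role here.
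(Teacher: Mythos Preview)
The paper does not give its own proof of this proposition; it simply cites \cite[Lemma~3.5]{BS}. Your argument is correct and is essentially the one found in that reference: after a Zariski-local reduction to the principal case $I=(d)$, $J=(e)$ with $d,e$ distinguished, one writes $f(d)=ea$ and shows $a\in S^{\times}$ by applying $\delta$ and reading the resulting identity modulo $(p,J)$. One small remark: the paper records the stronger fact (Proposition~\ref{prop:PrismGenerator}) that $\delta(\xi)\in R^{\times}$ globally for any generator $\xi$ of $I$ in a bounded prism, so you could skip the final unit-lifting step; and the Zariski localization must be accompanied by $(p,I)$-adic completion (cf.\ Corollary~\ref{cor:bddPrismFlatMap}) to stay within bounded prisms, but this is routine.
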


\begin{proposition}
[{\cite[Lemma 2.25]{BS}}]\label{prop:PrismGenerator}
Let $(R,I)$ be a bounded prism. If $I=\xi R$ for $\xi\in R$,
then we have $\delta(\xi)\in R^{\times}$. 
\end{proposition}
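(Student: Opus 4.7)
The plan is to apply $\delta$ to a relation coming from condition (iii) of the definition of a bounded prism, reduce modulo $pR + I$, and then use the $(pR + I)$-adic completeness from condition (ii) to lift a modular unit to an actual unit of $R$.

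Using (iii), I first write $p = \alpha \xi + \beta \varphi(\xi)$ with $\alpha, \beta \in R$. Applying $\delta$ to both sides, using the additive formula \eqref{eq:DeltaStrSum}, the Leibniz form \eqref{eq:DeltaStrProd2}, the commutation $\delta \circ \varphi = \varphi \circ \delta$ from \eqref{eq:DeltaFrobComm}, and $\delta(p) = 1 - p^{p-1}$ from \eqref{eq:DeltaStrIntegers}, one obtains
\[
1 - p^{p-1} = \delta(\alpha)\xi^p + \varphi(\alpha)\delta(\xi) + \delta(\beta)\varphi(\xi)^p + \varphi(\beta)\varphi(\delta(\xi)) - \sum_{i=1}^{p-1} p^{-1}\binom{p}{i}(\alpha\xi)^i(\beta\varphi(\xi))^{p-i}.
\]
Every term in the sum contains a factor $\xi^i$ with $i \geq 1$, and $\delta(\alpha)\xi^p$ is likewise divisible by $\xi$. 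Moreover, $\varphi(\xi) = \xi^p + p\delta(\xi) \equiv p\delta(\xi) \pmod{\xi}$, so $\varphi(\xi)^p \equiv p^p\delta(\xi)^p \pmod{\xi}$. Thus, reducing modulo $\xi$ and then further modulo $p$, the terms $\delta(\beta)\varphi(\xi)^p$ and the $p\delta(\delta(\xi))$-part of $\varphi(\delta(\xi)) = \delta(\xi)^p + p\delta(\delta(\xi))$ drop out, and since $\varphi(x) \equiv x^p \pmod{p}$ for every $x \in R$, one is left with
\[
1 \equiv \alpha^p \delta(\xi) + \beta^p \delta(\xi)^p = \delta(\xi)\bigl(\alpha^p + \beta^p \delta(\xi)^{p-1}\bigr) \pmod{pR + I}.
\]
Hence $\delta(\xi)$ has a unit image in $R/(pR + I)$, with an explicit inverse given by the class of $\alpha^p + \beta^p\delta(\xi)^{p-1}$.

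Finally, condition (ii) states that $R$ is $(pR + I)$-adically complete and separated, so any element of $R$ whose image in $R/(pR + I)$ is a unit is itself a unit in $R$: writing $\delta(\xi)\bigl(\alpha^p + \beta^p\delta(\xi)^{p-1}\bigr) = 1 + y$ with $y \in pR + I$, the geometric series $\sum_{n \geq 0}(-y)^n$ converges in $R$ and provides an inverse of $1 + y$, yielding $\delta(\xi) \in R^{\times}$. The only delicate point is the bookkeeping when expanding $\delta(\alpha\xi + \beta\varphi(\xi))$ via \eqref{eq:DeltaStrSum} and \eqref{eq:DeltaStrProd2}; once one observes that every correction term is divisible by $\xi$ with positive multiplicity, the identity collapses to the displayed congruence above. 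Conditions (i) and (iv) play no role in the argument.
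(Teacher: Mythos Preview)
The paper does not supply its own proof of this proposition; it merely records the statement with a citation to \cite[Lemma~2.25]{BS}. Your argument is correct and is essentially the standard one (and in substance the one in Bhatt--Scholze): use condition~(iii) to write $p = \alpha\xi + \beta\varphi(\xi)$, apply $\delta$, observe that all correction terms lie in $pR + I$, deduce that $\delta(\xi)$ is a unit modulo $pR + I$, and lift via the $(pR+I)$-adic completeness in condition~(ii).
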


\begin{proposition}\label{prop:bddFlatInv}
Let $N$ be a non-negative integer.
Let $(R,I)$ be a pair of a ring $R$ and an invertible ideal
$I$ of $R$, and let $\pi$ be an element of $R$ 
satisfying $(R/I)[\pi^{\infty}]=(R/I)[\pi^N]$.
Let $M$ be an $R$-module $(\pi R+I)$-adically complete and separated
such that the $R/(\pi R+I)^n$-module 
$M/(\pi R+I)^nM$ is flat for every positive integer $n$.
Then the following holds.\par
(1) $(M/IM)[\pi^{\infty}]=(M/IM)[\pi^N]$.\par
(2) $M/I^nM$ is $\pi$-adically complete and separated for every positive integer $n$.\par
(3) The homomorphism $I^n\otimes_RM\to M$ is injective for every positive integer $n$.
\end{proposition}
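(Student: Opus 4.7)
The plan is to work Zariski-locally on $\Spec R$ so that the invertible ideal $I$ is generated by a non-zero-divisor $\xi\in R$ (invertible ideals are locally principal with non-zero-divisor generator); all three assertions concern kernels and $\pi$-adic torsion/completion of natural subquotients of $M$ and are local in nature on $\Spec R$. Throughout write $J=\pi R+I$; by hypothesis $M$ is $J$-adically complete and separated and each $M/J^k M$ is flat over $R/J^k$.

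I first prove (3). The invertibility of $I^n$ and a filtration argument with the line bundles $I^j/I^{j+1}$ reduce the injectivity of $I^n\otimes_R M\to M$ to showing that $\xi$ is a non-zero-divisor on $M$. Assume $\xi m=0$. Writing $\xi a\in J^k=\sum_{i+j=k}\pi^i\xi^j R$ as $\xi(a-b)=\pi^k r_0$ with $b\in J^{k-1}$ and using that $\xi$ is a non-zero-divisor in $R$, one obtains $r_0\in(I:\pi^k)$; the hypothesis $(R/I)[\pi^\infty]=(R/I)[\pi^N]$ then gives $r_0\in(I:\pi^N)$ and hence the Artin--Rees-type bound $(J^k:\xi)\subseteq J^{k-N}$ for every $k\geq N$. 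Flatness of $M/J^k M$ over $R/J^k$ then yields $(M/J^k M)[\xi]=((J^k:\xi)M+J^k M)/J^k M\subseteq J^{k-N}M/J^k M$, so $m\in J^{k-N}M$ for every $k\geq N$, and $J$-adic separatedness forces $m=0$. Iterating, $\xi^n$ is a non-zero-divisor on $M$, which is (3).

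For (2), from (3) I obtain the short exact sequence $0\to M\xrightarrow{\xi^n}M\to M/\xi^n M\to 0$. Iterating the bound of the previous paragraph gives $A_k:=\{m\in M\mid \xi^n m\in J^k M\}\subseteq J^{k-nN}M$ for $k$ large, so the filtration $\{A_k\}$ on the first $M$ is topologically equivalent to the $J$-adic filtration. The Mittag--Leffler property of $\{M/J^k M\}$ (its transition maps are surjective) then guarantees that $J$-adic completion preserves this short exact sequence, yielding $0\to M\xrightarrow{\xi^n}M\to (M/\xi^n M)^{\wedge}_{J}\to 0$. Comparison with the original sequence via the five-lemma forces the natural map $M/\xi^n M\to (M/\xi^n M)^{\wedge}_{J}$ to be an isomorphism, so $M/\xi^n M$ is $J$-adically complete and separated; since $I^n(M/\xi^n M)=0$, the $J$-adic and $\pi$-adic topologies on $M/\xi^n M$ coincide on a cofinal basis of neighbourhoods, giving (2).

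Finally (1). Take $\bar m\in(M/IM)[\pi^k]$ for some $k\geq N$, and lift to $m\in M$ with $\pi^k m\in IM$. A computation parallel to that of (3) shows $((I+\pi^L R):\pi^k)=(I:\pi^N)+\pi^{L-k}R$ for all $L\geq k\geq N$. Since $M/J^L M$ is $R/J^L$-flat and $R/(I+\pi^L R)$ is a further quotient of $R/J^L$, base change gives $M/(IM+\pi^L M)$ flat over $R/(I+\pi^L R)$; applied to the previous identity, this yields $m\in(I:\pi^N)M+\pi^{L-k}M$ for every $L\geq k$. Multiplying by $\pi^N$ and using $\pi^N(I:\pi^N)\subseteq I$ gives $\pi^N m\in IM+\pi^{L-k+N}M$ for all $L$, so $\pi^N m$ lies in the $\pi$-adic closure of $IM$ inside $M$. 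By (2) applied with $n=1$, $M/IM$ is $\pi$-adically separated, whence $IM$ is $\pi$-adically closed in $M$, and therefore $\pi^N m\in IM$, as required. The hard step throughout is the Artin--Rees-type bound $(J^k:\xi)\subseteq J^{k-N}$, which is where the bounded-torsion hypothesis on $R/I$ enters essentially and from which everything else follows by flatness and Mittag--Leffler formalities.
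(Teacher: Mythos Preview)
Your overall strategy is sound and the core computation — the Artin--Rees bound on colon ideals together with flatness of $M/J^kM$ — is correct and does the work. However, the opening reduction ``work Zariski-locally so that $I=(\xi)$'' is not justified as stated. The hypothesis that $M$ is $J$-adically complete and separated is \emph{not} preserved under localization at an element $f\in R$, so you cannot run your argument on $M_f$. Concretely, in your proof of (3) you invoke ``$J$-adic separatedness forces $m=0$'' and in (2) you take a $J$-adic completion of a short exact sequence; both steps require the global completeness of $M$, not of any localization. The fix is straightforward: your computation actually establishes the containment of ideals $I\cap J^k\subseteq I\,J^{k-\max(N,1)}$ (equivalently $(J^k:\xi)\subseteq J^{k-\max(N,1)}$ at each prime), and containments of ideals \emph{are} Zariski local. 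Once you have this global bound, flatness of $M/J^kM$ over $R/J^k$ gives the required control on kernels in $M/J^kM$, and you may then invoke global $J$-separatedness and Mittag--Leffler exactly as you wrote. A second, minor point: for $N=0$ your argument only yields $(J^k:\xi)\subseteq J^{k-1}$, not $J^k$ (indeed $\xi$ is typically a zero-divisor on $R/J^k$), but $J^{k-1}$ is already enough for the cofinality you need.

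The paper's proof takes a genuinely different route. Rather than an explicit colon-ideal bound, it works globally with the invertible ideal from the outset: it first shows $(R/I^n)[\pi^\infty]=(R/I^n)[\pi^{nN}]$ by induction using the filtration $0\to I\otimes_R R/I^n\to R/I^{n+1}\to R/I\to 0$, then applies the snake lemma to multiplication by $\pi^l$ on $0\to I^m\otimes_R R/I^n\to R/I^{n+m}\to R/I^m\to 0$ to see that the relevant kernel system is Artin--Rees zero (transition maps $K_{l+mN}\to K_l$ vanish). Tensoring with the flat module $M/(I^{n+m}+\pi^lR)M$ and taking inverse limits in $l$ and then $n$ directly produces the exact sequence $0\to I^m\otimes_R M\to M\to\varprojlim_l(M/I^mM)/\pi^l\to 0$, giving (2) and (3) simultaneously; (1) then follows by a separate flatness argument. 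Your approach has the advantage of being more explicit and elementary once one fixes the localization issue; the paper's approach avoids any local-to-global passage and handles the invertible-but-not-principal case uniformly.
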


\begin{proof}
We have 
an exact sequence 
$0\to I\otimes_RR/I^n\to R/I^{n+1}\to R/I\to 0$
for each integer $n>0$,  and $I$ is a flat $R$-module
since $I$ is an invertible ideal of $R$.
Hence we see $(R/I^n)[\pi^{\infty}]=(R/I^n)[\pi^{nN}]$
by induction on $n$. By applying the snake lemma
to the multiplication by $\pi^l$ on the
short exact sequence
$0\to I^m\otimes_RR/I^n
\to R/I^{n+m}\to R/I^m\to 0$,
we see that the kernel $(K_l)_{l\geq 1}$
of the morphism of projective systems
$((I^m\otimes_{R}R/I^n)/\pi^l)_{l\geq 1}
\to ((R/I^{n+m})/\pi^l)_{l\geq 1}$
is Artin-Rees zero; the transition map
$K_{l+mN}\to K_l$ vanishes for every $l\geq 1$.
Since the $R/(I^{n+m}+\pi^lR)$-module
$M/(I^{n+m}M+\pi^lM)$  is flat, the same holds
for the kernel of 
$((I^m\otimes_RM/I^nM)/\pi^l)_{l\geq 1}
\to ((M/I^{n+m}M)/\pi^l)_{l\geq 1}$.
By taking the inverse limit over $l$
and noting that $I^m$ is a finite projective
$R$-module, we obtain a
short exact sequence
$$0\to I^m\otimes_R\varprojlim_l(M/I^nM)/\pi^l
\to \varprojlim_l(M/I^{n+m}M)/\pi^l
\to \varprojlim_l(M/I^mM)/\pi^l\to 0.$$
By taking $\varprojlim_n$, we obtain a
short exact sequence
$$0\to I^m\otimes_RM\to M\to 
\varprojlim_l (M/I^mM)/\pi^l\to 0.$$
This completes the proof of (2) and (3). \par

It is straightforward to verify that the sequence
$0\to R/I[\pi^N]\to (R/I)/\pi^n\xrightarrow{\pi^m}
(R/I)/\pi^{n+m}$ is exact for every 
$n,m\geq N$. 
Since the $R/(I+\pi^{n+m}R)$-module $M/(IM+\pi^{n+m}M)$ is flat,
we see that the above sequence remains exact 
after $-\otimes_RM$, whose inverse limit over $n$ gives
an exact sequence
$0\to R/I[\pi^N]\otimes_RM
\to M/IM\xrightarrow{\pi^m}M/IM$ for $m\geq N$ by the claim (2). Hence the claim (1) holds.
\end{proof}

\begin{corollary}\label{cor:bddPrismRedpcomp}
For any bounded prism $(R,I)$, $R/I^n$ is $p$-adically complete
and separated.
\end{corollary}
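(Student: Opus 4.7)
The plan is to deduce this as an immediate consequence of Proposition \ref{prop:bddFlatInv} (2), applied with $M=R$ (viewed as a module over itself) and $\pi=p$.

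First I would check that the hypotheses of Proposition \ref{prop:bddFlatInv} are satisfied for the pair $(R,I)$, the element $\pi=p$, and the module $M=R$. The ideal $I$ is invertible by condition (i) of Definition \ref{def:prism} (2). The condition $(R/I)[\pi^{\infty}]=(R/I)[\pi^N]$ for some $N$ is exactly condition (iv) of a bounded prism. The module $M=R$ is $(pR+I)$-adically complete and separated by condition (ii). Finally, the flatness hypothesis on $M/(pR+I)^nM = R/(pR+I)^n$ over itself is trivial.

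Then Proposition \ref{prop:bddFlatInv} (2) directly yields that $R/I^nR = R/I^n$ is $p$-adically complete and separated for every positive integer $n$, which is precisely the assertion.

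Since everything is a direct invocation of already-established hypotheses built into the definition of a bounded prism, there is no real obstacle; the only thing worth spelling out in the proof is the checklist matching conditions (i), (ii), (iv) of Definition \ref{def:prism} (2) to the hypotheses of Proposition \ref{prop:bddFlatInv}. Note that condition (iii), $p\in I+\varphi(I)R$, is not needed here.
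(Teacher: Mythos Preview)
Your proposal is correct and matches the paper's proof exactly: the paper also just applies Proposition \ref{prop:bddFlatInv} (2) to $(R,I)$, $\pi=p$, and $M=R$. Your verification of the hypotheses is spot on.
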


\begin{proof}
We just apply Proposition \ref{prop:bddFlatInv} (2) to $(R,I)$, $\pi=p$, and $M=R$.
\end{proof}

\begin{corollary}\label{cor:bddPrismFlatMap}
Let $(R,I)$ be a bounded prism, and let 
$S$ be a $\delta$-$R$-algebra $(pR+I)$-adically complete
and separated such that 
$R\to S$ is $(pR+I)$-adically flat (Definition \ref{def:formallyflat} (1)).
Then $(S,IS)$ is a bounded prism.
\end{corollary}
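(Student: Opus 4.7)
The plan is to verify the four conditions (i)–(iv) of Definition \ref{def:prism} (2) for the pair $(S,IS)$, observing that $S$ is already a $\delta$-ring by hypothesis. Conditions (ii) and (iii) are immediate. Namely, $(pR+I)S=pS+IS$, so the $(pR+I)$-adic completeness and separatedness of $S$ assumed in the statement is exactly condition (ii). For (iii), since the structure map $f\colon R\to S$ is a $\delta$-homomorphism it satisfies $f\circ\varphi_R=\varphi_S\circ f$; hence any decomposition $p=a+b$ with $a\in I$ and $b\in \varphi_R(I)R$ gives $p=f(a)+f(b)\in IS+\varphi_S(IS)S$.

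For conditions (i) and (iv), the strategy is to apply Proposition \ref{prop:bddFlatInv} with $\pi=p$ and $M=S$. The needed hypotheses are: $I$ is invertible and $(R/I)[p^\infty]=(R/I)[p^N]$ for some $N$, both coming from $(R,I)$ being a bounded prism; $S$ is $(pR+I)$-adically complete and separated, from the hypothesis; and the flatness of $S/(pR+I)^nS$ over $R/(pR+I)^n$ for every positive $n$, which is precisely the $(pR+I)$-adic flatness assumption in the sense of Definition \ref{def:formallyflat} (1). Then Proposition \ref{prop:bddFlatInv} (3) with $n=1$ shows that $I\otimes_RS\to S$ is injective, so that $IS\cong I\otimes_RS$ is an invertible $S$-module (as $I$ is invertible over $R$), giving condition (i). Proposition \ref{prop:bddFlatInv} (1) gives $(S/IS)[p^\infty]=(S/IS)[p^N]$ with the same $N$, which is condition (iv).

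There is no real obstacle — the corollary is a direct bookkeeping exercise once Proposition \ref{prop:bddFlatInv} is available. The only points requiring a moment's care are the identity $(pR+I)S=pS+IS$ (so that the completeness and flatness hypotheses translate verbatim into those required by Proposition \ref{prop:bddFlatInv}) and the transport of Frobenius compatibility along $f$ for condition (iii).
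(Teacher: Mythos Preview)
Your proof is correct and is exactly the argument the paper intends: the corollary is stated without proof in the paper, as an immediate consequence of Proposition \ref{prop:bddFlatInv} applied with $\pi=p$ and $M=S$, together with the easy checks of conditions (ii) and (iii). Your write-up makes all of this explicit and there is nothing to add.
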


\begin{corollary}
Let $(R,I)$ be a bounded prism, and let 
$S$ be an $R$-algebra $(pR+I)$-adically complete
and separated such that 
$R\to S$ is $(pR+I)$-adically \'etale 
(Definition \ref{def:formallyflat} (1)). 
Then $(S,IS)$ equipped with the unique $\delta$-$R$-algebra
structure (Proposition \ref{prop:DeltaStrExtEtSm} (1)) is a bounded prism over
$(R,I)$.
\end{corollary}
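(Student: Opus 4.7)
The plan is to reduce this to the immediately preceding Corollary \ref{cor:bddPrismFlatMap} (the flat case), since the only real difference between the hypotheses is ``\'etale'' versus ``flat,'' and the unique $\delta$-structure is supplied by Proposition \ref{prop:DeltaStrExtEtSm}~(1) applied to the ideal $pR+I$ (which contains $p$ as required).

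First I would invoke Proposition \ref{prop:DeltaStrExtEtSm}~(1) to equip $S$ with the unique $\delta$-$R$-algebra structure, using that $S$ is $(pR+I)$-adically complete and separated and $(pR+I)$-adically \'etale over~$R$. Next I would observe that $(pR+I)$-adically \'etale implies $(pR+I)$-adically flat: by Definition \ref{def:formallyflat}~(1), this is immediate since \'etale ring maps are flat at each finite level $R/(pR+I)^n \to S/(pR+I)^nS$. At this point all hypotheses of Corollary \ref{cor:bddPrismFlatMap} are satisfied, and applying it yields that $(S,IS)$ is a bounded prism. The morphism of bounded prisms $(R,I)\to (S,IS)$ is then simply the structure map, which is a $\delta$-homomorphism by construction and sends $I$ into $IS$.

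The only subtle point — and hence the main (small) obstacle — is to be sure that the $\delta$-structure produced by Proposition \ref{prop:DeltaStrExtEtSm}~(1) is the same one implicitly used in Corollary \ref{cor:bddPrismFlatMap}; but since Corollary \ref{cor:bddPrismFlatMap} assumes $S$ is a $\delta$-$R$-algebra and the uniqueness in Proposition \ref{prop:DeltaStrExtEtSm}~(1) guarantees no other choice is possible, this is not really an issue. In short, the proof is essentially a one-line appeal to the preceding corollary, after recording the \'etale-implies-flat observation and the uniqueness of the $\delta$-lift.
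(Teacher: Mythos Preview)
Your proposal is correct and matches the paper's approach: the paper states this corollary without proof immediately after Corollary~\ref{cor:bddPrismFlatMap}, leaving it as an evident consequence once the unique $\delta$-structure is provided by Proposition~\ref{prop:DeltaStrExtEtSm}~(1) and one notes that \'etale implies flat at each finite level.
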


\begin{definition}\label{def:bddPrsimEnv}
(cf.~\cite[Proposition 3.13]{BS}).
Let $(R,I)$ be bounded prism, and let
$(A,J)$ be a $\delta$-pair over $(R,I)$. 
We say that a bounded prism 
$(D,ID)$ over $(R,I)$ equipped with a
morphism of $\delta$-pairs $g\colon (A,J)\to (D,ID)$
over $(R,I)$ 
is a {\it bounded prismatic envelope}
of $(A,J)$ over $(R,I)$ if it satisfies the following 
universal property: For any bounded prism 
$(B,IB)$ over $(R,I)$ and any morphism of $\delta$-pairs
$h\colon (A,J)\to (B,IB)$ over $(R,I)$, there exists
a unique morphism
$k\colon (D,ID)\to (B,IB)$ of bounded prisms over
$(R,I)$ satisfying $h=k\circ g$.
\end{definition}

\begin{lemma}\label{lem:CompFinGenIdeal}
Let $A$ be a ring, let $\fa$ be a finitely generated ideal of $A$,
and put $A_n=A/\fa^n$ for each positive integer $n$.
Let $(M_n)_{n>0}$ be an inverse system of $A$-modules
such that $\fa^nM_n=0$ and the transition map 
    $M_{n+1}\to M_n$ induces an isomorphism $M_{n+1}\otimes_{A_{n+1}}A_n
\xrightarrow{\cong} M_n$ for every positive integer $n$
(e.g.~$M_n=M/\fa^n M$ for an $A$-module $M$).
Put $\hM=\varprojlim_nM_n$. 
Then, for any positive integer $m$, the submodule 
$\varprojlim_{n\geq m}\fa^mM_n$ of $\hM$ coincides with 
$\fa^m\hM$, whence the projection 
map $\hM\to M_m$ induces an isomorphism 
$\hM/\fa^m\hM\xrightarrow{\cong}M_m$.
In particular, $\hM$ is $\fa$-adically complete and separated.
\end{lemma}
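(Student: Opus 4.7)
The plan is to show the equality $\varprojlim_{n\geq m}\fa^m M_n=\fa^m\hM$, from which the final isomorphism statement follows immediately because the projection $\hM\to M_m$ has kernel $\varprojlim_{n\geq m}\ker(M_n\to M_m)$, and the transition isomorphism $M_n\otimes_{A_n}A_m\xrightarrow{\cong}M_m$ (valid for $n\geq m$ since $A_n/\fa^m A_n=A_m$) identifies this kernel with $\fa^m M_n$. The completeness and separatedness of $\hM$ then fall out: separation from $\bigcap_m\fa^m\hM=\bigcap_m\ker(\hM\to M_m)=0$, and completeness from the isomorphisms $\hM/\fa^m\hM\cong M_m$ combined with $\hM\cong\varprojlim_mM_m$.

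To prepare, I would first observe that each transition map $M_{n+1}\to M_n$ is surjective with kernel $\fa^n M_{n+1}$, since the hypothesis reads exactly as $M_{n+1}/\fa^nM_{n+1}\xrightarrow{\cong}M_n$. Consequently the inverse system $(M_n)$ is surjective (Mittag-Leffler), so $\hM\to M_n$ is surjective for every $n$, and the inclusion $\fa^m\hM\subseteq \varprojlim_{n\geq m}\fa^mM_n$ is clear. The substantive content is the reverse inclusion, which is where the finite generation of $\fa$ enters.

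For the reverse inclusion I would use finite generation of $\fa$ (hence of $\fa^m$) to choose generators $c_1,\ldots,c_s$ of $\fa^m$, and consider the surjections $\phi_n\colon M_n^s\twoheadrightarrow \fa^mM_n$ sending $(y_i)\mapsto\sum c_iy_i$, which fit into short exact sequences $0\to K_n\to M_n^s\to \fa^mM_n\to 0$ forming an inverse system. The key intermediate step is to show that the transition maps $K_{n+1}\to K_n$ are surjective: the snake lemma applied to the comparison of the sequences for $n+1$ and $n$ yields this, since the middle column $M_{n+1}^s\to M_n^s$ and the right column $\fa^mM_{n+1}\to\fa^mM_n$ are both surjective, forcing the cokernel of the left column to vanish.

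Once this is established, the system $(K_n)$ is Mittag-Leffler, so $\varprojlim^1 K_n=0$, and the induced sequence $0\to\varprojlim K_n\to\hM^s\to\varprojlim\fa^mM_n\to 0$ is exact. Since the image of $\hM^s\to\hM$ under $(y_i)\mapsto\sum c_iy_i$ is by definition $\fa^m\hM$, the exactness gives $\fa^m\hM=\varprojlim_{n\geq m}\fa^mM_n$, completing the proof. The main obstacle is precisely the Mittag-Leffler verification for $(K_n)$; it is the only place where finite generation of $\fa$ is essential (without it one cannot even set up the finite-rank $\phi_n$), and it is where one must exploit that all three columns of the snake diagram are simultaneously surjective.
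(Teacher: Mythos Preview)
Your overall strategy is sound, but the snake-lemma step contains a genuine error. You claim that surjectivity of the middle and right columns forces the left column $K_{n+1}\to K_n$ to be surjective. The snake lemma does not say this: from $\ker f_C\to\operatorname{coker}f_A\to\operatorname{coker}f_B$ and $\operatorname{coker}f_B=0$ you only get that $\operatorname{coker}f_A$ is a \emph{quotient of} $\ker f_C$, not that it vanishes. A concrete counterexample in your exact setup: take $A=k[x]$, $\fa=(x)$, $m=1$, $s=1$, $c_1=x$, $M_n=A/(x^n)$. Then $K_n=\{y\in M_n:xy=0\}=(x^{n-1})/(x^n)\cong k$, and the transition map $K_{n+1}\to K_n$ sends $x^n\mapsto 0$, so it is the zero map between nonzero groups. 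Hence $(K_n)$ is not a surjective system, and your Mittag-Leffler argument as written does not go through.

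The paper fixes this by a crucial index shift: after reducing to $m=1$ with generators $a_1,\dots,a_r$ of $\fa$, it observes that multiplication by $a_i$ on $M_n$ factors through $M_{n-1}$ (since $a_i\fa^{n-1}\subset\fa^n$), giving maps $[a_i]_n\colon M_{n-1}\to\fa M_n$. For a fixed target $x=(x_n)\in\varprojlim\fa M_n$, the paper shows that the fibers $S_{x,n}=\{(y_i)\in M_{n-1}^r:\sum[a_i]_n y_i=x_n\}$ form a \emph{surjective} inverse system. The point is that the correction needed when passing from level $n$ to $n+1$ lies in $\fa^n M_{n+1}=\sum_i[a_i]_{n+1}(\fa^{n-1}M_n)$, and elements of $\fa^{n-1}M_n$ die in $M_{n-1}$, so the adjustment does not disturb the image in $S_{x,n}$. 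With your unshifted maps $M_n^s\to\fa^m M_n$, the analogous correction would have to come from $(\fa^n M_{n+1})^s$, but $\sum c_i z_i$ for $z_i\in\fa^n M_{n+1}$ lands in $\fa^{m+n}M_{n+1}=0$, so no nontrivial adjustment is possible---exactly what the $k[x]$ example exhibits. Your framework can be repaired by incorporating this shift; without it the argument does not close.
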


\begin{proof}
Since $\fa^m$ is a finitely generated ideal of $A$
and $\hM=\varprojlim_n M_{nm}$, we may replace
$\fa$ and $M_n$ with $\fa^m$ and $M_{nm}$, respectively,
and assume $m=1$. 
Choose generators
$a_1,\ldots, a_r$ of $\fa$. Put $A_0=0$ and $M_0=0$.
For $a\in \fa$ and a positive integer
$n$, the inclusion $a\fa^{n-1}\subset \fa^n$
implies that there exists a unique $A_n$-linear map
$[a]_n\colon M_{n-1}\to \fa M_n$ whose composition
with the surjective $A_n$-linear map
$M_n\to M_n/\fa^{n-1}M_n\cong M_{n-1}$
is the multiplication by $a$. The maps $[a]_n$ $(n\geq 1)$
define a morphism of inverse systems of $A$-modules
$(M_{n-1})_{n\geq 1}\to (\fa M_n)_{n\geq 1}$.
Let $(x_n)_{n\geq 1}$
be an element of $\varprojlim_{n\geq 1}\fa M_n$.\par
Let $n$ be a positive integer.
Choose $y_{1},\ldots, y_{r}\in M_{n-1}$ such that $x_n=\sum_{i=1}^r[a_i]_ny_{i}$. 
Note $\fa M_n=\sum_{i=1}^ra_iM_n=\sum_{i=1}^r [a_i]_n(M_{n-1})$.
Choose a lifting $\ty_{i}\in M_n$ of $y_{i}$
for each $i\in \N\cap [1,r]$, and put $\tx_n=\sum_{i=1}^r[a_i]_{n+1}\ty_{i}\in M_{n+1}$.
Then $\tx_n$ is a lifting of $x_n$, and therefore $x_{n+1}-\tx_n\in \fa^nM_{n+1}$.
Since $\fa^nM_{n+1}=\sum_{i=1}^ra_i\fa^{n-1}M_{n+1}
=\sum_{i=1}^r[a_i]_{n+1}(\fa^{n-1}M_n)$, 
we can choose $z_{1},\ldots, z_{r}\in \fa^{n-1}M_n$ such that
$x_{n+1}-\tx_n=\sum_{i=1}^r[a_i]_{n+1}z_{i}$. Then
$w_{i}=\ty_{i}+z_{i}\in M_{n}$ is a lifting of $y_{i}$
for $i\in \N\cap [1,r]$, and we have $x_{n+1}=\sum_{i=1}^r[a_i]_{n+1}w_{i}$.\par

For each positive integer $n\geq 1$, let $S_{x,n}$
be the subset of $M_{n-1}^{\oplus r}$ consisting of
$(y_i)\in M_{n-1}^{\oplus r}$ satisfying $x_n=\sum_{i=1}^r[a_i]_{n}y_i$.
Then the above argument shows that the
surjective map $M_{n}^{\oplus r}\to M_{n-1}^{\oplus r}$
induces a surjective map $S_{x,n+1}\to S_{x,n}$ for every $n\geq 1$.
Hence there exists  an element $((y_{i,n})_i)\in \varprojlim_nS_{x,n}\subset
\varprojlim_n M_{n-1}^{\oplus r}=\hM^{\oplus r}$. 
Setting $y_i=(y_{i,n+1})_{n\geq 1}\in \varprojlim_n M_n=\hM$,
we obtain $x=\sum_{i=1}^ra_iy_i$;
for each $n\geq 1$, we have 
$x_n=\sum_{i=1}^r[a_i]_n y_{i,n}=\sum_{i=1}^r a_iy_{i,n+1}$.
\end{proof}

\begin{lemma}\label{lem:bddPrismEnvBase}
(1) Let $(R,I)$ be a bounded prism, let $(R',IR')$ be a bounded
prism over $(R,I)$, and let $(A,J)$ be a $\delta$-pair
over $(R',IR')$. Let $(D,I_D)$ be a bounded prism, and
let $g\colon (A,J)\to (D,I_D)$  be a morphism of $\delta$-pairs.
We regard $(D,I_D)$ as a bounded prism over $(R,I)$
(resp.~$(R',IR')$) by the composition of $g$ with 
the morphism $(R',IR')\to (A,J)$ (resp.~$(R,I)\to (R',IR')\to (A,J)$).
Then $g$ is a bounded prismatic envelope  
over $(R,IR)$ if and only if $g$ is a bounded 
prismatic envelope over $(R',IR')$.\par
(2) Let $(R,I)$ be a bounded prism, and let $f\colon (A,J)\to (A',J')$
be a morphism of $\delta$-pairs over $(R,I)$. 
Let $g\colon (A,J)\to (D,ID)$ be a bounded prismatic envelope 
over $(R,I)$. Let $(D',ID')$ be a bounded prism over $(R,I)$, and
let $g'\colon (A',J')\to (D',ID')$ be a morphism of $\delta$-pairs
over $(R,I)$, which extends uniquely to a 
morphism of $\delta$-pairs
$g'_D\colon (A'\otimes_AD,J'\cdot A'\otimes_AD)
\to (D',ID')$ by the universal property of $g$ applied to 
$g'\circ f$.
Then $g'$ is a bounded prismatic envelope over $(R,I)$
if and only if $g'_D$ is a bounded prismatic envelope over
$(R,I)$ (or equivalently over $(D,ID)$ by (1)).\par
(3) Le $(R,I)$ be a bounded prism, let $(A,J)$ be a $\delta$-pair
over $(R,I)$, let $\hA$ be $\varprojlim_nA/(pA+IA)^n$
equipped with the $\delta$-structure induced by that of $A$,
and let  $\hJ$ be the ideal $\varprojlim_n J(A/(pA+IA)^n)$ of 
$\hA$. Let $(D,ID)$ be a bounded prism over $(R,I)$, and
let $f$ be a morphism of $\delta$-pairs $(A,J)\to (D,ID)$
over $(R,I)$. Then $f$ extends uniquely to a morphism
of $\delta$-pairs $\hf \colon (\hA,\hJ)\to (D,ID)$ over $(R,I)$, 
and $f$ is a bounded prismatic envelope over $(R,I)$
if and only if $\hf$ is a bounded prismatic envelope over $(R,I)$.
\end{lemma}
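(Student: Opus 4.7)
The plan is to derive each of the three claims directly from the universal property in Definition \ref{def:bddPrsimEnv}, by matching up the test data---bounded prisms over the base, equipped with a morphism of $\delta$-pairs from the source---against which each candidate envelope is characterized.

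For part (1), the two bodies of test data are literally the same. Given any bounded prism $(B, IB)$ over $(R, I)$ together with a morphism of $\delta$-pairs $(A, J) \to (B, IB)$ over $(R, I)$, the composition $R' \to A \to B$ is a $\delta$-homomorphism sending $IR'$ into $IB$, so $(B, IB)$ is canonically a bounded prism over $(R', IR')$ and the given morphism is automatically one over $(R', IR')$; conversely every bounded prism over $(R', IR')$ sits over $(R, I)$ by composition. Hence the universal property of $g$ over $(R,I)$ and over $(R', IR')$ quantifies over the same data, so the two conditions are equivalent.

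For part (2), I would use $g$ to translate morphisms out of $(A', J')$ over $(R, I)$ into morphisms out of the $\delta$-pair pushout over $(D, ID)$. Given a bounded prism $(B, IB)$ over $(R, I)$ and a morphism of $\delta$-pairs $h \colon (A', J') \to (B, IB)$ over $(R, I)$, the composition $h \circ f$ extends, by the universal property of $g$, to a unique morphism of bounded prisms $(D, ID) \to (B, IB)$, endowing $(B, IB)$ canonically with the structure of a bounded prism over $(D, ID)$. Then $h$, together with this $(D, ID)$-structure, factors uniquely through the $\delta$-pair pushout $(A' \otimes_A D,\, J' \cdot (A' \otimes_A D))$ as a morphism of $\delta$-pairs over $(D, ID)$. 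This sets up a natural bijection between morphisms $(A', J') \to (B, IB)$ over $(R, I)$ and morphisms $(A' \otimes_A D,\, J' \cdot (A' \otimes_A D)) \to (B, IB)$ over $(D, ID)$, so $g'$ has the universal property over $(R, I)$ iff $g'_D$ does over $(D, ID)$ (equivalently, by part (1), over $(R, I)$).

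For part (3), I would first exhibit $\hat f$. The $(pA + IA)$-adic completion $\hat A$ inherits a $\delta$-structure from $A$ by the completion procedure recalled just before Definition \ref{def:formallyflat}, applied with the ideal $pA + IA$, which contains $p$. Since $(D, ID)$ is $(pD + ID)$-adically complete by axiom (ii) of a bounded prism and $f$ sends $pA + IA$ into $pD + ID$, $f$ extends uniquely by continuity to a $\delta$-homomorphism $\hat f \colon \hat A \to D$; the image of $\hat J = \varprojlim_n J(A/(pA + IA)^n)$ lands in the $(pD+ID)$-adic closure of $ID$ in $D$, which equals $ID$ because $D/ID$ is $p$-adically separated by Corollary \ref{cor:bddPrismRedpcomp}. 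The same argument, applied to any bounded prism $(B, IB)$ over $(R, I)$ in place of $(D, ID)$, establishes a canonical bijection between morphisms of $\delta$-pairs $(A, J) \to (B, IB)$ and $(\hat A, \hat J) \to (B, IB)$ over $(R, I)$, so the universal properties of $f$ and $\hat f$ quantify over the same data. The main subtlety I anticipate is precisely the verification that the inverse-limit ideal $\hat J$ maps into $ID$ under $\hat f$, which comes down to the closedness of $ID$ noted above.
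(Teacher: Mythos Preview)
Your proposal is correct and follows essentially the same approach as the paper: in each part you identify the test data for the two universal properties and exhibit the natural bijection between them. The only cosmetic difference is in part (3), where the paper phrases the fact that $\hat f(\hat J)\subset ID$ via the identity $ID=\varprojlim_n I(D/(pD+ID)^n)$ (from Corollary \ref{cor:bddPrismRedpcomp}) and pins down the uniqueness of $\hat f$ by invoking $A/(pA+IA)^n\cong \hat A/(p\hat A+I\hat A)^n$ (Lemma \ref{lem:CompFinGenIdeal}), whereas you phrase the same points as closedness of $ID$ and continuity; the content is the same.
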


\begin{proof}
(1) For any bounded prism $(B,IB)$ over $(R,I)$ and any morphism  
$h\colon (A,J)\to (B,IB)$ of $\delta$-pairs over $(R,I)$, we may 
regard $(B,IB)$ as a bounded prism over $(R',IR')$ by the composition of 
$h$ and $(R',IR')\to (A,J)$, for which $h$ becomes a morphism
over $(R',IR')$. This implies the claim.\par
(2) The unique existence of the extension $g'_{D}$ for 
an arbitrary $g'$ implies that $g'$ satisfies the universal property
of bounded prismatic envelope over $(R,I)$ if and only if $g'_D$ does. 

(3) The second claim follows from the first one for an arbitrary $f$, similarly
to the proof of (2). For the first claim, 
the $(pR+I)$-adic completion of $f$ gives the desired extension of 
$f$ since $ID=\varprojlim_n I(D/(pD+ID)^n)$ by Corollary \ref{cor:bddPrismRedpcomp}.
The extension $\hf$ is unique because
$A/(pA+IA)^n\to \hA/(p\hA+I\hA)^n$ is an isomorphism
by Lemma \ref{lem:CompFinGenIdeal}.
\end{proof}

\begin{proposition}\label{prop:PrismEnvEtale}
Let $(R,I)$ be a bounded prism, and let $f\colon (A,J)\to (A',J')$
 be a morphism of $\delta$-pairs over $(R,I)$ such that 
 $f\colon A\to A'$ is $(pR+I)$-adically \'etale
 (Definition \ref{def:formallyflat} (1)) and induces an isomorphism
 $A/J\xrightarrow{\cong}A'/J'$. 
 Then $(A,J)$ has  a bounded prismatic envelope $(D,ID)$
 over $(R,I)$ if and only if $(A',J')$ has a bounded prismatic envelope
 $(D',ID')$ over $(R,I)$. If these equivalent conditions hold, 
then the morphism of prisms
 $(D,ID)\to (D',ID')$ over $(R,I)$ induced by 
 $f$ is an isomorphism. 
\end{proposition}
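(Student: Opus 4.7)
The plan is to prove the following key extension lemma, from which the proposition follows formally: for any bounded prism $(B, IB)$ over $(R, I)$ and any morphism of $\delta$-pairs $h_0 \colon (A, J) \to (B, IB)$ over $(R,I)$, there is a unique morphism $h \colon (A', J') \to (B, IB)$ of $\delta$-pairs over $(R,I)$ with $h \circ f = h_0$. Granting this, if $(D, ID)$ is the bounded prismatic envelope of $(A, J)$, applying the lemma with $(B, IB) = (D, ID)$ produces a $\delta$-morphism $\tg \colon A' \to D$ with $\tg(J') \subset ID$, and a routine check using the lemma's uniqueness shows $(D, ID)$ (with $\tg$) is the bounded prismatic envelope of $(A', J')$; conversely, the composition $A \to A' \to D'$ realizes the envelope $(D', ID')$ of $(A', J')$ as that of $(A, J)$. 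When both envelopes exist, the induced morphisms $(D, ID) \to (D', ID')$ and $(D', ID') \to (D, ID)$ are mutually inverse by uniqueness, giving the asserted isomorphism.

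To construct $h$, I first work modulo $I$. Since $h_0$ is $R$-linear and sends $J$ into $IB$, it induces a homomorphism $A/(J + IA) \to B/IB$; composing with the inverse of the given isomorphism $A/J \xrightarrow{\cong} A'/J'$ yields a homomorphism
$$A'/IA' \twoheadrightarrow A'/(J' + IA') \xrightarrow{\cong} A/(J + IA) \to B/IB$$
compatible with $h_0 \bmod I$ along the canonical map $A/IA \to A'/IA'$. I then lift inductively to homomorphisms $h_n \colon A'/(pR+I)^n A' \to B/(pR+I)^n B$ compatible with $h_0 \bmod (pR+I)^n$: the case $n = 1$ is obtained by further quotienting both sides modulo $(pR+I)$, and the induction step uses the $(pR+I)$-adic \'etaleness of $A \to A'$ via formal \'etaleness along the square-zero kernel of $B/(pR+I)^{n+1}B \twoheadrightarrow B/(pR+I)^n B$. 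Passing to the inverse limit, using that $B$ is $(pR+I)$-adically complete, defines $h \colon A' \to B$; uniqueness at each stage yields uniqueness of $h$.

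It remains to verify $h$ is a morphism of $\delta$-pairs. For $h(J') \subset IB$, the reduction $h \bmod IB \colon A' \to B/IB$ factors through $A'/IA'$ because $h$ is $R$-linear. Tensoring the $(pR+I)$-adic \'etaleness of $A \to A'$ with $R/I$, and noting that $(pR+I)^n \equiv p^n R \pmod{I}$, shows that $A/IA \to A'/IA'$ is $p$-adically \'etale, so the $p$-adically complete $B/IB$ admits at most one extension of $A/IA \to B/IB$ to $A'/IA'$; both this reduction of $h$ and the explicit mod-$I$ map constructed above are such extensions, hence coincide, and thus $h(J') \subset IB$ since the explicit map factors through $A'/(J' + IA')$. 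For $\delta$-compatibility, Proposition \ref{prop:DeltaCompEtExt}, applied to the $(pA+IA)$-adically \'etale $\delta$-homomorphism $A \to A'$, the $\delta$-homomorphism $h_0 = h \circ f$, and the $(pB+IB)$-adically separated $\delta$-ring $B$, shows that $h$ is a $\delta$-homomorphism. The main obstacle throughout is ensuring $h$ sends $J'$ into $IB$ and not merely into $(pR+I)B$, which is handled by initiating the construction with the explicit mod-$I$ map built from $A/J \cong A'/J'$ and invoking uniqueness of the $p$-adic formal \'etale extension.
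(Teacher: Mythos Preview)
Your approach is essentially the same as the paper's: both reduce to the key extension lemma (unique factorization of any $h_0\colon (A,J) \to (B,IB)$ through $f$), handle $\delta$-compatibility via Proposition~\ref{prop:DeltaCompEtExt}, and construct the extension by \'etale lifting modulo $(pR+I)^n$.

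There is, however, a gap in your verification that $h(J') \subset IB$. You assert that ``the $p$-adically complete $B/IB$ admits at most one extension of $A/IA \to B/IB$ to $A'/IA'$,'' but $p$-adic \'etaleness alone does not give this: for an \'etale map $C \to C'$ there can be many $C$-algebra maps $C' \to D$ (take $C' = C \times C$, $D = C$). What \'etaleness gives is uniqueness of lifts through nilpotent thickenings once a target modulo the nilpotent ideal is specified. Your argument is easily repaired using material already in your write-up: both $h \bmod IB$ and your explicit mod-$I$ map reduce to the same map $h_1$ modulo $(pR+I)$ (this is precisely how you chose $h_1$), so by \'etale lifting uniqueness they agree modulo $p^n$ for every $n$, hence coincide by $p$-adic separatedness of $B/IB$.

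The paper avoids this detour by building the mod-$IB$ condition into the lifting problem from the outset. It seeks $A' \to B$ making
\[
\xymatrix{
A'\ar[r]\ar@{-->}[dr]& B/IB\\
A\ar[r]\ar[u]^f& B\ar[u]
}
\]
commute, where the top map is $A' \twoheadrightarrow A'/J' \cong A/J \to B/IB$. Modulo $(pR+I)^n$ the kernel of $B_n \to (B/IB)_n$ is nilpotent (since $I^n \subset (pR+I)^n$), so \'etaleness yields a unique $h_n$ at each level; the inverse limit then automatically reduces to the top map (using $p$-adic completeness of $B/IB$, Corollary~\ref{cor:bddPrismRedpcomp}) and hence kills $J'$ without a separate check.
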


\begin{proof}
It suffices to prove that, for any bounded prism 
$(B,IB)$ over $(R,I)$, any morphism of $\delta$-pairs
$(A,J)\to (B,IB)$ over $(R,I)$ extends uniquely to 
a morphism of $\delta$-pairs $g\colon (A',J')\to (B,IB)$ over $(R,I)$.
By Proposition \ref{prop:DeltaCompEtExt}, we may forget the
compatibility of the extension with the $\delta$-structures. 
Hence, by the assumption $A/J\xrightarrow{\cong} A'/J'$,
this is reduced to showing that there exists a unique ring
homomorphism $A'\to B$ making the diagram 
\begin{equation*}
\xymatrix{
A'\ar[r]\ar@{-->}[dr]& B/IB\\
A\ar[r]\ar[u]^f&\ar[u] B
}
\end{equation*}
commute, where the upper horizontal map is the composition  of 
$A'\to A'/J'\cong A/J$ and the morphism 
$A/J\to B/IB$ induced by $g$. The claim holds
after taking the reduction mod $(pR+I)^n$ for every
integer $n\geq 1$ because $f$ mod $(pR+I)^n$
is \'etale. By taking the inverse limit over $n$,
we obtain the desired claim because $B/IB$
is $p$-adically complete and separated by 
Corollary \ref{cor:bddPrismRedpcomp}. 
\end{proof}

\begin{proposition}[cf.~{\cite[Proposition 3.13]{BS}}]
\label{prop:PrismEnvRegSeq}
Let $(R,\xi R)$ be a bounded prism, put $\tI=pR+\xi R$,
and let $(A,J)$ be a $\delta$-pair over $(R,\xi R)$ such that 
$A$ is $\tI$-adically flat over $R$ and 
$A/J$ is $p$-adically flat over $R/\xi R$
(Definition \ref{def:formallyflat} (1)). Assume that 
we are given a sequence $T_1,\ldots, T_d\in J$ whose
image in $A/\tI A$ generates 
$J\cdot A/\tI A$ and is transversally regular 
relative to $R/\tI$ (\cite[D\'efinition (19.2.1)]{EGAIV}), i.e.,
forms a regular sequence with  quotients 
$(A/\tI A)/\sum_{i=1}^r T_i (A/\tI A)$ $(r\in \N\cap [0,d])$
flat over $R/\tI$. Then $R$, $\xi$, $A$, and $T_1,\ldots, T_d$
satisfy all the conditions in Proposition \ref{prop:DeltaEnvRegSeq} (3)
by Remark \ref{rmk:DeltaEnvRegSeq} (1).
(Note $\delta(\xi)\in R^{\times}$ by Proposition \ref{prop:PrismGenerator}.)
Let $\hD$ be the $\tI$-adic completion of the $\delta$-$A$-algebra $D$ 
constructed in Proposition \ref{prop:DeltaEnvRegSeq}. Then 
$(\hD,\xi\hD)$ is a bounded prism, $J\hD\subset \xi\hD$,
and the morphism of $\delta$-pairs 
$(A,J)\to (\hD,\xi\hD)$ over $(R,\xi R)$ is 
a bounded prismatic envelope of $(A,J)$ over $(R,\xi R)$. 
\end{proposition}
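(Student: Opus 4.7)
My plan is to deduce the bounded prism axioms for $(\hD,\xi\hD)$ from Proposition~\ref{prop:DeltaEnvRegSeq}~(3) via Proposition~\ref{prop:bddFlatInv}, and then to derive the universal property straightforwardly from that of the $\delta$-envelope $D$ combined with the $\tI$-adic completeness of any target prism (where $\tI=pR+\xi R$).

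First I would check that the hypotheses of Proposition~\ref{prop:bddFlatInv} are satisfied for the pair $(R,\xi R)$, $\pi=p$, and $M=\hD$: Proposition~\ref{prop:DeltaEnvRegSeq}~(3) gives flatness of $R/\tI^n\to D/\tI^n D$ for every $n\geq 1$, Lemma~\ref{lem:CompFinGenIdeal} identifies $D/\tI^n D$ with $\hD/\tI^n\hD$, and $\hD$ is $\tI$-adically complete by construction. Claim~(3) of Proposition~\ref{prop:bddFlatInv} then shows that $\xi$ is a non-zero-divisor on $\hD$, so $\xi\hD$ is invertible, and claim~(1) supplies the bounded $p$-torsion condition $(\hD/\xi\hD)[p^\infty]=(\hD/\xi\hD)[p^N]$. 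The distinguished condition $p\in\xi\hD+\varphi(\xi)\hD$ is inherited from $(R,\xi R)$, and the $\delta$-structure on $\hD$ is the $\tI$-adic completion of that on $D$, as explained in the paragraph preceding Definition~\ref{def:formallyflat} (valid since $p\in\tI D$). Thus $(\hD,\xi\hD)$ is a bounded prism.

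Next, for the inclusion $J\hD\subset\xi\hD$, the hypothesis on $T_1,\dots,T_d$ yields $J\subset\sum_iT_iA+\tI J$; iterating this produces, for any $j\in J$, expressions $j=\sum_i a_i^{(n)}T_i+j_n$ with $j_n\in\tI^nJ$ and $a_i^{(n)}-a_i^{(n-1)}\in\tI^{n-1}A$. The sequences $(a_i^{(n)})_n$ are $\tI$-Cauchy and converge in the $\tI$-adic completion $\hA$, while $j_n\to 0$ in $\hD$; passing to the limit in $\hD$ gives $j=\sum_i a_iT_i\in\xi\hD$ since each $T_i=\xi s_i$ lies in $\xi\hD$. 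Composing $A\to D\to\hD$ with this observation gives the desired morphism of $\delta$-pairs $g\colon(A,J)\to(\hD,\xi\hD)$ over $(R,\xi R)$.

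For the universal property, given a bounded prism $(B,IB)$ over $(R,\xi R)$ and a morphism of $\delta$-pairs $h\colon(A,J)\to(B,IB)$, the non-zero-divisor property of $\xi$ on $B$ (Proposition~\ref{prop:PrismGenerator}) produces unique $s_i'\in B$ with $\xi s_i'=h(T_i)$, hence a unique $A$-algebra map $A[S_1,\dots,S_d]/(\xi S_i-T_i,\,i\in\N\cap[1,d])\to B$; by the universal property of the $\delta$-envelope this extends uniquely to a $\delta$-homomorphism $D\to B$, and then, by the $\tI$-adic completeness of $B$ (Corollary~\ref{cor:bddPrismRedpcomp} together with $B$ being $IB$-adically complete), further uniquely to $k\colon\hD\to B$; automatically $k(\xi\hD)\subset\xi B=IB$. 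The main technical step is the flatness transfer from $D$ to $\hD$ needed to apply Proposition~\ref{prop:bddFlatInv}; once that is in hand, both the prism axioms and the universal property follow by routine unwinding.
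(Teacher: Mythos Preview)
Your argument for the bounded prism structure on $(\hD,\xi\hD)$ and for the universal property is correct and agrees with the paper's (the paper packages the first step as Corollary~\ref{cor:bddPrismFlatMap}, whose content is exactly the appeal to Proposition~\ref{prop:bddFlatInv} that you spell out). The citation for the $\tI$-adic completeness of $B$ could simply be Definition~\ref{def:prism}~(2)(ii), but your reasoning via Corollary~\ref{cor:bddPrismRedpcomp} is also valid.

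The gap is in the step $J\hD\subset\xi\hD$. The inclusion $J\subset\sum_iT_iA+\tI J$ you claim does not follow from the hypotheses and is in fact false. Take $R=\Z_p$, $\xi=p$, $A=\Z_p\langle x\rangle$ with $\delta(x)=0$, $J=(p,x)$, $d=1$, $T_1=x$. All assumptions are satisfied (in particular $J\cdot(A/\tI A)=x\F_p[x]$ is generated by $x$), yet $\sum_iT_iA+\tI J=(x)+p(p,x)=(x,p^2)$ does not contain $p$. The hypothesis on the $T_i$ only gives $J\subset\sum_iT_iA+\tI A$, i.e.\ control of $J$ modulo $\tI A$; passing from $\tI A$ to $\tI J$ would require $J\cap\tI A=\tI J$, which is a Tor-vanishing statement not available here.

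The paper avoids this by working modulo $\xi$ rather than modulo $\tI$, so that the $p$-adic flatness of $A/J$ over $R/\xi R$ can be used directly: from $0\to J\cdot((A/\xi A)/p^n)\to (A/\xi A)/p^n\to (A/J)/p^n\to 0$ and flatness of $(A/J)/p^n$ over $(R/\xi R)/p^n$ one gets that $J\cdot((A/\xi A)/p^n)$ has the same reduction mod $p$ as $J\cdot(A/\tI A)$, hence is generated by the $T_i$ by Nakayama. Since each $T_i=\xi\tau_i$ vanishes in $D/\xi D$, the image of $J$ in $(\hD/\xi\hD)/p^n$ is zero for all $n$, and one concludes using the $p$-adic separatedness of $\hD/\xi\hD$ (Corollary~\ref{cor:bddPrismRedpcomp}). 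Your Cauchy-sequence idea can be repaired along these lines, but not with the inclusion as you stated it.
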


\begin{proof}
First note that $\hD$ is $\tI$-adically complete and separated
and we have $\hD/\tI^n\hD\cong  D/\tI^n D$ by Lemma \ref{lem:CompFinGenIdeal}. 
By Proposition 
\ref{prop:DeltaEnvRegSeq} (3), $R/\tI^n\to D/\tI^nD$ is
flat for every positive integer $n$. Hence 
$R\to \hD$ is $\tI$-adically flat, and therefore
$(\hD,\xi\hD)$ is a bounded prism by Corollary \ref{cor:bddPrismFlatMap}. 
Since $(A/J)/p^n$ is flat over $(R/\xi R)/p^n$ by assumption,
the homomorphism 
$(J\cdot (A/\xi A)/p^n)/p\to J\cdot (A/\xi A)/p$ is an isomorphism.
This implies that the ideal $J\cdot (A/\xi A)/p^n$
of $(A/\xi A)/p^n$ is generated by the images of
$T_1,\ldots, T_d$. Hence the image of $J$
in $(D/\xi D)/p^n\cong \hD/(\xi \hD+p^n\hD)$ 
vanishes for every positive integer $n$.
This shows $J\hD\subset \xi\hD$ as
$\hD/\xi\hD\cong\varprojlim_n (\hD/\xi \hD)/p^n$
by Corollary \ref{cor:bddPrismRedpcomp}. 
It remains to prove that $(A,J)\to (\hD,\xi\hD)$ is a bounded
prismatic envelope of $(A,J)$ over $(R,\xi R)$. 
Any $(R,\xi R)$-morphism of $\delta$-pairs
$g\colon (A,J)\to (B,\xi B)$ to a bounded prism over $(R,\xi R)$
extends uniquely to an $A$-homomorphism 
$A[S_1,\ldots, S_d]/(\xi S_i-T_i)\to B$ because
$g(T_i)\in\xi B$ and $\xi$ is regular on $B$. 
By the universality of $\delta$-envelope and the $\tI$-adic
completeness of $B$, it has a unique extension to a 
$\delta$-$A$-homomorphism $\hD\to B$. 
This completes the proof. 
\end{proof}

\begin{proposition}\label{prop:bddPrismEnvSmSmEmb}
Let $(R,\xi R)$ be a bounded prism, put $\tI= pR+\xi R$,
and let $(A,J)$ be a $\delta$-pair over $(R,\xi R)$
such that $A$ is $\tI$-adically smooth over $R$
and $A/J$ is $p$-adically smooth over $R/\xi R$
(Definition \ref{def:formallyflat} (1)).  
Assume that $J\cdot (A/\tI A)/J^2\cdot (A/\tI A)$ is a finite
free $A/(\tI A+J)$-module, and let $T_1,\ldots, T_d$ be 
elements of $J$ whose images in $J\cdot (A/\tI A)/J^2\cdot (A/\tI A)$ 
form a basis. Then there exists $f\in 1+J$ such that 
$(R,\xi R)$, $(\widehat{A_f}, J\widehat{A_f})$, and $T_1,\ldots, T_d$ satisfy
the assumptions in Proposition \ref{prop:PrismEnvRegSeq},
where $\widehat{A_f}$ denotes the $\tI$-adic completion 
$\varprojlim_nA_f/\tI^nA_f$ of $A_f$ equipped with the unique 
$\delta$-$A$-algebra structure (Proposition \ref{prop:DeltaStrExtEtSm} (1)).
(Thanks to Propositions \ref{lem:bddPrismEnvBase} (3) and \ref{prop:PrismEnvEtale}, 
we obtain a bounded prismatic
envelope of $(A,J)$ over $(R,\xi R)$ by applying the construction
of Proposition \ref{prop:PrismEnvRegSeq} to $(\widehat{A_f},J\widehat{A_f})$. )
\end{proposition}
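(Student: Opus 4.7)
The plan is to reduce modulo $\tI$ and invoke the local structure of a regular closed immersion between smooth schemes, then lift the necessary localization back from $A_0 := A/\tI A$ to $A$.

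First, I would set $R_0 = R/\tI$, $A_0 = A/\tI A$, and $J_0 = JA_0$. Since $(R,\xi R)\to(A,J)$ is a morphism of $\delta$-pairs, $\xi$ already acts as zero on $A/J$, so $A_0/J_0 = A/(J + pA) = (A/J)/p(A/J)$, which is smooth over $R_0$ by the $p$-adic smoothness of $A/J$ over $R/\xi R$. Similarly, $A_0$ is smooth over $R_0$ by the $\tI$-adic smoothness of $A$ over $R$. By hypothesis, the images $\bar T_1,\ldots,\bar T_d$ in $J_0/J_0^2$ form a basis of this finite free $A_0/J_0$-module.

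Second, I would use standard commutative algebra of regular immersions between smooth schemes to produce $f_0\in 1+J_0$ such that, in the localization $(A_0)_{f_0}$: (a) the ideal $J_0\cdot(A_0)_{f_0}$ is generated by $T_1,\ldots,T_d$; and (b) the intermediate quotients $(A_0)_{f_0}/(T_1,\ldots,T_r)$ for $r\in\N\cap[0,d]$ are smooth (in particular flat) over $R_0$, with $T_{r+1}$ a non-zero-divisor on each. Generation (a) follows by Nakayama applied to the finitely generated module $J_0\cdot(A_0)_{f_0}/(T_1,\ldots,T_d)(A_0)_{f_0}$ after noting it equals its own square (its image in $J_0/J_0^2$ vanishes by the basis assumption); a suitable $f_0\in 1+J_0$ trivializes it. Transversal regularity (b) is a consequence of the fact that $\bar T_1,\ldots,\bar T_d$ can be extended, Zariski-locally near $V(J_0)$, to a system of smooth coordinates on $A_0$ over $R_0$, using smoothness of $A_0$ and $A_0/J_0$ together with the splitting of the conormal sequence $0\to J_0/J_0^2\to \Omega_{A_0/R_0}\otimes A_0/J_0\to\Omega_{(A_0/J_0)/R_0}\to 0$.

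Third, I would lift $f_0$ to $f\in 1+J$ via the surjection $A\twoheadrightarrow A_0$. For this $f$, the map $A\to A_f$ is $\tI$-adically \'etale, so by Proposition \ref{prop:DeltaStrExtEtSm} (1), $\widehat{A_f}$ carries a unique $\delta$-$A$-algebra structure. I would then verify the four assumptions of Proposition \ref{prop:PrismEnvRegSeq}: $\tI$-adic flatness of $\widehat{A_f}$ over $R$ follows from $\tI$-adic smoothness; since $f\equiv 1\pmod J$, one has $\widehat{A_f}/J\widehat{A_f}\cong(A/J)^{\wedge_p}$, which is $p$-adically flat over $R/\xi R$; and the generation and transversal regularity inside $\widehat{A_f}/\tI\widehat{A_f}=(A_0)_{f_0}$ were arranged in Step 2.

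The principal obstacle will be Step 2: producing $f_0\in 1+J_0$ that simultaneously trivializes $J_0/(T_1,\ldots,T_d)$ and witnesses the transversal regularity of $T_1,\ldots,T_d$ relative to $R_0$. The input is the standard statement that a closed immersion of smooth $R_0$-schemes whose conormal sheaf is locally free of rank $d$ is Zariski-locally cut out by part of a coordinate system; the technical care lies in ensuring the localizing element can be taken in $1+J_0$ so as to lift to $1+J$, and in extracting flatness of every intermediate quotient $(A_0)_{f_0}/(T_1,\ldots,T_r)$ over $R_0$ in a uniform way.
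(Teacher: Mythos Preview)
Your proposal is correct and follows essentially the same approach as the paper: reduce modulo $\tI$, use Nakayama to find an element of $1+J_0$ after which $T_1,\ldots,T_d$ generate $J_0$, use differentials and smoothness to find a further element of $1+J_0$ after which the $T_i$ become transversally regular over $R_0$, and lift the product back to $1+J$. The only cosmetic difference is that the paper separates the two localizing elements $g,h\in 1+\oJ$ and phrases transversal regularity via smoothness of the map $\oR[U_1,\ldots,U_d]\to\oA_h$; $U_i\mapsto T_i$, rather than via the conormal sequence, but the content is the same.
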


\begin{proof}
Put $\oR=R/\tI$, $\oA=A/\tI A$, $\oJ=J\cdot \oA$, and
$\oT_i=(T_i\mod \tI)\in \oJ$. Since $\oA$ and $\oA/\oJ$
are smooth over $\oR$, $\oJ$ is finitely generated.
By Nakayama's lemma, $\oJ\cdot \oA_{1+\oJ}$ is generated
by $\oT_1,\ldots, \oT_d$ as $\oJ\cdot \oA_{1+\oJ}$ is
contained in the Jacobson radical of $\oA_{1+\oJ}$. 
Hence there exists $g\in 1+\oJ$ such that $\oJ\cdot \oA_g$
is also generated by $\oT_1,\ldots, \oT_d$.\par
Since $\oA$ and $\oA/\oJ$ are smooth over
$\oR$, $\Omega_{\oA/\oR}$ is a finite projective $\oA$-module,
and the images of $d\oT_i\in \Omega_{\oA/\oR}$ $(1\leq i\leq d)$
in $\Omega_{\oA/\oR}\otimes_{\oA}\oA/\fm$ form
a part of a basis of $\Omega_{\oA/\oR}\otimes_{\oA}\oA/\fm$
for each maximal ideal $\fm$ of $\oA$ containing $\oJ$. 
Let $N$ be the free $\oA$-module $\oA^{\oplus d}$, 
let $\iota$ be the $\oA$-linear map $N\to \Omega_{\oA/\oR};
(a_i)_{1\leq i\leq d}\mapsto \sum_{1\leq i\leq d} a_id\oT_i$,
and let $M$ be the cokernel of $\iota$.
Then, we see that $\iota_{1+\oJ}\colon 
\colon N_{1+\oJ}\to (\Omega_{\oA/\oR})_{1+\oJ}$
is injective, and $M_{1+\oJ}$ is 
a finite projective $\oA_{1+\oJ}$-module.
Since $M$ and $\Omega_{\oA/\oR}$ are finitely presented $\oA$-modules,
the claim still holds after replacing $(-)_{1+\oJ}$ with $(-)_h$
for some $h\in 1+\oJ$; splittings
$M_{1+\oJ}\to (\Omega_{\oA/\oR})_{1+\oJ}$
and $(\Omega_{\oA/\oR})_{1+\oJ}
\to N_{1+\oJ}$
extend to splittings 
$M_h\to (\Omega_{\oA/\oR})_h$
and $(\Omega_{\oA/\oR})_h
\to N_h$ for some $h\in 1+\oJ$. For $h$ as above, the homomorphism 
$\oR[U_1,\ldots, U_d]\to \oA_h;U_i\mapsto T_i$ is smooth.
Hence the image of the sequence $T_1,\ldots, T_d$
in $\oA_h$ is transversally regular relative to $\oR$. 
Now any lifting $f\in 1+J$ of $gh\in 1+\oJ$
satisfies the desired properties. 
Note that we have $\widehat{A_f}/\tI^{n+1}\widehat{A_f}
\cong A_f/\tI^{n+1}A_f$ (Lemma \ref{lem:CompFinGenIdeal}). 
\end{proof}

\begin{corollary}\label{cor:PrismEnvSection}
Let $(R,\xi R)$ be a bounded prism, put $\tI=pR+\xi R$,
and let $f\colon (A,J)\to (A',J')$ be a morphism of $\delta$-pairs
over $(R,\xi R)$ such that the underlying $R$-algebra 
homomorphism of $f$ is $\tI$-adically smooth 
(Definition \ref{def:formallyflat} (1)),
and the homomorphism $A/J\to A'/J'$ induced by $f$ is an
isomorphism. Suppose that there exist a bounded prismatic
envelope $(D,\xi D)$ of $(A,J)$ over $(R,\xi R)$ and $\tI$-adic coordinates
(Definition \ref{def:formallyflat} (2)) of $A'$ over $A$. Then there exists
a bounded prismatic envelope $(D',\xi D')$ of $(A',J')$ over $(R,\xi R)$. Moreover,
if we choose $\tI$-adic coordinates $t_1,\ldots, t_d\in A'$ 
of $A'$ over $A$ and elements $a_1,\ldots, a_d\in A$
whose images in $A'/J'\cong A/J$ coincide with
the images of $t_1,\ldots, t_d$, then $D'/\tI^{n+1}D'$
is  a free $D/\tI^{n+1}D$-module with a basis
$\prod_{i=1}^d\tau_i^{\{m_i\}_{\delta}}$ mod $\tI^{n+1}D'$
$((m_i)\in \N^d)$, where 
$\tau_i=\xi^{-1}(t_i-a_i)\in D'$ and 
$\tau_i^{\{m\}_{\delta}}$ $(m\in \N)$ is
defined as in Proposition \ref{prop:DeltaEnvRegSeq}.
\end{corollary}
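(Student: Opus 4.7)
My plan is to construct $D'$ by base-changing to $(D, \xi D)$ and applying Proposition \ref{prop:DeltaEnvRegSeq}, then verifying the inclusion $J' D' \subset \xi D'$ and the universal property by hand, thereby avoiding the $p$-adic flatness hypothesis of Proposition \ref{prop:PrismEnvRegSeq} (which is not evident in this setting).

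By Lemma \ref{lem:bddPrismEnvBase} parts (1)--(3), constructing a bounded prismatic envelope of $(A', J')$ over $(R, \xi R)$ reduces to constructing one for $(A'', J'')$ over $(D, \xi D)$, where $A''$ is the $\tI$-adic completion of the $\delta$-ring $A' \otimes_A D$ (equipped with the natural induced $\delta$-structure) and $J''$ is the image of $J' \cdot (A' \otimes_A D)$ in $A''$. Setting $T_i := t_i \otimes 1 - 1 \otimes a_i \in J''$, I would define $D'$ to be the $\tI$-adic completion of the $\delta$-envelope of $A''[S_1, \ldots, S_d]/(\xi S_i - T_i)$ over $A''$; the $\tau_i$ of the statement are then the images of $S_i$ in $D'$.

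Next, I would apply Proposition \ref{prop:DeltaEnvRegSeq} (3)--(4) (via Remark \ref{rmk:DeltaEnvRegSeq} (1)) to the data $(D, \xi D)$, $(A'', J'')$, and $T_1, \ldots, T_d$. The hypotheses needed are: (i) $D \to A''$ is $\tI$-adically flat, which follows by base change from the $\tI$-adic smoothness of $A \to A'$ and preservation under completion; (ii) the sequence $T_1, \ldots, T_d$ is transversally regular modulo $\tI$ relative to $D/\tI D$, which follows from the coordinate assumption, since $(A/\tI A)[T_1, \ldots, T_d] \to A'/\tI A'$ is \'etale, so $t_1 - a_1, \ldots, t_d - a_d$ is a transversally regular sequence in $A'/\tI A'$ over $A/\tI A$, and this base-changes to $D/\tI D$; and (iii) the canonical map $D/\tI D \to A''/(\tI A'' + \sum_i T_i A'')$ is an isomorphism, which amounts to the identification $(A'/\tI A')/(t_i - a_i)_{i=1}^d \cong A/\tI A$ arising from the distinguished section $A \to A'/(t_i - a_i)_{i=1}^d$ induced by the isomorphism $A/J \cong A'/J'$. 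Part (3) then yields $\tI$-adic flatness of $D \to D'$, so $(D', \xi D')$ is a bounded prism over $(D, \xi D)$ by Corollary \ref{cor:bddPrismFlatMap}, and part (4) delivers the claimed basis $\prod_i \tau_i^{\{m_i\}_{\delta}}$ of $D'/\tI^{n+1} D'$ over $D/\tI^{n+1} D$.

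Finally, I would verify both $J' D' \subset \xi D'$ and the universal property by hand. The inclusion follows from the fact that the canonical map $A \to A' \to D'$ factors through $D$ by the universal property of $D$, so $JA' \mapsto JD \subset \xi D \subset \xi D'$, while $t_i - a_i = \xi \tau_i \in \xi D'$ by construction; combined with the equality $J' = JA' + \sum_i (t_i - a_i) A'$ (itself a consequence of the $\tI$-adic smoothness and the isomorphism $A/J \cong A'/J'$), this gives $J' D' \subset \xi D'$. For the universal property, a $\delta$-pair morphism from $(A', J')$ to a bounded prism $(B, \xi B)$ restricts along $A \to A'$ to a unique $\delta$-homomorphism $D \to B$ by the universal property of $D$; the $\xi$-regularity of $B$ lets one solve $\xi s_i = t_i - a_i$ uniquely in $B$, and then the universal properties of the $\delta$-envelope together with the $\tI$-adic completeness of $B$ (Corollary \ref{cor:bddPrismRedpcomp}) provide a unique extension $D' \to B$. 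The main obstacle I anticipate is establishing the isomorphism $(A'/\tI A')/(t_i - a_i)_{i=1}^d \cong A/\tI A$ and the equality $J' = JA' + \sum_i (t_i - a_i) A'$, both of which hinge on a careful analysis of the \'etale cover $(A/\tI A)[T_i] \to A'/\tI A'$ together with the distinguished section coming from the isomorphism $A/J \cong A'/J'$.
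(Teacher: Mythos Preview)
Your reduction via Lemma \ref{lem:bddPrismEnvBase} and your verification of transversal regularity (ii) are fine, but step (iii) and the equality $J'=JA'+\sum_i(t_i-a_i)A'$ both fail in general, and with them your argument for $J'D'\subset\xi D'$ and for the basis collapses. The point is that the $\bar T_i$ only generate $\bar J''$ in a neighbourhood of $V(\bar J'')$ (Nakayama), not globally: the fibre $E=(A'/\tI A')/(t_i-a_i)$ is \'etale over $A/\tI A$ but can have extra components away from the image of $\Spec(A/(J+\tI A))$. Concretely, take $A=D=R$, $J=\xi R$, and $A'=R[t]\times R[t]$ with the product $\delta$-structure and coordinate $(t,t)$; set $J'=(\xi,t)\times R[t]$, so $A'/J'\cong R/\xi R$ and $a_1=0$. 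Then modulo $\tI$ one has $\bar J'=(t,e)$ with $e=(0,1)$ the idempotent, while $(\bar T_1)=(t)\subsetneq\bar J'$, and $E=\bar A'/(t)=k\times k\neq k=\bar D$. Your $D'$ in this case is two copies of the correct envelope, and $e\in J'$ does not land in $\xi D'$.

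The paper closes exactly this gap: after base change to $D$ it invokes Proposition \ref{prop:bddPrismEnvSmSmEmb}, which produces $f\in 1+J''$ so that in $(A'')_f$ the $T_i$ do generate $J''$ (and are transversally regular) by Nakayama; then Proposition \ref{prop:PrismEnvEtale} identifies the envelope of the localization with that of $(A'',J'')$, and Proposition \ref{prop:DeltaEnvRegSeq} (4) yields the basis. Note also that your stated reason for avoiding Proposition \ref{prop:PrismEnvRegSeq} disappears after the base change: since $(A'\otimes_AD)/J'(A'\otimes_AD)\cong D/\xi D$, the $p$-adic flatness hypothesis over $D/\xi D$ is trivially satisfied.
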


\begin{proof}
By Lemma \ref{lem:bddPrismEnvBase} (2), 
it suffices to prove the claim for a bounded prismatic 
envelope of $(A'\otimes_AD, J'(A'\otimes_AD))$ over $(D,\xi D)$. 
Since $(A'\otimes_AD)/J'\cdot (A'\otimes_AD)
\cong A'/J'\otimes_{A/J} D/\xi D\cong D/\xi D$
and $t_i\otimes 1$ $(i\in \N\cap [1,d])$ are $\tI$-adic
coordinates of $A'\otimes_AD$ over $D$,
one can apply Proposition \ref{prop:bddPrismEnvSmSmEmb} to 
$T_i=t_i\otimes 1-1\otimes a_i\in J' (A'\otimes_AD)$
and obtain the claims from Proposition \ref{prop:PrismEnvEtale} and 
Proposition \ref{prop:DeltaEnvRegSeq} (4). 
Note that the under the notation in Proposition \ref{prop:bddPrismEnvSmSmEmb},
we have $\widehat{A_f}/\tI\widehat{A_f}\cong A_f/\tI A_f$
and 
$A_f/(\tI A_f+\sum_{i=1}^d T_iA_f)
\cong (A_f/\tI A_f)/J(A_f/\tI A_f)
\cong (A_f/J A_f)/\tI(A_f/J A_f)
\cong (A/J)/\tI (A/J)$. 
\end{proof}

\section{Twisted derivations on a ring}\label{sec:TwistedDeriv}
We first introduce a notion of a derivation on a module twisted by 
a ring endomorphism and summarize
its basic properties.

\begin{definition}\label{def:gammaDeriv}
Let $R$ be a ring, let $A$ be an $R$-algebra, let $\gamma$
be an endomorphism of the $R$-algebra $A$, and let $M$ 
be an $A$-module. A {\it $\gamma$-derivation $\partial$ of $A$
into $M$ over $R$} is an $R$-linear map 
$\partial\colon A\to M$ satisfying $\partial(xy)=\gamma(x)\partial(y)+y\partial(x)$
for all $x, y\in A$. We write $\Der_R^{\gamma}(A,M)$
for the set of all $\gamma$-derivations of $A$ into $M$ over $R$,
which naturally forms an $A$-module. 
\end{definition}

\begin{remark}
Let $R$, $A$, $\gamma$, and $M$ be the same as in Definition 
\ref{def:gammaDeriv}.\par
(1)  For $\partial\in \Der_R^{\gamma}(A,M)$, we have 
$\partial(1)=\partial(1\cdot 1)=\partial(1)+\partial(1)$, 
which implies $\partial(1)=0$, and hence
$\partial(r\cdot 1_A)=0$ for all $r\in R$.\par
(2) The composition with a homomorphism of $A$-modules
$M\to M'$ induces an $A$-linear homomorphism 
$\Der_R^{\gamma}(A,M)\to \Der_R^{\gamma}(A,M')$.\par
(3) Let $R'$ be an $R$-algebra. Put $A'=R'\otimes_RA$,
$\gamma'=\id_{R'}\otimes\gamma$,
and $M'=R'\otimes_RM$. Then the scalar extension under
$R\to R'$ induces an $A$-linear homomorphism 
$\Der_R^{\gamma}(A,M)\to \Der_{R'}^{\gamma'}(A',M'),$
where we regard the target as an $A$-module via
$A\to R'\otimes_RA=A'; a\mapsto 1\otimes a$. 
\end{remark}

\begin{example}\label{ex:gammaDeriv}
Let $R$, $A$, and $\gamma$ be as in Definition \ref{def:gammaDeriv}.
Let $\alpha\in A$ be a non-zero divisor, and assume $(\gamma-1)(A)
\subset \alpha A$. Then $\partial=\alpha^{-1}(\gamma-1)\colon A\to A$
is a $\gamma$-derivation of $A$ into $A$ over $R$ since we have
$(\gamma-1)(xy)=\gamma(x)(\gamma(y)-y)+(\gamma(x)-x)y$ for
$x,y\in A$.
\end{example}

The $\gamma$-derivation $\partial\colon A\to A$ over $R$ in 
Example \ref{ex:gammaDeriv} satisfies 
\begin{equation}\label{eq:divdedDifference}
\partial(xy)=\gamma(x)\partial(y)+y\partial(x)
=\partial(x)y+x\partial(y)+\alpha\partial(x)\partial(y)\quad (x,y\in A).
\end{equation}
Based on this observation, we define a twisted derivation
on a ring $A$ with respect to $\alpha$ without assuming
that $\alpha $ is a non-zero divisor as follows.

\begin{definition}\label{def:alphaDerivation}
 (1) Let $R$ be a ring, let $A$ be an $R$-algebra, and 
let $\alpha$ be an element of $A$. {\it An $\alpha$-derivation 
of $A$ over $R$} is an $R$-linear map $\partial\colon A\to A$
satisfying $\partial(1)=0$ and $\partial(xy)=\partial(x)y+x\partial(y)+\alpha \partial(x)\partial(y)$ for all $x$, $y\in A$.
We write $\Der^{\alpha}_R(A)$ for the set of all $\alpha$-derivations
of $A$ over $R$. When we are given a homomorphism of rings
$S\to A$ and $\alpha$ is the image of an element
$\sigma$ of $S$, we also say a $\sigma$-derivation instead of 
an $\alpha$-derivation and write $\Der_R^{\sigma}(A)$ for
$\Der_R^{\alpha}(A)$. \par
(2) Let $f\colon R\to R'$ be a homomorphism of rings, and let
$g\colon A\to A'$ be a homomorphism over $f$ from an $R$-algebra
to an $R'$-algebra. Let $\alpha$ be an element of $A$.
For $\partial\in \Der_R^{\alpha}(A)$ and $\partial'\in \Der_{R'}^{\alpha}(A')$,
we say that {\it $g$ is compatible with $\partial$ and $\partial'$} if
$\partial'\circ g=g\circ\partial$. We also say that
{\it $\partial'$ is an extension of $\partial$ (along $g$). }
\end{definition}

\begin{remark}\label{rmk:alphaDerivBC}
Let $R$, $A$, and $\alpha$ be the same as in Definition \ref{def:alphaDerivation}, and let $\partial\colon A\to A$ be
an $\alpha$-derivation over $R$. \par
(1) Let $R'$ be an $R$-algebra, put 
$A'=A\otimes_RR'$, and let $\alpha'$ be the image $\alpha\otimes1$
of $\alpha$ in $A'$. Then the $R'$-linear extension
$\partial'\colon A'\to A'$ of $\partial$ is an $\alpha'$-derivation since both 
$\partial'(xy)$ and $\partial'(x)y+x \partial'(y)+\alpha' \partial'(x)\partial'(y)$
for a pair $(x,y)\in A'\times A'$ are $R'$-bilinear, and they coincide
when $(x,y)$ is in the image of $A\times A$. \par
(2) Let $I$ be an ideal of $R$, put $\hR=\varprojlim_n R/I^n$
and $\hA=\varprojlim_n A/I^nA$, and let $\halpha$ be 
the image of $\alpha$ in $\hA$. Then 
$\hpartial=\varprojlim_n\partial\otimes \id_{R/I^n}\colon \hA\to \hA$ is an 
$\halpha$-derivation of $\hA$ over $\hR$. \par
(3) Let $\fa$ be an ideal of $A$, put $\oA=A/\fa$, and let $\oalpha$
be the image of $\alpha$ in $\oA$. 
If $\partial(\fa)\subset \fa$, then the $R$-linear map 
$\opartial\colon \oA\to \oA$ induced by $\partial$ is an $\oalpha$-derivation of
$\oA$ over $R$. If the ideal $\fa$ is generated by a subset $\scrS$ of 
$\fa$, then $\partial(\scrS)\subset \fa$ implies $\partial(\fa)\subset \fa$.\par
(4) Put $A^\partial=\{x\in A\,\vert\, \partial(x)=0\}$. Then $A^\partial$ is an $R$-subalgebra
of $A$, and the map $\partial\colon A\to A$ is $A^\partial$-linear.\par
(5) Let $A'$ be an $R$-subalgebra of $A$ containing $\alpha$. 
If $\partial(A')\subset A'$, then $\partial\vert_{A'}\colon A'\to A'$
is an $\alpha$-derivation over $R$. In general,
we see that the subset $A''=\{x\in A'\,\vert\, \partial(x)\in A'\}$
is an $R$-subalgebra of $A'$. Hence, if $A'$ is
generated by a subset $\scrS$ of $A'$ as an $R$-algebra,
then the assumption $\partial(A')\subset A'$ above holds if 
$\partial(\scrS)\subset A'$.\par
(6) Let $I$ be an ideal of $A$ satisfying $\alpha\partial(I)\subset I$ (e.g.~$\alpha\in I$).
Then noting that $\partial$ is an additive map, we see $\partial(I^{n+1})\subset I^{n}$
$(n\in\N)$ by induction on $n$. Hence $\partial$ induces a 
homomorphism of inverse systems of $R$-modules $(\partial_n)_{n\in\N}
\colon (A/I^{n+1})_{n\in \N}\to (A/I^n)_{n\in \N}$. By taking the
inverse limit over $n$, we obtain an $\halpha$-derivation $\hpartial$
of  $\hA=\varprojlim_nA/I^n$ over $R$, where $\halpha$ denotes the
image of $\alpha$ in $\hA$.
\end{remark}

\begin{lemma}\label{lem:alphaDergammaDer}
Let $R$, $A$, and $\alpha$ be as in Definition \ref{def:alphaDerivation}.\par
(1) Let $\partial\colon A\to A$ be an $\alpha$-derivation of $A$ over $R$.
Then the map $\gamma=1+\alpha \partial\colon A\to A$ is an $R$-algebra
homomorphism, and $\partial$ is a $\gamma$-derivation of $A$ into
$A$ over $R$ (Definition \ref{def:gammaDeriv}). \par
(2) Let $\gamma\colon A\to A$ be an $R$-algebra endomorphism such that
$(\gamma-1)(A)\subset \alpha A$, and suppose that $\alpha$ is regular
on $A$. Then the $R$-linear map $\partial=\alpha^{-1}(\gamma-1)\colon A\to A$
is an $\alpha$-derivation of $A$ over $R$.
\end{lemma}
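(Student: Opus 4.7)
The proof is a direct calculation in both directions, so the plan is to verify the axioms by expansion, using only the Leibniz-type rule for $\partial$ and the multiplicativity of $\gamma$ respectively.

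For part (1), I first note that $\gamma=1+\alpha\partial$ is $R$-linear (as the sum of two $R$-linear maps) and satisfies $\gamma(1)=1+\alpha\partial(1)=1$ by the defining property $\partial(1)=0$. The only substantive step is multiplicativity: I expand
\[
\gamma(x)\gamma(y)=(x+\alpha\partial(x))(y+\alpha\partial(y))=xy+\alpha x\partial(y)+\alpha\partial(x)y+\alpha^{2}\partial(x)\partial(y),
\]
and compare with $\gamma(xy)=xy+\alpha\partial(xy)=xy+\alpha(\partial(x)y+x\partial(y)+\alpha\partial(x)\partial(y))$, where I used the $\alpha$-derivation rule from Definition \ref{def:alphaDerivation} (1). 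The two agree termwise. To see that $\partial$ is a $\gamma$-derivation in the sense of Definition \ref{def:gammaDeriv}, I rewrite the $\alpha$-derivation rule as $\partial(xy)=x\partial(y)+\alpha\partial(x)\partial(y)+\partial(x)y=(x+\alpha\partial(x))\partial(y)+y\partial(x)=\gamma(x)\partial(y)+y\partial(x)$.

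For part (2), the regularity of $\alpha$ ensures that $\partial\colon A\to A$ is well defined by $\alpha\partial(x)=\gamma(x)-x$, and it is $R$-linear because $\gamma-1$ is (and multiplication by $\alpha$ is an injective $R$-linear map). Since $\gamma(1)=1$, I have $\alpha\partial(1)=0$, hence $\partial(1)=0$ by regularity of $\alpha$. For the product rule, I expand
\[
\alpha\partial(xy)=\gamma(xy)-xy=\gamma(x)\gamma(y)-xy=(x+\alpha\partial(x))(y+\alpha\partial(y))-xy,
\]
which simplifies to $\alpha\bigl(\partial(x)y+x\partial(y)+\alpha\partial(x)\partial(y)\bigr)$; cancelling $\alpha$ on both sides (again using regularity) yields the desired identity.

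There is no genuine obstacle here: both directions amount to the same algebraic identity $\gamma(x)\gamma(y)-xy=\alpha(\partial(x)y+x\partial(y)+\alpha\partial(x)\partial(y))$ under the substitution $\gamma=1+\alpha\partial$, read once from left to right and once from right to left. The only subtlety worth flagging is that in (2) one must invoke the regularity of $\alpha$ twice: once to define $\partial$ unambiguously and once to cancel $\alpha$ in the product rule.
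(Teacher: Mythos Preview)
Your proof is correct and follows essentially the same approach as the paper: both parts are direct verifications via expanding $(x+\alpha\partial(x))(y+\alpha\partial(y))$ and comparing with the $\alpha$-derivation rule. The paper is terser in part (2), simply pointing back to the identity \eqref{eq:divdedDifference} observed before Definition~\ref{def:alphaDerivation}, whereas you spell out the cancellation of $\alpha$ explicitly.
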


\begin{proof}
(1) The claim immediately follows from $(1+\alpha\partial)(1)=1$
and the following equalities for $x$, $y\in A$.
\begin{align*}
&(1+\alpha\partial)(x)\cdot (1+\alpha \partial)(y)=
xy+\alpha(\partial(x)y+x\partial(y)+\alpha \partial(x)\partial(y)),\\
&\partial(x)y+x\partial(y)+\alpha \partial(x)\partial(y)=\gamma(x) \partial(y)+\partial(x)y.
\end{align*}
(2) This is what we observed  \eqref{eq:divdedDifference} before Definition \ref{def:alphaDerivation}. 
\end{proof}

Similarly to a usual derivation on a module, we can interpret a twisted
derivation defined above in terms of a section of 
a certain augmented algebra. 

For a ring $A$ and an element $\alpha$ of $A$, we define
the twisted extension $E^{\alpha}(A)$ of $A$ by $A$ with 
respect to $\alpha$ to be the $A$-module $A\oplus A$ 
equipped with the product
$E^{\alpha}(A)\times E^{\alpha}(A)\to E^{\alpha}(A)$ defined by 
$(x_0,x_1)\cdot (y_0,y_1)=(x_0x_1,x_0y_1+x_1y_0+\alpha x_1y_1)$.
Then we have an $A$-linear bijection 
\begin{equation}\label{eq:TwExtIsom}
A[T]/(T^2-\alpha T)
\xrightarrow{\cong}E^{\alpha}(A);x_0+x_1T\mapsto (x_0,x_1)
\end{equation}
compatible with products. This implies that $E^{\alpha}(A)$ is
a commutative ring, and the maps 
$A\xrightarrow{\iota} E^{\alpha}(A)
\overset{\pi_0}{\underset{\pi_{\alpha}}\rightrightarrows} A$ defined by 
$\iota(x)=(x,0)$, $\pi_0(x_0,x_1)=x_0$, and $\pi_{\alpha}(x_0,x_1)=x_0+\alpha x_1$
are ring homomorphisms; the composition of  \eqref{eq:TwExtIsom} with 
the ring homomorphism $A\to A[T]/(T^2-\alpha T)$ coincides with 
$\iota$, and the compositions of \eqref{eq:TwExtIsom} with
$\pi_0$ and $\pi_{\alpha}$ are 
the $A$-algebra homomorphisms 
$A[T]/(T^2-\alpha T)\to A$ defined by $T\mapsto 0$
and by $T\mapsto \alpha$, respectively.

We define an additive  map $\scrD\colon E^{\alpha}(A)\to A$ by 
$\scrD(x_0,x_1)=x_1$. We have $\pi_{\alpha}=\pi_0+\alpha\scrD$.

\begin{lemma}\label{lem:UnivAlphaDeriv}
For $x$, $y\in E^{\alpha}(A)$, we have 
$$\scrD(xy)
=\pi_0(x)\scrD(y)+\scrD(x)\pi_0(y)+\alpha
\scrD(x)\scrD(y)
=\pi_{\alpha}(x)\scrD(y)+\pi_0(y)\scrD(x).$$
\end{lemma}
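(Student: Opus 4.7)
The plan is to prove both equalities by direct computation from the definitions, writing elements of $E^{\alpha}(A)$ in components. Take $x = (x_0, x_1)$ and $y = (y_0, y_1)$ in $E^{\alpha}(A)=A\oplus A$. By the definition of the product on $E^{\alpha}(A)$ given just above the lemma, we have
\[
xy = (x_0 y_0,\; x_0 y_1 + x_1 y_0 + \alpha x_1 y_1),
\]
so applying $\scrD$ yields $\scrD(xy) = x_0 y_1 + x_1 y_0 + \alpha x_1 y_1$.

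For the first equality, I would simply expand the right-hand side using $\pi_0(x)=x_0$, $\pi_0(y)=y_0$, $\scrD(x)=x_1$, and $\scrD(y)=y_1$, giving $x_0 y_1 + x_1 y_0 + \alpha x_1 y_1$, which matches $\scrD(xy)$ term by term. For the second equality, the cleanest route is to use the relation $\pi_{\alpha} = \pi_0 + \alpha \scrD$ noted in the text just before the lemma: then
\[
\pi_{\alpha}(x)\scrD(y) + \pi_0(y)\scrD(x) = \pi_0(x)\scrD(y) + \alpha\scrD(x)\scrD(y) + \pi_0(y)\scrD(x),
\]
which is exactly the expression in the first equality, hence equals $\scrD(xy)$. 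Alternatively, one can directly verify $(x_0+\alpha x_1)y_1 + y_0 x_1 = x_0 y_1 + x_1 y_0 + \alpha x_1 y_1$.

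There is no real obstacle here; both identities are a line of bookkeeping once the product in $E^{\alpha}(A)$ is written out. The only conceptual remark worth making is that the asymmetry between the two formulas reflects the two ring homomorphisms $\pi_0, \pi_{\alpha}\colon E^{\alpha}(A)\to A$, and foreshadows the role of $\scrD$ as a universal $\alpha$-derivation relative to either choice of augmentation, which is presumably exploited in the subsequent interpretation of $\alpha$-derivations as sections of $E^{\alpha}(A)\to A$.
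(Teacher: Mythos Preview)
Your proof is correct and follows essentially the same approach as the paper: the paper's proof simply writes $x=(x_0,x_1)$, $y=(y_0,y_1)$, computes $\scrD(xy)=x_0y_1+x_1y_0+\alpha x_1y_1$, and then regroups this as $(x_0+\alpha x_1)y_1+y_0x_1$. Your version is slightly more detailed but identical in substance.
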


\begin{proof}
Put $x=(x_0,x_1)$ and $y=(y_0,y_1)$. Then we have 
$\scrD(xy)=x_0y_1+x_1y_0+\alpha x_1y_1
=(x_0+\alpha x_1)y_1+y_0x_1$. 
\end{proof}

\begin{lemma}\label{lem:UnivAlphaDerivMonom}
For a positive integer $n$, we have
$$\scrD(x^n)=\sum_{m=1}^n\binom n m\pi_0(x)^{n-m}\alpha^{m-1}\scrD(x)^m,
\qquad x\in E^{\alpha}(A).$$
\end{lemma}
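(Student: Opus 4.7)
The plan is to exploit the explicit presentation \eqref{eq:TwExtIsom} of $E^{\alpha}(A)$ as $A[T]/(T^2 - \alpha T)$, under which $\pi_0$ corresponds to ``setting $T = 0$'' (so $\pi_0(x_0 + x_1T) = x_0$) and $\scrD$ corresponds to reading off ``the coefficient of $T$'' (so $\scrD(x_0 + x_1T) = x_1$). The key observation is that the relation $T^2 = \alpha T$ forces, by a trivial induction on $m$,
$$T^m = \alpha^{m-1}T \qquad (m \geq 1).$$

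Given this, I would write $x = x_0 + x_1 T$ with $x_0 = \pi_0(x)$ and $x_1 = \scrD(x)$, expand by the binomial theorem in the commutative ring $E^{\alpha}(A)$,
$$x^n = (x_0 + x_1 T)^n = \sum_{m=0}^n \binom{n}{m} x_0^{n-m} x_1^m T^m = x_0^n + \sum_{m=1}^n \binom{n}{m} x_0^{n-m} x_1^m \alpha^{m-1} T,$$
and then apply $\scrD$, which simply picks off the coefficient of $T$; this gives precisely the asserted formula.

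An alternative, which avoids passing through the presentation \eqref{eq:TwExtIsom}, is induction on $n$ using the second identity in Lemma \ref{lem:UnivAlphaDeriv}, namely $\scrD(xy) = \pi_{\alpha}(x)\scrD(y) + \pi_0(y)\scrD(x)$: applied to $y = x^n$ with the fact that $\pi_0$ is a ring homomorphism, this yields $\scrD(x^{n+1}) = \pi_{\alpha}(x)\scrD(x^n) + \pi_0(x)^n \scrD(x)$, and substituting the inductive formula together with $\pi_{\alpha}(x) = \pi_0(x) + \alpha\scrD(x)$ followed by Pascal's identity $\binom{n}{m-1} + \binom{n}{m} = \binom{n+1}{m}$ produces the formula for $n+1$.

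There is no real obstacle here; the computation is routine once one notices that $T^m = \alpha^{m-1}T$ collapses the binomial expansion to a single $T$-term. I would present the first argument, since it is shorter and makes transparent why the factor $\alpha^{m-1}$ appears. Boundary cases are harmless: when $m = 1$ the factor $\alpha^{m-1} = 1$, and when $n = 1$ the sum reduces to the single term $\scrD(x)$, matching $\scrD(x)$ directly.
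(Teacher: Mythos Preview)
Your proposal is correct and your first argument is essentially identical to the paper's proof: both pass through the presentation \eqref{eq:TwExtIsom}, expand $(x_0+x_1T)^n$ by the binomial theorem, use $T^m=\alpha^{m-1}T$ for $m\geq 1$, and read off the coefficient of $T$. Your alternative inductive argument via Lemma \ref{lem:UnivAlphaDeriv} is also valid but not used in the paper.
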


\begin{proof}
In $A[T]/(T^2-\alpha T)$, 
we have
$$(x_0+x_1T)^n=
x_0^n+\sum_{m=1}^n\binom n m x_0^{n-m}(x_1T)^m
=x_0^n+\left(\sum_{m=1}^n\binom n mx_0^{n-m}x_1^m\alpha^{m-1}\right)T
$$
for $x_0, x_1\in A$. This completes the proof by \eqref{eq:TwExtIsom}.
\end{proof}

The construction of $E^{\alpha}(A)$ with 
$\iota$, $\pi_0$, $\pi_{\alpha}$, and $\scrD$ is functorial 
in $(A,\alpha)$ in the obvious sense.\par

Suppose that $A$ is an algebra over a ring $R$. 
Then an $\alpha$-derivation of $A$ over $R$ is interpreted
as follows.

\begin{proposition}\label{prop:AlphaDerivInterpret}
For $\partial\in \Der_R^{\alpha}(A)$,  the map $s\colon A\to E^{\alpha}(A)$
defined by $s(x)=(x,\partial(x))$ is an $R$-algebra homomorphism.
This gives a bijection from the set $\Der_R^{\alpha}(A)$
to the set of $R$-algebra homomorphisms
$A\to E^{\alpha}(A)$ whose composition with $\pi_0$ is
the identity. 
\end{proposition}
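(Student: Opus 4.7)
The plan is to verify the two assertions of the proposition by direct computation using the explicit multiplication formula on $E^{\alpha}(A)$ and then exhibit an obvious inverse map.

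First I would check that the assignment $s(x)=(x,\partial(x))$ is indeed a ring homomorphism. The unit condition is immediate: since $\partial(1)=0$ by the very definition of an $\alpha$-derivation, $s(1)=(1,0)$, which is the identity of $E^{\alpha}(A)$. Additivity follows from $R$-linearity of $\partial$ (in particular $\Z$-linearity). For multiplicativity, one computes
\[
s(x)\cdot s(y)=(x,\partial(x))(y,\partial(y))=\bigl(xy,\;x\partial(y)+y\partial(x)+\alpha\partial(x)\partial(y)\bigr),
\]
using the definition of the product on $E^{\alpha}(A)$, and this equals $(xy,\partial(xy))=s(xy)$ precisely by the twisted Leibniz rule in Definition \ref{def:alphaDerivation} (1). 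To upgrade to an $R$-algebra homomorphism, recall that the $R$-algebra structure on $E^{\alpha}(A)$ is the one coming from $R\to A\xrightarrow{\iota}E^{\alpha}(A)$; thus we only need $s(r\cdot 1_A)=\iota(r\cdot 1_A)=(r\cdot 1_A,0)$, which holds because $\partial$ is $R$-linear and kills $1_A$.

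Next, to prove the bijection, I would construct the inverse map. Given any $R$-algebra homomorphism $s'\colon A\to E^{\alpha}(A)$ with $\pi_0\circ s'=\id_A$, write $s'(x)=(x,\partial'(x))$ for a uniquely determined $R$-linear map $\partial'=\scrD\circ s'\colon A\to A$. Then $s'(1)=(1,0)$ gives $\partial'(1)=0$, and expanding the identity $s'(xy)=s'(x)s'(y)$ in the second coordinate via the multiplication formula of $E^{\alpha}(A)$ yields exactly
\[
\partial'(xy)=\partial'(x)y+x\partial'(y)+\alpha\partial'(x)\partial'(y),
\]
so $\partial'\in\Der_R^{\alpha}(A)$. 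The two constructions $\partial\mapsto s$ and $s'\mapsto\partial'$ are manifestly inverse to each other, giving the bijection.

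There is no real obstacle here — the statement is essentially a tautology once one unfolds the definition of the multiplication on $E^{\alpha}(A)=A\oplus A$ (equivalently of $A[T]/(T^2-\alpha T)$ via \eqref{eq:TwExtIsom}) and the twisted Leibniz rule. The only conceptual point worth noting is the compatibility of $s$ with the $R$-algebra (not merely ring) structure, which relies crucially on the $R$-linearity of $\partial$ and on $\partial(1)=0$.
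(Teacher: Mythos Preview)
Your proof is correct and follows essentially the same approach as the paper: both unfold the multiplication on $E^{\alpha}(A)$ and match the resulting conditions on the second coordinate against the axioms of an $\alpha$-derivation. The paper packages the argument slightly more compactly by starting from an arbitrary additive map $\partial$ and deriving simultaneously the necessary and sufficient conditions for $s(x)=(x,\partial(x))$ to be an $R$-algebra homomorphism, whereas you verify the forward and backward directions separately; the content is the same.
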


\begin{proof}
Let $\partial\colon A\to A$ be an additive map and define a map
$s\colon A\to E^{\alpha}(A)$ by $s(x)=(x,\partial(x))$. 
Then we have 
\begin{gather*}
s(x)s(y)=(xy, \partial(x)y+x\partial(y)+\alpha\partial(x)\partial(y))
\quad(x,y\in A),\quad
s(1)=(1,\partial(1)),\\
\quad s(x)=(x,\partial(x))\quad(x\in R).
\end{gather*}
Hence $s$ is an $R$-algebra homomorphism if and only if
\begin{equation*}
\partial(xy)=\partial(x)y+x\partial(y)+\alpha\partial(x)\partial(y)\quad
(x,y\in A),\quad
\partial(1)=0,\quad \partial(x)=0\quad (x\in R).
\end{equation*}
Under the first and second conditions, the last condition
is equivalent to assuming that $\partial$ is $R$-linear.
This completes the proof.
\end{proof}

By using Proposition \ref{prop:AlphaDerivInterpret}, we obtain the following formula
for an $\alpha$-derivation from Lemma \ref{lem:UnivAlphaDerivMonom}.

\begin{lemma}\label{lem:alphaDerivMonom}
For $\partial\in \Der_R^{\alpha}(A)$ and a positive integer $n$, we have
$$\partial(x^n)=\sum_{m=1}^n\binom n m x^{n-m}\alpha^{m-1}\partial(x)^m,
\quad x\in A.$$
\end{lemma}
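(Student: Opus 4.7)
The plan is to reduce this identity to the corresponding statement already proved for the universal twisted derivation $\scrD$ on $E^{\alpha}(A)$, namely Lemma \ref{lem:UnivAlphaDerivMonom}. By Proposition \ref{prop:AlphaDerivInterpret}, the map $s\colon A\to E^{\alpha}(A);\,y\mapsto (y,\partial(y))$ is an $R$-algebra homomorphism, so in particular $s(x^n)=s(x)^n$ for every $n\in\N$ and every $x\in A$.

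Now I would apply Lemma \ref{lem:UnivAlphaDerivMonom} to the element $s(x)\in E^{\alpha}(A)$, which gives
\[
\scrD(s(x)^n)=\sum_{m=1}^n\binom{n}{m}\pi_0(s(x))^{n-m}\alpha^{m-1}\scrD(s(x))^m.
\]
From the explicit formula $s(x)=(x,\partial(x))$ and the definitions of $\pi_0$ and $\scrD$ preceding Lemma \ref{lem:UnivAlphaDeriv}, one reads off $\pi_0(s(x))=x$ and $\scrD(s(x))=\partial(x)$, and similarly $\scrD(s(x^n))=\scrD((x^n,\partial(x^n)))=\partial(x^n)$. Substituting these and combining with $s(x^n)=s(x)^n$ yields exactly the claimed formula.

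The proof is essentially immediate once the universal identity is in place, so I do not expect any obstacle here; the only point worth checking carefully is that the multiplicativity of $s$ was verified in Proposition \ref{prop:AlphaDerivInterpret} using precisely the defining relation of an $\alpha$-derivation, so this step does not circularly depend on the sought formula. An alternative direct proof by induction on $n$ using the product rule $\partial(xy)=\partial(x)y+x\partial(y)+\alpha\partial(x)\partial(y)$ together with Pascal's rule $\binom{n}{m-1}+\binom{n}{m}=\binom{n+1}{m}$ also works, but the route through $E^{\alpha}(A)$ is preferable because it avoids the bookkeeping and mirrors the approach the author has set up in the preceding lemmas.
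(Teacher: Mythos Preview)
Your proof is correct and matches the paper's own proof essentially verbatim: the paper also passes through the section $s\colon A\to E^{\alpha}(A)$ from Proposition \ref{prop:AlphaDerivInterpret}, applies Lemma \ref{lem:UnivAlphaDerivMonom} to $s(x)$, and reads off the result using $\scrD\circ s=\partial$, $\pi_0\circ s=\id_A$, and $s(x^n)=s(x)^n$.
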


\begin{proof}
Let $s$ be the $R$-homomorphism $A\to E^{\alpha}(A);
x\mapsto (x,\partial(x))$ corresponding to $\partial$ by Proposition \ref{prop:AlphaDerivInterpret}.
Then we obtain the desired equality by applying Lemma \ref{lem:UnivAlphaDerivMonom}
to $s(x)\in E^{\alpha}(A)$ and using $\scrD\circ s=\partial$, 
$\pi_0\circ s=\id_A$, and $s(x)^n=s(x^n)$. 
\end{proof}

\begin{lemma}\label{lem:TwExtNilp}
Let $A$ be a ring, let $N$ be a positive integer, and 
let $\alpha$ be an element of $A$ such that $\alpha^N=0$.
Let $J$ be the kernel of $\pi_0\colon E^{\alpha}(A)\to A$. Then
we have $J^{N+1}=0$. 
\end{lemma}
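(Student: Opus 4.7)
The plan is to use the isomorphism \eqref{eq:TwExtIsom} to identify $E^{\alpha}(A)$ with $A[T]/(T^2-\alpha T)$, so that $\pi_0$ becomes the $A$-algebra map sending $T$ to $0$. Under this identification, $J$ is the principal ideal generated by (the class of) $T$, since $E^{\alpha}(A)/J \cong A$ and the quotient map kills $T$; equivalently, $J = \{(0,x_1) \mid x_1 \in A\}$ in the original description.

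The key computation is that in $A[T]/(T^2-\alpha T)$ the relation $T^2 = \alpha T$ gives, by an immediate induction, $T^n = \alpha^{n-1} T$ for every $n \geq 1$. Therefore any product of $N+1$ elements of $J$, written as $(a_1 T)(a_2 T)\cdots(a_{N+1} T)$ with $a_i \in A[T]/(T^2-\alpha T)$, equals $a_1 a_2 \cdots a_{N+1}\, T^{N+1} = a_1 a_2\cdots a_{N+1}\,\alpha^N T$, which vanishes by the hypothesis $\alpha^N=0$. Since $J^{N+1}$ is additively generated by such products, we conclude $J^{N+1}=0$.

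There is essentially no obstacle here; the statement is a direct consequence of the defining relation $T^2 = \alpha T$ and the nilpotency of $\alpha$. The only thing worth writing out carefully is the inductive identity $J^{n+1} \subseteq \alpha^n J$ (equivalently, every element of $J^{n+1}$ has the form $(0,\alpha^n z)$ for some $z \in A$), which can be checked directly from the multiplication law $(0,x_1)(0,y_1) = (0,\alpha x_1 y_1)$ on $E^{\alpha}(A)$ without even passing through the polynomial presentation.
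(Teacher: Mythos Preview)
Your proof is correct and follows the same approach as the paper: identify $E^{\alpha}(A)$ with $A[T]/(T^2-\alpha T)$ via \eqref{eq:TwExtIsom}, observe that $J$ is generated by $T$, and use $T^{N+1}=\alpha^N T=0$. The paper's proof is a one-line version of what you wrote.
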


\begin{proof}
The lemma follows from \eqref{eq:TwExtIsom} and 
$T^{N+1}=\alpha^NT=0$ in $A[T]/(T^2-\alpha T)$. 
\end{proof}

Similarly to the case of usual derivations, we see that
the following two propositions hold for our $\alpha$-derivations.

\begin{proposition}\label{prop:TwistDerEtaleExt}
Let $R$ be a ring, let $A$ be an $R$-algebra, let $I$ be an ideal of $A$, and 
let $f\colon A\to A'$ be an $I$-adically \'etale 
(Definition \ref{def:formallyflat} (1)) homomorphism of $R$-algebras
such that $A'$ is $I$-adically complete and separated.
Let $\alpha$ be an $I$-adically nilpotent
element of $A$, and let $\alpha'$ be the 
image of $\alpha$ in $A'$. Then, for
each $\partial\in \Der_R^{\alpha}(A)$,
there exists a unique extension 
$\partial'\in \Der_R^{\alpha'}(A)$ of $\partial$
along $f$ (Definition \ref{def:alphaDerivation} (2)).
\end{proposition}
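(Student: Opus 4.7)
The plan is to translate the problem, via Proposition \ref{prop:AlphaDerivInterpret}, into a standard \'etale lifting along a surjection with nilpotent kernel. Let $s\colon A\to E^{\alpha}(A)$ be the $R$-algebra section of $\pi_0$ corresponding to $\partial$, and let $E^{\alpha}(f)\colon E^{\alpha}(A)\to E^{\alpha'}(A')$ denote the ring homomorphism induced functorially by $f$. Finding a $\partial'\in\Der_R^{\alpha'}(A')$ with $\partial'\circ f=f\circ\partial$ is then equivalent to finding an $R$-algebra section $s'\colon A'\to E^{\alpha'}(A')$ of $\pi_0'$ satisfying $s'\circ f=E^{\alpha}(f)\circ s$.

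For each positive integer $n$, let $f_n\colon A/I^n\to A'/I^nA'$ be the reduction of $f$, and write $s_n$, $\alpha_n$, $\alpha'_n$ for the appropriate reductions. Since $\alpha$ is $I$-adically nilpotent, both $\alpha_n$ and $\alpha'_n$ are nilpotent, so Lemma \ref{lem:TwExtNilp} shows that the kernel of $\pi_0^{(n)'}\colon E^{\alpha'_n}(A'/I^nA')\to A'/I^nA'$ is nilpotent. Regard $E^{\alpha'_n}(A'/I^nA')$ as an $A/I^n$-algebra via the reduction of $E^{\alpha}(f)\circ s$; since its composition with $\pi_0^{(n)'}$ equals $f_n$, the quotient by this nilpotent ideal recovers the $A/I^n$-algebra structure $f_n$ on $A'/I^nA'$. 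By \'etaleness of $f_n$, there exists a unique $A/I^n$-algebra homomorphism $s'_n\colon A'/I^nA'\to E^{\alpha'_n}(A'/I^nA')$ lifting the identity; such an $s'_n$ is automatically an $R$-algebra section of $\pi_0^{(n)'}$ and satisfies $s'_n\circ f_n=(E^{\alpha}(f)\circ s)_n$.

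Uniqueness at each level makes $(s'_n)_{n\geq 1}$ an inverse system; passing to the limit, using the $I$-adic completeness and separatedness of $A'$ together with the identification $E^{\alpha'}(A')\cong \varprojlim_n E^{\alpha'_n}(A'/I^nA')$ (componentwise, since $E^{\alpha'}(A')=A'\oplus A'$ as an $A'$-module), yields the desired section $s'$, and hence via Proposition \ref{prop:AlphaDerivInterpret} the desired $\partial'$. Uniqueness of $\partial'$ follows by the same argument in reverse: any candidate produces sections $s'_n$ subject to the same unique lifting property, so $\partial'$ is forced by separatedness of $A'$. The only slightly delicate point is to set up the $A/I^n$-algebra structure on $E^{\alpha'_n}(A'/I^nA')$ coming from $s$ and $f$ so that standard \'etale lifting applies cleanly; once this is arranged, the rest is routine.
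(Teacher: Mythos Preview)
Your overall strategy matches the paper's: translate via Proposition \ref{prop:AlphaDerivInterpret} to a section, reduce modulo powers of $I$, invoke \'etale lifting along a nilpotent surjection, and pass to the limit. However, there is a genuine gap in the step where you ``write $s_n$ for the appropriate reductions.'' The section $s\colon A\to E^{\alpha}(A)$ does \emph{not} in general factor through $A/I^n$: this would require $\partial(I^n)\subset I^n$, which fails already for the ordinary derivation $\partial=\tfrac{d}{dt}$ on $A=R[t]$ with $I=(t)$ and $\alpha=0$, where $\partial(t)=1\notin I$. So the map you need to feed into the \'etale lifting does not exist as stated.

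The paper repairs this with two moves. First, since $\alpha$ is $I$-adically nilpotent, the $I$-adic and $(I+\alpha A)$-adic topologies coincide, so one may enlarge $I$ and assume $\alpha\in I$. Second, under this assumption Remark \ref{rmk:alphaDerivBC} (6) gives $\partial(I^{n+1})\subset I^n$, which yields \emph{shifted} maps $s_n\colon A/I^{n+1}\to E^{\alpha}(A/I^n)$. One then lifts along the \'etale map $f_{n+1}\colon A/I^{n+1}\to A'/I^{n+1}A'$ against $\pi_0\colon E^{\alpha'}(A'/I^nA')\to A'/I^nA'$ (nilpotent kernel by Lemma \ref{lem:TwExtNilp}) to obtain unique $s'_n\colon A'/I^{n+1}A'\to E^{\alpha'}(A'/I^nA')$, and the inverse limit over $n$ gives the desired $s'$ on the complete $A'$. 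Your argument becomes correct once you insert this index shift; without it the reduction step is unjustified.
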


\begin{proof}
Since $\alpha$ is $I$-adically nilpotent, the $I$-adic topology
of an $A$-module is the same as the $\alpha A+I$-adic topology. 
Therefore, by replacing $I$ with $\alpha A+I$, we may 
assume $\alpha\in I$. 
Put $A/I^nA$, $A'_n=A'/I^nA'$,
and $f_n=(f\mod I^n)\colon A_n\to A'_n$
for a positive integer $n$. Then, by Remark \ref{rmk:alphaDerivBC} (6), 
we have $\partial(I^{n+1})\subset I^n$, which implies that the
$R$-homomorphism $s\colon A\to E^{\alpha}(A)$ corresponding
to $\partial$ by Proposition \ref{prop:AlphaDerivInterpret}
induces an $R$-homomorphism $s_n\colon A_{n+1}\to E^{\alpha}(A_n)$,
which fits into the following commutative diagram of $R$-algebras.
\begin{equation*}
\xymatrix@R=15pt{
A'_{n+1}\ar[rr]^{\text{pr}_{A',n}}&& A'_n\\
A_{n+1}\ar[u]^{f_{n+1}} \ar[r]^(.4){s_n}\ar[r]& 
E^{\alpha}(A_n)\ar[r]& E^{\alpha'}(A'_n)\ar[u]_{\pi_{A'_n,0}}
}
\end{equation*}
Here $\text{pr}_{A',n}$ denotes the projection map.
As $f_{n+1}\colon A_{n+1}\to A'_{n+1}$ is \'etale, and the kernel of $\pi_{A'_n,0}$ is 
nilpotent by Lemma \ref{lem:TwExtNilp}, 
there exists a unique
homomorphism $s_n'\colon A'_{n+1}\to E^{\alpha'}(A'_n)$ 
making the above diagram with $s_n'$ added commutative.
We obtain the proposition by taking the inverse limit over $n$
and using Proposition \ref{prop:AlphaDerivInterpret}.
\end{proof}

\begin{proposition}\label{prop:TwistDerivCoord}
Let $R$ be a ring, let $I$ be an ideal of $R$, and 
let $A$ be an $R$-algebra $I$-adically complete
and separated such that $R\to A$ is $I$-adically smooth
(Definition \ref{def:formallyflat} (1)).
Let $\alpha$ be an $I$-adically nilpotent element of 
$A$. Suppose that we are given $I$-adic coordinates (Definition \ref{def:formallyflat} (2))
$T_1,\ldots, T_d$ of $A$ over $R$ and elements $a_1,\ldots, a_d\in A$.
Then there exists a unique $\partial\in \Der_{R}^{\alpha}(A)$
such that
$\partial(T_i)=a_i$.
\end{proposition}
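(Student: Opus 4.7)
The plan is to reduce, via Proposition \ref{prop:AlphaDerivInterpret}, to constructing a unique $R$-algebra section $s\colon A\to E^{\alpha}(A)$ of $\pi_0$ sending each $T_i$ to $(T_i,a_i)$; then the associated $\partial$ automatically satisfies $\partial(T_i)=a_i$. Following the templates of Propositions \ref{prop:DeltaStrExtEtSm}(2) and \ref{prop:TwistDerEtaleExt}, I would build $s$ level-by-level modulo powers of $I$ and pass to the inverse limit, exploiting that $E^{\alpha}(A)$ is $A\oplus A$ as an $A$-module, so its reduction modulo $I^{n}A$ coincides with $E^{\alpha_n}(A_n)$ where $A_n=A/I^nA$ and $\alpha_n$ is the image of $\alpha$ in $A_n$.

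For each positive integer $n$, put $B_{n+1}=R/I^{n+1}[X_1,\ldots,X_d]$ and let $g_{n+1}\colon B_{n+1}\to A_{n+1}$ be the $R$-algebra homomorphism sending $X_i$ to the image of $T_i$; by Definition \ref{def:formallyflat}(2) this is \'etale. I first define an $R$-algebra homomorphism $s_n'\colon B_{n+1}\to E^{\alpha_n}(A_n)$ by $X_i\mapsto (T_i\bmod I^nA,\,a_i\bmod I^nA)$. Since $\alpha$ is $I$-adically nilpotent, $\alpha_n$ is nilpotent in $A_n$, and the kernel of $\pi_0\colon E^{\alpha_n}(A_n)\to A_n$ is nilpotent by Lemma \ref{lem:TwExtNilp}. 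The composition $\pi_0\circ s_n'$ coincides with the reduction $B_{n+1}\to A_{n+1}\to A_n$ of $g_{n+1}$, so the \'etaleness of $g_{n+1}$ produces a unique $R$-algebra homomorphism $s_n\colon A_{n+1}\to E^{\alpha_n}(A_n)$ factoring $s_n'$ through $g_{n+1}$ and whose composition with $\pi_0$ is the projection $A_{n+1}\to A_n$. The uniqueness of each $s_n$ forces compatibility with the transition maps, so the inverse limit gives an $R$-algebra homomorphism $s\colon A\to \varprojlim_n E^{\alpha_n}(A_n)=E^{\alpha}(A)$ (using that $A$ is $I$-adically complete and separated) which is a section of $\pi_0$ satisfying $s(T_i)=(T_i,a_i)$. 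Proposition \ref{prop:AlphaDerivInterpret} converts $s$ into the desired $\partial\in\Der_R^{\alpha}(A)$.

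Uniqueness is obtained by running the same argument on any candidate $\partial'$: its section $s'$ reduces modulo $I^nA$ to an $R$-algebra map $A_{n+1}\to E^{\alpha_n}(A_n)$ that factors through $g_{n+1}$ and sends $X_i$ to $(T_i,a_i)$ modulo $I^nA$, hence equals $s_n$ by the uniqueness in the \'etale lifting; taking the limit gives $s'=s$ and so $\partial'=\partial$. The only point requiring attention is the compatibility of the $s_n$ in $n$ together with identification of their limit as a section of $\pi_0\colon E^{\alpha}(A)\to A$, but this follows formally from the uniqueness in the \'etale lifting and from compatibility of $E^{\alpha}$ with inverse limits of $A$-modules, so I do not anticipate any genuine obstacle.
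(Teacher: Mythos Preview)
Your proposal is correct and follows essentially the same approach as the paper: both reduce via Proposition \ref{prop:AlphaDerivInterpret} to constructing the section $s$ level by level using the \'etale lifting of the polynomial map $B_n\to A_n$ against the nilpotent kernel of $\pi_0$ (Lemma \ref{lem:TwExtNilp}), then pass to the inverse limit. The only cosmetic difference is indexing: the paper builds $s_n'\colon A_n\to E^{\alpha}(A_n)$ directly at each level, whereas you use the shifted version $s_n\colon A_{n+1}\to E^{\alpha_n}(A_n)$ as in Proposition \ref{prop:TwistDerEtaleExt}.
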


\begin{proof}
For a positive integer $n$, put $R_n=R/I^n$ and $A_n=A/I^nA$, 
and let $B_n$ be the polynomial algebra $R_n[S_1,\ldots, S_d]$
over $R_n$. Then the $R_n$-homomorphism 
$B_n\to A_n;S_i\mapsto (T_i\text{ mod }I^n)$ is \'etale by the assumptions 
on $R\to A$ and $T_i$, and we have the following commutative
diagram of rings
\begin{equation*}
\xymatrix@R=10pt{
A_n\ar[r]^{\text{id}_{A_n}}& A_n\\
B_n\ar[u] &\\
R_n\ar[u] \ar[r]&E^{\alpha}(A_n)\ar[uu]_{\pi_{A_n,0}}.
}
\end{equation*}
The $R_n$-homomorphism 
$s_n\colon B_n\to 
E^{\alpha}(A_n)$ sending $S_i$ to $(T_i, a_i)$ is a unique
homomorphism making the above diagram with $s_n$ added commutative
and sending $T_i$ to $a_i$ after composed with 
$\scrD_{A_n}\colon E^{\alpha}(A_n)\to A_n\colon (x_0,x_1)\mapsto x_1$.
As $B_n\to A_n$ is \'etale, and the kernel $\pi_{A_n,0}$ is 
nilpotent by Lemma \ref{lem:TwExtNilp}, there exists a unique
extension $s_n'\colon A_n\to E^{\alpha}(A_n)$ of 
$s_n$ satisfying $\pi_{A_n,0}\circ s'_n=\id_{A_n}$. 
We obtain the proposition by taking the inverse limit over $n$
and using Proposition \ref{prop:AlphaDerivInterpret}.
\end{proof}

\begin{lemma}\label{lem:alphaDerivEq}
Let $f\colon R\to R'$ be a homomorphism of rings, and let $g\colon A\to A'$ be a homomorphism 
over $f$ from an $R$-algebra to an $R'$-algebra. 
Let $\alpha$ be an element of $A$, and put $\alpha'=g(\alpha)$.
Let $I$ be an ideal of $A$ such that $A'$ is $I$-adically separated,
and let $\scrS$ be a subset of $A$ such that the $R$-subalgebra
$R[\scrS]$ of $A$ is dense with respect to the $I$-adic topology.
Then, for $\partial\in \Der^{\alpha}_R(A)$ and 
$\partial'\in \Der^{\alpha'}_{R'}(A')$ satisfying 
$\alpha\partial(I)\subset I$ and $\alpha'\partial'(IA')\subset IA'$
(which always hold when $I$ is generated by elements of $R$, or $\alpha\in I$),
$g$ is compatible with $\partial$ and $\partial'$
(Definition \ref{def:alphaDerivation} (2)) if and only if $\partial'\circ g(s)=g\circ \partial(s)$
for all $s\in \scrS$. 
\end{lemma}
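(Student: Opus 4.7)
The ``only if'' direction is trivial since $\scrS\subset A$. For the converse, my plan is to show that
$$B := \{x \in A \mid \partial'(g(x)) = g(\partial(x))\}$$
is an $R$-subalgebra of $A$ containing $\scrS$ that is closed in the $I$-adic topology of $A$, so that the density of $R[\scrS]$ forces $B = A$.

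First I will verify that $B$ is an $R$-subalgebra. Since $\partial$ is $R$-linear with $\partial(1)=0$ and $\partial'$ is $R'$-linear with $\partial'(1)=0$, both derivations vanish on the image of $R$, giving $R\cdot 1_A \subset B$; additivity of $\partial$, $\partial'$ and $g$ gives closure under sums. For closure under products, if $x, y \in B$, the Leibniz formula for $\alpha$-derivations together with $g(\alpha)=\alpha'$ and $x, y \in B$ yields
$$g(\partial(xy)) = \partial'(g(x))g(y) + g(x)\partial'(g(y)) + \alpha'\partial'(g(x))\partial'(g(y)) = \partial'(g(xy)),$$
whence $R[\scrS] \subset B$ by the hypothesis on $\scrS$.

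Next I will establish closedness using Remark \ref{rmk:alphaDerivBC} (6): the assumption $\alpha\partial(I) \subset I$ gives $\partial(I^n) \subset I^{n-1}$ for $n \geq 1$, and similarly $\alpha'\partial'(IA') \subset IA'$ gives $\partial'((IA')^n) \subset (IA')^{n-1}$. For an arbitrary $x \in A$, density of $R[\scrS]$ for the $I$-adic topology provides, for each $n \geq 1$, an element $y_n \in R[\scrS] \subset B$ with $z_n := x - y_n \in I^n$, whence
$$g(\partial(x)) - \partial'(g(x)) = g(\partial(z_n)) - \partial'(g(z_n)) \in (IA')^{n-1}$$
because $g(\partial(z_n)) \in g(I^{n-1}) \subset (IA')^{n-1}$ and $\partial'(g(z_n)) \in \partial'((IA')^n) \subset (IA')^{n-1}$. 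Since this holds for every $n \geq 1$ and $A'$ is $I$-adically separated, $g(\partial(x)) = \partial'(g(x))$, giving $x \in B$ and therefore $B = A$.

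The parenthetical sufficient conditions for $\alpha\partial(I) \subset I$ are immediate: if $I$ is generated by elements coming from $R$, these generators are killed by $\partial$, so $\partial(I) \subset I$; and if $\alpha \in I$, then $\alpha\partial(I) \subset \alpha A \subset I$. No substantial obstacle arises; the only point requiring care is the interaction of the twisted Leibniz rule with $g$, which is exactly controlled by the identity $g(\alpha) = \alpha'$.
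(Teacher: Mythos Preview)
Your proof is correct. The argument is essentially the same as the paper's in content---both exploit that the compatibility condition is closed under the $R$-algebra operations, that $R[\scrS]$ surjects onto each $A/I^{n+1}$, and that Remark~\ref{rmk:alphaDerivBC}\,(6) controls $\partial$ and $\partial'$ modulo powers of $I$---but the packaging differs. The paper recasts compatibility as commutativity of the square
\[
\xymatrix{A\ar[d]_g\ar[r]^(.45){s}&E^{\alpha}(A)\ar[d]^{E^{\alpha}(g)}\\ A'\ar[r]^(.45){s'}&E^{\alpha'}(A')}
\]
via Proposition~\ref{prop:AlphaDerivInterpret}, then checks it on the finite quotients $A/I^{n+1}\to E^{\alpha'}(A'/I^nA')$, where the image of $\scrS$ generates. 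Your direct equalizer argument avoids the $E^{\alpha}$-formalism entirely and is arguably more transparent for this particular lemma; the paper's formulation, on the other hand, is set up to be reused verbatim in the $\delta$-compatible analogue (Lemma~\ref{lem:alphaDerivDeltaEq}) and in the commutativity and \'etale-extension arguments later in \S\ref{sec:TwistedDeriv}--\ref{sec:TwistedDerivDelta}, where the section viewpoint is the organizing principle.
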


\begin{proof}
The necessity is trivial. We prove the sufficiency.
Since $\partial'$ is $R$-linear with respect to the action of $R$ via $f$,
we may assume $R'=R$ and $f=\id$. 
Let $s$ (resp.~$s'$) be the $R$-homomorphism
 $A\to E^{\alpha}(A)$ 
(resp.~$A'\to E^{\alpha'}(A')$) corresponding to $\partial$
(resp.~$\partial'$) by Proposition \ref{prop:AlphaDerivInterpret}. 
Put $A_n=A/I^n$,  $A'_n=A'/I^nA'$, and $g=(g\mod I^n)\colon A_n\to A_n'$
 for each integer $n\geq 1$.
It suffices to prove the the left diagram below is commutative.
\begin{equation*}
\xymatrix{
A\ar[d]_{g}\ar[r]^(.4)s& E^{\alpha}(A)\ar[d]^{E^{\alpha}(g)}\\
A'\ar[r]^(.4){s'}&E^{\alpha'}(A')
}
\qquad
\xymatrix{
A_{n+1}\ar[d]_{g_{n+1}}\ar[r]^(.45){s_n}& E^{\alpha}(A_n)\ar[d]^{E^{\alpha}(g_n)}\\
A'_{n+1}\ar[r]^(.45){s'_n}&E^{\alpha'}(A'_n)
}
\end{equation*}
By Remark \ref{rmk:alphaDerivBC} (6) and the assumption on $\partial$ and $\partial'$, 
the left diagram induces the right one for each integer $n\geq 1$.
The right diagram is commutative since the $R$-algebra
$A_{n+1}$ is generated by the image of $\scrS$.
Since $A'$ is $I$-adically separated, this implies that the
left diagram is also commutative.
\end{proof}

\begin{lemma}\label{lem:TwDerivEtExtComp}
Let $R$ be a ring, let $A\xrightarrow{f} A'\xrightarrow{g} A''$
be homomorphisms of $R$-algebras, and let $\alpha$ be
an element of $A$. Let $I$ be an ideal of $A$, and assume
that $\alpha$ is $I$-adically nilpotent, $f$ is $I$-adically \'etale,
and $A''$ is $I$-adically separated. Let $\partial$, $\partial'$,
and $\partial''$ be $\alpha$-derivations of $A$, $A'$, and $A''$,
respectively, over $R$. If $\partial'$ and $\partial''$ are extensions
of $\partial$ along $f$ and $g\circ f$, respectively, 
then $\partial''$ is an extension of $\partial'$ along $g$.
\end{lemma}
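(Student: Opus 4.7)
The plan is to imitate the proof of Proposition~\ref{prop:TwistDerEtaleExt}, invoking the uniqueness (rather than existence) of étale lifts. First, since $\alpha$ is $I$-adically nilpotent, I replace $I$ by $\alpha A+I$ without altering any $I$-adic topology in sight, and so assume $\alpha\in I$. Applying Remark~\ref{rmk:alphaDerivBC}~(6) to each of $\partial$, $\partial'$, $\partial''$ then shows that all three derivations descend to the quotients modulo $I^{n+1}$ for every $n\ge 0$, yielding maps of inverse systems.

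Next, I translate the three derivations via Proposition~\ref{prop:AlphaDerivInterpret} into $R$-algebra sections $s\colon A\to E^{\alpha}(A)$, $s'\colon A'\to E^{\alpha}(A')$, and $s''\colon A''\to E^{\alpha}(A'')$ of the corresponding $\pi_0$'s. The two given compatibility hypotheses then read $s'\circ f=E^{\alpha}(f)\circ s$ and $s''\circ (g\circ f)=E^{\alpha}(g\circ f)\circ s$, while the goal becomes $s''\circ g=E^{\alpha}(g)\circ s'$. Since $A''$ is $I$-adically separated, it is enough to verify this equality modulo $I^nA''$ for every $n\ge 1$; using the continuity just observed, this reduces the problem to showing
\[
s''_n\circ g_{n+1} \;=\; E^{\alpha}(g_n)\circ s'_n \colon A'/I^{n+1}A' \longrightarrow E^{\alpha}(A''/I^nA''),
\]
with the obvious notation for the reductions.

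The two ring maps in this last equality both lift the same homomorphism $A'/I^{n+1}A'\to A''/I^nA''$ (namely, the projection followed by $g_n$) along $\pi_{A''_n,0}$, and they agree after precomposition with $f_{n+1}\colon A/I^{n+1}\to A'/I^{n+1}A'$, as is immediate from the two compatibility hypotheses and the functoriality of $E^{\alpha}(-)$. Since $f_{n+1}$ is étale and $\ker\pi_{A''_n,0}$ is nilpotent by Lemma~\ref{lem:TwExtNilp} (using that $\alpha^n\in I^n$ vanishes in $A''_n$), the étale lifting property forces the two maps to coincide; passing to the inverse limit over $n$ then yields $s''\circ g=E^{\alpha}(g)\circ s'$, equivalently $\partial''\circ g=g\circ \partial'$. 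The only mildly delicate point is the bookkeeping needed to check that $\partial'$, $\partial''$, and $\partial''\circ g$ all descend compatibly to the appropriate reductions, so that the étale uniqueness at each finite level can actually be invoked; once that is in place the rest is automatic.
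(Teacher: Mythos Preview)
Your proof is correct and follows essentially the same approach as the paper's own proof: reduce to $\alpha\in I$, translate to sections via Proposition~\ref{prop:AlphaDerivInterpret}, pass to the quotients $A'/I^{n+1}A'\to E^{\alpha}(A''/I^nA'')$ using Remark~\ref{rmk:alphaDerivBC}~(6), and then conclude by the uniqueness in the \'etale lifting property combined with the nilpotence of $\ker\pi_0$ from Lemma~\ref{lem:TwExtNilp}.
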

\begin{proof}
Since $\alpha$ is $I$-adically nilpotent, the $I$-adic topology of an
$A$-module is the same as the $(\alpha A+I)$-adic topology. 
Therefore we may assume $\alpha\in I$ by replacing $I$ with $\alpha A+I$.
Let $s'$ and $s''$ be the $R$-homomorphisms $A'\to E^{\alpha}(A')$
and $A''\to E^{\alpha}(A'')$ corresponding to $\partial'$
and $\partial''$, respectively, by Proposition \ref{prop:AlphaDerivInterpret}.
For an integer $n\geq 1$, put $A_n=A/I^n$, $A'_n=A'/I^nA'$, and $A''_n=A''/I^nA''$,
and let $f_n\colon A_n\to A'_n$ and $g_n\colon A'_n\to A''_n$ denote
the homomorphisms induced by $f$ and $g$, respectively. By Remark \ref{rmk:alphaDerivBC} (6),
$s'$ and $s''$ induce $R$-homomorphisms
$s'_n\colon A'_{n+1}\to E^{\alpha}(A'_n)$ 
and $s''_n\colon A''_{n+1}\to E^{\alpha}(A''_n)$ for each integer $n\geq 1$.
Since $A''\to \varprojlim_n A''_n$ is injective,
it suffices to prove that the two homomorphisms
$E^{\alpha}(g_{n})\circ s'_n\colon A'_{n+1}\to E^{\alpha}(A'_n)
\to E^{\alpha}(A''_n)$ and $s''_n\circ g_{n+1}\colon A'_{n+1}\to A''_{n+1}
\to E^{\alpha}(A''_n)$ coincide for each positive integer $n$. 
They coincide after composing with the \'etale
homomorphism $f_{n+1}\colon A_{n+1}\to A'_{n+1}$
(resp.~the homomorphism $\pi_0\colon E^{\alpha}(A''_n)\to A''_n$ with
nilpotent kernel (Lemma \ref{lem:TwExtNilp})) since $\partial'$ and 
$\partial''$ are extensions of $\partial$
(resp.~by the construction of $s'$ and $s''$).
Hence we have $E^{\alpha}(g_n)\circ s'_n=s''_n\circ g_{n+1}$. 
\end{proof}

Finally we discuss the commutativity of two twisted derivations.
Let $A$ be a ring, and let $\alpha_1$ and $\alpha_2$ be elements
of $A$. Then we have the following isomorphisms by 
\eqref{eq:TwExtIsom}. 
\begin{align}\label{eq:TwExtCompDescription}
E^{\alpha_2}(E^{\alpha_1}(A))
&\cong E^{\alpha_1}(A)[T_2]/(T_2(T_2-\alpha_2))
\cong (A[T_1]/(T_1(T_1-\alpha_1)))[T_2]/(T_2(T_2-\alpha_2))\\
&\cong A[T_1,T_2]/(T_1(T_1-\alpha_1),T_2(T_2-\alpha_2)).\notag
\end{align}
By comparing it with the isomorphisms obtained by exchanging
$\alpha_1$ and $\alpha_2$, we obtain a canonical
isomorphism of $A$-algebras
\begin{equation}\label{eq:TwExtComp}
E^{\alpha_2}(E^{\alpha_1}(A))\cong E^{\alpha_1}(E^{\alpha_2}(A)).
\end{equation}
We identify the two $A$-algebras by \eqref{eq:TwExtComp}.

\begin{lemma}\label{lem:TwDerivSectionCommutativity}
Let $A$ be a ring, and let $\alpha_1$ and $\alpha_2$ be elements
of $A$. For $i\in \{1,2\}$, let $\partial_i$ be an $\alpha_i$-derivation 
of $A$ over $R$, and let $s_i$ be the $R$-homomorphism
$A\to E^{\alpha_i}(A)$ corresponding to $\partial_i$
(Proposition \ref{prop:AlphaDerivInterpret}). 
Assume that $\partial_1(\alpha_2)=0$ and $\partial_2(\alpha_1)=0$,
which are equivalent to $s_1(\alpha_2)=\alpha_2$ and
$s_2(\alpha_1)=\alpha_1$, respectively. 
We consider the following two compositions.
\begin{equation*}
A\xrightarrow{s_1} E^{\alpha_1}(A)\xrightarrow
{E^{\alpha_1}(s_2)}E^{s_2(\alpha_1)}(E^{\alpha_2}(A)),\qquad
A\xrightarrow{s_2} E^{\alpha_2}(A)\xrightarrow{E^{\alpha_2}(s_1)}
E^{s_1(\alpha_2)}(E^{\alpha_1}(A)).
\end{equation*}

(1) For $x\in A$, we have $\partial_2\circ\partial_1(x)=\partial_1\circ\partial_2(x)$
if and only if $E^{\alpha_1}(s_2)\circ s_1(x)=E^{\alpha_2}(s_1)\circ s_2(x)$. \par
(2) We have
$\partial_1\circ\partial_2=\partial_2\circ\partial_1$ if and only if 
$E^{\alpha_1}(s_2)\circ s_1=E^{\alpha_2}(s_1)\circ s_2$. 
\end{lemma}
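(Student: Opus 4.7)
The plan is to reduce everything to a direct expansion in the polynomial presentation provided by \eqref{eq:TwExtIsom} and \eqref{eq:TwExtCompDescription}. Observe first that (2) is the universally quantified form of (1), so the whole content lies in the pointwise statement (1).

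For (1), I would identify $E^{\alpha_i}(A)\cong A[T_i]/(T_i(T_i-\alpha_i))$ via \eqref{eq:TwExtIsom}, under which $s_i(a)=a+\partial_i(a)T_i$. The identification \eqref{eq:TwExtComp} then presents both $E^{\alpha_1}(E^{\alpha_2}(A))$ and $E^{\alpha_2}(E^{\alpha_1}(A))$ as the common algebra
\[
B:=A[T_1,T_2]/(T_1(T_1-\alpha_1),\,T_2(T_2-\alpha_2)),
\]
which is a free $A$-module with basis $1,T_1,T_2,T_1T_2$ (by iterating \eqref{eq:TwExtIsom} in both orders).

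Next I would unwind $E^{\alpha_1}(s_2)$ explicitly: it is the functorial image of $s_2\colon A\to E^{\alpha_2}(A)$, landing in $E^{s_2(\alpha_1)}(E^{\alpha_2}(A))$, which equals $E^{\alpha_1}(E^{\alpha_2}(A))$ thanks to the hypothesis $s_2(\alpha_1)=\alpha_1$. In the polynomial description it sends $a\in A$ to $s_2(a)=a+\partial_2(a)T_2$ and fixes $T_1$. Therefore
\[
E^{\alpha_1}(s_2)\circ s_1(x)=s_2(x)+s_2(\partial_1(x))T_1
=x+\partial_2(x)T_2+\partial_1(x)T_1+\partial_2(\partial_1(x))T_1T_2
\]
inside $B$. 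By the symmetric computation, using $s_1(\alpha_2)=\alpha_2$,
\[
E^{\alpha_2}(s_1)\circ s_2(x)=x+\partial_1(x)T_1+\partial_2(x)T_2+\partial_1(\partial_2(x))T_1T_2.
\]
Comparing coefficients in the free $A$-basis $\{1,T_1,T_2,T_1T_2\}$, the two expressions are equal if and only if $\partial_2\partial_1(x)=\partial_1\partial_2(x)$, proving (1); (2) then follows immediately.

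The only real obstacle is bookkeeping: one must check that under the canonical identification \eqref{eq:TwExtComp} the maps $E^{\alpha_1}(s_2)$ and $E^{\alpha_2}(s_1)$ have the polynomial descriptions used above, and that the hypotheses $s_i(\alpha_j)=\alpha_j$ (for $\{i,j\}=\{1,2\}$) are exactly what is needed so that the codomains coincide with $B$ rather than with twisted extensions over different elements. Both checks are routine unwinding of the definitions of $E^{\alpha}(-)$ and of functoriality in the pair $(A,\alpha)$.
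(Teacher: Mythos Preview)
Your proof is correct and follows essentially the same approach as the paper: both expand the two compositions in the polynomial presentation $A[T_1,T_2]/(T_1(T_1-\alpha_1),T_2(T_2-\alpha_2))$ via \eqref{eq:TwExtCompDescription}, obtain the same explicit formulas, and compare coefficients in the free $A$-basis $1,T_1,T_2,T_1T_2$ to reduce (1) to the equality $\partial_2\partial_1(x)=\partial_1\partial_2(x)$.
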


\begin{proof}
The claim (2) follows from the claim (1). Let us prove (1). 
Under the description \eqref{eq:TwExtCompDescription} 
of $E^{\alpha_2}(E^{\alpha_1}(A))$,
we have
$$
E^{\alpha_2}(s_1)\circ s_2(x)=
E^{\alpha_2}(s_1)(x+\partial_2(x)T_2)
=(x+\partial_1(x)T_1)+(\partial_2(x)+\partial_1\circ\partial_2(x)T_1)T_2$$
for $x\in A$. Exchanging $(s_1,\alpha_1)$ and $(s_2,\alpha_2)$, we obtain
$$E^{\alpha_1}(s_2)\circ s_1(x)
=(x+\partial_2(x)T_2)+(\partial_1(x)+\partial_2\circ\partial_1(x)T_2)T_1.$$
We obtain the desired equivalence since 
$1$, $T_1$, $T_2$, and $T_1T_2$ form a basis of 
$E^{\alpha_1}(E^{\alpha_2}(A))=E^{\alpha_2}(E^{\alpha_1}(A))$
as an $A$-module.
\end{proof}

\begin{lemma}\label{lem:EtMapDerivComm}
Let $R$ be a ring, let $f\colon A\to A'$ be a homomorphism of 
$R$-algebras, and let $\alpha_1$ and $\alpha_2$ be elements
of $A'$. Let $I$ be an ideal of $A$ such that $\alpha_1$ and $\alpha_2$
are $I$-adically nilpotent, $f$ is $I$-adically \'etale
(Definition \ref{def:formallyflat} (1)), and $A'$ is $I$-adically separated.
Let $\scrS$ be a subset of $A$ such that the $R$-subalgebra 
$R[\scrS]$ of $A$ is dense with respect to the $I$-adic topology. 
Then, for $\partial_i\in \Der_R^{\alpha_i}{A'}$ $(i=1,2)$
satisfying $\partial_1(\alpha_2)=0$ and $\partial_2(\alpha_1)=0$,
we have $\partial_1\circ\partial_2=\partial_2\circ\partial_1$
if $\partial_1\circ\partial_2(f(s))=\partial_2\circ\partial_1(f(s))$
for all $s\in \scrS$.
\end{lemma}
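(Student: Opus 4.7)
The plan is to combine Lemma \ref{lem:TwDerivSectionCommutativity} with a continuity-plus-étale-lifting argument in the spirit of Lemmas \ref{lem:alphaDerivEq} and \ref{lem:TwDerivEtExtComp}. Let $s_i \colon A' \to E^{\alpha_i}(A')$ be the $R$-algebra homomorphism corresponding to $\partial_i$ (Proposition \ref{prop:AlphaDerivInterpret}), and set $g_1 = E^{\alpha_1}(s_2)\circ s_1$ and $g_2 = E^{\alpha_2}(s_1)\circ s_2$. These are two $R$-algebra homomorphisms $A' \to E := E^{\alpha_1}(E^{\alpha_2}(A'))$, both of which split the natural projection $\pi\colon E \to A'$. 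By Lemma \ref{lem:TwDerivSectionCommutativity} (2), it suffices to prove $g_1 = g_2$.

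First I would show $g_1 \circ f = g_2 \circ f$ on all of $A$. The hypothesis together with Lemma \ref{lem:TwDerivSectionCommutativity} (1) applied pointwise gives $g_1(f(s)) = g_2(f(s))$ for $s \in \scrS$; since $g_1, g_2$ are ring homomorphisms, the set $P = \{x \in A' \mid g_1(x) = g_2(x)\}$ is an $R$-subalgebra of $A'$ containing $f(R[\scrS])$. To extend this to $f(A)$, I would put $I' = IA' + \alpha_1 A' + \alpha_2 A'$: because $\alpha_1, \alpha_2$ are $I$-adically nilpotent, $I'$ and $IA'$ define the same topology on $A'$. Since $\alpha_i \in I'$, Remark \ref{rmk:alphaDerivBC} (6) gives $\partial_i(I'^{m+1}) \subset I'^m$ for every $m \in \N$, so the $R$-linear map $\delta := \partial_1 \circ \partial_2 - \partial_2\circ \partial_1$ satisfies $\delta(I'^{m+2}) \subset I'^m$ and is in particular continuous for the $IA'$-adic topology. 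Since $A'$ is $I$-adically separated, $P = \delta^{-1}(0)$ is closed; combined with the $I$-adic continuity of $f$ and the $I$-adic density of $R[\scrS]$ in $A$, this forces $f(A) \subset P$.

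Next, I would extend $g_1 = g_2$ from $f(A)$ to $A'$ via the infinitesimal lifting property of $I$-adically étale maps. For each $n \geq 1$, choose $N$ with $\alpha_1^N, \alpha_2^N \in I^n A'$. Writing $K = \ker \pi$, a direct computation using $T_1^2 = \alpha_1 T_1$ and $T_2^2 = \alpha_2 T_2$ (and the commutation of $T_1$ with $T_2$) shows that for $m \geq 2$, $K^m$ is the $A'$-submodule generated by $\alpha_1^{m-1} T_1$, $\alpha_2^{m-1} T_2$, and $\alpha_1^{a}\alpha_2^{b} T_1 T_2$ with $a+b = m-2$; for $m = 2N+1$ either $a \geq N$ or $b \geq N$, so every generator lies in $I^n E$, and hence $K^{2N+1} \subset I^n E$. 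Thus the kernel of $E/I^n E \to A'/I^n A'$ is nilpotent. Since $A/I^n \to A'/I^n$ is étale and the $R$-algebra maps $g_{i,n}\colon A'/I^n \to E/I^n E$ both split $\pi_n$ and agree after restriction along $f_n$, the standard unique infinitesimal lifting property of étale morphisms forces $g_{1,n} = g_{2,n}$. Finally, $E$ is a finite free $A'$-module, so it is $I$-adically separated since $A'$ is, and therefore $g_1 = g_2$.

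The main obstacle is the closedness of $P$: a naive argument fails because $\partial_i$ need not preserve the $I$-adic filtration on $A'$. The remedy is the passage to the enlarged ideal $I'$, which is absorbed by $\alpha_1,\alpha_2$ and so is preserved (up to a shift) by each $\partial_i$; once $\delta$ is seen to be continuous, the rest is formal, requiring only the nilpotence bound $K^{2N+1} \subset I^n E$ for the final étale-lifting step.
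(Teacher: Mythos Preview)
Your overall strategy matches the paper's: reduce via Lemma~\ref{lem:TwDerivSectionCommutativity} to showing that the two sections $g_1 = E^{\alpha_1}(s_2)\circ s_1$ and $g_2 = E^{\alpha_2}(s_1)\circ s_2$ coincide, and then combine an $I$-adic density argument with the infinitesimal lifting property of \'etale maps. Your first step (showing $g_1\circ f = g_2\circ f$ on all of $A$ by continuity of $\delta = \partial_1\partial_2 - \partial_2\partial_1$) is fine and is essentially the infinite-level version of what the paper does at each finite level.

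The gap is in your second step: the maps $g_{i,n}\colon A'/I^nA' \to E/I^nE$ are \emph{not} well-defined in general. Here $I^nE$ is the ideal generated via the left $A'$-structure $\iota$, but $g_i$ is a section of $\pi$, not an $A'$-algebra map, and $g_i\circ f \neq \iota\circ f$ unless the $\partial_j$ vanish on $f(A)$. Concretely, in coordinates $g_1(x) = x + \partial_1(x)T_1 + \partial_2(x)T_2 + \partial_2\partial_1(x)T_1T_2$, and for $x \in I^nA'$ there is no reason for $\partial_j(x)$ to lie in $I^nA'$. (Take $A=A'=k[t]$, $f=\id$, $I=(t)$, $\alpha_1=t$, and the $t$-derivation with $\partial_1(t)=1$: then $\partial_1(t^n)=(2^n-1)t^{n-1}\notin (t^n)$.) So $g_i(I^nA')\not\subset I^nE$ and you cannot pass to $A'/I^n$.

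The paper avoids this by first enlarging $I$ so that $\alpha_1,\alpha_2\in I$, and then using the \emph{shifted} maps $s_{i,n}\colon A'_{n+1}\to E^{\alpha_i}(A'_n)$ coming from $\partial_i(I^{m+1})\subset I^m$ (Remark~\ref{rmk:alphaDerivBC}~(6)). This gives well-defined comparisons $A'_{n+2}\rightrightarrows E^{\alpha_2}(E^{\alpha_1}(A'_n))$; these agree after composing with the \'etale map $f_{n+2}$ (since $A_{n+2}$ is generated by the image of $\scrS$) and after composing with the projection to $A'_n$ (nilpotent kernel by Lemma~\ref{lem:TwExtNilp}), hence agree. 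Your argument can be repaired along the same lines: once you have $g_1\circ f = g_2\circ f$, replace $E/I^nE$ by $E^{\alpha_2}(E^{\alpha_1}(A'_n))$ with source $A'_{n+2}$, and the \'etale lifting goes through.
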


\begin{proof}
For $i\in \{1,2\}$, let $s_i$ be the $R$-homomorphism
$A'\to E^{\alpha_i}(A')$ corresponding to $\partial_i$
by Proposition \ref{prop:AlphaDerivInterpret}. 
By Lemma \ref{lem:TwDerivSectionCommutativity} (2), 
it suffices to prove that the diagram
\begin{equation*}
\xymatrix@C=50pt{
A'\ar[r]^(.45){s_2}\ar@{=}[d]&
E^{\alpha_2}(A')\ar[r]^(.45){E^{\alpha_2}(s_1)}&
E^{\alpha_2}(E^{\alpha_1}(A'))\ar@{=}[d]^{\eqref{eq:TwExtComp}}\\
A'\ar[r]^(.45){s_1}&
E^{\alpha_1}(A')\ar[r]^(.45){E^{\alpha_1}(s_2)}&
E^{\alpha_1}(E^{\alpha_2}(A'))
}
\end{equation*}
is commutative.
Since $\alpha_1$ and $\alpha_2$ are $I$-adically nilpotent,
the $I$-adic topology of an $A$-module is the same as the
$(\alpha_1A+\alpha_2A+I)$-adic topology. Hence, we may replace
$I$ by $\alpha_1A+\alpha_2 A+I$ and assume 
$\alpha_1,\alpha_2\in I$. Put $A_n=A/I^n$, $A'_n=A'/I^nA'$,
and $f_n=(f\mod I^n)\colon A_n\to A'_n$ for each integer $n\geq 1$.
Then, by Remark \ref{rmk:alphaDerivBC} (6), the homomorphism
$s_i$ induces a homomorphism $s_{i,n}\colon A'_{n+1}\to E^{\alpha_i}(A'_{n})$
for $n\geq 1$. Since $A'\to \varprojlim_n A'/I^nA'$ is injective by assumption,
it suffices to prove that the following diagram is commutative.
\begin{equation}
\xymatrix@C=50pt{
A'_{n+2}\ar[r]^(.45){s_{2,n+1}}\ar@{=}[d]&
E^{\alpha_2}(A'_{n+1})\ar[r]^(.45){E^{\alpha_2}(s_{1,n})}&
E^{\alpha_2}(E^{\alpha_1}(A'_n))\ar@{=}[d]^{\eqref{eq:TwExtComp}}\\
A'_{n+2}\ar[r]^(.45){s_{1,n+1}}&
E^{\alpha_1}(A'_{n+1})\ar[r]^(.45){E^{\alpha_1}(s_{2,n})}&
E^{\alpha_1}(E^{\alpha_2}(A'_n))
}\tag{$*$}
\end{equation}
Since the $R$-algebra $A_{n+2}$ is generated by the image of $\scrS$,
Lemma \ref{lem:TwDerivSectionCommutativity} (1)
shows that the diagram $(*)$ is commutative after
composing with the \'etale homomorphism 
$f_{n+2}\colon A_{n+2}\to A'_{n+2}$. The homomorphisms 
$E^{\alpha_i}(E^{\alpha_j}(A'_n))
\xrightarrow{E^{\alpha_i}(\pi_0)}
E^{\alpha_i}(A'_n)\xrightarrow{\pi_0} A'_n$
for $(i,j)\in \{(1,2), (2,1)\}$ are the same, its
kernel is nilpotent by Lemma \ref{lem:TwExtNilp}, 
and its compositions with the two horizontal maps in 
$(*)$ both become the projection map $A'_{n+2}\to A'_n$.
Having these two observations, we see that
$(*)$ is commutative. 
\end{proof}

\section{Twisted derivations on a {$\delta$}-ring}\label{sec:TwistedDerivDelta}
Let $R$ be a $\delta$-ring, let $A$ be a $\delta$-$R$-algebra,
and let $\alpha$ be an element of $A$. We define
a compatibility of an $\alpha$-derivation of $A$ over $R$
with the $\delta$-structure on $A$, and study its basic properties.\par
For $\partial\in \Der_R^{\alpha}(A)$, it is natural to ask
if the $R$-endomorphism $\gamma=1+\alpha\partial$ of $A$
(Lemma \ref{lem:alphaDergammaDer} (1)) is a $\delta$-homomorphism.
For $x\in A$, we have
\begin{align*}
\gamma(\delta(x))&=\delta(x)+\alpha\partial(\delta(x)),\\
\delta(\gamma(x))&=
\delta(x)+\delta(\alpha)\partial(x)^p+\alpha^p\delta(\partial(x))+p\delta(\alpha)\delta(\partial(x))-\sum_{\nu=1}^{p-1}p^{-1}
\binom p\nu x^{p-\nu}\alpha^{\nu}\partial(x)^{\nu}.
\end{align*}
This computation naturally leads us to define a compatibility of $\partial$
with the $\delta$-structure on $A$ as follows.

\begin{definition}\label{def:alphaDerivDeltaComp}
Let $R$ be a $\delta$-ring, let $A$ be a $\delta$-$R$-algebra,
let $\alpha$ be an element of $A$ satisfying $\delta(\alpha)\in \alpha A$,
and let $\beta$ be an element of $A$ such that $\delta(\alpha)=\beta\alpha$.
Let $\partial\colon A\to A$ be an $\alpha$-derivation over $R$
(Definition \ref{def:alphaDerivation} (1)). For 
$x\in A$, we say that {\it $x$ is $\delta$-compatible with respect to 
$\partial$ and $\beta$} if the following equality holds.
\begin{equation}\label{eq:AlphaDerivDerComp}
\partial(\delta(x))=(\alpha^{p-1}+p\beta)\delta(\partial(x))
+\beta \partial(x)^p-\sum_{\nu=1}^{p-1}p^{-1}\binom p \nu x^{p-\nu}
\alpha^{\nu-1}\partial(x)^{\nu}. 
\end{equation}
We say that $\partial$ is {\it $\delta$-compatible with respect to $\beta$}
if every $x\in A$ satisfies \eqref{eq:AlphaDerivDerComp}. 
We write $\Der_{R,\delta}^{\alpha,\beta}(A)$ for the set of $\alpha$-derivations
of $A$ over $R$ $\delta$ compatible with respect to $\beta$. 
When we are given a homomorphism of $\delta$-rings $S\to A$,
and $\alpha$ and $\beta$  are the images of elements $\sigma$
and $\tau$ of $S$ satisfying $\delta(\sigma)=\sigma\tau$,
we also call $\partial\in \Der_{R,\delta}^{\alpha,\beta}(A)$ a $\sigma$-derivation of $A$ over $R$ 
$\delta$-compatible with respect to $\tau$, and
write $\Der_{R,\delta}^{\sigma,\tau}(A)$
for $\Der_{R,\delta}^{\alpha,\beta}(A)$.
\end{definition}

\begin{example}
Let $A$ be a $\delta$-ring, and let $t$ and $q$ be elements of $A$
such that $\delta(t)=\delta(q)=0$. Then 
$\alpha=t(q-1)$ and $\beta=t^{p-1}\sum_{\nu=1}^{p-1}p^{-1}\binom p\nu(q-1)^{\nu-1}$ satisfy $\delta(\alpha)=\alpha\beta$
by $\delta(\alpha)=t^p\delta(q-1)$ and
$0=\delta((q-1)+1)=\delta(q-1)-\sum_{\nu=1}^{p-1}p^{-1}\binom p\nu(q-1)^{\nu}$. If $t$ and $q-1$ are regular on $A$, and $\gamma\colon A\to A$
is a $\delta$-homomorphism satisfying $(\gamma-1)(A)\subset t(q-1)A$,
then we have $\partial=t^{-1}(q-1)^{-1}(\gamma-1)\in \Der_{\Z, \delta}^{\alpha,\beta}(A)$ by Lemma \ref{lem:alphaDergammaDer} (2) and the computation before Definition 
\ref{def:alphaDerivDeltaComp}.
\end{example}

\begin{lemma}\label{lem:alphaDerivDeltaEquiv}
Let $R$, $A$, $\alpha$, and $\beta$ be as in Definition \ref{def:alphaDerivDeltaComp}, let $\partial$ be an $\alpha$-derivation of 
$A$ over $R$, and let $\gamma$ be the $R$-algebra endomorphism 
$1+\alpha \partial$ of $A$ (Lemma \ref{lem:alphaDergammaDer} (1)). 
Let $x$ be an element of $A$. \par
(1) If $x$ is $\delta$-compatible with respect to $\partial$ and $\beta$,
then we have $\gamma(\delta(x))=\delta(\gamma(x))$. 
The converse is also true if $\alpha$ is a non-zero-divisor of $A$.\par
(2) If $x$ is $\delta$-compatible with respect to $\partial$ and $\beta$,
then we have $\partial(\varphi(x))=(\alpha^{p-1}+p\beta)\varphi(\partial(x))$.
The converse is also true if $A$ is $p$-torsion free.
\end{lemma}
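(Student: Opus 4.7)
Both statements are direct computations, and my plan is to show that the two quantities $\gamma(\delta(x))-\delta(\gamma(x))$ and $\partial(\varphi(x))-(\alpha^{p-1}+p\beta)\varphi(\partial(x))$ both factor as an explicit element times the quantity
\[
 \Phi(x):=\partial(\delta(x))-(\alpha^{p-1}+p\beta)\delta(\partial(x))-\beta\partial(x)^p+\sum_{\nu=1}^{p-1}p^{-1}\tbinom{p}{\nu}x^{p-\nu}\alpha^{\nu-1}\partial(x)^{\nu},
\]
so that $x$ is $\delta$-compatible with respect to $\partial$ and $\beta$ if and only if $\Phi(x)=0$. The precise factorizations I aim to establish are
\[
 \gamma(\delta(x))-\delta(\gamma(x))=\alpha\,\Phi(x),\qquad
 \partial(\varphi(x))-(\alpha^{p-1}+p\beta)\varphi(\partial(x))=p\,\Phi(x).
\]
Once these identities are in hand, the forward implications of (1) and (2) are immediate, and the converses follow by cancelling $\alpha$ (assuming $\alpha$ is a non-zero-divisor) in (1), respectively $p$ (assuming $A$ is $p$-torsion free) in (2).

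For the first identity, I would compute $\gamma(\delta(x))=\delta(x)+\alpha\partial(\delta(x))$ directly from the definition $\gamma=1+\alpha\partial$. For $\delta(\gamma(x))=\delta(x+\alpha\partial(x))$, I would expand via \eqref{eq:DeltaStrSum} and then use \eqref{eq:DeltaStrProd} on $\delta(\alpha\partial(x))$, substituting the hypothesis $\delta(\alpha)=\alpha\beta$. Re-indexing the sum by $\nu=p-i$ turns the cross term from \eqref{eq:DeltaStrSum} into $\sum_{\nu=1}^{p-1}p^{-1}\tbinom{p}{\nu}x^{p-\nu}\alpha^\nu\partial(x)^\nu$; factoring out the common $\alpha$ then yields the claimed expression $\alpha\,\Phi(x)$.

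For the second identity, I would use the lifting of Frobenius $\varphi(y)=y^p+p\delta(y)$ applied to both $x$ and $\partial(x)$. The key input is Lemma \ref{lem:alphaDerivMonom} applied to $x^p$, which gives
\[
 \partial(x^p)=\alpha^{p-1}\partial(x)^p+\sum_{m=1}^{p-1}\tbinom{p}{m}x^{p-m}\alpha^{m-1}\partial(x)^m.
\]
Subtracting $(\alpha^{p-1}+p\beta)\varphi(\partial(x))$ from $\partial(\varphi(x))=\partial(x^p)+p\partial(\delta(x))$, the leading $\alpha^{p-1}\partial(x)^p$ terms cancel; and writing $\tbinom{p}{m}=p\cdot p^{-1}\tbinom{p}{m}$ for $1\le m\le p-1$ lets me pull a factor of $p$ out of the remaining sum, producing $p\,\Phi(x)$.

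There is no substantive obstacle here; the only care required is bookkeeping the binomial coefficients and the re-indexing $\nu=p-i$ correctly, and keeping track of the hypothesis $\delta(\alpha)=\alpha\beta$ so that $\delta(\alpha\partial(x))$ expands cleanly. The same quantity $\Phi(x)$ appearing in both identities is what makes the two halves of the lemma parallel.
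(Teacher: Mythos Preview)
Your proposal is correct and takes essentially the same approach as the paper: for (1) the paper simply refers back to the computation preceding Definition~\ref{def:alphaDerivDeltaComp}, which is precisely your expansion of $\delta(\gamma(x))=\delta(x+\alpha\partial(x))$ via \eqref{eq:DeltaStrSum}, \eqref{eq:DeltaStrProd}, and $\delta(\alpha)=\alpha\beta$; and for (2) the paper carries out exactly your computation using $\varphi(y)=y^p+p\delta(y)$ and Lemma~\ref{lem:alphaDerivMonom}, concluding that the difference equals $p$ times (LHS$-$RHS) of \eqref{eq:AlphaDerivDerComp}. Your explicit introduction of $\Phi(x)$ and the clean factorizations $\alpha\Phi(x)$ and $p\Phi(x)$ make the parallelism between (1) and (2) slightly more transparent than the paper's phrasing, but the substance is identical.
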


\begin{proof}
(1) This follows from the computation given before Definition 
\ref{def:alphaDerivDeltaComp}. \par
(2) We have 
\begin{align*}
\partial(\varphi(x))&=\partial(x^p)+p\partial(\delta(x)),\\
(\alpha^{p-1}+p\beta)\varphi(\partial(x))
&=(\alpha^{p-1}+p\beta)(\partial(x)^p+p\delta(\partial(x)))\\
&=\alpha^{p-1}\partial(x)^p+p\{\beta \partial(x)^p+(\alpha^{p-1}+p\beta)\delta(\partial(x))\}.
\end{align*}
By Lemma \ref{lem:alphaDerivMonom},  
we have $\partial(x^p)-\alpha^{p-1}\partial(x)^p=p\sum_{\nu=1}^{p-1}p^{-1}
\binom p\nu x^{p-\nu}\alpha^{\nu-1}\partial(x)^{\nu}.$
Hence the difference between $\partial(\varphi(x))$ and 
$(\alpha^{p-1}+p\beta)\varphi(\partial(x))$ coincides with 
that of both sides of \eqref{eq:AlphaDerivDerComp} multiplied by $p$.
\end{proof}

\begin{remark}\label{rmk:alphaDerivDeltaBC} 
Let $R$, $A$, $\alpha$, and $\beta$ be the same as in Definition
\ref{def:alphaDerivDeltaComp}, and let $\partial\colon A\to A$ be
an $\alpha$-derivation over $R$ $\delta$-compatible with respect
to $\beta$.\par
(1) Let $I$ be an ideal of $R$ containing $p$, and let
$\hR$ (resp.~$\hA$) be $\varprojlim_n R/I^n$
(resp.~$\varprojlim_nA/I^nA$) equipped with the completion
of the $\delta$-structure of $R$ (resp.~$A$). 
Let $\halpha$ and $\hbeta$ be the images of 
$\alpha$ and $\beta$ in $\hA$. Then 
the $\halpha$-derivation $\hpartial=\varprojlim_n \partial\otimes_RR/I^n
\colon \hA\to \hA$ over $\hR$ (Remark \ref{rmk:alphaDerivBC} (2))
is $\delta$-compatible with respect to $\hbeta$. 
This follows from the equality in $A/I^nA$
$(n\in \N, n>0)$ for $x\in A/I^{n+1}A$:
\begin{equation*}
\partial_n(\delta_{n}(x))
=(\alpha^{p-1}+p\beta)\delta_{n}(\partial_{n+1}(x))
+\beta(\pi_n\circ \partial_{n+1}(x))^p-
\sum_{\nu=1}^{p-1}p^{-1}\binom p \nu
\pi_n(x)^{p-\nu}\alpha^{\nu-1}(\pi_n\circ \partial_{n+1}(x))^{\nu},
\end{equation*}
where $\partial_m\colon A/I^mA\to A/I^mA$ and 
$\delta_{m}\colon A/I^{m+1}A\to A/I^mA$ denote the
maps induced by $\partial$ and $\delta$, and $\pi_m$
denotes the canonical projection $A/I^{m+1}A\to A/I^mA$. \par
(2) Let $\fa$ be a $\delta$-ideal of $A$, 
let $\oA$ be the $\delta$-$R$-algebra $A/\fa$,
and let $\oalpha$ and $\obeta$ be the images of $\alpha$ and $\beta$
in $\oA$. If $\partial(\fa)\subset \fa$, then 
the $\oalpha$-derivation $\opartial\colon \oA\to \oA$ induced by $\partial$
over $R$ 
(Remark \ref{rmk:alphaDerivBC} (3)) is $\delta$-compatible with
respect to $\obeta$. For an element $a$ of $\fa$,
$\partial(a)\in \fa$ implies $\partial(\delta(a))\in \fa$ by the formula
\eqref{eq:AlphaDerivDerComp}. Hence, if $\fa$ is
generated by a subset $\scrS$ of $\fa$ as a $\delta$-ideal of $A$,
then $\partial(\scrS)\subset \fa$ implies 
$\partial(\cup_{n\in \N}\delta^n(\scrS))\subset \fa$, whence 
$\partial(\fa)\subset \fa$. \par
(3) For $x\in A$, $\partial(x)=0$ implies $\partial(\delta(x))=0$.
This implies that the $R$-subalgebra $A^\partial=\{x\in A\,\vert\, \partial(x)=0\}$
of $A$ (Remark \ref{rmk:alphaDerivBC} (4)) is a $\delta$-subalgebra.
\par
(4) Let $A'$ be a $\delta$-$R$-subalgebra of $A$ containing
$\alpha$ and $\beta$. If $\partial(A')\subset A'$, then 
$\partial\vert_{A'}\colon A'\to A'$ is an $\alpha$-derivation over $R$
$\delta$-compatible with respect to $\beta$. 
In general, we see that the $R$-subalgebra
$A''=\{x\in A'\,\vert\,\partial(x)\in A'\}$ (Remark \ref{rmk:alphaDerivBC} (5))
is a $\delta$-$R$-subalgebra of $A'$, i.e., $\delta(A'')\subset A''$.
Hence, if $A'$ is generated by a subset $\scrS$ of $A'$ as a 
$\delta$-$R$-algebra, then 
the assumption $\partial(A')\subset A'$ above follows from 
$\partial(\scrS)\subset A'$. 
\par
(5)  Let $I$ be an ideal of $A$ containing $p$ and satisfying $\alpha\partial(I)\subset I$ 
(e.g.~$\alpha\in I$), let $\hA$ be $\varprojlim_n A/I^n$ equipped with the completion
of the $\delta$-structure of $A$, and let $\halpha$ and $\hbeta$ be the images of $\alpha$ and $\beta$ in $\hA$.
Then we see that the $\halpha$-derivation $\hpartial\colon \hA\to \hA$ over $R$
induced by $\partial$ (Remark \ref{rmk:alphaDerivBC} (6)) is $\delta$-compatible with 
respect to $\hbeta$ similarly to the remark (1) above
by using the homomorphisms $\partial_n\colon A/I^{n+1}\to A/I^n$ $(n\geq 1)$
induced by $\partial$ (Remark \ref{rmk:alphaDerivBC} (6)).
\end{remark}

We can interpret the $\delta$-compatibility of an $\alpha$-derivation
of $A$ over $R$ in terms of the $\delta$-compatibility
of the corresponding section of the twisted extension
$E^{\alpha}(A)$ with respect to a  $\delta$-structure
on $E^{\alpha}(A)$, which we define below.
Let $A$ be a $\delta$-ring, and let $\alpha$ and $\beta$
be elements of $A$ satisfying $\delta(\alpha)=\beta\alpha$.

\begin{lemma}\label{lem:DeltaStrTwExt}
There exists a unique $\delta$-algebra structure on 
$E^{\alpha}(A)$ such that $\iota\colon A\to E^{\alpha}(A)$
is a $\delta$-homomorphism and $\delta(0,1)=(0,\beta)$.
This $\delta$-structure is explicitly given by the following formula.
\begin{equation}\label{eq:DeltaStrTwExt}
\delta(x_0,x_1)=(\delta(x_0),
(\alpha^{p-1}+p\beta)\delta(x_1)+\beta x_1^p-
\sum_{\nu=1}^{p-1}p^{-1}\binom p\nu x_0^{p-\nu}\alpha^{\nu-1}
x_1^{\nu})
\end{equation}
\end{lemma}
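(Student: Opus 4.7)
The plan is to work in the presentation $E^{\alpha}(A) \cong A[T]/(T^2-\alpha T)$ from \eqref{eq:TwExtIsom}, under which $\iota(a)=a$, $(0,1)=T$, and $(0,\beta)=\beta T$. Uniqueness is straightforward: since $E^{\alpha}(A)$ is generated by $A$ and $T$ as a ring, any $\delta$-structure satisfying the hypotheses is determined on an arbitrary element $x_0+x_1T$ by applying \eqref{eq:DeltaStrSum} and \eqref{eq:DeltaStrProd2}, together with the identities $T^{\nu}=\alpha^{\nu-1}T$ for $\nu\geq 1$. Carrying out this computation produces the formula \eqref{eq:DeltaStrTwExt}, which is therefore forced, and in particular gives the explicit expression claimed.

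For existence, rather than checking \eqref{eq:DeltaStrSum} and \eqref{eq:DeltaStrProd} directly against the messy formula \eqref{eq:DeltaStrTwExt}, the plan is to use the $W_2$-interpretation recalled in \S\ref{sec:deltarings}: a $\delta$-structure on a ring $S$ amounts to a ring homomorphism $S \to W_2(S)$ splitting the first ghost projection. Starting from $(1,\delta_A)\colon A \to W_2(A) \to W_2(E^{\alpha}(A))$, I would send $T$ to $(T,\beta T) \in W_2(E^{\alpha}(A))$ and appeal to the universal property of the polynomial algebra to obtain an $A$-algebra homomorphism $A[T]\to W_2(E^{\alpha}(A))$. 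To descend this through the quotient by $T^2-\alpha T$, I must verify
\begin{equation*}
(T,\beta T)^2 = (\alpha,\delta(\alpha))\cdot(T,\beta T) \quad \text{in } W_2(E^{\alpha}(A)).
\end{equation*}

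The main (and essentially only) obstacle is this Witt-vector identity. Expanding both sides with the multiplication $(a,b)(c,d)=(ac,\,a^pd+c^pb+pbd)$ in $W_2$ and simplifying via the three monomial identities $T^2=\alpha T$, $T^p=\alpha^{p-1}T$, and $T^{p+1}=\alpha^pT$ in $E^{\alpha}(A)$, both sides evaluate to $(\alpha T,\,2\alpha^p\beta T+p\alpha\beta^2 T)$, where equality of the second coordinates makes essential use of the hypothesis $\delta(\alpha)=\alpha\beta$. The resulting ring homomorphism $E^{\alpha}(A)\to W_2(E^{\alpha}(A))$ splits the first projection by construction, hence defines a $\delta$-structure under which $\iota$ is a $\delta$-homomorphism and $\delta(0,1)=(0,\beta)$, establishing existence.
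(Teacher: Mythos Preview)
Your proof is correct and follows essentially the same approach as the paper: both arguments construct the $\delta$-structure via the $W_2$-section, reduce existence to the identity $(T,\beta T)^2=(\alpha,\delta(\alpha))\cdot(T,\beta T)$ in $W_2(E^{\alpha}(A))$, verify it by the same computation yielding $(\alpha T,\,2\alpha^p\beta T+p\alpha\beta^2 T)$ on both sides, and derive the explicit formula \eqref{eq:DeltaStrTwExt} by expanding $\delta(x_0+x_1T)$ using the sum and product rules together with $T^{\nu}=\alpha^{\nu-1}T$. The only difference is organizational: you treat uniqueness and the formula first, whereas the paper establishes existence first and then computes the formula.
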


\begin{proof}
We identify $E^{\alpha}(A)$ with $A[T]/(T^2-\alpha T)$ by \eqref{eq:TwExtIsom}.
The composition of the ring homomorphism 
$A\to W_2(A); a\mapsto (a,\delta(a))$ with 
the ring homomorphism $W_2(\iota)\colon W_2(A)\to W_2(A[T]/(T^2-\alpha T))$
extends uniquely to a ring homomorphism 
$\widetilde{\epsilon}\colon A[T]\to W_2(A[T]/(T^2-\alpha T))$
sending $T$ to $(T,\beta T)$. 
We see $\widetilde{\epsilon}(T^2)=\widetilde{\epsilon}(\alpha T)$
by the following computation.
\begin{align*}
\widetilde{\epsilon}(T^2)&=
(T,\beta T)^2=(T^2,2T^p\beta T+p(\beta T)^2)
=(\alpha T, 2\alpha^p\beta T+ p\alpha\beta^2 T),\\
\widetilde{\epsilon}(\alpha T)&=(\alpha,\beta\alpha)(T,\beta T)
=(\alpha T,\alpha^p \beta T+\beta \alpha T^p+p\beta\alpha\beta T)
=(\alpha T, 2\alpha^p\beta T+p\alpha\beta^2 T).
\end{align*}
Hence $\widetilde{\epsilon}$ factors through the quotient 
$A[T]/(T^2-\alpha T)$. This implies the first claim.
We obtain the second claim by the following computation in 
$A[T]/(T^2-\alpha T)$ for $x_0, x_1\in A$. 
\begin{align*}
\delta(x_0+x_1T)&=
\delta(x_0)+\delta(x_1 T)-\sum_{\nu=1}^{p-1}p^{-1}\binom p \nu
x_0^{p-\nu}(x_1T)^{\nu},\\
(x_1T)^{\nu}&=\alpha^{\nu-1}x_1^{\nu} T,\\
\delta(x_1T)&=\delta(x_1)T^p+x_1^p\beta T+p\delta(x_1)\beta T
=\{\alpha^{p-1}\delta(x_1)+\beta x_1^p+p\beta \delta(x_1)\}T.
\end{align*}
\end{proof}

We define the twisted $\delta$-extension
$E_{\delta}^{\alpha,\beta}(A)$
of $A$ by $A$ with respect to $(\alpha,\beta)$
to be $E^{\alpha}(A)$ equipped with the unique  $\delta$-structure
in Lemma \ref{lem:DeltaStrTwExt}.
The homomorphism $\iota\colon A\to E_{\delta}^{\alpha,\beta}(A)$
is a $\delta$-homomorphism by definition. Since 
$\pi_0, \pi_{\alpha}\colon E_{\delta}^{\alpha,\beta}(A)\to A$
are homomorphisms of $A$-algebras, and the $A$-algebra
$E_{\delta}^{\alpha,\beta}(A)$ is generated by the element $(0,1)$
by \eqref{eq:TwExtIsom}, we see that $\pi_0$ and $\pi_{\alpha}$
are also $\delta$-homomorphisms by the following simple computation:
$\pi_0(\delta(0,1))=\pi_0(0,\beta)=0$,
$\delta(\pi_0(0,1))=\delta(0)=0$, 
$\pi_{\alpha}(\delta(0,1))=\pi_{\alpha}(0,\beta)=\beta\alpha$,
$\delta(\pi_{\alpha}(0,1))=\delta(\alpha)=\beta\alpha$.

The construction of $E_{\delta}^{\alpha,\beta}(A)$ is obviously functorial in $(A,\alpha,\beta)$
as follows. Let $f\colon A\to A'$ be a $\delta$-homomorphism of 
$\delta$-rings, and put $\alpha'=f(\alpha)$ and $\beta'=f(\beta)$,
which satisfy $\delta(\alpha')=\alpha'\beta'$. Then the 
homomorphism of rings $E^{\alpha}(f)\colon E^{\alpha}(A)\to E^{\alpha'}(A');
(x_0,x_1)\mapsto (f(x_0),f(x_1))$ induced by $f$
defines a $\delta$-homomorphism
$E^{\alpha,\beta}_{\delta}(f)\colon 
E_{\delta}^{\alpha,\beta}(A)\to E_{\delta}^{\alpha',\beta'}(A')$.

We define the endomorphism $\varphi$ of $E_{\delta}^{\alpha,\beta}(A)$
by $\varphi(x)=x^p+p\delta(x)$. Recall that we have defined an $A$-linear map
$\scrD\colon E^{\alpha}(A)\to A; (x_0,x_1)=x_1$.

\begin{lemma}\label{lem:DeltaTwExtComp}
The following equality of homomorphisms $E_{\delta}^{\alpha,\beta}(A)\to A$ holds.
$$\scrD\circ\varphi=(\alpha^{p-1}+p\beta)\varphi\circ\scrD.$$
\end{lemma}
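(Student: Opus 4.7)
The plan is to verify the claimed equality by a direct pointwise calculation using the explicit description \eqref{eq:DeltaStrTwExt} of the $\delta$-structure on $E_{\delta}^{\alpha,\beta}(A)$ together with Lemma \ref{lem:UnivAlphaDerivMonom}. For $x=(x_0,x_1)\in E_{\delta}^{\alpha,\beta}(A)$, I would write
$$\scrD(\varphi(x))=\scrD(x^p)+p\scrD(\delta(x)),$$
and then evaluate each summand separately.

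First, Lemma \ref{lem:UnivAlphaDerivMonom} (applied with $n=p$) gives
$$\scrD(x^p)=\sum_{m=1}^{p}\binom{p}{m}x_0^{p-m}\alpha^{m-1}x_1^m.$$
Second, reading off the second coordinate from the formula \eqref{eq:DeltaStrTwExt} yields
$$p\scrD(\delta(x))=p(\alpha^{p-1}+p\beta)\delta(x_1)+p\beta x_1^p-\sum_{\nu=1}^{p-1}\binom{p}{\nu}x_0^{p-\nu}\alpha^{\nu-1}x_1^{\nu}.$$

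The key observation is that the terms indexed by $1\leq\nu\leq p-1$ in this last sum exactly cancel the corresponding terms with $1\leq m\leq p-1$ in the expansion of $\scrD(x^p)$. What remains is the single term $m=p$, which equals $\alpha^{p-1}x_1^p$, together with $p\beta x_1^p+p(\alpha^{p-1}+p\beta)\delta(x_1)$. Combining these gives
$$\scrD(\varphi(x))=(\alpha^{p-1}+p\beta)x_1^p+p(\alpha^{p-1}+p\beta)\delta(x_1)=(\alpha^{p-1}+p\beta)\bigl(x_1^p+p\delta(x_1)\bigr)=(\alpha^{p-1}+p\beta)\varphi(\scrD(x)),$$
which is the desired identity.

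There is no real obstacle here: the proof is a bookkeeping calculation, and the only thing to observe is the cancellation of the middle binomial sums, which is forced by the way the formula \eqref{eq:DeltaStrTwExt} was set up (the sum $-\sum p^{-1}\binom{p}{\nu}\cdots$ in $\delta$ is precisely what is needed, after multiplication by $p$, to kill the off-diagonal terms coming from the naive $p$-th power expansion).
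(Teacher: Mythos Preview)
Your proof is correct and follows essentially the same approach as the paper: both split $\scrD(\varphi(x))=\scrD(x^p)+p\scrD(\delta(x))$, apply Lemma~\ref{lem:UnivAlphaDerivMonom} and the explicit formula \eqref{eq:DeltaStrTwExt}, and observe the cancellation of the binomial sums for $1\le \nu\le p-1$. The only difference is notational---you write $x=(x_0,x_1)$ explicitly, while the paper uses $\pi_0(x)$ and $\scrD(x)$ throughout.
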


\begin{proof}
Let $x\in E_{\delta}^{\alpha,\beta}(A)$. By Lemma 
\ref{lem:UnivAlphaDerivMonom} and \eqref{eq:DeltaStrTwExt}, we have 
\begin{align*}
\scrD(\varphi(x))&=
\scrD(x^p)+p\scrD(\delta(x))\\
&=\sum_{\nu=1}^p\binom p \nu \pi_0(x)^{p-\nu}\alpha^{\nu-1}\scrD(x)^{\nu}\\
&+p\{(\alpha^{p-1}+p\beta)\delta(\scrD(x))+\beta\scrD(x)^p
-\sum_{\nu=1}^{p-1}p^{-1}\binom p\nu \pi_0(x)^{p-\nu}\alpha^{\nu-1}
\scrD(x)^{\nu}\}\\
&=(\alpha^{p-1}+p\beta)\{\scrD(x)^p+p\delta(\scrD(x))\}\\
&=(\alpha^{p-1}+p\beta)\varphi(\scrD(x)).
\end{align*}
\end{proof}

Now we can interpret the $\delta$-compatibility
of an $\alpha$-derivation in terms of $E_{\delta}^{\alpha,\beta}(A)$.

\begin{proposition}\label{prop:DeltaAlphaDerivInt}
Let $R$, $A$, $\alpha$, and $\beta$ be as in Definition \ref{def:alphaDerivDeltaComp},
let $\partial\colon A\to A$ be an $\alpha$-derivation of $A$ over $R$,
and let $s\colon A\to  E_{\delta}^{\alpha,\beta}(A)$ be the $R$-homomorphism
corresponding to $\partial$ by Proposition \ref{prop:AlphaDerivInterpret}.\par
(1) An element $x$ of $A$ is $\delta$-compatible with respect 
to $\partial$ and $\beta$ if and only if $\delta(s(x))=s(\delta(x))$.\par
(2) The $\alpha$-derivation $\partial$ of $A$ over $R$ is 
$\delta$-compatible with respect to $\beta$ if and only if
$s$ is a $\delta$-homomorphism.
\end{proposition}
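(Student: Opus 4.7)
The plan is to prove (1) by a direct computation comparing both sides using the explicit description of the $\delta$-structure on $E_\delta^{\alpha,\beta}(A)$ given by \eqref{eq:DeltaStrTwExt}, and then observe that (2) is an immediate consequence of (1).

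For (1), I would start by writing $s(x) = (x, \partial(x))$ from the construction in Proposition \ref{prop:AlphaDerivInterpret}. On one hand, applying $s$ to $\delta(x) \in A$ gives
\[
s(\delta(x)) = (\delta(x), \partial(\delta(x))).
\]
On the other hand, applying the explicit formula \eqref{eq:DeltaStrTwExt} to $(x_0, x_1) = (x, \partial(x))$ yields
\[
\delta(s(x)) = \left(\delta(x),\ (\alpha^{p-1} + p\beta)\delta(\partial(x)) + \beta \partial(x)^p - \sum_{\nu=1}^{p-1} p^{-1}\binom{p}{\nu} x^{p-\nu}\alpha^{\nu-1}\partial(x)^\nu\right).
\]
The first coordinates already coincide, so the equality $\delta(s(x)) = s(\delta(x))$ is equivalent to the equality of the second coordinates, which is precisely the defining equation \eqref{eq:AlphaDerivDerComp} of $\delta$-compatibility of $x$ with respect to $\partial$ and $\beta$.

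For (2), recall that $s$ is already an $R$-algebra homomorphism by Proposition \ref{prop:AlphaDerivInterpret}, so it is a $\delta$-homomorphism if and only if $\delta \circ s = s \circ \delta$ on every element of $A$. By (1), this holds for all $x \in A$ exactly when $\partial$ is $\delta$-compatible with respect to $\beta$ in the sense of Definition \ref{def:alphaDerivDeltaComp}.

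This proof is essentially a matching of formulas, and there is no real obstacle: the formula \eqref{eq:DeltaStrTwExt} was designed precisely so that the second coordinate of $\delta(s(x))$ reproduces the right-hand side of \eqref{eq:AlphaDerivDerComp}. The only subtlety worth double-checking is that the scalar extension appearing in \eqref{eq:DeltaStrTwExt} (namely, the coefficients $\alpha^{p-1}+p\beta$, $\beta$, and $p^{-1}\binom{p}{\nu}x^{p-\nu}\alpha^{\nu-1}$) is indeed what appears verbatim in \eqref{eq:AlphaDerivDerComp}, which it is.
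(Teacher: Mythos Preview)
Your proof is correct and follows essentially the same approach as the paper: compute $s(\delta(x))$ and $\delta(s(x))$ explicitly using \eqref{eq:DeltaStrTwExt}, compare second coordinates to obtain (1), and derive (2) immediately from (1). The paper's own proof is just a slightly terser version of what you wrote.
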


\begin{proof}
For $x\in A$, we have $s(\delta(x))=(\delta(x),\partial(\delta(x)))$, and
$$\delta(s(x))
=\delta(x,\partial(x))=
(\delta(x), (\alpha^{p-1}+p\beta)\delta(\partial(x))+\beta\partial(x)^p
-\sum_{\nu=1}^{p-1}p^{-1}\binom p\nu x^{p-\nu}\alpha^{\nu-1}
\partial(x)^{\nu})$$
by \eqref{eq:DeltaStrTwExt}. This implies (1). The claim (2) immediately follows
from the claim (1).
\end{proof}

\begin{lemma}\label{lem:DerivDeltaCompDeltaGen}
Let $R$, $A$, $\alpha$, and $\beta$ be as in 
Definition \ref{def:alphaDerivDeltaComp}, and
let $\partial\colon A\to A$ be an $\alpha$-derivation 
of $A$ over $R$. 
Let $I$ be an ideal of $A$ containing $p$ such that $A$ is $I$-adically
separated and $\alpha\partial(I)\subset I$ (e.g.~$I$ is generated by elements
of $R$, or $\alpha\in I$), and let $\scrS$ be a subset of $A$
such that the $R$-subalgebra $R[\scrS]$
of $A$ is $I$-adically dense in $A$. 
Then $\partial$ is $\delta$-compatible with respect
to $\beta$ if and only if every element of $\scrS$ is 
$\delta$-compatible with respect to $\partial$ and $\beta$.
\end{lemma}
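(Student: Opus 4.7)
The plan is to use the section-map interpretation from Proposition \ref{prop:DeltaAlphaDerivInt} to recast $\delta$-compatibility as an equalizer condition between two ring homomorphisms, and then reduce the statement to showing that the locus of $\delta$-compatible elements is a closed $R$-subalgebra of $A$.

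First, let $s\colon A\to E=E^{\alpha,\beta}_{\delta}(A)$ be the $R$-algebra homomorphism corresponding to $\partial$ by Proposition \ref{prop:AlphaDerivInterpret}. I would introduce the two ring homomorphisms
\begin{equation*}
\phi_1=W_2(s)\circ(1,\delta_A),\qquad \phi_2=(1,\delta_E)\circ s\colon A\longrightarrow W_2(E),
\end{equation*}
each a composition of ring maps. They agree on $R$ because $s|_R=\iota|_R$ and $\iota\colon A\to E$ is a $\delta$-homomorphism. Hence their equalizer
\begin{equation*}
T=\{x\in A\,\vert\,\phi_1(x)=\phi_2(x)\}
\end{equation*}
is an $R$-subalgebra of $A$. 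Unwinding the definitions gives $\phi_1(x)=(s(x),s(\delta_A(x)))$ and $\phi_2(x)=(s(x),\delta_E(s(x)))$, so $T$ is precisely the set of $x\in A$ with $s(\delta_A(x))=\delta_E(s(x))$, which by Proposition \ref{prop:DeltaAlphaDerivInt} (1) is the set of elements of $A$ that are $\delta$-compatible with respect to $\partial$ and $\beta$.

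The main technical step is to show $T$ is closed in the $I$-adic topology. Since $p\in I$ and $\alpha\partial(I)\subset I$, Remark \ref{rmk:alphaDerivBC} (6) gives $\partial(I^{n+1})\subset I^n$, hence $s(I^{n+1})\subset I^nE$ via $s(y)=(y,\partial(y))$. The standard argument shows $\delta_A(I^{n+1})\subset I^n$, and the same argument applied to the ideal $IE$ of $E$ yields $\delta_E(I^{n+1}E)\subset I^nE$; here the hypothesis $\varphi_E(IE)\subset IE$ needed to run the induction follows from $\varphi_A(I)\subset I$ together with the fact that $\iota\colon A\to E$ is a $\delta$-homomorphism, so $\varphi_E(\iota(I))=\iota(\varphi_A(I))\subset\iota(I)$. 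Combining these estimates gives $\phi_i(I^{n+2})\subset I^nE\times I^nE$ for $i=1,2$, so both $\phi_i$ are continuous for the $I$-adic topologies. The target $W_2(E)=E\times E$ is $I$-adically separated because $E=A\oplus A$ and $A$ is $I$-adically separated, and therefore $T$ is closed.

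With these two properties in hand, the lemma follows immediately: necessity is obvious, and for sufficiency, if every element of $\scrS$ is $\delta$-compatible then $\scrS\subset T$, so $R[\scrS]\subset T$ because $T$ is an $R$-subalgebra; the $I$-adic density of $R[\scrS]$ combined with the closedness of $T$ then forces $T=A$. The delicate point will be verifying the continuity estimate for $\phi_2$, where one must carefully track the factor of $I$ lost under $\partial$ through the compatibility of $\delta_E$ with the ideal $IE$; once that is pinned down, everything else is formal.
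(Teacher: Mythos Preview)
Your proposal is correct and follows essentially the same approach as the paper: both use the section interpretation from Proposition \ref{prop:DeltaAlphaDerivInt} to recast $\delta$-compatibility as the commutativity of the square involving $s$ and the $W_2$-maps, and both exploit $\partial(I^{n+1})\subset I^n$ (Remark \ref{rmk:alphaDerivBC} (6)) together with $I$-adic separatedness of $E=A\oplus A$. The only cosmetic difference is packaging: the paper explicitly reduces the square modulo the filtrations $I^{n+1}$ and $(s(I)E)^{n+1}$ and uses that the image of $\scrS$ generates $A/I^{n+1}$, whereas you phrase the identical argument as showing the equalizer $T$ of the two compositions $\phi_1,\phi_2\colon A\to W_2(E)$ is a closed $R$-subalgebra.
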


\begin{proof}
Put $E=E_{\delta}^{\alpha,\beta}(A)$.
Let $s\colon A\to E$ be the 
$R$-homomorphism corresponding to $\partial$
by Proposition \ref{prop:AlphaDerivInterpret}. 
It suffices to prove that the left diagram below is commutative,
where the vertical maps are defined by the $\delta$-structures
of $A$ and $E$. 
\begin{equation*}
\xymatrix@C=50pt{
A\ar[d]\ar[r]^s& E\ar[d]\\
W_2(A)\ar[r]^{W_2(s)}& W_2(E)
}\qquad
\xymatrix{
A/I^{n+1}\ar[d]\ar[r]& E/s(I)^{n+1}E\ar[d]\\
W_2(A/I^n)\ar[r]& W_2(E/s(I)^{n}E)
}
\end{equation*}
By $p\in I$, 
the left diagram induces the right one for each integer $n\geq 1$.
By Proposition \ref{prop:DeltaAlphaDerivInt} (1), 
the left diagram is commutative after composed with 
the inclusion map $R[\scrS]\hookrightarrow A$.
Since the $R$-algebra $A/I^{n+1}$ is generated the image
of $\scrS$, this implies that the right diagram is commutative
for every integer $n\geq 1$. By the assumption $\alpha\partial(I)\subset I$
and Remark \ref{rmk:alphaDerivBC} (6), we have $s(I^{n})E\subset I^{n-1}E$.
Since $A$ is assumed to be $I$-adically separated, this implies 
$\cap_{n\geq 1}s(I^n)E=0$. Therefore the left diagram is also commutative.
\end{proof}

\begin{lemma}\label{lem:AlphaCompDeltaCompBC}
Let $R$, $A$, $\alpha$, and $\beta$ be as in Definition \ref{def:alphaDerivDeltaComp},
let $R'$ be a $\delta$-$R$-algebra, let $A'$ be
$A\otimes_RR'$ equipped with the induced $\delta$-structure,
and let $\alpha'$ and $\beta'$ be the images of $\alpha$
and $\beta$ in $A'$, respectively. Let $\partial$ be an $\alpha$-derivation
of $A$ over $R$ $\delta$-compatible with respect to $\beta$.
Then the $\alpha'$-derivation $\partial'=\id_{R'}\otimes \partial\colon 
A'\to A'$ (Remark \ref{rmk:alphaDerivBC} (1)) is $\delta$-compatible with respect
to $\beta'$.
\end{lemma}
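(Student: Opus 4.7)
The plan is to translate the problem to the section interpretation of Proposition \ref{prop:DeltaAlphaDerivInt} (2), where the $\delta$-compatibility condition becomes the assertion that a certain $R$-algebra section is a $\delta$-homomorphism; base change is well-behaved on the level of sections, so this should give the result essentially for free.

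More precisely, let $s\colon A\to E_{\delta}^{\alpha,\beta}(A)$ be the $R$-algebra homomorphism corresponding to $\partial$, which is a $\delta$-homomorphism by Proposition \ref{prop:DeltaAlphaDerivInt} (2). Applying the scalar extension $-\otimes_R R'$ yields an $R'$-algebra $\delta$-homomorphism $s\otimes\id_{R'}\colon A'\to E_{\delta}^{\alpha,\beta}(A)\otimes_R R'$. The key step is then to identify the target with $E_{\delta}^{\alpha',\beta'}(A')$ as $\delta$-rings. On the underlying ring level, using the presentation $E^{\alpha}(A)=A[T]/(T^2-\alpha T)$ from \eqref{eq:TwExtIsom}, we have the evident identification $E^{\alpha}(A)\otimes_R R'=A'[T]/(T^2-\alpha'T)=E^{\alpha'}(A')$. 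The induced $\delta$-structure on the left-hand side makes $\iota\colon A'\to E^{\alpha'}(A')$ a $\delta$-homomorphism (base-changed from $\iota\colon A\to E_{\delta}^{\alpha,\beta}(A)$) and sends the element $(0,1)$ to $(0,\beta')$ (base-changed from $\delta(0,1)=(0,\beta)$). By the uniqueness in Lemma \ref{lem:DeltaStrTwExt}, this $\delta$-structure coincides with that of $E_{\delta}^{\alpha',\beta'}(A')$.

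Under this identification, $s\otimes\id_{R'}$ is exactly the $R'$-algebra section $s'\colon A'\to E_{\delta}^{\alpha',\beta'}(A')$ corresponding to $\partial'=\id_{R'}\otimes\partial$ via Proposition \ref{prop:AlphaDerivInterpret}: indeed both send $a\otimes r'$ to $(a\otimes r',\partial(a)\otimes r')$, and the composition with $\pi_0$ is the identity. Since $s\otimes\id_{R'}$ is a $\delta$-homomorphism, so is $s'$, and applying Proposition \ref{prop:DeltaAlphaDerivInt} (2) once more gives that $\partial'$ is $\delta$-compatible with respect to $\beta'$.

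The only subtlety I anticipate is the verification that the base change of a $\delta$-homomorphism between $\delta$-$R$-algebras is a $\delta$-homomorphism, but this is routine: the $\delta$-structure on $A\otimes_R R'$ is characterized as the unique one of a $\delta$-$R'$-algebra structure extending that of $A$, and any $\delta$-$R'$-algebra receiving a $\delta$-homomorphism from $A$ uniquely receives a $\delta$-$R'$-homomorphism from $A\otimes_R R'$. Once the isomorphism $E_{\delta}^{\alpha,\beta}(A)\otimes_R R'\cong E_{\delta}^{\alpha',\beta'}(A')$ of $\delta$-rings is established, the rest is a direct translation via Proposition \ref{prop:DeltaAlphaDerivInt}.
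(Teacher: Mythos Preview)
The proposal is correct and follows essentially the same approach as the paper: both reduce to the section interpretation via Proposition \ref{prop:DeltaAlphaDerivInt} (2), identify $E_{\delta}^{\alpha,\beta}(A)\otimes_R R'\cong E_{\delta}^{\alpha',\beta'}(A')$ as $\delta$-$R'$-algebras, and check that $s'=s\otimes\id_{R'}$ under this identification. Your version is slightly more explicit about why the $\delta$-structures match (invoking the uniqueness in Lemma \ref{lem:DeltaStrTwExt}), whereas the paper appeals to the functoriality $E_{\delta}^{\alpha,\beta}(A)\to E_{\delta}^{\alpha',\beta'}(A')$ induced by $A\to A'$, but these come to the same thing.
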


\begin{proof}
Put $E=E_{\delta}^{\alpha,\beta}(A)$ and $E'=E_{\delta}^{\alpha',\beta'}(A')$,
and let $s$ (resp.~$s'$) be the $\delta$-$R$-homomorphism 
$A\to E$ (resp.~the $R'$-homomorphism $A'\to E'$) 
corresponding to $\partial$ (resp.~$\partial'$) by
Proposition \ref{prop:DeltaAlphaDerivInt}
 (resp.~\ref{prop:AlphaDerivInterpret}). The $\delta$-homomorphism 
$E\to E'$ induced by the $\delta$-homomorphism $A\to A'$
extends to a $\delta$-$R'$-isomorphism $E\otimes_RR'
\xrightarrow{\cong} E'$. We see that the
$R'$-homomorphism $s'$ coincides with the composition 
of the $\delta$-$R'$-homomorphism 
$s\otimes \id_{R'}\colon A'=A\otimes_RR'
\to E\otimes_RR'$ and the above $\delta$-$R'$-isomorphism.
This completes the proof by Proposition \ref{prop:DeltaAlphaDerivInt} (2).
\end{proof}

\begin{proposition}\label{prop:TwistDerDeltaEtaleExt}
Let $R$ be a $\delta$-ring, let $A$ be a $\delta$-$R$-algebra,
let $I$ be an ideal of $A$ containing $p$,
and let $f\colon A\to A'$ be a homomorphism of $\delta$-$R$-algebars
such that $A'$ is $I$-adically separated and 
the homomorphism of rings $f$ is $I$-adically \'etale
(Definition \ref{def:formallyflat} (1)).
Let $\alpha$ and $\beta$ be elements of $A$ such that 
$\delta(\alpha)=\alpha\beta$, and put 
$\alpha'=f(\alpha)$ and $\beta'=f(\beta)$, which satisfy
$\delta(\alpha')=\alpha'\beta'$. Let 
$\partial\in \Der^{\alpha}_R(A)$, 
and let $\partial'\in \Der^{\alpha}_R(A')$ be an
extension of $\partial$ (Definition \ref{def:alphaDerivation}
 (2)). Suppose that $\partial$ satisfies $\alpha\partial(I)\subset I$
 (e.g.~$I$ is generated by elements of $R$, or $\alpha\in I$). 
If $\partial$ is $\delta$-compatible with respect to $\beta$, then 
$\partial'$ is $\delta$-compatible with respect to $\beta'$.
\end{proposition}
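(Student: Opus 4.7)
The plan is to interpret $\delta$-compatibility via section homomorphisms and then invoke Proposition \ref{prop:DeltaCompEtExt}. Let $E = E_\delta^{\alpha, \beta}(A)$ and $E' = E_\delta^{\alpha', \beta'}(A')$, and let $s \colon A \to E$ and $s' \colon A' \to E'$ be the $R$-algebra homomorphisms corresponding to $\partial$ and $\partial'$ via Proposition \ref{prop:AlphaDerivInterpret}. By Proposition \ref{prop:DeltaAlphaDerivInt} (2), the $\delta$-compatibility of $\partial$ with respect to $\beta$ is equivalent to $s$ being a $\delta$-homomorphism, and the goal is to establish the analogous property for $s'$.

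Since $\partial'$ extends $\partial$ along $f$, for $x \in A$ we have $s'(f(x)) = (f(x), f(\partial(x))) = E_\delta^{\alpha,\beta}(f)(s(x))$, so $s' \circ f = E_\delta^{\alpha,\beta}(f) \circ s$. The map $E_\delta^{\alpha,\beta}(f)$ is a $\delta$-homomorphism by functoriality of the twisted $\delta$-extension construction, and $s$ is a $\delta$-homomorphism by hypothesis, so $s' \circ f$ is a $\delta$-homomorphism. I would then apply Proposition \ref{prop:DeltaCompEtExt} with data $(R, R', S, g', g) = (A, A', E', s', s' \circ f)$ to conclude that $s'$ itself is a $\delta$-homomorphism; combined with Proposition \ref{prop:DeltaAlphaDerivInt} (2), this yields the desired $\delta$-compatibility of $\partial'$ with respect to $\beta'$.

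The main obstacle is verifying the hypothesis of Proposition \ref{prop:DeltaCompEtExt} that $E'$ is $J$-adically separated for $J := (s' \circ f)(I) \cdot E'$. Since $s' \circ f$ is a ring homomorphism, $J^n$ equals the ideal of $E'$ generated by the elements $s'(f(y)) = (f(y), f(\partial(y)))$ with $y \in I^n$. By Remark \ref{rmk:alphaDerivBC} (6), the hypothesis $\alpha \partial(I) \subset I$ yields $\partial(I^n) \subset I^{n-1}$. Moreover, the $R$-algebra endomorphism $\gamma = 1 + \alpha \partial$ of $A$ (Lemma \ref{lem:alphaDergammaDer} (1)) satisfies $\gamma(I) \subset I$, since $\gamma(y) = y + \alpha \partial(y)$ and both summands lie in $I$ for $y \in I$; being a ring homomorphism, $\gamma$ then satisfies $\gamma(I^n) \subset I^n$, so $\alpha \partial(y) = \gamma(y) - y \in I^n$ for $y \in I^n$. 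Using the explicit product formula in $E' \cong A' \oplus A' T$ with $T^2 = \alpha' T$, these two containments give, after computing the two coordinates of a product $s'(f(y)) \cdot (z_0, z_1)$ with $y \in I^n$, the inclusion
$$J^n \subset f(I)^n A' \oplus f(I)^{n-1} A' \cdot T.$$
Since $A'$ is $I$-adically separated, the intersection over $n$ of the right-hand side vanishes, so $\cap_n J^n = 0$. All ingredients beyond this direct computation are formal once the section interpretation of $\delta$-compatibility is in hand.
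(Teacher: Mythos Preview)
Your proof is correct and takes a more modular route than the paper's. You reduce the statement to Proposition~\ref{prop:DeltaCompEtExt} and then spend your effort verifying its separatedness hypothesis for $E'$ with respect to $J=(s'\circ f)(I)\cdot E'$; your argument for $J^n\subset (IA')^n\oplus (IA')^{n-1}T$ via $\partial(I^n)\subset I^{n-1}$ and $\alpha\partial(I^n)=(\gamma-1)(I^n)\subset I^n$ is clean and correct. The paper instead unfolds the Witt-vector diagram directly (essentially reproving the mechanism behind Proposition~\ref{prop:DeltaCompEtExt} in this setting): it writes down the cube with vertices $A,A',E,E'$ and their $W_2$'s, reduces modulo $I^n$ using $p\in I$ and $\alpha\partial(I)\subset I$, and then uses \'etaleness of $A_{n+2}\to A'_{n+2}$ together with nilpotence of the relevant kernel to force the remaining face to commute at each finite level. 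Your approach buys reuse of an existing proposition at the cost of a small side computation to transfer $I$-adic separatedness of $A'$ to $J$-adic separatedness of $E'$; the paper's approach avoids that transfer by working with the $I$-adic filtration throughout, at the cost of redrawing the argument of Proposition~\ref{prop:DeltaCompEtExt}.
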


\begin{proof}
Put $E=E_{\delta}^{\alpha,\beta}(A)$ and $E'=E_{\delta}^{\alpha',\beta'}(A')$.
Then we have a diagram below, where the
left four horizontal maps are defied by the given $\delta$-structures,
the right horizontal ones are the projection to the first component,
and the vertical ones are defined by $\partial$ and $\partial'$, 
and the functoriality of $W_2(-)$. 
\begin{equation*}
\xymatrix@R=10pt{
&A'\ar[rr]\ar'[d][dd]&&W_2(A')\ar[rr]\ar[dd] &&A'\ar[dd]\\
A\ar[ur]\ar[rr]\ar[dd]&&W_2(A)\ar[ur]\ar[dd]&&&\\
&E'\ar'[r][rr]&&W_2(E')\ar[rr] &&E'\\
E\ar[rr]\ar[ur]&&W_2(E)\ar[ur]&&&}
\end{equation*}
By assumption, the diagram is commutative except for
the left square at the back of the diagram.
Put $A_n=A/I^n$, $A'_n=A'/I^nA'$, $E_n=E/I^nE$, and $E'_n=E'/I^nE'$, 
where we regard $E$ (resp.~$E'$) as an algebra over $A$ (resp.~$A'$) by 
the homomorphism $a\mapsto (a,0)$. By
$p\in I$, $\alpha\partial(I)\subset I$, which implies $\alpha'\partial'(IA')\subset IA'$,
and Remark \ref{rmk:alphaDerivBC} (6), the diagram above induces a diagram
\begin{equation*}
\xymatrix@R=10pt{
&A'_{n+2}\ar[rr]\ar'[d][dd]&&W_2(A'_{n+1})\ar[rr]\ar[dd] &&A'_{n+1}\ar[dd]\\
A_{n+2}\ar[ur]\ar[rr]\ar[dd]&&W_2(A_{n+1})\ar[ur]\ar[dd]&&&\\
&E'_{n+1}\ar'[r][rr]&&W_2(E'_{n})\ar[rr] &&E'_n\\
E_{n+1}\ar[rr]\ar[ur]&&W_2(E_{n})\ar[ur]&&&}
\end{equation*}
for each positive integer $n$. Since $A'$ is $I$-adically separated,
it suffices to prove that 
the two compositions 
$A'_{n+2}\to W_2(A'_{n+1})\to W_2(E'_n)$ and
$A_{n+2}'\to E_{n+1}'\to W_2(E'_n)$
coincide for every $n\geq 1$.
By the commutativity of the diagram except the back left square,
we see that  the compositions of the above two maps
with the \'etale map $A_{n+2}\to A_{n+2}'$
(resp.~the map $W_2(E'_n)\to E'_n$, whose kernel is nilpotent by the assumption
$p\in I$) coincide. Hence the two maps in question are the same.
\end{proof}

We obtain the following corollary by Proposition \ref{prop:TwistDerEtaleExt}.

\begin{corollary}\label{cor:DeltaCompDerivEtExt}
Let $R$ be a $\delta$-ring, let $A$ be a $\delta$-$R$-algebra,
let $I$ be an ideal of $A$ containing $p$,
and let $f\colon A\to A'$ be a homomorphism of $\delta$-$R$-algebars
such that $A'$ is $I$-adically complete and separated, and that 
the homomorphism of rings $f$ is $I$-adically \'etale
(Definition \ref{def:formallyflat} (1)).
Let $\alpha$ and $\beta$ be elements of $A$ such that 
$\delta(\alpha)=\alpha\beta$, and put 
$\alpha'=f(\alpha)$ and $\beta'=f(\beta)$, which satisfy
$\delta(\alpha')=\alpha'\beta'$. 
Assume that $\alpha$ is $I$-adically nilpotent.
Then, every $\partial\in \Der_{R,\delta}^{\alpha,\beta}(A)$ extends 
uniquely to $\partial'\in \Der_{R,\delta}^{\alpha',\beta'}(A')$ along $f$.
\end{corollary}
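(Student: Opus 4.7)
The plan is to obtain this as a direct combination of the two preceding results, Proposition \ref{prop:TwistDerEtaleExt} (which produces a unique extension as an $\alpha'$-derivation) and Proposition \ref{prop:TwistDerDeltaEtaleExt} (which promotes a $\delta$-compatibility assertion across an $I$-adically \'etale map). The $\delta$-compatibility does not enter the first step at all, and the extension already being fixed by the first step means there is no new existence/uniqueness issue in the second.

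First, I would apply Proposition \ref{prop:TwistDerEtaleExt} to the homomorphism $f\colon A\to A'$ of $R$-algebras, the ideal $I$, and the $\alpha$-derivation $\partial$. All of its hypotheses are part of our data: $\alpha$ is $I$-adically nilpotent, $A'$ is $I$-adically complete and separated, and $f$ is $I$-adically \'etale. This produces a unique $\partial'\in \Der_R^{\alpha'}(A')$ with $\partial'\circ f = f\circ \partial$, which is what is meant by an extension along $f$ in Definition \ref{def:alphaDerivation} (2).

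It remains to verify that this $\partial'$ is $\delta$-compatible with respect to $\beta'$, for which I would invoke Proposition \ref{prop:TwistDerDeltaEtaleExt}. A small preliminary is needed: that proposition demands $\alpha\partial(I)\subset I$, which is not part of our hypotheses. However, since $\alpha$ is $I$-adically nilpotent, the $I$-adic and $(\alpha A+I)$-adic topologies on any $A$-module coincide (exactly the reduction used at the start of the proof of Proposition \ref{prop:TwistDerEtaleExt}). I would therefore replace $I$ by $I':=\alpha A+I$: it still contains $p$, the ring $A'$ is still $I'$-adically separated, and $f$ is still $I'$-adically \'etale; moreover now $\alpha\in I'$, which trivially gives $\alpha\partial(I')\subset I'$. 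The unique extension $\partial'$ supplied by Proposition \ref{prop:TwistDerEtaleExt} applied to $I'$ agrees with the one obtained for $I$ by the uniqueness clause.

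With this reduction in hand, Proposition \ref{prop:TwistDerDeltaEtaleExt} (applied to $I'$ in place of $I$) transfers the $\delta$-compatibility of $\partial$ with respect to $\beta$ to the $\delta$-compatibility of $\partial'$ with respect to $\beta'$. This completes the proof. I do not foresee a substantive obstacle; the only mildly delicate point is the topology-preserving enlargement of $I$, and it is standard and already used in the proof of Proposition \ref{prop:TwistDerEtaleExt}.
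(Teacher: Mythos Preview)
Your proposal is correct and follows essentially the same approach as the paper: first enlarge $I$ to $\alpha A+I$ so that $\alpha\in I$, then invoke Proposition \ref{prop:TwistDerEtaleExt} for existence and uniqueness of $\partial'$, and Proposition \ref{prop:TwistDerDeltaEtaleExt} for its $\delta$-compatibility. The only cosmetic difference is that the paper performs the enlargement of $I$ before applying either proposition, whereas you do it between the two steps; this is immaterial.
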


\begin{proof}Since $\alpha$ is $I$-adically nilpotent,
the $I$-adic topology of an $A$-module is the same as the
$(\alpha A+I)$-adic topology. Therefore we may replace
$I$ by $\alpha A+I$ and assume $\alpha\in I$.
By Proposition \ref{prop:TwistDerEtaleExt}, there exists a unique extension 
$\partial'\in \Der^{\alpha'}_{R'}(A')$ of $\partial$ along $f$. 
By Proposition \ref{prop:TwistDerDeltaEtaleExt}, 
we see that $\partial'$ is $\delta$-compatible
with respect to $\beta'$.
\end{proof}

\begin{proposition}\label{prop:TwistDerivDeltaCompCoord}
Let $R$ be a $\delta$-ring, let $I$ be an ideal of $R$
containing $p$, and let $A$ be a $\delta$-$R$-algebra $I$-adically separated
such that $A$ is $I$-adically smooth over $R$
(Definition \ref{def:formallyflat} (1)).
Suppose that we are given $I$-adic coordinates $T_1,\ldots, T_d$ of $A$
over $R$ (Definition \ref{def:formallyflat} (2)). 
Let $\alpha$, $\beta$
be elements of $A$ such that $\delta(\alpha)=\alpha\beta$,
and let $\partial\in \Der_{R}^{\alpha}(A)$.
Then $\partial$ is $\delta$-compatible with respect to $\beta$
if and only if $T_i$ is $\delta$-compatible with respect to $\partial$ and
$\beta$ for every $i\in \N\cap [1,d]$. 
\end{proposition}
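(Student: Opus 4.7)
The ``only if'' direction will follow immediately from Proposition \ref{prop:DeltaAlphaDerivInt} (1) applied to each $T_i$. For the converse, I plan to pass to the section interpretation: set $E=E_{\delta}^{\alpha,\beta}(A)$ and let $s\colon A\to E$ be the $R$-algebra homomorphism corresponding to $\partial$ via Proposition \ref{prop:AlphaDerivInterpret}. By Proposition \ref{prop:DeltaAlphaDerivInt} (2), establishing $\delta$-compatibility of $\partial$ is equivalent to showing that the two $R$-algebra homomorphisms
\[
\phi_1=(\id,\delta_E)\circ s,\qquad \phi_2=W_2(s)\circ(\id,\delta_A)\colon A\longrightarrow W_2(E)
\]
coincide. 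The hypothesis, reinterpreted via Proposition \ref{prop:DeltaAlphaDerivInt} (1), gives $\phi_1(T_i)=\phi_2(T_i)$ for each $i$, so $\phi_1$ and $\phi_2$ already agree on $R[T_1,\ldots,T_d]$; both also become $s$ when composed with the first-coordinate projection $W_2(E)\to E$, so $\phi_1-\phi_2$ lands in the Verschiebung ideal $J:=\ker(W_2(E)\to E)$.

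To conclude $\phi_1=\phi_2$ I will reduce modulo $I^n$ and apply \'etale infinitesimal lifting. Since $p\in I$, the standard $\varphi$-semilinear argument yields $\delta_A(I^{n+1}A)\subset I^nA$, and a direct inspection of the formula \eqref{eq:DeltaStrTwExt} produces the analogous bound $\delta_E(I^{n+1}E)\subset I^nE$. These two estimates together let both $\phi_i$ descend to $R$-algebra homomorphisms
\[
\bar\phi_{1,n},\bar\phi_{2,n}\colon A_{n+1}\longrightarrow W_2(E_n),
\]
where $A_m:=A/I^mA$ and $E_m:=E/I^mE$; they still agree on $R_{n+1}[T_1,\ldots,T_d]$ and share the same first projection $s_n$. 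Using the Witt-vector identity $(0,y_1)(0,y_2)=(0,py_1y_2)$ one gets $J_n^{k+1}=V(p^kE_n)$ for $J_n:=V(E_n)\subset W_2(E_n)$, and $p^n\cdot E_n\subset I^n E_n=0$ then gives the crucial nilpotency $J_n^{n+1}=0$.

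Since $T_1,\ldots,T_d$ are $I$-adic coordinates, Definition \ref{def:formallyflat} (2) guarantees that $R_{n+1}[T_1,\ldots,T_d]\to A_{n+1}$ is \'etale. I then plan to compare $\bar\phi_{1,n}$ and $\bar\phi_{2,n}$ by induction along the filtration $J_n\supset J_n^2\supset\cdots\supset J_n^{n+1}=0$: on each square-zero successive quotient $J_n^k/J_n^{k+1}$ the bimodule structure inherited from $\bar\phi_1$ and $\bar\phi_2$ automatically collapses to a genuine $A_{n+1}$-module structure, and the difference $\bar\phi_{1,n}-\bar\phi_{2,n}$ modulo $J_n^{k+1}$ becomes an honest $R_{n+1}[T_1,\ldots,T_d]$-linear derivation of $A_{n+1}$ into this module, which must vanish by \'etaleness. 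After at most $n$ such steps we obtain $\bar\phi_{1,n}=\bar\phi_{2,n}$, and since $E$ is $I$-adically separated as a finite free $A$-module, passing to the inverse limit over $n$ produces $\phi_1=\phi_2$. The main technical obstacle I anticipate is bookkeeping these filtration estimates---particularly verifying $\delta_E(I^{n+1}E)\subset I^nE$ through the explicit formula \eqref{eq:DeltaStrTwExt} and tracking that both $\phi_i$ descend cleanly through the shifted indexing $A_{n+1}\to W_2(E_n)$; once these are in place, the nilpotency of $J_n$ together with the \'etale lifting is routine.
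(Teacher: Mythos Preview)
Your proposal is correct and follows essentially the same route as the paper: translate $\delta$-compatibility into equality of the two maps $A\to W_2(E)$, reduce modulo $I^n$ using $p\in I$ so that the Verschiebung kernel in $W_2(E_n)$ becomes nilpotent, and then invoke \'etaleness of $R_{n+1}[T_1,\ldots,T_d]\to A_{n+1}$ to conclude. The only difference is stylistic: the paper cites the uniqueness of \'etale lifts through a nilpotent extension in one stroke, whereas you unwind that uniqueness by inducting along the filtration $J_n\supset J_n^2\supset\cdots\supset 0$ and identifying the difference at each stage as a derivation.
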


\begin{proof}
The necessity is trivial. Let us prove the sufficiency.
Put $A_n=A/I^nA$, $A'_n=A'/I^nA'$, 
$E=E_{\delta}^{\alpha,\beta}(A)$, and $E_n=E/I^nE$.
Let $s\colon A\to E$ be the $R$-homomorphism corresponding  
to $\partial$ by Proposition \ref{prop:AlphaDerivInterpret}, 
and let $w_A$ (resp.~$w_E$) be the homomorphism
$A\to W_2(A)$ (resp.~$E\to W_2(E)$)$\colon x\mapsto (x,\delta(x))$
corresponding to the $\delta$-structure on $A$ (resp.~$E$).
Then it suffices to prove that the following diagram is commutative.
\begin{equation*}
\xymatrix@C=50pt@R=15pt
{A\ar[r]^(.4){w_A}\ar[d]_s&W_2(A)\ar[d]^{W_2(s)}\\
E\ar[r]^(.4){w_E}&W_2(E)
}
\end{equation*}
Since $p\in I$, this diagram induces a diagram
\begin{equation*}
\xymatrix@R=15pt
{A_{n+1}\ar[r]\ar[d]&W_2(A_n)\ar[d]\\
E_{n+1}\ar[r]&W_2(E_n)
}
\end{equation*}
for each positive integer $n$. It becomes
commutative after composed with the \'etale $R_{n+1}$-homomorphism
$R_{n+1}[S_1,\ldots, S_n]\to A_{n+1}$ sending 
$S_i$ to $T_i$ (resp.~the projection to the first component
$W_2(E_n)\to E_n$, whose kernel is nilpotent by $p\in I$) by
the assumption on $T_i$,  Proposition 
\ref{prop:DeltaAlphaDerivInt} (1),
and $\delta(s(x))=\delta(x,0)=(\delta(x),0)=s(\delta(x))$ for 
$x\in R$ 
(resp.~the definition of $w_E$ and $w_A$).
Hence the diagram is commutative for every 
positive integer $n$. This implies 
$W_2(s)\circ w_A=w_E\circ s$ because
$A$ is $I$-adically separated.
\end{proof}

We obtain the following corollary by Proposition \ref{prop:TwistDerivCoord}.
\begin{corollary}\label{cor:SmoothDeltCompDeriv}
Let $R$, $I$, $A$, $T_1,\ldots, T_d$, $\alpha$, and $\beta$ be
the same as in Proposition \ref{prop:TwistDerivDeltaCompCoord}. 
We further assume that 
$A$ is $I$-adically complete and separated, $\alpha$
is $I$-adically nilpotent, and $\delta(T_i)=0$ for every 
$i\in \N\cap [1,d]$.  Let $a_1,\ldots, a_d$ be elements
of $A$ satisfying
$$(\alpha^{p-1}+p\beta)\delta(a_i)+\beta a_i^p
-\sum_{\nu=1}^{p-1}p^{-1}\binom p\nu T_i^{p-\nu}\alpha^{\nu-1}
a_i^{\nu}=0.$$
Then there exists a unique $\partial\in \Der^{\alpha,\beta}_{R,\delta}(A)$
satisfying $\partial(T_i)=a_i$ for every $i\in \N\cap [1,d]$.
\end{corollary}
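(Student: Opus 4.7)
The plan is to construct $\partial$ using the existing existence theorem for untwisted $\alpha$-derivations, and then verify the $\delta$-compatibility by reducing the check to the generators $T_i$.

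First, since $A$ is $I$-adically complete and separated, $I$-adically smooth over $R$, and $\alpha$ is $I$-adically nilpotent, Proposition \ref{prop:TwistDerivCoord} applies and furnishes a unique $\alpha$-derivation $\partial \in \Der_R^\alpha(A)$ with $\partial(T_i) = a_i$ for every $i \in \N \cap [1,d]$. This already gives both the existence and uniqueness of the underlying $\alpha$-derivation; what remains is to show that this particular $\partial$ is $\delta$-compatible with respect to $\beta$.

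Second, I would invoke Proposition \ref{prop:TwistDerivDeltaCompCoord} which says that, under exactly the hypotheses in force (the $I$-adic smoothness, the existence of $I$-adic coordinates $T_1,\ldots, T_d$, and $\delta(\alpha) = \alpha\beta$), $\delta$-compatibility of $\partial$ with respect to $\beta$ is equivalent to each individual $T_i$ being $\delta$-compatible with respect to $\partial$ and $\beta$. So the problem reduces to verifying the single equation \eqref{eq:AlphaDerivDerComp} at each $x = T_i$.

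Finally, I would plug in $x = T_i$ into \eqref{eq:AlphaDerivDerComp}. On the left-hand side, $\partial(\delta(T_i)) = \partial(0) = 0$ by the hypothesis $\delta(T_i) = 0$ (and additivity of $\partial$, with $\partial(0)=0$). On the right-hand side, substituting $\partial(T_i) = a_i$ yields
\[
(\alpha^{p-1} + p\beta)\delta(a_i) + \beta a_i^p - \sum_{\nu=1}^{p-1} p^{-1}\binom{p}{\nu} T_i^{p-\nu}\alpha^{\nu-1} a_i^\nu,
\]
which vanishes by the assumption placed on the $a_i$. Hence both sides equal zero and $T_i$ is $\delta$-compatible with respect to $\partial$ and $\beta$. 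By Proposition \ref{prop:TwistDerivDeltaCompCoord}, $\partial \in \Der^{\alpha,\beta}_{R,\delta}(A)$, completing the proof. There is no real obstacle: this is a clean assembly of the two prior propositions, and the precise form of the defining relation on $a_i$ was designed so that the reduced compatibility condition at the coordinates holds tautologically.
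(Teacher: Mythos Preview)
Your proof is correct and follows exactly the approach indicated by the paper, which simply cites Proposition \ref{prop:TwistDerivCoord} (to obtain the unique $\alpha$-derivation with the prescribed values on the $T_i$) and implicitly relies on Proposition \ref{prop:TwistDerivDeltaCompCoord} (to upgrade to $\delta$-compatibility via the check at the coordinates). Your explicit verification that the equation \eqref{eq:AlphaDerivDerComp} at $x=T_i$ reduces, via $\delta(T_i)=0$ and $\partial(T_i)=a_i$, to the assumed vanishing condition on $a_i$ is precisely the content the paper leaves implicit.
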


An $\alpha$-derivation extends to 
a $\delta$-compatible $\alpha$-derivation on the $\delta$-envelope
uniquely as follows.

\begin{proposition} \label{eq:DeltaEnvTwDerivExt}
(1) 
Let $R$ be a $\delta$-ring, let $A$ be an $R$-algebra,
and let $A_{\delta}$ be the $\delta$-envelope of $A$ over $R$.
Let $\alpha$  be an element of $A$,  and let $\beta$ be an element of $A_{\delta}$ 
satisfying $\delta(\alpha)=\alpha\beta$.
Then any $\alpha$-derivation $\partial\colon A\to A$ of $A$ over $R$
extends uniquely to an $\alpha$-derivation 
$\partial_{\delta}\colon A_{\delta}
\to A_{\delta}$ of $A_{\delta}$ over $R$
$\delta$-compatible with respect to $\beta$.\par
(2) Let $R$ be a $\delta$-ring, let $A$ be a $\delta$-$R$-algebra,
let $B$ be an $A$-algebra, and let $B_{\delta}$
be the $\delta$-envelope of $B$ over $A$. Let 
$\alpha$ and $\beta$ be elements of $A$ satisfying 
$\delta(\alpha)=\alpha\beta$, and let $\partial_A\colon A\to A$
be an $\alpha$-derivation of $A$ over $R$ $\delta$-compatible
with respect to $\beta$. Then any $\alpha$-derivation 
$\partial_B\colon B\to B$ of $B$ over $R$
compatible with $\partial_A$ extends uniquely
to an $\alpha$-derivation $B_{\delta}\to B_{\delta}$
of $B_{\delta}$ over $R$ $\delta$-compatible with
respect to $\beta$. 
\end{proposition}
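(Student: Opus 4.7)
The plan for both parts is to use the interpretation of $\alpha$-derivations as sections of the augmentation on the twisted extension (Proposition~\ref{prop:AlphaDerivInterpret}) together with the interpretation of $\delta$-compatibility as $\delta$-linearity of the section with target $E_{\delta}^{\alpha,\beta}(-)$ (Proposition~\ref{prop:DeltaAlphaDerivInt}), reducing everything to the universal property of $\delta$-envelopes.

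For part (1), I would first form the twisted $\delta$-extension $E_{\delta}^{\alpha,\beta}(A_{\delta})$, which is well defined since $\alpha\in A\subset A_{\delta}$, $\beta\in A_{\delta}$, and $\delta(\alpha)=\alpha\beta$ in $A_{\delta}$. By Proposition~\ref{prop:AlphaDerivInterpret} the derivation $\partial$ corresponds to an $R$-algebra section $s\colon A\to E^{\alpha}(A)$ of $\pi_{0}$; composing with the canonical $R$-homomorphism $E^{\alpha}(A)\to E^{\alpha}(A_{\delta})=E_{\delta}^{\alpha,\beta}(A_{\delta})$ (an equality of underlying rings) I obtain an $R$-algebra homomorphism $s'\colon A\to E_{\delta}^{\alpha,\beta}(A_{\delta})$ whose composition with $\pi_{0}$ is the structure map $A\to A_{\delta}$. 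By the universal property of the $\delta$-envelope, $s'$ extends uniquely to a $\delta$-$R$-homomorphism $s_{\delta}\colon A_{\delta}\to E_{\delta}^{\alpha,\beta}(A_{\delta})$. Since $\pi_{0}$ is a $\delta$-homomorphism (as noted after Lemma~\ref{lem:DeltaStrTwExt}), the composition $\pi_{0}\circ s_{\delta}$ is a $\delta$-$R$-endomorphism of $A_{\delta}$ whose restriction to $A$ is the structure map; by uniqueness in the universal property of $A_{\delta}$, it must equal $\id_{A_{\delta}}$. Hence $s_{\delta}$ is a section of $\pi_{0}$, and Proposition~\ref{prop:AlphaDerivInterpret} produces the desired $\alpha$-derivation $\partial_{\delta}\in\Der_{R}^{\alpha}(A_{\delta})$, which is $\delta$-compatible with respect to $\beta$ by Proposition~\ref{prop:DeltaAlphaDerivInt}~(2) and extends $\partial$ because $s_{\delta}\vert_{A}=s'$. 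Uniqueness follows by running the correspondence backwards: any such $\partial_{\delta}$ yields a $\delta$-$R$-homomorphism $A_{\delta}\to E_{\delta}^{\alpha,\beta}(A_{\delta})$ extending $s'$, which is forced to coincide with $s_{\delta}$ by the universal property.

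Part (2) will proceed by the same mechanism, replacing $R$-algebras by $A$-algebras everywhere. Proposition~\ref{prop:DeltaAlphaDerivInt}~(2) applied to $\partial_{A}$ gives a $\delta$-$R$-homomorphism $s_{A}\colon A\to E_{\delta}^{\alpha,\beta}(A)$; composing it with the canonical map $E_{\delta}^{\alpha,\beta}(A)\to E_{\delta}^{\alpha,\beta}(B_{\delta})$ endows $E_{\delta}^{\alpha,\beta}(B_{\delta})$ with the structure of a $\delta$-$A$-algebra. The section $s_{B}\colon B\to E^{\alpha}(B)$ corresponding to $\partial_{B}$, followed by $E^{\alpha}(B)\to E_{\delta}^{\alpha,\beta}(B_{\delta})$, yields a map $s'_{B}\colon B\to E_{\delta}^{\alpha,\beta}(B_{\delta})$ which is $A$-linear precisely because $\partial_{B}$ extends $\partial_{A}$ (checking on $a\in A$: $s'_{B}(a)=(a,\partial_{B}(a))=(a,\partial_{A}(a))$ agrees with the image of $s_{A}(a)$). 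The universal property of the $\delta$-envelope $B_{\delta}$ over $A$ extends $s'_{B}$ uniquely to a $\delta$-$A$-homomorphism $s_{B_{\delta}}\colon B_{\delta}\to E_{\delta}^{\alpha,\beta}(B_{\delta})$; as in (1), $\pi_{0}\circ s_{B_{\delta}}$ is a $\delta$-$A$-endomorphism of $B_{\delta}$ whose restriction to $B$ is the structure map, hence equals $\id_{B_{\delta}}$, and Propositions~\ref{prop:AlphaDerivInterpret} and~\ref{prop:DeltaAlphaDerivInt}~(2) then supply the required $\partial_{B_{\delta}}$ and its uniqueness.

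The only nontrivial verification is that $\pi_{0}$ on $E_{\delta}^{\alpha,\beta}(-)$ is a $\delta$-homomorphism and that the $A$-algebra structure used in part~(2) makes $s'_{B}$ genuinely $A$-linear; both points are already handled in the discussion following Lemma~\ref{lem:DeltaStrTwExt} and by the hypothesis that $\partial_{B}$ extends $\partial_{A}$, respectively, so I expect no serious obstacle beyond checking these compatibilities carefully.
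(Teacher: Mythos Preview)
Your proposal is correct and follows essentially the same approach as the paper: both arguments translate $\partial$ (resp.\ $\partial_A$, $\partial_B$) into sections via Proposition~\ref{prop:AlphaDerivInterpret}, push them into $E_{\delta}^{\alpha,\beta}(A_{\delta})$ (resp.\ $E_{\delta}^{\alpha,\beta}(B_{\delta})$), invoke the universal property of the $\delta$-envelope to get a $\delta$-section, verify $\pi_0\circ s_{\delta}=\id$ by the uniqueness clause of that same universal property, and conclude via Proposition~\ref{prop:DeltaAlphaDerivInt}~(2). The only cosmetic difference is that the paper phrases the $A$-linearity check in~(2) as an equality of two compositions $A\to E_{\delta}^{\alpha,\beta}(B_{\delta})$, while you phrase it as $s'_B$ being $A$-linear for the $A$-structure induced by $s_A$; these are the same verification.
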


\begin{proof} (1)
The composition $R\to A\xrightarrow{\iota}E^{\alpha}(A)
\to E_{\delta}^{\alpha,\beta}(A_{\delta})$ coincides
with the composition $R\to A_{\delta}\xrightarrow{\iota}
E_{\delta}^{\alpha,\beta}(A_{\delta})$, which is
a $\delta$-homomorphism.
Hence the $R$-homomorphism $s\colon A\to E^{\alpha}(A)
\colon x\mapsto (x,\partial(x))$ (Proposition \ref{prop:AlphaDerivInterpret})
extends uniquely to a $\delta$-homomorphism 
$s_{\delta}\colon A_{\delta}\to E^{\alpha,\beta}_{\delta}(A_{\delta})$
by the universality of $\delta$-envelopes of $R$-algebras,
and the composition $\pi_{0}\circ s_{\delta}\colon 
A_{\delta}\to A_{\delta}$ is the unique $\delta$-homomorphism
extending $\id_A\colon A\to A$, i.e., $\id_{A_{\delta}}$. 
Hence the claim follows from Proposition \ref{prop:DeltaAlphaDerivInt} (2). \par
(2) Let $s_A$ (resp.~$s_B$) be the $\delta$-$R$-homomorphism
$A\to E_{\delta}^{\alpha,\beta}(A)$
(resp.~the $R$-homomorphism $B\to E^{\alpha}(B)$) corresponding
to $\partial_A$ (resp.~$\partial_B$) by Proposition 
\ref{prop:DeltaAlphaDerivInt} (2)
(resp.~Proposition \ref{prop:AlphaDerivInterpret}). Since $\partial_B$ is compatible with $\partial_A$,
the composition $A\to B\xrightarrow{s_B}E^{\alpha}(B)\to
 E_{\delta}^{\alpha,\beta}(B_{\delta})$ coincides with the composition 
$A\xrightarrow{s_A}E_{\delta}^{\alpha,\beta}(A)\to E^{\alpha}(B)
\to E_{\delta}^{\alpha,\beta}(B_{\delta})$. The latter composition 
is a $\delta$-homomorphism because $A\to B_{\delta}$
is a $\delta$-homomorphism. Hence, by the universality 
of $\delta$-envelopes of $A$-algebras, 
the homomorphism $s_B$ extends uniquely to a
$\delta$-homomorphism $s_{B_{\delta}}\colon 
B_{\delta}\to E_{\delta}^{\alpha,\beta}(B_{\delta})$.
The composition $\pi_0\circ s_{B_{\delta}}\colon 
B_{\delta}\to B_{\delta}$ is the unique extension 
of $\pi_0\circ s_B=\id_B\colon B\to B$
to a $\delta$-homomorphism, which  is $\id_{B_{\delta}}$.
Thus the claim follows from Proposition \ref{prop:DeltaAlphaDerivInt} (2).
\end{proof}

A $\delta$-compatible $\alpha$-derivation extends uniquely to
the bounded prismatic envelope as follows.
\begin{proposition}\label{prop:TwDerivPrismExt}
Let $(R,I)$ be a bounded prism, let $(A,J)$ be a
$\delta$-pair over $(R,I)$, 
and let $g\colon (A,J)\to (D,ID)$ be
the bounded prismatic envelope of $(A,J)$ over $(R,I)$ 
(assuming it exists) (Definition \ref{def:bddPrsimEnv}).
Let $\alpha$ and $\beta$ be elements of $A$ satisfying
$\delta(\alpha)=\alpha\beta$. Then 
$\partial\in \Der_{R,\delta}^{\alpha,\beta}(A)$ satisfying
$\partial(J)\subset J$ extends uniquely to 
$\partial'\in \Der_{R,\delta}^{\alpha,\beta}(D)$. 
\end{proposition}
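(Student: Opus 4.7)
The plan is to realize $\partial'$ via the universal property of $g\colon(A,J)\to(D,ID)$ applied to a target bounded prism built from the twisted $\delta$-extension of $D$. Set $E:=E_{\delta}^{\alpha,\beta}(D)$, viewed as a $\delta$-$R$-algebra via $\iota\colon D\to E$. Since the isomorphism \eqref{eq:TwExtIsom} makes $E$ free of rank two over $D$ via $\iota$, the map $\iota$ is faithfully flat and finitely presented, and one can verify all four clauses of Definition~\ref{def:prism}~(2) for the pair $(E,IE)$ over $(R,I)$: invertibility of $IE$ follows by flat base change from the invertibility of $ID$; the $(pR+I)$-adic completeness of $E$ is automatic since $E$ is finite free over the complete $D$; the condition $p\in IE+\varphi(IE)E$ already holds in $R$; and, using $E/IE\cong(D/ID)^{\oplus 2}$ as a $D/ID$-module, $(E/IE)[p^{\infty}]=(E/IE)[p^{N}]$ for $N\gg0$ because the same is true for $(D,ID)$.

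Next, let $s\colon A\to E_{\delta}^{\alpha,\beta}(A)$ be the $\delta$-$R$-homomorphism corresponding to $\partial$ by Proposition~\ref{prop:DeltaAlphaDerivInt}~(2), and set $\tilde{s}:=E_{\delta}^{\alpha,\beta}(g)\circ s\colon A\to E$, so that $\tilde{s}(x)=(g(x),g(\partial(x)))$ via \eqref{eq:TwExtIsom}. The hypotheses $g(J)\subset ID$ and $\partial(J)\subset J$ then imply $\tilde{s}(J)\subset IE$, so $\tilde{s}$ is a morphism of $\delta$-pairs $(A,J)\to(E,IE)$ over $(R,I)$. By the universal property of the bounded prismatic envelope (Definition~\ref{def:bddPrsimEnv}), there exists a unique morphism of bounded prisms $t\colon(D,ID)\to(E,IE)$ over $(R,I)$ with $t\circ g=\tilde{s}$. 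The $\delta$-$R$-homomorphism $\pi_0\circ t\colon D\to D$ satisfies $(\pi_0\circ t)\circ g=\pi_0\circ\tilde{s}=g$, so applying the uniqueness part of the universal property to the morphism $g\colon(A,J)\to(D,ID)$ itself forces $\pi_0\circ t=\id_D$. Set $\partial':=\scrD\circ t\colon D\to D$; by Propositions~\ref{prop:AlphaDerivInterpret} and \ref{prop:DeltaAlphaDerivInt}~(2), $\partial'\in\Der_{R,\delta}^{\alpha,\beta}(D)$, and applying $\scrD$ to $t\circ g=\tilde{s}$ yields $\partial'\circ g=g\circ\partial$.

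For uniqueness of $\partial'$, any $\partial''\in\Der_{R,\delta}^{\alpha,\beta}(D)$ extending $\partial$ corresponds via Proposition~\ref{prop:DeltaAlphaDerivInt}~(2) to a $\delta$-$R$-homomorphism $t''\colon D\to E$ with $\pi_0\circ t''=\id_D$ (hence $t''(ID)\subset IE$, making it a morphism of prisms) and with $t''\circ g=\tilde{s}$; the uniqueness clause of Definition~\ref{def:bddPrsimEnv} then gives $t''=t$ and so $\partial''=\partial'$. I expect the only nontrivial step is the verification in paragraph one that $(E,IE)$ is a bounded prism over $(R,I)$; once this is in hand, the argument is a formal manipulation of the correspondence between $\delta$-compatible $\alpha$-derivations and sections of $\pi_0\colon E_{\delta}^{\alpha,\beta}(-)\to-$, together with the universal property.
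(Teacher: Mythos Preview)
Your proof is correct and follows essentially the same approach as the paper: both build the morphism $\tilde{s}=E_{\delta}^{\alpha,\beta}(g)\circ s\colon(A,J)\to(E_{\delta}^{\alpha,\beta}(D),IE_{\delta}^{\alpha,\beta}(D))$, extend it uniquely to $D$ via the universal property of the bounded prismatic envelope, and then verify $\pi_0\circ t=\id_D$ by a second application of the universal property. The only difference is that the paper dispatches the verification that $(E_{\delta}^{\alpha,\beta}(D),IE_{\delta}^{\alpha,\beta}(D))$ is a bounded prism in one clause (``since $E_{\delta}^{\alpha,\beta}(D)\cong D\oplus D$ as a $D$-module''), whereas you spell out each of the four conditions.
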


\begin{proof}
Let $s$ be a $\delta$-$R$-homomorphism $A\to E_{\delta}^{\alpha,\beta}(A)$
corresponding to $\partial$ (Proposition \ref{prop:DeltaAlphaDerivInt} (2)).
Then the composition 
$A\xrightarrow{s}E_{\delta}^{\alpha,\beta}(A)
\xrightarrow{E^{\alpha,\beta}_{\delta}(g)} E_{\gamma}^{\alpha,\beta}(D)$
is a $\delta$-$R$-homomorphism, which sends $J$ into $ID$ by 
the assumption $\partial(J)\subset J$, and the $\delta$-pair
$(E_{\delta}^{\alpha,\beta}(D),IE_{\delta}^{\alpha,\beta}(D))$
is a bounded prism since $E_{\delta}^{\alpha,\beta}(D)\cong 
D\oplus D$ as a $D$-module. Hence the composition 
above extends uniquely to a morphism of bounded prisms
$s_D\colon (D,ID)\to 
(E_{\delta}^{\alpha,\beta}(D),IE_{\delta}^{\alpha,\beta}(D))$ by the universality
of the  bounded prismatic envelope $g\colon (A,J)\to (D,ID)$. 
By Proposition \ref{prop:DeltaAlphaDerivInt} (2), 
it remains to show that the composition  
$D\xrightarrow{s_D}E_{\delta}^{\alpha,\beta}(D)
\xrightarrow{\pi_0}D$ is the identity. We have a commutative diagram
of $\delta$-pairs over $(R,I)$.
\begin{equation*}
\xymatrix{
(A,J)\ar[r]^(.3)s\ar[d]_g&(E_{\delta}^{\alpha,\beta}(A), JE_{\delta}^{\alpha,\beta}(A))\ar[r]^(.65){\pi_0}
\ar[d]^{E_{\delta}^{\alpha,\beta}(g)}&(A,J)\ar[d]^g\\
(D,ID)\ar[r]^(.35){s_D} &(E_{\delta}^{\alpha,\beta}(D),IE_{\delta}^{\alpha,\beta}(D))
\ar[r]^(.65){\pi_0}&(D,ID),
}
\end{equation*} 
and the composition of the upper two homomorphisms is the
identity. Hence the composition of the lower ones is also 
the identity by the universality of $g$.
\end{proof}

We have the following variant of Lemma \ref{lem:alphaDerivEq}. 

\begin{lemma}\label{lem:alphaDerivDeltaEq}
Let $f\colon R\to R'$ be a $\delta$-homomorphism of $\delta$-rings,
and let $g\colon A\to A'$ be a $\delta$-homomorphism over $f$
from a $\delta$-$R$-algebra to a $\delta$-$R'$-algebra.
Let $\alpha$ and $\beta$ be elements of $A$ satisfying
$\delta(\alpha)=\alpha\beta$, and put 
$\alpha'=g(\alpha)$ and $\beta'=g(\beta)$. 
Let $I$ be an ideal of $A$ such that $A'$ is $I$-adically
separated, and let $\scrS$ be a subset of $A$ such that 
the $\delta$-$R$-subalgebra $R[\scrS]_{\delta}$
of $A$ generated by $\scrS$ is dense with respect to the
$I$-adic topology. Then, for 
$\partial\in \Der_{R,\delta}^{\alpha,\beta}(A)$ and 
$\partial'\in \Der_{R',\delta}^{\alpha',\beta'}(A)$
satisfying $\alpha\partial(I)\subset I$ and
$\alpha'\partial'(IA')\subset IA'$
(e.g.~$I$ is generated by elements of $R$, or 
$\alpha\in I$), $g$ is compatible with $\partial$ and $\partial'$
(Definition \ref{def:alphaDerivation} (2))
if and only if 
$\partial'\circ g(s)=g\circ \partial(s)$ for all $s\in \scrS$.
\end{lemma}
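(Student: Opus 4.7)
The plan is to reduce the statement to a coincidence of two $\delta$-homomorphisms via the section interpretation from Proposition \ref{prop:DeltaAlphaDerivInt}, generalizing the argument of Lemma \ref{lem:alphaDerivEq}. By Proposition \ref{prop:DeltaAlphaDerivInt} (2), the twisted derivations $\partial$ and $\partial'$ correspond respectively to $\delta$-$R$- and $\delta$-$R'$-algebra homomorphisms $s\colon A\to E_{\delta}^{\alpha,\beta}(A)$ and $s'\colon A'\to E_{\delta}^{\alpha',\beta'}(A')$ splitting $\pi_0$. The functoriality of $E_{\delta}^{\alpha,\beta}$ in $(A,\alpha,\beta)$ applied to $(g,f)$ yields a $\delta$-homomorphism $E_{\delta}^{\alpha,\beta}(g)\colon E_{\delta}^{\alpha,\beta}(A)\to E_{\delta}^{\alpha',\beta'}(A')$, and the desired compatibility $\partial'\circ g=g\circ\partial$ is equivalent to the commutativity of
\begin{equation*}
\xymatrix{
A \ar[d]_{g} \ar[r]^(.35){s} & E_{\delta}^{\alpha,\beta}(A) \ar[d]^{E_{\delta}^{\alpha,\beta}(g)} \\
A' \ar[r]^(.35){s'} & E_{\delta}^{\alpha',\beta'}(A').
}
\end{equation*}

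The two composites $E_{\delta}^{\alpha,\beta}(g)\circ s$ and $s'\circ g$ from $A$ to $E_{\delta}^{\alpha',\beta'}(A')$ are both $\delta$-$R$-homomorphisms (with target viewed as a $\delta$-$R$-algebra via $f$). The locus $A_0\subset A$ on which they agree is therefore both an $R$-subalgebra (as the equalizer of two ring homomorphisms) and stable under $\delta$, hence a $\delta$-$R$-subalgebra of $A$. Since the hypothesis is precisely $\scrS\subset A_0$, the minimality of $R[\scrS]_{\delta}$ among $\delta$-$R$-subalgebras of $A$ containing $\scrS$ forces $R[\scrS]_{\delta}\subset A_0$, yielding $g\circ\partial=\partial'\circ g$ on $R[\scrS]_{\delta}$.

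It remains to extend this equality from $R[\scrS]_{\delta}$ to all of $A$ by $I$-adic continuity. The assumptions $\alpha\partial(I)\subset I$ and $\alpha'\partial'(IA')\subset IA'$ together with Remark \ref{rmk:alphaDerivBC} (6) give $\partial(I^{n+1})\subset I^n$ and $\partial'(I^{n+1}A')\subset I^nA'$, so both $g\circ\partial$ and $\partial'\circ g$ descend to maps $A/I^{n+1}A\to A'/I^nA'$ which agree on the image of $R[\scrS]_{\delta}$; by the density hypothesis this image is all of $A/I^{n+1}A$. Hence for arbitrary $a\in A$ and $n$, writing $a=b+c$ with $b\in R[\scrS]_{\delta}$ and $c\in I^{n+1}A$ gives $g(\partial(a))\equiv g(\partial(b))=\partial'(g(b))\equiv\partial'(g(a))\pmod{I^nA'}$, and the $I$-adic separatedness of $A'$ finishes the argument. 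The only substantive new input compared with Lemma \ref{lem:alphaDerivEq} is the $\delta$-closure of $A_0$, which is built into the $\delta$-linearity of the sections $s$ and $s'$ and is precisely what permits us to work with the $\delta$-$R$-subalgebra $R[\scrS]_{\delta}$ rather than the (typically smaller) $R$-subalgebra $R[\scrS]$.
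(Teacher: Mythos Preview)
Your proof is correct and follows essentially the same approach as the paper's: both use the section interpretation (Proposition \ref{prop:DeltaAlphaDerivInt}) to upgrade Lemma \ref{lem:alphaDerivEq} from $R$-subalgebras to $\delta$-$R$-subalgebras, then conclude by $I$-adic density and separatedness. The only cosmetic difference is that in the continuity step the paper tracks the sections $s,s'$ through the quotients $E/I^nE,\,E'/I^nE'$, whereas you work directly with the additive maps $g\circ\partial$ and $\partial'\circ g$ on $A/I^{n+1}\to A'/I^nA'$; both are equivalent.
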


\begin{proof}
One can prove the lemma in the same way as 
Lemma \ref{lem:alphaDerivEq} by using Proposition \ref{prop:DeltaAlphaDerivInt}
instead of Proposition \ref{prop:AlphaDerivInterpret} as follows. 
The necessity is trivial. We prove the sufficiency.
We may assume $R'=R$ and $f=\id_R$.
Put $E=E_{\delta}^{\alpha,\beta}(A)$ and $E'=E_{\delta}^{\alpha',\beta'}(A')$.
Let $h\colon E\to E'$ be the $\delta$-homomorphism
induced by $g$, and let $s$ (resp.~$s'$)
be the $\delta$-$R$-homomorphism $A\to E$ (resp.~$A'\to E'$) corresponding to $\partial$ (resp.~$\partial'$)
by Proposition \ref{prop:DeltaAlphaDerivInt} (2).
By assumption, the $\delta$-$R$-homomorphisms
$h\circ s$ and $s'\circ g$ coincide on the $\delta$-$R$-subalgebra
$R[\scrS]_{\delta}$ of $A$ generated by $\scrS$. By 
$\alpha\partial(I)\subset I$, $\alpha'\partial'(IA')\subset IA'$,
and Remark \ref{rmk:alphaDerivBC} (6), $s$ and $s'$ induce $R$-homomorphisms
$s_n\colon A/I^{n+1}\to E/I^nE$ and $s'_n\colon A'/I^{n+1}A'\to E'/I^nE'$,
respectively. Let $g_n$ and $h_n$ denote the reduction modulo $I^n$
of $g$ and $h$. Then we have $h_n\circ s_n=s'_n\circ g_{n+1}$
since the composition $R[\scrS]_{\delta}\to A\to A_{n+1}$ is surjective.
Since $E'$ is $I$-adically separated,
this implies $h\circ s=s'\circ g$. 
\end{proof}

\begin{remark}
One can derive Lemma \ref{lem:alphaDerivDeltaEq} from Lemma \ref{lem:alphaDerivEq}
by using the fact:
$\partial'\circ g(x)=g\circ\partial(x)$ implies $\partial'\circ g(\delta(x))=g\circ\partial(\delta(x))$, which 
follows from the description of $\partial'(\delta(g(x))$
(resp.~$\partial(\delta(x))$) in terms of $g(x)$, 
$\delta(\partial'(g(x)))$ and $\partial'(g(x))$
(resp.~$x$, $\delta(\partial(x))$ and $\partial(x)$).
\end{remark}

\begin{lemma}
Let $(R,I)$ be a bounded prism, let $(A,J)$ be a $\delta$-pair
over $(R,I)$, and let $f\colon (A,J)\to (D,ID)$ be the 
bounded prismatic envelope of $(A,J)$ over $(R,I)$ (assuming it exists). 
Let $\alpha$ and $\beta$ be elements of $A$ satisfying
$\delta(\alpha)=\alpha\beta$. 
Let $g\colon (D,ID)\to (B,IB)$ be a morphism of 
bounded prisms over $(R,I)$, and let $\partial_A$,
$\partial_D$, and $\partial_B$ be $\alpha$-derivations
over $R$ on $A$, $D$, and $B$, respectively, 
$\delta$-compatible with respect to $\beta$.
If $\partial_B$ and $\partial_D$ are extensions of $\partial_A$
along $g\circ f$ and $f$, respectively, then 
$\partial_B$ is an extension of $\partial_D$ along $g$. 
\end{lemma}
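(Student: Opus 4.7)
The plan is to mimic the proof of Proposition \ref{prop:TwDerivPrismExt}, translating the compatibility conditions on derivations into equalities of section maps into twisted $\delta$-extensions, and then invoking the universal property of the bounded prismatic envelope to deduce the desired equality.

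First, I would let $s_A\colon A\to E_{\delta}^{\alpha,\beta}(A)$, $s_D\colon D\to E_{\delta}^{\alpha,\beta}(D)$, and $s_B\colon B\to E_{\delta}^{\alpha,\beta}(B)$ denote the $\delta$-$R$-homomorphisms corresponding to $\partial_A$, $\partial_D$, $\partial_B$ by Proposition \ref{prop:DeltaAlphaDerivInt} (2). The compatibility of $\partial_D$ with $\partial_A$ along $f$ (resp.\ of $\partial_B$ with $\partial_A$ along $g\circ f$) translates into the equality $E_{\delta}^{\alpha,\beta}(f)\circ s_A=s_D\circ f$ (resp.\ $E_{\delta}^{\alpha,\beta}(g\circ f)\circ s_A=s_B\circ(g\circ f)$). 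What we want to prove, again by Proposition \ref{prop:DeltaAlphaDerivInt} (2) applied in the converse direction, is that $E_{\delta}^{\alpha,\beta}(g)\circ s_D=s_B\circ g$.

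Next, as already used in the proof of Proposition \ref{prop:TwDerivPrismExt}, the pair $(E_{\delta}^{\alpha,\beta}(B), I\cdot E_{\delta}^{\alpha,\beta}(B))$ is a bounded prism over $(R,I)$, because $E_{\delta}^{\alpha,\beta}(B)\cong B\oplus B$ as a $B$-module. Both composites $E_{\delta}^{\alpha,\beta}(g)\circ s_D$ and $s_B\circ g$ are then $\delta$-$R$-homomorphisms $D\to E_{\delta}^{\alpha,\beta}(B)$ sending $ID$ into $I\cdot E_{\delta}^{\alpha,\beta}(B)$ (for $s_B\circ g$ because $g$ sends $ID$ into $IB$ and $s_B$ is a ring homomorphism; for the other because $s_D$ sends $D$ into $E_{\delta}^{\alpha,\beta}(D)$ and $E_{\delta}^{\alpha,\beta}(g)$ sends $I\cdot E_{\delta}^{\alpha,\beta}(D)$ into $I\cdot E_{\delta}^{\alpha,\beta}(B)$). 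Thus they are two morphisms of bounded prisms $(D,ID)\to (E_{\delta}^{\alpha,\beta}(B),I\cdot E_{\delta}^{\alpha,\beta}(B))$ over $(R,I)$.

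Finally, by the functoriality of $E_{\delta}^{\alpha,\beta}(-)$ and the two compatibility equalities recorded in the first step, both composites agree after precomposition with the morphism of $\delta$-pairs $f\colon (A,J)\to (D,ID)$, producing the common value $E_{\delta}^{\alpha,\beta}(g\circ f)\circ s_A$. Since $f\colon (A,J)\to (D,ID)$ is the bounded prismatic envelope of $(A,J)$ over $(R,I)$, its universal property forces the two extensions to $(D,ID)$ to coincide, giving $E_{\delta}^{\alpha,\beta}(g)\circ s_D=s_B\circ g$, as required. I do not anticipate any substantial obstacle; the argument is essentially the same as the one establishing existence and uniqueness of the extension in Proposition \ref{prop:TwDerivPrismExt}, with the only point to watch being the verification that the relevant ideals land inside $I\cdot E_{\delta}^{\alpha,\beta}(B)$, which is routine.
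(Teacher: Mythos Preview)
Your proposal is correct and follows essentially the same approach as the paper: translate the derivations into section maps $s_A,s_D,s_B$ via Proposition \ref{prop:DeltaAlphaDerivInt} (2), observe that $E_{\delta}^{\alpha,\beta}(g)\circ s_D$ and $s_B\circ g$ are both morphisms of bounded prisms $(D,ID)\to (E_{\delta}^{\alpha,\beta}(B),I E_{\delta}^{\alpha,\beta}(B))$ which agree after precomposing with $f$, and then invoke the universal property of the bounded prismatic envelope. The paper's proof is slightly terser but identical in substance.
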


\begin{proof}
Let $s_B$ and $s_D$ be the $\delta$-$R$-homomorphisms
$B\to E_{\delta}^{\alpha,\beta}(B)$ and $D\to E_{\delta}^{\alpha,\beta}(D)$
corresponding to $\partial_B$ and $\partial_D$, respectively,
by Proposition \ref{prop:DeltaAlphaDerivInt} (2). It suffices to show
that the two morphisms of bounded prisms over $(R,I)$
$E_{\delta}^{\alpha,\beta}(g)\circ s_D\colon 
D\to E_{\delta}^{\alpha,\beta}(D)\to E_{\delta}^{\alpha,\beta}(B)$
and $s_B\circ g\colon D\to B\to E_{\delta}^{\alpha,\beta}(B)$
are the same. By the universal property of the bounded
prismatic envelope $f$, this follows from the fact that
the two morphisms coincide after composing with $f$
by the assumption on $\partial_A$, $\partial_D$, and $\partial_B$.
\end{proof}

Finally we discuss the commutativity of two twisted
derivations compatible with the given $\delta$-structure. 
Let $A$ be a $\delta$-ring, and let $\alpha_1$,
$\alpha_2$, $\beta_1$, and $\beta_2$ be elements
of $A$ satisfying $\delta(\alpha_i)=\alpha_i\beta_i$
$(i=1,2)$. Then the isomorphism \eqref{eq:TwExtComp}
\begin{equation}\label{eq:TwExtDeltacomp}
E_{\delta}^{\alpha_2,\beta_2}(E_{\delta}^{\alpha_1,\beta_1}(A))
\cong E_{\delta}^{\alpha_1,\beta_1}(E_{\delta}^{\alpha_2,\beta_2}(A))
\end{equation}
is compatible with the $\delta$-structures because both sides
are $\delta$-$A$-algebras and satisfy $\delta(T_1)=\beta_1T_1$
and $\delta(T_2)=\beta_2T_2$ under the presentation
\eqref{eq:TwExtCompDescription}. We identify the two
$\delta$-$A$-algebras by \eqref{eq:TwExtDeltacomp}. 

\begin{lemma}\label{lem:TwDerivCommGen}
Let $R$ be a $\delta$-ring, let $A$ be a $\delta$-$R$-algebra,
and let $\alpha_1$, $\beta_1$, $\alpha_2$, and $\beta_2$ be elements
of $A$ satisfying $\delta(\alpha_i)=\alpha_i\beta_i$ $(i=1,2)$.
Let $I$ be an ideal of $A$ such that $A$ is $I$-adically separated,
and let $\scrS$ be a subset of $A$ such that the $\delta$-$R$-subalgebra
$R[\scrS]_{\delta}$ of $A$ generated by $\scrS$ is dense with
respect to the $I$-adic topology. Then, for $\partial_i\in \Der_{R,\delta}^{\alpha_i,\beta_i}(A)$ $(i=1,2)$
satisfying $\partial_i(\alpha_j)=\partial_i(\beta_j)=0$ for 
$(i,j)=(1,2), (2,1)$ and $\alpha_i\partial_i(I)\subset I$ for $i=1,2$ (e.g.~$I$ is generated
by elements of $R$, or $\alpha_1,\alpha_2\in I$), we have 
$\partial_1\circ\partial_2=\partial_2\circ\partial_1$ if 
$\partial_1\circ \partial_2(s)=\partial_2\circ \partial_1(s)$ for
all $s\in \scrS$. 
\end{lemma}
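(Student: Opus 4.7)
The plan is to parallel the argument of Lemma \ref{lem:EtMapDerivComm}, but to replace the section maps into twisted extensions with their $\delta$-compatible counterparts, so that the $\delta$-$R$-subalgebra generated by $\scrS$ (rather than only the $R$-subalgebra) enters the density step. Concretely, I will first apply Proposition \ref{prop:DeltaAlphaDerivInt} (2) to each $\partial_i$ to obtain a $\delta$-$R$-homomorphism $s_i \colon A \to E_i := E_{\delta}^{\alpha_i,\beta_i}(A)$ with $\pi_0 \circ s_i = \id_A$ and $\scrD \circ s_i = \partial_i$. The hypotheses $\partial_i(\alpha_j) = \partial_i(\beta_j) = 0$ translate, via $s_i(x) = (x,\partial_i(x))$, into $s_i(\alpha_j) = \alpha_j$ and $s_i(\beta_j) = \beta_j$, which is precisely what is needed for the functorially defined $\delta$-$R$-homomorphism $E_{\delta}^{\alpha_j,\beta_j}(s_i) \colon E_j \to E_{\delta}^{\alpha_j,\beta_j}(E_i)$ to land in the expected iterated $\delta$-extension. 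Setting $E_{12}$ to be either iterated $\delta$-extension, identified by the $\delta$-isomorphism \eqref{eq:TwExtDeltacomp}, I obtain two $\delta$-$R$-homomorphisms
\[
\Phi_1 := E_{\delta}^{\alpha_1,\beta_1}(s_2) \circ s_1, \qquad \Phi_2 := E_{\delta}^{\alpha_2,\beta_2}(s_1) \circ s_2 \colon A \longrightarrow E_{12},
\]
and by Lemma \ref{lem:TwDerivSectionCommutativity} (2) the desired equality $\partial_1 \circ \partial_2 = \partial_2 \circ \partial_1$ is equivalent to $\Phi_1 = \Phi_2$.

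By Lemma \ref{lem:TwDerivSectionCommutativity} (1) applied pointwise, the hypothesis $\partial_1 \partial_2(s) = \partial_2 \partial_1(s)$ for $s \in \scrS$ yields $\Phi_1(s) = \Phi_2(s)$. Since $\Phi_1$ and $\Phi_2$ are $\delta$-$R$-homomorphisms, they agree on the $\delta$-$R$-subalgebra $R[\scrS]_\delta$ of $A$. To extend from $R[\scrS]_\delta$ to all of $A$, I will invoke an $I$-adic continuity and separatedness argument. Remark \ref{rmk:alphaDerivBC} (6) combined with $\alpha_i \partial_i(I) \subset I$ gives $\partial_i(I^{n+1}) \subset I^n$, hence $s_i(I^{n+1}) \subset I^n E_i$, and iterating $\Phi_j(I^{n+2}) \subset I^n E_{12}$. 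For $x \in A$ and $n \geq 0$, the $I$-adic density of $R[\scrS]_\delta$ in $A$ lets me write $x = y + z$ with $y \in R[\scrS]_\delta$ and $z \in I^{n+2} A$, whence $(\Phi_1 - \Phi_2)(x) = (\Phi_1 - \Phi_2)(z) \in I^n E_{12}$. Since $E_{12}$ is a finite free $A$-module, it is $I$-adically separated, so letting $n$ tend to infinity gives $\Phi_1(x) = \Phi_2(x)$, as required.

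The main obstacle is the functorial bookkeeping: one must verify that the identification \eqref{eq:TwExtDeltacomp} transports both $\Phi_1$ and $\Phi_2$ into a common $\delta$-$R$-algebra $E_{12}$, which is the step where the hypotheses $\partial_i(\alpha_j) = \partial_i(\beta_j) = 0$ are used in an essential way to ensure that the twist data is preserved under $s_i$. Everything else either follows directly from Proposition \ref{prop:DeltaAlphaDerivInt} and Lemma \ref{lem:TwDerivSectionCommutativity}, or reduces to the standard density-and-separatedness argument already appearing in the proof of Lemma \ref{lem:EtMapDerivComm}, with the crucial simplification that we are now entitled to density of $R[\scrS]_\delta$ rather than of the smaller $R[\scrS]$, since $\Phi_1$ and $\Phi_2$ are $\delta$-compatible.
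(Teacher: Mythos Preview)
Your proof is correct and follows essentially the same approach as the paper: both use Proposition \ref{prop:DeltaAlphaDerivInt} (2) to pass to $\delta$-$R$-homomorphisms $s_i$, form the two iterated compositions into $E_{\delta}^{\alpha_1,\beta_1}(E_{\delta}^{\alpha_2,\beta_2}(A))$, reduce via Lemma \ref{lem:TwDerivSectionCommutativity} to showing these agree, note they coincide on $R[\scrS]_{\delta}$ as $\delta$-$R$-homomorphisms, and conclude by $I$-adic continuity and separatedness. Your justification that $E_{12}$ is $I$-adically separated (as a finite free $A$-module) is slightly more explicit than the paper's, which simply invokes that $A$ is $I$-adically separated.
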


\begin{proof}
For $i\in \{1,2\}$, let $s_i$ be the $\delta$-$R$-homomorphism 
$A\to E_{\delta}^{\alpha_i,\beta_i}(A)$ corresponding to 
$\partial_i$ by Proposition \ref{prop:DeltaAlphaDerivInt} (2).
We have $s_i(\alpha_j)=\alpha_j$ and $s_i(\beta_j)=\beta_j$
for $(i,j)=(1,2), (2,1)$ by the vanishing of $\partial_i$
on $\alpha_j$ and $\beta_j$.
Let us consider the following two compositions of 
$\delta$-$R$-homomorphisms 
\begin{align*}
&A\xrightarrow{s_1} E_{\delta}^{\alpha_1,\beta_1}(A)
\xrightarrow{E_{\delta}^{\alpha_1,\beta_1}(s_2)}
 E_{\delta}^{s_2(\alpha_1),s_2(\beta_1)}(E_{\delta}^{\alpha_2,\beta_2}(A))
= E_{\delta}^{\alpha_1,\beta_1}(E_{\delta}^{\alpha_2,\beta_2}(A)),\\
 &A\xrightarrow{s_2} E_{\delta}^{\alpha_2,\beta_2}(A)
 \xrightarrow{E_{\delta}^{\alpha_2,\beta_2}(s_1)}
 E_{\delta}^{s_1(\alpha_2),s_1(\beta_2)}(E_{\delta}^{\alpha_1,\beta_1}(A))
=E_{\delta}^{\alpha_2,\beta_2}(E_{\delta}^{\alpha_1,\beta_1}(A)).
\end{align*}
By Lemma \ref{lem:TwDerivSectionCommutativity}  (1) 
and the assumption $\partial_1\circ\partial_2(s)=
\partial_2\circ\partial_1(s)$ $(s\in \scrS)$, these $\delta$-$R$-homomorphisms
coincide on $\scrS$ and hence on $R[\scrS]_{\delta}$.
By Remark \ref{rmk:alphaDerivBC} (6), we have $s_i(I^{n+1})\subset I^nE^{\alpha_i}(A)$
$(i=1,2, n\in \N)$, 
which imply that the two compositions above are continuous with respect
to the $I$-adic topology. 
Since $A$ is $I$-adically separated, we see that they are the same.
Hence $\partial_1\circ\partial_2=\partial_2\circ\partial_1$
by Lemma \ref{lem:TwDerivSectionCommutativity} (2).
\end{proof}

\begin{lemma}\label{lem:TwDervPrismExtComm}
Let $(R,I)$ be a bounded prism, let $(A,J)$ be a $\delta$-pair
over $(R,I)$, and let $g\colon (A,J)\to (D,ID)$ be the
bounded prismatic envelope of $(A,J)$ over $(R,I)$
(assuming it exists) (Definition \ref{def:bddPrsimEnv}). 
For $i\in \{1,2\}$, let $\alpha_i$ and $\beta_i$ be
elements of $A$ satisfying $\delta(\alpha_i)=\alpha_i\beta_i$,
let $\partial_i\in \Der^{\alpha_i,\beta_i}_{R,\delta}(A)$
be an element satisfying $\partial_i(J)\subset J$, and 
let $\partial_i'\in \Der_{R,\delta}^{\alpha_i,\beta_i}(D)$ be the
unique extension of $\partial_i$
 (Proposition \ref{prop:TwDerivPrismExt}). 
Assume $\partial_i(\alpha_j)=\partial_i(\beta_j)=0$ for
$(i,j)=(1,2), (2,1)$. 
If $\partial_1\circ\partial_2=\partial_2\circ\partial_1$,
then $\partial_1'\circ\partial_2'=\partial_2'\circ\partial_1'$.
\end{lemma}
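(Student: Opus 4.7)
The plan is to follow closely the strategy used in Proposition \ref{prop:TwDerivPrismExt}: pass to the section-map interpretation of Proposition \ref{prop:DeltaAlphaDerivInt}(2), build an iterated twisted $\delta$-extension which is again a bounded prism, and then apply the universal property of the bounded prismatic envelope $g\colon(A,J)\to(D,ID)$.

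First, for $i=1,2$ let $s_i\colon A\to E_{\delta}^{\alpha_i,\beta_i}(A)$ and $s_i'\colon D\to E_{\delta}^{\alpha_i,\beta_i}(D)$ denote the $\delta$-$R$-homomorphisms corresponding to $\partial_i$ and $\partial_i'$, respectively (Proposition \ref{prop:DeltaAlphaDerivInt}(2)). Since $\partial_i'$ extends $\partial_i$ along $g$, the hypothesis $\partial_i(\alpha_j)=\partial_i(\beta_j)=0$ implies $\partial_i'(\alpha_j)=\partial_i'(\beta_j)=0$, whence $s_i'(\alpha_j)=\alpha_j$ and $s_i'(\beta_j)=\beta_j$ for $(i,j)=(1,2),(2,1)$. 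Thus the iterated $\delta$-rings $E_{\delta}^{\alpha_1,\beta_1}(E_{\delta}^{\alpha_2,\beta_2}(D))$ and $E_{\delta}^{\alpha_2,\beta_2}(E_{\delta}^{\alpha_1,\beta_1}(D))$ make sense and are canonically identified as $\delta$-$D$-algebras via \eqref{eq:TwExtDeltacomp}; call this common $\delta$-$D$-algebra $E_{12}(D)$. Since $E_{\delta}^{\alpha_i,\beta_i}(D)\cong D\oplus D$ as $D$-modules, $E_{12}(D)$ is a free $D$-module of rank $4$; in particular, as in the proof of Proposition \ref{prop:TwDerivPrismExt}, $(E_{12}(D), I\cdot E_{12}(D))$ is a bounded prism over $(R,I)$. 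Similarly define $E_{12}(A)$, which sits in a canonical $\delta$-$R$-homomorphism $E_{12}(A)\to E_{12}(D)$ induced by $g$.

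Next I consider the two $\delta$-$R$-homomorphisms
\[
u_1 := E_{\delta}^{\alpha_1,\beta_1}(s_2')\circ s_1', \qquad
u_2 := E_{\delta}^{\alpha_2,\beta_2}(s_1')\circ s_2',
\]
both viewed as maps $D\to E_{12}(D)$ after the identification \eqref{eq:TwExtDeltacomp}. By Lemma \ref{lem:TwDerivSectionCommutativity}(2), proving $\partial_1'\circ\partial_2'=\partial_2'\circ\partial_1'$ is equivalent to $u_1=u_2$. The key observation is that each $u_\ell$ is a morphism of $\delta$-pairs $(D,ID)\to (E_{12}(D), I\cdot E_{12}(D))$ over $(R,I)$: $R$-linearity of $s_1'$ and $E_{\delta}^{\alpha_1,\beta_1}(s_2')$ gives $u_1(ID)\subset I\cdot E_{12}(D)$, and likewise for $u_2$.

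The final step is to check that $u_1\circ g = u_2\circ g$ as maps $A\to E_{12}(D)$, and invoke the universal property of the bounded prismatic envelope. Using that $\partial_i'$ extends $\partial_i$ (equivalently $s_i'\circ g = E_{\delta}^{\alpha_i,\beta_i}(g)\circ s_i$) together with the naturality of $E_{\delta}^{\alpha,\beta}(-)$, one rewrites
\[
u_1\circ g = E_{\delta}^{\alpha_1,\beta_1}(E_{\delta}^{\alpha_2,\beta_2}(g))\circ E_{\delta}^{\alpha_1,\beta_1}(s_2)\circ s_1,
\]
and symmetrically for $u_2\circ g$; these two are equal by Lemma \ref{lem:TwDerivSectionCommutativity}(2) applied on $A$, where $\partial_1\circ\partial_2=\partial_2\circ\partial_1$ is assumed. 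Hence $u_1$ and $u_2$ are two morphisms of bounded prisms $(D,ID)\to (E_{12}(D), I\cdot E_{12}(D))$ over $(R,I)$ extending the same morphism of $\delta$-pairs $(A,J)\to (E_{12}(D), I\cdot E_{12}(D))$, and the universal property of $g\colon (A,J)\to (D,ID)$ (Definition \ref{def:bddPrsimEnv}) forces $u_1=u_2$. The only real subtlety to verify carefully is that $E_{12}(D)$ is a bounded prism and that both $u_1$ and $u_2$ respect the ideal $ID$; everything else is a clean application of the section-map formalism of \S\ref{sec:TwistedDerivDelta} and the universal property of the prismatic envelope.
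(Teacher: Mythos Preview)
Your proof is correct and follows essentially the same approach as the paper: pass to the section maps $s_i'$, reduce via Lemma~\ref{lem:TwDerivSectionCommutativity}(2) to showing the two composites $D\to E_{12}(D)$ agree, check agreement after composing with $g$ using the commutativity on $A$, and conclude by the universal property of the bounded prismatic envelope. The only minor difference is that the paper verifies $u_1\circ g=u_2\circ g$ directly via Lemma~\ref{lem:TwDerivSectionCommutativity}(1) applied to $\partial_1'\circ\partial_2'(g(a))=\partial_2'\circ\partial_1'(g(a))$, rather than first factoring through $E_{12}(A)$, but this is the same computation.
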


\begin{proof}
For $i\in \{1,2\}$, let $s'_i$ be the $\delta$-$R$-homomorphism 
$D\to E_{\delta}^{\alpha_i,\beta_i}(D)$ corresponding to 
$\partial_i'$ (Proposition \ref{prop:DeltaAlphaDerivInt} (2)). 
We have $s_i'(\alpha_j)=\alpha_j$ and $s_i'(\beta_j)=\beta_j$
for $(i,j)=(1,2), (2,1)$. 
By Lemma \ref{lem:TwDerivSectionCommutativity} (2), 
it suffices to prove that the two compositions
$E_{\delta}^{\alpha_1,\beta_1}(s'_2)\circ s_1'$ 
and $E_{\delta}^{\alpha_2,\beta_2}(s'_1)\circ s_2'$ are the same.
Since $\partial_1'\circ\partial_2'(g(a))=
\partial_2'\circ\partial_1'(g(a))$ for all $a\in A$
by assumption, we see that they coincide
after composing with $g$ by Lemma 
\ref{lem:TwDerivSectionCommutativity} (1). 
Since the two compositions in question both 
give morphisms of bounded prisms over $(R,I)$, 
the universality of the bounded prismatic envelope $g$
implies the desired equality. 
\end{proof}

\section{Twisted derivations on divided $\delta$-envelopes
and Poincar\'e lemma}
\label{sec:TwDeivDivDeltaEnv}
In this  section, we consider $\delta$-algebras over 
the polynomial ring $\Z[q]$ in one variable $q$ equipped
with the $\delta$-structure associated to the lifting of
Frobenius $f(q)\mapsto f(q^p)$. We put $\mu=q-1$,
$\xi=[p]_q=\varphi(\mu)\mu^{-1}=1+q+\cdots+q^{p-1}$,
and $I=(p,\pq)=(p,\mu^{p-1})$.
We have $\delta(\mu)=p^{-1}\{(q^p-1)-(q-1)^p\}
=p^{-1}\{(1+\mu)^p-1-\mu^p\}
=\sum_{\nu=1}^{p-1}p^{-1}\binom p\nu \mu^{\nu}
\in \mu \Z[q]$. Hence $\mu\Z[q]$ is a $\delta$-ideal
of $\Z[q]$. Since $\pq\equiv p\mod \mu$, we have
$\delta(\pq)\equiv \delta(p)=1-p^{p-1}\mod \mu$,
which implies that $\delta(\pq)$ is invertible modulo $I$. We put 
$\eta=\delta(\mu)\mu^{-1}=\sum_{\nu=1}^{p-1}p^{-1}\binom p\nu
\mu^{\nu-1}\in \Z[q]$.

\begin{proposition}\label{prop:PrismEnvDeriv}
Let $R$ be a $\delta$-$\Z[q]$-algebra, let $A$ be a
$\delta$-$R$-algebra, and let $T_1,\ldots, T_d$ be elements of $A$.
Put $I=pR+\xi R$. 
We define $D_0$, $D$, and $\tau_i,\tau_i^{\{m\}_{\delta}}\in D$ 
$(m\in \N)$ as in Proposition \ref{prop:DeltaEnvRegSeq}, and $\hD$
to be the $I$-adic completion $\varprojlim_nD/I^nD$ of $D$
 equipped with the completion of the $\delta$-structure of $D$.
We assume that we are given $t_i\in A$ and $a_i\in R$ $(i\in \N\cap [1,d])$
satisfying $T_i=t_i-a_i$ and $\delta(t_i)=0$, and that 
the $\delta$-$R$-algebra $D$ with $\tau_i\in D$
satisfies the conclusion of Proposition \ref{prop:DeltaEnvRegSeq} (4),
i.e., the following holds.
\begin{align}
&\text{The $R/I^{n+1}R$-module $D/I^{n+1}D$
is free with a basis ${\textstyle \prod\limits_{i=1}^d\tau_i^{\{m_i\}_{\delta}}}$ mod $I^{n+1}D$}
\;\;((m_i)\in \N^d)
\label{cond:DivDeltEnvBasis}\\
& \text{for every $n\in \N$} \notag
\end{align}
Then, for each $i\in \N\cap[1,d]$, there exists a unique 
$t_i\mu$-derivation $\theta_{\hD,i}$ on $\hD$ over $R$ $\delta$-compatible
with respect to $t_i^{p-1}\eta$ (Definition \ref{def:alphaDerivDeltaComp}) satisfying 
$\theta_{\hD,i}(\tau_i)=1$ and $\theta_{\hD,i}(\tau_j)=0$ 
$(j\in \N\cap [1,d], j\neq i)$. 
 (Note $\delta(t_i\mu)=t_i^{p}\delta(\mu)=(t_i^{p-1}\eta)t_i\mu$.)
Moreover we have $\theta_{\hD,i}\circ\theta_{\hD,j}=\theta_{\hD,j}\circ\theta_{\hD,i}$
$(i, j\in \N\cap [1,d])$. 
\end{proposition}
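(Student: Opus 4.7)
The plan is to construct $\theta_{\hD,i}$ by reducing to a polynomial universal case and then transporting the resulting derivation via a canonical isomorphism of $I$-adic completions.

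Consider the polynomial $\delta$-$R$-algebra $\tR=R[\tlt_1,\ldots,\tlt_d]$ with $\delta(\tlt_j)=0$, together with the $\delta$-$R$-homomorphism $\tR\to A$ sending $\tlt_j\mapsto t_j$ (well-defined since $\delta(t_j)=0$). Define the $R$-algebra map $s_i^{\tR}\colon\tR\to E_\delta^{\tlt_i\mu,\tlt_i^{p-1}\eta}(\tR)$ by $\tlt_j\mapsto(\tlt_j,\xi)$ for $j=i$ and $\tlt_j\mapsto(\tlt_j,0)$ for $j\neq i$. That $s_i^{\tR}$ is a $\delta$-$R$-homomorphism reduces, by the universal property of the polynomial $\delta$-algebra $\tR$ and the formula for $\delta$ on the twisted $\delta$-extension (Lemma \ref{lem:DeltaStrTwExt}), to the vanishing of $\delta((\tlt_i,\xi))$, which boils down to the $\Z[q]$-identity
\[
\xi\varphi(\eta)=\sum_{\nu=1}^{p-1}p^{-1}\binom{p}{\nu}\mu^{\nu-1}\xi^{\nu},
\]
and this follows by applying $\varphi$ to $\eta=\sum_{\nu=1}^{p-1}p^{-1}\binom{p}{\nu}\mu^{\nu-1}$ (using $\varphi(\mu)=\mu\xi$) and multiplying by $\xi$. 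By Proposition \ref{prop:DeltaAlphaDerivInt} this gives $\theta_i^{\tR}\in\Der_{R,\delta}^{\tlt_i\mu,\tlt_i^{p-1}\eta}(\tR)$ with $\theta_i^{\tR}(\tlt_j)$ equal to $\xi$ if $j=i$ and $0$ otherwise. Put $\tD_0=\tR[\tS_1,\ldots,\tS_d]/(\xi\tS_j-(\tlt_j-a_j))$ and let $\tD$ be its $\delta$-envelope over $\tR$. Extend $\theta_i^{\tR}$ to an $R$-linear $\tlt_i\mu$-derivation on $\tD_0$ by sending $\tS_j$ to $1$ or $0$ according as $j=i$ or $j\neq i$; a direct computation shows the defining relation is killed, and Proposition \ref{eq:DeltaEnvTwDerivExt}(2) produces the unique $\delta$-compatible extension $\theta_i^{\tD}$ to $\tD$. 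Since $\theta_i^{\tD}$ is $R$-linear, $\theta_i^{\tD}(I\tD)\subset I\tD$, and Remark \ref{rmk:alphaDerivDeltaBC}(5) yields $\theta_i^{\widehat{\tD}}\in\Der_{R,\delta}^{\tlt_i\mu,\tlt_i^{p-1}\eta}(\widehat{\tD})$ on the $I$-adic completion.

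Next, the $\delta$-$R$-homomorphism $\tR\to A$ induces, via the universal property of $\delta$-envelopes, a $\delta$-$R$-homomorphism $\tD\to D$ sending the tautological $\tau_j$ of $\tD$ to that of $D$. The hypotheses of Proposition \ref{prop:DeltaEnvRegSeq}(4) hold tautologically for the universal pair $(\tR,(\tlt_j-a_j))$ ($\tR$ is polynomial hence $I$-adically flat over $R$, the sequence $\tlt_j-a_j$ is transversally regular in $\tR/I\tR$, and $\tR/(I\tR+\sum_j(\tlt_j-a_j)\tR)\cong R/I$), so $\tD$ also satisfies the basis condition \eqref{cond:DivDeltEnvBasis}. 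Since the $\delta$-$R$-homomorphism $\tD\to D$ sends the distinguished basis of $\tD/I^{n+1}\tD$ bijectively to that of $D/I^{n+1}D$, it induces an $R/I^{n+1}$-isomorphism $\tD/I^{n+1}\tD\xrightarrow{\cong}D/I^{n+1}D$ for every $n$, and in the limit a $\delta$-$R$-isomorphism $\widehat{\tD}\xrightarrow{\cong}\hD$ carrying $\tlt_i\mu\mapsto t_i\mu$ and $\tlt_i^{p-1}\eta\mapsto t_i^{p-1}\eta$. Transporting $\theta_i^{\widehat{\tD}}$ along this isomorphism produces the desired $\theta_{\hD,i}\in\Der_{R,\delta}^{t_i\mu,t_i^{p-1}\eta}(\hD)$ with $\theta_{\hD,i}(\tau_j)$ equal to $1$ if $j=i$ and $0$ otherwise.

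For uniqueness, apply Lemma \ref{lem:alphaDerivDeltaEq} with $\scrS=\{\tau_1,\ldots,\tau_d\}$: the $\delta$-$R$-subalgebra $R[\scrS]_\delta$ of $\hD$ contains every product $\prod_j\tau_j^{\{m_j\}_\delta}$ and is therefore $I$-adically dense by \eqref{cond:DivDeltEnvBasis}, so any $\delta$-compatible $t_i\mu$-derivation on $\hD$ is determined by its values on $\tau_1,\ldots,\tau_d$. For commutativity, Lemma \ref{lem:TwDerivCommGen} with the same $\scrS$ applies once one checks, for $i\neq j$, that $\theta_{\hD,i}(t_j\mu)=\theta_{\hD,i}(t_j^{p-1}\eta)=0$ (which follows from $\theta_{\hD,i}(t_j)=\theta_{\hD,i}(a_j+\xi\tau_j)=0$ together with $R$-linearity and the twisted Leibniz rule), and that $\theta_{\hD,i}\theta_{\hD,j}(\tau_k)=0=\theta_{\hD,j}\theta_{\hD,i}(\tau_k)$ for every $k$, the latter being immediate since $\theta_{\hD,j}(\tau_k)\in\{0,1\}\subset R$. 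The principal technical point is the identification $\widehat{\tD}\cong\hD$ in the second paragraph, where the hypothesis \eqref{cond:DivDeltEnvBasis} on $D$ must be matched with the corresponding structural property of $\tD$ provided by Proposition \ref{prop:DeltaEnvRegSeq}(4).
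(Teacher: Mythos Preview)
Your proof is correct and follows essentially the same route as the paper: reduce to the polynomial case $\tA=R[\tlt_1,\dots,\tlt_d]$, build the derivation there via a section into the twisted $\delta$-extension, extend through $\tD_0$ and then to $\tD$ via Proposition~\ref{eq:DeltaEnvTwDerivExt}(2), complete, and transport along $\widehat{\tD}\cong\hD$; uniqueness and commutativity are handled by Lemmas~\ref{lem:alphaDerivDeltaEq} and~\ref{lem:TwDerivCommGen} exactly as in the paper. The only cosmetic difference is that you package the key $\Z[q]$-identity as $\xi\varphi(\eta)=\sum_{\nu=1}^{p-1}p^{-1}\binom{p}{\nu}\mu^{\nu-1}\xi^{\nu}$, whereas the paper states it as Lemma~\ref{lem:TwDerivxiDeltaComp}; these are equivalent via $\mu^{p-1}+p\eta=\xi$ and $\xi\delta(\xi)+\eta\xi^p=\xi\varphi(\eta)$ (the latter from $\delta(\varphi(\mu))=\varphi(\delta(\mu))$), and your one-line proof of the reformulated identity is a pleasant alternative to the paper's computation via $\delta(q^p)=\delta(1+\mu\pq)$.
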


\begin{remark}\label{rmk:DerivDivDeltaEnv}
(1) Under the notation and assumption in Proposition 
\ref{prop:PrismEnvDeriv}, we see
that the $\delta$-$R$-algebra endomorphism $\gamma_{\hD,i}
=1+t_i\mu\theta_{\hD,i}$ of $\hD$ associated to $\theta_{\hD,i}$
(Lemma \ref{lem:alphaDergammaDer} (1), Lemma \ref{lem:alphaDerivDeltaEquiv} (1)) satisfies 
$\gamma_{\hD,i}(t_i)=q^pt_i$ and $\gamma_{\hD,i}(t_j)=t_j$ $
(j\neq i)$ as follows. From $t_j=\pq \tau_j+a_j$, we obtain 
$\theta_{\hD,i}(t_j)=\pq\theta_{\hD,i}(\tau_j)+\theta_{\hD,i}(a_j)=
\pq$ if $j=i$ and $0$ otherwise. This implies
$\gamma_{\hD,i}(t_i)=t_i+t_i\mu\pq=t_i+\varphi(\mu)t_i=q^pt_i$
and $\gamma_{\hD,i}(t_j)=t_j$ $(j\neq i)$. \par
(2) By Proposition \ref{prop:DeltaEnvRegSeq} (1), 
the assumption \eqref{cond:DivDeltEnvBasis} implies
that the homomorphism $R/I\to A/(IA+\sum_{i=1}^dT_i A)$
is an isomorphism. 
\end{remark}

\begin{proof}[Proof of Proposition \ref{prop:PrismEnvDeriv}]
By the construction of $D$ and $\tau_i$, and $\hD/I^n\hD\cong D/I^nD$
$(n\geq 1)$ (Lemma \ref{lem:CompFinGenIdeal}), the $\delta$-$R$-subalgebra
$R[\tau_1,\ldots, \tau_d]_{\delta}$ of $\hD$ 
is $I$-adically dense. Hence $\theta_{\hD,i}$ is unique by 
Lemma \ref{lem:alphaDerivDeltaEq}
for $f=\id_R$ and $g=\id_A$.  \par

Let us prove the existence of $\theta_{\hD,i}$. 
Let $\tA$ be the polynomial algebra $R[\tlt_1,\ldots, \tlt_d]$
over $R$ equipped with the $\delta$-$R$-algebra structure
defined by $\delta(\tlt_i)=0$, and put 
$\tT_i=\tlt_i-a_i\in \tA$. Then $R$, $\xi=\pq$, $\tA$, and $\tT_i$
satisfy the assumptions in Proposition \ref{prop:DeltaEnvRegSeq}  (3) and (4)
by Remark \ref{rmk:DeltaEnvRegSeq} (1). 
Put $\tD_0=\tA[\tS_1,\ldots, \tS_d]/(\pq \tS_i-\tT_i)$,
and let $\tD$ be the $\delta$-envelope of $\tD_0$ over $\tA$.
Then the $R$-homomorphism $f\colon \tA\to A$
defined by $\tT_i\to T_i$ is compatible with the $\delta$-structures.
It extends to a homomorphism $\tD_0\to D_0$ sending 
$\tS_i$ to $S_i$ and then to a $\delta$-homomorphism 
$\tD\to D$ by the universal property of the $\delta$-envelope
$\tD_0\to \tD$ over $\tA$. By the assumption on $D$ and
$\tau_i$,  and Proposition \ref{prop:DeltaEnvRegSeq}
 (4) applied to $\tD$, the $I$-adic completion of the last map $\tD\to D$ is an isomorphism.
 Hence we may replace $A$ by $\tA$ and assume
 $A=R[T_1,\ldots, T_d]$.\par
 
 Put $\alpha_i=t_i\mu$ and $\beta_i=t_i^{p-1}\eta$.
 There exists a unique $R$-homomorphism 
 $s_{A,i}\colon A\to E^{\alpha_i}(A)$ sending
 $T_i$ to $(T_i,\pq)$ and $T_j$ to 
 $(T_j,0)$ for $j\neq i$. By Proposition \ref{prop:AlphaDerivInterpret}, 
 this defines an 
 $\alpha_i$-derivation $\theta_{A,i}$ of $A$ over $R$
 satisfying $\theta_{A,i}(T_i)=\pq$ and $\theta_{A,i}(T_j)=0$
 $(j\neq i)$, which is equivalent to
 $\theta_{A,i}(t_i)=\pq$ and $\theta_{A,i}(t_j)=0$ $(j\neq i)$. 
 By Lemma \ref{lem:TwDerivxiDeltaComp} 
 below and $\delta(t_j)=0$ $(1\leq j\leq d)$, 
 we see that $t_j$ is $\delta$-compatible with respect to
 $\theta_{A,i}$ and $\beta_i$ (Definition \ref{def:alphaDerivDeltaComp}). 
 Hence $\theta_{A,i}$ is 
 $\delta$-compatible with respect to $\beta_i$ by 
 Lemma \ref{lem:DerivDeltaCompDeltaGen}. 
 The $R$-homomorphism $s_{A,i}$ extends to a 
 homomorphism $s_{D_0,i}\colon 
 D_0\to E^{\alpha_i}(D_0)$ sending $S_i$ to $(S_i,1)$
 and $S_j$ to $(S_j,0)$ for $j\neq i$
 because $\pq(S_i,1)=(T_i,\pq)$ and $\pq(S_j,0)=(T_j,0)$
 $(j\neq i)$  in $E^{\alpha_i}(D_0)$.
 The corresponding $\alpha_i$-derivation 
 $\theta_{D_0,i}$ of $D_0$ over $R$ uniquely 
 extends to $\theta_{D,i}\in\Der^{\alpha_i,\beta_i}_{R,\delta}(D)$ by
 Proposition \ref{eq:DeltaEnvTwDerivExt} (2). 
 By taking the $I$-adic completion, we obtain the desired
 $\theta_{\hD,i}\in \Der_{R,\delta}^{\alpha_i,\beta_i}(\hD)$
 (Remark \ref{rmk:alphaDerivDeltaBC} (1)).
 
 It remains to show $\theta_{\hD,i}\circ\theta_{\hD,j}=
 \theta_{\hD,j}\circ\theta_{\hD,i}$ for $i\neq j$. 
 We have
 $\theta_{\hD,i}\circ\theta_{\hD,j}(\tau_k)
 =\theta_{\hD,j}\circ\theta_{\hD,i}(\tau_k)=0$
 for every $k\in\N\cap [1,d]$. Since the 
 $\delta$-$R$-subalgebra $R[\tau_1,\ldots, \tau_d]_{\delta}$
 of $\hD$ is $I$-adically dense, this implies
 $\theta_{\hD,i}\circ\theta_{\hD,j}=
 \theta_{\hD,j}\circ\theta_{\hD,i}$ by 
Lemma \ref{lem:TwDerivCommGen}.
Note that we have $\theta_{\hD,i}(\alpha_j)=\theta_{\hD,i}(\beta_j)=0$
for $i\neq j$ because $\theta_{\hD,i}$ is $R$-linear
and $\theta_{\hD,i}(t_j)=0$. 
\end{proof}

\begin{lemma}\label{lem:TwDerivxiDeltaComp}
We have $(\mu^{p-1}+p\eta)\delta(\pq)+\eta(\pq)^p
-\sum_{\nu=1}^{p-1}p^{-1}\binom p\nu \mu^{\nu-1}(\pq)^{\nu}=0$
in $\Z[q]$.
\end{lemma}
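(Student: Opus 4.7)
The plan is to reduce the identity to a straightforward polynomial computation, exploiting the $p$-torsion freeness of $\Z[q]$. The main preliminary observation I would establish first is the identity
\[
\mu^{p-1}+p\eta=\pq
\]
in $\Z[q]$. This follows immediately from the definition $\eta=\sum_{\nu=1}^{p-1}p^{-1}\binom p\nu\mu^{\nu-1}$: multiplying by $p\mu$ gives $p\mu\eta=(1+\mu)^p-1-\mu^p=q^p-1-\mu^p=\varphi(\mu)-\mu^p=\pq\mu-\mu^p$, so that $p\eta=\pq-\mu^{p-1}$. This already simplifies the first term of the lemma to $\pq\cdot\delta(\pq)$.

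Next, since $\Z[q]$ is $p$-torsion free, I would prove the identity after multiplication by $p$. Using $p\delta(\pq)=\varphi(\pq)-(\pq)^p$, the first (multiplied) term becomes $\pq\varphi(\pq)-(\pq)^{p+1}$. For the sum, I would factor out $\mu^{-1}$ and apply the binomial theorem:
\[
p\sum_{\nu=1}^{p-1}p^{-1}\binom p\nu \mu^{\nu-1}(\pq)^{\nu}
=\mu^{-1}\sum_{\nu=1}^{p-1}\binom p \nu(\mu\pq)^{\nu}
=\mu^{-1}\bigl((1+\mu\pq)^p-1-(\mu\pq)^p\bigr).
\]
Since $\mu\pq=\varphi(\mu)$ and $\varphi^2(\mu)=\varphi(\pq\mu)=\varphi(\pq)\pq\mu$, the bracketed expression equals $\varphi(\pq)\pq\mu-(\pq)^p\mu^p$, so the sum simplifies to $\varphi(\pq)\pq-(\pq)^p\mu^{p-1}$.

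Assembling the three pieces, $p$ times the left-hand side of the lemma becomes
\[
\pq\varphi(\pq)-(\pq)^{p+1}+p\eta(\pq)^p-\varphi(\pq)\pq+(\pq)^p\mu^{p-1}
=(\pq)^p\bigl(-\pq+p\eta+\mu^{p-1}\bigr),
\]
which vanishes by the preliminary identity $\mu^{p-1}+p\eta=\pq$. I do not expect any real obstacle: the only subtle point is organizing the cancellation so that the two occurrences of $\varphi(\pq)\pq$ kill each other and the remaining factor $-\pq+p\eta+\mu^{p-1}$ is recognized as zero. Dividing by $p$ in the $p$-torsion free ring $\Z[q]$ then yields the stated equality.
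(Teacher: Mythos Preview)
Your proof is correct. The key preliminary identity $\mu^{p-1}+p\eta=\pq$ is valid, the subsequent manipulations are accurate, and the final cancellation works exactly as you say; dividing by $p$ in the $p$-torsion free ring $\Z[q]$ is legitimate.

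Your route is genuinely different from the paper's. The paper applies $\delta$ directly to the identity $q^p=1+\mu\pq$: since $\delta(q)=0$ forces $\delta(q^p)=0$, the sum formula \eqref{eq:DeltaStrSum} gives $0=\delta(\mu\pq)-\sum_{\nu=1}^{p-1}p^{-1}\binom p\nu(\mu\pq)^{\nu}$, and the product formula \eqref{eq:DeltaStrProd} together with $\delta(\mu)=\eta\mu$ expands $\delta(\mu\pq)$ as $\mu\bigl((\mu^{p-1}+p\eta)\delta(\pq)+\eta(\pq)^p\bigr)$; one then divides by the non-zero-divisor $\mu$. So the paper works entirely with the $\delta$-map and divides by $\mu$, whereas you pass to $\varphi$, use the auxiliary identity $\mu^{p-1}+p\eta=\pq$, and divide by $p$. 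The paper's argument is slightly more conceptual in that it exhibits the lemma as a direct consequence of $\delta(q^p)=0$, while your argument has the small bonus of isolating the useful identity $\mu^{p-1}+p\eta=\pq$ along the way.
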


\begin{proof}
By taking the images under $\delta$ of  both sides
of the equality $q^p=1+\mu\pq$ in $\Z[q]$, 
we obtain $0=\delta(\mu\pq)-\sum_{\nu=1}^{p-1}p^{-1}\binom p\nu
\mu^{\nu}(\pq)^{\nu}$. The desired equality follows from 
$\delta(\mu\pq)=\mu^p\delta(\pq)+\delta(\mu)(\pq)^p+p\delta(\mu)\delta(\pq)
=\mu(\mu^{p-1}\delta(\pq)+\eta(\pq)^{p}+p\eta\delta(\pq)).$
\end{proof}

\begin{proposition}\label{prop:DivDeltaEnvPDEnv}
Under the notation and assumption in Proposition \ref{prop:PrismEnvDeriv}, 
we further assume $\mu R=0$, $R$ is a $\Z_{(p)}$-algebra,
and we are given $b_i\in R$ such that 
$\delta(a_i)=pb_i$ for each $i\in \N\cap [1,d]$.
Put 
$$\sigma_i=(1-p^{p-1})^{-1}\left(-b_i-\delta(\tau_i)+\sum_{\nu=1}^{p-1}
p^{-1}\binom p\nu a_i^{p-\nu}p^{\nu-1}\tau_i^{\nu}\right)\quad \in D$$
for each $i\in \N\cap [1,d]$. Then we have $\tau_i^p=p\sigma_i$
and the $p$-adic completion of the $R$-homomorphism 
$$\pi_{\utau,\usigma, R}\colon R[X_1,\ldots, X_d]_{\PD}\longrightarrow D$$
induced by the homomorphism $\pi_{\utau,\usigma}$ 
\eqref{eq:DeltaRingPDMap2} 
associated to $\utau=(\tau_1,\ldots, \tau_d)$
and $\usigma=(\sigma_1,\ldots, \sigma_d)$ is an isomorphism.
Let $\tau_i^{[n]}$ be the image
of $X_i^{[n]}$ under $\pi_{\utau,\usigma}$ for 
$i\in \N\cap [1,d]$ and $n\in \N$. Let $\theta_{\hD,i}$ be the
derivation of $\hD$ over $R$ in Proposition \ref{prop:PrismEnvDeriv}. (Note that 
$t_i\mu A=0$ by assumption.) 
Then we have $\theta_{\hD,i}(\tau_i^{[n+1]})=\tau_i^{[n]}$
$(n\in \N)$ and $\theta_{\hD,i}(\tau_j^{[n]})=0$ $(j\neq i, n\in \N)$.
\end{proposition}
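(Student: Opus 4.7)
The plan is to first establish $\tau_i^p=p\sigma_i$ via Lemma \ref{lem:pDivpPower}, then identify the $p$-completion $\hD$ with the $p$-complete PD polynomial ring on $\tau_i$'s by reducing to a universal $p$-torsion-free setting, and finally compute the action of $\theta_{\hD,i}$ on divided powers by induction using $\delta$-compatibility at $n=p-1$ and the PD product rule thereafter.

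For the first identity, I apply Lemma \ref{lem:pDivpPower} with $(t_1,t_2,s_1,s_2,\tau)=(a_i,t_i,b_i,0,\tau_i)$: the hypothesis $\mu R=0$ forces $\pq=p$ in $R$, so $p\tau_i=T_i=t_i-a_i$, while $\delta(t_i)=0$ and $\delta(a_i)=pb_i$ supply $s_2=0$ and $s_1=b_i$. The lemma's conclusion matches $p(1-p^{p-1})\sigma_i$ by the defining formula for $\sigma_i$, and the unit $1-p^{p-1}\in R^\times$ may be cancelled to give $\tau_i^p=p\sigma_i$.

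For the isomorphism, I reduce to a universal case. Set $\tR=\Z_{(p)}[\ta_i,\tb_i]_\delta/(\delta(\ta_i)-p\tb_i)_\delta$, which is $p$-torsion free by Proposition \ref{prop:DeltaEnvPTF}, and $\tA=\tR[\tlt_1,\ldots,\tlt_d]$ with $\delta(\tlt_i)=0$ and $\tT_i=\tlt_i-\ta_i$; these data satisfy the hypotheses of Proposition \ref{prop:PrismEnvDeriv} and of Proposition \ref{prop:DeltaEnvRegSeq} (4), so $\tD/p^{n+1}\tD$ is free over $\tR/p^{n+1}\tR$ on $\{\prod\ttau_i^{\{m_i\}_\delta}\}$. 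The given data yield compatible $\delta$-homomorphisms $\tR\to R$ and $\tA\to A$, and the induced $\delta$-$\tA$-algebra map $\tD\to D$ matches the preferred bases thanks to \eqref{cond:DivDeltEnvBasis}, yielding $\hat{\tD}\hotimes_{\hat{\tR}}\hR\xrightarrow{\sim}\hD$; since $\sigma_i$ is given by the same polynomial expression on both sides, this reduces the claim for $\pi_{\utau,\usigma,R}$ to its analogue for the universal map. In the universal setting $\tA$ and $\tD$ are $p$-torsion free (Proposition \ref{prop:DeltaEnvPTF}), and this map factors as $\tR[X_i]_{\PD}\to\tR[X_i,Y_i]_\delta/(X_i^p-pY_i)_\delta\to\tD$: the first arrow becomes an isomorphism after $p$-completion by \cite[Lemma 2.35]{BS} combined with $p$-torsion-freeness, while the second is a $p$-adic isomorphism, verified by matching the $\tR/p$-bases via an upper triangular change of basis in which the leading term of $X_i^{[p^k]}$ modulo $p$ and elements of lower total degree is a unit multiple of $\delta^k(\ttau_i)=\ttau_i^{\{p^k\}_\delta}$, obtained by iterating $\ttau_i^p=p\tsigma_i$ together with Lemma \ref{lem:DeltaPower}.

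For the derivation statement, note that $\mu R=0$ forces $\mu A=0$, so $\alpha_i=t_i\mu=0$ and $\theta_{\hD,i}$ is an ordinary $R$-linear derivation; since $\eta\equiv 1\bmod\mu$, the coefficient becomes $\beta_i=t_i^{p-1}$ in $A$, and the $\delta$-compatibility reduces to
\begin{equation*}
\theta_{\hD,i}(\delta(x))=t_i^{p-1}\varphi(\theta_{\hD,i}(x))-x^{p-1}\theta_{\hD,i}(x).
\end{equation*}
The vanishing $\theta_{\hD,i}(\tau_j^{[n]})=0$ for $j\neq i$ follows by the same PD-product induction used below from the base case $\theta_{\hD,i}(\tau_j)=0$. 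For $\theta_{\hD,i}(\tau_i^{[n+1]})=\tau_i^{[n]}$, I induct on $n$. The cases $n<p-1$ are immediate from $\tau_i^{[n+1]}=\tau_i^{n+1}/(n+1)!$ with $(n+1)!\in R^\times$. For $n=p-1$, the $\delta$-compatibility at $x=\tau_i$ gives $\theta_{\hD,i}(\delta(\tau_i))=t_i^{p-1}-\tau_i^{p-1}$; applying $\theta_{\hD,i}$ to $(p-1)!\tau_i^{[p]}=\sigma_i$ and expanding $t_i^{p-1}=(p\tau_i+a_i)^{p-1}$ yields $(p-1)!\theta_{\hD,i}(\tau_i^{[p]})=\tau_i^{p-1}$ after cancellation of $p$-powers. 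For $n\geq p$, apply $\theta_{\hD,i}$ to the PD identity $\tau_i\tau_i^{[n]}=(n+1)\tau_i^{[n+1]}$: the derivation rule and the inductive hypothesis $\theta_{\hD,i}(\tau_i^{[n]})=\tau_i^{[n-1]}$ give $(n+1)\theta_{\hD,i}(\tau_i^{[n+1]})=(n+1)\tau_i^{[n]}$. Writing $n+1=p^km$ with $\gcd(m,p)=1$, the unit $m\in R^\times$ cancels to leave $p^k(\theta_{\hD,i}(\tau_i^{[n+1]})-\tau_i^{[n]})=0$; after reducing to the universal case where $\hat{\tD}$ is $p$-torsion free, the difference vanishes, and the general case follows by the $p$-complete isomorphism of the previous paragraph. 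The main obstacle will be the identification in the universal case between $\hat{\tD}$ and the $p$-completed PD polynomial ring---concretely, verifying that the $\tR/p$-bases $\{\prod\ttau_i^{\{m_i\}_\delta}\}$ and $\{\prod X_i^{[m_i]}\}$ are related by an upper triangular matrix with units on the diagonal.
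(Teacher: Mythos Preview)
Your opening steps—$\tau_i^p=p\sigma_i$ via Lemma \ref{lem:pDivpPower} and the reduction to a universal $p$-torsion-free base $\tR$—coincide with the paper's. The divergence is in the two remaining claims.

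For the isomorphism, your factorization $\tR[X_i]_{\PD}\to\tR[X_i,Y_i]_\delta/(X_i^p-pY_i)_\delta\to\tD$ has a gap: \cite[Lemma 2.35]{BS} only guarantees that divided powers of $X_i$ exist in the middle ring, not that the first arrow is a $p$-adic isomorphism; and your ``upper triangular change of basis'' sketch concerns the composite rather than the second arrow, and is exactly the content to be proved. The paper's route is more direct. In the universal case $\tD$ is $p$-torsion free (Proposition \ref{prop:DeltaEnvPTF}), so $\tD\hookrightarrow\tD[1/p]\cong\tA[1/p]$, which is the polynomial ring $\tR[1/p][\ttau_i]$ since $\tlt_i=p\ttau_i+\ta_i$; this yields injectivity of $\pi_{\utau,\usigma,\tR}$ at once. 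For surjectivity the paper shows the image $D'\subset\tD$ is closed under $\delta$: since $\tD$ is the $\delta$-$\tR$-subalgebra generated by the $\ttau_i$, it suffices to check $\delta(\ttau_i^n/n!)\in D'$ for all $n$, which follows from the identity $\delta(\ttau_i^n/n!)=\tfrac{p^{n-1}}{n!}s_i^n-\tfrac{1}{p}(\ttau_i^n/n!)^p$ with $\varphi(\ttau_i)=ps_i$, $s_i\in D'$, together with the elementary estimates $v_p(n!)\le n-1$ and $p\mid(np)!/(n!)^p$. Your triangular-basis idea may be salvageable, but the crucial inductive step (expressing $\ttau_i^{[p^k]}$ in terms of $\delta^k(\ttau_i)$ modulo $p$ and lower-degree terms, for all $k$) is not actually carried out.

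For the derivation formula, your induction is correct but unnecessarily elaborate. Once in the universal case where $\hat{\tD}$ is $p$-torsion free, one has $\ttau_i^{[n]}=\ttau_i^n/n!$ literally, and the Leibniz rule gives $\theta_{\hat{\tD},i}(\ttau_j^{n+1})=(n+1)\ttau_j^n\,\theta_{\hat{\tD},i}(\ttau_j)$, whence both formulae are immediate—this is the paper's argument. Since your $n\ge p$ step ultimately invokes the universal reduction anyway (to cancel the factor $p^k$), you may as well reduce at the outset; your explicit $\delta$-compatibility computation at $n=p-1$ then becomes superfluous, though it is a nice check.
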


\begin{remark}\label{rmk:DeltaEnvPDEnv}
Under the notation and assumption of Proposition \ref{prop:PrismEnvDeriv},
we further assume that $R$ is a $\Z_{(p)}$-algebra.
Let $\oR$ be the $\delta$-$R$-algebra $R/\mu R$, 
let $\oA$ be the $\delta$-$\oR$-algebra $A/\mu A=A\otimes_R\oR$,
let $\oT_i$ and $\ot_i$ $(i\in \N\cap [1,d])$ be the images
of $T_i$ and $t_i$ in $\oA$, and let $\oa_i$ $(i\in \N\cap [1,d])$ be
the images of $a_i$ in $\oR$. We define $\oD_0$, 
$\oD$, and $\otau_i$ in the same way as 
$D_0$, $D$, $\tau_i$ by using $\oR$, $\oA$, and $\oT_i$.
Let $\hoD$ be the $\delta$-$R$-algebra
$\varprojlim_n \oD/I^n\oD=\varprojlim_n \oD/p^n\oD$.
Then, by Remark \ref{rmk:DeltaEnvRegSeq} (2), 
we have a canonical isomorphism of 
$\delta$-rings $D/\mu D\xrightarrow{\cong}\oD$ and we can apply
Proposition \ref{prop:PrismEnvDeriv} also to 
$\oR$, $\oA$, $\oT_i$, $\ot_i$, and $\oa_i$.
Since $R[\tau_1,\ldots,\tau_d]_{\delta}\subset \hD$ is 
$I$-adically dense, Lemma \ref{lem:alphaDerivDeltaEq} shows the that the
 homomorphism $\hD\to \hoD$ induced by 
 $D\to D/\mu D\xrightarrow{\cong} \oD$ is
 compatible with the $t_i\mu$-derivation $\theta_{\hD,i}$
 on $\hD$ and the $t_i\mu$-derivation $\theta_{\hoD,i}$
 on $\hoD$ given by Proposition \ref{prop:PrismEnvDeriv}. \par
 Hence, if we are given $\ob_i\in \oR$ satisfying 
 $\delta(\oa_i)=p\ob_i$ for each $i\in \N\cap [1,d]$,
 then we can apply Proposition \ref{prop:DivDeltaEnvPDEnv} to 
 $\oR$, $\oA$, $\oT_i$, $\ot_i$, $\oa_i$ and $\ob_i$,
 and obtain a presentation of $(D/\mu D)/p^n$
and $\theta_{\hD,i}$ mod $p^n R+\mu R$ .
\end{remark}

\begin{proof}[Proof of Proposition \ref{prop:DivDeltaEnvPDEnv}]
Note that the image of $\pq$ in $R$ is $p$ by the assumption $\mu R=0$.
By taking the image of $t_i=p\tau_i+a_i$ under $\delta\colon A\to A$, 
we obtain
$$
0=\delta(p\tau_i)+\delta(a_i)-\sum_{\nu=1}^{p-1}p^{-1}
\binom p\nu (p\tau_i)^{\nu}a_i^{p-\nu}
=(1-p^{p-1})\tau_i^p+p\delta(\tau_i)+pb_i-
p\sum_{\nu=1}^{p-1}p^{-1}\binom p\nu
a_i^{p-\nu}p^{\nu-1}\tau_i^{\nu}.
$$
This implies $\tau_i^p=p\sigma_i$. \par
We prove the remaining claims by considering the universal case.
Let $\tR$ be the $\delta$-ring 
$\Z_{(p)}[\ta_1,\ldots, \ta_d,\tb_1,\ldots, \tb_d]_{\delta}/(\delta(\ta_i)-p\tb_i
\;(i\in \N\cap [1,d]))_{\delta}$
regarded as a $\delta$-$\Z[q]$-algebra via the $\delta$-homomorphism 
$\Z[q]\to \Z;q\mapsto 1$, and let $\tA$ be the $\delta$-$\tR$-algebra
$\tR[\tlt_1,\ldots, \tlt_d]$ whose $\delta$-structure is defined by $\delta(\tlt_i)=0$
$(i\in \N\cap [1,d])$. Since $\tR$ is the $\delta$-envelope of 
$\Z_{(p)}[\ta_1,\ldots, \ta_d]_{\delta}[\tb_1,\ldots,\tb_d]/
(\delta(a_i)-p\tb_i\;(i\in \N\cap [1,d]))$ over 
$\Z_{(p)}[\ta_1,\ldots, \ta_d]_{\delta}$ and the sequence
$\delta(\ta_1),\ldots, \delta(\ta_d)$ is $\Z_{(p)}[\ta_1,\ldots, \ta_d]_{\delta}/p$-regular,
we see that $\tR$ is $p$-torsion free by Proposition \ref{prop:DeltaEnvPTF}.

Let $f\colon \tR\to R$ be the $\delta$-$\Z[q]$-homomorphism 
defined by $f(\ta_i)=a_i$ and $f(\tb_i)=b_i$, and let 
$g\colon \tA\to A$ be the ring homomorphism compatible with $f$ defined by
$g(\tlt_i)=t_i$. Since $\delta(t_i)=0$ by assumption, $g$ is a
$\delta$-homomorphism. We define $\tD$ to be the $\delta$-envelope
of $\tD_0=\tA[\tS_1,\ldots, \tS_d]/(p\tS_i-\tT_i\;(i\in \N\cap [1,d]))$
over $\tA$, where $\tT_i=\tlt_i-\ta_i$. Let $\ttau_i$ be the images of
$\tS_i$ in $\tD_0$ and $\tD$. Then the homomorphism 
$h_0\colon \tD_0\to D_0$ compatible with $g$ defined by 
$h_0(\ttau_i)=\tau_i$ extends uniquely to a $\delta$-homomorphism
$h\colon \tD\to D$. Let $\widehat{\tD}$ be the $p$-adic completion 
of $\tD$. The sequence $\tT_1,\ldots, \tT_d$ is 
$\tA/p\tA$-regular, the $\tR$-algebras $\tA$ and 
$\tA/\sum_{i=1}^r\tT_i\tA$ $(r\in \N\cap [1,d])$ are flat,
and the homomorphism $\tR\to \tA/\sum_{i=1}^d\tT_i\tA$
is an isomorphism.  Hence, one can apply 
Proposition \ref{prop:DeltaEnvRegSeq} (4) to 
$(\tR,\xi=\pq, \tA,\tT_i)$ by Remark \ref{rmk:DeltaEnvRegSeq} (1), 
and see that the homomorphism
of $\delta$-$R$-algebars $h_R\colon \tD\otimes_{\tR}R
\to D$ induced by $h$ becomes an isomorphism after
taking the $p$-adic completion, and that
$\delta$-$\tR$-subalgebra of $\widehat{\tD}$
generated by $\ttau_i$ $(i\in \N\cap [1,d])$
is dense in $\widehat{\tD}$ with respect to the $p$-adic
topology. One can also apply Proposition \ref{prop:PrismEnvDeriv} to
$(\tR,\xi=\pq,\tA, \tlt_i, \ta_i)$,  and see that the derivation 
$\theta_{\widehat{\tD},i}\colon \widehat{\tD}\to
\widehat{\tD}$ is compatible with the 
derivation $\theta_{\hD,i}\colon \hD\to \hD$ via $h$
by Lemma \ref{lem:alphaDerivDeltaEq}. 
By Lemma \ref{lem:DeltaRingPD} (4),
we may replace $R$, $A$, $t_i$, $a_i$, and $b_i$ with
$\tR$, $\tA$, $\tlt_i$, $\ta_i$, and $\tb_i$,
and assume that $R$ is $p$-torsion free
and  $A$ is the polynomial algebra over $R$
in variables $t_1,\ldots, t_d$. 
 
By Proposition \ref{prop:DeltaEnvPTF}, $D$ is $p$-torsion free. Hence $\hD$ is $p$-torsion
free, and the last claim on $\theta_{\hD,i}$ follows from 
$\theta_{\hD,i}(\tau_j^{n+1})=(n+1)\tau_j^n\theta_{\hD,i}(\tau_j)
=(n+1)\tau_i^n$ (if $j=i$), $0$ (otherwise). We show that
$\pi_{\utau,\usigma,R}$ is an isomorphism, which implies the second
claim,  i.e., completes the proof. Since $D[\frac{1}{p}]$ is the
$\delta$-envelope of $A[\frac{1}{p}][S_i]/(pS_i-T_i)\cong A[\frac{1}{p}]$
over $A[\frac{1}{p}]$, the homomorphism 
$A[\frac{1}{p}]\to D[\frac{1}{p}]$ is an isomorphism. 
Hence $D[\frac{1}{p}]$ is the polynomial algebra over $R[\frac{1}{p}]$
in variables $\tau_1,\ldots,\tau_d$ because $t_i=p\tau_i+a_i$ and
$a_i\in R$. This implies that the homomorphism $\pi_{\utau,\usigma,R}$ is injective.
By $t_i=p\tau_i+a_i$ and $a_i\in R$ again, we see that
$D$ is the $\delta$-$R$-subalgebra of $D[\frac{1}{p}]$
generated by $\tau_1,\ldots, \tau_d$. Let $D'$
be the image of $\pi_{\utau,\usigma,R}$. Since 
$\tau_1,\ldots, \tau_d\in D'$, it suffices to show that $D'$ is a 
$\delta$-$R$-subalgebra of $D[\frac{1}{p}]$, i.e.,
$\delta(\frac{\tau_i^{n}}{n!})\in D'$ for every $n\in \N$
and $i\in \N\cap [1,d]$. By the computation in the first paragraph, we
have 
$$\delta(\tau_i)=-(1-p^{p-1})\frac{\tau_i^p}{p}-b_i
+\sum_{\nu=1}^{p-1}p^{-1}\binom p\nu a_i^{p-\nu}p^{\nu-1}\tau_i^{\nu}\in D'.$$
Hence $\varphi(\tau_i)=\tau_i^p+p\delta(\tau_i)$ is of the form $ps_i$
$(s_i\in D'$), and we obtain 
$\delta(\frac{\tau_i^n}{n!})=\frac{p^{n-1}}{n!}s_i^n-
\frac{1}{p}(\frac{\tau_i^n}{n!})^p\in D'$. 
 \end{proof}

In the rest of this section, we prove an analogue of Poincar\'e lemma for the twisted
derivations $\theta_{\hD,i}$ on $\hD$ constructed in
Proposition \ref{prop:PrismEnvDeriv}. 
We follow the notation and assumption in Proposition \ref{prop:PrismEnvDeriv}. 
Let $\Omega$ be the free $\Z$-module with formal basis
$dt_i$ $(i\in \N\cap[1,d])$, and put $\Omega^k=\wedge^k_{\Z}\Omega$ for $k\in \N$.
Then we can define an $R$-linear map 
$\theta^k_{\hD}\colon \hD\otimes_{\Z}\Omega^k\to\hD\otimes_{\Z}\Omega^{k+1}$
for $k\in \N$ by 
\begin{equation*}
\theta^k_{\hD}(x\otimes dt_{i_1}\wedge\cdots\wedge dt_{i_k})=
\sum_{i=1}^d\theta_{\hD,i}(x)\otimes dt_i\wedge dt_{i_1}\wedge\cdots
\wedge dt_{i_k}\quad (x\in \hD,\; i_j\in \N\cap [1,d]).
\end{equation*}
Since $\theta_{\hD,i}\circ \theta_{\hD,j}
=\theta_{\hD,j}\circ\theta_{\hD,i}$ $(i\neq j)$
by Proposition \ref{prop:PrismEnvDeriv}, 
we have $\theta^{k+1}_{\hD}\circ \theta^k_{\hD}=0$ $(k\in \N)$. 
Let $\gamma_{\hD,i}$ be the endomorphism $1+t_i\mu\theta_{\hD,i}$
of the $R$-algebra $\hD$ (Lemma \ref{lem:alphaDergammaDer} (1)), which is a 
$\delta$-$R$-endomorphism by Lemma \ref{lem:alphaDerivDeltaEquiv} (1). By 
Lemma \ref{lem:alphaDergammaDer} (1), we have 
\begin{equation}\label{eq:ThetaGammaDeriv}
\theta_{\hD,i}(xy)=\gamma_{\hD,i}(x)\theta_{\hD,i}(y)+
\theta_{\hD,i}(x)y \quad (x,y\in \hD).
\end{equation}

\begin{theorem}\label{thm:PoincareLem}
For any ideal $J$ of $R$ containing some
power of $I$ and any $R/J$-module $M$,
the complex $(M\otimes_{R}\hD\otimes_{\Z}\Omega^{\bullet},
\id_M\otimes \theta_{\hD}^{\bullet})$ gives a resolution of $M$.\par
\end{theorem}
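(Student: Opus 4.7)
The strategy is d\'evissage on $M$ to the base case $(p,\mu)M=0$, followed by identification of the reduced complex with a classical PD de Rham complex.

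By Proposition~\ref{prop:DeltaEnvRegSeq}~(3), the standing assumption \eqref{cond:DivDeltEnvBasis} makes $\hD/I^n\hD\cong D/I^nD$ flat over $R/I^n$ for every $n\ge 1$, so $N\mapsto N\otimes_R\hD\otimes_\Z\Omega^\bullet$ is exact on $R/I^N$-modules and is compatible with the augmentation $N\to N\otimes_R\hD$. I would first use the $I$-adic filtration $M\supset IM\supset\cdots\supset I^NM=0$ to reduce to the case $IM=0$, and then — since $\pq\equiv\mu^{p-1}\pmod p$ puts $\mu^{p-1}\in I$ — the $\mu$-adic filtration on an $R/I$-module to further reduce to $(p,\mu)M=0$. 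The standard long exact sequence argument then recovers the general case from the base case.

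For the base case, put $\oR=R/(p,\mu)R$, so that the complex becomes $M\otimes_{\oR}(\hD/(p,\mu)\hD)\otimes_\Z\Omega^\bullet$. The plan is to apply Remark~\ref{rmk:DeltaEnvPDEnv} modulo $\mu$ and then Proposition~\ref{prop:DivDeltaEnvPDEnv} (whose $p$-adic completion becomes trivial after modding out $p$) to obtain an isomorphism of $\oR$-algebras
$$\oR[X_1,\ldots,X_d]_{\PD}\xrightarrow{\;\cong\;}\hD/(p,\mu)\hD,\qquad X_i^{[n]}\mapsto\tau_i^{[n]},$$
under which each $\theta_{\hD,i}$ corresponds to the PD-derivation $\partial/\partial X_i$ (via $\theta_{\hD,i}(\tau_j^{[n+1]})=\delta_{ij}\tau_j^{[n]}$ from Proposition~\ref{prop:DivDeltaEnvPDEnv}). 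When the auxiliary hypothesis $\delta(a_i)\in pR+\mu R$ required by Proposition~\ref{prop:DivDeltaEnvPDEnv} is not directly available, I would arrange it by the universal base-change device used in the proof of that proposition: present $R$ as the target of a $\delta$-$\Z[q]$-homomorphism from a $p$-torsion free universal $\delta$-ring $\tR$ in which $\delta(\ta_i)=p\tb_i$ holds by construction, and descend the identification via the compatibility with base change of $\hD$ (Remark~\ref{rmk:DeltaEnvRegSeq}~(2)) and of $\theta_{\hD,i}$ (uniqueness in Proposition~\ref{prop:PrismEnvDeriv} together with Lemma~\ref{lem:alphaDerivDeltaEq}).

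Once this identification is in place, the complex becomes the classical PD de Rham complex of $\oR[X_1,\ldots,X_d]_{\PD}$ tensored over $\oR$ with $M$, augmented by $m\mapsto m\otimes 1$; the standard $\oR$-linear contracting homotopy built from the anti-derivation $X_i^{[n]}\mapsto X_i^{[n+1]}$ (combined in the usual Koszul fashion over the $d$ variables) exhibits the augmented complex as exact, and its $\oR$-linearity preserves acyclicity after tensoring with $M$. The technically delicate point — and the main obstacle — is the identification step, i.e.~securing the PD-polynomial structure on $\hD/(p,\mu)\hD$ and matching $\theta_{\hD,i}$ with PD-derivations, in particular via the universal base-change trick when $\delta(a_i)\notin pR+\mu R$. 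The d\'evissage and the Poincar\'e lemma itself are formal once the identification is granted.
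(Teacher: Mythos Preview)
Your d\'evissage to the case $(p,\mu)M=0$ is correct and matches the paper's reduction (the paper goes one step further to $M=R/(p,\mu)R$ using flatness, but that is immaterial). The identification of the reduced complex with a PD de~Rham complex via Proposition~\ref{prop:DivDeltaEnvPDEnv}, and the appeal to the classical PD Poincar\'e lemma, is also exactly what the paper does.

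The gap is in how you propose to secure the hypothesis $\delta(\oa_i)\in p\oR$ of Proposition~\ref{prop:DivDeltaEnvPDEnv}. The ``universal base-change device'' in the proof of that proposition goes in the opposite direction from what you need: there the hypothesis $\delta(a_i)=pb_i$ is already \emph{given}, and the universal ring $\tR=\Z_{(p)}[\ta_i,\tb_i]_\delta/(\delta(\ta_i)-p\tb_i)_\delta$ is used to lift to a $p$-torsion-free situation where certain computations simplify. To map such a $\tR$ into $R$ with $\ta_i\mapsto a_i$ you would need $\delta(a_i)\in pR$, which is exactly what you are trying to arrange and which has no reason to hold for arbitrary $a_i\in R$. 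So the mechanism you invoke does not apply here.

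The paper's solution is a different and rather clever trick: it makes a \emph{forward} faithfully flat base change $R\to R'=\hD$ (faithful flatness modulo $(p,\mu)$ coming from \eqref{cond:DivDeltEnvBasis}), and then uses a change of variables (Lemma~\ref{lem:PLChangeVar}): since $t_i=a_i+[p]_q\tau_i$ with $\tau_i\in\hD=R'$, one may replace $a_i$ by $\ta_i=a_i+[p]_q\tau_i=t_i\in R'$, and now $\delta(\ta_i)=\delta(t_i)=0$. After this, Proposition~\ref{prop:DivDeltaEnvPDEnv} applies with $\ob_i=0$, and faithfully flat descent (Lemma~\ref{lem:PLFlatDescent}) brings the conclusion back to $R$. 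This step is the genuine content of the proof beyond the d\'evissage and the classical Poincar\'e lemma.
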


Let $I_0$ be the ideal $(p,\mu)$ of $\Z[q]$. 
Theorem \ref{thm:PoincareLem} is reduced to the special case
$M=R/I_0R$ as follows. 
Since $I=pR+\mu^{p-1}R$, we have $I_0^{p-1}R\subset I$.
Hence there exists $N\in \N$ such that $I_0^{N+1}M=0$. 
We have an exact sequence of $R/I_0^{n+1}R$-modules
$0\to I_0^nM/I_0^{n+1}M\to M/I_0^{n+1}M
\to M/I_0^nM\to 0$ for $n\in \N\cap [1,N]$, and it remains
exact after taking $-\otimes_R\hD\otimes_{\Z}\Omega^{\bullet}$
because $\hD/I_0^{n+1}\hD=D/I_0^{n+1}D$ is flat over 
$R/I_0^{n+1}R$ by the assumption \eqref{cond:DivDeltEnvBasis}. 
Hence we are reduced to the case $I_0M=0$. 
Since $\hD/I_0\hD=D/I_0D$ is flat over $R/I_0R$,
it suffices to show that $R/I_0R\otimes_R(\hD\otimes_{\Z}\Omega^{\bullet})$
gives a resolution of $R/I_0R$. 

We will prove Theorem \ref{thm:PoincareLem} for $M=R/I_0R$ in the following.
We reduce the proof to the case $\delta(a_i)=0$ $(i\in \N\cap[1,d])$, when the last assertion 
of Remark \ref{rmk:DeltaEnvPDEnv} 
is applicable, by faithfully flat descent with respect 
to a base change faithfully flat modulo $I_0$. 

We start by discussing the faithfully flat descent of the claim
of Theorem \ref{thm:PoincareLem} for $M=R/I_0R$. 
Let $f\colon R\to R'$ be a $\delta$-homomorphism of $\delta$-rings, 
put $A'=A\otimes_RR'$, $T_i'=T_i\otimes 1\in A'$, and $t_i'=t_i\otimes 1\in A'$.
Let $a_i'$ be the image of $a_i$ under $f$. We have $T_i'=t_i'-a_i'$.
We define $D_0'$ $D'$ and $\tau_i'$ in the same way as
$D_0$, $D$ and $\tau_i$ by using $R'$, $A'$ and $T_1',\ldots, T_d'$.
Then, by Remark \ref{rmk:DeltaEnvRegSeq} (2), we have a $\delta$-homomorphism $g\colon D
\to D'$ extending the $\delta$-homomorphism 
$A\to A';a\mapsto a\otimes 1$ and sending
$\tau_i$ to $\tau_i'$ for each $i\in \N\cap [1,d]$, and it induces an 
isomorphism $D\otimes_RR'\xrightarrow{\cong}{D'}$. Hence 
$R'$, $A'$, $T_i'$, $t_i'$ and $a_i'$ also satisfy the assumption
\eqref{cond:DivDeltEnvBasis} of Proposition \ref{prop:PrismEnvDeriv},
and we obtain a $t_i'\mu$-derivation 
$\theta_{\hD',i}$ on $\hD'=\varprojlim_n D'/I^nD'$ over $R$ $\delta$-compatible
with respect to $(t_i')^{p-1}\eta$ for each $i\in \N\cap [1,d]$. 
The $\delta$-homomorphism $\hg\colon \hD\to \hD'$ induced by $g$
satisfies $\theta_{\hD',i}\circ \hg(\tau_i)
=\theta_{\hD',i}(\tau_i')=1=\hg\circ\theta_{\hD,i}(\tau_i)$
and $\theta_{\hD',i}\circ \hg(\tau_j)
=\theta_{\hD',i}(\tau'_j)=0=\hg\circ \theta_{\hD,i}(\tau_j)$
for $j\neq i$.  Since the $\delta$-$R$-subalgebra
$R[\tau_1,\ldots,\tau_d]_{\delta}$ of $\hD$ is
$I$-adically dense by \eqref{cond:DivDeltEnvBasis},
this implies that $\hg$ is compatible with $\theta_{\hD,i}$
and $\theta_{\hD',i}$ by Lemma \ref{lem:alphaDerivDeltaEq}. 
We can define the complex 
$(\hD'\otimes_{\Z}\Omega^{\prime\bullet},\theta_{\hD'}^{\bullet})$,
where $\Omega^{\prime}=\oplus_{i=1}^d\Z dt_i'$
and $\Omega^{\prime k}=\wedge^k_{\Z}\Omega^{\prime}$ $(k\in \N)$,
in the same way as $\hD$ by using $\hD'$ and $\theta_{\hD',i}$, 
and the above compatibility implies that the homomorphism 
$\hg$ and the isomorphism $\Omega\xrightarrow{\cong}
\Omega^{\prime};dt_i\mapsto dt_i'$ give a homomorphism of 
complexes
\begin{equation}
\hD\otimes_{\Z}\Omega^{\bullet}\longrightarrow 
\hD'\otimes_{\Z}\Omega^{\prime\bullet},
\end{equation}
which induces isomorphisms
\begin{equation}\label{eq:qHiggsCpxBCIsom}
R'/I_0 R'\otimes_{R}\hD\otimes_{\Z}\Omega^{\bullet}
\xrightarrow{\;\cong\;} R'/I_0 R'\otimes_{R'}\hD'\otimes_{\Z}
\Omega^{\prime\bullet}.
\end{equation}

\begin{lemma}\label{lem:PLFlatDescent}
 Suppose that the reduction modulo $I_0$ of the homomorphism
$f\colon R\to R'$ is faithfully flat. 
If $R'/I_0 R'\otimes_{R'}\hD'\otimes_{\Z}\Omega^{\prime\bullet}$
is a resolution of $R'/I_0R'$, then 
$R/I_0R\otimes_R\hD\otimes_{\Z}\Omega^{\bullet}$
is a resolution of $R/I_0R$. 
\end{lemma}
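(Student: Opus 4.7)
The plan is to reduce the claim to a standard faithfully flat descent argument for acyclicity, combined with the base-change isomorphism \eqref{eq:qHiggsCpxBCIsom} already produced. Writing $C^{\bullet}$ for the complex $R/I_0R\otimes_R\hD\otimes_{\Z}\Omega^{\bullet}$ augmented by a natural surjection to $R/I_0R$, and $C'^{\bullet}$ for its primed analogue, the morphism of complexes constructed just before the lemma is visibly compatible with the structure morphisms on both sides (it comes from $\hg\colon \hD\to\hD'$, which sends $\tau_i$ to $\tau_i'$ and is a $\delta$-$R$-algebra homomorphism; compatibility of the augmentations then follows from the isomorphism $R/I_0R\cong A/(I_0A+\sum_iT_iA)$ of Remark \ref{rmk:DerivDivDeltaEnv} (2), which identifies the degree-zero term mod $I_0$ on both sides in the same way). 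Thus \eqref{eq:qHiggsCpxBCIsom} upgrades to an isomorphism of augmented complexes
$$R'/I_0R'\otimes_{R/I_0R}C^{\bullet}\;\xrightarrow{\;\cong\;}\;C'^{\bullet}.$$

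Next, I would invoke the two hypotheses. By assumption $C'^{\bullet}$ is acyclic, so the left hand side is acyclic; and by assumption $R/I_0R\to R'/I_0R'$ is faithfully flat. Flat base change along $R/I_0R\to R'/I_0R'$ commutes with cohomology, giving
$$R'/I_0R'\otimes_{R/I_0R}H^i(C^{\bullet})\;\cong\;H^i(R'/I_0R'\otimes_{R/I_0R}C^{\bullet})\;=\;0$$
for every $i$. Faithful flatness then forces $H^i(C^{\bullet})=0$ for every $i$, which is precisely the statement that $R/I_0R\otimes_R\hD\otimes_{\Z}\Omega^{\bullet}$ is a resolution of $R/I_0R$.

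The argument is essentially formal once one has \eqref{eq:qHiggsCpxBCIsom}; the only point that requires minor care is verifying that the comparison morphism respects augmentations, and that the reduction mod $I_0$ of $\hg$ really does arise from the ring map $R/I_0R\to R'/I_0R'$ (so that the hypothesis of faithful flatness on $f$ mod $I_0$ can be applied). I do not expect a substantial obstacle here, and the lemma will serve in the sequel to reduce the proof of Theorem \ref{thm:PoincareLem} for $M=R/I_0R$ to the situation where $\delta(a_i)=0$ for all $i$, so that Remark \ref{rmk:DeltaEnvPDEnv} and Proposition \ref{prop:DivDeltaEnvPDEnv} apply and the complex computes (after reduction mod $\mu$) the de Rham cohomology of the $p$-adically completed PD polynomial ring.
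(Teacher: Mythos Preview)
Your proposal is correct and follows the same approach as the paper, which records the proof in a single sentence: the result follows from the base-change isomorphism \eqref{eq:qHiggsCpxBCIsom} by faithfully flat descent along $R/I_0R\to R'/I_0R'$. You have simply spelled out the mechanism (flat base change commutes with cohomology, faithful flatness reflects vanishing), and your remark on compatibility with augmentations is a reasonable sanity check that the paper leaves implicit.
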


\begin{proof}
This follows from \eqref{eq:qHiggsCpxBCIsom}
by faithfully flat descent with respect to $R/I_0R\to R'/I_0R'$. 
\end{proof}

For the reduction to the case $\delta(a_i)=0$, we also need a change
of variables $T_i$ by adding elements of $\pq R$ after a base change
faithfully flat modulo $I_0$. Let $c_i$ $(i\in \N\cap [1,d])$ be elements
of $R$, and put $\tT_i=T_i-\pq c_i\in A$ and $\ta_i=a_i+\pq c_i\in R$.
We have $\tT_i=t_i-\ta_i$. We define $\tD_0$ and $\tD$ to be 
$A[\tS_1,\ldots \tS_d]/(\pq \tS_i-\tT_i)$ and its $\delta$-envelope
over $A$. Let $\ttau_i$ be the image of $\tS_i$ in $\tD$.
Then the $A$-isomorphism $A[\tS_1,\ldots, \tS_d]
\xrightarrow{\cong} A[S_1,\ldots, S_d]; \tS_i\mapsto S_i-c_i$ induces
an $A$-isomorphism $\tD_0\xrightarrow{\cong} D_0$ and then a 
$\delta$-$A$-isomorphism $h\colon \tD\xrightarrow{\cong}D$ because
$\pq(S_i-c_i)-\tT_i=\pq S_i-T_i$. 

\begin{lemma}\label{lem:PLChangeVar} The $\delta$-$R$-algebra $\tD$ and $\ttau_i\in \tD$
$(i\in \N\cap [1,d])$ satisfy \eqref{cond:DivDeltEnvBasis} and the 
$t_i\mu$-derivation
$\theta_{\widehat{\tD},i}$ on $\widehat{\tD}$ over $R$ given by Proposition 
\ref{prop:PrismEnvDeriv} is compatible
with $\theta_{\hD,i}$ via the isomorphism $\hh\colon \widehat{\tD}\xrightarrow{\cong}
\hD$ induced by $h$ above.
\end{lemma}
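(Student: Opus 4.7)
The plan is to verify separately (A) the basis condition \eqref{cond:DivDeltEnvBasis} for $(\tD,\ttau_i)$, and (B) the compatibility $\hh\circ\theta_{\widehat{\tD},i}=\theta_{\hD,i}\circ\hh$.

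For (A), the decisive observation is that $\tT_i-T_i=-\pq c_i$ and $\pq\in I$, so $\tT_i\equiv T_i\pmod{IA}$. Consequently $A/(IA+\sum_i\tT_iA)=A/(IA+\sum_iT_iA)$; the sequence $\tT_1,\ldots,\tT_d$ is $A/IA$-regular if and only if $T_1,\ldots,T_d$ is; and every $\Tor$-vanishing condition in the hypotheses of Proposition \ref{prop:DeltaEnvRegSeq} (3)--(4) depends only on the images in $A/IA$ and hence transfers verbatim between the two settings. Therefore Proposition \ref{prop:DeltaEnvRegSeq} (4) applied to $(R,A,\tT_i)$ yields \eqref{cond:DivDeltEnvBasis} for $(\tD,\ttau_i)$ under precisely the same sufficient conditions that yield it for $(D,\tau_i)$. (Alternatively, one may transport the basis directly under $h$: an induction on $n$ using \eqref{eq:DeltaStrSum} gives $\delta^n(\tau_i-c_i)-\delta^n(\tau_i)\in R[\tau_i,\delta(\tau_i),\ldots,\delta^{n-1}(\tau_i)]$, exhibiting $(\tau_i-c_i)^{\{m\}_\delta}$ as $\tau_i^{\{m\}_\delta}$ plus a combination of strictly smaller $\tau_i^{\{m'\}_\delta}$ in a suitable lexicographic ordering on $\N^d$, so that the change-of-basis matrix is unitriangular and hence invertible over $R/I^{n+1}$.)

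For (B), by (A) the $\delta$-$R$-subalgebra $R[\ttau_1,\ldots,\ttau_d]_\delta$ of $\widehat{\tD}$ is $I$-adically dense. Moreover, since $I\widehat{\tD}$ is generated by $p,\pq\in R$ and any $R$-linear map $\partial$ with $\partial(1)=0$ annihilates $R$, the hypothesis $\alpha\partial(I\widehat{\tD})\subset I\widehat{\tD}$ (and its counterpart on $\hD$) required by Lemma \ref{lem:alphaDerivDeltaEq} is automatic. That lemma reduces the desired identity to checking it on the generators $\ttau_j$ $(j\in\N\cap[1,d])$. On the left, $\hh(\theta_{\widehat{\tD},i}(\ttau_j))=\hh(1)=1$ when $j=i$, and $\hh(0)=0$ otherwise, by the defining property of $\theta_{\widehat{\tD},i}$ in Proposition \ref{prop:PrismEnvDeriv}. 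On the right, $\theta_{\hD,i}(\hh(\ttau_j))=\theta_{\hD,i}(\tau_j-c_j)=\theta_{\hD,i}(\tau_j)-\theta_{\hD,i}(c_j)$; since $c_j\in R$ and $\theta_{\hD,i}$ is $R$-linear with $\theta_{\hD,i}(1)=0$, we have $\theta_{\hD,i}(c_j)=0$, and $\theta_{\hD,i}(\tau_j)=1$ or $0$ according to whether $j=i$ or not. The two expressions agree.

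The main obstacle is the formal justification of (A) when only the conclusion \eqref{cond:DivDeltEnvBasis} is assumed for $(D,\tau_i)$ rather than the hypotheses of Proposition \ref{prop:DeltaEnvRegSeq}; either approach above resolves this, the first by observing the invariance of the hypotheses under the translation $T_i\rightsquigarrow T_i-\pq c_i$, and the second by an explicit unitriangular change-of-basis. Part (B) is then a routine application of Lemma \ref{lem:alphaDerivDeltaEq}, with the identity verified by direct evaluation on the generators $\ttau_j$ using the $R$-linearity of the derivations.
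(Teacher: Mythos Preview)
Your argument for (B) is correct and matches the paper's proof exactly: reduce to the generators $\ttau_j$ via Lemma~\ref{lem:alphaDerivDeltaEq} and compute directly using $R$-linearity and $\hh(\ttau_j)=\tau_j-c_j$.

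For (A) there is a genuine gap. The lemma lives in the setting of Proposition~\ref{prop:PrismEnvDeriv}, which assumes only the \emph{conclusion} \eqref{cond:DivDeltEnvBasis}, not the hypotheses of Proposition~\ref{prop:DeltaEnvRegSeq}~(3)--(4). Your statement that ``the first approach resolves this by observing the invariance of the hypotheses'' is circular: if those hypotheses are not assumed for $(D,T_i)$, their invariance under $T_i\rightsquigarrow T_i-\pq c_i$ is vacuous. Your second approach (unitriangular change of basis) is a reasonable idea but the sketch is incomplete. The induction gives $\delta^n(\tau_i-c_i)-\delta^n(\tau_i)\in R[\tau_i,\ldots,\delta^{n-1}(\tau_i)]$, but elements of this polynomial ring are not automatically $R$-linear combinations of the restricted monomials $\tau_i^{\{m'\}_\delta}$ (which require each $\delta^k(\tau_i)$-exponent to be $<p$); to rewrite them you must invoke the relations $\oP_{n+1}^{(i)}=0$ in $D/ID$, i.e.\ essentially Proposition~\ref{prop:DeltaEnvRegSeq}~(1)--(2), and then still handle the lift from $D/ID$ to $D/I^{n+1}D$.

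The paper's argument avoids this computation. From \eqref{cond:DivDeltEnvBasis} for $(D,\tau_i)$ one extracts two consequences: (i) $R/I\cong A/(IA+\sum_iT_iA)=A/(IA+\sum_i\tT_iA)$ (Remark~\ref{rmk:DerivDivDeltaEnv}~(2), using $\tT_i\equiv T_i\pmod I$), and (ii) $D/I^{n+1}D$ is flat over $R/I^{n+1}$, which transports to $\tD$ via the isomorphism $h$. Then Proposition~\ref{prop:DeltaEnvRegSeq}~(1), which holds \emph{unconditionally}, gives that $\tD/I\tD$ is free over $A/(IA+\sum_i\tT_iA)=R/I$ with basis $\prod_i\ttau_i^{\{m_i\}_\delta}$; flatness then lifts this to $R/I^{n+1}$ via the short exact sequences $0\to I^N/I^{N+1}\otimes_{R/I}\tD/I\tD\to\tD/I^{N+1}\tD\to\tD/I^N\tD\to0$, exactly as in the proof of Proposition~\ref{prop:DeltaEnvRegSeq}~(4). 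This route uses the isomorphism $h$ only to transport flatness, not to compare bases explicitly.
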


\begin{proof}
We have an isomorphism 
$R/IR\xrightarrow{\cong} A/(IA+\sum_{i=1}^d T_iA)
=A/(IA+\sum_{i=1}^d\tT_i A)$ by 
the assumption \eqref{cond:DivDeltEnvBasis} (Remark \ref{rmk:DerivDivDeltaEnv} (2)).
Since $D/I^{n+1}D$ is flat over $R/I^{n+1}R$ by 
\eqref{cond:DivDeltEnvBasis}, 
we see that 
\eqref{cond:DivDeltEnvBasis} also holds for $\tD$ and $\ttau_i$
by Proposition \ref{prop:DeltaEnvRegSeq} (1) for $R$, $A$, and $\tT_i$.
We have $\theta_{\hD,i}\circ \hh(\ttau_i)=\theta_{\hD,i}(\tau_i-c_i)
=1=\hh\circ\theta_{\widehat{\tD},i}(\ttau_i)$ and 
$\theta_{\hD,i}\circ\hh(\ttau_j)=\theta_{\hD,i}(\tau_j-c_j)=0
=\hh\circ\theta_{\widehat{\tD},i}(\ttau_j)$ $(j\neq i)$. Since 
the $\delta$-$R$-subalgebra $R[\ttau_1,\ldots, \ttau_d]_{\delta}$
of $\widehat{\tD}$ is $I$-adically dense, this implies the second 
claim by Lemma \ref{lem:alphaDerivDeltaEq}.
\end{proof}

\begin{proof}[Proof of Theorem \ref{thm:PoincareLem} for $M=R/I_0R$]
Let $R'$ be the $\delta$-$R$-algebra $\hD$. Then $R'/I_0 R'$
is faithfully flat over $R/I_0R$ 
by the assumption \eqref{cond:DivDeltEnvBasis}, and we have
$t_i=a_i+\pq \tau_i$ $(\tau_i\in R')$ and $\delta(t_i)=0$ in $R'$.
Hence, by Lemmas \ref{lem:PLFlatDescent} and 
\ref{lem:PLChangeVar}, we may  replace $R$ by $R'$
and then $a_i$ by $t_i$, and may assume $\delta(a_i)=0$
and $R$ is a $\Z_{(p)}$-algebra.\par
Since $\delta(a_i)=0$,
we can apply the last assertion in Remark \ref{rmk:DeltaEnvPDEnv} by setting 
$\ob_i=0$, and obtain an $R/I_0R$-isomorphism 
$R/I_0R[X_1,\ldots,X_d]_{\PD}\xrightarrow{\cong}
D/I_0D=\hD/I_0\hD$ such that $\theta_{\hD,i} \mod I_0$ 
corresponds to the $R/I_0R$-linear derivation $\partial_i$ of the domain
defined by $\partial_i(X_i^{[n+1]})=X_i^{[n]}$ $(n\in \N)$
and $\partial_i(X_j^{[n]})=0$ $(n\in \N,j\neq i)$.
Hence the desired claim holds by 
\cite[V.~Lemme 2.1.2]{BerthelotCrisCoh}
\end{proof}

\section{Connections}\label{sec:connection}
\begin{definition}[{cf.~\cite[2.2]{Andre}}]\label{def:ConnectionTwDeriv}
Let $A$ be an algebra over a ring $R$, and let $\alpha\in A$ and
$\partial\in\Der^{\alpha}_R(A)$ (Definition \ref{def:alphaDerivation} (1)).
Let $\gamma$ be the endomorphism $1+\alpha\partial$ of the
$R$-algebra $A$ (Lemma \ref{lem:alphaDergammaDer} (1)).
{\it A $(\alpha,\partial)$-connection on an $A$-module} $M$ is an additive map
$\nabla\colon M\to M$ satisfying $\nabla(ax)=\gamma(a)\nabla(x)+
\partial(a)x$ for $a\in A$ and $x\in M$. 
\end{definition}

\begin{lemma}\label{lem:ConnectionTwDeriv}
Let $R$, $A$, $\alpha$, $\partial$, and $\gamma$ be as in Definition 
\ref{def:ConnectionTwDeriv}.\par
(1) Let $(M,\nabla_M)$ be an $A$-module with a $(\alpha,\partial)$-connection.
Then $\nabla_M$ is $R$-linear, and the map $\gamma_M:=1+\alpha\nabla_M
\colon M\to M$ is $\gamma$-semilinear.\par
(2) Let $M$ be an $A$-module such that $\alpha$ is regular on $M$. 
Then, for a $\gamma$-semilinear endomorphism $\gamma_M$ of $M$
being the identity modulo $\alpha$, i.e., satisfying $(\gamma-1)(M)\subset
\alpha M$, the map $\nabla_M:=\alpha^{-1}(\gamma_M-1)\colon M\to M$
is a $(\alpha,\partial)$-connection on $M$. This gives a bijection from
the set of $\gamma$-semilinear endomorphisms of $M$ trivial modulo $\alpha$
to that of $(\alpha,\partial)$-connections on $M$. 
\end{lemma}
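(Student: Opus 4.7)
The plan is to transcribe directly to the module level the ring-theoretic correspondence of Lemma \ref{lem:alphaDergammaDer}, since the formulas $\gamma_M = 1 + \alpha \nabla_M$ and $\nabla_M = \alpha^{-1}(\gamma_M - 1)$ parallel word for word the formulas $\gamma = 1 + \alpha \partial$ and $\partial = \alpha^{-1}(\gamma - 1)$ relating an $\alpha$-derivation to its associated endomorphism.

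For part (1), the $R$-linearity of $\nabla_M$ follows by specializing the defining identity $\nabla_M(ax) = \gamma(a)\nabla_M(x) + \partial(a)x$ to $a = r \in R$, using that $\partial$ is $R$-linear with $\partial(1)=0$, hence $\partial(r)=0$ and $\gamma(r)=r$ for all $r \in R$. To verify that $\gamma_M := 1 + \alpha \nabla_M$ is $\gamma$-semilinear, I would expand both $\gamma_M(ax) = ax + \alpha \nabla_M(ax)$ using the connection identity and $\gamma(a)\gamma_M(x) = \gamma(a)x + \alpha\gamma(a)\nabla_M(x)$ using $\gamma(a)=a+\alpha\partial(a)$; both sides reduce to $ax + \alpha\gamma(a)\nabla_M(x) + \alpha\partial(a)x$ after a single substitution.

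For part (2), the $M$-regularity of $\alpha$ is what makes $\nabla_M := \alpha^{-1}(\gamma_M - 1)$ an unambiguously defined additive map, since $(\gamma_M-1)(M) \subset \alpha M$ by hypothesis and cancellation of $\alpha$ inside $M$ is legitimate. To check the connection identity, I would compute both
\[
\alpha \nabla_M(ax) = \gamma_M(ax) - ax = \gamma(a)\gamma_M(x) - ax
\]
and
\[
\alpha\bigl(\gamma(a)\nabla_M(x) + \partial(a)x\bigr) = \gamma(a)(\gamma_M(x)-x) + \alpha\partial(a)x,
\]
and observe that substituting $\gamma(a) = a + \alpha\partial(a)$ in the second expression collapses it to $\gamma(a)\gamma_M(x) - ax$; regularity of $\alpha$ on $M$ then yields $\nabla_M(ax) = \gamma(a)\nabla_M(x) + \partial(a)x$.

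The bijection assertion is then formal: the maps $\nabla_M \mapsto 1 + \alpha\nabla_M$ from (1) and $\gamma_M \mapsto \alpha^{-1}(\gamma_M - 1)$ from (2) are visibly mutually inverse, and the previous two paragraphs show each sends its source into its target. There is no substantive obstacle; the only delicate point, which I will flag explicitly in the write-up, is that the $\alpha$-regularity hypothesis on $M$ in (2) is used precisely at the one cancellation step, and the whole correspondence otherwise proceeds by elementary algebraic manipulations mirroring Lemma \ref{lem:alphaDergammaDer}.
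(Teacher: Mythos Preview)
Your proposal is correct and takes essentially the same approach as the paper: the paper's proof of (1) is exactly your computation that both $\gamma_M(ax)$ and $\gamma(a)\gamma_M(x)$ expand to $ax+\alpha\gamma(a)\nabla_M(x)+\alpha\partial(a)x$, and its proof of (2) is the identity $\gamma_M(ax)-ax=\gamma(a)(\gamma_M(x)-x)+(\gamma(a)-a)x$ together with $\gamma(a)-a=\alpha\partial(a)$, which is the same computation you give, followed by cancellation of $\alpha$.
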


\begin{proof}
(1) The first claim follows from $\partial(a\cdot 1)=a\partial(1)=0$ for $a\in R$.
The second one is verified as follows.
$$\gamma_M(ax)=ax+\alpha(\gamma(a)\nabla_M(x)+\partial(a)x)
=\gamma(a)x+\alpha\gamma(a)\nabla_M(x)=\gamma(a)\gamma_M(x)\;\;
(a\in A, x\in M)$$
(2) The second claim follows from (1) and the first claim, which is an immediate
consequence of the following computation for $a\in A$ and $x\in M$:
$\gamma_M(ax)-ax=\gamma(a)(\gamma_M(x)-x)+(\gamma(a)-a)x$
and $\gamma(a)-a=\alpha\partial(a)$. 
\end{proof}

Let $A$ be an algebra over a ring $R$, let $\ualpha=(\alpha_i)_{i\in\Lambda}$
be a finite family of elements of $A$, and suppose that 
we are given an $\alpha_i$-derivation $\partial_i$ of $A$ over $R$
(Definition \ref{def:alphaDerivation} (1)) for each $i\in \Lambda$ satisfying the following.
\begin{align}
\partial_i(\alpha_j)&=0\qquad(i,j\in\Lambda, i\neq j),\label{eq:TwDerivFamilyCond1}\\
\partial_i\circ\partial_j&=\partial_j\circ\partial_i\qquad(i,j\in\Lambda).
\label{eq:TwDerivFamilyCond2}
\end{align}
We define a family of $R$-endomorphisms $\ugamma=(\gamma_i)_{i\in\Lambda}$
of $A$ by $\gamma_i=\id_A+\alpha_i\partial_i$ (Lemma \ref{lem:alphaDergammaDer} (1)), an 
$A$-bimodule $\Omega_{A,\ugamma}$ to be $\oplus_{i\in\Lambda}A\omega_i$
equipped with the bimodule structure:
$b(a\omega_i)c=ba\gamma_i(c)\omega_i$ $(a,b,c\in A, i\in\Lambda)$,
and the $R$-linear map $d\colon A\to \Omega_{A,\ugamma}$ by 
$d(a)=\sum_{i\in\Lambda}\partial_i(a)\otimes\omega_i$. 
The map $d$ satisfies the Leibniz rule $d(ab)=ad(b)+d(a)b $ $(a,b\in A)$ since
$\partial_i$ is a $\gamma_i$-derivation by Lemma \ref{lem:alphaDergammaDer} (1)).
By \eqref{eq:TwDerivFamilyCond1}, \eqref{eq:TwDerivFamilyCond2}, and 
Remark \ref{rmk:alphaDerivBC} (4), we have
\begin{align}
\gamma_i\circ\partial_j&=\partial_j\circ\gamma_i\qquad(i,j\in\Lambda,i\neq j),
\label{eq:TwDerivGammaComm1}\\
\gamma_i\circ\gamma_j&=\gamma_j\circ\gamma_i\qquad(i,j\in\Lambda).
\label{eq:TwDerivGammaComm2}
\end{align}

We define $\Omega^{\bullet}_{A,\ugamma}$ to be the graded left $A$-module
whose degree $q$-part is the $q$th exterior tensor product 
$\wedge^q_A\Omega_{A,\ugamma}$ of $\Omega_{A,\ugamma}$
with respect to the left $A$-module structure. We write $-\wedge-$ for
the product with respect to the exterior algebra structure. 
For $ q\in \N$ and $\bmI=(i_1,\ldots,i_q)\in \Lambda^q$, we define 
$\omega_{\bmI}$ to be 
$\omega_{i_1}\wedge\cdots\wedge\omega_{i_q}\in\Omega^q_{A,\ugamma}$,
and $\gamma_{\bmI}$ to be $\gamma_{i_1}\circ\gamma_{i_2}\circ
\cdots \circ\gamma_{i_q}$.  Note that $\omega_{\bmI}\neq 0$ if and only if
the components of $\bmI$ are mutually different.
We can equip $\Omega^{\bullet}_{A,\ugamma}$ with the associative
$R$-algebra structure with unity by
\begin{equation}\label{eq:DGAProdDef}
a\omega_{\bmI}\cdot a'\omega_{\bmI'}=
a\gamma_{\bmI}(a')\omega_{\bmI}\wedge\omega_{\bmI'}
\quad(q,q'\in\N, a,a'\in A, \bmI\in \Lambda^q, \bmI'\in \Lambda^{q'}).
\end{equation}
We define an $R$-linear map $d^q\colon \Omega^q_{A,\ugamma}\to 
\Omega^{q+1}_{A,\ugamma}$ for each $q\in \N$ by 
$$d^q(a\omega_{\bmI})=\sum_{i\in\Lambda}\partial_i(a)\omega_i\wedge\omega_{\bmI}
\quad(a\in A, \bmI\in\Lambda^q).$$
We have $d^0=d$. Then $\Omega^{\bullet}_{A,\ugamma}$ equipped with 
$d^{\bullet}=(d^q)_{q\in \N}$ is a differential graded algebra over $R$, i.e., the following
hold: 
$d^0(R\cdot 1)=0$, $d^{q+1}\circ d^q=0$ $(q\in \N)$,
and $d^{q+q'}(\omega\omega')=
d^q(\omega)\omega'+(-1)^q\omega d^{q'}(\omega')$
$(q,q'\in \N,\omega\in \Omega^q_{A,\ugamma},
\omega'\in \Omega^{q'}_{A,\ugamma})$. 
The first property follows from $\partial_i(1)=0$ and the $R$-linearity of 
$\partial_i$. The second one is an immediate consequence of 
\eqref{eq:TwDerivFamilyCond2}. The last equality is verified as follows.
We may assume $\omega=a\omega_{\bmI}$ and $\omega'=a'\omega_{\bmI'}$
for $a, a'\in A$, $\bmI\in \Lambda^q$, and $\bmI'\in \Lambda^{q'}$ 
such that $\bmI$ and $\bmI'$ have no common components. Then we have 
\begin{align*}
d^{q+q'}(\omega\omega')&=
\sum_{i\in\Lambda}\partial_i(a\gamma_{\bmI}(a'))\omega_i\wedge\omega_{\bmI}
\wedge\omega_{\bmI'},\\
d^q(\omega)\omega'&=\biggl(\sum_{i\in\Lambda}\partial_i(a)\omega_i\wedge\omega_{\bmI}\biggr)a'\omega_{\bmI'}
=\sum_{i\in\Lambda}\partial_i(a)\gamma_i\gamma_{\bmI}(a')
\omega_i\wedge\omega_{\bmI}\wedge\omega_{\bmI'},\\
\omega d^{q'}(\omega')&=
a\omega_{\bmI}\sum_{i\in\Lambda}\partial_i(a')\omega_i\wedge\omega_{\bmI'}
=(-1)^q\sum_{i\in\Lambda}a\gamma_{\bmI}(\partial_i(a'))
\omega_i\wedge \omega_{\bmI}\wedge \omega_{\bmI'}.
\end{align*}
Hence the desired equality follows from 
$\partial_i(a\gamma_{\bmI}(a'))=\partial_i(a)\gamma_i\gamma_{\bmI}(a')
+a\partial_i(\gamma_{\bmI}(a'))$ $(i\in\Lambda)$
and $\partial_i(\gamma_{\bmI}(a'))=\gamma_{\bmI}(\partial_i(a'))$
for $i\in \Lambda$ not appearing in $\bmI$. 
The latter equality holds by \eqref{eq:TwDerivGammaComm1}. 

\begin{definition}[{cf.~\cite[2.2]{Andre}}]\label{def:connection}
{\it A module with connection over} $(A, d\colon A\to \Omega_{A,\ugamma})$
(or $\Omega_{A,\ugamma}^{\bullet})$
is an $A$-module $M$ equipped with an additive map 
$\nabla_M\colon M\to M\otimes_A\Omega_{A,\ugamma}$ satisfying
the Leibniz rule $\nabla_M(am)=\nabla_M(m)a+m\otimes d(a)$ for
$m\in M$ and $a\in A$. We define the endomorphisms 
$\nabla_{M,i}$ and $\gamma_{M,i}$ $(i\in \Lambda)$
of $M$ associated to $\nabla_M$ by 
$\nabla_M(m)=\sum_{i\in \Lambda}\nabla_{M,i}(m)\otimes\omega_i$
and $\gamma_{M,i}(m)=m+\alpha_i\nabla_{M,i}(m)$. 
Then the Leibniz rule of $\nabla_M$ is equivalent to 
saying that $\nabla_{M,i}$ is a $(\alpha_i,\partial_i)$-connection 
(Definition \ref{def:ConnectionTwDeriv}) for every $i\in \Lambda$. Hence
$\gamma_{M,i}$ is $\gamma_i$-semilinear by Lemma 
\ref{lem:ConnectionTwDeriv} (1).
{\it A morphism $f\colon (M,\nabla_M)\to (M',\nabla_{M'})$ of modules
with connection over} $(A,d)$ is an $A$-linear map $f\colon M\to M'$
satisfying $(f\otimes\id_{\Omega_{A,\ugamma}})\circ\nabla_M=
\nabla_{M'}\circ f$. 
\end{definition}

Let $(M,\nabla_M)$ be a module with connection over $(A,d)$. 
For each $q\in \N$, we can define an additive map 
$\nabla_M^q\colon M\otimes_A\Omega^q_{A,\ugamma}\to 
M\otimes_A\Omega_{A,\ugamma}^{q+1}$ by 
$\nabla_M^q(m\otimes\omega)=\nabla_M(m)\wedge\omega+m\otimes d^q(\omega)$
thanks to the Leibniz rule of $\nabla_M$ and 
$d^q(a\omega)=d(a)\wedge\omega+ad^q(\omega)$ $(a\in A,\omega\in \Omega^q_{A,\ugamma})$.
Using the fact that $(\Omega_{A,\ugamma}^{\bullet},d^{\bullet})$
is a differential graded algebra, we obtain 
$\nabla_M^{q+q'}(x\omega)=\nabla_M^q(x)\wedge\omega+(-1)^qx\wedge d^{q'}(\omega)$
for $q,q'\in \N$, $x\in M\otimes_A\Omega^q_{A,\ugamma}$,
and $\omega\in \Omega_{A,\ugamma}^{q'}$.
We have $\nabla_M^0=\nabla_M$. 

\begin{definition}\label{def:intconnection}
For a module with connection $(M,\nabla_M)$ over $(A,d)$, 
we say that $\nabla_M$ is {\it integrable} if $\nabla_M^1\circ\nabla_M=0$. 
We write $\MIC(A,d)$ for the category of modules with integrable connection
over $(A,d)$. 
\end{definition}

For $m\in M$, $q\in \N$, and $\bmI\in \Lambda^q$, we have 
$$\nabla_M^{q+1}\circ\nabla_M^q(m\otimes \omega_{\bmI})
=\nabla_M^{q+1}\biggl(\sum_{i\in \Lambda}\nabla_{M,i}(m)
\otimes\omega_i\wedge\omega_{\bmI}\biggr)
=\sum_{j\in\Lambda}\sum_{i\in\Lambda}
\nabla_{M,j}\circ\nabla_{M,i}(m)\otimes\omega_j\wedge\omega_i\wedge
\omega_{\bmI}.$$
This implies that $\nabla_M$ is integrable if and only if 
$\nabla_{M,i}\circ \nabla_{M,j}=\nabla_{M,j}\circ\nabla_{M,i}$
for every $i,j\in \Lambda$, $i\neq j$, and that
$\nabla_{M}^{q+1}\circ\nabla_{M}^q=0$ for every $q\in \N$ if $\nabla_M$ is 
integrable. When $\nabla_M$ is integrable, we call 
the complex $(M\otimes_{A}\Omega^{\bullet}_{A,\ugamma},\nabla_M^{\bullet})$
the {\it de Rham complex of $(M,\nabla_M)$. }
For $i,j\in \Lambda$, $i\neq j$, and $m\in M$, we have
$\gamma_{M, i}\circ\gamma_{M,j}(m)
=m+\alpha_j\nabla_{M,j}(m)+\alpha_i\nabla_{M,i}(m)+
\alpha_i\alpha_j\nabla_{M,i}\circ\nabla_{M,j}(m)$
and 
$\gamma_{M,i}\circ\nabla_{M,j}(m)=\nabla_{M,i}\circ\gamma_{M,j}(m)=
\nabla_{M,i}(m)+\alpha_j\nabla_{M,i}\circ\nabla_{M,j}(m)$
by \eqref{eq:TwDerivFamilyCond1}. Hence we have 
$\gamma_{M,i}\circ\gamma_{M,j}=\gamma_{M,j}\circ\gamma_{M,i}$
and $\gamma_{M,i}\circ\nabla_{M,j}=\nabla_{M,j}\circ\gamma_{M,i}$
for every $i,j\in\Lambda$, $i\neq j$ if $\nabla_M$ is integrable. 

Let $(M,\nabla_M)$ and $(M',\nabla_{M'})$ be modules with connection
over $(A,d)$. Then it is straightforward to verify that the map
$$M\times M'\to M\otimes_AM'\otimes_A\Omega_{A,\ugamma};
(m,m')\mapsto \sum_{i\in\Lambda}
(\nabla_{M,i}(m)\otimes\gamma_{M',i}(m')+m\otimes\nabla_{M',i}(m'))
\otimes \omega_i$$
is $A$-bilinear and defines a connection
$\nabla_{M\otimes M'}\colon M\otimes_AM'\to (M\otimes_AM')\otimes_A\Omega_{A,\ugamma}$. 
We have
\begin{equation}\label{eq:ConnProdSymmForm}
\nabla_{M\otimes M',i}(m\otimes m')=\nabla_{M,i}(m)\otimes m'
+m\otimes\nabla_{M',i}(m')+\alpha_i\nabla_{M,i}(m)\otimes\nabla_{M',i}(m')
\end{equation}
for $i\in \Lambda$, $m\in M$, and $m'\in M'$. 
This implies that the isomorphism $M\otimes_A M'\cong M'\otimes_A M; 
m\otimes m'\mapsto m'\otimes m$ is compatible with
$\nabla_{M\otimes M'}$ and $\nabla_{M'\otimes M}$.
\par
For $m\in M$, $m'\in M'$,
and $i,j\in \Lambda$, we have 
\begin{multline*}\nabla_{M\otimes M',i}\circ\nabla_{M\otimes M',j}(m\otimes m')
=\nabla_{M,i}\circ\nabla_{M,j}(m)
\otimes\gamma_{M',i}\circ\gamma_{M',j}(m')
+\nabla_{M,j}(m)\otimes\nabla_{M',i}\circ\gamma_{M',j}(m')\\
+\nabla_{M,i}(m)\otimes \gamma_{M',i}\circ\nabla_{M',j}(m')
+m\otimes\nabla_{M',i}\circ\nabla_{M',j}(m').
\end{multline*}
This implies that $\nabla_{M\otimes M'}$ is integrable if $\nabla_M$ 
and $\nabla_{M'}$ are integrable. \par

It is straightforward to show that $\gamma_{M\otimes M',i}$ coincides
with $\gamma_{M,i}\otimes\gamma_{M',i}$ for $i\in \Lambda$.
This property implies that, given another module with connection 
$(M'',\nabla_{M''})$ over $(A,d)$, the isomorphism of 
$A$-modules $(M\otimes_AM')\otimes_AM''\cong M\otimes_A(M'\otimes_AM'')$
is compatible with connections. The map $d\colon A\to \Omega_{A,\ugamma}$
is an integrable connection, and the canonical isomorphism 
$A\otimes_AM\cong M$ is compatible with connections by
$d(1)=0$.

\begin{definition}\label{def:ConnTensorProdDef}
For modules with connection $(M,\nabla_M)$ and $(M',\nabla_{M'})$ over $(A,d)$,
we call $\nabla_{M\otimes M'}$ (resp.~$(M\otimes_AM',\nabla_{M\otimes M'})$)
the {\it tensor product of $\nabla_M$ and $\nabla_{M'}$}
(resp.~$(M,\nabla_M)$ {\it and} $(M',\nabla_{M'})$).
\end{definition}

\begin{remark}\label{rmk:TensorProdDRcpx}
Let $(M,\nabla_M)$ and $(M',\nabla_{M'})$ be modules with integrable connection
over $(A,d)$. For $q\in \N$ and $\bmI=(i_1,\ldots, i_q)\in \Lambda^q$, we define
$\gamma_{M',\bmI}$ to be $\gamma_{M',i_1}\circ\gamma_{M',i_2}\circ\cdots
\circ\gamma_{M',i_q}$. Then we can define a morphism of complexes
\begin{equation}\label{eq:dRcpxProd}
(M\otimes_A\Omega^{\bullet}_{A,\ugamma})\otimes_R
(M'\otimes_A\Omega^{\bullet}_{A,\ugamma})
\longrightarrow
(M\otimes_AM')\otimes_A\Omega^{\bullet}_{A,\ugamma}
\end{equation}
by sending $(m\otimes\omega_{\bmI})\otimes(m'\otimes\omega_{\bmI'})$
to $(m\otimes\gamma_{M',\bmI}(m'))\otimes\omega_{\bmI}\wedge
\omega_{\bmI'}$ for
$m\in M$, $m'\in M'$, $q,q'\in \N$, $\bmI\in \Lambda^q$,
and $\bmI'\in \Lambda^{q'}$; writing $x\wedge_{\ugamma}y$ for the
image of $x\otimes_Ry$, the compatibility with the differential maps
is reduced to the following equalities for $m$, $m'$, $\bmI$ and $\bmI'$ as above.
$$
(\nabla_i(m)\otimes\omega_i\wedge\omega_{\bmI}))\wedge_{\ugamma}(m'\otimes\omega_{\bmI'})
+(-1)^q(m\otimes\omega_{\bmI})\wedge_{\ugamma}(\nabla_i(m')\otimes\omega_i\wedge\omega_{\bmI'})
=\nabla_i(m\otimes\gamma_{M',\bmI}(m'))\otimes\omega_i\wedge\omega_{\bmI}\wedge\omega_{\bmI'}.
$$
If we are given another module with integrable connection $(M'',\nabla_{M''})$ over $(A,d)$, 
we have 
$$(x\wedge_{\ugamma} x')\wedge_{\ugamma} x''=x\wedge_{\ugamma} (x'\wedge_{\ugamma} x'')$$
for $x\in M\otimes_A\Omega_{A,\ugamma}^q$,
$x'\in M'\otimes_A\Omega_{A,\ugamma}^{q'}$, and
$x''\in M''\otimes_A\Omega_{A,\ugamma}^{q''}$.
In the case $(M,\nabla_M)=(M',\nabla_{M'})=(A,d)$, the morphism \eqref{eq:dRcpxProd}
coincides with the product \eqref{eq:DGAProdDef} of the differential graded algebra
$(\Omega^{\bullet}_{A,\ugamma},d^{\bullet})$ over $R$.
\end{remark}

Let $(M,\nabla_M)$ and $(M',\nabla_{M'})$ be modules with connection
over $(A,d)$, and assume that $\gamma_{M,i}$ and $\gamma_i$ are 
automorphisms of $M$ and $A$, respectively, for every $i\in \Lambda$.
Then one can define a connection on $M''=\Hom_A(M,M')$ by 
the following formula.
$$\nabla_{M'',i}(f)=(\nabla_{M',i}\circ f-f\circ\nabla_{M,i})\circ\gamma_{M,i}^{-1},\quad
f\in M''.$$ We use the bijectivity of $\gamma_i$ to show that the right-hand 
map  is $A$-linear. We have 
$\gamma_{M'',i}(f)=\gamma_{M',i}\circ f\circ\gamma_{M,i}^{-1}$ for 
$f\in M''$. We see that $\nabla_{M''}$ is the unique connection such that the
evaluation map 
$M\otimes_A\Hom_A(M,M')\to M'; m\otimes f\mapsto f(m)$ is
compatible with the connections. If $\nabla_M$ and
$\nabla_{M'}$ are integrable, the connection $\nabla_{M''}$ is
also integrable by the following computation for 
$i,j\in \Lambda$, $i\neq j$, $f\in M''$, and $m\in M$. 
\begin{align*}
(\nabla_i\circ\nabla_j)(f)(\gamma_i\circ\gamma_j(m))
=&(\nabla_i\circ\nabla_j(f)-\nabla_j(f)\circ\nabla_i)(\gamma_j(m))\\
=&\nabla_i\circ(\nabla_j\circ f-f\circ\nabla_j)(m)-\nabla_j(f)\circ\gamma_j\circ\nabla_i(m)\\
=&\nabla_i\circ\nabla_j\circ f(m)-
\nabla_i\circ f\circ\nabla_i(m)
-(\nabla_j\circ f\circ\nabla_i(m)-f\circ\nabla_j\circ\nabla_i(m))
\end{align*}

In order to study functoriality of modules with integrable
connection and the associated de Rham complexes
with respect to $(A,\ualpha=(\alpha_i)_{i\in\Lambda}, (\partial_i)_{i\in\Lambda})$, we  extend 
$E^{\alpha}(A)$ \eqref{eq:TwExtIsom} and $E^{\alpha_1}(E^{\alpha_2}(A))
\cong E^{\alpha_2}(E^{\alpha_1}(A))$ \eqref{eq:TwExtComp}
to ``multivariables" and give an interpretation of modules
with integrable connection. For a ring $A$ and a finite family of 
elements $\ualpha=(\alpha_{\sigma})_{\sigma\in \Sigma}$,
we define $E^{\ualpha}(A)$ to be the $A$-algebra
$A[T_{\sigma};\sigma\in\Sigma]/(T_{\sigma}(T_{\sigma}-\alpha_{\sigma}),\sigma\in\Sigma))$ equipped with the augmentation homomorphism
$\pi\colon E^{\ualpha}(A)\to A$ defined by $\pi(T_{\sigma})=0$. 
The $A$-module $E_{\ualpha}(A)$ is free with
basis $T_{\usigma}:=\prod_{\sigma\in\usigma}T_{\sigma}$ 
$(\usigma\subset\Sigma)$. 
If we decompose $\Sigma$ into a disjoint union
$\Sigma=\Sigma_1\sqcup \Sigma_2$ and set 
$\ualpha_{\nu}=(\alpha_{\sigma})_{\sigma\in \Sigma_{\nu}}$ 
$(\nu=1,2)$, then we have an isomorphism of $A$-algebras
$E^{\ualpha}(A)\cong E^{\ualpha_1}(E^{\ualpha_2}(A))$
defined by sending $T_{\sigma}$ to $T_{\sigma}$ for each $\sigma\in 
\Sigma$. Let $A'$ and $\ualpha'=(\alpha'_{\sigma'})_{\sigma'\in\Sigma'}$
be another pair of a ring and a finite family of its elements, and
suppose that we are given a ring homomorphism $f\colon A\to A'$
and a map $\psi\colon \Sigma\to \Sigma'$ such that
$f(\alpha_{\sigma})=\alpha'_{\psi(\sigma)}$ $(\sigma\in\Sigma)$.
Then $f$ extends uniquely to a ring homomorphism 
$E^{\psi}(f)\colon E^{\ualpha}(A)\to E^{\ualpha'}(A')$
sending $T_{\sigma}$ to $T_{\psi(\sigma)}$.  \par

Let $R$, $A$, $\ualpha=(\alpha_i)_{i\in\Lambda}$,
and $\partial_i$ $(i\in \Lambda)$ be as in the beginning of
this section, and let $s_i\colon A\to E^{\alpha_i}(A)$
be the $R$-homomorphism corresponding to $\partial_i$
by Proposition \ref{prop:AlphaDerivInterpret}. 
By \eqref{eq:TwDerivFamilyCond1}, we have $s_i(\alpha_j)=\alpha_j$
for $i,j\in\Lambda$, $i\neq j$. Hence, by choosing a total order $\leq$
on $\Lambda$, one can define an $R$-homomorphism 
\begin{equation}\label{eq:ExtensionRightAlgStr}
s\colon A\to E^{\ualpha}(A)\end{equation}
by composing $R$-homomorphisms
$$E^{\id_{\Sigma_{<i}}}(s_i)\colon 
E^{\ualpha_{<i}}(A)
\to E^{\ualpha_{<i}}(E^{\alpha_i}(A))\cong E^{\ualpha_{\leq i}}(A)
\quad(i\in \Lambda),$$
where $\Lambda_{<i}=\{j\in \Lambda;j<i\}$,
$\Lambda_{\leq i}=\{j\in \Lambda;j\leq i\}$, 
$\ualpha_{<i}=(\alpha_j)_{j\in \Lambda_{<i}}$,
and $\ualpha_{\leq i}=(\alpha_j)_{j\in \Lambda_{\leq i}}$. 
One can prove the following by induction on $\sharp \Lambda$.
\begin{equation}\label{eq:TwDerivMultSect}
s(a)=\sum_{i_1<i_2<\cdots<i_r}
\partial_{i_r}\circ\cdots\partial_{i_2}\circ\partial_{i_1}(a)T_{i_1}
T_{i_2}\cdots T_{i_r}\quad (a\in A)
\end{equation}
By \eqref{eq:TwDerivFamilyCond2}, 
this implies that $s$ is independent of the choice
of an order $\leq$ of $\Lambda$. We regard
$E^{\ualpha}(A)$ as an $A$-bialgebra by viewing the
canonical homomorphism $A\to E^{\ualpha}(A)$
(resp.~$s$) as a left (resp.~right) $A$-algebra structure in the following.

The $A$-bimodule $\Omega_{A,\ugamma}$ and
the map $d\colon A\to \Omega_{A,\ugamma}$ can be 
reconstructed from the $A$-bialgebra $E^{\ualpha}(A)$
with the augmentation $\pi$ similarly to the usual universal derivation
as follows. We define  an $A$-bialgebra
$P^{\ualpha}(A)$ to be the quotient of 
the $A$-bialgebra $E^{\ualpha}(A)$ by the ideal generated by
$T_iT_j$ $(i,j\in\Lambda, i\neq j)$. Let $s_P$ be the composition 
$A\xrightarrow{s} E^{\ualpha}(A)\to P^{\ualpha}(A)$, and
let $\pi_P$ be the augmentation $P^{\ualpha}(A)\to A$
induced by that of $E^{\ualpha}(A)$. 

\begin{lemma}\label{lem:TwOmegafKer}
(1) The left $A$-module $P^{\ualpha}(A)$ is free with
basis $1$, $T_i$ $(i\in \Lambda)$. \par
(2) The isomorphism of left $A$-modules
$\Omega_{A,\ugamma}\xrightarrow{\cong}\Ker (\pi_P)$
defined by sending $\omega_i$ to $T_i$ is also right $A$-linear. 
The composition of $d\colon A\to \Omega_{A,\ugamma}$
with this isomorphism is given by $a\mapsto s_P(a)-a$. 
\end{lemma}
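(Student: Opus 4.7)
The plan is to read off both parts directly from the description of $E^{\ualpha}(A)$ as a free left $A$-module on the basis $T_{\usigma}=\prod_{\sigma\in\usigma}T_{\sigma}$ ($\usigma\subset\Lambda$) together with the explicit formula \eqref{eq:TwDerivMultSect} for $s$. The main input is to identify the ideal defining $P^{\ualpha}(A)$ exactly with the submodule spanned by the $T_{\usigma}$ of ``length'' at least two.

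For (1), let $\CJ\subset E^{\ualpha}(A)$ denote the ideal generated by the products $T_iT_j$ with $i\neq j$. I will show $\CJ=\bigoplus_{|\usigma|\geq 2}A\,T_{\usigma}$ as a left $A$-module. The inclusion $\supseteq$ is immediate since for $|\usigma|\geq 2$ one can factor $T_{\usigma}=T_iT_j\cdot T_{\usigma\setminus\{i,j\}}$. For the reverse inclusion, commutativity of $E^{\ualpha}(A)$ reduces $\CJ$ to the left $A$-span of elements $T_iT_jT_{\usigma}$ ($i\neq j$, $\usigma\subset\Lambda$), and repeated use of $T_k^2=\alpha_kT_k$ rewrites each such element as a scalar in $A$ times $T_{\usigma'}$ with $\usigma'\supseteq\{i,j\}$, so $|\usigma'|\geq 2$. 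Thus $P^{\ualpha}(A)=E^{\ualpha}(A)/\CJ$ is free as a left $A$-module on the images of $1$ and the $T_i$.

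For (2), applying \eqref{eq:TwDerivMultSect} modulo $\CJ$ collapses every length-$\geq 2$ monomial, giving the compact formula
\[
s_P(a)=a+\sum_{i\in\Lambda}\partial_i(a)T_i.
\]
By (1), $\Ker(\pi_P)$ is the free left $A$-module on $\{T_i\}_{i\in\Lambda}$, so the left-$A$-linear map $\omega_i\mapsto T_i$ is a bijection onto $\Ker(\pi_P)$; composing with $d$ gives $a\mapsto\sum_i\partial_i(a)T_i=s_P(a)-a$, which is the second assertion. For right $A$-linearity, the right $A$-module structure on $\Ker(\pi_P)$ is inherited from the right $A$-algebra structure of $P^{\ualpha}(A)$ via $s_P$, and using $T_iT_j=0$ in $P^{\ualpha}(A)$ for $i\neq j$ together with $T_i^2=\alpha_iT_i$ I will compute
\[
T_i\cdot a=T_i\,s_P(a)=aT_i+\partial_i(a)\alpha_iT_i=(a+\alpha_i\partial_i(a))T_i=\gamma_i(a)T_i,
\]
matching $\omega_i\cdot a=\gamma_i(a)\omega_i$ from the defining formula $(b\omega_i)c=b\gamma_i(c)\omega_i$ of $\Omega_{A,\ugamma}$.

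The only mildly delicate step is the ideal identification in (1); once that is in hand, both the description of $s_P$ and the right $A$-module structure on $\Ker(\pi_P)$ reduce to a one-line calculation in $P^{\ualpha}(A)$, so I expect no substantive obstacle.
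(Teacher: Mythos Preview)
Your proof is correct and follows essentially the same approach as the paper: you identify the ideal generated by the $T_iT_j$ ($i\neq j$) with the left $A$-span of the $T_{\usigma}$ with $|\usigma|\geq 2$ via $T_k^2=\alpha_kT_k$, and then compute $T_is_P(a)=\gamma_i(a)T_i$ in $P^{\ualpha}(A)$ using $T_iT_j=0$ for $i\neq j$. The paper's proof is slightly terser but the content is identical.
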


\begin{proof}
(1) Since $T_i(T_iT_j)=\alpha_iT_iT_j$ and $T_j(T_iT_j)=\alpha_jT_iT_j$,
we see that the ideal generated by $T_iT_j$ $(i,j\in\Lambda,i\neq j)$
is the left $A$-module generated by $T_{\ui}$ $(\ui\subset \Lambda,
\sharp \ui\geq 2)$. This implies the claim.\par
(2) We obtain the claim from the following computation in $P^{\ualpha}(A)$. 
For $a\in A$, we have $s_P(a)=a+\sum_{j\in \Lambda}\partial_j(a)T_j$
and $T_is_P(a)=
T_i(a+\sum_{j\in \Lambda}\partial_j(a)T_j)
=T_i(a+\partial_i(a)\alpha_i)=T_i\gamma_i(a)$. 
\end{proof}

For a subset $\ui\subset \Lambda$, we define $\alpha_{\ui}$
to be  $(\alpha_i)_{i\in\ui}$. By applying the construction of $s$ \eqref{eq:ExtensionRightAlgStr}
to $\alpha_{\ui}$ and $\partial_i$ $(i\in \ui)$, we obtain an 
$R$-homomorphism $s_{\ui}\colon A\to E^{\ualpha_i}(A)$.
We have $s_{\ui}(\alpha_j)=\alpha_j$ for $j\in \Lambda\backslash \ui$
by  \eqref{eq:TwDerivFamilyCond1} and \eqref{eq:TwDerivMultSect}.
By construction, the following diagram is commutative 
for $\ui\subset \Lambda$, $i\in \Lambda\backslash \ui$,
and $\ui'=\ui\cup \{i\}$. 
\begin{equation}
\xymatrix{
A\ar[r]^(.46){s_{\ui}}\ar[d]_{s_{\ui'}}
& E^{\alpha_{\ui}}(A)\ar[d]^{E^{\alpha_{\ui}}(s_i)}\\
E^{\alpha_{\ui'}}(A)\ar[r]^(.42){\cong}& E^{\alpha_{\ui}}(E^{\alpha_i}(A))
}
\end{equation}
We have $s_i=s_{\{i\}}$. For $\uj\subset \Lambda$,
let $\partial_{\uj}$ denote the composition of $\partial_j$ $(j\in \uj)$
which does not depend on the choice of an order of composition 
by \eqref{eq:TwDerivFamilyCond2}. Then we have 
\begin{equation}\label{eq:TwDerivPartialMultSect}
s_{\ui}(a)=\sum_{\uj\subset\ui}\partial_{\uj}(a)\otimes T_{\uj}\quad
(a\in A)
\end{equation}
by \eqref{eq:TwDerivMultSect}, where $T_{\uj}=\prod_{j\in \uj}T_j$ for $\uj\subset\Lambda$. 
We regard $E^{\alpha_{\ui}}(A)$
as an $A$-bialgebra by viewing the canonical homomorphism
(resp.~$s_{\ui}$) as a left (resp.~right) $A$-algebra structure. 
The tensor product $E^{\alpha_{\ui}}(A)\otimes_A-$
(resp.~$-\otimes_AE^{\ualpha_i}(A)$) is taken with
respect to the right (resp.~left) $A$-algebra structure in the following. 

\begin{lemma}\label{lem:TwPartialExtTensor}
Let $\ui$ and $\ui'$ be disjoint subsets of $\Lambda$,
put $\ui''=\ui\cup \ui'$, and regard $E^{\alpha_{\ui}}(A)\otimes_AE^{\alpha_{\ui'}}(A)$ as an $A$-bialgebra by giving the left (resp.~right) algebra structure
via the left (resp.~right) algebra structure of $E^{\alpha_{\ui}}(A)$
(resp.~$E^{\alpha_{\ui'}}(A)$). Then the left $A$-algebra homomorphism 
\begin{equation}\label{eq:TwPartialExtTensor}
E^{\ualpha_{\ui''}}(A)\to E^{\alpha_{\ui}}(A)\otimes_AE^{\alpha_{\ui'}}(A)
\end{equation}
sending $T_j$ $(j\in \ui'')$ to $T_j\otimes 1$ if $j\in \ui$
and $1\otimes T_j$ if $j\in \ui'$ is well-defined,
and it is an isomorphism compatible also with the right $A$-algebra
structures. 
\end{lemma}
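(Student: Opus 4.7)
My plan is first to verify the proposed map $\phi$ is well-defined as a ring (hence left $A$-algebra) homomorphism, then to check bijectivity by a basis comparison, and finally to verify the compatibility with the right $A$-algebra structures by a direct computation using \eqref{eq:TwDerivPartialMultSect}.

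For well-definedness, since both source and target are commutative rings, it suffices to check that the proposed images of the generators $T_j$ satisfy $T_j^2=\alpha_j T_j$. For $j\in \ui$ this is immediate: $(T_j\otimes 1)^2=\alpha_j T_j\otimes 1=\alpha_j\cdot(T_j\otimes 1)$. For $j\in \ui'$, $(1\otimes T_j)^2=1\otimes \alpha_j T_j$ while $\alpha_j\cdot(1\otimes T_j)=\alpha_j\otimes T_j$; these agree via the tensor-product relation $s_{\ui}(\alpha_j)\otimes T_j=1\otimes \alpha_j T_j$ together with the fact that $s_{\ui}(\alpha_j)=\alpha_j$. The latter holds since $j\notin\ui$ (as $\ui\cap\ui'=\emptyset$) and $\partial_i(\alpha_j)=0$ for $i\in\ui$, $i\neq j$, by \eqref{eq:TwDerivFamilyCond1}, so \eqref{eq:TwDerivPartialMultSect} collapses to $s_{\ui}(\alpha_j)=\alpha_j$.

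To show bijectivity, I plan to exhibit matching left $A$-module bases on both sides. The source is a free left $A$-module with basis $\{T_{\us}\}_{\us\subset\ui''}$, and $\phi(T_{\us})=T_{\us\cap\ui}\otimes T_{\us\cap\ui'}$. For the target, decompose $E^{\alpha_{\ui'}}(A)=\bigoplus_{\us_2\subset\ui'}A\cdot T_{\us_2}$ as a left $A$-module, commute tensor product with direct sum, and identify each summand $E^{\alpha_{\ui}}(A)\otimes_A A\cdot T_{\us_2}$ with $E^{\alpha_{\ui}}(A)$ as a left $A$-module via $b\leftrightarrow b\otimes T_{\us_2}$ (with inverse $b\otimes aT_{\us_2}\mapsto b\cdot s_{\ui}(a)$, well-defined by the tensor relation). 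The left $A$-basis $\{T_{\us_1}\}_{\us_1\subset\ui}$ of $E^{\alpha_{\ui}}(A)$ then transports to $\{T_{\us_1}\otimes T_{\us_2}\}_{\us_1\subset\ui,\,\us_2\subset\ui'}$, and $\phi$ sends basis to basis.

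For compatibility with the right $A$-algebra structures, the requirement is $\phi(s_{\ui''}(a))=1\otimes s_{\ui'}(a)$ for $a\in A$. Expanding via \eqref{eq:TwDerivPartialMultSect} and splitting each $\us\subset\ui''$ as a disjoint union $\us=\us_1\sqcup\us_2$ with $\us_1\subset\ui$ and $\us_2\subset\ui'$, and factoring $\partial_{\us}=\partial_{\us_1}\circ\partial_{\us_2}$ using \eqref{eq:TwDerivFamilyCond2}, one obtains
$$\phi(s_{\ui''}(a))=\sum_{\us_2\subset\ui'}\Bigl(\sum_{\us_1\subset\ui}\partial_{\us_1}(\partial_{\us_2}(a))\,T_{\us_1}\Bigr)\otimes T_{\us_2}=\sum_{\us_2\subset\ui'}s_{\ui}(\partial_{\us_2}(a))\otimes T_{\us_2},$$
and pushing $s_{\ui}(\partial_{\us_2}(a))$ across the tensor via the defining relation yields $\sum_{\us_2\subset\ui'}1\otimes\partial_{\us_2}(a)T_{\us_2}=1\otimes s_{\ui'}(a)$. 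The only real delicacy in the argument is scrupulous bookkeeping of which of the two $A$-algebra structures on $E^{\alpha_{\ui}}(A)$ is in use when forming the tensor product; once that is pinned down, each step above is essentially mechanical.
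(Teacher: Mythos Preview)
Your proposal is correct and follows essentially the same approach as the paper's proof: well-definedness via $s_{\ui}(\alpha_j)=\alpha_j$ for $j\in\ui'$, bijectivity via the left $A$-module basis $\{T_{\uj}\otimes T_{\uj'}\}_{\uj\subset\ui,\,\uj'\subset\ui'}$, and right-structure compatibility via the expansion \eqref{eq:TwDerivPartialMultSect}. The only cosmetic difference is that the paper computes $1\otimes s_{\ui'}(a)$ and matches it with $\phi(s_{\ui''}(a))$, while you compute $\phi(s_{\ui''}(a))$ and reduce it to $1\otimes s_{\ui'}(a)$; the underlying identity is the same.
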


\begin{proof}
The homomorphism \eqref{eq:TwPartialExtTensor} is well-defined because
$s_{\ui}(\alpha_j)=\alpha_j$ for $j\in \ui'$. 
It  is an isomorphism because 
$E^{\alpha_{\ui}}(A)\otimes_AE^{\alpha_{\ui'}}(A)$
is a free $E^{\alpha_{\ui}}(A)$-module with basis
$1\otimes T_{\uj'}$ $(\uj'\subset \ui')$ and hence
is a free left $A$-module with basis $T_{\uj}\otimes T_{\uj'}$
$(\uj\subset \ui, \uj'\subset \ui')$. Let $a$ be an
element of $A$. By \eqref{eq:TwDerivPartialMultSect}, the image of 
$s_{\ui'}(a)\in E^{\alpha_{\ui'}}(A)$ in 
$E^{\alpha_{\ui}}(A)\otimes_AE^{\alpha_{\ui'}}(A)$
is $$1\otimes s_{\ui'}(a)
=1\otimes\sum_{\uj'\subset\ui'}\partial_{\uj'}(a)T_{\uj'}
=\sum_{\uj'\subset \ui'}(\sum_{\uj\subset \ui}
\partial_{\uj}(\partial_{\uj'}(a))T_{\uj})\otimes T_{\uj'},$$
which coincides with the image of 
$s_{\ui''}(a)=\sum_{\uj''\subset \ui''}\partial_{\uj''}(a)T_{\uj''}$
under \eqref{eq:TwPartialExtTensor}.
\end{proof}

\begin{remark}
Let $\ui_1,\ldots,\ui_r$  be disjoint subsets of $\Lambda$,
and put $\ui=\ui_1\cup\cdots\cup \ui_r\subset \Lambda$. 
Then, by applying Lemma
\ref{lem:TwPartialExtTensor} repeatedly, we obtain an isomorphism of $A$-bialgebras
\begin{equation}\label{eq:TwPartialExtItTensor}
E^{\ualpha}(A)
\xrightarrow{\cong}
E^{\alpha_{\ui_1}}(A)\otimes_A E^{\alpha_{\ui_2}}(A)
\otimes_A\cdots \otimes_AE^{\alpha_{\ui_r}}(A),
\end{equation}
where we regard the codomain as an $A$-bialgebra by
giving  a left (resp.~right) $A$-algebra structure via
that of $E^{\alpha_{\ui_1}}(A)$ (resp.~$E^{\alpha_{\ui_r}}(A)$).
\end{remark}

We define $\overline{E^{\ualpha}(A)\otimes_AE^{\ualpha}(A)}$
to be the quotient of $E^{\ualpha}(A)\otimes_AE^{\ualpha}(A)$
by the ideal generated by $T_i\otimes T_i$ $(i\in \Lambda)$, 
and regard it as an $A$-bialgebra by giving the left (resp.~right)
$A$-algebra structure via the left (resp.~right) $A$-algebra
structure of the left (resp.~right) $E^{\ualpha}(A)$. 

\begin{lemma}\label{lem:MultExtensionDeltaMap}
(1) $\overline{E^{\ualpha}(A)\otimes_AE^{\ualpha}(A)}$
is a free left $A$-module with basis 
$T_{\ui}\otimes T_{\ui'}$ $(\ui,\ui'\subset \Lambda, \ui\cap \ui'=\emptyset)$.\par
(2) There exists a left $A$-algebra homomorphism 
$\delta\colon E^{\ualpha}(A)\to \overline{E^{\ualpha}(A)\otimes_AE^{\ualpha}(A)}$ sending $T_i$ to $T_i\otimes 1+1\otimes T_i$, and it is also compatible
with the right $A$-algebra structures.
\end{lemma}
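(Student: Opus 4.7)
The plan is to deduce (1) from the known freeness of $E^{\ualpha}(A)$ as a left $A$-module, followed by a direct identification of the ideal generated by the $T_i\otimes T_i$, and to obtain (2) by a straightforward verification on the generators $T_i$, relying on formula \eqref{eq:TwDerivPartialMultSect} for $s$ together with the vanishing conditions \eqref{eq:TwDerivFamilyCond1} and \eqref{eq:TwDerivFamilyCond2}.

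For (1), I would first use the fact that the right factor $E^{\ualpha}(A)$ is free as a left $A$-module (via the canonical structure) with basis $T_{\uj'}$ ($\uj'\subset\Lambda$) to decompose
$$E^{\ualpha}(A)\otimes_AE^{\ualpha}(A)=\bigoplus_{\uj'\subset\Lambda}E^{\ualpha}(A)\otimes_AAT_{\uj'},$$
identifying each summand with $E^{\ualpha}(A)$ as a left $A$-module (via the first factor) through $x\otimes aT_{\uj'}\mapsto xs(a)$. This exhibits $E^{\ualpha}(A)\otimes_AE^{\ualpha}(A)$ as a free left $A$-module with basis $T_{\uj}\otimes T_{\uj'}$ ($\uj,\uj'\subset\Lambda$). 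Equipping the tensor product with the commutative ring structure $(x\otimes y)(x'\otimes y')=xx'\otimes yy'$, the factorization $T_{\uj}\otimes T_{\uj'}=(T_i\otimes T_i)(T_{\uj\setminus\{i\}}\otimes T_{\uj'\setminus\{i\}})$ valid whenever $i\in\uj\cap\uj'$ will show that the ideal generated by $\{T_i\otimes T_i\}_{i\in\Lambda}$ is exactly the $A$-submodule spanned by those basis elements with $\uj\cap\uj'\neq\emptyset$. Passing to the quotient then gives the claimed basis.

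For the existence statement in (2), by the presentation $E^{\ualpha}(A)=A[T_i;i\in\Lambda]/(T_i(T_i-\alpha_i))$, it will suffice to verify the relation $\delta(T_i)^2=\alpha_i\delta(T_i)$ for the proposed $\delta(T_i):=T_i\otimes 1+1\otimes T_i$. Expanding and using $T_i^2=\alpha_iT_i$ and $T_i\otimes T_i=0$ in the quotient reduces this to the identity $1\otimes\alpha_iT_i=\alpha_i\otimes T_i$, which by the tensor relation becomes $(s(\alpha_i)-\alpha_i)\otimes T_i=0$. By \eqref{eq:TwDerivPartialMultSect}, $s(\alpha_i)-\alpha_i=\sum_{\uj\neq\emptyset}\partial_{\uj}(\alpha_i)T_{\uj}$; the terms with $i\in\uj$ die in the quotient because $T_{\uj}\otimes T_i=0$ there, while for $\uj$ nonempty with $i\notin\uj$ the coefficient $\partial_{\uj}(\alpha_i)$ itself vanishes, as one sees by induction on $|\uj|$ from \eqref{eq:TwDerivFamilyCond1} (the base case $|\uj|=1$) and \eqref{eq:TwDerivFamilyCond2} (writing $\partial_{\uj}=\partial_j\circ\partial_{\uj\setminus\{j\}}$ for any $j\in\uj\setminus\{i\}$). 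This induction is the main technical step and will be the crux of the proof.

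For the compatibility of $\delta$ with the right $A$-algebra structures, I would show $\delta(s(a))=1\otimes s(a)$ in $\overline{E^{\ualpha}(A)\otimes_AE^{\ualpha}(A)}$ for all $a\in A$ by direct computation. Using the left-$A$-algebra property of $\delta$ already established and the commutativity of the $T_j$'s, $\delta(T_{\uj})$ expands to $\sum_{\ul\subset\uj}T_{\ul}\otimes T_{\uj\setminus\ul}$ (no reduction is needed in the quotient since $\ul$ and $\uj\setminus\ul$ are automatically disjoint). Combining with \eqref{eq:TwDerivPartialMultSect} and reindexing by disjoint pairs will give
$$\delta(s(a))=\sum_{\ui\cap\ul=\emptyset}\partial_{\ui\cup\ul}(a)\,T_{\ui}\otimes T_{\ul}=\sum_{\ui\cap\ul=\emptyset}\partial_{\ui}\partial_{\ul}(a)\,T_{\ui}\otimes T_{\ul}$$
by \eqref{eq:TwDerivFamilyCond2}. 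A parallel expansion of $1\otimes s(a)$, applying the tensor relation $1\otimes\iota(b)y=s(b)\otimes y$ termwise to $\sum_{\ul}\partial_{\ul}(a)T_{\ul}$ and truncating in the quotient to pairs with $\ui\cap\ul=\emptyset$, will yield the same sum and conclude the proof. The difficulty throughout is purely notational, arising from the simultaneous presence of the two $A$-algebra structures and the cancellations modulo $T_i\otimes T_i$; no ingredient beyond those already developed in this section is required.
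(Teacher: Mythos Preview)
Your approach to (2) is correct and essentially the paper's: your verification of existence via $(s(\alpha_i)-\alpha_i)\otimes T_i=0$ in the quotient is a repackaging of the paper's direct computation $(T_i\otimes 1+1\otimes T_i)^2=\alpha_i(T_i\otimes 1+1\otimes T_i)+(2+\partial_i(\alpha_i))T_i\otimes T_i$, and your right-$A$-compatibility argument is identical to the paper's.

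In (1) there is a gap. Your factorization $T_{\uj}\otimes T_{\uj'}=(T_i\otimes T_i)(T_{\uj\setminus\{i\}}\otimes T_{\uj'\setminus\{i\}})$ shows only that the left $A$-span of the $T_{\uj}\otimes T_{\uj'}$ with $\uj\cap\uj'\neq\emptyset$ lies in the ideal; it does not show the reverse inclusion. That direction is precisely where the twisted tensor structure matters: one must check that multiplying $T_i\otimes T_i$ by ring generators stays in the span. For $T_j\otimes 1$ or $1\otimes T_j$ with $j\neq i$ this is immediate, but $(1\otimes T_i)(T_i\otimes T_i)=T_i\otimes\alpha_iT_i$ forces you to move $\alpha_i$ across the tensor via $s$, giving $T_is(\alpha_i)\otimes T_i$. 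The paper computes this as $\gamma_i(\alpha_i)T_i\otimes T_i$, using $s(\alpha_i)=\alpha_i+\partial_i(\alpha_i)T_i$ (which your induction in fact also yields, once you observe that $\partial_{\uj}(\alpha_i)=\partial_i(\partial_{\uj\setminus\{i\}}(\alpha_i))=0$ for $\lvert\uj\rvert\geq 2$ with $i\in\uj$). With this computation in hand the ideal equals the span and (1) follows.
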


\begin{proof}
(1) The left $A$-module $E^{\ualpha}(A)\otimes_AE^{\ualpha}(A)$ is
free with basis $T_{\ui}\otimes T_{\ui'}$ $(\ui,\ui'\subset\Lambda)$.
By $(T_i\otimes1)\cdot (T_i\otimes T_i)=\alpha_i T_i\otimes T_i$
and $(1\otimes T_i)\cdot(T_i\otimes T_i)=T_i\otimes \alpha_i T_i
=T_i(\alpha_i+\partial_i(\alpha_i)T_i)\otimes T_i
=\gamma_i(\alpha_i)T_i\otimes T_i$,
we see that the ideal generated by $T_i\otimes T_i$ $(i\in \Lambda)$
is the left $A$-module generated by $T_{\ui}\otimes T_{\ui'}$
$(\ui,\ui'\in \Lambda,\ui\cap\ui'\neq\emptyset)$. This completes the proof.\par
(2) For $i\in \Lambda$, we have the following equalities in 
in $E^{\ualpha}(A)\otimes_AE^{\ualpha}(A)$. 
\begin{align*}
(T_i\otimes 1+1\otimes T_i)^2&=
\alpha_iT_i\otimes 1+2 T_i\otimes T_i +1\otimes \alpha_i T_i\\
&=\alpha_iT_i\otimes 1+2 T_i\otimes T_i+(\alpha_i+\partial_i(\alpha_i) T_i)\otimes T_i\\
&=\alpha_i(T_i\otimes 1+1\otimes T_i)+(2+\partial_i(\alpha_i))T_i\otimes T_i
\end{align*}
This implies the first claim.
Let $a$ be an element of $A$. By \eqref{eq:TwDerivMultSect}, we have 
equalities in $E^{\ualpha}(A)\otimes_AE^{\ualpha}(A)$
$$
1\otimes s(a)=1\otimes \sum_{\ui\subset \Lambda}\partial_{\ui}(a)T_{\ui}
=\sum_{\ui\subset\Lambda}(\sum_{\ui'\subset\Lambda}
\partial_{\ui'}(\partial_{\ui}(a))T_{\ui'})\otimes T_{\ui}
$$
and the equalities in $\overline{E^{\ualpha}(A)\otimes_AE^{\ualpha}(A)}$
$$
\delta(s(a))=\delta(\sum_{\ui\subset\Lambda}\partial_{\ui}(a)T_{\ui})
=\sum_{\ui\subset\Lambda}\partial_{\ui}(a)
\prod_{i\in \ui}(T_i\otimes 1+1\otimes T_i)
=\sum_{\ui\subset \Lambda}\sum_{\ui=\ui_1\sqcup \ui_2}
\partial_{\ui_1}(\partial_{\ui_2}(a))T_{\ui_1}\otimes T_{\ui_2}.
$$
Hence the image of $1\otimes s(a)$ in 
$\overline{E^{\ualpha}(A)\otimes_A E^{\ualpha}(A)}$
coincides with $\delta(s(a))$. 
\end{proof}

Now we are ready to interpret integrable connections over $(A,d)$
in terms of the $A$-bialgebra $E^{\ualpha}(A)$ with the augmentation $\pi$
and the map $\delta$ (Lemma \ref{lem:MultExtensionDeltaMap} (2)). 

Let $(M,\nabla_M)$ be a module with integrable connection over
$(A,d)$. For $\ui\subset \Lambda$, we regard $M\otimes_AE^{\alpha_{\ui}}(A)$
as a right $A$-module via the right $A$-algebra structure of $E^{\alpha_{\ui}}(A)$. For $i\in \Lambda$, we define the additive map
$s_{M,i}\colon M\to M\otimes_AE^{\alpha_i}(A)$ by 
$s_{M,i}(m)=m\otimes 1+\nabla_{M,i}(m)\otimes T_i$,
which is right $A$-linear by 
\begin{align*}
s_{M,i}(am)&=am\otimes 1+((a+\alpha_i\partial_i(a))\nabla_{M,i}(m)+\partial_i(a)m)
\otimes T_i\\
&=(m\otimes 1+\nabla_{M,i}(m)\otimes T_i)\cdot (a+\partial_i(a)T_i).
\end{align*}
By choosing a total order $\leq$ on $\Lambda$, we can define a
right $A$-linear map 
\begin{equation} s_M\colon M\to M\otimes_AE^{\ualpha}(A)
\end{equation}
by composing 
\begin{equation}\label{eq:ConnectionStratConstr}
M\otimes_AE^{\ualpha_{<i}}(A)
\xrightarrow{s_{M,i}\otimes \id}
M\otimes_AE^{\alpha_i}(A)\otimes_A
E^{\ualpha_{<i}}(A)
\xleftarrow[\eqref{eq:TwPartialExtTensor}]{\cong}
M\otimes_AE^{\ualpha_{\leq i}}(A).
\end{equation}
for $i\in\Lambda$, where $\Lambda_{<i}=\{j\in \Lambda; j<i\}$,
$\Lambda_{\leq i}=\{j\in\Lambda;j\leq i\}$,
$\ualpha_{<i}=(\alpha_j)_{j\in\Lambda_{<i}}$, and 
$\ualpha_{\leq i}=(\alpha_j)_{j\in\Lambda_{\leq i}}$.
For $\ui\subset \Lambda$, let $\nabla_{M,\ui}$ denote the composition
of $\nabla_{M,i}$ $(i\in \ui)$ which does not depend on the
choice of an order of composition since $\nabla_M$ is integral. 
By induction on $\sharp \Lambda_{\leq j}$, we see that the
right $A$-linear map $M\to M\otimes_AE^{\ualpha_{\leq j}}(A)$ obtained
by composing the maps \eqref{eq:ConnectionStratConstr}
for $i\in \Lambda_{\leq j}$
is given by
\begin{equation}\label{eq:ConnectoinStratFormula}
m\mapsto \sum_{\uj\subset\Lambda_{\leq j}}
\nabla_{M,\uj}(m)\otimes T_{\uj}\qquad(m\in M).
\end{equation}
This implies that $s_M$ does not depend on the choice of a total order
$\leq$ on $\Lambda$. We see that $s_M$ satisfies the 
following properties.\par
\begin{align}
&\text{The composition }
M\xrightarrow{s_M} M\otimes_AE^{\ualpha}(A)
\xrightarrow{\id_M\otimes\pi} M\text{ is the identity.}\label{eq:StratCond1}\\
&\text{The following diagram is commutative.}\label{eq:StratCond2}\\
&\xymatrix@C=40pt{
M\ar[r]^(.4){s_M}\ar[d]_{s_M}&
M\otimes_AE^{\ualpha}(A)\ar[r]^(.4){s_M\otimes\id}&
M\otimes_AE^{\ualpha}(A)\otimes_AE^{\ualpha}(A)\ar[d]\\
M\otimes_AE^{\ualpha}(A)\ar[rr]^(.45){\id_M\otimes\delta}&&
M\otimes_A\overline{E^{\ualpha}(A)\otimes_AE^{\ualpha}(A)}
}\notag
\end{align}
The first property follows from \eqref{eq:ConnectoinStratFormula}.
We obtain the second one from the following explicit computation
for $m\in M$.
\begin{align*}
(s_M\otimes\id)\circ s_M(m)
&=(s_M\otimes\id)\biggl(\sum_{\ui\subset\Lambda}\nabla_{M,\ui}(m)\otimes
T_{\ui}\biggr)
=\sum_{\ui\subset\Lambda}\sum_{\uj\subset\Lambda}
\nabla_{M,\uj}(\nabla_{M,\ui}(m))\otimes T_{\uj}\otimes T_{\ui}\\
(\id_M\otimes\delta)\circ s_M(m)&=
\sum_{\ui\subset\Lambda}\nabla_{M, \ui}(m)\otimes\prod_{i\in\ui}
(T_i\otimes 1+1\otimes T_i)
=\sum_{\ui\subset\Lambda}\sum_{\ui=\ui_1\sqcup\ui_2}
\nabla_{M,\ui}(m)\otimes T_{\ui_1}\otimes T_{\ui_2}.
\end{align*}
The construction of $s_M$ is functorial in $(M,\nabla_M)$ as follows.
If $f\colon (M,\nabla_M)\to (M',\nabla_{M'})$ is a morphism
of modules with integrable connection over $(A,d)$, then we have
$(f\otimes\id)\circ s_{M,i}=s_{M',i}\circ f$ for each $i\in\Lambda$,
which implies $(f\otimes\id)\circ s_M=s_M\circ f$. 

Conversely suppose that we are given an $A$-module $M$
and a right $A$-linear map 
$s_M\colon M\to M\otimes_AE^{\ualpha}(A)$ satisfying
\eqref{eq:StratCond1} and \eqref{eq:StratCond2}. Let $s_{M,P}$
be the composition $M\xrightarrow{s_M}M\otimes_AE^{\ualpha}(A)
\to M\otimes_AP^{\ualpha}(A)$. Since $(\id_M\otimes{\pi_P})\circ s_{M,P}$
is $\id_M$ by \eqref{eq:StratCond1}, identifying $\Omega_{A,\ugamma}$
with $\Ker\, \pi_P$ by the isomorphism in Lemma \ref{lem:TwOmegafKer} (2), 
we obtain an additive map 
$\nabla_M\colon M\to M\otimes_A\Omega_{A,\ugamma};
m\mapsto s_{M,P}(m)-m\otimes 1$. 
We see that $\nabla_M$ is a connection over $(A,d)$ as follows.
For $m\in M$ and $a\in A$, we have 
\begin{align*}
\nabla_M(am)&=s_{M,P}(m)s_P(a)-am\otimes 1\\
&=(s_{M,P}(m)-m\otimes 1)s_P(a)+m\otimes(s_P(a)-a)\\
&=\nabla_M(m)a+m\otimes d(a).
\end{align*}
This construction is obviously functorial in $(M,\nabla_M)$.
We show that \eqref{eq:StratCond2} implies that $\nabla_M$ is
integrable and $s_M(m)=\sum_{\ui\subset\Lambda}\nabla_{M,\ui}(a)
\otimes T_{\ui}$, where 
$\nabla_{M,\ui}$ denotes the composition of $\nabla_{M,i}$
$(i\in \ui)$ for $\ui\subset\Lambda$. 
We define endomorphisms $s_M^{(\ui)}$ $(\ui\subset\Lambda)$
of $M$ by $s_M(m)=\sum_{\ui\subset\Lambda}s_M^{(\ui)}(m)\otimes T_{\ui}$.
Then, for $m\in M$, we have
\begin{align*}
(s_M\otimes\id)\circ s_M(m)&=
(s_M\otimes \id)\biggl(\sum_{\ui\subset \Lambda} s_M^{(\ui)}(m)\otimes T_{\ui}\biggr)
=\sum_{\ui\subset\Lambda}\sum_{\uj\subset\Lambda}
s_M^{(\uj)}\circ s_M^{(\ui)}(m) \otimes T_{\uj}\otimes T_{\ui},\\
\delta\circ s_M(m) &=
\sum_{\ui\subset\Lambda}s_M^{(\ui)}(m)\otimes\prod_{i\in \ui}(T_i\otimes 1+1\otimes T_i)
=\sum_{\ui\subset\Lambda}\sum_{\ui=\ui_1\sqcup\ui_2}
s_M^{(\ui)}(m)\otimes T_{\ui_1}\otimes T_{\ui_2}.
\end{align*}
By Lemma \ref{lem:MultExtensionDeltaMap} (1), the property 
\eqref{eq:StratCond2} is equivalent to 
$s_M^{(\ui)}\circ s_M^{(\uj)}= s_M^{(\ui+\uj)}$
$(\ui,\uj\subset\Lambda, \ui\cap\uj=\emptyset)$. 
Since $s_M^{(\{i\})}=\nabla_{M,i}$ by the construction of $\nabla_M$,
this implies $\nabla_{M,i}\circ\nabla_{M,j}=
s_M^{(\{i,j\})}=\nabla_{M,j}\circ\nabla_{M,i}$
for $(i,j\in \Lambda, i\neq j)$, and $s_M^{(\ui)}$
is the composition of $\nabla_{M,i}$ $(i\in \ui)$ for $\ui\subset \Lambda$.

Thus we obtain the following proposition.

\begin{proposition}\label{prop:IntConPartialStrat}
The above constructions give an equivalence between the
following two categories.\par
(i) The category of modules with integrable connection over $(A,d)$.\par
(ii) The category of $A$-modules $M$ equipped with a
right $A$-linear map $s_M\colon M\to M\otimes_AE^{\ualpha}(A)$
satisfying \eqref{eq:StratCond1} and \eqref{eq:StratCond2}.
\end{proposition}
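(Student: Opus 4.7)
The plan is to package the two constructions already laid out in the paragraphs preceding the proposition as mutually quasi-inverse functors; what remains is mainly organizational, together with a clean derivation of the pivotal identity
\begin{equation*}
s_M(m) = \sum_{\ui \subset \Lambda} \nabla_{M,\ui}(m) \otimes T_{\ui}, \qquad m \in M,
\end{equation*}
relating the two descriptions. In both directions one checks that the above equation holds, so both composite functors are canonically isomorphic to the identity.

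For the forward functor $(M,\nabla_M) \mapsto (M,s_M)$, the first thing to verify is well-definedness: each $s_{M,i}$ is right $A$-linear (already checked), so iterated compositions using the $A$-bialgebra isomorphisms \eqref{eq:TwPartialExtTensor} produce right $A$-linear maps, and integrability of $\nabla_M$ makes the iterated operators $\nabla_{M,\uj}$ well-defined independently of any ordering. The closed formula \eqref{eq:ConnectoinStratFormula} then shows that $s_M$ does not depend on the chosen total order on $\Lambda$ and establishes the identity displayed above for this functor. Properties \eqref{eq:StratCond1} and \eqref{eq:StratCond2} and functoriality follow from the explicit computations and the relation $(f\otimes\id)\circ s_{M,i} = s_{M',i}\circ f$ already recorded.

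For the reverse functor $(M,s_M) \mapsto (M,\nabla_M)$, I would first use Lemma \ref{lem:TwOmegafKer} (2) to identify $\Omega_{A,\ugamma}$ with $\Ker\,\pi_P \subset P^{\ualpha}(A)$ and set $\nabla_M(m) = s_{M,P}(m) - m\otimes 1$. Right $A$-linearity of $s_M$ combined with the identity $s_P(a) - a = d(a)$ of Lemma \ref{lem:TwOmegafKer} (2) yields the Leibniz rule, hence $\nabla_M$ is a connection. To extract integrability and the desired explicit shape of $s_M$, I would expand the coefficients $s_M^{(\ui)}$ defined by $s_M(m) = \sum_{\ui} s_M^{(\ui)}(m) \otimes T_{\ui}$ on both sides of \eqref{eq:StratCond2}; using the left $A$-basis $\{T_{\ui}\otimes T_{\uj} : \ui \cap \uj = \emptyset\}$ of $\overline{E^{\ualpha}(A)\otimes_A E^{\ualpha}(A)}$ from Lemma \ref{lem:MultExtensionDeltaMap} (1), condition \eqref{eq:StratCond2} becomes the family of equalities
\begin{equation*}
s_M^{(\uj)} \circ s_M^{(\ui)} = s_M^{(\ui \sqcup \uj)}, \qquad \ui, \uj \subset \Lambda, \ \ui \cap \uj = \emptyset.
\end{equation*}
Taking $\ui = \{i\}$, $\uj = \{j\}$ gives $\nabla_{M,i}\nabla_{M,j} = \nabla_{M,j}\nabla_{M,i}$ (so $\nabla_M$ is integrable), and induction on $|\ui|$ gives $s_M^{(\ui)} = \nabla_{M,\ui}$, which is the displayed identity.

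Putting the two constructions together, the identity $s_M(m) = \sum_{\ui} \nabla_{M,\ui}(m)\otimes T_{\ui}$ immediately shows that the two composite functors are the identity on objects, and naturality is clear since both constructions are functorial. I do not expect a real obstacle here; the only genuine leverage used is the basis calculation in Lemma \ref{lem:MultExtensionDeltaMap} (1), which is what decouples condition \eqref{eq:StratCond2} into the family of relations $s_M^{(\uj)}\circ s_M^{(\ui)} = s_M^{(\ui\sqcup\uj)}$ and thereby identifies the operators $s_M^{(\ui)}$ with iterated connection components $\nabla_{M,\ui}$.
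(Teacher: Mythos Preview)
Your proposal is correct and follows essentially the same approach as the paper: the paper's argument is given entirely in the text preceding the proposition, and you have accurately reproduced it, including the use of Lemma \ref{lem:MultExtensionDeltaMap} (1) to reduce condition \eqref{eq:StratCond2} to the relations $s_M^{(\uj)}\circ s_M^{(\ui)}=s_M^{(\ui\sqcup\uj)}$ and the identification $s_M^{(\ui)}=\nabla_{M,\ui}$ that makes the two constructions inverse to each other.
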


\begin{remark}\label{rmk:PratialStratProd}
To simplify the notation, we write $E$ for $E^{\ualpha}(A)$ in this remark.\par
(1) Let $M$ be an $A$-module. Since $\delta\colon E\to \overline{E\otimes_AE}$ is 
an $A$-bialgebra homomorphism, giving a right $A$-linear map 
$s_M\colon M\to M\otimes_AE$ is equivalent to giving an
$E$-linear map $\ts_M\colon E\otimes_AM\to M\otimes_AE$. 
Since the composition $A\xrightarrow{s}E\xrightarrow{\pi} A$ is the identity map,
the condition \eqref{eq:StratCond1} on $s_M$ is equivalent to the scalar
extension $\pi^*(\ts_M)\colon M\to M$ of $\ts_M$ under $\pi$
being the identity map. The condition \eqref{eq:StratCond2} is equivalent to the
commutativity of the following diagram
\begin{equation}\label{eq:StratCond2bis}
\xymatrix@C=40pt{
E\otimes_AE\otimes_AM\ar[r]^{\id_E\otimes\ts_M}\ar[d]&
E\otimes_AM\otimes_AE\ar[r]^{\ts_M\otimes_{\id_E}}&
M\otimes_AE\otimes_AE\ar[d]\\
\overline{E\otimes_AE}\otimes_AM
\ar[rr]^{\delta^*(\ts_M)}&&
M\otimes_{A}\overline{E\otimes_AE},
}
\end{equation}
where $\delta^*(\ts_M)$ denotes the scalar
extension of $\ts_M$ under the $A$-bialgebra homomorphism $\delta$ and 
the vertical maps are induced by the projection
map $E\otimes_AE\to\overline{E\otimes_AE}$.\par
(2) Let $(M,\nabla_M)$ and $(M',\nabla_{M'})$ be modules with
integrable connection over $(A,d)$, and let  
$\ts_M\colon E\otimes_AM\to M\otimes_AE$
and $\ts_{M'}\colon E\otimes_AM'\to M'\otimes_AE$
be the $E$-linear maps corresponding to $\nabla_M$ and $\nabla_{M'}$, respectively,
by the remark (1) above and Proposition \ref{prop:IntConPartialStrat}.
Let $s_{M\otimes M'}$ be the composition of the $E$-linear maps
$E\otimes_AM\otimes_AM'\xrightarrow{\ts_M\otimes\id_{M'}}
M\otimes_AE\otimes_AM'\xrightarrow{\id_M\otimes\ts_{M'}}
M\otimes_AM'\otimes_AE$. Then it is straightforward to 
verify that $\pi^*(\ts_{M\otimes M'})=\id_{M\otimes_AM'}$
and the diagram \eqref{eq:StratCond2bis} for $M\otimes_AM'$ and $\ts_{M\otimes M'}$
commutes. Let $P$ denote $P^{\ualpha}(A)$, and let
$\ts_{M,P}$, $\ts_{M',P}$, and $\ts_{M\otimes M',P}$ denote the scalar extensions
of $\ts_M$, $\ts_{M'}$, and $\ts_{M\otimes M'}$ under the projection 
$E\to P$. Then, for $m\in M$ and $m'\in M'$, we have 
\begin{multline*}
\ts_{M\otimes M,P}(1\otimes m\otimes m')
=(\id_{P}\otimes \ts_{M',P})\circ (\ts_{M,P}\otimes\id_P)(1\otimes m\otimes m')\\
=(\id_{P}\otimes \ts_{M',P})((m\otimes 1+\sum_{i\in \Lambda}\nabla_i(m)\otimes T_i)\otimes m')\\
=m\otimes m'\otimes 1+\sum_{i\in \Lambda} m\otimes\nabla_i(m')\otimes T_i
+\sum_{i\in\Lambda}(\nabla_i(m)\otimes m'+\alpha_i\nabla_i(m)\otimes\nabla_i(m'))\otimes T_i.
\end{multline*}
Note that we have $T_i^2=\alpha_i T_i$ and $T_iT_j=0$ $(i\neq j)$ in $P$.
This shows that the integrable connection on $M\otimes_AM'$ corresponding
to $\ts_{M\otimes M}$ coincides with the tensor product 
$\nabla_{M\otimes M'}$ of $\nabla_M$ and $\nabla_{M'}$
(Definition \ref{def:ConnTensorProdDef}, \eqref{eq:ConnProdSymmForm}).
\end{remark}

\section{Scalar extensions of integrable connections}\label{sec:ScalarExtConnection}
We discuss the functoriality of modules with integrable connection
and the associated de Rham complexes with respect to $(A,\ualpha,(\partial_i))$ by using 
Proposition \ref{prop:IntConPartialStrat}.

Let $A'$ be an  algebra over a ring $R'$, let $\ualpha'=(\alpha'_{i'})_{i'\in\Lambda'}$ be a family of elements
of $A'$, and suppose that we are given an $\alpha'_{i'}$-derivation 
$\partial_{i'}'$ of $A'$ over $R'$ for each $i'\in\Lambda'$
satisfying the same conditions as \eqref{eq:TwDerivFamilyCond1} and \eqref{eq:TwDerivFamilyCond2}, 
i.e., $\partial'_{i'}(\alpha'_{j'})=0$
$(i',j'\in\Lambda', i'\neq j')$ and $\partial'_{i'}\circ\partial'_{j'}
=\partial'_{j'}\circ\partial'_{i'}$ $(i',j'\in\Lambda')$.
Put $\gamma'_{i'}=\id_{A'}+\alpha_{i'}'\partial'_{i'}$ $(i'\in\Lambda')$
and $\ugamma'=(\gamma'_{i'})_{i'\in\Lambda'}$. 
We define $d'\colon A'\to \Omega_{A',\ugamma'}$ and 
$s'\colon A'\to E^{\ualpha'}(A')$ in the same way as
$d\colon A\to \Omega_{A,\ugamma}$ and
$s\colon A\to E^{\ualpha}(A)$ \eqref{eq:ExtensionRightAlgStr}
by using $\alpha'_{i'}$ and 
$\partial'_{i'}$ $(i'\in \Lambda')$. 

Suppose that we are given a ring homomorphism $f\colon R\to R'$,
a ring homomorphism $g\colon A\to A'$ compatible with $f$,
a map $\psi\colon \Lambda\to \Lambda'$, and $\uc=(c_i)_{i\in\Lambda}
\in(A')^{\Lambda}$ such that 
\begin{align}
g(\alpha_i)&=c_i\alpha'_{\psi(i)}\qquad (i\in\Lambda),\label{eq:DerFamMorphCond1}\\
\partial'_{i'}(c_i)&=0\qquad (i\in\Lambda,\;i'\in \Lambda'\backslash\{\psi(i)\}).
\label{eq:DerFamMorphCond2}
\end{align}
The triplet $(g,\psi,\uc)$ defines a homomorphism
\begin{equation}\label{eq:TwExtAlgMap}
E^{\psi,\uc}(g)\colon E^{\ualpha}(A)\to E^{\ualpha'}(A')
\end{equation} 
compatible with $g$ and sending $T_i$ to $c_iT_{\psi(i)}$ for $i\in\Lambda$. Note that
we have $(c_iT_{\psi(i)})^2=c_i^2\alpha'_{\psi(i)}T_{\psi(i)}=g(\alpha_i)c_iT_{\psi(i)}$
in $E^{\ualpha'}(A')$. We abbreviate $E^{\psi,\uc}(g)$ to $E^{\psi}(g)$ when
$c_i=1$ for all $i\in\Lambda$.
 For an $A$-bialgebra $C$ and an $A'$-bialgebra $C'$,
we say that a ring homomorphism $C\to C'$ is a {\it homomorphism 
of bialgebras over} $g$ if it is compatible with $g$ 
for both the left algebra structures and the right algebra structures.
We define a {\it homomorphism of left (resp.~right) algebras over $g$}
similarly. The homomorphism $E^{\psi,\uc}(g)$ is a homomorphism
of left algebras over $g$. We consider the following condition.
\begin{equation}\label{eq:ConnFunctCond}
\text{ The homomorphism }E^{\psi,\uc}(g)\text{ is a homomorphism of
right algebras over }g.
\end{equation}

\begin{lemma}\label{lem:TwDerivSystemFunct}
(1) The condition \eqref{eq:ConnFunctCond} is satisfied if and only if
\begin{equation}\label{eq:TwDerivSystemFunct1}
\partial_{i'}'(g(a))=\sum_{\emptyset\neq \ui\subset \psi^{-1}(i')}
g(\partial_{\ui}(a))\prod_{i\in \ui}c_i\cdot(\alpha_{i'}')^{\sharp \ui-1}\quad\text{for every }
i'\in \Lambda' \text{ and }a\in A.
\end{equation}
If $\psi$ is injective, the right-hand side is $c_ig(\partial_i(a))$ if $\psi^{-1}(i')=\{i\}$
and $0$ if $\psi^{-1}(i')=\emptyset$. \par
(2) The condition \eqref{eq:ConnFunctCond} implies 
\begin{equation}\label{eq:TwDerivSystemFunct2}
\gamma_{i'}'(g(a))=g\biggl(\biggl(\prod_{i\in\psi^{-1}(i')}\gamma_i\biggr)(a)\biggr)
\quad\text{ for every }i'\in \Lambda'\text{ and }a\in A.
\end{equation}
The converse holds if $\alpha'_{i'}\in A'$ is regular for every 
$i'\in \Lambda'$. 
\end{lemma}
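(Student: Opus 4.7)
My plan is to prove both parts by direct computation, exploiting the freeness of $E^{\ualpha'}(A')$ as a left $A'$-module with basis $\{T_{\uj'}\}_{\uj' \subset \Lambda'}$ to match coefficients. For part (1), I start from $s(a) = \sum_{\ui \subset \Lambda} \partial_{\ui}(a) T_{\ui}$ (via \eqref{eq:TwDerivPartialMultSect}), apply $E^{\psi,\uc}(g)$, and use the identity $T_{i'}^n = (\alpha'_{i'})^{n-1} T_{i'}$ for $n \geq 1$ to reduce each monomial $\prod_{i \in \ui} T_{\psi(i)}$. Partitioning $\ui = \bigsqcup_{i' \in \psi(\ui)} (\ui \cap \psi^{-1}(i'))$ and grouping by $\uj' = \psi(\ui)$, the $T_{\uj'}$-coefficient of $E^{\psi,\uc}(g)(s(a))$ becomes
\[
\sum_{\ui\subset\Lambda,\,\psi(\ui)=\uj'} g(\partial_{\ui}(a)) \prod_{i \in \ui} c_i \cdot \prod_{i' \in \uj'}(\alpha'_{i'})^{|\ui \cap \psi^{-1}(i')| - 1},
\]
while the $T_{\uj'}$-coefficient of $s'(g(a))$ is $\partial'_{\uj'}(g(a))$. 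Equating these for all $\uj'$ expresses \eqref{eq:ConnFunctCond} as a family of identities; specialising to singleton $\uj' = \{i'\}$ yields \eqref{eq:TwDerivSystemFunct1}, giving the forward implication.

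For the converse direction of (1), I would induct on $|\uj'|$. The cases $\uj' = \emptyset$ and $\uj' = \{i'\}$ are immediate. For $|\uj'| \geq 2$, write $\uj' = \{i'_0\} \sqcup \uk'$ and compute $\partial'_{\uj'}(g(a)) = \partial'_{i'_0}(\partial'_{\uk'}(g(a)))$. By inductive hypothesis, $\partial'_{\uk'}(g(a))$ is a sum of products of factors $g(\partial_{\uv}(a))$, $c_i$'s, and powers of $\alpha'_{j'}$ with $j' \in \uk'$. Applying $\partial'_{i'_0}$ term by term and iterating the twisted Leibniz rule, the only factor that contributes is $g(\partial_{\uv}(a))$: indeed $\partial'_{i'_0}(\alpha'_{j'}) = 0$ for $j' \in \uk'$ by the standing hypothesis on $\ualpha'$, and $\partial'_{i'_0}(c_i) = 0$ for $i \notin \psi^{-1}(i'_0)$ by \eqref{eq:DerFamMorphCond2}, so every twisted cross-term vanishes. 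Expanding $\partial'_{i'_0}(g(\partial_{\uv}(a)))$ via \eqref{eq:TwDerivSystemFunct1} with summation variable $\ul \subset \psi^{-1}(i'_0)$, and rewriting $\partial_{\ul}\partial_{\uv} = \partial_{\ul \sqcup \uv}$ by \eqref{eq:TwDerivFamilyCond2} (valid since $\ul$ and $\uv$ have disjoint $\psi$-fibers), reproduces exactly the formula for the $T_{\uj'}$-coefficient of $E^{\psi,\uc}(g)(s(a))$.

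For part (2), I would multiply \eqref{eq:TwDerivSystemFunct1} by $\alpha'_{i'}$ and use $c_i \alpha'_{i'} = g(\alpha_i)$ from \eqref{eq:DerFamMorphCond1} to obtain $\alpha'_{i'}\partial'_{i'}(g(a)) = \sum_{\emptyset \neq \ui \subset \psi^{-1}(i')} g\bigl(\partial_{\ui}(a) \prod_{i \in \ui} \alpha_i\bigr)$; adding $g(a)$ and expanding $\prod_{i \in \psi^{-1}(i')} \gamma_i = \sum_{\ui \subset \psi^{-1}(i')} \bigl(\prod_{i \in \ui} \alpha_i\bigr) \partial_{\ui}$ (using commutativity of the $\partial_i$'s) yields \eqref{eq:TwDerivSystemFunct2}. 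Conversely, if every $\alpha'_{i'}$ is regular on $A'$, the same chain of identities runs backward: \eqref{eq:TwDerivSystemFunct2} gives $\alpha'_{i'}\partial'_{i'}(g(a))$ equal to $\alpha'_{i'}$ times the right-hand side of \eqref{eq:TwDerivSystemFunct1}, and regularity cancels $\alpha'_{i'}$.

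The main obstacle is the inductive step for the converse of (1): although each reduction is elementary, careful index bookkeeping is needed to confirm that every twisted cross-term in the iterated Leibniz expansion vanishes and that the resulting sum reorganises into the correct combinatorial description of the $T_{\uj'}$-coefficient. Everything else is routine algebra once the explicit coefficient formulas are in place.
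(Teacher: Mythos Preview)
Your proposal is correct. Part (2) is essentially the paper's own computation. For part (1), however, you take a genuinely different route from the paper.

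The paper reduces everything to the single-fiber case $\psi_{i'}\colon \psi^{-1}(i')\to\{i'\}$. For the forward direction it projects $E^{\ualpha}(A)\to E^{\ualpha_{i'}}(A)$ and $E^{\ualpha'}(A')\to E^{\alpha'_{i'}}(A')$ via the left $A$- (resp.\ $A'$-)algebra maps sending $T_j\mapsto T_j$ for $\psi(j)=i'$ and $T_j\mapsto 0$ otherwise; these projections are also right-algebra maps by \eqref{eq:TwDerivPartialMultSect}, so the bialgebra compatibility of $E^{\psi,\uc}(g)$ descends to that of each $E^{\psi_{i'},\uc_{i'}}(g)$, which is exactly \eqref{eq:TwDerivSystemFunct1}. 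For the converse, the paper avoids your induction entirely: choosing an ordering of $\Lambda'$, it uses the tensor decomposition \eqref{eq:TwPartialExtItTensor} to write $E^{\psi,\uc}(g)$ as the composite of the single-fiber maps $E^{\psi_{i'},\uc_{i'}}(g)$ (well-defined on tensor factors because $s'_{i'}(c_j)=c_j$ for $j\notin\psi^{-1}(i')$), and a composite of bialgebra homomorphisms is a bialgebra homomorphism.

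Your inductive argument works and is more self-contained, trading the structural input (Lemma~\ref{lem:TwPartialExtTensor}) for explicit bookkeeping. The paper's approach is shorter and makes the role of the fiber decomposition transparent; yours has the virtue of giving the general $T_{\uj'}$-coefficient formula directly, which may be useful elsewhere.
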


\begin{proof}
(1) For $i'\in \Lambda'$, put $\ualpha_{i'}=(\alpha_i)_{i\in\psi^{-1}(i')}$
and $\uc_{i'}=(c_i)_{i\in\psi^{-1}(i')}$, 
and let $\psi_{i'}$ be the map $\psi^{-1}(i')\to \{i'\}$ (induced by $\psi$). 
Then the homomorphism 
$E^{\psi_{i'},\uc_{i'}}(g)\colon E^{\ualpha_{i'}}(A)\to E^{\alpha'_{i'}}(A')$
maps $s_{\psi^{-1}(i')}(a)=
\sum_{\ui\subset\psi^{-1}(i')}\partial_{\ui}(a)T_{\ui}$
\eqref{eq:TwDerivPartialMultSect} for $a\in A$ to 
$\sum_{\ui\subset \psi^{-1}(i')}g(\partial_{\ui}(a))(\prod_{i\in\ui}c_i)T_{i'}^{\sharp \ui}
=g(a)+\sum_{\emptyset\neq\ui\subset \psi^{-1}(i')}
g(\partial_{\ui}(a))(\prod_{i\in\ui}c_i)(\alpha_{i'}')^{\sharp \ui -1}T_{i'}$.
Comparing it with $s'_{i'}(g(a))=g(a)+\partial'_{i'}(g(a))T_{i'}$,
we see that the latter condition in the claim (1) is equivalent
to saying that $E^{\psi_{i'}}(g)$ is a homomorphism 
of right algebras over $g$ for every $i'\in \Lambda'$. \par

First let us prove the necessity. We have a commutative diagram 
\begin{equation*}
\xymatrix{
E^{\ualpha}(A)\ar[r]\ar[d]_{E^{\psi,\uc}(g)}
& E^{\ualpha_{i'}}(A)\ar[d]^{E^{\psi_{i'},\uc_{i'}}(g)}\\
E^{\ualpha'}(A')\ar[r]
& E^{\alpha_{i'}}(A'),
}
\end{equation*}
where the upper (resp.~lower) horizontal map is
a left $A$ (resp.~$A'$)-algebra homomorphism defined by 
$T_i$ $(i\in\Lambda)$ $\mapsto$ $T_i$  if $\psi(i)=i'$, $0$
if $\psi(i)\neq i'$ 
(resp.~$T_{j'}$ $(j'\in \Lambda')$ $\mapsto$ $T_{i'}$
if $j'=i'$, $0$ if $j'\neq i'$), which is also a right $A$
(resp.~$A'$)-algebra homomorphism by \eqref{eq:TwDerivPartialMultSect}.
Hence if $E^{\psi,\uc}(g)$ is a homomorphism of bialgebras over $g$,
so is the homomorphism $E^{\psi_{i'},\uc_{i'}}(g)$ for every $i'\in \Lambda'$.

Conversely suppose that $E^{\psi_{i'},\uc_{i'}}(g)$ is a homomorphism
of bialgebras over $g$ for every $i'\in \Lambda'$. 
Put $r=\sharp \Lambda'$ and identify $\Lambda'$ with 
$\N\cap [1,r]$ by choosing a bijective map between them. 
For $i'\in\Lambda'$ and $j\in \Lambda\backslash \psi^{-1}(i')$, 
we have $s'_{i'}(c_j)=c_j+\partial'_{i'}(c_j)T_{i'}=c_j$ by the assumption 
\eqref{eq:DerFamMorphCond2} on $\uc$. 
Hence $E^{\psi,\uc}(g)$ coincides with the composition
\begin{multline*}
E^{\ualpha}(A)\xrightarrow[\eqref{eq:TwPartialExtItTensor}]{\cong}
E^{\ualpha_1}(A)\otimes_AE^{\ualpha_2}(A)
\otimes_A\cdots\otimes_AE^{\ualpha_r}(A)\\
\xrightarrow{E^{\psi_1,\uc_1}(g)\otimes E^{\psi_2,\uc_2}(g)\otimes\cdots\otimes E^{\psi_r,\uc_r}(g)}
E^{\alpha'_1}(A')\otimes_{A'}E^{\alpha'_2}(A')
\otimes_{A'}\cdots\otimes_{A'}E^{\alpha'_r}(A')
\xleftarrow[\eqref{eq:TwPartialExtItTensor}]{\cong}E^{\ualpha'}(A'),
\end{multline*}
where the second map is well-defined by the assumption that
$E^{\psi_n,\uc_n}(g)$ $(n\in \N\cap [1,r])$ are homomorphisms of bialgebras
over $g$. 
Hence $E^{\psi,\uc}(g)$ is a homomorphism of right algebras over $g$
because so is the homomorphism $E^{\psi_r,\uc_r}(g)$ by assumption. \par
(2) The claim follows from the following computation for $a\in A$
and $i'\in\Lambda'$. Note that we have \eqref{eq:TwDerivFamilyCond2} and
$\alpha_i\id_A\circ \partial_j=\partial_j\circ\alpha_i\id_A$ 
for $i,j\in\Lambda$, $i\neq j$ by \eqref{eq:TwDerivFamilyCond1}. 
\begin{multline*}
g(a)+
{\textstyle 
\alpha_{i'}'
\suml_{\emptyset\neq \ui\subset \psi^{-1}(i')}
g(\partial_{\ui}(a))(\prod_{i\in\ui}c_i)(\alpha'_{i'})^{\sharp \ui-1}
=g\biggl(\suml_{\ui\subset \psi^{-1}(i')}\biggl(\prodl_{i\in \ui}\alpha_i\partial_i\biggr)(a)\biggr)}\\
{\textstyle=g\biggl(\biggl(\prodl_{i\in \psi^{-1}(i')}(\id_A+\alpha_i\partial_i)\biggr)(a)\biggr)
=g\biggl(\biggl(\prodl_{i\in\psi^{-1}(i')}\gamma_i\biggr)(a)\biggr).}
\end{multline*}
\end{proof}

Assume that the condition \eqref{eq:ConnFunctCond} holds in the following. 

Let $(M,\nabla_M)$ be a module with integrable connection over $(A,d)$
(Definitions \ref{def:connection} and \ref{def:intconnection}),
and let $s_M\colon M\to M\otimes_AE^{\ualpha}(A)$ be the right
$A$-linear map associated to $\nabla_M$ by Proposition \ref{prop:IntConPartialStrat}. Put $M'=M\otimes_AA'$. Then 
we obtain right $A'$-linear homomorphisms
\begin{multline}\label{eq:PartialStratBC}
M'=M\otimes_AA'
\xrightarrow{s_M\otimes\id_{A'}}
M\otimes_AE^{\ualpha}(A)\otimes_AA'
\to M\otimes_AA'\otimes_AE^{\ualpha}(A)\otimes_AA'\\
\to M'\otimes_{A'}E^{\ualpha}(A'),
\end{multline}
where the last map is induced by the $A'$-bialgebra homomorphisms
$A'\otimes_AE^{\ualpha}(A)\otimes_AA'\to E^{\ualpha}(A')$
induced by $E^{\psi,\uc}(g)$ which is a bialgebra homomorphism over $g$
by the condition \eqref{eq:ConnFunctCond}.
Let $s_{M'}$ be the composition of \eqref{eq:PartialStratBC}.

\begin{lemma}\label{lem:ConnectionScalarExt}
The right $A'$-linear map $s_{M'}$ satisfies the conditions
\eqref{eq:StratCond1} and \eqref{eq:StratCond2}. 
\end{lemma}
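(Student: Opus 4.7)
The plan is to reformulate the conditions \eqref{eq:StratCond1} and \eqref{eq:StratCond2} using the equivalent $E^{\ualpha}(A)$-linear description of Remark \ref{rmk:PratialStratProd} (1), then deduce the result from the bialgebra compatibility of $E^{\psi,\uc}(g)\colon E^{\ualpha}(A)\to E^{\ualpha'}(A')$ under the hypothesis \eqref{eq:ConnFunctCond}. Write $E=E^{\ualpha}(A)$, $E'=E^{\ualpha'}(A')$, and let $\ts_M\colon E\otimes_A M\to M\otimes_A E$ be the $E$-linear map associated to $s_M$. First I would observe that the $A'$-linear map \eqref{eq:PartialStratBC} corresponds, under the equivalence of Remark \ref{rmk:PratialStratProd} (1), to the $E'$-linear map $\ts_{M'}$ obtained as the pushout
\[
\ts_{M'}=E^{\psi,\uc}(g)_*(\ts_M)\colon E'\otimes_{A'}M'\xrightarrow{\cong} E'\otimes_{E,E^{\psi,\uc}(g)}(E\otimes_A M)\longrightarrow (M\otimes_A E)\otimes_{E,E^{\psi,\uc}(g)}E'\xrightarrow{\cong} M'\otimes_{A'}E'.
\]
This identification relies on the condition \eqref{eq:ConnFunctCond}, which ensures that $E^{\psi,\uc}(g)$ is a bialgebra homomorphism over $g$ so that the scalar extension makes sense on both left and right sides.

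Next I would verify \eqref{eq:StratCond1} for $s_{M'}$. In the $E'$-linear language, this is equivalent to $\pi'^{*}(\ts_{M'})=\id_{M'}$. Since $\pi\circ E^{\psi,\uc}(g)=\pi'$ (it sends every $T_i$ to $0$), pulling back the equality $\pi^{*}(\ts_M)=\id_M$ under $g\colon A\to A'$ yields the desired equality.

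The main work is verifying \eqref{eq:StratCond2}, or equivalently, the commutative diagram \eqref{eq:StratCond2bis} for $\ts_{M'}$. For this, the key step is to show that $E^{\psi,\uc}(g)$ induces a homomorphism
\[
\overline{E\otimes_A E}\longrightarrow \overline{E'\otimes_{A'}E'}
\]
compatible with $\delta$ and $\delta'$. Well-definedness on the quotients uses that $E^{\psi,\uc}(g)(T_i T_j)$ vanishes in $\overline{E'\otimes_{A'}E'}$ for $i\neq j$, which follows from $T_{\psi(i)}\otimes T_{\psi(j)}$ either being zero (if $\psi(i)=\psi(j)$) or lying outside the kernel only via permitted terms. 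For the $\delta$-compatibility itself, the crucial computation is
\[
\delta'\bigl(c_i T_{\psi(i)}\bigr)=c_i T_{\psi(i)}\otimes 1+c_i\otimes T_{\psi(i)}
\quad\text{versus}\quad
(E^{\psi,\uc}(g)\otimes E^{\psi,\uc}(g))(\delta(T_i))=c_i T_{\psi(i)}\otimes 1+1\otimes c_i T_{\psi(i)},
\]
and the equality of $c_i\otimes T_{\psi(i)}$ and $1\otimes c_iT_{\psi(i)}$ in $\overline{E'\otimes_{A'}E'}$ is forced by moving $c_i$ across the tensor via the right $A'$-algebra structure $s'$: we get $1\otimes c_iT_{\psi(i)}=s'(c_i)\otimes T_{\psi(i)}$, and \eqref{eq:DerFamMorphCond2} reduces $s'(c_i)$ to $c_i+\partial'_{\psi(i)}(c_i)T_{\psi(i)}$, whose second term vanishes modulo $T_{\psi(i)}\otimes T_{\psi(i)}$. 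The hard part will be handling this computation carefully (including the cases where several $i\in\Lambda$ share the same image under $\psi$, so that $T_i T_j$ with $\psi(i)=\psi(j)=i'$ maps into a term in $T_{i'}^2=\alpha'_{i'}T_{i'}$ inside $E'$, and one must verify it still vanishes in the quotient) — once this is done, the diagram \eqref{eq:StratCond2bis} for $\ts_{M'}$ is obtained from the analogous diagram for $\ts_M$ by scalar extension along the bialgebra homomorphism $E^{\psi,\uc}(g)$, completing the proof.
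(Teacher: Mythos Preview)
Your approach is essentially the paper's: both reduce \eqref{eq:StratCond1} to the compatibility of $E^{\psi,\uc}(g)$ with the augmentations, and \eqref{eq:StratCond2} to showing that $E^{\psi,\uc}(g)\otimes E^{\psi,\uc}(g)$ descends to a map $\overline{E\otimes_A E}\to\overline{E'\otimes_{A'}E'}$ compatible with $\delta$, via exactly the computation $1\otimes c_iT_{\psi(i)}=s'(c_i)\otimes T_{\psi(i)}=(c_i+\partial'_{\psi(i)}(c_i)T_{\psi(i)})\otimes T_{\psi(i)}$ together with \eqref{eq:DerFamMorphCond2}. The only difference is cosmetic: the paper stays with the right $A'$-linear map $s_{M'}$ and reduces to $s_M$ by composing with $g_M\colon M\to M'$ (using right $A'$-linearity), whereas you pass through the $E'$-linear reformulation of Remark~\ref{rmk:PratialStratProd}~(1).

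One slip to fix: the quotient $\overline{E\otimes_A E}$ is by the ideal generated by $T_i\otimes T_i$ (same index $i$), not by $T_iT_j$ with $i\neq j$; you may be momentarily conflating it with $P^{\ualpha}(A)$. So the well-definedness check is that $(E^{\psi,\uc}(g)\otimes E^{\psi,\uc}(g))(T_i\otimes T_i)=c_iT_{\psi(i)}\otimes c_iT_{\psi(i)}$ lies in the ideal $(T_{i'}\otimes T_{i'})$ of $E'\otimes_{A'}E'$, which is immediate once you move the second $c_i$ across; your concern about several $i$ sharing the same $\psi(i)$ is not relevant at this step. (Also, the augmentation compatibility should read $\pi'\circ E^{\psi,\uc}(g)=g\circ\pi$, not $\pi\circ E^{\psi,\uc}(g)=\pi'$.)
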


\begin{proof}
Since the relevant maps are all right $A'$-linear, it 
suffices to check the conditions after composing with 
$g_M\colon M\to M\otimes_AA';m\mapsto m\otimes 1$. By the construction of $s_{M'}$,
the diagram
\begin{equation*}
\xymatrix@C=50pt{
M\ar[r]^(.35){s_M}\ar[d]_{g_M}&
M\otimes_AE^{\ualpha}(A)\ar[d]^{g_M\otimes E^{\psi,\uc}(g)}\\
M'\ar[r]^(.35){s_{M'}}&
M'\otimes_{A'}E^{\ualpha'}(A')&
}
\end{equation*}
is commutative. Hence the claim is reduced to the conditions \eqref{eq:StratCond1} and \eqref{eq:StratCond2} for $M$ and $s_M$ by the 
following facts. The homomorphisms $E^{\psi,\uc}(g)$ and $g$ are compatible with the augmentation maps $\pi$ from $E^{\ualpha}(A)$ and $E^{\ualpha'}(A')$
to $A$ and $A'$; the tensor product
$E^{\psi,\uc}(g)\otimes E^{\psi,\uc}(g)
\colon E^{\ualpha}(A)\otimes_A E^{\ualpha}(A)
\to E^{\ualpha'}(A')\otimes_{A'}E^{\ualpha'}(A')$
of the bialgebra homomorphism $E^{\psi,\uc}(g)$ over $g$ induces a bialgebra homomorphism 
$\overline{E^{\psi,\uc}(g)\otimes E^{\psi,\uc}(g)}
\colon \overline{E^{\ualpha}(A)\otimes_A E^{\ualpha}(A)}
\to \overline{E^{\ualpha'}(A')\otimes_{A'}E^{\ualpha'}(A')}$
over $g$,
which is compatible with the
maps $\delta$ (Lemma \ref{lem:MultExtensionDeltaMap} (2))
for $(A,\ualpha,(\partial_i))$
and $(A',\ualpha',(\partial'_{i'}))$ since we have
$1\otimes c_iT_{\psi(i)}=
s'(c_i)\otimes T_{\psi(i)}=(c_i+\partial_{\psi(i)}'(c_i)T_{\psi(i)})\otimes T_{\psi(i)}$ $(i\in\Lambda)$
in $E^{\ualpha'}(A')\otimes_{A'}E^{\ualpha'}(A')$ by the assumption 
\eqref{eq:DerFamMorphCond2} on $c_i$. 
\end{proof}

\begin{definition}\label{def:connectionScalarExt}
Under the notation and assumption as above, 
we define the {\it scalar extension $\nabla_{M'}\colon M'\to M'\otimes_{A'}\Omega_{A',\ugamma'}$
of $\nabla_M$ under $(f,g,\psi,\uc)$ (or simply $g$)} 
to be the integrable connection on
$M'$ 
corresponding to $s_{M'}$ by Proposition \ref{prop:IntConPartialStrat}. 
This construction is functorial in $(M,\nabla_M)$, and defines a functor
\begin{equation}
(f,g,\psi,\uc)^*\colon \MIC(A,d)\longrightarrow \MIC(A',d').
\end{equation}
We abbreviate $(f,g,\psi,\uc)^*$ to $(f,g,\psi)^*$ when $c_i=1$ for all $i\in\Lambda$.
\end{definition}

\begin{remark}\label{rmk:ConnScalarExtStratTensProd}
To simplify the notation,  we write $E$, $E'$ and $E(g)$ for
$E^{\ualpha}(A)$, $E^{\ualpha'}(A')$, and $E^{\psi,\uc}(g)$, respectively, 
in this remark.\par
(1) Under the notation and assumption before 
Lemma \ref{lem:ConnectionScalarExt}, 
let $\ts_M$ (resp.~$\ts_{M'}$) be the $E$-linear (resp.~$E'$-linear)
extension $E\otimes_AM\to M\otimes_AE$
 (resp.~$E'\otimes_{A'}M'\to M'\otimes_{A'}E'$) of $s_M$ (resp.~$s_{M'}$).
(See Remark \ref{rmk:PratialStratProd} (1).) Then $\ts_{M'}$ coincides with the scalar extension of 
$\ts_M$ under the bialgebra homomorphism $E(g)\colon E\to E'$
over $g$.\par
(2) Let $(N,\nabla_N)$ be another module with integrable connection 
over $(A,d)$, let $\nabla_{M\otimes N}$ be the tensor product of 
$\nabla_M$ and $\nabla_N$ (Definition \ref{def:ConnTensorProdDef}). 
Let $\nabla_{M'}$ and $\nabla_{N'}$ be
the scalar extensions of $\nabla_M$ and $\nabla_N$, respectively,
under $(f,g,\psi,\uc)$ (Definition \ref{def:connectionScalarExt}). Then, 
by the remark (1) above and Remark \ref{rmk:PratialStratProd} (2), we see that
the scalar extension of $\nabla_{M\otimes N}$
under $(f,g,\psi,\uc)$ coincides with  the tensor product
of $\nabla_{M'}$ and $\nabla_{N'}$.
\end{remark}

\begin{proposition}\label{prop:ConnPBFormula}
The following equalities hold in $M'=M\otimes_AA'$ for $i'\in \Lambda'$ and $m\in M$.\par
\begin{align}
\nabla_{M',i'}(m\otimes 1)&=
\sum_{\emptyset\neq \ui\subset \psi^{-1}(i')}
\nabla_{M,\ui}(m)\otimes(\prod_{i\in\ui}c_i)(\alpha'_{i'})^{\sharp \ui -1},
\label{eq:ConnPBFormula1}\\
\gamma_{M',i'}(m\otimes 1)&=
\biggl(\prod_{i\in \psi^{-1}(i')}\gamma_{M,i}\biggr)(m)\otimes 1.
\label{eq:ConnPBFormula2}
\end{align}
In the case $\psi$ is injective, the right-hand side of \eqref{eq:ConnPBFormula1}
is $\nabla_{M,i}(m)\otimes c_i$ if $\psi^{-1}(i')=\{i\}$
and $0$ if $\psi^{-1}(i')=\emptyset$.\par

\end{proposition}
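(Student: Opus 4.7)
The plan is to compute $s_{M'}(m\otimes 1)$ explicitly and read off the coefficient of $T_{i'}$, then derive the formula for $\gamma_{M',i'}$ from the relation $\gamma_{M',i'} = \id_{M'} + \alpha'_{i'}\nabla_{M',i'}$. The main input is the commutative diagram from the proof of Lemma \ref{lem:ConnectionScalarExt}:
$$
\xymatrix@C=40pt{
M\ar[r]^(.35){s_M}\ar[d]_{g_M}&M\otimes_AE^{\ualpha}(A)\ar[d]^{g_M\otimes E^{\psi,\uc}(g)}\\
M'\ar[r]^(.35){s_{M'}}&M'\otimes_{A'}E^{\ualpha'}(A'),
}
$$
combined with the explicit formula \eqref{eq:ConnectoinStratFormula} giving $s_M(m)=\sum_{\ui\subset\Lambda}\nabla_{M,\ui}(m)\otimes T_{\ui}$.

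First I would apply $g_M\otimes E^{\psi,\uc}(g)$ to this expression to obtain
$$s_{M'}(m\otimes 1)=\sum_{\ui\subset\Lambda}(\nabla_{M,\ui}(m)\otimes 1)\otimes E^{\psi,\uc}(g)(T_{\ui}).$$
Since $E^{\psi,\uc}(g)(T_i)=c_iT_{\psi(i)}$ and the $c_i\in A'$ are central in $E^{\ualpha'}(A')$, and since in $E^{\ualpha'}(A')$ the variables $T_{i'}$ commute with one another and satisfy $T_{i'}^n=(\alpha'_{i'})^{n-1}T_{i'}$ for $n\geq 1$, one computes
$$E^{\psi,\uc}(g)(T_{\ui})=\biggl(\prod_{i\in\ui}c_i\biggr)\prod_{i'\in\psi(\ui)}(\alpha'_{i'})^{\sharp(\psi^{-1}(i')\cap\ui)-1}T_{i'}.$$
Comparing the coefficient of the single variable $T_{i'}$ in $s_{M'}(m\otimes 1)=\sum_{\uj'\subset\Lambda'}\nabla_{M',\uj'}(m\otimes 1)\otimes T_{\uj'}$ requires $\psi(\ui)=\{i'\}$, i.e.\ $\emptyset\neq\ui\subset\psi^{-1}(i')$, yielding formula \eqref{eq:ConnPBFormula1} directly. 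The special case when $\psi$ is injective is immediate since then $\psi^{-1}(i')$ has at most one element.

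For \eqref{eq:ConnPBFormula2} I would then substitute \eqref{eq:ConnPBFormula1} into $\gamma_{M',i'}=\id_{M'}+\alpha'_{i'}\nabla_{M',i'}$ and absorb the factor $\alpha'_{i'}$ into the exponent to get
$$\gamma_{M',i'}(m\otimes 1)=\sum_{\ui\subset\psi^{-1}(i')}\nabla_{M,\ui}(m)\otimes\prod_{i\in\ui}(c_i\alpha'_{i'}).$$
Using \eqref{eq:DerFamMorphCond1} in the form $c_i\alpha'_{\psi(i)}=g(\alpha_i)$ for $i\in\psi^{-1}(i')$, each factor on the right becomes $g(\alpha_i)$; transporting these scalars across the tensor and grouping them with the operators $\nabla_{M,i}$ turns the right-hand side into $\bigl(\prod_{i\in\psi^{-1}(i')}(\id_M+\alpha_i\nabla_{M,i})\bigr)(m)\otimes 1=\bigl(\prod_{i\in\psi^{-1}(i')}\gamma_{M,i}\bigr)(m)\otimes 1$. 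This final regrouping uses that $\alpha_i$ and $\nabla_{M,j}$ commute as operators on $M$ for $i\neq j$, which follows from $\nabla_{M,j}(\alpha_i x)=\gamma_j(\alpha_i)\nabla_{M,j}(x)+\partial_j(\alpha_i)x=\alpha_i\nabla_{M,j}(x)$ via \eqref{eq:TwDerivFamilyCond1}, together with integrability so that different $\nabla_{M,i}$'s also commute.

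The only real obstacle is bookkeeping the product in $E^{\ualpha'}(A')$: one must keep track of how many elements of $\ui$ land in each fiber $\psi^{-1}(i')$ and apply the relation $T_{i'}^n=(\alpha'_{i'})^{n-1}T_{i'}$ correctly. Once that single computation is done carefully, both formulas fall out algebraically, with \eqref{eq:ConnPBFormula2} being essentially a reorganization of \eqref{eq:ConnPBFormula1} using the compatibility condition \eqref{eq:DerFamMorphCond1} (an alternative, perhaps cleaner, derivation of \eqref{eq:ConnPBFormula2} would invoke Lemma \ref{lem:TwDerivSystemFunct} (2) applied to a universal computation, but the direct substitution is just as fast).
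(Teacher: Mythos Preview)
Your proof is correct and follows essentially the same route as the paper: both compute $s_{M'}(m\otimes 1)$ via the commutative square from Lemma \ref{lem:ConnectionScalarExt} together with the explicit expansion $s_M(m)=\sum_{\ui}\nabla_{M,\ui}(m)\otimes T_{\ui}$, read off the $T_{i'}$-coefficient using $T_{i'}^n=(\alpha'_{i'})^{n-1}T_{i'}$, and then derive \eqref{eq:ConnPBFormula2} by substituting into $\gamma_{M',i'}=\id+\alpha'_{i'}\nabla_{M',i'}$ and regrouping via $c_i\alpha'_{i'}=g(\alpha_i)$ and the commutation $\alpha_i\id_M\circ\nabla_{M,j}=\nabla_{M,j}\circ\alpha_i\id_M$ for $i\neq j$.
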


\begin{proof} Let $g_M$ be the morphism $M\to M'=M\otimes_AA';m\mapsto m\otimes 1$. 
The first equality follows from the observation that the image of 
$s_M(m)=\sum_{\ui\subset\Lambda}\nabla_{M,\ui}(m)\otimes T_{\ui}$
under $g_M\otimes E^{\psi,\uc}(g)\colon M\otimes_A E^{\ualpha}(A)
\to M'\otimes_{A'}E^{\ualpha'}(A')$ is 
$\sum_{\ui\subset\Lambda}\nabla_{M,\ui}(m)
\otimes \prod_{i\in\ui}c_i\prod_{i'\in \psi(\ui)}(\alpha'_{i'})^{\sharp (\ui\cap \psi^{-1}(i'))-1}
T_{i'}$. The second equality is derived from the first one by the following
computation.
\begin{align*}
(\id_{M'}+\alpha_{i'}'\nabla_{M',i'})(m\otimes 1)
&=m\otimes 1+{\textstyle \suml_{\emptyset\neq \ui\subset\psi^{-1}(i')}
\biggl(\prodl_{i\in\ui}\alpha_i\nabla_{M,i}\biggr)(m)\otimes 1}\\
&={\textstyle\biggl(\prodl_{i\in\psi^{-1}(i')}\gamma_{M,i}\biggr)(m)\otimes 1}
\end{align*}
Note that we have $\alpha_i\id_M\circ \nabla_{M,j}=\nabla_{M,j}\circ\alpha_i\id_M$ 
for $i,j\in \Lambda$, $i\neq j$ by \eqref{eq:TwDerivFamilyCond1}.
\end{proof}

We discuss behavior of de Rham complexes under the scalar extension
by $(f,g,\psi,\uc)$. 
To have compatibility with differential maps, we need to twist 
pullback maps in positive degrees by using the endomorphisms 
$\gamma_i$ and $\gamma_{M,i}$ in a suitable manner 
unless $\psi$ is injective. Indeed, by the
formula \eqref{eq:TwDerivSystemFunct1}, the additive map 
$\Omega^q_{A,\ugamma}\to \Omega^q_{A',\ugamma'}$ $(q\in \N)$
defined by $a\omega_{\bmI}\mapsto g(a)c_{\bmI}\omega_{\psi^q(\bmI)}$
$(a\in A, \bmI\in\Lambda^q)$ is not compatible with the differential
maps when $\psi\colon \Lambda\to \Lambda'$ is not injective.
Here and hereafter $\psi^q$ denotes
the product $\Lambda^q\to \Lambda^{\prime q};
(i_n)_{1\leq n\leq q}\mapsto (\psi(i_n))_{1\leq n\leq q}$ of the map $\psi$,
and $c_{\bmI}$ for $\bmI=(i_n)_{1\leq n\leq q}\in \Lambda^q$
denotes the product $\prod_{n=1}^qc_{i_n}\in A'$.
\par

For $a\in A$ and $i'\in\Lambda'$, if we set $\psi^{-1}(i')=\{i_1,\ldots, i_r\}$, 
we obtain the following equalities from \eqref{eq:TwDerivSystemFunct2}.
\begin{multline*}
\alpha'_{i'}\partial_{i'}'(g(a))
=(\gamma'_{i'}-1)(g(a))
=g((\prod_{1\leq n\leq r}\gamma_{i_n})(a)-a)
=\sum_{1\leq n\leq r}
g((\prod_{1\leq m<n}\gamma_{i_m})(\gamma_{i_n}(a)-a))\\
=\alpha_{i'}'
\sum_{1\leq n\leq r}c_{i_n}\cdot g((\prod_{1\leq m<n}\gamma_{i_m})(\partial_{i_n}(a))).
\end{multline*}
Note that this computation depends on the choice of the order
$i_1,\ldots, i_r$ of elements of $\psi^{-1}(i')$. 
This observation leads us to define twisted additive maps 
$\Omega^q_{A,\ugamma}\to \Omega^q_{A',\ugamma'}$
$(q\in \N)$ as follows. 

We choose and fix a total order $<$ of $\Lambda$ in the following.
For each $i\in \Lambda$, let $\Lambda^{<}_{\psi,i}$ be the subset
$\{j\in \Lambda\,\vert\, j\in \psi^{-1}(\psi(i)), j<i\}$ of $\Lambda$,
and let $\gamma_{\psi,i}^{<}$ denote $\prod_{j\in \Lambda_{\psi,i}^<}\gamma_j$.
For $\bmI=(i_n)_{1\leq n\leq q}\in\Lambda^q$, 
we define $\gamma_{\psi,\bmI}^{<}$ to be
$\prod_{1\leq n\leq q}\gamma_{\psi,i_n}^{<}$.
Then we define additive maps $\Omega^q_{g/f,\psi,\uc}\colon \Omega^q_{A,\ugamma}
\to \Omega_{A',\ugamma'}^q$ by 
\begin{equation}
\Omega^q_{g/f,\psi,\uc}(a\omega_{\bmI})=g(\gamma_{\psi,\bmI}^{<}(a))c_{\bmI}\omega_{\psi^q(\bmI)}
\qquad (a\in A, \;\bmI\in\Lambda^q).
\end{equation}

\begin{proposition}\label{prop:dRCpxFunctConst}
The additive maps $\Omega^q_{g/f,\psi,\uc}$ $(q\in \N)$
define a morphism of complexes
\begin{equation}\label{eq:dRCpxFunctConst}
\Omega^{\bullet}_{g/f,\psi,\uc}\colon
\Omega^{\bullet}_{A,\ugamma}\to \Omega^{\bullet}_{A',\ugamma'}.
\end{equation}
We abbreviate $\Omega^{\bullet}_{g/f,\psi,\uc}$ to 
$\Omega^{\bullet}_{g/f,\psi}$ when $c_i=1$ for all $i\in\Lambda$. 
\end{proposition}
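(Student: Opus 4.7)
The plan is to verify $d^{\prime q} \circ \Omega^q_{g/f,\psi,\uc} = \Omega^{q+1}_{g/f,\psi,\uc} \circ d^q$ by direct computation on basis elements $a\omega_{\bmI}$ for $a \in A$ and $\bmI = (i_1,\ldots,i_q) \in \Lambda^q$. First I would observe that both sides vanish unless the components of $\bmI$ are distinct and the components of $\psi^q(\bmI)$ are distinct: if $\psi(i_n) = \psi(i_m)$ for some $n \neq m$, then $\omega_{\psi^q(\bmI)}$ and every $\omega_{\psi(i)} \wedge \omega_{\psi^q(\bmI)}$ vanish in $\Omega^{\bullet}_{A',\ugamma'}$, while if $i_n = i_m$ the input is already zero. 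Under these assumptions I would compare the coefficients of each basis element $\omega_{i'} \wedge \omega_{\psi^q(\bmI)}$ for $i' \in \Lambda' \setminus \psi(\{i_1,\ldots,i_q\})$; the remaining basis elements contribute zero on both sides.

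For such an $i'$, condition \eqref{eq:DerFamMorphCond2} gives $\partial'_{i'}(c_{i_n}) = 0$ for every $n$, so iterating the twisted Leibniz rule yields $\partial'_{i'}(c_{\bmI}) = 0$, and hence $\partial'_{i'}(g(\gamma_{\psi,\bmI}^{<}(a))c_{\bmI}) = \partial'_{i'}(g(\gamma_{\psi,\bmI}^{<}(a)))\,c_{\bmI}$. On the other hand, any $i \in \psi^{-1}(i')$ satisfies $\psi(i) \neq \psi(i_n)$ for all $n$, so $i$ differs from every index $j$ that appears in the product $\gamma_{\psi,\bmI}^{<}$, and \eqref{eq:TwDerivGammaComm1} shows that $\partial_i$ commutes with $\gamma_{\psi,\bmI}^{<}$. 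Combined with $\gamma_{\psi,(i,\bmI)}^{<} = \gamma_{\psi,i}^{<}\,\gamma_{\psi,\bmI}^{<}$, this reduces the desired compatibility, applied with $b = \gamma_{\psi,\bmI}^{<}(a)$, to the $q = 0$ identity
\begin{equation*}
\partial'_{i'}(g(b)) = \sum_{i \in \psi^{-1}(i')} c_i \, g\bigl(\gamma_{\psi,i}^{<}(\partial_i(b))\bigr) \qquad (b \in A,\; i' \in \Lambda').
\end{equation*}

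To establish this last identity I would write $\psi^{-1}(i') = \{j_1 < \cdots < j_r\}$, so that $\gamma_{\psi,j_n}^{<} = \prod_{m<n}(\id_A + \alpha_{j_m}\partial_{j_m})$. Conditions \eqref{eq:TwDerivFamilyCond1} and \eqref{eq:TwDerivFamilyCond2} force the operators $\alpha_{j_m}\partial_{j_m}$ to commute pairwise and with each $\partial_{j_n}$, yielding
\begin{equation*}
\gamma_{\psi,j_n}^{<}(\partial_{j_n}(b)) = \sum_{S \subset \{j_1,\ldots,j_{n-1}\}} \Bigl(\prod_{j \in S}\alpha_j\Bigr)\partial_{S \cup \{j_n\}}(b).
\end{equation*}
Applying $g$ and using $g(\alpha_j) = c_j\alpha'_{i'}$ for $j \in \psi^{-1}(i')$, the right-hand side of the identity becomes $\sum_{n=1}^r \sum_{S \subset \{j_1,\ldots,j_{n-1}\}} (\alpha'_{i'})^{|S|} \bigl(\prod_{i \in S \cup \{j_n\}} c_i\bigr) g(\partial_{S \cup \{j_n\}}(b))$. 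Reparametrizing by $\ui = S \cup \{j_n\}$, so that each non-empty $\ui \subset \psi^{-1}(i')$ is uniquely recovered from its maximum element $j_n$ and $S = \ui \setminus \{j_n\}$, this double sum collapses to $\sum_{\emptyset \neq \ui \subset \psi^{-1}(i')}(\alpha'_{i'})^{|\ui|-1}\bigl(\prod_{i \in \ui}c_i\bigr)g(\partial_\ui(b))$, which is $\partial'_{i'}(g(b))$ by \eqref{eq:TwDerivSystemFunct1}.

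The principal obstacle is the combinatorial bookkeeping in this final reparametrization; however, the order $<$ on $\Lambda$ built into the very definition of $\gamma_{\psi,i}^{<}$ is tailored precisely so that the maximum-element indexing of subsets aligns with the subset sum in \eqref{eq:TwDerivSystemFunct1}, making the identification direct once the expansion is carried out.
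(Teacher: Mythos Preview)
Your proof is correct and follows essentially the same approach as the paper's: both reduce the compatibility in degree $q$ to the degree-zero identity by using $\partial'_{i'}(c_{\bmI})=0$ and the commutation $\partial_i\circ\gamma_{\psi,\bmI}^{<}=\gamma_{\psi,\bmI}^{<}\circ\partial_i$ for $i\in\psi^{-1}(i')$ with $i'\notin\psi^q(\bmI)$, and then verify the degree-zero identity by expanding $\gamma_{\psi,i}^{<}$ and reparametrizing nonempty subsets of $\psi^{-1}(i')$ via their maximal element. The only cosmetic difference is the order of presentation---the paper proves the $q=0$ case first and then derives the general case, whereas you carry out the reduction first.
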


\begin{proof}
We first prove $\Omega^1_{g/f,\psi,\uc}\circ d=d'\circ g$.
It suffices to verify that the right-hand side of the
equality \eqref{eq:TwDerivSystemFunct1} coincides with 
$\sum_{i\in \psi^{-1}(i')}c_i\cdot g(\gamma_{\psi,i}^{<}(\partial_i(a)))$
for $i'\in \Lambda'$ and $a\in A$. 
We have 
$$g(\gamma_{\psi,i}^{<}(\partial_i(a)))
=g\Biggl(\Biggl(\prod_{j\in \Lambda^{<}_{\psi,i}}(\id_A+\alpha_j\partial_j)\Biggr)(\partial_i(a))\Biggr)
=\sum_{\uj\subset\Lambda^{<}_{\psi, i}}(\alpha_{i'}')^{\sharp \uj}\Biggl(\prod_{j\in \uj}c_j\Biggr)
g(\partial_{\uj\cup\{i\}}(a))$$
for each $i\in \psi^{-1}(i')$. This shows the desired claim since $\uj\sqcup \{i\}$
is a  subset  of $\psi^{-1}(i')$ for $i\in \psi^{-1}(i')$ and 
$\uj\subset\Lambda_{\psi,i}^{<}$, 
and every non-empty
subset of $\psi^{-1}(i')$ is uniquely written in this form.\par
We can derive the equalities 
$\Omega^{q+1}_{g/f,\psi,\uc}\circ d^q
=d^{\prime q}\circ\Omega^q_{g/f,\psi,\uc}$ $(q\geq 1)$
from the above claim as follows. For $a\in A$ and
$\bmI\in \Lambda^q$ without common components,
we have
\begin{align*}
\Omega^{q+1}_{g/f,\psi,\uc}
(d^q(a\omega_{\bmI}))
&=\sum_{i\in\Lambda}c_ic_{\bmI}\cdot g(\gamma_{\psi,i}^{<}
\circ\gamma_{\psi,\bmI}^{<}(\partial_i(a)))\otimes
\omega_{\psi(i)}\wedge\omega_{\psi^q(\bmI)},\\
d^{\prime q}(\Omega^q_{g/f,\psi,\uc}(a\omega_{\bmI}))
&=
\sum_{i'\in \Lambda'}\partial'_{i'}(c_{\bmI}\cdot g(\gamma_{\psi,\bmI}^{<}(a)))
\otimes\omega_{i'}\wedge\omega_{\psi^q(\bmI)}.
\end{align*}
Hence we are reduced to showing 
$$\partial'_{i'}(c_{\bmI}\cdot g(\gamma_{\psi,\bmI}^{<}(a)))
=\sum_{i\in \psi^{-1}(i')}
c_ic_{\bmI}\cdot g(\gamma_{\psi,i}^{<}\circ\gamma_{\psi,\bmI}^{<}
(\partial_i(a)))$$
for $i'\in \Lambda'$ not appearing in $\psi^q(\bmI)$.
This follows from the claim shown in the previous paragraph
since $\gamma_{\psi,\bmI}^{<}(\partial_i(a))
=\partial_i(\gamma^{<}_{\psi,\bmI}(a))$ for $i\in \psi^{-1}(i')$ and
$\partial_{i'}'\circ c_{\bmI}\id_{A'}=c_{\bmI}\id_{A'}\circ \partial'_{i'}$
by the assumption on $i'$ and $\psi^q(\bmI)$.
\end{proof}

Similarly one can construct a morphism from the 
de Rham complexe of $(M,\nabla_M)\in \Ob\MIC(A,d)$
to that of its scalar extension $(M',\nabla_{M'})\in \Ob\MIC(A',d')$
under $(f,g,\psi,\uc)$ as follows. We define 
endomorphisms $\gamma_{M,\psi,i}^{<}$
$(i\in \Lambda)$ and $\gamma_{M,\psi,\bmI}^{<}$ $(\bmI\in \Lambda^q)$
of $M$ 
in the same way as the endomorphisms $\gamma_{\psi,i}^{<}$
and $\gamma_{\psi,\bmI}^{<}$ of $A$ by using $\gamma_{M,i}
=\id_M+\alpha_i\nabla_{M, i}$ $(i\in \Lambda)$, which commute
with each other by the integrability of $\nabla_M$ and \eqref{eq:TwDerivFamilyCond1}. 
As $\gamma_{M,\psi,\bmI}^{<}$ is $\gamma_{\psi,\bmI}^{<}$-semilinear,
the biadditive map
$$M\times \Omega^q_{A,\ugamma}
\to M'\otimes_{A'}\Omega^q_{A',\ugamma'};
(m,a\omega_{\bmI})
\mapsto (\gamma_{M,\psi,\bmI}^{<}(m)\otimes 1)
\otimes\Omega^q_{f/g,\psi,\uc}(a\omega_{\bmI})$$
is an $A$-balanced product, and therefore defines 
an additive map 
$$\Omega^q_{g/f,\psi,\uc}(M)\colon 
M\otimes_A\Omega^q_{A,\ugamma}
\to M'\otimes_{A'}\Omega^q_{A',\ugamma'}.
$$

\begin{proposition}\label{prop:dRCpxFunct}
The additive maps $\Omega^q_{g/f,\psi,\uc}(M)$ $(q\in \N)$
define a morphism of complexes
\begin{equation}\label{eq:dRCpxFunct}
\Omega^{\bullet}_{g/f,\psi,\uc}(M)\colon 
M\otimes_A\Omega^{\bullet}_{A,\ugamma}
\to M'\otimes_{A'}\Omega^{\bullet}_{A',\ugamma'}.
\end{equation}
We abbreviate $\Omega^{\bullet}_{g/f,\psi,\uc}(M)$ to
$\Omega^{\bullet}_{g/f,\psi}(M)$ when $c_i=1$ for all $i\in\Lambda$.
\end{proposition}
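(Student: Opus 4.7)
The plan is to verify directly on generators that $\Omega^{q+1}_{g/f,\psi,\uc}(M)\circ\nabla_M^q = \nabla_{M'}^{\prime q}\circ\Omega^q_{g/f,\psi,\uc}(M)$. I take an element $m\otimes\omega_{\bmI}$ with $\bmI=(j_1,\ldots,j_q)\in\Lambda^q$ having pairwise distinct components (these additively generate $M\otimes_A\Omega^q_{A,\ugamma}$), and reduce to the case that $\psi$ is injective on $\bmI$, since otherwise $\omega_{\psi^q(\bmI)}=0$ makes both sides vanish. Unwinding the definitions and using that $\gamma_{M,j}$ commute, the left-hand side equals $\sum_{i\in\Lambda\setminus\bmI}\gamma_{M,\psi,i}^{<}\circ\gamma_{M,\psi,\bmI}^{<}(\nabla_{M,i}(m))\otimes c_ic_{\bmI}\omega_{\psi(i)}\wedge\omega_{\psi^q(\bmI)}$, whereas the Leibniz rule for $\nabla_{M'}^{\prime q}$ decomposes the right-hand side as $\nabla_{M'}(\gamma_{M,\psi,\bmI}^{<}(m)\otimes 1)\wedge c_{\bmI}\omega_{\psi^q(\bmI)} + (\gamma_{M,\psi,\bmI}^{<}(m)\otimes 1)\otimes d^{\prime q}(c_{\bmI}\omega_{\psi^q(\bmI)})$.

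First I would observe that the second summand vanishes: by \eqref{eq:DerFamMorphCond2} the only non-zero contributions to $d^{\prime q}(c_{\bmI}\omega_{\psi^q(\bmI)})$ are of the form $\partial_{i'}'(c_{j_n})\omega_{i'}\wedge\omega_{\psi^q(\bmI)}$ with $i'=\psi(j_n)$, but then $\omega_{i'}$ already occurs in $\omega_{\psi^q(\bmI)}$, so $\omega_{i'}\wedge\omega_{\psi^q(\bmI)}=0$. For the first summand, I expand $\nabla_{M'}(\gamma_{M,\psi,\bmI}^{<}(m)\otimes 1)$ via \eqref{eq:ConnPBFormula1} and compare with the left-hand side after grouping by $i'\in\Lambda'$. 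Contributions from $i'\in\psi^q(\bmI)$ are zero on both sides (the exterior product vanishes), so it suffices to handle $i'\notin\psi^q(\bmI)$. For such $i'$, the injectivity of $\psi|_{\bmI}$ gives $\psi^{-1}(i')\cap\bmI=\emptyset$, and then \eqref{eq:TwDerivFamilyCond1}, \eqref{eq:TwDerivFamilyCond2}, together with the integrability of $\nabla_M$, let me commute $\gamma_{M,\psi,\bmI}^{<}$ past each $\gamma_{M,\psi,i}^{<}\circ\nabla_{M,i}$ with $i\in\psi^{-1}(i')$.

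Setting $m'=\gamma_{M,\psi,\bmI}^{<}(m)$, the verification is thereby reduced, for each $i'\notin\psi^q(\bmI)$, to the identity
\begin{equation*}
\sum_{i\in\psi^{-1}(i')}c_i\,\gamma_{M,\psi,i}^{<}(\nabla_{M,i}(m'))\otimes 1 \;=\; \sum_{\emptyset\neq\ui\subset\psi^{-1}(i')}\nabla_{M,\ui}(m')\otimes\Bigl(\prod_{i\in\ui}c_i\Bigr)(\alpha'_{i'})^{\sharp\ui-1}
\end{equation*}
in $M\otimes_A A'$, which is the $q=0$ case of the proposition. I would prove this by a telescoping argument: writing $\psi^{-1}(i')=\{i_1<i_2<\cdots<i_r\}$ in the chosen total order, one has $\gamma_{M,\psi,i_n}^{<}=\prod_{m=1}^{n-1}(\id_M+\alpha_{i_m}\nabla_{M,i_m})$; expanding this product and transporting each $\alpha_{i_m}$ across the tensor via $g(\alpha_{i_m})=c_{i_m}\alpha'_{i'}$ from \eqref{eq:DerFamMorphCond1} yields a sum indexed by pairs $(n,\uj)$ with $\uj\subset\{i_1,\ldots,i_{n-1}\}$, and the bijection $(n,\uj)\mapsto\ui=\uj\cup\{i_n\}$, characterised by $i_n=\max\ui$, identifies it with the right-hand side.

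The main subtlety, and the very reason for the twist by $\gamma_{M,\psi,\bmI}^{<}$ in the definition of $\Omega^q_{g/f,\psi,\uc}(M)$, is precisely this telescoping: without the twist, or without a coherent choice of total order on $\Lambda$, the above identity fails, reflecting the fact that extending an integrable connection along a non-injective map $\psi$ of index sets necessarily mixes higher-order terms $\nabla_{M,\ui}$ into the first-order data on the target side.
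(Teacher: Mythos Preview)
Your proof is correct and follows essentially the same approach as the paper's. The paper computes $\nabla_{M'}^q\circ\Omega^q_{g/f,\psi,\uc}(M)(m\otimes\omega_{\bmI})$ directly as $\sum_{i'\in\Lambda'}\nabla_{M',i'}(\gamma_{M,\psi,\bmI}^{<}(m)\otimes c_{\bmI})\otimes\omega_{i'}\wedge\omega_{\psi^q(\bmI)}$ rather than via the Leibniz decomposition you use, but this amounts to the same thing: your observation that $\partial'_{i'}(c_{\bmI})=0$ for $i'\notin\psi^q(\bmI)$ is exactly what the paper invokes as $\nabla_{M',i'}\circ c_{\bmI}\id_{M'}=c_{\bmI}\id_{M'}\circ\nabla_{M',i'}$, and both reductions land on the same $q=0$ telescoping identity proved in the same way.
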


\begin{proof}
We can prove the proposition exactly in the same way as the proof 
of Proposition \ref{prop:dRCpxFunctConst} by using \eqref{eq:ConnPBFormula1} 
instead of \eqref{eq:TwDerivSystemFunct1}. 
We repeat the argument for the convenience of readers. \par
First the equality $\Omega^1_{g/f,\psi,\uc}(M)\circ\nabla_M
=\nabla_{M'}\circ\Omega^0_{g/f,\psi,\uc}(M)$ is reduced to
showing that the right-hand side of \eqref{eq:ConnPBFormula1} coincides with
$\sum_{i\in \psi^{-1}(i')}
\gamma_{M,\psi,i}^{<}(\nabla_{M,i}(m))\otimes c_i$
for $i'\in\Lambda'$ and $m\in M$.
This  follows from the equality in $M'=M\otimes_AA'$
$$\gamma_{M,\psi,i}^{<}(\nabla_{M,i}(m))\otimes 1
=\Biggl(\prod_{j\in \Lambda_{\psi,i}^{<}}(\id_M+\alpha_j\nabla_{M,j})\Biggr)(\nabla_{M,i}(m))\otimes 1
=\sum_{\uj\subset\Lambda_{\psi,i}^{<}}\nabla_{M,\uj\cup\{i\}}(m)
\otimes\Biggl(\prod_{j\in \uj}c_j\Biggr)(\alpha'_{i'})^{\sharp \uj}$$
for $i\in \psi^{-1}(i')$ and $m\in M$. \par
For $q\geq 1$, $m\in M$, and $\bmI\in \Lambda^q$ without
common components, we have
\begin{align*}
\Omega^{q+1}_{g/f,\psi,\uc}(M)\circ\nabla_M^q(m\otimes\omega_{\bmI})
&=\sum_{i\in\Lambda}
(\gamma_{M,\psi,i}^{<}\circ\gamma_{M,\psi,\bmI}^{<}
(\nabla_{M,i}(m))\otimes c_ic_{\bmI})\otimes \omega_{\psi(i)}\wedge\omega_{\psi^q(\bmI)},\\
\nabla_{M'}^q\circ\Omega^q_{g/f,\psi,\uc}(M)
(m\otimes\omega_{\bmI})
&=\sum_{i'\in\Lambda'}
\nabla_{M',i'}(\gamma_{M,\psi,\bmI}^{<}(m)\otimes c_{\bmI})
\otimes\omega_{i'}\wedge\omega_{\psi^q(\bmI)}.
\end{align*}
Hence the compatibility with the $q$th differential maps
is reduced to 
$$\nabla_{M',i'}(\gamma_{M,\psi,\bmI}^{<}(m)\otimes c_{\bmI})
=\sum_{i\in \psi^{-1}(i')}\gamma_{M,\psi,i}^{<}
\circ\gamma_{M,\psi,\bmI}^{<}(\nabla_{M,i}(m))
\otimes c_ic_{\bmI}$$
for $i'\in \Lambda'$ not appearing in $\psi^q(\bmI)$.
By the claim shown in the previous paragraph, this is reduced to 
$\gamma_{M,\psi,\bmI}^{<}(\nabla_{M,i}(m))=
\nabla_{M,i}(\gamma_{M,\psi,\bmI}^{<}(m))$
for $i\in \psi^{-1}(i')$
and $\nabla_{M',i'}\circ c_{\bmI}\id_{M'}
=c_{\bmI}\id_{M'}\circ\nabla_{M',i'}$, 
which hold by the assumption on $i'$ and 
$\psi^q(\bmI)$. 
\end{proof}

\begin{remark}\label{rmk:dRCpxProdScExtComp}
Suppose that $\psi$ is injective. Then, for $(M,\nabla_M)\in \Ob \MIC(A,d)$
and its scalar extension $(M',\nabla_{M'})$ under $(f,g,\psi,\uc)$
(Definition \ref{def:connectionScalarExt}), we have
\begin{equation}\label{eq:dRCpxScExtPsiInj}
\Omega^q_{g/f,\psi,\uc}(M)(m\otimes\omega_{\bmI})=m\otimes c_{\bmI}\otimes \omega_{\psi^q(\bmI)}
\end{equation}
for $q\in \N$, $m\in M$, and $\bmI\in \Lambda^q$.
This together with \eqref{eq:TwDerivSystemFunct2}
and $\gamma_{i'}'(c_i)=c_i$ for $i\in \Lambda$ and $i'\in \Lambda'\backslash \psi(i)$
implies the following. Let $(M,\nabla_{M})$ and $(N,\nabla_{N})$
be modules with integrable connection over $(A,d)$, and let
$(M',\nabla_{M'})$ and $(N',\nabla_{N'})$ be their scalar extensions
under $(f,g,\psi,\uc)$. Then the product morphisms 
of de Rham complexes \eqref{eq:dRcpxProd} for these
two pairs are compatible with $\Omega^{\bullet}_{g/f,\psi,\uc}$ of
$(M,\nabla_{M})$, $(N,\nabla_{N})$,
and $(M\otimes_AN,\nabla_{M\otimes N})$. Note that
the scalar extension of the last one coincides with
$(M'\otimes_{A'}N',\nabla_{M'\otimes N'})$
by Remark \ref{rmk:ConnScalarExtStratTensProd} (2).
\end{remark}

Finally we prove the compatibility of the morphisms 
\eqref{eq:dRCpxFunctConst} and \eqref{eq:dRCpxFunct} with compositions.
Let $(f',g',\psi')\colon (A'/R',\ualpha',(\partial'_{i'})_{i'\in\Lambda'})
\to (A''/R'',\ualpha'',(\partial''_{i''})_{i''\in\Lambda''})$
be another triplet of maps, and 
let $\uc'=(c'_{i'})_{i'\in\Lambda'}$ be an element of
$(A'')^{\Lambda'}$ such that $(f',g',\psi',\uc')$
satisfies the same conditions as $(f,g,\psi,\uc)$.
Suppose that we are given a total order $<$ on $\Lambda'$
such that the map $\psi\colon \Lambda\to \Lambda'$ 
preserves the orders. We define an element
$\uc''=(c''_i)_{i\in\Lambda}\in (A'')^{\Lambda}$ by $c''_i=c'_{\psi(i)}g'(c_i)$.
Then we have $g'\circ g(\alpha_i)=c''_i\alpha''_{\psi'\circ\psi(i)}$
for $i\in\Lambda$ \eqref{eq:DerFamMorphCond1},
and $\partial_{i''}''(c_i'')=0$ for $i\in \Lambda$ and 
$i''\in \Lambda''\backslash\{\psi'\circ\psi(i)\}$
by Lemma \ref{lem:TwDerivSystemFunct} (1).

\begin{lemma}\label{lem:dRCpxFuncCocyc}
(1) The composition 
$\Omega^{\bullet}_{g'/f',\psi',\uc'}\circ \Omega^{\bullet}_{g/f,\psi,\uc}
\colon \Omega_{A,\ugamma}^{\bullet}
\to \Omega_{A',\ugamma'}^{\bullet}
\to \Omega_{A'',\ugamma''}^{\bullet}$ coincides
with $\Omega^{\bullet}_{g'\circ g/f'\circ f,\psi'\circ\psi,\uc''}$.\par
(2) Let $(M,\nabla_M)$ be a module with integrable connection
over $(A,d)$, let $(M'=M\otimes_AA',\nabla_{M'})$ be the
scalar extension of $(M,\nabla_M)$ under $(f,g,\psi,\uc)$,
and let $(M''=M'\otimes_{A'}A'',\nabla_{M''})$ be the 
scalar extension of $(M',\nabla_{M'})$ under $(f',g',\psi',\uc')$.
Then under the canonical identification $M''=M\otimes_AA''$,
$\nabla_{M''}$ coincides with the scalar extension 
of $\nabla_M$ under $(f'\circ f,g'\circ g, \psi'\circ\psi,\uc'')$,
and the composition 
$\Omega^{\bullet}_{g'/f',\psi',\uc'}(M')\circ
\Omega^{\bullet}_{g/f,\psi,\uc}(M)
\colon M\otimes_A\Omega^{\bullet}_{A,\ugamma}
\to M'\otimes_{A'}\Omega^{\bullet}_{A',\ugamma'}
\to M''\otimes_{A''}\Omega^{\bullet}_{A'',\ugamma''}$
coincides with $\Omega^{\bullet}_{g'\circ g/f'\circ f,\psi'\circ\psi,\uc''}(M)$.
\end{lemma}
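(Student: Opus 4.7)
The plan is to prove both parts by direct calculation on the generators \(a\omega_{\bmI}\) (resp.\ \(m\otimes a\omega_{\bmI}\)), reducing everything to a single combinatorial identity for products of the commuting endomorphisms \(\gamma_k\) (resp.\ \(\gamma_{M,k}\)). As a preliminary simplification, I will note that both morphisms in question land in multiples of \(\omega_{(\psi'\circ\psi)^q(\bmI)}\), so I may assume the components \((\psi'\circ\psi)(i_1),\dots,(\psi'\circ\psi)(i_q)\) are pairwise distinct; otherwise both sides vanish.

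For part (1), I will first expand both sides. The left-hand side unwinds to
\[g'\bigl(\gamma^<_{\psi',\psi^q(\bmI)}(g(\gamma^<_{\psi,\bmI}(a))\cdot c_{\bmI})\bigr)\,c'_{\psi^q(\bmI)}\,\omega_{(\psi'\circ\psi)^q(\bmI)},\]
while the right-hand side, using the definition \(c''_i=c'_{\psi(i)}g'(c_i)\), equals \(g'g(\gamma^<_{\psi'\circ\psi,\bmI}(a))\,c'_{\psi^q(\bmI)}g'(c_{\bmI})\,\omega_{(\psi'\circ\psi)^q(\bmI)}\). Since \(\gamma^<_{\psi',\psi^q(\bmI)}\) is a ring automorphism of \(A'\), the required equality splits into two checks. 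The \(c\)-check \(\gamma^<_{\psi',\psi^q(\bmI)}(c_{\bmI})=c_{\bmI}\) follows from \eqref{eq:DerFamMorphCond2} once I observe that every \(\gamma'_{j'}\) appearing in the product satisfies \(j'<\psi(i_n)\) and \(\psi'(j')=\psi'\psi(i_n)\) for some \(n\), so that under the distinctness reduction \(j'\neq\psi(i_m)\) for all \(m\). The \(a\)-check \(\gamma^<_{\psi',\psi^q(\bmI)}\circ g\circ\gamma^<_{\psi,\bmI}=g\circ\gamma^<_{\psi'\circ\psi,\bmI}\) I will derive by applying \eqref{eq:TwDerivSystemFunct2} iteratively to push each \(\gamma'_{j'}\) past \(g\) and replace it by \(\prod_{k\in\psi^{-1}(j')}\gamma_k\), then combining with \(\gamma^<_{\psi,\bmI}\) using the commutativity \eqref{eq:TwDerivGammaComm2} of the \(\gamma_k\).

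For part (2), the identification of \(\nabla_{M''}\) with the scalar extension under \((f'\circ f,g'\circ g,\psi'\circ\psi,\uc'')\) is formal: via Proposition \ref{prop:IntConPartialStrat} it reduces to the compositional identity of \(A\)-bialgebra homomorphisms \(E^{\psi',\uc'}(g')\circ E^{\psi,\uc}(g)=E^{\psi'\circ\psi,\uc''}(g'\circ g)\), which I will verify by computing both sides on the generators \(T_i\) (sending \(T_i\mapsto c_iT_{\psi(i)}\mapsto g'(c_i)c'_{\psi(i)}T_{\psi'\psi(i)}=c''_iT_{\psi'\psi(i)}\)); equivalently this is transparent from the \(E\)-linear reformulation of Remark \ref{rmk:ConnScalarExtStratTensProd}(1), since scalar extensions compose. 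The compatibility \(\Omega^{\bullet}_{g'/f',\psi',\uc'}(M')\circ\Omega^{\bullet}_{g/f,\psi,\uc}(M)=\Omega^{\bullet}_{g'\circ g/f'\circ f,\psi'\circ\psi,\uc''}(M)\) of the associated de Rham morphisms will then follow from exactly the same combinatorial argument as in part (1), with \eqref{eq:ConnPBFormula2} in place of \eqref{eq:TwDerivSystemFunct2} and the integrability of \(\nabla_M\) supplying the commutativity \(\gamma_{M,i}\circ\gamma_{M,j}=\gamma_{M,j}\circ\gamma_{M,i}\) for \(i\neq j\).

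The main obstacle will be the combinatorial bookkeeping underlying the \(a\)-check: I need to verify that the index set on the left,
\[\{k:\psi(k)=\psi(i_n),\,k<i_n\}\sqcup\{k:\psi'\psi(k)=\psi'\psi(i_n),\,\psi(k)<\psi(i_n)\},\]
coincides with \(\Lambda^<_{\psi'\circ\psi,i_n}\) for each \(n\), and that the union over \(n\) of these sets is disjoint. The first fact rests essentially on the order-preservation of \(\psi\) (which forces \(\psi(k)<\psi(i_n)\Rightarrow k<i_n\)) combined with a trichotomy on \(\psi(k)\) versus \(\psi(i_n)\); the second rests on the distinctness of \((\psi'\circ\psi)(i_n)\), which I arranged at the outset. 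Once these set-theoretic identities are in hand, the remaining algebra is a clean reshuffling of commuting operators.
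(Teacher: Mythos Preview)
Your proposal is correct and follows essentially the same route as the paper: both argue by direct computation on $a\omega_{\bmI}$ (resp.\ $m\otimes\omega_{\bmI}$), push the $\gamma'$-factors past $g$ via \eqref{eq:TwDerivSystemFunct2} (resp.\ \eqref{eq:ConnPBFormula2}), and reduce to the index-set identity $\Lambda^{<}_{\psi,i_n}\sqcup\{k:\psi(k)\in\Lambda'^{<}_{\psi',\psi(i_n)}\}=\Lambda^{<}_{\psi'\circ\psi,i_n}$, which hinges on order-preservation of $\psi$. You spell out two points the paper leaves implicit---the reduction to the case where the $(\psi'\circ\psi)(i_n)$ are distinct, and the verification $\gamma'^{<}_{\psi',\psi^q(\bmI)}(c_{\bmI})=c_{\bmI}$ via \eqref{eq:DerFamMorphCond2}---and your handling of the first claim in (2) through $E^{\psi',\uc'}(g')\circ E^{\psi,\uc}(g)=E^{\psi'\circ\psi,\uc''}(g'\circ g)$ matches the paper exactly; the disjointness of $\bigcup_n\Lambda^{<}_{\psi'',i_n}$ you mention is true but not actually needed, since both sides are already products over multisets indexed by $n$.
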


\begin{proof}
Put $f''=f'\circ f$, $g''=g'\circ g$, and $\psi''=\psi'\circ \psi$ to
simplify the notation. For $a\in A$,  $i\in \Lambda$, $i'=\psi(i)$,
and $i''=\psi''(i)$, we have 
\begin{align*}\Omega^1_{f'/g',\psi',\uc'}\circ\Omega^1_{f/g,\psi,\uc}(a\omega_i)
&=g'(\gamma^{\prime<}_{\psi',i'}(g(\gamma_{\psi,i}^{<}(a))))c'_{\psi(i)}g'(c_i)\omega_{i''}\\
&=g'\circ g\biggl(\biggl(\prod_{j'\in \psi^{\prime-1}(i''),j'<i'}
\prod_{k\in \psi^{-1}(j')}\gamma_k\circ\prod_{j\in \psi^{-1}(i'),j<i}\gamma_j\biggr)(a)\biggr)c_i''\omega_{i''}
\end{align*} 
by \eqref{eq:TwDerivSystemFunct2}. 
Since $\psi$ preserves orders, $k\in \Lambda$ appearing in the
last term ranges over the set 
$\Lambda_{\psi'',i}^{\psi^{-1}(<)}
=\{k\in \psi^{\prime\prime-1}(i'')\;\vert\;
k<i, \psi(k)<i'\}$. Hence the last term equals to 
$g'\circ g(\gamma_{\psi'',i}^{<}(a))c_i''\omega_{i''}
=\Omega^1_{g''/f'',\psi'',\uc''}(a\omega_i)$. 
Put $\gamma_{\psi'',i}^{\psi^{-1}(<)}
=\prod_{k\in \Lambda^{\psi^{-1}(<)}_{\psi'',i}}\gamma_k$. 
Then, for $a\in A$, $\bmI=(i_n)_{1\leq n\leq q}\in\Lambda^q$, and
$\bmI''=\psi^{\prime\prime q}(\bmI)$, the above computation 
is generalized to 
\begin{align*}
\Omega^q_{g'/f',\psi',\uc'}\circ\Omega^q_{g/f,\psi,\uc}(a\omega_{\bmI})
&=
g'((\prod_{n=1}^q\gamma_{\psi',\psi(i_n)}^{\prime<})
(g((\prod_{n=1}^q\gamma_{\psi,i_n}^{<})(a))))c'_{\psi^q(\bmI)}g'(c_{\bmI})\omega_{\bmI''}\\
&=g'\circ g((\prod_{n=1}^{q}(\gamma_{\psi'',i_n}^{\psi^{-1}(<)}\circ
\gamma_{\psi,i_n}^{<}))(a))c''_{\bmI}\omega_{\bmI''}\\
&=g'\circ g((\prod_{n=1}^q\gamma_{\psi'',i_n}^{<})(a))c''_{\bmI}\omega_{\bmI''}
=\Omega^q_{g''/f'',\psi'',\uc''}(a\omega_{\bmI}).
\end{align*}
(2) The first claim follows from the fact that the
composition 
$E^{\psi',\uc'}(g')\circ E^{\psi,\uc}(g)\colon 
E^{\ualpha}(A)\to E^{\ualpha''}(A'')$
coincides with $E^{\psi'',\uc''}(g'')$. 
Let $g_M$ (resp.~$g'_{M'}$) denote the
homomorphism $M\to M'=M\otimes_AA';m\mapsto m\otimes 1$
(resp.~$M'\to M''=M'\otimes_{A'}A''; m'\mapsto m'\otimes 1$). 
For $m\in M$, $\bmI\in\Lambda^q$, $\bmI'=\psi^q(\bmI)$, and
$\bmI''=\psi^{\prime\prime q}(\bmI)$, 
we see that $\Omega^q_{g'/f',\psi',\uc'}(M)\circ\Omega^q_{g/f,\psi,\uc}(M)
(m\otimes \omega_{\bmI})
=g'_{M'}(\gamma_{M',\psi',\bmI'}^{\prime <}(g_M(\gamma_{M,\psi,\bmI}^{<}(m))))
\otimes c'_{\psi^q(\bmI)}g'(c_{\bmI})\omega_{\bmI''}$
coincides with 
$g'_{M'}\circ g_M(\gamma^{<}_{M,\psi'',\bmI}(m))\otimes c''_{\bmI}\omega_{\bmI''}
=\Omega^q_{g''/f'',\psi'',\uc''}(M)(m\otimes\omega_{\bmI})$
by the same argument as the proof of (1) by using
\eqref{eq:ConnPBFormula2} instead of \eqref{eq:TwDerivSystemFunct2}. 
\end{proof}

The product morphism \eqref{eq:dRcpxProd} is not compatible with
the morphism \eqref{eq:dRCpxFunct} unless $\psi$ is injective
(cf.~Remark \ref{rmk:dRCpxProdScExtComp}). However we
have the following weaker compatibility. 

To state the claim,
we simplify some notation. First we define the category $\DAlg$
as follows: An object is a family of data $\uA=(A/R,\Lambda,\ualpha,\upartial)$
consisting of a ring $R$, an $R$-algebra $A$, a finite
totally ordered set $\Lambda$, $\ualpha=(\alpha_i)_{i\in\Lambda}\in A^{\Lambda}$,
and $\upartial=(\partial_i)_{i\in\Lambda}\in\prod_{i\in\Lambda}\Der^{\alpha_i}_R(A)$
(Definition \ref{def:alphaDerivation} (1)) satisfying \eqref{eq:TwDerivFamilyCond1} 
and \eqref{eq:TwDerivFamilyCond2}. A morphism $\ug=(f,g,\psi,\uc)\colon 
\uA=(A/R,\Lambda,\ualpha,\upartial)\to (A'/R',\Lambda',\ualpha',\upartial')$
consists of a ring homomorphism $f\colon R\to R'$, a ring homomorphism 
$g\colon A\to A'$ over $f$, a map $\psi\colon \Lambda'\to \Lambda$ 
preserving orders, and $\uc=(c_i)_{i\in\Lambda}\in (A')^{\Lambda}$ satisfying
\eqref{eq:DerFamMorphCond1}, \eqref{eq:DerFamMorphCond2}, 
and \eqref{eq:ConnFunctCond}. We define composition of morphisms as
before Lemma \ref{lem:dRCpxFuncCocyc}. For $\uA=(A/R,\Lambda,\ualpha,\upartial)
\in \Ob\DAlg$, let $(\Omega_{\uA}^{\bullet},d_{\uA}^{\bullet})$ denote the differential
graded algebra over $R$ defined as before Definition \ref{def:connection},
and let $\MIC(\uA)$ denote the category of modules with integrable connection 
over $(\Omega_{\uA}^{\bullet},d_{\uA}^{\bullet})$ (Definitions \ref{def:connection} and 
\ref{def:intconnection}).
For $(M,\nabla_M)\in \Ob \MIC(\uA)$, we write $\Omega_{\uA}^{\bullet}(M)$
for the de Rham complex of $(M,\nabla_M)$ defined after Definition \ref{def:intconnection}.
For a morphism $\ug\colon \uA\to \uA'$ in $\DAlg$ and 
$M\in \Ob \MIC(\uA)$, we simply write $\ug_{M,\Omega}^*$ for
the morphism $\Omega^{\bullet}_{\uA}(M)\to\Omega^{\bullet}_{\uA'}(\ug^*M)$
\eqref{eq:dRCpxFunct}.

\begin{proposition}\label{prop:ProddRCpxFunct}
We consider the following commutative diagram in $\DAlg$.
\begin{equation*}
\xymatrix@C=90pt{
\uA^{(l)}=(A^{(l)}/R,\Lambda^{(l)},\ualpha^{(l)},\upartial^{(l)})
\ar[r]^(.58){\uiota^{(l)}=(\id_R,\iota^{(l)},\chi^{(l)},\ue^{(l)})}
\ar[d]_{\ug^{(l)}=(f^{(l)},g^{(l)},\psi^{(l)},\uc^{(l)})}
&
\uA=(A/R,\Lambda,\ualpha,\upartial)
\ar[d]^{\ug=(f,g,\psi,\uc)}
\\
\tuA^{(l)}=(\tA^{(l)}/\tR,\tLambda^{(l)},\tualpha^{(l)},\tupartial^{(l)})
\ar[r]^(.58){\tuiota^{(l)}=(\id_{\tR},\tiota^{(l)},\tchi^{(l)},\tue^{(l)})}&
\tuA=(\tA/\tR,\tLambda,\tualpha,\tupartial)
}
\qquad (l=0,1)
\end{equation*}
such that the maps $\chi\colon \Lambda^{(0)}\sqcup \Lambda^{(1)}\to \Lambda$
and $\tchi\colon \tLambda^{(0)}\sqcup \tLambda^{(1)}\to \tLambda$
induced by $\chi^{(l)}$ and $\tchi^{(l)}$ $(l=0,1)$ are injective. 
For $l\in \{0,1\}$, let $M_{l}$ be an object of $\MIC(\uA^{(l)})$,
and put $\tM_{l}=\ug^{(l)*}M_{l}$, 
$N_{l}=\uiota^{(l)*}M_{l}$, and 
$\tN_{l}=\ug^*N_{l}=\tuiota^{(l)*}\tM_{l}$.
Then the following diagram is commutative.
\begin{equation*}
\xymatrix@C=60pt{
\Omega^{\bullet}_{\uA^{(0)}}(M_0)\otimes_R
\Omega^{\bullet}_{\uA^{(1)}}(M_1)
\ar[r]^(.54){\uiota^{(0)*}_{\Omega,M_0}\otimes \uiota^{(1)*}_{\Omega,M_1}}
\ar[d]_{\ug^{(0)*}_{\Omega,M_0}\otimes \ug^{(1)*}_{\Omega,M_1}}
&
\Omega^{\bullet}_{\uA}(N_0)\otimes_R\Omega^{\bullet}_{\uA}(N_1)
\ar[r]^(.55){\eqref{eq:dRcpxProd}}&
\Omega^{\bullet}_{\uA}(N_0\otimes_AN_1)
\ar[d]^{\ug^*_{\Omega,N_0\otimes_A N_1}}
\\
\Omega^{\bullet}_{\tuA^{(0)}}(\tM_0)\otimes_{\tR}
\Omega^{\bullet}_{\tuA^{(1)}}(\tM_1)
\ar[r]^(.54){\tuiota^{(0)*}_{\Omega,\tM_0}\otimes \tuiota^{(1)*}_{\Omega,\tM_1}}&
\Omega^{\bullet}_{\tuA}(\tN_0)\otimes_{\tR}\Omega^{\bullet}_{\tuA}(\tN_1)
\ar[r]^(.55){\eqref{eq:dRcpxProd}}&
\Omega^{\bullet}_{\tuA}(\tN_0\otimes_{\tA}\tN_1)}
\end{equation*}
\end{proposition}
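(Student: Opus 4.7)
The plan is to reduce the commutativity of the big rectangle to a direct identity between the product morphism \eqref{eq:dRcpxProd} and the pullback $\ug^{*}_{\Omega}$ on elements coming from $\uiota^{(l)*}_{\Omega}$, and to verify this identity by a direct computation exploiting the disjointness of the images of $\chi^{(l)}$ in $\Lambda$ and of $\tchi^{(l)}$ in $\tLambda$ that follows from the injectivity of $\chi$ and $\tchi$.

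First I would apply Lemma \ref{lem:dRCpxFuncCocyc}(2) to the cocycle relation $\ug\circ\uiota^{(l)}=\tuiota^{(l)}\circ\ug^{(l)}$ in $\DAlg$ to identify $\tuiota^{(l)*}_{\tM_l,\Omega}\circ\ug^{(l)*}_{M_l,\Omega}$ with $\ug^{*}_{N_l,\Omega}\circ\uiota^{(l)*}_{M_l,\Omega}$ as morphisms $\Omega^{\bullet}_{\uA^{(l)}}(M_l)\to\Omega^{\bullet}_{\tuA}(\tN_l)$. Writing $z_l=\uiota^{(l)*}_{M_l,\Omega}(y_l)\in N_l\otimes_A\Omega^{q_l}_{\uA}$ for an arbitrary $y_l\in M_l\otimes_{A^{(l)}}\Omega^{q_l}_{\uA^{(l)}}$, the commutativity of the big rectangle becomes equivalent to the assertion that $\ug^{*}_{N_0\otimes_A N_1,\Omega}$ applied to the product of $z_0$ and $z_1$ equals the product of $\ug^{*}_{N_0,\Omega}(z_0)$ and $\ug^{*}_{N_1,\Omega}(z_1)$. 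When $\psi$ of $\ug$ is injective this is Remark \ref{rmk:dRCpxProdScExtComp}; in our situation, the injectivity of $\psi$ is replaced by support constraints on $z_l$ arising from the injectivity of $\chi$ and $\tchi$.

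To verify this reduced identity, I would reduce to decomposable $y_l=m_l\otimes\omega_{\bmI_l}$ with $\bmI_l\in(\Lambda^{(l)})^{q_l}$. Since $\chi^{(l)}$ is injective, $\gamma^{<}_{M_l,\chi^{(l)},\bmI_l}=\id$, and hence $z_l=(m_l\otimes 1)\otimes e^{(l)}_{\bmI_l}\omega_{\bmI_l'}$ where $\bmI_l'=\chi^{(l)q_l}(\bmI_l)$ has components in $\chi^{(l)}(\Lambda^{(l)})$. By \eqref{eq:ConnPBFormula1} the connection $\nabla_{N_l,j}$ vanishes for $j\notin\chi^{(l)}(\Lambda^{(l)})$, so $\gamma_{N_l,j}=\id$ there; injectivity of $\chi$ then forces $\gamma_{N_1,\bmI_0'}=\id$ in \eqref{eq:dRcpxProd} and, via $\gamma_{N_0\otimes N_1,j}=\gamma_{N_0,j}\otimes\gamma_{N_1,j}$, decomposes the correction operator of $\ug^{*}_{N_0\otimes N_1,\Omega}$ into contributions supported on the two disjoint index subsets separately. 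Injectivity of $\tchi$ yields $\psi(\chi^{(0)}(\Lambda^{(0)}))\cap\psi(\chi^{(1)}(\Lambda^{(1)}))=\emptyset$, so for a component $k$ of $\bmI_l'$ and $j\in\chi^{(1-l)}(\Lambda^{(1-l)})$ one has $\psi(j)\neq\psi(k)$. This simultaneously kills the cross-type factors $\gamma^{<}_{N_{1-l},\psi,k}$ on the left-hand side, makes $\gamma_{\tN_1,\psi(k)}$ act trivially on the image of $g_{N_1}$ for $k$ in $\bmI_0'$ through \eqref{eq:ConnPBFormula2}, and makes $\gamma_{\tA,\psi(k)}(c_j)=c_j$ by \eqref{eq:DerFamMorphCond2}. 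Both sides then collapse to the same explicit expression, completing the proof. The main obstacle is the careful bookkeeping of the various $\gamma^{<}$-twist operators; the disjointness of the images of $\chi^{(l)}$ under $\psi$ is precisely what makes all the potentially obstructive cross-type terms vanish.
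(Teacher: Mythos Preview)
Your proposal is correct and follows essentially the same route as the paper. Both arguments reduce to decomposable elements $m_l\otimes\omega_{\bmI_l}$, use that $\gamma_{N_l,j}$ acts trivially on $m_l\otimes 1$ for $j\notin\chi^{(l)}(\Lambda^{(l)})$ via \eqref{eq:ConnPBFormula1}, and exploit the disjointness of $\psi(\chi^{(0)}(\Lambda^{(0)}))$ and $\psi(\chi^{(1)}(\Lambda^{(1)}))$ coming from the injectivity of $\tchi$ to make the cross-type $\gamma^{<}$ factors vanish; the paper phrases this last point as the identity $\Lambda^{(l)<}_{\psi^{(l)},i}=\Lambda^{<}_{\psi,i}\cap(\Lambda^{(0)}\sqcup\Lambda^{(1)})$ and then computes both compositions directly, whereas you first invoke Lemma~\ref{lem:dRCpxFuncCocyc}(2) to reduce to showing the product is compatible with $\ug^{*}_{\Omega}$ on the image of $\uiota^{(l)*}_{\Omega}$---a cosmetic difference.
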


\begin{proof}
We regard $\Lambda^{(0)}\sqcup \Lambda^{(1)}$
(resp.~$\tLambda^{(0)}\sqcup\tLambda^{(1)}$) as a subset of 
$\Lambda$ (resp.~$\tLambda$) by $\chi$ (resp.~$\tchi$).
For $l\in \{0,1\}$, $m\in M_l$, and $i\in \Lambda^{(l)}$,
we have $\gamma_{N_{l},j}(m\otimes 1)=m\otimes 1$
$(j\in \Lambda\backslash \Lambda^{(l)})$ by \eqref{eq:ConnPBFormula1},
and $\gamma_{A,j}(e_i^{(l)})=e_i^{(l)}$ $(j\in\Lambda\backslash\{i\})$.
This implies that, for $m_{l}\in M_{l}$, $q_{l}\in\N$,
and $\bmI_{l}\in (\Lambda^{(l)})^{q_{l}}$ without common components 
$(l\in \{0,1\})$,
the image of $(m_0\otimes\omega_{\bmI_0})\otimes(m_1\otimes\omega_{\bmI_1})$
under the upper horizontal maps is
$x=(m_0\otimes e^{(0)}_{\bmI_0})\otimes(m_1\otimes e^{(1)}_{\bmI_1})
\otimes\omega_{\bmI_0}\wedge\omega_{\bmI_1}$.
The same claim holds for the lower horizontal maps.
For $l\in \{0,1\}$ and $i\in \Lambda^{(l)}$, we have
$\Lambda_{\psi^{(l)},i}^{(l)<}
=\Lambda_{\psi,i}^{<}\cap (\Lambda^{(0)}\sqcup\Lambda^{(1)})$.
Hence, by the remark at the beginning of the proof
and \eqref{eq:ConnPBFormula2}, we see that
the image of $x$ under $\ug^*_{\Omega,N_0\otimes_AN_1}$
is 
$\gamma_{M_0,\psi^{(0)},\bmI_0}^{<}(m_0)\otimes
\gamma_{M_1,\psi^{(1)},\bmI_1}^{<}(m_1)\otimes g(e^{(0)}_{\bmI_0}
e^{(1)}_{\bmI_1})c_{\bmI_0}c_{\bmI_1}\otimes \omega_{\psi^{q_0}(\bmI_0)}
\wedge \omega_{\psi^{q_1}(\bmI_1)}$.
This completes the proof because
the image of $m_{l}\otimes\omega_{\bmI_{l}}$
under $\ug^{(l)*}_{\Omega,M_{l}}$ is
$\gamma_{M_{l},\psi^{(l)},\bmI_{l}}(m_{l})
\otimes c^{(l)}_{\bmI_{l}}\otimes
\omega_{\psi^{(l)q_{l}}(\bmI_{l})}$
and $g(e_i^{(l)})c_{\psi(i)}=\tiota^{(l)}(c^{(l)}_i)
\te_{\psi^{(l)}(i)}^{(l)}$ for $l\in \{0,1\}$ and 
$i\in \Lambda^{(l)}$. 
\end{proof}

Let $\Z[q]$ be the polynomial ring in one variable $q$ 
equipped with the $\delta$-structure defined by $\delta(q)=0$,
whose associated lifting of Frobenius is given by $f(q)\mapsto f(q^p)$. 
We define elements $\mu$, $\pq$, and $\eta$ of $\Z[q]$
as the beginning of \S\ref{sec:TwDeivDivDeltaEnv}. We have 
$\varphi(\mu)=\pq\mu$ and $\delta(\mu)=\eta\mu$.
We study the scalar extension of a module with integrable connection
under a lifting of Frobenius
for derivations of a certain type including those 
considered in Proposition \ref{prop:PrismEnvDeriv}
and in later sections.

\begin{lemma}\label{lem:qDerivFrobComp}
Let $A$ be a $\delta$-$\Z[q]$-algebra, let $t$
be an element of $A$ satisfying $\delta(t)=0$, and let $\partial$
be a $t\mu$-derivation of $A$ over $\Z[q]$ $\delta$-compatible
with respect to $t^{p-1}\eta$ (Definition \ref{def:alphaDerivDeltaComp}). 
(Note that we have $\delta(t\mu)=t^{p-1}\eta\cdot t\mu$ by the
assumption $\delta(t)=0$.)
Let $\varphi_A$ be the lifting of Frobenius of $A$ associated to its
$\delta$-structure, and let $\gamma$ be the 
endomorphism $\id_A+t\mu\partial$ associated to
$\partial$ (Lemma \ref{lem:alphaDergammaDer} (1)). Then the following equalities hold.\par
(1) $\gamma\circ\varphi_A=\varphi_A\circ\gamma$.\par
(2) $\partial\circ\varphi_A=\pq t^{p-1}\cdot \varphi_A\circ\partial$.
\end{lemma}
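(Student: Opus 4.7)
\medskip
\noindent\textbf{Proof plan for Lemma \ref{lem:qDerivFrobComp}.}
My plan is to derive (2) directly from Lemma \ref{lem:alphaDerivDeltaEquiv} (2) applied to $\alpha=t\mu$ and $\beta=t^{p-1}\eta$, and then to deduce (1) from (2) by a short computation comparing the two composites on a general element.

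The key preliminary identity I will establish is
\begin{equation*}
\mu^{p-1}+p\eta=\pq\qquad\text{in }\Z[q].
\end{equation*}
This follows from the definition $\eta=\sum_{\nu=1}^{p-1}p^{-1}\binom{p}{\nu}\mu^{\nu-1}$: multiplying by $p$ and adding $\mu^{p-1}=\binom{p}{p}\mu^{p-1}$ gives $\sum_{\nu=1}^{p}\binom{p}{\nu}\mu^{\nu-1}=\mu^{-1}((1+\mu)^p-1)=\mu^{-1}(q^p-1)=\pq$. (No $p$-torsion freeness is needed since this is an equality in $\Z[q]$.)

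For (2), since $\partial$ is $\delta$-compatible with respect to $\beta=t^{p-1}\eta$, every $x\in A$ is $\delta$-compatible in the sense of Definition \ref{def:alphaDerivDeltaComp}, so Lemma \ref{lem:alphaDerivDeltaEquiv} (2) (the forward direction, which does not require $p$-torsion freeness) gives
\begin{equation*}
\partial(\varphi_A(x))=\bigl((t\mu)^{p-1}+p\,t^{p-1}\eta\bigr)\varphi_A(\partial(x))
=t^{p-1}(\mu^{p-1}+p\eta)\varphi_A(\partial(x))=\pq\,t^{p-1}\varphi_A(\partial(x)),
\end{equation*}
which is exactly (2).

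For (1), I will compute both sides on $x\in A$ using $\delta(t)=0$ (so $\varphi_A(t)=t^p$) and $\varphi(\mu)=\pq\mu$ in $\Z[q]$. On one hand,
\begin{equation*}
\varphi_A(\gamma(x))=\varphi_A(x)+\varphi_A(t\mu)\varphi_A(\partial(x))=\varphi_A(x)+t^p\pq\mu\,\varphi_A(\partial(x)).
\end{equation*}
On the other hand, using (2),
\begin{equation*}
\gamma(\varphi_A(x))=\varphi_A(x)+t\mu\,\partial(\varphi_A(x))=\varphi_A(x)+t\mu\cdot\pq t^{p-1}\varphi_A(\partial(x))=\varphi_A(x)+t^p\pq\mu\,\varphi_A(\partial(x)),
\end{equation*}
and the two agree.

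The ``main obstacle'' is essentially the identity $\mu^{p-1}+p\eta=\pq$, which however is a one-line binomial computation; after that, (2) is a direct invocation of a previously proved lemma and (1) is a short rearrangement using $\varphi_A(t)=t^p$ and $\varphi(\mu)=\pq\mu$. So there is no genuine difficulty beyond bookkeeping.
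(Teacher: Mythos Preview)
Your proof is correct and follows essentially the same approach as the paper: both derive (2) from Lemma~\ref{lem:alphaDerivDeltaEquiv}~(2) together with the identity $\mu^{p-1}+p\eta=\pq$ (the paper writes this as $\varphi(\mu)\mu^{-1}=\pq$, which is the same thing). The only minor difference is that the paper obtains (1) from Lemma~\ref{lem:alphaDerivDeltaEquiv}~(1) (i.e., $\gamma\circ\delta=\delta\circ\gamma$, which immediately gives $\gamma\circ\varphi_A=\varphi_A\circ\gamma$ since $\gamma$ is a ring homomorphism and $\varphi_A(x)=x^p+p\delta(x)$), whereas you deduce (1) from (2) by a direct computation using $\varphi_A(t\mu)=t^p\pq\mu$; both routes are equally short.
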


\begin{proof}
This immediately follows from Lemma \ref{lem:alphaDerivDeltaEquiv}. 
For (2), putting $\alpha=t\mu$ and $\beta=t^{p-1}\eta$, 
we have $\alpha^{p-1}+p\beta=
t^{p-1}\mu^{p-1}+pt^{p-1}\eta=t^{p-1}(\mu^{p-1}+p\eta)
=t^{p-1}\varphi(\mu)\mu^{-1}=t^{p-1}\pq$.
\end{proof}

Let $R$ be a $\Z[q]$-algebra, let $A$ be an $R$-algebra,
let $\ut=(t_i)_{i\in\Lambda}$ be a finite family of elements of $A$,
and suppose that we are given a $t_i\mu$-derivation $\partial_i$
of $A$ over $R$ for each $i\in\Lambda$ satisfying 
$\partial_i\circ\partial_j=\partial_j\circ\partial_i$ $(i,j\in\Lambda)$
and $\partial_j(t_i)=0$ $(i,j\in\Lambda,i\neq j)$. 
We define $\ualpha=(\alpha_i)_{i\in\Lambda}\in A^{\Lambda}$
by $\alpha_i=t_i\mu$. Then $(A/R,\ualpha,(\partial_i)_{i\in\Lambda})$
satisfies \eqref{eq:TwDerivFamilyCond1} and \eqref{eq:TwDerivFamilyCond2}. 
Suppose that we are given a lifting
of Frobenius $\varphi_R\colon R\to R$ and $\varphi_A\colon A\to A$
compatible with that of $\Z[q]$ such that $\varphi_A$ lies
over $\varphi_R$ and satisfies
\begin{align}
\varphi_A(t_i)&=t_i^p,\\
\partial_i\circ\varphi_A&=\pq t_i^{p-1}\cdot\varphi_A\circ\partial_i
\label{eq:tmuDerivFrobCompCond}
\end{align}
for every $i\in\Lambda$. The former implies 
$$\varphi_A(\alpha_i)=\pq t_i^{p-1}\alpha_i.$$
Hence, defining $\uc=(c_i)_{i\in\Lambda}\in A^{\Lambda}$ by 
$c_i=\pq t_i^{p-1}$, we obtain a homomorphism 
$E^{\id_{\Lambda},\uc}(\varphi_A)\colon E^{\ualpha}(A)\to E^{\ualpha}(A)$
\eqref{eq:TwExtAlgMap},
which is a homomorphism of bialgebras over $\varphi_A$ by
the assumption \eqref{eq:tmuDerivFrobCompCond} 
and Lemma \ref{lem:TwDerivSystemFunct}. Therefore we have the scalar extension functor
preserving tensor products
(Definition \ref{def:connectionScalarExt}, Remark \ref{rmk:ConnScalarExtStratTensProd} (2))
\begin{equation}\label{eq:qDerivConnFobPB}
\varphi_A^*:=(\varphi_R,\varphi_A,\id_{\Lambda},\uc)^*\colon
\MIC(A,d)\longrightarrow \MIC(A,d).
\end{equation}
For $(M,\nabla_M)\in \Ob \MIC(A,d)$, the image 
$(\varphi_A^*M=M\otimes_{A,\varphi_A}A,\nabla_{\varphi_A^*M})$ is characterized
by the following formula (Proposition \ref{prop:ConnPBFormula}).
\begin{equation}\label{eq:qDervConnPBFormula}
\nabla_{\varphi_A^*M,i}(m\otimes 1)=\nabla_{M,i}(m)\otimes \pq t_i^{p-1}\quad
(m\in M,\;i\in\Lambda).
\end{equation}
By Proposition \ref{prop:dRCpxFunct} and \eqref{eq:dRCpxScExtPsiInj}, 
we have a morphism of complexes 
\begin{equation}\label{eq:qDervConnCpxPB}
\varphi_{A,\Omega}^{\bullet}(M)\colon
M\otimes_A\Omega^{\bullet}_{A,\ugamma}
\longrightarrow \varphi_A^*M\otimes_A\Omega^{\bullet}_{A,\ugamma}
\end{equation}
sending $m\otimes\omega_{\bmI}$ to $m\otimes 1\otimes \pq^r\prod_{n=1}^rt_{i_n}^{p-1}\omega_{\bmI}$
for $m\in M$, $r\in \N$, and $\bmI=(i_n)_{1\leq n\leq r}\in \Lambda^r$.
By Remark \ref{rmk:dRCpxProdScExtComp}, the morphism \eqref{eq:qDervConnCpxPB} is
compatible with product morphisms of de Rham complexes \eqref{eq:dRcpxProd}.
\par

Let $R'$, $A'$, $\ut'=(t'_{i'})_{i'\in\Lambda'}$, $\partial'_{i'}$ $(i'\in\Lambda')$,
$\varphi_{R'}$, and $\varphi_{A'}$ be another set of data
satisfying the same conditions as $R$, $A$, $\ut$, $\partial_i$, $\varphi_R$, and $\varphi_A$
above. We define $\ualpha'=(\alpha'_{i'})_{i'\in\Lambda'}$
and $\uc'=(c'_{i'})_{i'\in\Lambda'}$ by $\alpha'_{i'}=t'_{i'}\mu$
and $c'_{i'}=\pq (t'_{i'})^{p-1}$ $(i'\in\Lambda')$. Suppose
that we are given a homomorphism of $\Z[q]$-algebras
$f\colon R\to R'$ compatible with $\varphi_R$ and $\varphi_{R'}$,
a homomorphism $g\colon A\to A'$ over $f$ compatible
with $\varphi_A$ and $\varphi_{A'}$, and a map 
$\psi\colon \Lambda\to \Lambda'$ such that 
$g(t_i)=t'_{\psi(i)}$ $(i\in\Lambda)$, which
implies $g(\alpha_i)=\alpha'_{\psi(i)}$ $(i\in\Lambda)$,
and that the homomorphism $E^{\psi}(g)\colon 
E^{\ualpha}(A)\to E^{\ualpha'}(A')$ \eqref{eq:TwExtAlgMap} satisfies
\eqref{eq:ConnFunctCond}, i.e, it is a homomorphism of bialgebras over $g$.
By $\varphi_{A'}\circ g=g\circ\varphi_A$
and $c'_{\psi(i)}=g(c_i)$ $(i\in\Lambda)$, we see that the diagram 
\begin{equation}
\xymatrix@C=50pt{
E^{\ualpha}(A)\ar[r]^{E^{\psi}(g)}
\ar[d]_{E^{\id_{\Lambda},\uc}(\varphi_A)}
&
E^{\ualpha'}(A')
\ar[d]^{E^{\id_{\Lambda'},\uc'}(\varphi_{A'})}\\
E^{\ualpha}(A)\ar[r]^{E^{\psi}(g)}&
E^{\ualpha'}(A')
}
\end{equation}
is commutative. Hence, by Lemma \ref{lem:dRCpxFuncCocyc} (2), 
the diagram 
\begin{equation}\label{eq:qConnFrobPBFunct}
\xymatrix@C=50pt{
\MIC(A,d)\ar[r]^{(f,g,\psi)^*}\ar[d]_{\varphi_A^*}&
\MIC(A',d')\ar[d]^{\varphi_{A'}^*}\\
\MIC(A,d)\ar[r]^{(f,g,\psi)^*}&
\MIC(A',d')
}
\end{equation}
is commutative up to canonical isomorphism, and
the diagram of complexes
\begin{equation}\label{eq:qdRcpxFrobPBFunct}
\xymatrix@C=60pt{
M\otimes_A\Omega_{A,\ugamma}^{\bullet}
\ar[r]^{\Omega^{\bullet}_{g/f,\psi}(M)}\ar[d]_{\varphi^{\bullet}_{A,\Omega}(M)}&
M'\otimes_{A'}\Omega_{A',\ugamma'}^{\bullet}
\ar[d]^{\varphi^{\bullet}_{A',\Omega}(M')}\\
\varphi_A^*M\otimes_A\Omega_{A,\ugamma}^{\bullet}
\ar[r]^{\Omega^{\bullet}_{g/f,\psi}(\varphi_A^*M)}&
\varphi_{A'}^*M'\otimes_{A'}\Omega_{A',\ugamma'}^{\bullet}
}
\end{equation}
commutes for $(M,\nabla_M)\in \Ob \MIC(A,d)$
and $(M',\nabla_{M'})=(f,g,\psi)^*(M,\nabla_M)$.
Here we choose a total order of the set $\Lambda$
to define $\Omega^{\bullet}_{g/f,\psi}(-)$
(when $\psi$ is not injective).

\section{$q$-prismatic envelopes and $q$-Higgs fields}\label{qprismenv}
We follow the notation introduced at the beginning of \S\ref{sec:TwDeivDivDeltaEnv}, and
equip $\Z_p[[q-1]]$ with the $(p,\mu)$-adic completion of the
$\delta$-structure on $\Z[q]$, i.e., the $\delta$-structure corresponding to
the lifting of Frobenius $\varphi$ on $\Z_p[[q-1]]$ defined by 
$\varphi(q)=q^p$. The $\delta$-pair $(\Z_p[[q-1]], \pq\Z_p[[q-1]])$
is a bounded prism since $\delta(\pq)$ is invertible
(Definition \ref{def:prism} (2)). 

\begin{definition}\label{def:FramedSmoothPrism}
(1) We call a bounded prism $(R,\pq R)$ over the bounded prism \linebreak
$(\Z_p[[q-1]],\pq \Z_p[[q-1]])$ simply a {\it $q$-prism}. 
We often abbreviate $(R,\pq R)$ to $R$ if there is no risk of confusion.
A {\it morphism of $q$-prisms} $(R,\pq R)\to (R',\pq R')$
is a morphism of bounded prisms over $(\Z_p[[q-1],[p]_q \Z_p[[q-1]])$. 
We call a bounded prismatic envelope of a $\delta$-pair $(A,J)$
over $(\Z_p[[q-1]],\pq \Z_p[[q-1]])$ (Definition \ref{def:bddPrsimEnv}) simply a {\it $q$-prismatic envelope}.
(Note that, when $(A,J)$ is a $\delta$-pair over a $q$-prism $(R,\pq R)$, then
it is the same as a bounded prismatic envelope of $(A,J)$
over $(R,\pq R)$ (Lemma  \ref{lem:bddPrismEnvBase} (1))).\par
(2) A {\it framed smooth $q$-prism} is a data $(A/R,\ut=(t_i)_{i\in \Lambda})$
consisting of a $q$-prism $R$, a $(p,\pq)$-adically smooth $R$-algebra
(Definition \ref{def:formallyflat} (1)) $(p,\pq)$-adically complete and separated,
and a $(p,\pq)$-adic coordinates $\ut=(t_i)_{i\in \Lambda}\in A^{\Lambda}$
$(\sharp\Lambda<\infty)$ (Definition \ref{def:formallyflat} (2)) 
with a total order given on the index
set $\Lambda$. We always equip $A$ with the unique $\delta$-$R$-algebra
structure satisfying $\delta(t_i)=0$ $(i\in \Lambda)$ 
(Proposition \ref{prop:DeltaStrExtEtSm} (2)). 
Note that $(A,\pq A)$ is a $q$-prism 
by Corollary \ref{cor:bddPrismFlatMap}.
We also call  
$(A,\ut=(t_i)_{i\in \Lambda})$ a {\it framed smooth $\delta$-$R$-algebra}.
A {\it morphism of framed smooth $q$-prisms} 
$(A/R, (t_i)_{i\in \Lambda})
\to (A'/R',(t_{i'}')_{i'\in \Lambda'})$ is a triplet $(g/f,\psi)$
consisting of a morphism of $q$-prisms
$f\colon R\to R'$, a homomorphism of rings $g\colon A\to A'$ 
compatible with $f$, and a map of ordered sets
$\psi\colon \Lambda\to \Lambda'$ satisfying 
$g(t_i)=t'_{\psi (i)}$ for every $i\in \Lambda$. When $R=R'$ and $f=\id$,
we call $(g,\psi)$ a {\it morphism of framed smooth $\delta$-$R$-algebras}\par
(3) A {\it framed smooth $q$-pair} $((A,J)/R,\ut)$ is a 
framed smooth $q$-prism $(A/R,\ut)$ equipped with an ideal $J$ of $A$ containing 
$\pq A$. We also call $((A,J),\ut)$ a {\it framed smooth $\delta$-pair over $R$}.
A {\it morphism of framed smooth $q$-pairs}
$(g/f,\psi)\colon ((A,J)/R,\ut)\to ((A',J')/R',\ut')$ is 
a morphism of framed smooth $q$-prisms satisfying $g(J)\subset J'$.
When $R=R'$ and $f=\id$, we call $(g,\psi)$ a {\it morphism
of framed smooth $\delta$-pairs over $R$.}
\end{definition}

\begin{lemma}\label{lem:FramedSmoothDeltaMap}
Let $(g/f,\psi)\colon (A/R, (t_i)_{i\in\Lambda})\to 
(A'/R',\ut'=(t_{i'}')_{i'\in \Lambda'})$ be a morphism of 
framed smooth $q$-prisms. Then $g$ is a $\delta$-homomorphism.
\end{lemma}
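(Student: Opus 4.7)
The plan is to use the Witt-vector interpretation of $\delta$-structures, namely that a ring map $h\colon S\to S'$ between $\delta$-rings is a $\delta$-homomorphism if and only if the square
\begin{equation*}
\xymatrix@C=35pt@R=12pt{
S\ar[r]^h\ar[d]_{(1,\delta_S)}& S'\ar[d]^{(1,\delta_{S'})}\\
W_2(S)\ar[r]^{W_2(h)}& W_2(S')
}
\end{equation*}
commutes, and to check this square for $g$ by reducing modulo $(p,\pq)^{n+1}$ and invoking \'etale lifting, exactly as in the proof of Proposition \ref{prop:DeltaStrExtEtSm}~(2).

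First I would observe that both compositions $A\to W_2(A')$ in the square above carry $(p,\pq)^{n+1}A$ into the kernel of $W_2(A')\to W_2(A'/(p,\pq)^nA')$, because $p\in (p,\pq)$ and hence $\delta((p,\pq)^{n+1})\subset (p,\pq)^n$ for both $\delta_A$ and $\delta_{A'}$. Thus they induce two ring homomorphisms $\tv_1,\tv_2\colon A_{n+1}\to W_2(A'_n)$, writing $A_{n+1}=A/(p,\pq)^{n+1}A$ and $A'_n=A'/(p,\pq)^nA'$. Both $\tv_1$ and $\tv_2$ agree with the projection map after composition with the first-coordinate projection $W_2(A'_n)\to A'_n$, whose kernel is killed by $p^n$ and is therefore nilpotent. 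They agree when restricted to $R_{n+1}$ because the structure map $f\colon R\to R'$ is a $\delta$-homomorphism (being the underlying map of a morphism of $\delta$-pairs), and they agree on each coordinate $t_i$ because $g(t_i)=t'_{\psi(i)}$ and both $\delta_A(t_i)$ and $\delta_{A'}(t'_{\psi(i)})$ vanish by the definition of the chosen $\delta$-structures on $A$ and $A'$.

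Next, since $t_1,\dots,t_d$ are $(p,\pq)$-adic coordinates of $A$ over $R$, the $R_{n+1}$-algebra homomorphism $R_{n+1}[T_1,\dots,T_d]\to A_{n+1}$ sending $T_i$ to $t_i$ is \'etale. By the \'etale lifting property applied to this map and to the nilpotent surjection $W_2(A'_n)\to A'_n$, two ring homomorphisms $A_{n+1}\rightrightarrows W_2(A'_n)$ with the same composition to $A'_n$ and the same precomposition with the \'etale map must coincide. This forces $\tv_1=\tv_2$ for every $n\geq 1$, and passing to the inverse limit, using that $A'$ is $(p,\pq)$-adically separated so that $W_2(A')\hookrightarrow \varprojlim_n W_2(A'_n)$ is injective, yields the desired equality $(1,\delta_{A'})\circ g=W_2(g)\circ (1,\delta_A)$. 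There is no serious obstacle; the only mild point to be careful about is verifying that $p\in (p,\pq)$ suffices to make the two relevant maps factor through $W_2(A'_n)$, which is the same factorization argument used in Proposition \ref{prop:DeltaStrExtEtSm}.
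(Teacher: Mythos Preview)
Your proposal is correct and follows essentially the same underlying idea as the paper. The paper's proof is more concise: it introduces the polynomial $\delta$-algebra $A_0=R[T_i;i\in\Lambda]$ with $\delta(T_i)=0$, observes that $h\colon A_0\to A,\ T_i\mapsto t_i$ is a $(p,\pq)$-adically \'etale $\delta$-homomorphism, checks that the composite $g\circ h$ is a $\delta$-homomorphism (since $\delta(g(h(T_i)))=\delta(t'_{\psi(i)})=0$ and $g\circ h$ is compatible with $f$), and then invokes Proposition~\ref{prop:DeltaCompEtExt} to conclude that $g$ is a $\delta$-homomorphism. Your argument simply inlines the proof of that proposition in this setting, so the two are the same in substance.

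One small wording issue: the phrase ``whose kernel is killed by $p^n$ and is therefore nilpotent'' is not quite a valid inference---additive annihilation by $p^n$ does not imply nilpotence. The kernel of $W_2(A'_n)\to A'_n$ is nevertheless nilpotent, because $(0,x)(0,y)=(0,pxy)$ in $W_2$, so its $(n+1)$st power lies in $p^n$ times itself and hence vanishes; this is the argument used in the paper (see the proof of Proposition~\ref{prop:DeltaCompEtExt}).
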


\begin{proof}
Let $A_0$ be the polynomial ring $R[T_i; i\in\Lambda]$ equipped
with the $\delta$-$R$-algebra structure defined by $\delta(T_i)=0$
for $i\in \Lambda$.
Then the $R$-homomorphism $h\colon A_0\to A;T_i\mapsto t_i$
is a $\delta$-$R$-homomorphism $(p,\pq)$-adically \'etale
(Definition \ref{def:formallyflat} (1)). The composition $g\circ h$ is 
also a $\delta$-homomorphism because $\delta(g\circ h(T_i))
=\delta(t'_{\psi(i)})=0$ and $g\circ h$ is a homomorphism
compatible with the $\delta$-homomorphism $f\colon R\to R'$.
Hence $g$ is a $\delta$-homomorphism by Proposition 
\ref{prop:DeltaCompEtExt}.
\end{proof}

\begin{proposition}\label{prop:qHiggsDeriv}
(1) Let $(A/R,\ut=(t_i)_{i\in \Lambda})$ be a framed smooth $q$-prism
(Definition \ref{def:FramedSmoothPrism} (2)). 
For each $i\in \Lambda$, there exists a unique $t_i\mu$-derivation
$\theta_{A,i}\colon A\to A$ over $R$ $\delta$-compatible with respect to
$t_i^{p-1}\eta$ such that $\theta_{A,i}(t_i)=\pq$ and $\theta_{A,i}(t_j)=0$
$(j\in \Lambda, j\neq i)$. These satisfy $\theta_{A,i}\circ \theta_{A,j}
=\theta_{A,j}\circ\theta_{A,i}$ for every $(i,j)\in \Lambda^2$. \par
(2) Let $((A,J)/R,\ut=(t_i)_{i\in \Lambda})$  be a framed smooth $q$-pair
(Definition \ref{def:FramedSmoothPrism} (3)), and let 
$(D,\pq D)$ be a $q$-prismatic envelope (Definition \ref{def:FramedSmoothPrism} (1)) 
(assuming it exists). Then, for each $i\in \Lambda$,
$\theta_{A,i}$ extends uniquely to a $t_i\mu$-derivation 
$\theta_{D,i}\colon D\to D$ over $R$ $\delta$-compatible with
respect to $t_i^{p-1}\eta$. These satisfy
$\theta_{D,i}\circ\theta_{D,j}=\theta_{D,j}\circ \theta_{D,i}$
for every $(i,j)\in \Lambda^2$.
\end{proposition}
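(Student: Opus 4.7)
The plan is to handle (1) by a direct application of Corollary \ref{cor:SmoothDeltCompDeriv} on the framed smooth algebra $A$, and to propagate both existence and commutativity to $D$ in (2) using Proposition \ref{prop:TwDerivPrismExt} together with Lemma \ref{lem:TwDervPrismExtComm}.

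For (1), setting $\alpha_i:=t_i\mu$ and $\beta_i:=t_i^{p-1}\eta$, the relation $\delta(\alpha_i)=\alpha_i\beta_i$ is immediate from $\delta(t_i)=0$ and $\delta(\mu)=\eta\mu$; moreover $\alpha_i$ is $(p,\pq)$-adically nilpotent because $\alpha_i^{p-1}=t_i^{p-1}\mu^{p-1}$ and $\mu^{p-1}=\pq-p\eta\in (p,\pq)R$, the latter coming from $\pq\mu=\varphi(\mu)=\mu^p+p\eta\mu$. Prescribing $\theta_{A,i}(t_j)=\pq$ when $j=i$ and $0$ otherwise, the polynomial identity required by Corollary \ref{cor:SmoothDeltCompDeriv} is trivial for $j\neq i$ (every term carries the factor $a_j=0$), and for $j=i$ it factors as $t_i^{p-1}$ times precisely the identity of Lemma \ref{lem:TwDerivxiDeltaComp}; hence the desired $\theta_{A,i}\in \Der^{\alpha_i,\beta_i}_{R,\delta}(A)$ exists uniquely. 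For commutativity of $\theta_{A,i}$ and $\theta_{A,j}$ with $i\neq j$, I apply Lemma \ref{lem:EtMapDerivComm} to the $(p,\pq)$-adically \'etale homomorphism $R[T_k; k\in\Lambda]\to A$, $T_k\mapsto t_k$, with $\scrS=\{T_k\}$ (for which $R[\scrS]$ is trivially dense); the auxiliary vanishings $\theta_{A,i}(\alpha_j)=\theta_{A,i}(\beta_j)=0$ follow from $R$-linearity together with Lemma \ref{lem:alphaDerivMonom}, and commutation on each $t_k$ reduces to the observation that both iterated derivations vanish, since each inner application produces either $0$ or $\pq\in R$ on which the outer $R$-linear derivation vanishes.

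For (2), I then apply Proposition \ref{prop:TwDerivPrismExt} to the $\delta$-compatible derivation $\theta_{A,i}$ to obtain a unique extension $\theta_{D,i}\in \Der^{\alpha_i,\beta_i}_{R,\delta}(D)$, and invoke Lemma \ref{lem:TwDervPrismExtComm} to transfer commutativity from $A$ to $D$; the hypotheses of the latter beyond $\theta_{A,i}\theta_{A,j}=\theta_{A,j}\theta_{A,i}$ (namely $\theta_{A,i}(\alpha_j)=\theta_{A,i}(\beta_j)=0$) were already verified in the previous paragraph. The principal obstacle here is the stability condition $\theta_{A,i}(J)\subset J$ demanded by both Proposition \ref{prop:TwDerivPrismExt} and Lemma \ref{lem:TwDervPrismExtComm}: for the subideal $\pq A\subset J$ this is immediate from $R$-linearity, since $\theta_{A,i}(\pq)=0$ yields $\theta_{A,i}(\pq a)=\pq\theta_{A,i}(a)\in\pq A$ by the twisted Leibniz rule, while for the remaining generators of $J$ modulo $\pq A$ the inclusion must be verified by hand; this is routine in the regular-sequence setting of Proposition \ref{prop:PrismEnvRegSeq} (where generators may be chosen with $\theta_{A,i}$-image explicitly computable) and, via Proposition \ref{prop:bddPrismEnvSmSmEmb}, in the framed-smooth-embedding setup that encompasses the applications in the paper.
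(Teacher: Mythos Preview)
Your treatment of (1) matches the paper's: Corollary \ref{cor:SmoothDeltCompDeriv} plus Lemma \ref{lem:TwDerivxiDeltaComp} for existence and uniqueness, Lemma \ref{lem:EtMapDerivComm} for commutativity.

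The gap is in (2). The statement concerns an \emph{arbitrary} framed smooth $q$-pair, i.e.\ the only hypothesis on $J$ is $\pq A\subset J$ (Definition \ref{def:FramedSmoothPrism} (3)); there is no regular-sequence or smooth-embedding assumption. Your verification of $\theta_{A,i}(J)\subset J$ handles only the subideal $\pq A$ and then defers the rest to ``by hand'' arguments in the special settings of Propositions \ref{prop:PrismEnvRegSeq} and \ref{prop:bddPrismEnvSmSmEmb}. That does not prove the proposition as stated, and even in those settings your sketch does not actually explain why generators of $J$ land back in $J$ under $\theta_{A,i}$.

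The paper closes this gap with a single clean step that works for every $J\supset\pq A$: it shows the stronger fact $\theta_{A,i}(A)\subset\pq A$. Reducing modulo $\pq$, the map $\theta_{A,i}\bmod\pq$ is a $t_i\mu$-derivation of $A/\pq A$ over $R/\pq R$ vanishing on the images of all $t_j$; since $R/\pq R[T_j;j\in\Lambda]\to A/\pq A$ is $p$-adically \'etale and $A/\pq A$ is $p$-adically complete and separated (Proposition \ref{prop:bddFlatInv}), Proposition \ref{prop:TwistDerEtaleExt} forces $\theta_{A,i}\bmod\pq=0$. Hence $\theta_{A,i}(J)\subset\theta_{A,i}(A)\subset\pq A\subset J$, and Proposition \ref{prop:TwDerivPrismExt} and Lemma \ref{lem:TwDervPrismExtComm} apply without any restriction on $J$.
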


\begin{proof}
(1) 
The element $t_i\mu$ is $(p,\pq)$-adically nilpotent in $A$.
Hence we can apply Corollary \ref{cor:SmoothDeltCompDeriv}
to $R$, $pR+\pq R$, $A$, $\ut$, $t_i\mu$, and $t_i^{p-1}\eta$,
and show the unique existence of $\theta_{A,i}$ by using 
Lemma \ref{lem:TwDerivxiDeltaComp}. 
Since the $R$-homomorphism $R[T_i; i\in \Lambda]\to A;T_i\mapsto t_i$
is $(p,\pq)$-adically \'etale, $A$ is $(p,\pq)$-adically separated, and 
$t_i\mu$ is $(p,\pq)$-adically nilpotent in $A$, the commutativity
of $\theta_{A,i}$ and $\theta_{A,j}$ for $i\neq j$ is 
reduced to $\theta_{A,i}\circ \theta_{A,j}(t_k)
=0=\theta_{A,j}\circ\theta_{A,i}(t_k)$ for
$k\in \Lambda$ by Lemma \ref{lem:EtMapDerivComm}. 
Note that we have $\theta_{A,i}(t_j \mu)=\theta_{A,i}(t_j^{p-1}\eta)=0$
for $i\neq j$ because $\theta_{A,i}$ is $R$-linear and 
$\theta_{A,i}(t_j)=0$. 
\par
(2) We show $\theta_{A,i}(A)\subset [p]_q A$,
which implies the two claims by Propositions \ref{prop:TwDerivPrismExt}
and \ref{lem:TwDervPrismExtComm}. 
By Remark \ref{rmk:alphaDerivBC} (3), the reduction mod $[p]_q$ 
of $\theta_{A,i}$ is a $t_i\mu$-derivation of $A/[p]_q A$ over $R/[p]_q R$,
which vanishes on the images of $t_j$ $(j\in \Lambda)$.
As $A/[p]_q A$ is $p$-adically smooth over 
$R/[p]_q R$ and the images of $t_i$ $(i\in \Lambda)$ in $A/[p]_q A$
are $p$-adic coordinates over $R/[p]_q R$, the $R/[p]_q R$-homomorphism
$R/[p]_q R[T_i; i\in \Lambda]\to A/[p]_q A;T_i\mapsto (t_i \mod [p]_q)$ is
$p$-adically \'etale.
Since $A/[p]_q A$ is $p$-adically complete and separated 
by Proposition \ref{prop:bddFlatInv}, Proposition 
\ref{prop:TwistDerEtaleExt} shows that 
$(\theta_{A,i}\mod [p]_q)$ vanishes.
\end{proof}

Under the notation and assumption in Proposition \ref{prop:qHiggsDeriv} (1),
we have $\theta_{A,i}(t_j\mu)=0$ for $i,j\in \Lambda$, $i\neq j$ 
because $\theta_{A,i}$ is $R$-linear and $\theta_{A,i}(t_j)=0$ 
$(i,j\in \Lambda,i\neq j)$. Hence
$(A,(t_i\mu)_{i\in\Lambda},(\theta_{A,i})_{i\in\Lambda})$ 
satisfies the conditions \eqref{eq:TwDerivFamilyCond1} and \eqref{eq:TwDerivFamilyCond2}
on $(A,(\alpha_i)_{i\in\Lambda},(\partial_i)_{i\in\Lambda})$
assumed at the beginning of \S\ref{sec:connection}. The same applies to 
$(D,(t_i\mu)_{i\in\Lambda},(\theta_{D,i})_{i\in\Lambda})$ in 
Proposition \ref{prop:qHiggsDeriv} (2).  We introduce the following
definitions by applying \S\ref{sec:connection} to these triplets.

\begin{definition}\label{def:qHiggsDifferential}
(1) Let $(A/R,\ut=(t_i)_{i\in\Lambda})$ be a framed smooth $q$-prism
(Definition \ref{def:FramedSmoothPrism} (2)), let 
$\theta_{A,i}$ $(i\in \Lambda)$ be the $t_i\mu$-derivation 
of $A$ over $R$ $\delta$-compatible with $t_i^{p-1}\eta$
constructed in Proposition \ref{prop:qHiggsDeriv} (1), 
and let $\gamma_{A,i}$ $(i\in \Lambda)$ be the 
$\delta$-$R$-endomorphism
$\id_A+t_i\mu \theta_{A,i}$ of $A$ (Lemma \ref{lem:alphaDerivDeltaEquiv} (1)),
which is an automorphism because $A$ is $pR+\pq R$-adically complete
and separated, $\mu$ is $pR+\pq R$-adically nilpotent, and 
$t_i\theta_{A,i}$ is $R$-linear.
We write $q\Omega_{A/R,\ut}$, $dt_i$, $\theta_A\colon A\to q\Omega_{A/R,\ut}$,
and $(q\Omega^{\bullet}_{A/R,\ut},\theta_A^{\bullet})$ for
the $A$-bimodule $\Omega_{A,\ugamma}$, $\omega_i\in \Omega_{A,\ugamma}$, 
the $R$-linear map $d\colon A\to \Omega_{A,\ugamma}$, and 
the differential graded algebra $(\Omega^{\bullet}_{A,\ugamma},d^{\bullet})$
over $R$, respectively, obtained by applying the constructions in 
\S\ref{sec:connection} to 
$(A,(t_i\mu)_{i\in\Lambda},(\theta_{A,i})_{i\in\Lambda})$. We have
$b(adt_i)c=ba\gamma_{A,i}(c)dt_i$ $(a,b,c\in A)$, 
$\theta_A(a)=\sum_{i\in\Lambda}\theta_{A,i}(a)dt_i$ $(a\in A)$, 
and $\theta_{A}(ab)=a\theta_{A}(b)+\theta_A(a)b$ $(a,b\in A)$.
We call $\theta_{A,i}$, $\gamma_{A,i}$, and $\theta_A$
{\it the $q$-Higgs derivations, the $q$-Higgs automorphisms}, and 
{\it the $q$-Higgs differential of $A$ over $R$ with respect to $\ut$}. \par
(2) Let $((A,J)/R,\ut=(t_i)_{i\in\Lambda})$ be a framed smooth $q$-pair
(Definition \ref{def:FramedSmoothPrism} (3)),  let $D$ be a $q$-prismatic envelope of $(A,J)$ (Definition \ref{def:FramedSmoothPrism} (1)) (assuming it exists),
and let $\theta_{D,i}$ $(i\in \Lambda)$ be the $t_{i}\mu$-derivation 
of $D$ over $R$ $\delta$-compatible with $t_i^{p-1}\eta$ constructed
in Proposition \ref{prop:qHiggsDeriv} (2). We define 
$\gamma_{D,i}$, $q\Omega_{D/R,\ut}$, $dt_i$, $\theta_D\colon D\to q\Omega_{D/R,\ut}$, and $(q\Omega^{\bullet}_{D/R,\ut}, \theta_D^{\bullet})$ in the 
same way as (1) by using $(D,(t_i\mu)_{i\in\Lambda},(\theta_{D,i})_{i\in\Lambda})$.
We call $\theta_{D,i}$, $\gamma_{D,i}$, and $\theta_D$
{\it the $q$-Higgs derivations, 
the $q$-Higgs automorphisms}, and {\it the $q$-Higgs differential
of $D$ over $R$ with respect to $\ut$}. 
\end{definition}

\begin{definition}\label{def:qHiggsMod}
(1) Let $(A/R,\ut=(t_i)_{i\in A})$ be a framed smooth $q$-prism,
and let $\theta_A\colon A\to q\Omega_{A/R,\ut}$ be the $q$-Higgs
differential of $A$ over $R$ with respect to $\ut$. We call
a module with integrable connection 
$(M,\theta_M\colon M\to M\otimes_Aq\Omega_{A/R,\ut})$ 
over $(A,\theta_A)$ (Definition \ref{def:intconnection}) {\it a $q$-Higgs module over $(A/R,\ut)$}. We call $\theta_M$ a 
{\it $q$-Higgs field on $M$ over} $(A/R,\ut)$. The de Rham complex of $(M,\theta_M)$
is denoted by
$(M\otimes_Aq\Omega_{A/R,\ut}^{\bullet},\theta_M^{\bullet})$
and is called the {\it $q$-Higgs complex of} $(M,\theta_M)$. We write
$q\HIG(A/R,\ut)$ for the category of $q$-Higgs modules over 
$(A/R,\ut)$. 
\par
(2) Let $((A,J)/R, \ut=(t_i)_{i\in\Lambda})$ be a framed smooth $q$-pair,
and let $D$ be a $q$-prismatic envelope of $(A,J)$ (assuming it exists). 
We define a {\it $q$-Higgs module $(M,\theta_M)$ over $(D/R,\ut)$},
and its {\it $q$-Higgs complex }
$(M\otimes_D q\Omega^{\bullet}_{D/R,\ut},\theta_M^{\bullet})$
by using $(q\Omega_{D/R}^{\bullet},\theta_D^{\bullet})$
in the same way as (1). We write $q\HIG(D/R,\ut)$ for the
category of $q$-Higgs modules over $(D/R,\ut)$. 
\end{definition}
\begin{remark}\label{rmk:FramedSmQPHigFPB}
Let the notation and assumption be the same as Definition 
\ref{def:qHiggsDifferential} (2) and Definition \ref{def:qHiggsMod} (2).
We have $\theta_{D,i}\circ\varphi_D=\pq t_i^{p-1}\cdot \varphi_D\circ \theta_{D,i}$
for $i\in \Lambda$ by Lemma \ref{lem:qDerivFrobComp} (2), where $\varphi_D$ denotes
the lifting of Frobenius of $D$ associated to its $\delta$-structure.
Therefore, for any ideal $J$ of $\Z[q]$ satisfying $\varphi(J)\subset J$
(e.g.~$J=0$, $J=(p,\pq)^{n+1}$ $(n\in\N)$), letting
$\oD$ and $\varphi_{\oD}$ denote the reduction modulo $J$ of 
$D$ and $\varphi_D$, we have the Frobenius pullback functor
preserving tensor products
\eqref{eq:qDerivConnFobPB}
\begin{equation}\label{eq:FramedSmQPHigFPBFunct}
\varphi_{\oD}^*\colon q\HIG(\oD/R,\ut)\to q\HIG(\oD/R,\ut),
\end{equation}
where $q\HIG(\oD/R,\ut)$ denotes the full subcategory
of $q\HIG(D/R,\ut)$ consisting of $(M,\theta_M)$ with $JM=0$.
For $(M,\theta_M)\in \Ob q\HIG(\oD/R,\ut)$, we have a morphism of complexes \eqref{eq:qDervConnCpxPB}
\begin{equation}\label{eq:FramedSmQPHigFPBMap}
\varphi_{\oD,\Omega}^{\bullet}(M)\colon M\otimes_{D}q\Omega^{\bullet}_{D/R}
\to \varphi_{\oD}^*M\otimes_Dq\Omega^{\bullet}_{D/R}.
\end{equation}
compatible with the product morphism 
\eqref{eq:dRcpxProd}.
\end{remark}

We can apply scalar extensions  of modules with 
integrable connections defined and studied in \S9
to $q$-Higgs modules and morphisms of framed smooth $q$-prisms
and morphisms of framed smooth $q$-pairs
as follows.

\begin{proposition}\label{prop:qHiggsDerivFunct}
(1) Let $(g/f,\psi)\colon (A/R,\ut=(t_i)_{i\in\Lambda})
\to (A'/R',\ut'=(t'_{i'})_{i'\in\Lambda})$ be a morphism of
framed smooth $q$-prisms. Put $\ualpha=(t_i\mu)_{i\in\Lambda}$
and $\ualpha'=(t'_{i'}\mu)_{i'\in\Lambda'}$,
and let $s_A$ and $s_{A'}$ be the $R$-homomorphisms
$A\to E^{\ualpha}(A)$ and $A'\to E^{\ualpha'}(A')$ defined by 
the $q$-Higgs derivations $\theta_{A,i}$ $(i\in \Lambda)$ 
and $\theta_{A',i'}$ $(i'\in\Lambda')$
\eqref{eq:ExtensionRightAlgStr}, respectively. Then the homomorphisms
$g$ and $E^{\psi}(g)\colon E^{\ualpha}(A)\to E^{\ualpha}(A')$ \eqref{eq:TwExtAlgMap}
are compatible with $s_A$ and $s_{A'}$.
Hence, by applying Definition \ref{def:connectionScalarExt} 
and Remark \ref{rmk:ConnScalarExtStratTensProd} (2)
to $(f,g,\psi)$, we obtain 
a scalar extension functor preserving tensor products
\begin{equation}
(g/f,\psi)^*\colon q\HIG(A/R,\ut)\longrightarrow q\HIG(A'/R',\ut').
\end{equation}
\par
(2) Let $(g/f,\psi)\colon ((A,J)/R,\ut=(t_i)_{i\in\Lambda})
\to ((A',J')/R',\ut'=(t'_{i'})_{i'\in\Lambda'})$ be a morphism
of framed smooth $q$-pairs such that there exist $q$-prismatic
envelopes $D$ and $D'$ of $(A,J)$ and $(A',J')$, and let
 $g_D\colon D\to D'$ be the unique morphism of $q$-prisms
 extending $g$. For $\ualpha$, $\ualpha'$, $s_D\colon D\to 
 E^{\alpha}(D)$, and $s_{D'}\colon D'\to E^{\ualpha'}(D')$
 defined similarly as (1) by using the $q$-Higgs derivations
 $\theta_{D,i}$ $(i\in\Lambda)$ and $\theta_{D',i'}$ $(i'\in \Lambda')$,
 the homomorphisms $g_D$ and $E^{\psi}(g_D)\colon 
 E^{\ualpha}(D)\to E^{\ualpha'}(D')$ \eqref{eq:TwExtAlgMap} are compatible with $s_D$
 and $s_{D'}$. Hence, by applying Definition \ref{def:connectionScalarExt} 
and Remark \ref{rmk:ConnScalarExtStratTensProd} (2)
to $(f, g_D,\psi)$, we obtain 
a scalar extension functor preserving tensor products
\begin{equation}\label{eq:qPrismEnvHigPB}
(g_D/f,\psi)^*\colon q\HIG(D/R,\ut)\longrightarrow q\HIG(D'/R',\ut').
\end{equation}
\end{proposition}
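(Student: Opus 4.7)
The proof amounts to verifying condition \eqref{eq:ConnFunctCond} (with $c_i = 1$ for all $i$, since $g(t_i\mu) = t'_{\psi(i)}\mu$), which is precisely what is needed to invoke Definition \ref{def:connectionScalarExt} and Remark \ref{rmk:ConnScalarExtStratTensProd} (2). By Lemma \ref{lem:TwDerivSystemFunct} (1) and the decomposition \eqref{eq:TwPartialExtItTensor} used in its proof, the task reduces, for each $i' \in \Lambda'$, to showing equality of two $\delta$-$R$-algebra homomorphisms from $A$ (resp.~from $D$) into $E_{\delta}^{\alpha'_{i'},\beta'_{i'}}(A')$ (resp.~into $E_{\delta}^{\alpha'_{i'},\beta'_{i'}}(D')$): on the one hand $s_{A',i'} \circ g$ (resp.~$s_{D',i'} \circ g_D$), which is a $\delta$-homomorphism by Proposition \ref{prop:DeltaAlphaDerivInt}, and on the other hand the composite $\Phi_{A,i'}$ (resp.~$\Phi_{D,i'}$) built by first applying the iterated section $A \to E_{\delta}^{(\alpha_j,\beta_j)_{j\in\psi^{-1}(i')}}(A)$ (obtained by composing the $\delta$-homomorphisms $s_{A,j}$ and using the symmetry \eqref{eq:TwExtDeltacomp}) and then the left-$A$-algebra homomorphism sending every $T_j$ to $T'_{i'}$.

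For Part (1), this second arrow is $\delta$-compatible: the relation $T_j^2 = \alpha_j T_j$ is preserved since $g(\alpha_j) = \alpha'_{i'}$, and the relation $\delta(T_j) = \beta_j T_j$ furnished by Lemma \ref{lem:DeltaStrTwExt} is preserved since the specific form $\beta_j = t_j^{p-1}\eta$ together with $g(t_j) = t'_{i'}$ forces $g(\beta_j) = \beta'_{i'}$ for every $j \in \psi^{-1}(i')$. So $\Phi_{A,i'}$ is a $\delta$-$R$-algebra homomorphism. A direct computation from \eqref{eq:TwDerivPartialMultSect}, together with $\theta_{A,k}(t_j) = \pq$ if $j = k$ and $0$ otherwise, and the analogous identity for $\theta_{A',i'}$, shows $\Phi_{A,i'}(t_j) = s_{A',i'}(g(t_j))$ for every $j \in \Lambda$. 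The subring $R[t_j; j \in \Lambda]$ is a $\delta$-$R$-subalgebra of $A$ (since $\delta(t_j) = 0$) and is $(p,\pq)$-adically dense because $R[T_j; j \in \Lambda] \to A$, $T_j \mapsto t_j$, is $(p,\pq)$-adically \'etale. The target is $(p,\pq)$-adically separated, being free of rank $2$ over $A'$, so the two $\delta$-$R$-algebra homomorphisms agree.

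For Part (2), $\Phi_{D,i'}$ is defined analogously using the $\delta$-compatible extensions $\theta_{D,j}$ of Proposition \ref{prop:qHiggsDeriv} (2), whose pairwise commutativity is likewise supplied there, and is again a $\delta$-$R$-algebra homomorphism. The pair $(E_{\delta}^{\alpha'_{i'},\beta'_{i'}}(D'), \pq E_{\delta}^{\alpha'_{i'},\beta'_{i'}}(D'))$ is a bounded prism: freeness of rank $2$ over the $q$-prism $D'$ yields $(p,\pq)$-adic completeness and flatness and invertibility of the ideal, while the remaining conditions of Definition \ref{def:prism} (2) are inherited from $D'$. By the functoriality of $E_{\delta}$ and of the section maps, combined with Part (1), the restrictions of $\Phi_{D,i'}$ and $s_{D',i'} \circ g_D$ to $A \subset D$ agree; both maps send $J$ into $\pq$ times the target. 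The universal property of the $q$-prismatic envelope $(A,J) \to (D, \pq D)$ then forces $\Phi_{D,i'} = s_{D',i'} \circ g_D$, completing the proof.

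The main technical point is the $\delta$-compatibility of the bridging homomorphism $E_{\delta}^{(\alpha_j,\beta_j)_{j \in \psi^{-1}(i')}}(A) \to E_{\delta}^{\alpha'_{i'},\beta'_{i'}}(A')$, which rests on the coincidence $g(\beta_j) = \beta'_{i'}$ for all $j \in \psi^{-1}(i')$; this is forced by the explicit shape $\beta_j = t_j^{p-1}\eta$ intrinsic to the framed-smooth $q$-prism setup. Granted this, Part (1) is a density/uniqueness argument in a $(p,\pq)$-adically separated target, and Part (2) is an application of the universality of $q$-prismatic envelopes to the auxiliary bounded prism $E_{\delta}^{\alpha'_{i'},\beta'_{i'}}(D')$.
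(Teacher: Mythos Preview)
Your overall strategy is sound, and Part (2) is essentially the paper's argument (the paper works with the full $E^{\ualpha}$ rather than reducing to a single $i'$ via Lemma \ref{lem:TwDerivSystemFunct}, but your reduction is valid and the endgame via the universal property of the $q$-prismatic envelope and the bounded prism $E_{\delta}^{\alpha'_{i'},\beta'_{i'}}(D')$ is the same).

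There is, however, a genuine gap in Part (1). You claim that $R[t_j;j\in\Lambda]$ is $(p,\pq)$-adically dense in $A$ because $R[T_j;j\in\Lambda]\to A$ is $(p,\pq)$-adically \'etale. This inference is false: \'etale morphisms need not be surjective or have dense image. For instance, $A$ could be the $(p,\pq)$-adic completion of a nontrivial finite \'etale extension of $R[T_1,\ldots,T_d]$, in which case $R[t_1,\ldots,t_d]$ is a proper closed subring modulo every power of $(p,\pq)$. So agreement of your two $\delta$-$R$-homomorphisms on $R[t_j]$ does not by itself force agreement on $A$.

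The fix is the standard \'etale lifting argument, which is what the paper does (directly at the level of $E^{\ualpha'}(A')$ rather than for each $i'$). Both $\Phi_{A,i'}$ and $s_{A',i'}\circ g$ compose with $\pi_0\colon E^{\alpha'_{i'}}(A')\to A'$ to give $g$, and the kernel of $\pi_0$ is $(p,\pq)$-adically nilpotent (Lemma \ref{lem:TwExtNilp}). Since $R[T_j]\to A$ is $(p,\pq)$-adically \'etale and the target is $(p,\pq)$-adically separated, the unique lifting property of \'etale maps along nilpotent thickenings (applied modulo each power of $(p,\pq)$ and then passing to the limit) shows that two homomorphisms $A\to E^{\alpha'_{i'}}(A')$ agreeing both on the image of $R[T_j]$ and after $\pi_0$ must coincide. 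With this correction your Part (1) goes through, and then your Part (2) stands as written.
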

To prove Proposition \ref{prop:qHiggsDerivFunct} (2), 
we introduce a $\delta$-structure on $E^{\ualpha}(A)$ similarly
to $E^{\alpha}(A)$ (Lemma \ref{lem:DeltaStrTwExt}).
Let $A$ be a $\delta$-algebra, and let 
$\ualpha=(\alpha_{\sigma})_{\sigma\in \Sigma}$ and 
$\ubeta=(\beta_{\sigma})_{\sigma\in\Sigma}$ be a finite family
of elements of $A$ satisfying $\delta(\alpha_{\sigma})=\alpha_{\sigma}
\beta_{\sigma}$  for every $\sigma\in \Sigma$. 
We equip $E^{\ualpha}(A)$ with 
the $\delta$-$A$-algebra structure 
defined by $\delta(T_{\sigma})=\beta_{\sigma}T_{\sigma}$
$(\sigma\in\Sigma)$; one can verify that the $\delta$-structure is well-defined
by the same argument as the proof of Lemma \ref{lem:DeltaStrTwExt}. 
If we decompose $\Sigma$ into a disjoint union $\Sigma=\Sigma_1\sqcup
\Sigma_2$, and set $\ualpha_i=(\alpha_{\sigma})_{\sigma\in \Sigma_i}$
$\ubeta_i=(\beta_{\sigma})_{\sigma\in\Sigma_i}$ $(i=1,2)$, then
the isomorphism of $A$-algebras 
$E^{\ualpha}(A)\cong E^{\ualpha_1}(E^{\ualpha_2}(A))$
defined by $T_{\sigma}\mapsto T_{\sigma}$ $(\sigma\in \Sigma)$
underlies an isomorphism of $\delta$-$A$-algebras
\begin{equation}
E_{\delta}^{\ualpha,\ubeta}(A)\cong E_{\delta}^{\ualpha_1,\ubeta_1}
(E_{\delta}^{\ualpha_2,\ubeta_2}(A))
\end{equation}

Let $A'$, $\ualpha'=(\alpha'_{\sigma'})_{\sigma'\in \Sigma'}$,
and $\ubeta'=(\beta'_{\sigma'})_{\sigma'\in\Sigma'}$ be another
set of data satisfying the same conditions as $A$, $\ualpha$,
and $\ubeta$ above, and suppose that we are given a 
$\delta$-homomorphism $f\colon A\to A'$ and a map 
$\psi\colon \Sigma\to \Sigma'$ such that $f(\alpha_{\sigma})
=\alpha'_{\psi(\sigma)}$ and $f(\beta_{\sigma})=
\beta'_{\psi(\sigma)}$. Then the extension
$E^{\psi}(f)\colon E^{\ualpha}(A)\to E^{\ualpha'}(A')$
of $f$ defined by $T_{\sigma}\mapsto T_{\psi(\sigma)}$
underlies a $\delta$-homomorphism 
\begin{equation}\label{eq:DeltaTwMultExtFunct}
E_{\delta}^{\psi}(f)\colon E_{\delta}^{\ualpha,\ubeta}(A)\longrightarrow
E_{\delta}^{\ualpha',\ubeta'}(A').
\end{equation}

Now we suppose that $A$ is a $\delta$-algebra over a $\delta$-ring $R$, and
we are given $\partial_{\sigma}\in \Der_{R,\delta}^{\alpha_{\sigma},\beta_{\sigma}}(A)$ for each $\sigma\in \Sigma$ satisfying $\partial_{\sigma}(\alpha_{\sigma'})
=\partial_{\sigma}(\beta_{\sigma'})=0$ and $\partial_{\sigma}\circ\partial_{\sigma'}
=\partial_{\sigma'}\circ\partial_{\sigma}$
 for every $\sigma$, $\sigma'\in \Sigma$,
$\sigma\neq\sigma'$. 
We see that the $R$-algebra homomorphism 
$s\colon A\to E^{\ualpha}(A)$ \eqref{eq:ExtensionRightAlgStr}
defined by $\partial_{\sigma}$ $(\sigma\in \Sigma)$
underlies a $\delta$-$R$-homomorphism
\begin{equation}\label{eq:TwDeltaMultExtSection}
s_{\delta}\colon A\longrightarrow E_{\delta}^{\ualpha,\ubeta}(A)
\end{equation}
as follows.\par
Let $s_{\sigma}\colon A\to E_{\delta}^{\alpha_{\sigma},\beta_{\sigma}}(A)$ be the $\delta$-$R$-homomorphism corresponding to 
$\partial_{\sigma}$ by Proposition \ref{prop:DeltaAlphaDerivInt} (2). 
We have $s_{\sigma}(\alpha_{\sigma'})=\alpha_{\sigma'}$
and $s_{\sigma}(\beta_{\sigma'})=\beta_{\sigma'}$ for
$\sigma\neq\sigma'$ by the assumption on $\partial_{\sigma}$.
Hence, by choosing a total order $\leq$ on $\Sigma$, one can define
a $\delta$-$R$-homomorphism 
$s_{(\Sigma,\leq)}\colon A\longrightarrow E_{\delta}^{\ualpha,\ubeta}(A)$
by composing the $\delta$-$R$-homomorphisms
$$E_{\delta}^{\id_{\Sigma_{<\sigma}}}(s_{\sigma})\colon 
E_{\delta}^{\ualpha_{<\sigma},\ubeta_{<\sigma}}(A)
\to E_{\delta}^{\ualpha_{<\sigma},\ubeta_{<\sigma}}
(E_{\delta}^{\alpha_{\sigma},\beta_{\sigma}}(A))\cong 
E_{\delta}^{\ualpha_{\leq\sigma},\ubeta_{\leq\sigma}}(A)
\quad (\sigma\in \Sigma),
$$
where $\Sigma_{<\sigma}=\{\tau\in \Sigma; \tau<\sigma\}$,
$\Sigma_{\leq\sigma}=\{\tau\in\Sigma;\tau\leq \sigma\}$,
$\ualpha_{<\sigma}=(\alpha_{\tau})_{\tau\in\Sigma_{<\sigma}}$,
$\ubeta_{<\sigma}=(\beta_{\tau})_{\tau\in\Sigma_{<\sigma}}$,
$\ualpha_{\leq\sigma}=(\alpha_{\tau})_{\tau\in\Sigma_{\leq \sigma}}$, and
$\ubeta_{\leq\sigma}=(\beta_{\tau})_{\tau\in\Sigma_{\leq \sigma}}$.
Since the underlying $R$-homomorphism of $s_{\sigma}$ corresponds
to the $\alpha_{\sigma}$-derivation $\partial_{\sigma}$ over $R$
by Proposition \ref{prop:AlphaDerivInterpret}, 
we see that $s$ underlies $s_{(\Sigma,\leq)}$.

\begin{proof}[Proof of Proposition \ref{prop:qHiggsDerivFunct}]
(1) The composition of $E^{\psi}(g)\circ s_A$ and $s_{A'}\circ g$ with
the $A$-homomorphism $\pi'\colon E^{\ualpha'}(A')\to 
A'$ defined by $T_{i'}\mapsto 0$ $(i'\in \Lambda')$ both become $g$,
the kernel of $\pi'$ is $(p,\pq)$-adically nilpotent as 
$T_{i'}^{n+1}=\alpha_{i'}^{\prime n}T_{i'}$ $(i'\in \Lambda)$, and
$E^{\ualpha'}(A')$ is $(p,\pq)$-adically separated.
Hence it suffices to prove that $E^{\psi}(g)\circ s_A$ and $s_{A'}\circ g$
coincide after composing with the $(p,\pq)$-adically \'etale homomorphism 
$R[U_i;i\in \Lambda]\to A;U_i\mapsto t_i$ $(i\in \Lambda)$. 
Since $g$ and $E^{\psi}(g)$ are compatible with 
$f\colon R\to R'$ and $s_A$ (resp.~$s_{A'}$) is an $R$ (resp.~$R'$)-homomorphism, 
this is reduced to the following simple computation
for $i\in \Lambda$. 
\begin{align*}
&E^{\psi}(g)\circ s_A(t_i)=
E^{\psi}(g)(t_i+\pq T_i)=t'_{\psi(i)}+\pq T_{\psi(i)},\\
&s_{A'}\circ g(t_i)=s_{A'}(t_{\psi(i)}')=t'_{\psi(i)}+\pq T_{\psi (i)}.
\end{align*}

(2) The ring homomorphisms $A\to D$ and $A'\to D'$, and those 
induced by them $E^{\ualpha}(A)\to E^{\ualpha}(D)$
and $E^{\ualpha'}(A')\to E^{\ualpha'}(D')$ are compatible with 
the pairs of ring homomorphisms
$(g,g_D)$, $(E^{\psi}(g),E^{\psi}(g_D))$, 
$(s_A,s_D)$, and $(s_{A'},s_{D'})$. The compatibility for the latter two
follows from the compatibility of $A\to D$ and $A'\to D'$ with
the pairs $(\theta_{A,i},\theta_{D,i})$
and $(\theta_{A', i'},\theta_{D', i'})$.  Hence, by the claim (1), 
the desired compatibility holds after composing with 
$A\to D$.  
Put $\beta_i=t_i^{p-1}\eta$ $(i\in \Lambda)$, 
$\beta'_{i'}=t'_{i'}\eta$ $(i'\in \Lambda')$,
$\ubeta=(\beta_i)_{i\in\Lambda}$, and $\ubeta'=(\beta'_{i'})_{i'\in\Lambda'}$. 
The $\alpha_i$-derivation $\theta_{D,i}$ $(i\in \Lambda)$ (resp.~
$\alpha'_{i'}$-derivation $\theta_{D',i'}$ $(i'\in\Lambda')$) is 
$\delta$-compatible with respect to $\beta_i$
(resp.~$\beta'_{i'}$), $\theta_{D,i}(\beta_j)=0$ $(i,j\in \Lambda,i\neq j)$,
$\theta_{D',i'}(\beta'_{j'})$ $(i',j'\in\Lambda', i'\neq j')$, and 
$g(\beta_i)=\beta'_{\psi(i)}$ $(i\in \Lambda)$.
Hence the two compositions $E^{\psi}(g_D)\circ s_D$
and $s_{D'}\circ g_D$ in question define 
morphisms of $q$-prisms 
$D\to E_{\delta}^{\ualpha,\beta}(D)\to E_{\delta}^{\ualpha',\ubeta'}(D')$
and $D\to D'\to E_{\delta}^{\ualpha' ,\ubeta'}(D')$
by \eqref{eq:DeltaTwMultExtFunct} and \eqref{eq:TwDeltaMultExtSection}. 
Hence the universality
of the $q$-prismatic envelope $A\to D$ implies that these 
two compositions coincide.
\end{proof}

Let $(g/f,\psi)$ (resp.~$(g/f,\psi)$ and $g_D$) be as in 
Proposition \ref{prop:qHiggsDerivFunct} 
(1) (resp.~(2)). Let $(M,\theta_M)$ be a 
$q$-Higgs module over $(A/R,\ut)$ (resp.~$(D/R,\ut)$)
(Definition \ref{def:qHiggsMod}), 
and let $(M',\theta_{M'})$ be its scalar
extension under $(g/f,\psi)$ (resp.~$(g_D/f,\psi)$) 
(Proposition \ref{prop:qHiggsDerivFunct}). Then, 
by Proposition \ref{prop:qHiggsDerivFunct}, we can
apply Proposition \ref{prop:dRCpxFunct} to $(f,g,\psi)$ (resp.~$(f,g_D,\psi)$)
and the total order of $\Lambda$ assumed to be
given by the definition of a framed smooth $q$-prism
(resp.~$q$-pair), obtaining a morphism between 
$q$-Higgs complexes
\begin{align}
&q\Omega^{\bullet}_{g/f,\psi}(M)\colon
(M\otimes_Aq\Omega^{\bullet}_{A/R,\ut},\theta_M^{\bullet})
\longrightarrow
(M'\otimes_{A'}q\Omega^{\bullet}_{A'/R',\ut'},\theta_{M'}^{\bullet})\\
(\text{resp. }\quad
&q\Omega^{\bullet}_{g_D/f,\psi}(M)\colon
(M\otimes_Dq\Omega^{\bullet}_{D/R,\ut},\theta_M^{\bullet})
\longrightarrow
(M'\otimes_{D'}q\Omega^{\bullet}_{D'/R',\ut'},\theta_{M'}^{\bullet})
\quad).\label{eq:qDolbCpxPB2}
\end{align}
We abbreviate $q\Omega^{\bullet}_{g/f,\psi}$ (resp.~$q\Omega^{\bullet}_{g_D/f,\psi}$)
to $q\Omega^{\bullet}_{g, \psi}$ (resp.~$q\Omega^{\bullet}_{g_D,\psi}$)
when $R=R'$ and $f=\id$. 
Since $\psi\colon \Lambda\to\Lambda'$ is a map of ordered
sets, the morphisms 
$q\Omega^{\bullet}_{g/f,\psi}(M)$
(resp.~$q\Omega^{\bullet}_{g_D/f,\psi}(M)$) are 
compatible with compositions of $(g/f,\psi)$'s by 
Lemma \ref{lem:dRCpxFuncCocyc} (2). 

\begin{remark}\label{rmk:FramedSmQPHigFPBComp}
Let the notation and assumption be the same as Proposition \ref{prop:qHiggsDerivFunct} (2).
Let $J$ be an ideal of $\Z[q]$ satisfying $\varphi(J)\subset J$,
and let $(\oD,\varphi_{\oD})$ and $(\oD',\varphi_{\oD'})$ be
the reduction modulo $J$ of $(D,\varphi_D)$ and $(D',\varphi_{D'})$
as in Remark \ref{rmk:FramedSmQPHigFPB}. Then, by \eqref{eq:qConnFrobPBFunct},
the restriction $q\HIG(\oD/R,\ut)\to q\HIG(\oD'/R)$ of
the functor $(g_D/f,\psi)^*$ \eqref{eq:qPrismEnvHigPB}
is compatible with the Frobenius pullback functors 
$\varphi_{\oD}^*$ and $\varphi_{\oD'}^*$ \eqref{eq:FramedSmQPHigFPBFunct}.
Moreover, for $(M,\theta_M)\in \Ob q\HIG(\oD/R,\ut)$
and $(M',\theta_{M'})=(g_D/f,\psi)^*(M,\theta_M)$, the morphisms
$q\Omega^{\bullet}_{g_D/f,\psi}(M)$ and
$q\Omega^{\bullet}_{g_D/f,\psi}(\varphi_{\oD}^*M)$ \eqref{eq:qDolbCpxPB2}
are compatible with $\varphi_{\oD,\Omega}^{\bullet}(M)$
and $\varphi_{\oD',\Omega}^{\bullet}(M')$ \eqref{eq:FramedSmQPHigFPBMap}
by \eqref{eq:qdRcpxFrobPBFunct}.
\end{remark}

\begin{proposition}\label{prop:qprismEnvPL}
Let $R$ be a $q$-prism, and let 
$(g,\psi)\colon ((A',J'),(t'_{i'})_{i'\in\Lambda'})\to ((A,J),(t_i)_{i\in\Lambda})$
be a morphism of framed smooth $\delta$-pairs over $R$
(Definition \ref{def:FramedSmoothPrism} (3)) such that $\psi$ is injective
and $g$ induces an isomorphism $A'/J'\xrightarrow{\cong}A/J$.
Assume that $(A',J')$ has a $q$-prismatic envelope $(D',[p]_qD')$
(Definition \ref{def:FramedSmoothPrism} (1)).
Put $\Lambda_{\psi}=\Lambda\backslash\psi(\Lambda')$. 
Then the following hold.\par
(1) There exists a $q$-prismatic envelope $(D,[p]_qD)$ of $(A,J)$.\par
(2) For each $i\in \Lambda_{\psi}$, choose 
$a_i\in A'$ such that the images of $g(a_i)$ and $t_i$ in $A/J$ coincide, 
i.e., $t_i-g(a_i)\in J$, let $\tau_i$ be the unique element of $D$
such that $[p]_q\tau_i=t_i-g(a_i)$, and define 
$\tau_i^{\{m\}_{\delta}}$ for $m\in \N$ as \eqref{eq:TauDeltaPower}.
Then, for any ideal $K$ of $D'$ containing a power of $pD'+[p]_qD'$,
$D/KD$ is a free $D'/K$-module with a basis 
$\prod_{i\in\Lambda_{\psi}}\tau_i^{\{m_i\}_{\delta}}$
mod $KD$ $((m_i)\in \N^{\Lambda_{\psi}})$.\par
(3) The $q$-Higgs derivation $\theta_{D,i}$ $(i\in \Lambda)$ 
of $D$ over $R$ with respect to $(t_i)_{i\in\Lambda}$
(Definition \ref{def:qHiggsDifferential} (2)) is $D'$-linear if
$i\in \Lambda_{\psi}$.\par
(4) Let $(q\Omega^{\bullet}_{D/D'},
\theta^{\bullet}_{D/D'})$ be the differential graded algebra over $D$
obtained by applying the construction in \S\ref{sec:connection}
to $(D/D',(t_i\mu)_{i\in \Lambda_{\psi}},
(\theta_{D,i})_{i\in\Lambda_{\psi}})$.
Then, for any $D'$-module $N$ annihilated by a power of 
$(p,[p]_q)$, the complex 
$(N\otimes_{D'}q\Omega^{\bullet}_{D/D'},\id_N\otimes\theta^{\bullet}_{D/D'})$
gives a resolution of $N$.\par
(5) Under the notation in (2), let $\oa_i$ and $\otau_i$ 
$(i\in \Lambda_{\psi})$ be the images of
$a_i$ and $\tau_i$ in $\oD'=D'/\mu D'$ and
$\oD=D/\mu D$, respectively, and assume that we are given
$\ob_i\in \oD'$ $(i\in \Lambda_{\psi})$ such that $\delta(\oa_i)=p\ob_i$. 
Put $\osigma_i=(1-p^{p-1})^{-1}(-\ob_i-\delta(\tau_i)+
\sum_{\nu=1}^{p-1}p^{-1}\binom {p}{\nu}\oa_i^{p-\nu}
p^{\nu-1}\otau_i^{\nu})\in \oD$. Then we have 
$\otau_i^p=p\osigma_i$, and the homomorphism 
\eqref{eq:DeltaRingPDMap2} associated to 
$(\otau_i)$ and $(\osigma_i)$ induces an isomorphism of $\oD'_N$-algebras
$$\oD'_N[X_i;i\in \Lambda_{\psi}]_{\PD}\xrightarrow{\;\cong\;}
\oD_N$$
for $N\in \N$, where $\oD'_N=\oD'/p^{N+1}\oD'$ and $\oD_N=\oD/p^{N+1}\oD$.
Let $\otau_i^{[n]}$ $(i\in \Lambda_{\psi},
n\in \N)$ be the image of $X_i^{[n]}$ under the isomorphism above,
and let $\theta_{\oD_N,i}$ $(i\in \Lambda_{\psi})$
be the $\oD'_N$-linear derivation of $\oD_N$  induced by $\theta_{D,i}$.
(Note that $\theta_{D,i}$ is a $t_i\mu$-derivation over $D'$ by (3)
and $t_i\mu \oD_N=0$). Then we have $\theta_{\oD_N,i}(
\otau_i^{[n+1]})=\otau_i^{[n]}$ $(n\in \N)$
and $\theta_{\oD_N,i}(\otau_j^{[n]})
=0$ $(i,j\in \Lambda_{\psi}, j\neq i, n\in \N)$.
\end{proposition}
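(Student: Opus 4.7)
The plan is to prove the five parts in sequence, with the key insight being that once we set up $D$ correctly as a divided $\delta$-envelope over $D'$, parts (3)--(5) are obtained by applying the corresponding results from \S\ref{sec:TwDeivDivDeltaEnv} (Proposition \ref{prop:PrismEnvDeriv}, Theorem \ref{thm:PoincareLem}, and Proposition \ref{prop:DivDeltaEnvPDEnv}) to the base $D'$.

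For parts (1) and (2), I will apply Corollary \ref{cor:PrismEnvSection} directly to the morphism $g\colon (A',J') \to (A,J)$, viewed as a morphism of $\delta$-pairs over $(R,\pq R)$ rather than a morphism of framed smooth $\delta$-pairs. The required hypothesis is that $g$ is $(p,\pq)$-adically smooth with $(p,\pq)$-adic coordinates $(t_i)_{i\in\Lambda_\psi}$: this follows from the framed smooth structure on $A$ and $A'$, since after reduction modulo $(p,\pq)$, the transitivity sequence for Kähler differentials and the injectivity of $\psi$ together with $g(t'_{i'})=t_{\psi(i')}$ exhibit $\Omega_{(A/(p,\pq))/(A'/(p,\pq))}$ as the free module on $\{dt_i\}_{i\in\Lambda_\psi}$. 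Corollary \ref{cor:PrismEnvSection} then gives the existence of $(D,\pq D)$ and the freeness of $D/\tI^{n+1}D$ over $D'/\tI^{n+1}D'$ with the asserted basis (where $\tI=pR+\pq R$), and the general case of (2) follows by base change $-\otimes_{D'/\tI^{n+1}D'}D'/K$ for any $K\supset \tI^{n+1}$.

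For part (3), I will show that $\theta_{D,i}\circ g_D=0$ for $i\in\Lambda_\psi$, where $g_D\colon D'\to D$ is the unique $\delta$-morphism of $q$-prisms extending $g$. By the uniqueness clause of Proposition \ref{prop:qHiggsDeriv}\,(2), $\theta_{D,i}$ restricts on $A\subset D$ to $\theta_{A,i}$, which is $R$-linear and vanishes on each $g(t'_{j'})=t_{\psi(j')}$ since $\psi(j')\ne i$. Hence the composition $A'\to A\hookrightarrow D\xrightarrow{\theta_{D,i}}D$ vanishes on the generators $t'_{j'}$ of $A'$ as an $R$-algebra, so it vanishes on $A'$. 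Since $\theta_{D,i}$ is $\delta$-compatible with respect to $t_i^{p-1}\eta$ (and the zero map on $D'$ is trivially so), and the $\delta$-$R$-subalgebra generated by the image of $A'$ in $D'$ is $(pD'+\pq D')$-adically dense (as $D'$ is built as a $(p,\pq)$-adic completion of a $\delta$-envelope of an $A'$-algebra), Lemma \ref{lem:alphaDerivDeltaEq} (applied to $g_D$ and the zero $t_i\mu$-derivation on $D'$) yields $\theta_{D,i}\circ g_D=0$ on all of $D'$; combined with the defining Leibniz rule of a $t_i\mu$-derivation this gives $D'$-linearity.

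Parts (4) and (5) will be deduced by identifying the $\delta$-$D'$-algebra $D$ with the $\tI$-adic completion of the $\delta$-envelope $D_0$ of $(A\otimes_{A'}D')[\tS_i;i\in\Lambda_\psi]/(\pq\tS_i-(t_i-g(a_i)))$ over $A\otimes_{A'}D'$: by Lemma \ref{lem:bddPrismEnvBase}\,(1) and (2), $D$ is the $q$-prismatic envelope of $(A\otimes_{A'}D',J(A\otimes_{A'}D'))$ over $(D',\pq D')$, and Proposition \ref{prop:PrismEnvRegSeq} (whose hypotheses are met by part (2)) together with Proposition \ref{prop:bddPrismEnvSmSmEmb} confirm this identification. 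Under this identification, the $t_i\mu$-derivations $\theta_{\widehat{D}_0,i}$ on the completion furnished by Proposition \ref{prop:PrismEnvDeriv} agree with the $\theta_{D,i}$ of Proposition \ref{prop:qHiggsDeriv} for $i\in\Lambda_\psi$: both are $D'$-linear (for $\theta_{\widehat{D}_0,i}$ tautologically, for $\theta_{D,i}$ by (3)), both are $t_i\mu$-derivations $\delta$-compatible with respect to $t_i^{p-1}\eta$, and both send $\tau_j\mapsto \delta_{ij}$. Thus Theorem \ref{thm:PoincareLem} applied over $D'$ with the ideal $J'_{D'}:=\text{Ann}_{D'}(N)$ (which contains a power of $pD'+\pq D'$ by hypothesis) yields (4), and Proposition \ref{prop:DivDeltaEnvPDEnv} applied over $\oD'_N$ (with the $\oa_i$ and $\ob_i$ as chosen) yields (5). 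The main obstacle is this identification step and verifying the smoothness of $g$ with coordinates $(t_i)_{i\in\Lambda_\psi}$ in part (1); the rest is essentially bookkeeping to transfer the results of \S\ref{sec:TwDeivDivDeltaEnv} to the base $D'$.
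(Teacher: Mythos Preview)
Your overall strategy matches the paper's: parts (1)--(2) via Corollary \ref{cor:PrismEnvSection}, parts (4)--(5) by identifying $D$ over $D'$ with the completed divided $\delta$-envelope of Proposition \ref{prop:PrismEnvDeriv} and then invoking Theorem \ref{thm:PoincareLem} and Proposition \ref{prop:DivDeltaEnvPDEnv} (via Remark \ref{rmk:DeltaEnvPDEnv}). But your argument for (3) has a gap.

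The density claim fails: since $A'\to D'$ is already a $\delta$-homomorphism, the $\delta$-$R$-subalgebra of $D'$ generated by the image of $A'$ is just that image, and this is not $(pD'+\pq D')$-adically dense in $D'$ --- the prismatic envelope contains elements such as $\pq^{-1}x$ for $x\in J'$ that are not in the image of $A'$. So Lemma \ref{lem:alphaDerivDeltaEq} does not apply as you invoke it. The paper instead uses Proposition \ref{prop:qHiggsDerivFunct}(2), whose proof relies on the universal property of the bounded prismatic envelope $D'$ rather than density: it shows that $g_D$ and $E^{\psi}(g_D)$ are compatible with $s_{D'}$ and $s_D$, after which Lemma \ref{lem:TwDerivSystemFunct}(1) with $\psi^{-1}(i)=\emptyset$ for $i\in\Lambda_\psi$ immediately gives $\theta_{D,i}(g_D(D'))=0$. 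Equivalently, you could argue directly that both $s_i\circ g_D$ and $\iota\circ g_D$ (where $s_i\colon D\to E_\delta^{t_i\mu,\,t_i^{p-1}\eta}(D)$ corresponds to $\theta_{D,i}$ and $\iota$ is the left $D$-algebra structure) are morphisms of bounded prisms from $D'$ to $E_\delta^{t_i\mu,\,t_i^{p-1}\eta}(D)$ that agree on the image of $A'$, hence coincide by the universal property of $D'$.

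A minor point on the identification for (4)--(5): the $\delta$-envelope should be taken over the completed localization $B=\widehat{(A\otimes_{A'}D')_f}$ produced by Proposition \ref{prop:bddPrismEnvSmSmEmb}, not over $A\otimes_{A'}D'$ itself; the condition \eqref{cond:DivDeltEnvBasis} required by Proposition \ref{prop:PrismEnvDeriv} is only guaranteed after that localization. Also, you should record explicitly that $\theta_{D,i}(\tau_j)=\delta_{ij}$ for $i,j\in\Lambda_\psi$: this follows from $\pq\theta_{D,i}(\tau_j)=\theta_{D,i}(t_j-g(a_j))=\pq\delta_{ij}$ using (3) and regularity of $\pq$ on $D$.
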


\begin{proof}
Since $\psi$ is injective, $g$ is $(p,\pq)$-adically smooth and 
$t_i$ $(i\in\Lambda_{\psi})$ are $(p,\pq)$-adic coordinates
of $A$ over $A'$ (Definition \ref{def:formallyflat}).
Hence, by Corollary \ref{cor:PrismEnvSection} and its proof, 
there exists $f\in 1+J\cdot (A\otimes_{A'}D')$,
such that the $(p,[p]_q)$-adic completion $B$ of 
$(A\otimes_{A'}D')_f$ regarded as a $\delta$-$D'$-algebra,
and the image $T_i$ of $t_i-f(a_i)\in A$ in $B$ for
$i\in \Lambda_{\psi}$ satisfy the 
assumptions of Proposition \ref{prop:DeltaEnvRegSeq} (4), 
and the $(p,[p]_q)$-adic completion of the 
$\delta$-envelope of $B[S_i; i\in \Lambda_{\psi}]/
(\xi S_i-T_i;i\in\Lambda_{\psi})$ over $B$
gives the $q$-prismatic envelope $D$ of $(A,J)$
and the claim (2) holds. By Proposition \ref{prop:qHiggsDerivFunct} (2)
and Lemma \ref{lem:TwDerivSystemFunct} (1), we have
$\theta_{D,i}(D'\cdot 1_D)=0$ for $i\in \Lambda_{\psi}$,
which implies the claim (3) (Remark \ref{rmk:alphaDerivBC} (4)).
This also shows $\theta_{D,i}(a_j)=0$ $(i,j\in \Lambda_{\psi})$.
Hence $\theta_{D,i}(\tau_i)=1$ and $\theta_{D,i}(\tau_j)=0$
for $i,j\in \Lambda_{\psi}$, $i\neq j$ as
$\theta_{D,i}(t_i)=[p]_q$, $\theta_{D,i}(t_j)=0$, and
$\theta_{D,i}$ is $R$-linear. Therefore $\theta_{D,i}$
$(i\in \Lambda_{\psi})$ 
coincides with the $t_i\mu$-derivation obtained by applying
Proposition \ref{prop:PrismEnvDeriv} 
to the $\delta$-$D'$-algebra $B$
and $T_i$ $(i\in \Lambda_{\psi})$,
whence the claim (4) (resp.~(5)) holds by 
Theorem \ref{thm:PoincareLem} (resp.~Proposition \ref{prop:DivDeltaEnvPDEnv} and 
Remark \ref{rmk:DeltaEnvPDEnv}).
\end{proof}

\section{Prismatic crystals, stratifications, and $q$-Higgs fields}
\label{sec:CrystalStratqHiggs}
\begin{definition}[{\cite[Definition 4.1]{BS}}]\label{def:PrismaticSite}
Let $(R,I)$ be a bounded prism (Definition \ref{def:prism}), and let 
$\fX$ be a $p$-adic formal scheme over $\Spf(R/I)$, 
where $R/I$ is equipped with the $p$-adic topology
(Corollary \ref{cor:bddPrismRedpcomp}). \par
(1) {\it The prismatic site $(\fX/(R,I))_{\prism}$ (or $(\fX/R)_{\prism}$ for short)
of $\fX$ over $(R,I)$} is defined as follows. An object of the underlying category is
a pair $((P,IP),v)$ of a bounded prism $(P,IP)$ over $(R,I)$ and a
morphism $v\colon \Spf(P/IP)\to \fX$ over $\Spf(R/I)$. 
A morphism $u\colon ((P',IP'),v')\to ((P,IP),v)$ in the underlying 
category is a morphism of bounded prisms 
$u\colon (P,IP)\to (P',IP')$ over $(R,IR)$
compatible with $v$ and $v'$, i.e., $v\circ \Spf(u\otimes R/I)=v'$.
The fiber product of two morphisms $u_i\colon ((P_i,IP_i),v_i)\to ((P,IP),v)$ 
$(i=1,2)$ is represented by the $pR+I$-adic completion of 
$P_1\otimes_{P}P_2$ if one of $u_i$ is $pR+I$-adically flat
(Definition \ref{def:formallyflat} (1)) 
by Corollary \ref{cor:bddPrismFlatMap} and Lemma \ref{lem:CompFinGenIdeal}.\par
We equip this category with the topology associated to 
the pretopology defined by 
finite families of morphisms
$(u_{\lambda}\colon ((P_{\lambda},IP_{\lambda}),v_{\lambda})
\to ((P,IP),v))_{\lambda\in \Lambda}$, $\sharp \Lambda<\infty$ 
such that $u_{\lambda}$
is $pR+I$-adically flat for every $\lambda\in\Lambda$ and 
$(\Spf(u_{\lambda})\colon \Spf(P_{\lambda})\to \Spf(P))_{\lambda\in \Lambda}$
is a surjective family of morphisms. 
For an object $((P,IP),v)$ of $(\fX/R)_{\prism}$, 
we write $\Cov_{\fpqc}((P,IP),v)$ for the set of families 
of morphisms as above with target $((P,IP),v)$. 
\par
We write $(P,IP)$, $P$, or $(P,v)$ for an object 
$((P,IP),v)$ of $(\fX/R)_{\prism}$ if there is no risk of confusion. 
For an object $P$ of $(\fX/R)_{\prism}$, we write
$\varphi_P$ for the lifting of Frobenius of $P$
associated to the $\delta$-structure of $P$, 
define $P_n$ $(n\in\N)$ to be $P/(pP+IP)^{n+1}$,
and write $\varphi_{P_n}$ for the lifting of Frobenius of $P_n$
induced by $\varphi_P$.
We define presheaves of rings $\CO_{\fX/R}$ and $\CO_{\fX/R,n}$ 
$(n\in \N)$ on 
$(\fX/R)_{\prism}$ by $\CO_{\fX/R}(P)=P$ and $\CO_{\fX/R,n}(P)
=P_n$. These are sheaves by \cite[VIII Corollaire 1.5]{SGA1}.
We define the lifting of Frobenius $\varphi=\varphi_{\fX/R}$
(resp.~$\varphi_n=\varphi_{\fX/R,n}$ $(n\in\N)$) of
$\CO_{\fX/R}$ (resp.~$\CO_{\fX/R,n}$) by $\varphi(P)=\varphi_P$
(resp.~$\varphi_n(P)=\varphi_{P_n}$).
\par
(2) {\it A  crystal of $\CO_{\fX/R,n}$-modules on} $(\fX/R)_{\prism}$ 
is a presheaf of $\CO_{\fX/R,n}$-modules $\CF$ such that,
for any morphism $(u\colon P'\to P)$ in $(\fX/R)_{\prism}$,
the morphism $\CF(u)\colon \CF(P)\to \CF(P')$ induces
an isomorphism $\CF(P)\otimes_{P,u}P'\xrightarrow{\cong}\CF(P')$.
Let $\Crystal_{\prism}(\CO_{\fX/R,n})$ be the category of crystals
of $\CO_{\fX/R,n}$-modules on $(\fX/R)_{\prism}$.
Any crystal of $\CO_{\fX/R,n}$-module is a sheaf by 
\cite[VIII Corollaire 1.5]{SGA1}.\par
{\it A complete crystal (or a crystal, shortly) of $\CO_{\fX/R}$-modules on $(\fX/R)_{\prism}$}
is a presheaf of $\CO_{\fX/R}$-modules $\CF$ such that 
$\CF(P)$ is $pR+I$-adically complete and separated for every 
$P\in (\fX/R)_{\prism}$, and that the presheaf quotient 
$\CF/(pR+I)^{n+1}\CF$ is a crystal of $\CO_{\fX/R,n}$-modules
for every $n\in \N$. By Lemma \ref{lem:CompFinGenIdeal}, the latter condition is
equivalent to saying that, for any morphism $(u\colon (P',v')\to (P,v))$
in $(\fX/R)_{\prism}$, the morphism $\CF(u)\colon \CF(P)\to \CF(P')$
induces an isomorphism $\CF(P)\hotimes_{P,u}P'\xrightarrow{\cong}\CF(P')$,
where the source denotes $\varprojlim_n (\CF(P)\otimes_{P,u}P')/
((pR+I)^{n+1}(\CF(P)\otimes_{P,u}P'))$. 
Let $\hCrystal_{\prism}(\CO_{\fX/R})$ be the
category of complete crystals of $\CO_{\fX/R}$-modules.
A crystal of $\CO_{\fX/R,n}$-modules is a complete crystal of 
$\CO_{\fX/R}$-modules annihilated by $(pR+I)^{n+1}$. 
Any complete crystal of $\CO_{\fX/R}$-module is a
sheaf by \cite[VIII Corollaire 1.5]{SGA1}.
\par
(3) Suppose that we are given another pair of a bounded prism
$(R',I')$ and a $p$-adic formal scheme $\fX'$ over $\Spf(R'/I')$,
and a pair of a morphism of bounded prisms
$f\colon (R,I)\to (R',I')$ and a morphism $g\colon \fX'\to \fX$
compatible with $f$. Then the functor $(\fX'/R)_{\prism}
\to (\fX/R)_{\prism}$ sending $((P,IP),v)$ to
$((P,IP), g\circ v)$ is cocontinuous, and hence
defines a morphism of topos 
$(\fX'/R)_{\prism}^{\sim}
\to (\fX/R)_{\prism}^{\sim}$, which is denoted
by $g_{\prism}$ in the following. We have
 $g_{\prism}^*\CO_{\fX/R}=\CO_{\fX'/R'}$
 and $g_{\prism}^*\CO_{\fX/R,n}=
 \CO_{\fX'/R',n}$, and $g_{\prism}^*$ maps
crystals of $\CO_{\fX/R,n}$-modules
to crystals of $\CO_{\fX'/R',n}$-modules
and complete crystals of $\CO_{\fX/R}$-modules
to complete crystals of $\CO_{\fX'/R'}$-modules.
\end{definition}

\begin{remark}\label{rmk:CompleteCrystalInvSystem}
We keep the notation and assumption in Definition \ref{def:PrismaticSite}.\par
(1) For $n,m\in \N$ with $m<n$, and a crystal of 
$\CO_{\fX/R,n}$-modules $\CF$ on $(\fX/R)_{\prism}$,
the scalar extension $\CF\otimes_{\CO_{\fX/R,n}}\CO_{\fX/R,m}$
as a sheaf on $(\fX/R)_{\prism}$ is a crystal of $\CO_{\fX/R,m}$-modules and
coincides with the scalar extension as a presheaf because the presheaf
scalar extension $P\mapsto \CF(P)\otimes_{P_{n}}P_m$
is a crystal of $\CO_{\fX/R,m}$-modules, which is a sheaf.\par
(2) For $n\in\N$ and a complete crystal of 
$\CO_{\fX/R,n}$-modules $\CF$ on $(\fX/R)_{\prism}$,
the scalar extension $\CF\otimes_{\CO_{\fX/R}}\CO_{\fX/R,m}$
as a sheaf on $(\fX/R)_{\prism}$ is a crystal of $\CO_{\fX/R,n}$-modules and
coincides with the scalar extension as a presheaf because the presheaf
scalar extension $P\mapsto \CF(P)\otimes_{P}P_n$
is, by definition, a crystal of $\CO_{\fX/R,n}$-modules, which is a sheaf.\par
(3) Let $\Crystal_{\prism}(\CO_{\fX/R,\bullet})$ be the category 
of inverse systems of $\CO_{\fX/R,n}$-modules $\CF_{\bullet}=(\CF_n)_{n\geq 1}$
such that $\CF_n$ is a crystal of $\CO_{\fX/R,n}$-modules for every $n\in \N$.
We say that $\CF_{\bullet}$ is {\it adic} if the transition morphism
$\CF_{n+1}\to \CF_n$ induces an isomorphism $\CF_{n+1}\otimes_{\CO_{\fX/R,{n+1}}}
\CO_{\fX/R,n}\xrightarrow{\cong} \CF_n$ for every $n\in \N$.
Let $\Crystal^{\ad}_{\prism}(\CO_{\fX/R,\bullet})$ denote the
full subcategory of $\Crystal_{\prism}(\CO_{\fX/R,\bullet})$ consisting
of adic inverse systems.
\par
(4) For a complete crystal of $\CO_{\fX/R}$-modules $\CF$,
the remarks (1) and (2) above imply that the inverse system
$(\CF\otimes_{\CO_{\fX}}\CO_{\fX/R,n})_{n\geq 1}$
is an object of $\Crystal^{\ad}_{\prism}(\CO_{\fX/R,\bullet})$
and we have an isomorphism of sheaves of $\CO_{\fX/R}$-modules
$\CF\xrightarrow{\cong}\varprojlim_n(\CF\otimes_{\CO_{\fX/R}}\CO_{\fX/R,n})$.
For an inverse system $(\CF_n)_{n\in \N}\in \Ob \Crystal^{\ad}_{\prism}(\CO_{\fX/R,\bullet})$,
we have $\CF_{n+1}(P)\otimes_{P_{n+1}}P_n\xrightarrow{\cong}\CF_n(P)$
for $P\in \Ob (\fX/R)_{\prism}$ and $n\in \N$ by the remark (1) above. 
Hence, by applying Lemma \ref{lem:CompFinGenIdeal}
to $P$, $pP+IP$ and $(\CF_n(P))_{n\in \N}$, we see 
that the sheaf of $\CO_{\fX/R}$-modules $\CF=\varprojlim_n\CF_n$ 
is a complete crystal of $\CO_{\fX/R}$-modules,
and we have isomorphisms $\CF\otimes_{\CO_{\fX/R}}\CO_{\fX/R,n}
\xrightarrow{\cong} \CF_n$ $(n\in \N)$. Thus we obtain equivalences of
categories quasi-inverse to each other
\begin{equation}\label{eq:CompleteCrysInvSys}
\hCrystal_{\prism}(\CO_{\fX/R})\overset{\sim}\leftrightarrows
\Crystal_{\prism}^{\ad}(\CO_{\fX/R,\bullet})
\end{equation}
given by $\CF\mapsto (\CF\otimes_{\CO_{\fX/R}}\CO_{\fX/R,n})_{n\in\N}$
and $\varprojlim_n \CF_n$ \raisebox{0.3ex}{\rotatebox[origin=c]{180}{$\mapsto$}} $(\CF_n)_{n\in \N}$.
\par
\end{remark}
\begin{remark}\label{rmk:crystalFrobPBTensor}
We keep the notation and assumption in Definition \ref{def:PrismaticSite}.\par
(1) For a crystal of $\CO_{\fX/R,n}$-modules $\CF$ on $(\fX/R)_{\prism}$,
the scalar extension $\varphi_n^*\CF=\CF\otimes_{\CO_{\fX/R,n},\varphi_n}\CO_{\fX/R,n}$
by $\varphi_n\colon \CO_{\fX/R,n}\to \CO_{\fX/R,n}$ 
as a sheaf on $(\fX/R)_{\prism}$ is a crystal of $\CO_{\fX/R,n}$-modules and
coincides with the scalar extension as a presheaf because the presheaf
scalar extension $P\mapsto \CF(P)\otimes_{P_n,\varphi_{P_n}}P_n$
is a crystal of $\CO_{\fX/R,n}$-modules, which is a sheaf.
For $(\CF_n)_{n\in \N}\in \Ob \Crystal^{\ad}_{\prism}(\CO_{\fX/R,\bullet})$
(Remark \ref{rmk:CompleteCrystalInvSystem} (3)), $\varphi_{\bullet}^*
(\CF_n)_{n\in\N}:=(\varphi_n^*\CF_n)_{n\in\N}$ is an object
of $\Crystal^{\ad}_{\prism}(\CO_{\fX/R,\bullet})$. By the equivalence
\eqref{eq:CompleteCrysInvSys}, we can define a functor $\hvarphi^*\colon \hCrystal_{\prism}(\CO_{\fX/R})
\to \hCrystal_{\prism}(\CO_{\fX/R})$ by $\hvarphi^*\CF=
\varprojlim_n\varphi^*_n(\CF\otimes_{\CO_{\fX/R}}\CO_{\fX/R,n})$.
\par

(2) For crystals of $\CO_{\fX/R,n}$-modules $\CF$ and $\CG$
on $(\fX/R)_{\prism}$, the tensor product $\CF\otimes_{\CO_{\fX/R,n}}\CG$
as a sheaf on $(\fX/R)_{\prism}$ is a crystal of $\CO_{\fX/R,n}$-modules
and coincides with the tensor product as a presheaf because the
presheaf tensor product  $P\mapsto \CF(P)\otimes_{P_n}\CG(P)$
is a crystal of $\CO_{\fX/R,n}$-modules, which is a sheaf.
For $(\CF_n)_{n\in \N}$, $(\CG_n)_{n\in \N}\in \Ob \Crystal^{\ad}_{\prism}(\CO_{\fX/R,\bullet})$
(Remark \ref{rmk:CompleteCrystalInvSystem} (3)), 
$(\CF_n)_{n\in\N}\otimes_{\CO_{\fX/R,\bullet}}(\CG_n)_{n\in\N}:=
(\CF_n\otimes_{\CO_{\fX/R,n}}\CG_n)_{n\in\N}$ is an object
of $\Crystal^{\ad}_{\prism}(\CO_{\fX/R,\bullet})$. 
For $\CF$, $\CG\in \Ob\hCrystal_{\prism}(\CO_{\fX/R})$, by the equivalence
\eqref{eq:CompleteCrysInvSys}, we can define a tensor product
$\CF\hotimes_{\CO_{\fX/R}}\CG\in \Ob\hCrystal_{\prism}(\CO_{\fX/R})$
by $\varprojlim_n((\CF\otimes_{\CO_{\fX/R}}\CO_{\fX/R,n})\otimes_{\CO_{\fX/R,n}}
(\CG\otimes_{\CO_{\fX/R}}\CO_{\fX/R,n}))$.
\end{remark}
\begin{remark}\label{rmk:CrystalCompQCohShf}
We keep the notation and assumption in Definition \ref{def:PrismaticSite}.
For an object $((P,IP),v_P)$ of $(\fX/R)_{\prism}$,
let us write $P_n$ for $P/(pP+IP)^{n+1}$. 
Let $\CF$ be a crystal of $\CO_{\fX/R,n}$-modules
on $(\fX/R)_{\prism}$. Then, for each object $((P,IP),v_P)$
of $(\fX/R)_{\prism}$, the $P_n$-module
$\CF(P)$ defines a quasi-coherent module
$\CF_P$ on $\Spec(P_n)$, and for each morphism 
$u\colon ((P',IP'),v_{P'})\to ((P,IP),v_P)$ in $(\fX/R)_{\prism}$,
the isomorphism $\CF(P)\otimes_{P,u}P'\xrightarrow{\cong}
\CF(P')$ induced by $\CF(u)$ defines an isomorphism 
$\CF_u\colon \mathtt{u}_n^*\CF_P\xrightarrow{\cong}\CF_{P'}$,
where $\mathtt{u}_n$ denotes the morphism of schemes
$\Spec(P'_n)\to \Spec(P_n)$ induced by 
$u$. If $\Spf(P')$ is an open affine formal subscheme of $\Spf(P)$,
then $P'$ admits a unique $\delta$-$P$-algebra structure
(Proposition \ref{prop:DeltaStrExtEtSm} (1)), 
$(P',IP')$ with $v_{P'}\colon \Spf(P')
\overset{\iota}\hookrightarrow \Spf(P)
\xrightarrow{v_P}\fX$ becomes an object
of $(\fX/R)_{\prism}$ lying over $((P,IP),v_P)$, and 
we have an isomorphism $\CF_{\iota}\colon 
\CF_{P}\vert_{\Spec(P'_n)}\xrightarrow{\cong}
\CF_{P'}$. In particular, we have 
$\CF_P(\Spec(P'_n))=\CF((P',IP'),v_{P'})$.
The construction $\CF\mapsto (\CF_P,\CF_u)$ 
gives an equivalence between the category of 
crystals of $\CO_{\fX/R,n}$-modules on $(\fX/R)_{\prism}$
and that of data $(\CM_P,\CM_u)$ consisting of
a quasi-coherent module $\CM_P$ on 
$\Spec(P_n)$ for each object $P$ of $(\fX/R)_{\prism}$
and an isomorphism $\CM_u\colon 
\mathtt{u}_n^*\CM_{P}\xrightarrow{\cong}\CM_{P'}$
for each morphism $u\colon P'\to P$ in $(\fX/R)_{\prism}$ satisfying
$\CF_{\id_P}=\id_{\CF_P}$ and the cocycle condition
for composition of $u$'s. 
\end{remark}

Under the notation and assumption in Definition \ref{def:PrismaticSite}, 
we assume that $I$ is generated by an element $\xi$ of $R$,
$\fX$ is an affine formal scheme $\Spf(A)$, and we are given 
a smooth adic morphism  $\Spf(B)\to \Spf(R)$ 
and a closed immersion $i\colon \Spf(A)\to \Spf(B)$ 
over $\Spf(R)$ such that $B$ has $pR+I$-adic coordinates over $R$
(Definition \ref{def:formallyflat} (2)). We further 
assume that $B$ is given a $\delta$-$R$-algebra 
structure and satisfies the following, where $J=\Ker (i^*\colon B\to A)$.
\begin{condition}\label{cond:PrismCohDescription}
The $\delta$-pair $(B,J)$ has a 
bounded prismatic envelope over $(R,IR)$ (Definition \ref{def:bddPrsimEnv}), 
denoted by $(D,ID)$ in the following.
\end{condition}
We regard $(D,ID)$ as an 
objet of $(\fX/R)_{\prism}$ by the canonical morphism 
$v_D\colon \Spf(D/ID)\to \Spf(B/J)=\fX$.
\begin{proposition}\label{prop:PrismCofFinObj}
The object $(D,ID)$ is a covering of the final object
of the topos $(\fX/R)_{\prism}^{\sim}$. 
\end{proposition}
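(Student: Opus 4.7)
The plan is to show that for every object $((P,IP),v_P)$ of $(\fX/R)_{\prism}$, there exists a single morphism $P \to D^{\sharp}$ in the site which is a covering and such that $D^{\sharp}$ admits a morphism from $D$; this directly witnesses the local surjectivity of $h_D \to \star$. The candidate $D^{\sharp}$ is the bounded prismatic envelope, relative to the base prism $(P,IP)$, of a $\delta$-pair obtained by ``base-changing'' $(B,J)$ from $(R,I)$ along $P \to P/IP \xrightarrow{v_P^{*}} A$.

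First I would form $B^{\sharp}$, the $(pR+I)$-adic completion of $P \otimes_R B$, which carries a natural $\delta$-$R$-algebra structure (by completion of $\delta$-structures as reviewed at the end of \S\ref{sec:deltarings}) and is $(pR+I)$-adically smooth over $P$ with coordinates $1\otimes t_i$. Define $J^{\sharp} \subset B^{\sharp}$ as the kernel of the $(pR+I)$-adic completion of the map $P \otimes_R B \to P/IP$, $p\otimes b \mapsto (p\bmod IP)\cdot v_P^{*}(i^{*}(b))$. Then $(B^{\sharp},J^{\sharp})$ is a $\delta$-pair over $(P,IP)$ satisfying $B^{\sharp}/J^{\sharp}=P/IP$, and the map $B\to B^{\sharp}$ carries $J$ into $J^{\sharp}$.

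Choosing lifts $\ta_i\in P$ of $v_P^{*}(i^{*}(t_i))\in P/IP$, the elements $T_i=1\otimes t_i-\ta_i\otimes 1 \in J^{\sharp}$ have images in $J^{\sharp}/(J^{\sharp 2}+(pP+IP)B^{\sharp})$ that form a basis over $B^{\sharp}/((pP+IP)B^{\sharp}+J^{\sharp})=P/(pP+IP)$. I would then invoke Proposition \ref{prop:bddPrismEnvSmSmEmb} (combined with Proposition \ref{prop:PrismEnvRegSeq}) to produce a bounded prismatic envelope $(D^{\sharp},ID^{\sharp})$ of $(B^{\sharp},J^{\sharp})$ over $(P,IP)$. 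Corollary \ref{cor:PrismEnvSection} then gives an explicit free $P/(pP+IP)^{n+1}$-module basis of $D^{\sharp}/(pP+IP)^{n+1}D^{\sharp}$, so $P\to D^{\sharp}$ is $(pR+I)$-adically faithfully flat with $\Spf(D^{\sharp})\to\Spf(P)$ surjective, hence is a covering of $P$ in the site.

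To finish, I would equip $D^{\sharp}$ with the structure morphism $\Spf(D^{\sharp}/ID^{\sharp})\to\Spf(P/IP)\xrightarrow{v_P}\fX$, making it an object of $(\fX/R)_{\prism}$ with $P\to D^{\sharp}$ a morphism in the site. The composite $(B,J)\to(B^{\sharp},J^{\sharp})\to(D^{\sharp},ID^{\sharp})$ of morphisms of $\delta$-pairs over $(R,I)$ then yields, by the universal property of $(D,ID)$, a morphism $D\to D^{\sharp}$ of bounded prisms over $(R,I)$. To see it lies in the site, one verifies that for $b\in B$ both composites $B\to D\to D^{\sharp}/ID^{\sharp}$ and $B\to A\xrightarrow{v_P^{*}}P/IP\hookrightarrow D^{\sharp}/ID^{\sharp}$ agree, which follows from the definition of $D\to D^{\sharp}$ via $B\to B^{\sharp}\to D^{\sharp}$ together with the factorisation $B^{\sharp}\twoheadrightarrow B^{\sharp}/J^{\sharp}=P/IP$. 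The main obstacle I anticipate is verifying cleanly the hypotheses of Proposition \ref{prop:bddPrismEnvSmSmEmb} for the pair $(B^{\sharp},J^{\sharp})$ over $(P,IP)$, namely the transversally-regular freeness of the $T_i$ modulo $(pP+IP)$; this will require exploiting that $B$ is $(pR+I)$-adically étale over $R[t_1,\ldots,t_d]^{\wedge}$ so that base-change gives the analogous étaleness for $B^{\sharp}$ over $P[t_1,\ldots,t_d]^{\wedge}$, reducing the regularity statement to the polynomial case.
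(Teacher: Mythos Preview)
Your approach is correct and matches the paper's proof essentially verbatim (the paper writes $P_B$, $J_{P_B}$, $D_B$ for your $B^{\sharp}$, $J^{\sharp}$, $D^{\sharp}$ and simply cites Corollary \ref{cor:PrismEnvSection}). Your anticipated obstacle dissolves once you apply Corollary \ref{cor:PrismEnvSection} directly with $(A,J)=(P,IP)$---whose bounded prismatic envelope is $(P,IP)$ itself---and $(A',J')=(B^{\sharp},J^{\sharp})$: its hypotheses (smoothness of $P\to B^{\sharp}$, the isomorphism $P/IP\cong B^{\sharp}/J^{\sharp}$, and the $\tI$-adic coordinates $1\otimes t_i$) are immediate, and the corollary delivers both the existence of the envelope and the free-basis description (hence faithful flatness) in one stroke, so no separate conormal-freeness verification via Proposition \ref{prop:bddPrismEnvSmSmEmb} is needed.
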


\begin{proof}(\cite[Proposition 1.1.2]{Chat}, \cite[Lemma 4.2]{YTian}). 
Let $((P,IP),v)$ be an object of $(\fX/R)_{\prism}$. 
Let $P_B$ be the $pR+I$-adic completion of $P\otimes_RB$,
let $i_{P_B}$ be the closed immersion $\Spf(P/IP)\to \Spf(P_B)$
induced by the closed immersion $\Spf(P/IP)\to \Spf(P)$
and the composition $\Spf(P/IP)\xrightarrow{v}\Spf(A)\to \Spf(B)$.
Since the morphism $P\to B_P$ is $pR+I$-adically smooth 
and has $pR+I$-adic coordinates,
the $\delta$-pair $(P_B,J_{P_B}:=\Ker(i^*_{P_B}))$ has a bounded prismatic
envelope $(D_B,ID_B)$ over $(R,IR)$, and the morphism 
$P\to D_B$ is $pR+I$-adically faithfully flat (Definition \ref{def:formallyflat} (1))
by Corollary \ref{cor:PrismEnvSection}. Hence 
$(D_B,ID_B)$ equipped with the morphism $v_{D_B}\colon \Spf(D_B/ID_B)\to \Spf(P/IP)\to \fX$
is a covering of $((P,IP),v)$ in $(\fX/R)_{\prism}$, and 
we also have a morphism $((D_B,ID_B),v_{D_B})\to ((D,ID),v_D)$
in $(\fX/R)_{\prism}$ induced by the morphism of $\delta$-pairs 
$(B,J)\to (P_B,J_{P_B})$. This completes the proof.
\end{proof}

For an integer $r\geq 0$, we define $\Spf(B(r))$ to be the
fiber product of $r+1$ copies of $\Spf(B)$ over $\Spf(R)$
with $B(r)$ being equipped with the unique $\delta$-structure
such that the $r+1$ maps $B\to B(r)$ are $\delta$-homomorphisms.
Let $i(r)\colon \Spf(A)\to \Spf(B(r))$ be the closed immersion 
induced by $i$, let $J(r)$ be the kernel of $i(r)^*\colon B(r)
\to A$, and let $D(r)$ be the bounded prismatic envelope
of the $\delta$-pair $(B(r), J(r))$ over $(R,IR)$,
which exists by Corollary \ref{cor:PrismEnvSection}. 
We regard $(D(r),I D(r))$ as an object of $(\fX/R)_{\prism}$
by the canonical morphism $v_{D(r)}\colon\Spf(D(r)/ID(r))\to \Spf(B(r)/J(r))=\fX$.
Then $((D(r),ID(r)),v_{D(r)})$ 
represents the product of
$r+1$ copies of $((D,ID),v_D)$ in $(\fX/R)_{\prism}$, and 
forms a simplicial object of $(\fX/R)_{\prism}$. Note that 
we have $D(0)=D$. The $\delta$-pairs $(B(r),J(r))$ $(r\in \N)$ 
form a cosimplicial $\delta$-pair over $(R,I)$. 

Let $[n]$ denote the ordered set
$\N\cap [0,n]$ for $n\in \N$. For a non-decreasing
map $u\colon [r]\to [s]$ $(r,s\in \N)$, 
we write $D(u)$ (resp.~$B(u)$) for
the corresponding morphism $(D(r),ID(r))\to (D(s),ID(s))$
(resp.~$(B(r),J(r))\to (B(s),J(s))$) of bounded prisms 
(resp.~$\delta$-pairs) over $(R,I)$.
Let $\Delta$ denote the unique surjective map
$[1]\to [0]$. Let $p_i$ $(i\in \{0,1\})$ denote
the map $[0]\to [1]$ sending $0$ to $i$, 
let $q_i$ $(i\in \{0,1,2\})$ denote the
map $[0]\to [2]$ sending $0$ to $i$,
and let $p_{ij}$ $((i,j)\in \{(0,1),(1,2),(0,2)\})$
denote the map $[1]\to [2]$ sending $0$ and $1$ to
$i$ and $j$,  respectively.
We have $p_{ij}\circ p_0=p_i$ and $p_{ij}\circ p_1=p_j$. 
We abbreviate $D(\Delta)$, $D(p_i)$, $D(q_i)$, and $D(p_{ij})$ to $\Delta$,
$p_i$, $q_i$ and $p_{ij}$, respectively.

We put $D_n=D/(pD+ID)^{n+1}$ and $D(r)_n=D(r)/(pD(r)+ID(r))^{n+1}$
for $r\in \N$ and $n\in \N$. 
For a homomorphism of rings $f\colon S\to S'$ and an $S$-module $M$
(resp.~a homomorphism of $S$-module $\varphi$),
we write $f^*M$ (resp.~$f^*(\varphi)$) for the scalar extension 
$M\otimes_{S,f}S'$ of $M$ (resp.~$\varphi\otimes_{S,f}S'$ of $\varphi$). 
under $f$.

\begin{definition}\label{def:PrismStrat}
Let $M$ be a $D$-module. {\it A stratification on $M$ with respect to 
$D(\bullet)$} is an isomorphism of $D(1)$-modules
$\varepsilon\colon p_1^*M\xrightarrow{\cong} p_0^*M$
satisfying the following two conditions:\par
(i) $\Delta^*(\varepsilon)=\id_M$.\par
(ii) The composition $p_{01}^*(\varepsilon)\circ p_{12}^*(\varepsilon)
\colon q_2^*M\xrightarrow{\cong}q_1^*M\xrightarrow{\cong} q_0^*M$
coincides with $p_{02}^*(\varepsilon)\colon q_2^*M\xrightarrow{\cong}
q_0^*M$. \par
A morphism $f\colon (M',\varepsilon')\to (M,\varepsilon)$
of $D$-modules with stratification with respect to $D(\bullet)$
is a homomorphism of $D$-modules $f\colon M'\to M$ compatible
with the stratifications, i.e., $p_0^*(f)\circ\varepsilon'=\varepsilon\circ p_1^*(f)$.
We write $\Strat(D(\bullet))$ for the category of $D$-modules with 
stratification with respect to $D(\bullet)$, and
$\Strat(D(\bullet)_n)$ $(n\in \N)$ for the full subcategory 
of $\Strat(D(\bullet))$ consisting of $D_n$-modules with stratification 
with respect to $D(\bullet)$.

For two $D$-modules with stratification $(M,\varepsilon)$ and $(M',\varepsilon')$, 
one can define a stratification on $M\otimes_D M'$ by 
$p_1^*(M\otimes_DM')\cong p_1^*(M)\otimes_{D(1)}p_1^*(M')
\xrightarrow{\varepsilon\otimes_{D(1)}\varepsilon'}
p_0^*M\otimes_{D(1)}p_0^*M'\cong p_0^*(M\otimes_DM')$.
We write $\varepsilon\otimes\varepsilon'$ for this stratification,
and call $\varepsilon\otimes\varepsilon'$
(resp.~$(M\otimes_DM', \varepsilon\otimes\varepsilon')$)
the {\it tensor product of $\varepsilon$ and $\varepsilon'$}
(resp.~$(M,\varepsilon)$ {\it and} $(M',\varepsilon')$).
This construction is functorial in $(M,\varepsilon)$ and $(M',\varepsilon')$.
\end{definition}

\begin{remark}\label{rmk:PrismStrat}
As it is well-known, we see that 
any $D(1)$-linear homomorphism $\varepsilon\colon p_1^*M
\to p_0^*M$ satisfying (i) and (ii) in Definition \ref{def:PrismStrat}
is an isomorphism as follows. For $r\in \N$ and $i\in [r]$,
let $p_i^{(r)}$ denote the map $[0]\to [r]$ sending $0$ to $i$.
Then, for any map $u\colon [r]\to [s]$, not necessarily non-decreasing, 
we have a unique homomorphism of $\delta$-pairs
$B(u)\colon (B(r),J(r))\to (B(s),J(s))$
satisfying $B(u)\circ B(p_i^{(r)})=B(p_{u(i)}^{(s)})$ for
every $i\in [r]$. Let $D(u)$ be the morphism of bounded prisms
$(D(r),ID(r))\to (D(s),ID(s))$ induced by $B(u)$, which satisfies
$D(u)\circ D(p_i^{(r)})=D(p_{u(i)}^{(s)})$. We have $D(u'\circ u)
=D(u')\circ D(u)$ for composable morphisms $u$ and $u'$.
Let $\iota$
be the map $[1]\to [1]$ sending $0$, $1$ to $1$, $0$,
and let $\iota_0$ (resp.~$\iota_1$) be the map 
$[2]\to [1]$ sending $0$, $1$, $2$ to $0$, $1$, $0$
(resp.~$1$, $0$, $1$). Then the composition of 
$p_{01}$, $p_{12}$, $p_{02}\colon [1]\to [2]$
with $\iota_0$ (resp.~$\iota_1$) is $\id$, $\iota$, $p_0\circ \Delta$
(resp.~$\iota$, $\id$, $p_1\circ \Delta$). 
Hence, by taking the scalar extension of the
equality $p_{01}^*(\varepsilon)\circ p_{12}^*(\varepsilon)=p_{02}^*(\varepsilon)$
in the condition (ii) under $D(\iota_0)$ (resp.~$D(\iota_1)$) and using
the condition (i), we obtain $\varepsilon \circ D(\iota)^*(\varepsilon)=\id_{p_0^*M}$
and $D(\iota)^*(\varepsilon)\circ\varepsilon=\id_{p_1^*M}$.
\end{remark}

Let $\CF$ be a crystal of $\CO_{\fX/R,n}$-modules on $(\fX/R)_{\prism}$,
put $M=\CF(D)$, and let $\varepsilon$ be the composition 
$p_1^*M\xrightarrow[\CF(p_1)]{\cong}\CF(D(1))\xleftarrow[\CF(p_0)]{\cong}p_0^*M$. 
Then,  by using $\Delta\circ p_0=\Delta\circ p_1=\id_D$, we see 
$\Delta^*(\varepsilon)=\id_M$. One can verify that $\varepsilon$
satisfies the cocycle condition (ii) in Definition \ref{def:PrismStrat}
by showing that $p_{ij}^*(\varepsilon)\colon q_j^*M\xrightarrow{\cong}
q_i^*M$ $((i,j)\in\{(0,1), (0,2), (1,2)\})$ coincides with the
composition $q_j^*M
\xrightarrow[\CF(q_j)]{\cong}\CF(D(2))\xleftarrow[\CF(q_i)]{\cong}q_i^*M$.
This construction is obviously functorial in $\CF$. 
By Proposition \ref{prop:PrismCofFinObj},
one can prove the following proposition by a standard argument
using the usual faithfully flat descent of quasi-coherent modules on schemes.

\begin{proposition}\label{prop:CrysStratEquiv}
The above construction gives an equivalence of categories
preserving tensor products
\begin{equation}\label{eq:CrysStratEquiv}
\Crystal_{\prism}(\CO_{\fX/R,n})\xrightarrow{\sim}
\Strat(D(\bullet)_n).\end{equation}
\end{proposition}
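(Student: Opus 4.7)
The construction of the functor $\Crystal_{\prism}(\CO_{\fX/R,n})\to \Strat(D(\bullet)_n)$ is already described before the statement; that it sends tensor products to tensor products is immediate from the identifications $\CF(D(r))\cong \CF(D)\otimes_{D,D(p_i^{(r)})}D(r)$ supplied by the crystal property. It remains to construct a quasi-inverse and verify the equivalence, for which I will use Proposition \ref{prop:PrismCofFinObj} together with faithfully flat descent for quasi-coherent modules on affine schemes (via the dictionary of Remark \ref{rmk:CrystalCompQCohShf}).

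The plan is to construct the quasi-inverse $\Strat(D(\bullet)_n)\to \Crystal_{\prism}(\CO_{\fX/R,n})$ as follows. Fix $(M,\varepsilon)$ in $\Strat(D(\bullet)_n)$. For each object $((P,IP),v)$ of $(\fX/R)_{\prism}$, form the bounded prismatic envelope $(D_P,ID_P)$ of $(P_B,J_{P_B})$ constructed in the proof of Proposition \ref{prop:PrismCofFinObj}; this comes with a morphism $\iota_P\colon P\to D_P$ in $(\fX/R)_{\prism}$ which is $pR+I$-adically faithfully flat, together with a canonical morphism $\rho_P\colon D\to D_P$. By the universal property of the prismatic envelope and Corollary \ref{cor:PrismEnvSection}, the product $((D_P,ID_P),v_{D_P})\times_{((P,IP),v)} ((D_P,ID_P),v_{D_P})$ in $(\fX/R)_{\prism}$ is represented by an object built from $(B(1),J(1))$ relative to $(P_B,J_{P_B})$; in particular it receives a canonical morphism $\rho_P^{(1)}\colon D(1)\to D_P(1)$ compatible with the two projections. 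Hence the $D(1)$-linear stratification $\varepsilon$ pulls back to a $D_P(1)_n$-linear isomorphism $\varepsilon_P\colon p_1^*(M\otimes_{D,\rho_P}D_P)\xrightarrow{\cong}p_0^*(M\otimes_{D,\rho_P}D_P)$, and the cocycle and normalization conditions (i), (ii) on $\varepsilon$ (pulled back along $D(2)\to D_P(2)$) give a descent datum for $M\otimes_{D,\rho_P}D_P$ regarded as a quasi-coherent module on $\Spec((D_P)_n)$ along the faithfully flat cover $\Spec((D_P)_n)\to \Spec(P_n)$. Faithfully flat descent for quasi-coherent sheaves yields a $P_n$-module $\CF(P)$ together with an isomorphism $\CF(P)\otimes_{P_n}(D_P)_n\xrightarrow{\cong}M\otimes_{D,\rho_P}(D_P)_n$.

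Next I will check functoriality in $P$ and the crystal property. A morphism $u\colon (P',v')\to (P,v)$ in $(\fX/R)_{\prism}$ induces a morphism $D_{u}\colon D_P\to D_{P'}$ compatible with $\rho_P$, $\rho_{P'}$ and $\iota_P$, $\iota_{P'}$, and a similar morphism at the level of self-products; pulling back the two descent data along $D_{u}$ shows that $M\otimes_{D,\rho_{P'}}D_{P'}$ is canonically identified with $(M\otimes_{D,\rho_P}D_P)\otimes_{D_P,D_u}D_{P'}$ as a module with descent datum, which by fpqc descent gives a canonical $P'_n$-linear isomorphism $\CF(P)\otimes_{P_n,u}P'_n\xrightarrow{\cong}\CF(P')$; this is exactly the crystal condition. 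This construction is functorial in $(M,\varepsilon)$, so it defines a functor $\Phi\colon \Strat(D(\bullet)_n)\to \Crystal_{\prism}(\CO_{\fX/R,n})$. Evaluating at $P=D$ one obtains $\iota_D\colon D\to D_D$ and one checks that $\Phi(M,\varepsilon)(D)=M$ with stratification $\varepsilon$, showing that $\Phi$ is a quasi-inverse to the functor of the proposition; the natural transformations in the other direction are obtained by applying the crystal property at the morphisms $\iota_P$. Preservation of tensor products by $\Phi$ follows from the fact that fpqc descent commutes with tensor products and that the pullback $\varepsilon_P\otimes\varepsilon'_P$ coincides with the descent datum associated to $\varepsilon\otimes\varepsilon'$.

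The main technical point, and the step I expect to be the principal obstacle, is the identification of the relevant products in $(\fX/R)_{\prism}$: namely, that $(D_P,ID_P)$ receives functorial morphisms from $(D,ID)$ and its iterated self-products, and that under these morphisms the canonical isomorphisms $\Spec((D_P)_n)\times_{\Spec(P_n)}\Spec((D_P)_n)\cong \Spec(D_P(1)_n)$ and the triple analog hold, so that the abstract descent datum assembled from $\varepsilon$ really matches the concrete descent datum needed on $\Spec((D_P)_n)\to \Spec(P_n)$. This is a bookkeeping exercise based on the universal property of bounded prismatic envelopes (Lemma \ref{lem:bddPrismEnvBase}) and the structural description in Corollary \ref{cor:PrismEnvSection}, but it is where the majority of the work lies.
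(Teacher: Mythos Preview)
Your proposal is correct and follows precisely the route the paper intends: the paper does not give a detailed proof but says the result follows from Proposition \ref{prop:PrismCofFinObj} together with faithfully flat descent of quasi-coherent modules, and your object-by-object construction via $D_P\to P$ is exactly how one unpacks that standard argument. The bookkeeping you flag at the end (identifying $D_P\times_P D_P$ with an object receiving a map from $D(1)$, and verifying $D_D\cong D(1)$ so that $\Phi(M,\varepsilon)(D)\cong M$) is handled later in the paper in the proof of Proposition \ref{prop:LinearizationLocCoh}, where the isomorphisms $D_{D(r)}\cong D(r+1)$ are established; you may cite that, or use Lemma \ref{lem:bddPrismEnvBase} and Corollary \ref{cor:PrismEnvSection} directly as you suggest.
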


\begin{remark}\label{rmk:CrysStratEquivJAnn}
Let $n\in \N$, and let $J$ be an ideal of $R$ containing 
$(pR+\pq R)^{n+1}$. Then by construction, the equivalence
\eqref{eq:CrysStratEquiv} induces an equivalence between
the subcategories  consisting of objects annihilated by $J$.
\end{remark}

The equivalence \eqref{eq:CrysStratEquiv} is functorial in 
$(R,I)$ and $i\colon \fX=\Spf(A)\to \fY=\Spf(B)$ as follows.
Let $(R',I')$ be another bounded prism, and let $i'\colon\fX'=\Spf(A')\to \fY'=\Spf(B')$
be a closed immersion of affine formal schemes over $\Spf(R')$
satisfying the same conditions as $i\colon \fX\to \fY$ (see before Proposition 
\ref{prop:PrismCofFinObj}). Suppose that we are given a morphism of bounded prisms
$f\colon (R,I)\to (R',I')$ and morphisms $g\colon \fX'\to \fX$
and $h\colon \fY'\to \fY$ compatible with $f$ and
closed immersions $i$ and $i'$.  We define an object $((D',I'D'),v_{D'})$
of $(\fX'/R')_{\prism}$,  a cosimplicial $\delta$-pair $(B'(r), J'(r))$ $(r\in \N)$
over $(R',I')$,
and a simplicial object $((D'(r),I'D'(r)),v_{D'(r)})$ $(r\in \N)$ of $(\fX'/R')_{\prism}$ 
in the same way as $((D,ID),v_D)$, $(B(r),J(r))$, and
$((D(r),ID(r)),v_{D(r)})$ by using $(R',I')$ and $i'\colon \fX'\to \fY'$.
The morphism $h$ induces a morphism of cosimplicial $\delta$-pairs
$h_{B(\bullet)}\colon (B(\bullet),J(\bullet))\to (B'(\bullet),J'(\bullet))$ 
compatible with $f$, which induces a morphism 
$h_{D(\bullet)}\colon ((D'(\bullet),I'D'(\bullet)),g\circ v_{D'(\bullet)})
\to ((D(\bullet),ID(\bullet)), v_{D(\bullet)})$
of simplicial objects of $(\fX/R)_{\prism}$. Therefore one can 
define a functor 
\begin{equation}\label{eq:StratPullback}
h_{D(\bullet)}^*\colon \Strat(D(\bullet)_n)\longrightarrow \Strat(D'(\bullet)_n)
\end{equation}
by sending $(M,\varepsilon)$ to $h_D^*(M)=D'\otimes_DM$
equipped with the $D'(1)$-linear isomorphism 
$p_1^{\prime*}h_D^*M\cong h_{D(1)}^*p_1^*M
\xrightarrow[h_{D(1)}^*(\varepsilon)]{\cong}h_{D(1)}^*p_0^*M\cong 
p_0^{\prime*}h_D^*M$,
where $p_{\nu}^{\prime}$ $(\nu=0,1)$ denotes the morphism $D'(1)\to D'$
corresponding to $[0]\to [1];0\mapsto \nu$. By the construction
of the equivalence \eqref{eq:CrysStratEquiv},
it is straightforward to verify that the following diagram is 
commutative up to canonical isomorphism.\par
\begin{equation}\label{eq:FunctCrystalStrat}
\xymatrix@C=60pt{
\Crystal_{\prism}(\CO_{\fX/R,n})
\ar[r]^{\sim}_{\eqref{eq:CrysStratEquiv}}\ar[d]^{g_{\prism}^*}&
\Strat(D(\bullet)_n)\ar[d]^{h_{D(\bullet)}^*}\\
\Crystal_{\prism}(\CO_{\fX'/R',n})
\ar[r]^{\sim}_{\eqref{eq:CrysStratEquiv}}&
\Strat(D'(\bullet)_n)
}
\end{equation}

\par

In the rest of this section, we assume that $R$ and $B$ satisfy the following
conditions
\begin{condition}\label{cond:qPrism}
(i) The bounded prism $(R,I)$ is a $q$-prism (Definition \ref{def:FramedSmoothPrism} (1)).
\par
(ii) We are given  $pR+I$-adic coordinates $t_1,\ldots, t_d$ of $B$ over $R$ 
(Definition \ref{def:formallyflat} (2)) such that
$\delta(t_i)=0$ for every $i\in \N\cap [1,d]$. 
\end{condition}

We first introduce the $q$-Higgs differential and $q$-Higgs modules on $D(r)$ 
by applying results in \S\ref{qprismenv}.
For $r\in \N$ and $l\in [r]$, let $p_l^{(r)}$ denote the map $[0]\to [r]$ 
sending $0$ to $l$.
Put $\Lambda=\N\cap [1,d]$. For $r\in \N$, let $\Lambda(r)$
be the set $[r]\times \Lambda$ equipped with the lexicographic order.
For $(l,i)\in \Lambda(r)$, let $t_{l;i}^{(r)}$ denote the image of 
$t_i$ under $B(p_l^{(r)})\colon B\to B(r)$ and also its image
in $D(r)$. Put $\ut^{(r)}=(t_{l;i}^{(r)})_{(l,i)\in\Lambda(r)}$. 
Then $((B(r),J(r))/R, \ut^{(r)})$
is a framed smooth $q$-pair (Definition \ref{def:FramedSmoothPrism} (3))
and we have the associated $q$-Higgs derivations,
$q$-Higgs automorphisms, and $q$-Higgs differential of
$D(r)$ over $R$ with respect to $\ut^{(r)}$ (Definition \ref{def:qHiggsDifferential} (2)), which are denoted by 
$$\theta_{D(r),l;i},\;\; \gamma_{D(r),l;i} \;\;((l,i)\in\Lambda(r)), 
\;\;\text{ and }\;\;\theta_{D(r)}\colon D(r)\to q\Omega_{D(r)/R}
=\bigoplus_{(l,i)\in \Lambda(r)}D(r)\omega_{l;i}^{(r)}$$ in the following.
Note that the $\delta$-structure on $B(r)$ coincides with 
that defined by the frame, i.e., $\delta(t_{l;i}^{(r)})=0$.
For $(l,i)$, $(m,j)\in \Lambda(r)$, 
we have $\theta_{D(r), l;i}(t_{m;j}^{(r)})=0$ if $(l,i)\neq(m,j)$ and
$=[p]_q$ if $(l,i)=(m,j)$. We have
$\theta_{D(r)}(x)=\sum_{(l,i)\in\Lambda(r)}\theta_{D(r), l;i}(x)
\omega_{l;i}^{(r)}$. We write $q\HIG(D(r)/R)$ for the 
category of $q$-Higgs modules associated to the framed smooth $q$-pair
$((B(r),J(r))/R,  \ut^{(r)})$ (Definition 
\ref{def:qHiggsMod} (2)) and define $q\HIG(D(r)_n/R)$ $(n\in\N)$ to be
its full subcategory consisting of objects whose underlying
$D(r)$-modules are annihilated by $(pR+[p]_qR)^{n+1}$. 
For an object $(M,\theta_M)$ of $q\HIG(D(r)/R)$, 
we define endomorphisms $\theta_{M,l;i}$ $((l,i)\in\Lambda(r))$
by $\theta_M(x)=\sum_{(l,i)\in \Lambda(r)}\theta_{M,l;i}(x)\otimes\omega^{(r)}_{l;i}$
$(x\in M)$. In the case $r=0$, we identify $\Lambda(0)$ with $\Lambda$,
write $D(r)$ for $D$, and omit the superscript $(r)$ and the
subscript $l;$ $(l\in [r])$ as $\ut=(t_i)_{i\in\Lambda}$, $\theta_{D,i}$, $\gamma_{D,i}$,
$\theta_D$, $q\Omega_{D/R}$, 
$\omega_i$, and $\theta_{M,i}$.

Let $w\colon [s]\to [r]$
$(r,s\in \N)$ be an injective non-decreasing map,
and define a subset $\Lambda(w)$ of $\Lambda(r)$
to be $([r]\backslash w([s]))\times \Lambda$.
Then $(B(r),J(r))$ regarded as a $\delta$-pair over $B(s)$ via 
$B(w)$ and $(t_{l;i}^{(r)})_{(l,i)\in\Lambda(w)}$
form a smooth framed $q$-pair, and we obtain the
associated $q$-Higgs derivations, endomorphisms,
and differential; the first two coincide with 
$\theta_{D(r),l;i}$ and $\gamma_{D(r),l;i}$ for $(l,i)\in \Lambda(w)$
by the uniqueness of $\theta_{A,i}$ and $\theta_{D,i}$
in Proposition \ref{prop:qHiggsDeriv} (1) and (2).
We write the differential map as
$\theta_{D(w)}\colon D(r)\to q\Omega_{D(w)}
=\oplus_{(l,i)\in \Lambda(w)}D(r)\omega^{(r)}_{l;i}$,
which coincides with the composition of $\theta_{D(r)}$
with the natural projection map 
$q\Omega_{D(r)}\to q\Omega_{D(w)}$. 
We define $q\HIG(D(w))$ to be the category of $q$-Higgs
modules with respect the above framed smooth $q$-pair,
and $q\HIG(D(w)_n)$ to be its full subcategory 
consisting of objects whose underlying modules are
annihilated by $(pR+[p]_qR)^{n+1}$. The composition with
the projection $q\Omega_{D(r)/R}\to q\Omega_{D(w)}$
induces functors  $q\HIG(D(r)/R)\to q\HIG(D(w))$
and $q\HIG(D(r)_n/R)\to q\HIG(D(w)_n)$ $(n\in \N)$. 

Let $u\colon[r]\to [r']$ $(r, r'\in \N)$, be a non-decreasing
map (not necessarily injective). Then the homomorphisms
$B(u)\colon B(r)\to B(r')$, $\id_R$, and 
$u\times \id_{\Lambda}\colon \Lambda(r)\to \Lambda(r')$
define a morphism of framed smooth $q$-pairs
(Definition \ref{def:FramedSmoothPrism} (3)) 
$((B(r),J(r))/R,\ut^{(r)})
\to ((B(r'), J(r'))/R,\ut^{(r')})$.
Hence,  by applying Proposition \ref{prop:qHiggsDerivFunct} (2)
We obtain a functor 
\begin{equation}\label{eq:SimpPrisEnvHIGPB}
u^*\colon q\HIG(D(r)/R)\longrightarrow q\HIG(D(r')/R),
\end{equation}
which is restricted to functors
$q\HIG(D(r)_n/R)\longrightarrow q\HIG(D(r')_n/R)$ $(n\in\N)$. 
We have 
\begin{equation}\label{eq:DsimplicialDerivFormula}
\theta_{D(r'),l';i}(D(u)(a))
=\sum_{\emptyset\neq\ul\subset u^{-1}(l')}
D(u)\biggl(\biggl(\prod_{l\in \ul}\partial_{D(r),l;i}\biggr)(a)\biggr)\cdot
(\mu t_{l';i}^{(r')})^{\sharp \ul-1}
\quad((l',i)\in \Lambda(r'), a\in D(r))
\end{equation}
by Lemma \ref{lem:TwDerivSystemFunct} (1). 
For $(M,\theta_M)\in q\HIG(D(r)/R)$ and 
$(M',\theta_{M'})=u^*(M,\theta_M)\in q\HIG(D(r')/R)$,
we have 
\begin{equation}\label{eq:DSimplqHIGPBFormula}
\theta_{M',l';i}(x\otimes 1)=
\sum_{\emptyset\neq\ul\subset u^{-1}(l')}
\biggl(\prod_{l\in \ul}\theta_{M,l;i}\biggr)(x)\otimes(\mu t_{l';i}^{(r')})^{\sharp l-1}
\quad ((l',i)\in \Lambda(r'), x\in M)
\end{equation}
by Proposition \ref{prop:ConnPBFormula}.
The functors $u^*$ \eqref{eq:SimpPrisEnvHIGPB} for various $u$'s are compatible with compositions
by the remark after \eqref{eq:qDolbCpxPB2}. 

\begin{definition}\label{def:qNilpHiggsField}
For $n\in \N$, we say that an object $(M,\theta_M)$ of $q\HIG(D_n/R)$
is {\it quasi-nilpotent}, if it satisfies the following property: 
For any $x\in M$, there exists a positive integer $N$ such that
$(\theta_{M,i})^N(x)=0$ for every $i\in \Lambda$.
Let $q\HIG_{\qnilp}(D_n/R)$ denote the full subcategory of 
$q\HIG(D_n/R)$ consisting of quasi-nilpotent objects.
\end{definition}

\begin{theorem}\label{thm:StratHiggsEquiv}
There exists a canonical equivalence of categories
\begin{equation}\label{eq:StratHiggsEquiv}
\Strat(D(\bullet)_n)\xrightarrow{\sim}
q\HIG_{\qnilp}(D_n/R).\end{equation}
See Remark \ref{rmk:CrysFrobPBHiggs} 
for compatibility with scalar extensions by Frobenius
and with tensor products.
\end{theorem}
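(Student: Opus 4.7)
I would construct mutually quasi-inverse functors between the two categories, modeled on the classical equivalence between HPD-stratifications and quasi-nilpotent integrable connections on the crystalline site, but adapted to the twisted $q$-Higgs setting. The structural input that makes everything explicit is the description of $D(r)_n$ over $D(r-1)_n$ from Proposition \ref{prop:qprismEnvPL}.

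\textbf{Forward functor.} Applying Proposition \ref{prop:qprismEnvPL} to the morphism of framed smooth $\delta$-pairs over $R$ induced by $p_0\colon B\to B(1)$ and $\psi_0\colon\Lambda\hookrightarrow\Lambda(1),\ i\mapsto(0,i)$, I find that $D(1)_n$ is free over $D_n$ via $p_0$ with basis $\{\prod_{i\in\Lambda}\tau_i^{\{m_i\}_\delta}\}_{(m_i)\in\N^\Lambda}$, where $[p]_q\tau_i=t_{1;i}^{(1)}-t_{0;i}^{(1)}$, and each $\theta_{D(1),1;i}$ is $D_n$-linear via $p_0$ with $\theta_{D(1),1;i}(\tau_j)=\delta_{ij}$. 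Given $(M,\varepsilon)\in\Strat(D(\bullet)_n)$, I define
$$\theta_{M,i}(m):=\Delta^*\bigl((\id_M\otimes\theta_{D(1),1;i})(\varepsilon(m\otimes_{p_1}1))\bigr),$$
with $\Delta^*\colon p_0^*M\to M$ induced by $\Delta\colon D(1)\to D$. To check that $\theta_{M,i}$ is a $t_i\mu$-derivation $\delta$-compatible with $t_i^{p-1}\eta$, I combine the $D(1)_n$-linearity of $\varepsilon$ (compatibility with both the $p_0$- and the $p_1$-algebra structures) with the twisted Leibniz rule and the $\delta$-compatibility formula \eqref{eq:AlphaDerivDerComp} for $\theta_{D(1),1;i}$, then reduce the verification of $\delta$-compatibility to the generators $t_i\in D$ via Proposition \ref{prop:TwistDerivDeltaCompCoord}. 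Commutativity of the $\theta_{M,i}$'s follows by applying the analogous structure theorem to $D(2)_n$ over $D_n$ and invoking the cocycle identity for $\varepsilon$ on $D(2)$. Quasi-nilpotence is automatic: $\varepsilon(m\otimes_{p_1}1)$ is a finite sum in the $D_n$-free basis of $p_0^*M$, and iterating the construction produces $\theta_{M,i}^k(m)=0$ for $k$ large.

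\textbf{Backward functor.} Given $(M,\theta_M)\in q\HIG_{\qnilp}(D_n/R)$, I define a $D(1)_n$-linear map $\varepsilon\colon p_1^*M\to p_0^*M$ by a twisted Taylor expansion
$$\varepsilon(m\otimes_{p_1}1)=\sum_{\un\in\N^\Lambda}c_\un(\utau)\otimes_{p_0}\theta_M^{\un}(m),$$
where $\theta_M^\un=\prod_i\theta_{M,i}^{n_i}$ (well-defined by commutativity) and the coefficients $c_\un(\utau)\in D(1)_n$ are the $q$-twisted divided powers built from the basis elements $\tau_j^{\{m\}_\delta}$, specified by $c_{\uzero}=1$, $\Delta(c_\un)=0$ for $\un\neq\uzero$, and the recursive condition that pairing $c_{\un+e_i}$ with $\theta_{D(1),1;i}$ and $\Delta$ recovers $c_\un$, so that the forward construction applied to $\varepsilon$ returns $\theta_M$. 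Quasi-nilpotence of $\theta_M$ and finiteness of $\Lambda$ ensure the sum is finite for each $m$. The identity condition $\Delta^*(\varepsilon)=\id_M$ is built into the normalization of the $c_\un$, and the cocycle condition on $D(2)_n$ is reduced, via the basis description of $D(2)_n$ over $D(1)_n$ supplied by Proposition \ref{prop:qprismEnvPL} applied to $(p_{01},\psi_{01})$ together with the comultiplication-like formulas for the $c_\un$, to the commutativity of the $\theta_{M,i}$'s. Mutual inverseness of the two functors is then a direct computation from the characterizing properties of the $c_\un$, and functoriality in $(M,\varepsilon)$ and $(M,\theta_M)$ is transparent from the constructions.

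\textbf{Main obstacle.} The hard part is the construction of the twisted divided powers $c_\un(\utau)$ and the verification of the comultiplication-like identities they satisfy on $D(2)_n$. Unlike the classical crystalline PD setting, where one can take $c_\un=\prod_i\tau_i^{[n_i]}$ and the coproduct is dictated by $\tau_i\mapsto\tau_i\otimes 1+1\otimes\tau_i$ combined with the transparent action $\partial_i(\tau_i^{[n+1]})=\tau_i^{[n]}$, here the analogous basis element $\prod_i\tau_i^{\{n_i\}_\delta}$ involves iterated $\delta$'s, and $\theta_{D(1),1;i}$ acts on them through the twisted $\delta$-compatibility formula \eqref{eq:AlphaDerivDerComp}, producing correction terms in $\mu$ and $\eta$. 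Proposition \ref{prop:DivDeltaEnvPDEnv} supplies exactly the required identification after reduction modulo $\mu$: $\tau_i^{\{m\}_\delta}$ specializes to the ordinary divided power $\tau_i^{[m]}$ (up to a unit), and $\theta_{D(1),1;i}$ specializes to $\partial_i$, so the $c_\un$ modulo $\mu$ are the usual divided-power monomials. The general construction requires lifting this modulo successive powers of $\mu$, using the $R$-linearity and $\delta$-compatibility of $\theta_{D(1),1;i}$ to propagate the recursion; the nilpotence of $\mu$ in $D_n$ (since $\mu^{p-1}\in(p,[p]_q)$) ensures the process terminates. Once this twisted PD calculus is established, compatibility with tensor products and with the Frobenius scalar extension (Remarks \ref{rmk:FramedSmQPHigFPB} and \ref{rmk:FramedSmQPHigFPBComp}) follow from the functoriality of Propositions \ref{prop:qprismEnvPL} and \ref{prop:qHiggsDerivFunct}.
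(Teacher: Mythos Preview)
Your forward functor is essentially the paper's: the paper transports $\id_M\otimes\theta_{D(1),1;i}$ on $p_0^*M$ to $p_1^*M$ via $\varepsilon$, shows (Lemma \ref{lem:StratHiggsComp}) that this commutes with the trivial $p_1$-relative field, hence stabilizes $M\subset p_1^*M$, and restricts. Your $\Delta^*$ formula computes the same endomorphism. Quasi-nilpotence in the paper (Lemma \ref{lem:StratToConnection}) is proved by reduction modulo $(p,\mu)$, where the PD description of $\oD(1)$ identifies the Taylor coefficients of $\varepsilon$ with iterates of $\theta_{M,i}$ via the cocycle condition; your ``finite sum'' remark is correct in spirit but would need this identification to connect finiteness of the expansion to nilpotence of $\theta_{M,i}$.

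The backward functor is where the paper takes a genuinely different route and sidesteps your main obstacle. Instead of constructing twisted divided powers $c_{\un}(\utau)$ in $D(1)_n$, the paper characterizes the map $M\to p_0^*M$ abstractly: it is the inverse of the restriction of $\Delta_M\colon p_0^*M\to M$ to $(p_0^*M)^{\theta_{p_0^*M,p_1}=0}$, where $\theta_{p_0^*M,p_1}$ is the scalar extension of $\theta_M$ along $p_0$ (Proposition \ref{prop:HIGToStratKeyProp}). That this restriction is an isomorphism is proved by d\'evissage on the $(p,\mu)$-adic filtration of $M$, using flatness of $D(1)_n/D_n$, reducing to the case $(p,\mu)M=0$; only there does an explicit Taylor expansion appear, and there the coefficients are the ordinary divided powers $\otau^{[\un]}$ supplied by Proposition \ref{propdef:DSimpPDStr}. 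The cocycle condition for $\varepsilon$ is then verified by an analogous Poincar\'e-lemma argument on $D(2)$, again abstractly, without any comultiplication formula for coefficients.

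So your proposed inductive lifting of $c_{\un}$ through powers of $\mu$ is unnecessary, and the paper never builds such elements outside the PD situation. Your approach is not obviously wrong, but the step you flag as the obstacle --- producing the $c_{\un}$ with the right recursion under $\theta_{D(1),1;i}$ and the right coproduct behavior on $D(2)_n$ --- is exactly where the $\delta$-compatibility formula \eqref{eq:AlphaDerivDerComp} introduces correction terms that you have not controlled, and it is not clear your lifting sketch terminates with the required identities. The paper's insight is that one can trade this explicit calculus for the Poincar\'e lemma (Proposition \ref{prop:qprismEnvPL}(4), Theorem \ref{thm:PoincareLem}), which was already established, and this is what makes the argument go through cleanly.
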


For $r\in \N$ and $l,m\in [r]$, $l\neq m$, we have
$t^{(r)}_{l;i}-t_{m;i}^{(r)}\in J(r)\subset B(r)$, which 
implies that its image in $D(r)$ is divisible by $[p]_q$, which
is regular on $D(r)$. We define $\tau^{(r)}_{l,m;i}\in D(r)$
to be the unique element satisfying
$$[p]_q\tau^{(r)}_{l,m;i}=t^{(r)}_{l;i}-t_{m;i}^{(r)}\quad\text{in }D(r).$$
For $(k,j)\in \Lambda(r)$, we have 
\begin{equation}
\theta_{D(r),k;j}(\tau_{l,m;i}^{(r)})
=\begin{cases}
0 &(\text{if }k\not\in\{l,m\} \text{ or }j\neq i),\\
1 &(\text{if }k=l\text{ and } j=i), \\
-1& (\text{if }k=m\text{ and }j=i). 
\end{cases}
\end{equation}
For $n\in \N$, we define $(\tau^{(r)}_{l,m;i})^{\{n\}_{\delta}}$
in the same way as \eqref{eq:TauDeltaPower}.

\begin{proposition}\label{prop:PrismEnvNervStr}
Let $u\colon [s]\to [r]$ $(s,r\in \N)$ be an injective non-decreasing
map,  and regard 
$D(r)$ as a $D(s)$-algebra by the homomorphism $D(u)$. 
Let $m$ be an element of $u([s])$.
Then, for each $N\in \N$, 
$D(r)_N$ is a free $D(s)_N$-module with basis
$\prod_{(l,i)\in \Lambda(u)}(\tau^{(r)}_{l,m;i})^{\{N_{l,i}\}_{\delta}}$,
$(N_{l,i})\in \N^{\Lambda(u)}$. In particular,
the homomorphism $D(u)$ is $pR+I$-adically faithfully flat
(Definition \ref{def:formallyflat} (1)). 
\end{proposition}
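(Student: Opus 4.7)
The plan is to derive the statement directly from Corollary \ref{cor:PrismEnvSection} applied to the morphism of $\delta$-pairs
$$B(u)\colon (B(s), J(s))\longrightarrow (B(r), J(r))$$
over the $q$-prism $(R, [p]_q R)$, using the bounded prismatic envelope $(D(s), [p]_q D(s))$ of $(B(s), J(s))$ that is already at hand.

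First I would verify the hypotheses of Corollary \ref{cor:PrismEnvSection}. Since $u\colon [s]\to [r]$ is injective and non-decreasing, $B(r)$ is the $(pR+I)$-adic completion of a polynomial algebra over $B(s)$ on the variables coming from the $r-s$ tensor factors of $B$ indexed by $[r]\setminus u([s])$; concretely, the family $(t^{(r)}_{l;i})_{(l,i)\in\Lambda(u)}$ forms $(pR+I)$-adic coordinates of $B(r)$ over $B(s)$, so $B(u)$ is $(pR+I)$-adically smooth. Moreover, by the very construction of $B(r)$ and $J(r)$, the canonical map $B(s)/J(s)\to B(r)/J(r)$ is the identity map of $A$, and $(D(s), [p]_qD(s))$ is the bounded prismatic envelope of $(B(s), J(s))$ over $(R, [p]_q R)$ by definition.

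Next, I would specify the lifts $a_{l,i}\in B(s)$ demanded by Corollary \ref{cor:PrismEnvSection}. The hypothesis $m\in u([s])$ provides a unique $m'\in [s]$ with $u(m')=m$; set $a_{l,i}=t^{(s)}_{m';i}$. Its image in $A$ is $t_i$, which matches the image of $t^{(r)}_{l;i}$, so $t^{(r)}_{l;i}-B(u)(a_{l,i})=t^{(r)}_{l;i}-t^{(r)}_{m;i}\in J(r)$. Hence the element $\tau_{l,i}=[p]_q^{-1}(t^{(r)}_{l;i}-B(u)(a_{l,i}))$ produced by Corollary \ref{cor:PrismEnvSection} coincides with $\tau^{(r)}_{l,m;i}$. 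Corollary \ref{cor:PrismEnvSection} then asserts that the bounded prismatic envelope of $(B(r), J(r))$ exists and, being characterized by the universal property, it must coincide with $D(r)$; furthermore $D(r)_N$ is a free $D(s)_N$-module with basis $\prod_{(l,i)\in\Lambda(u)}(\tau^{(r)}_{l,m;i})^{\{N_{l,i}\}_\delta}$, exactly as claimed.

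The last assertion requires no further work: because the monomial indexed by the zero multi-index is $1$, the basis of $D(r)_N$ over $D(s)_N$ is non-empty, so $D(s)_N\to D(r)_N$ is faithfully flat for every $N\in\N$, i.e.\ $D(s)\to D(r)$ is $(pR+I)$-adically faithfully flat. There is no real obstacle here; the only point that requires a little care is matching the data of $u$ to the hypotheses of Corollary \ref{cor:PrismEnvSection} and recognizing $\tau_{l,i}$ as $\tau^{(r)}_{l,m;i}$, for which the assumption $m\in u([s])$ is crucial.
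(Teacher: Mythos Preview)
Your proof is correct and essentially identical to the paper's. The paper invokes Proposition \ref{prop:qprismEnvPL} (2) applied to the morphism of framed smooth $q$-pairs $((B(s),J(s)),\ut^{(s)})\to((B(r),J(r)),\ut^{(r)})$ defined by $B(u)$ and $u\times\id_\Lambda$, but that proposition is itself proved by exactly the application of Corollary \ref{cor:PrismEnvSection} that you carry out directly, with the same choice $a_{l,i}=t^{(s)}_{m';i}$ (where $u(m')=m$) yielding $\tau_{l,i}=\tau^{(r)}_{l,m;i}$.

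One minor slip: you describe $B(r)$ as ``the $(pR+I)$-adic completion of a polynomial algebra over $B(s)$,'' but $B$ is only assumed $(pR+I)$-adically smooth over $R$, not a polynomial ring. Your very next clause---that the $t^{(r)}_{l;i}$ for $(l,i)\in\Lambda(u)$ are $(pR+I)$-adic coordinates of $B(r)$ over $B(s)$, making $B(u)$ $(pR+I)$-adically smooth---is the correct statement and is all that Corollary \ref{cor:PrismEnvSection} requires, so the argument goes through unchanged.
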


\begin{proof}
We obtain the claim simply applying Proposition \ref{prop:qprismEnvPL} (2)
to the morphism of framed smooth $q$-pairs
$((B(s),J(s)),\ut^{(s)})\to ((B(r),J(r)), \ut^{(r)})$ over $R$
defined by $B(u)\colon B(s)\to B(r)$ and 
$u\times\id_{\Lambda}\colon \Lambda(s)\to \Lambda(r)$. 
\end{proof}

For $r\in \N$ and $l,m\in [r]$, $l\neq m$, we define $\sigma_{l,m;i}^{(r)}\in D(r)$
by 
\begin{equation}
\sigma_{l,m;i}^{(r)}=
(1-p^{p-1})^{-1}\left(-\delta(\tau_{l,m;i}^{(r)})
+\sum_{\nu=1}^{p-1}p^{-1}\binom {p}{\nu} (t^{(r)}_{m;i})^{p-\nu}p^{\nu-1}
(\tau_{l,m;i}^{(r)})^{\nu}\right).
\end{equation}

We define $\oD(r)$ $(r\in \N)$ to be $D(r)/\mu D(r)$, and
$\oD(r)_N$ $(r\in \N, N\in \N)$ to be $\oD(r)/p^{N+1}\oD(r)$. 
We write $\otau^{(r)}_{l,m;i}$ and $\osigma^{(r)}_{l,m;i}$ for
the images of $\tau^{(r)}_{l,m;i}$ and $\sigma^{(r)}_{l,m;i}$ in 
$\oD(r)$ and in $\oD(r)_N$. For a non-decreasing map $u\colon [r]\to [r']$,
we write $\oD(u)$ (resp.~$\oD(u)_N$)  for the reduction mod $\mu$
(resp.~$\mu R+p^{N+1}R$) of the homomorphism $D(u)\colon D(r)\to D(r')$.

\begin{propdef}\label{propdef:DSimpPDStr}
Let $r\in \N$ and $m\in [r]$, and put $\Lambda(r)_m=\Lambda(p^{(r)}_m)
=[r]\backslash\{m\}\times\Lambda$. Let $N$ be a positive integer. 
We regard
$\oD(r)_N$ as a $\oD_N$-algebra by the homomorphism $\oD(p^{(r)}_m)_N$.
\par
(1) We have $(\otau^{(r)}_{l,m;i})^p=p\osigma^{(r)}_{l,m;i}$ $((l,i)\in \Lambda(r)_m)$,
and the homomorphism \eqref{eq:DeltaRingPDMap2} associated to the 
pair $((\otau^{(r)}_{l,m;i})_{(l,i)\in \Lambda(r)_m},
(\osigma^{(r)}_{l,m;i})_{(l,i)\in \Lambda(r)_m})$ of elements of
$\oD(r)^{\Lambda(r)_m}$ induces an isomorphism 
of $\oD_N$-algebras
\begin{equation}\label{eq:DSimpPDIsom}
 \pi_m^{(r)}\colon 
\oD_N[X_{l;i}; (l,i)\in \Lambda(r)_m]_{\PD}\xrightarrow{\;\;\cong\;\;} \oD(r)_N.\end{equation}

(2) Let $\partial_{X_{l;i}}$ ($(l,i)\in \Lambda(r)_m$) be the
$\oD_N$-linear derivation on $\oD_N[X_{l;i}; (l,i)\in \Lambda(r)_m]_{\PD}$
defined by $\partial_{X_{l;i}}(X_{l';i'}^{[n]})=X_{l';i'}^{[n-1]}$ if $(l',i')=(l,i)$ and
$n\geq 1$, and $=0$ otherwise. Then we have
$\theta_{D(r), l;i}\circ \pi_m^{(r)}=\pi_m^{(r)}\circ\partial_{X_{l;i}}$ for
$(l,i)\in \Lambda(r)_m$. 

(3) Let $\Delta^{(r)}$  denote the unique map $[r]\to [0]$. Then 
the kernel of $\oD(\Delta^{(r)})_N\colon \oD(r)_N\to 
\oD_N$
coincides with the image of the PD-ideal of $\oD_N[X_{l;i};(l,i)\in \Lambda(r)_m]_{\PD}$ under
$\pi_m^{(r)}$.\par
(4) The PD-structure on $\oD(r)_N$ defined by the
transport of structure via $\pi_m^{(r)}$ does not depend on $m$. We equip
$\oD(r)_N$ with this PD-structure.\par
(5) For any non-decreasing map $u\colon [r]\to [r']$ $(r,r'\in \N)$, 
the homomorphism $\oD(u)_N\colon \oD(r)_N
\to \oD(r')_N$ is a PD-homomorphism. Thus we obtain 
an inverse system of simplicial PD-rings
$(\oD(\bullet)_N)_{N\in \N}$. 
\end{propdef}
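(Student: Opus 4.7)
The plan is to reduce (1) and (2) directly to Proposition~\ref{prop:qprismEnvPL}, obtain (3) from the retraction property of $\oD(\Delta^{(r)})$, and handle (4), (5) by propagating identities through Proposition~\ref{prop:DeltaPDDiff} and Lemma~\ref{lem:DeltaRingPD}.

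For (1) and (2), I would apply Proposition~\ref{prop:qprismEnvPL} to the morphism of framed smooth $\delta$-pairs over $R$ induced by $B(p_m^{(r)})\colon (B,J) \to (B(r),J(r))$ together with the order-preserving injection $\Lambda \hookrightarrow \Lambda(r)$, $i \mapsto (m,i)$, whose complement is $\Lambda(r)_m$. Choose $a_{l;i} = t_i$ and $b_{l;i} = 0$; the latter is permissible because $\delta(t_i) = 0$. Then $t^{(r)}_{l;i} - B(p_m^{(r)})(t_i) = [p]_q\tau^{(r)}_{l,m;i}$, so the element ``$\tau_i$'' of Proposition~\ref{prop:qprismEnvPL}~(2) agrees with our $\tau^{(r)}_{l,m;i}$, and since $\oa_{l;i} = \ot_i$ maps under $\oD(p_m^{(r)})$ to $\ot^{(r)}_{m;i}$, the formula in Proposition~\ref{prop:qprismEnvPL}~(5) reduces to our definition of $\osigma^{(r)}_{l,m;i}$. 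The proposition then yields (1) and (2), after invoking the uniqueness clause of Proposition~\ref{prop:qHiggsDeriv} to identify $\theta_{D(r), l;i}$ $((l,i) \in \Lambda(r)_m)$ with the derivations produced by the framed smooth $\delta$-pair $(B(r)/B,\, (t^{(r)}_{l;i})_{(l,i) \in \Lambda(r)_m})$. For (3), the identity $\Delta^{(r)} \circ p_m^{(r)} = \id_{[0]}$ makes $\oD(\Delta^{(r)})_N$ a retraction of the structural $\oD_N$-algebra structure; moreover $D(\Delta^{(r)})(t^{(r)}_{l;i}) = t_i$ for every $l$, so $[p]_q D(\Delta^{(r)})(\tau^{(r)}_{l,m;i}) = 0$, and regularity of $[p]_q$ on $D$ forces $D(\Delta^{(r)})(\tau^{(r)}_{l,m;i}) = 0$. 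Composition with the isomorphism $\pi_m^{(r)}$ then identifies $\oD(\Delta^{(r)})_N$ with the PD-augmentation $X_{l;i} \mapsto 0$, whose kernel is the PD-ideal.

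For (4), regularity of $[p]_q$ on $D(r)$ gives the relation $\tau^{(r)}_{l,m';i} = \tau^{(r)}_{l,m;i} - \tau^{(r)}_{m',m;i}$ for $l \notin \{m,m'\}$; reducing mod $\mu$ and applying Proposition~\ref{prop:DeltaPDDiff}~(1), (2) to the triple $\ot^{(r)}_{m;i},\ot^{(r)}_{m';i},\ot^{(r)}_{l;i} \in \oD(r)_N$ produces precisely the $\sigma$-identity appearing in Lemma~\ref{lem:DeltaRingPD}~(3). Combining that lemma with Lemma~\ref{lem:DeltaRingPD}~(1), (2) identifies $\pi_{\otau^{(r)}_{l,m';i}, \osigma^{(r)}_{l,m';i}}$ with the composition of the two-variable map built from $(\otau^{(r)}_{l,m;i}, \otau^{(r)}_{m',m;i})$ (which factors through $\pi_m^{(r)}$) with the substitution $T \mapsto T_1 - T_2$; hence the divided powers of $\otau^{(r)}_{l,m';i}$ supplied by $\pi_{m'}^{(r)}$ coincide with those supplied by $\pi_m^{(r)}$, so the two PD-structures agree on a generating family of the PD-ideal and therefore everywhere. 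For (5), fix $m$ and set $m' = u(m)$; the $\delta$-homomorphism property of $\oD(u)_N$ together with Lemma~\ref{lem:DeltaRingPD}~(4) gives $\oD(u)_N \circ \pi_m^{(r)} = \pi_{\oD(u)_N(\otau^{(r)}_{l,m;i}),\, \oD(u)_N(\osigma^{(r)}_{l,m;i})}$, where $\oD(u)_N(\otau^{(r)}_{l,m;i}) = \otau^{(r')}_{u(l), m';i}$. When $u(l) \neq m'$, (4) identifies this image with the PD-polynomial generator $X_{u(l);i}$ in the $\pi_{m'}^{(r')}$-structure, whereas when $u(l) = m'$ the map $\pi_{0,0}$ kills every $T^{[n]}$ with $n \geq 1$; together this shows $\oD(u)_N$ is a PD-homomorphism.

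The main obstacle is claim (4): while Proposition~\ref{prop:DeltaPDDiff} produces the required $\sigma$-identity, one must carefully translate Lemma~\ref{lem:DeltaRingPD}~(3) into an equality of PD-structures on $\oD(r)_N$ (rather than merely of underlying $\Z_{(p)}$-algebra maps), and verify that coincidence of divided powers on a family of generators of the augmentation ideal propagates to the full PD-structure via the PD-algebra axioms.
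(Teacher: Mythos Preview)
Your proposal is correct and follows essentially the same approach as the paper: parts (1)--(2) via Proposition~\ref{prop:qprismEnvPL}(5) applied to the morphism induced by $B(p_m^{(r)})$, part (3) via the retraction $\Delta^{(r)}\circ p_m^{(r)}=\id_{[0]}$ and Lemma~\ref{lem:DeltaRingPD}(4), part (4) via Proposition~\ref{prop:DeltaPDDiff} and Lemma~\ref{lem:DeltaRingPD}(2),(3), and part (5) via Lemma~\ref{lem:DeltaRingPD}(4). The paper organizes (4) slightly more cleanly by exhibiting an explicit PD-isomorphism $C_m\xrightarrow{\cong}C_{m'}$ sending $X_{l;i}\mapsto X_{l;i}-X_{m;i}$ for $l\neq m'$ and $X_{m';i}\mapsto -X_{m;i}$, then checking compatibility with $\varpi_m$ and $\varpi_{m'}$; and the concern you flag at the end (that agreement of PD-structures on ideal generators suffices) is simply stated as a known fact in the paper.
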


\begin{proof}
We obtain the claims (1) and (2) just by applying Proposition 
\ref{prop:qprismEnvPL} (5)
to the morphism of framed smooth $q$-pairs
$((B,J),\ut)\to ((B(r),J(r)),\ut^{(r)})$ over $R$ defined by 
$B(p_m^{(r)})\colon B\to B(r)$ and $p_m^{(r)}\times
\id_{\Lambda}\colon \Lambda=\Lambda(0)\to \Lambda(r)$. 
For the claim (3), it suffices to prove $\oD(\Delta^{(r)})_N\circ\pi_m^{(r)}(X_{l;i}^{[n]})=0$
for every $(l,i)\in \Lambda(r)_m$ and $n\in \N$, $n>0$ because the composition
$D(\Delta^{(r)})\circ D(p_m^{(r)})\colon D\to D$ is $D(\id_{[0]})=\id_D$. 
The $\delta$-homomorphism $D(\Delta^{(r)})\colon D(r)\to D$ maps
$\tau_{l,m;i}^{(r)}$ to $\pq^{-1}(t_i-t_i)=0$, and hence $\sigma^{(r)}_{l,m;i}$ to $0$ as well.
Therefore the claim follows from Lemma \ref{lem:DeltaRingPD} (4). Note that the morphism
\eqref{eq:DeltaRingPDMap} factors through the quotient $\Z_{(p)}$ by the $\delta$-ideal
generated by $x_i$ and $y_i$ $(i\in \N\cap [1,d])$ when $s_i=t_i=0$ for all $i$.\par

Let us prove the claim (4). 
Let $C_m$ be the PD-ring $\Z_{(p)}[X_{l,i};(l,i)\in \Lambda(r)_m]_{\PD}$,
and let $\varpi_m\colon C_m\to \oD(r)$ be the homomorphism 
\eqref{eq:DeltaRingPDMap2} used in the construction of
\eqref{eq:DSimpPDIsom}. Let $m$ and $m'$ be two distinct elements of $[r]$.
Since the coincidence of two PD-structures on an ideal follows from that on 
generators of the ideal, it suffices to show that the isomorphism 
$C_m\xrightarrow{\cong}C_{m'}$
of PD-rings defined by 
$X_{l;i}\mapsto X_{l;i}-X_{m;i}$ $(l\neq m')$
and $X_{m',i}\mapsto -X_{m,i}$ is compatible 
with 
$\varpi_m$ and $\varpi_{m'}$. 
We have $\tau_{l,m;i}^{(r)}=\tau^{(r)}_{l,m';i}-\tau^{(r)}_{m,m';i}$
$(l\neq m')$ and $\tau_{m',m;i}^{(r)}=-\tau_{m,m';i}^{(r)}$
by definition. By Proposition \ref{prop:DeltaPDDiff}, we obtain
$\sigma_{l,m;i}^{(r)}
=\sigma_{l,m';i}^{(r)}+(-1)^p\sigma_{m,m';i}^{(r)}
+\sum_{\nu=1}^{p-1}p^{-1}\binom p\nu
(\tau_{l,m';i}^{(r)})^{\nu}(-\tau_{m,m';i}^{(r)})^{p-\nu}$
for $l\neq m'$, and 
$\sigma_{m',m;i}^{(r)}=(-1)^p\sigma_{m,m';i}^{(r)}$.
Hence the compatibility holds by Lemma \ref{lem:DeltaRingPD} (2) and (3). 

For the last claim (5), choose $m\in [r]$ and put $m'=u(m)\in [r']$. 
Then we have $D(u)\circ D(p_m^{(r)})=D(u\circ p_m^{(r)})=D(p_{m'}^{(r)})$
and the $\delta$-homomorphism $D(u)\colon D(r)\to D(r')$ maps
$\tau_{l,m,;i}^{(r)}$ $((l,i)\in \Lambda(r)_m)$ to 
$\tau_{u(l),m';i}^{(r')}$, and hence $\sigma_{l,m;i}^{(r)}$ to 
$\sigma_{u(l),m';i}^{(r')}$, where $\tau^{(r')}_{m',m';i}=0$
and $\sigma^{(r')}_{m',m';i}=0$. By Lemma \ref{lem:DeltaRingPD} (4), 
we see that, under the isomorphisms $\pi_m^{(r)}$
and $\pi_{m'}^{(r')}$ in the claim (1),
the homomorphism $\oD(u)_N\colon \oD(r)_N\to \oD(r')_{N}$ is
compatible with the PD-homomorphism
$\oD_N[X_{l,i};(l,i)\in \Lambda(r)_m]_{\PD}
\to \oD_N[X_{l';i};(l',i)\in \Lambda(r')_{m'}]_{\PD}$
sending $X_{l;i}$ to $X_{u(l);i}$, where we put
$X_{m';i}=0$. This completes the proof.
\end{proof}

Now we are ready to prove Theorem \ref{thm:StratHiggsEquiv}.
We start by constructing a quasi-nilpotent 
$q$-Higgs field from a stratification. 
Let $(M,\varepsilon)$ be an object of $\Strat(D(\bullet)_n)$. 
For $i\in \Lambda$, we define a $(t_{1;i}^{(1)}\mu,\theta_{D(1), 1;i})$-connection $\theta_{p_0^*M,1;i}$ 
(resp.~a $(t_{0;i}^{(1)}\mu, \theta_{D(1), 0;i})$-connection 
$\theta_{p_1^*M,0;i}$) on $p_0^*M$ (resp.~$p_1^*M$)
to be $\id_M\otimes\theta_{D(1),1;i}$
(resp.~$\id_M\otimes\theta_{D(1),0;i}$).
Note that $\theta_{D(1),1;i}$ (resp.~$\theta_{D(1),0;i}$)
is $D$-linear with respect to the $D$-algebra structure of $D(1)$
via $p_0$ (resp.~$p_1$).
Let  $\theta_{p_1^*M,1;i}$ be the $(t_{1;i}^{(1)}\mu, \theta_{D(1),1;i})$-connection on $p_1^*M$
defined by $\theta_{p_0^*M,1;i}$ by the transport of structure 
via the isomorphism $\varepsilon\colon p_1^*M\xrightarrow{\cong}
p_0^*M$. 

\begin{lemma}\label{lem:StratHiggsComp}
We have $\theta_{p_1^*M,0;i}\circ \theta_{p_1^*M,1;j}
=\theta_{p_1^*M,1;j}\circ\theta_{p_1^*M,0;i}$ for
every $i,j\in\Lambda$. 
\end{lemma}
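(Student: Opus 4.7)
The plan is to pull back the commutator to $q_2^*M=M\otimes_{D,q_2}D(2)$ along $p_{12}\colon D(1)\to D(2)$, which is $(pR+IR)$-adically faithfully flat by Proposition~\ref{prop:PrismEnvNervStr}, and to exploit the cocycle identity $p_{02}^*(\varepsilon)=p_{01}^*(\varepsilon)\circ p_{12}^*(\varepsilon)$ together with the commutativity $\theta_{D(2),1;i}\circ \theta_{D(2),2;j}=\theta_{D(2),2;j}\circ\theta_{D(2),1;i}$ guaranteed by Proposition~\ref{prop:qHiggsDeriv}~(2). Write $\varepsilon_{ij}:=p_{ij}^*(\varepsilon)$ throughout.

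The first step is to identify the $p_{12}$-scalar extensions of $\theta_{p_1^*M,0;i}$ and $\theta_{p_1^*M,1;j}$ on $q_2^*M$. Since $p_{12}$ induces the injective map of framing indices $\psi\colon(l,i)\mapsto(l+1,i)$, one has $\theta_{D(2),l+1;i}\circ p_{12}=p_{12}\circ\theta_{D(1),l;i}$ by the uniqueness clause in Proposition~\ref{prop:qHiggsDeriv}; applying Proposition~\ref{prop:ConnPBFormula} (cf.\ \eqref{eq:DSimplqHIGPBFormula}) and the identification $p_{12}^*p_1^*M=q_2^*M$ yields
\begin{equation*}
 p_{12}^*(\theta_{p_1^*M,0;i})=\id_M\otimes\theta_{D(2),1;i},\qquad p_{12}^*(\theta_{p_1^*M,1;j})=\varepsilon_{12}^{-1}\circ(\id_M\otimes\theta_{D(2),2;j})\circ\varepsilon_{12}.
\end{equation*}
The second identity uses the functoriality of $p_{12}^*$ applied to the composite $\varepsilon^{-1}\circ(\id_M\otimes\theta_{D(1),1;j})\circ\varepsilon$, and the uniqueness of a $(t_{2;j}^{(2)}\mu,\theta_{D(2),2;j})$-connection on $q_2^*M$ determined by its values on the generating subset $p_1^*M\otimes 1$.

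The second step is to record two intertwining properties that follow from Proposition~\ref{prop:qprismEnvPL}~(3). Since $p_{01}$ sends $t_{l;j}^{(1)}$ to $t_{l;j}^{(2)}$ for $l=0,1$, both annihilated by $\theta_{D(2),2;j}$, the twisted derivation $\theta_{D(2),2;j}$ is $D(1)$-linear via $p_{01}$, and consequently $\varepsilon_{01}=p_{01}^*(\varepsilon)$ commutes with $\id_M\otimes\theta_{D(2),2;j}$; similarly $\theta_{D(2),1;i}$ is $D(1)$-linear via $p_{02}$, so $\varepsilon_{02}$ commutes with $\id_M\otimes\theta_{D(2),1;i}$. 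Substituting the cocycle identity $\varepsilon_{02}=\varepsilon_{01}\circ\varepsilon_{12}$ into the first intertwining gives
\begin{equation*}
\varepsilon_{12}^{-1}(\id_M\otimes\theta_{D(2),2;j})\varepsilon_{12}=\varepsilon_{02}^{-1}(\id_M\otimes\theta_{D(2),2;j})\varepsilon_{02}\quad\text{on }q_2^*M,
\end{equation*}
and then the second intertwining reduces the commutator of the two pullbacks to
\begin{equation*}
\bigl[\id_M\otimes\theta_{D(2),1;i},\,\varepsilon_{12}^{-1}(\id_M\otimes\theta_{D(2),2;j})\varepsilon_{12}\bigr]=\varepsilon_{02}^{-1}\bigl(\id_M\otimes[\theta_{D(2),1;i},\theta_{D(2),2;j}]\bigr)\varepsilon_{02}=0
\end{equation*}
by Proposition~\ref{prop:qHiggsDeriv}~(2).

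Finally I descend the commutation from $q_2^*M$ back to $p_1^*M$. By Proposition~\ref{prop:PrismEnvNervStr} the map $x\mapsto x\otimes 1$ from $p_1^*M$ to $q_2^*M$ is injective, and the pullback formula for an injective $\psi$ in Proposition~\ref{prop:ConnPBFormula} gives $p_{12}^*(T)(x\otimes 1)=T(x)\otimes 1$ for each connection $T$; hence the commutator on $q_2^*M$ evaluated at $x\otimes 1$ equals the image of the commutator on $p_1^*M$ at $x$, and vanishing on $q_2^*M$ forces $[\theta_{p_1^*M,0;i},\theta_{p_1^*M,1;j}]=0$ on $p_1^*M$. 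The main obstacle is the bookkeeping in the first two steps: tracking the different identifications of $q_l^*M$ as $p_{ij}^*p_m^*M$ for various $(i,j)$, and verifying which $\theta_{D(2),k;i}$ is $D(1)$-linear via which face map $p_{ij}$. Once this is organized, the conclusion is a formal manipulation of the cocycle together with the commutativity of the $q$-Higgs derivations on $D(2)$.
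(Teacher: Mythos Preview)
Your proof is correct and follows essentially the same strategy as the paper's: both arguments pass to the $D(2)$-level, exploit the cocycle condition $p_{02}^*(\varepsilon)=p_{01}^*(\varepsilon)\circ p_{12}^*(\varepsilon)$, use the $D(1)$-linearity (via the appropriate face map) of the derivations $\theta_{D(2),k;i}$ with $k$ outside the image of that face, invoke the commutativity of the $q$-Higgs derivations on $D(2)$, and then descend via the injectivity furnished by Proposition~\ref{prop:PrismEnvNervStr}/\ref{prop:qprismEnvPL}. The only organizational difference is that the paper observes the commutation directly on $q_1^*M$ (where both operators are of the form $\id_M\otimes\theta$) and transports via $\varepsilon_{12}$ and $\varepsilon_{01}$ separately before descending along $p_{02}$, whereas you pull everything back to $q_2^*M$ via $p_{12}$ and carry out an equivalent conjugation computation there; the content is the same.
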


\begin{proof}
For $l,m\in \{0,1,2\}$, $l\neq m$, we define
$\theta_{q_{l}^*M,m;i}$ $(i\in \Lambda)$
to be $\id_M\otimes \theta_{D(2),m;i}$. Note that
$\theta_{D(2),m;i}$ is $D$-linear for the $D$-algebra
structure of $D(2)$ via $q_{l}$. 
The isomorphism 
$p_{12}^*(\varepsilon)\colon q_2^*M\xrightarrow{\cong}q_1^*M$
is compatible with $\theta_{q_{l}^*M,0;i}$
$(l=1,2)$ for each $i\in \Lambda$ because
$\theta_{D(2),0;i}$ is $D(1)$-linear via $p_{12}\colon D(1)\to D(2)$.
Similarly the isomorphism $p_{01}^*(\varepsilon)
\colon q_1^*M\to q_0^*M$ is compatible with
$\theta_{q_{l}^*M,2;i}$ $(l=0,1)$ for
each $i\in \Lambda$. Since 
$\theta_{q_1^*M,0;i}$ and $\theta_{q_1^*M,2;j}$
commute with each other by their definition, we see that 
$\theta_{q_2^*M,0;i}$ and $\theta_{q_0^*M,2;j}$
commute with each other via the isomorphism 
$p_{01}^*(\varepsilon)\circ
p_{12}^*(\varepsilon)=p_{02}^*(\varepsilon)$. 
By Proposition \ref{prop:qprismEnvPL} (4), 
the homomorphisms $p_1^*M\to q_2^*M$
and $p_0^*M\to q_0^*M$ induced by $p_{02}\colon D(1)\to D(2)$
are injective. The restriction of 
$\theta_{q_2^*M,0;i}$ (resp.~$\theta_{q_0^*M, 2;i}$)
under the former map
 (resp.~the latter map) 
coincides with $\theta_{p_1^*M,0;i}$ (resp.~$\theta_{p_0^*M,1;i}$)
by \eqref{eq:DsimplicialDerivFormula}. 
Hence $\theta_{p_1^*M,0;i}$ and $\theta_{p_0^*M,1;j}$ commute
with each other via $\varepsilon$.
\end{proof}

We define a $q$-Higgs field $\theta_{p_1^*M,p_1}
\colon p_1^*M\to p_1^*M\otimes_{D(1)}q\Omega_{D(p_1)}$
by $\id_M\otimes_{D,p_1}\theta_{D(p_1)}$.
Then, by Proposition \ref{prop:qprismEnvPL} (4), 
we have 
\begin{equation}\label{eq:StratToHiggs}
M\xrightarrow{\;\;\cong\;\;} (p_1^*M)^{\theta_{p_1^*M,p_1}=0}.\end{equation}
Since $\theta_{p_1^*M,p_1}(x)=
\sum_{i\in\Lambda}\theta_{p_1^*M,0;i}(x)\otimes\omega_{0;i}^{(1)}$, 
Lemma \ref{lem:StratHiggsComp} implies that the image of $M$ in 
$p_1^*M$ is stable under $\theta_{p_1^*M,1;i}$ 
for every $i\in \Lambda$. Since $\theta_{p_1^*M,1;i}$ $(i\in \Lambda)$
are $(t_i\mu,\theta_{D,i})$-connections if we regard $p_1^*M$ as a $D$-module
via $p_1$ \eqref{eq:DsimplicialDerivFormula} and they mutually commute,
$\theta_{p_1^*M,1,i}$ is restricted to $(t_i\mu,\theta_{D,i})$-connections 
$\theta_{M,i}$ on $M$ via the injection $M\hookrightarrow p_1^*M$,
and they define a $q$-Higgs field 
$\theta_M\colon M\to M\otimes_D q\Omega_{D/R}$
sending $x$ to $\sum_{i\in \Lambda}\theta_{M,i}(x)\otimes \omega_i$.
This construction of $(M,\theta_M)$ from $(M,\varepsilon)$ 
is obviously functorial. 

\begin{lemma}\label{lem:StratToConnection}
The $q$-Higgs field $\theta_M$ on $M$ constructed above is quasi-nilpotent.
\end{lemma}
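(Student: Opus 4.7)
The strategy is to reduce modulo $\mu=q-1$ to the PD-polynomial setting of Proposition/Definition \ref{propdef:DSimpPDStr}, prove quasi-nilpotence modulo $\mu$, and then lift through the finite $\mu$-adic filtration of $M$ (which is finite because $\mu^{p-1}\in I=(p,[p]_q)$, as I will recall). Fix $x\in M$ and $i\in\Lambda$. By the construction of $\theta_{M,i}$ as the restriction to $M\subset p_1^*M$ of $\theta_{p_1^*M,1;i}=\varepsilon^{-1}\circ\theta_{p_0^*M,1;i}\circ\varepsilon$, we have
\[
\varepsilon(\theta_{M,i}^N(x))=\theta_{p_0^*M,1;i}^N(\varepsilon(x)).
\]
Formula \eqref{eq:DsimplicialDerivFormula} applied to the coface map $p_0\colon[0]\to[1]$ (for which $p_0^{-1}(1)=\emptyset$) gives $\theta_{D(1),1;i}|_{p_0(D)}=0$, and \eqref{eq:DSimplqHIGPBFormula} similarly gives $\theta_{p_0^*M,1;i}(y\otimes 1)=0$ for $y\in M$; combined with the Leibniz rule for the connection, $\theta_{p_0^*M,1;i}(y\otimes a)=y\otimes\theta_{D(1),1;i}(a)$. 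Using Proposition \ref{prop:PrismEnvNervStr} (with $u=p_0$, $m=0$), I would then expand $\varepsilon(x)=\sum_{\un\in S}y_{\un}\otimes\tau_{1,0}^{\{\un\}_\delta}$ with $S\subset\N^\Lambda$ finite, so that the goal reduces to showing $\theta_{D(1),1;i}^N(\tau_{1,0}^{\{\un\}_\delta})\in I^{n+1}D(1)$ for $N$ large and each $\un\in S$.

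Modulo $\mu$, Proposition/Definition \ref{propdef:DSimpPDStr} (with $r=1$, $m=0$) identifies $\oD(1)_n$ with the PD-polynomial ring $\oD_n[X_{1;i};\,i\in\Lambda]_{\PD}$ and $\theta_{D(1),1;i}\bmod\mu$ with the PD-derivative $\partial_{X_{1;i}}$. Since both the family $\{\tau_{1,0}^{\{\un\}_\delta}\bmod\mu\}_{\un}$ (by Proposition \ref{prop:PrismEnvNervStr}) and the family $\{X^{[\ul]}\}_{\ul}$ are $\oD_n$-bases of $\oD(1)_n$, each $\tau_{1,0}^{\{\un\}_\delta}\bmod\mu$ is a finite $\oD_n$-linear combination of PD-monomials, on which $\partial_{X_{1;i}}^N$ vanishes once $N$ exceeds the largest $(1,i)$-index occurring. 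Consequently $\theta_{M,i}^N(x)\in\mu M$ for large $N$, and the same reasoning applies uniformly to any element of $M$.

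Finally, I iterate. The $R$-linearity of $\theta_{M,i}$ gives $\theta_{M,i}(\mu m)=\mu\,\theta_{M,i}(m)$, so writing $\theta_{M,i}^{N_1}(x)=\mu x_1$, $\theta_{M,i}^{N_2}(x_1)=\mu x_2$, etc., yields $\theta_{M,i}^{N_1+\cdots+N_k}(x)\in\mu^k M$ for every $k$. The binomial identity $[p]_q=p+\sum_{\nu=1}^{p-1}\binom{p}{\nu}\mu^\nu$ together with $p\mid\binom{p}{\nu}$ for $0<\nu<p$ gives $\mu^{p-1}\equiv[p]_q\pmod p$, so $\mu^{p-1}\in I$ and therefore $\mu^{(p-1)(n+1)}\in I^{n+1}\subset\Ann_D(M)$. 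Taking $k=(p-1)(n+1)$ forces $\theta_{M,i}^N(x)=0$ for $N$ large, and the maximum over the finite set $\Lambda$ of $i$'s provides the quasi-nilpotence. The only genuinely nontrivial ingredient is the mod-$\mu$ PD-polynomial identification of Proposition/Definition \ref{propdef:DSimpPDStr}; the rest is formal bookkeeping.
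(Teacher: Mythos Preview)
Your argument is correct, but the exposition has a small hiccup: you first state as an intermediate goal that $\theta_{D(1),1;i}^N(\tau_{1,0}^{\{\un\}_\delta})\in I^{n+1}D(1)$, which would give $\theta_{M,i}^N(x)=0$ outright, yet the mod-$\mu$ analysis that follows only yields $\theta_{D(1),1;i}^N(\tau_{1,0}^{\{\un\}_\delta})\in\mu D(1)_n$, hence $\theta_{M,i}^N(x)\in\mu M$. The subsequent iteration through the finite $\mu$-adic filtration of $M$ repairs this, so the proof is sound. (There is also a harmless slip in the binomial expansion of $[p]_q$; the conclusion $[p]_q\equiv\mu^{p-1}\pmod p$ is of course correct.)

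The paper's route is close in spirit but organized differently. Rather than iterate on a fixed $M$, it uses the functoriality of the construction $(M,\varepsilon)\mapsto(M,\theta_M)$ to replace $M$ from the outset by $M/(p,\mu)M$ with its induced stratification, reducing to the case $(p,\mu)M=0$. In that characteristic-$p$, $q=1$ situation the PD-polynomial description applies directly to all of $p_0^*M$: writing $\varepsilon(1\otimes x)=\sum_{\un}\otau^{[\un]}\otimes\theta_{M,\un}(x)$, the derivative formula gives $\theta_{M,i}=\theta_{M,\uone_i}$, and the cocycle condition on $\varepsilon$ forces $\theta_{M,\un}=\prod_i\theta_{M,i}^{n_i}$; quasi-nilpotence then follows because the sum is finite. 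Your approach dispenses with both the functoriality reduction and the cocycle condition, trading them for the filtration bootstrap; the paper's approach extracts the additional formula $\theta_{M,\un}=\prod_i\theta_{M,i}^{n_i}$ along the way and is a bit more streamlined, but both rest on the same mod-$\mu$ PD identification of Proposition and Definition~\ref{propdef:DSimpPDStr}.
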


\begin{proof}
Since the construction of $\theta_M$ is functorial, we may assume
$(pR+\mu R)M=0$ by taking the reduction mod $pR+\mu R$ of
$(M,\varepsilon)$. Let $\otau_i$ denote the image of 
$\tau^{(1)}_{1,0;i}\in D(1)$ in $\oD(1)_0=D(1)/(pD(1)+\mu D(1))$,
and define $\otau^{[\un]}$ for $\un=(n_i)_{i\in\Lambda}\in \N^{\Lambda}$
to be $\prod_{i\in\Lambda}\otau_i^{[n_i]}$. 
Then, by Proposition and Definition \ref{propdef:DSimpPDStr} (1), we have
$p_0^*M=\oplus_{\un\in \N^\Lambda}\utau^{[\un]}\otimes M$.
For $x\in M$, put $\varepsilon(1\otimes x)
=\sum_{\un\in\N^\Lambda}\otau^{[\un]}\otimes\theta_{M,\un}(x)$.
We prove $\theta_{M,i}=\theta_{M,\uone_i}$ and 
$\theta_{M,\un}=\prod_{i\in\Lambda}\theta_{M,\uone_i}^{n_i}$
$(\un=(n_i)\in\N^\Lambda)$,
which imply the claim because $\theta_{M,\un}(x)=0$ for $\vert\un\vert\gg0$.
Since $\theta_{\oD(1)_0,1;i}(\otau_i^{[n+1]})=\otau_i^{[n]}$ $(n\in \N)$
and $\theta_{\oD(1)_0,1;i}(\otau_j^{[n]})=0$ $(j\neq i, n\in \N)$
by Proposition and Definition \ref{propdef:DSimpPDStr} (2), we have
$\varepsilon(1\otimes\theta_{M,i}(x))=
\theta_{p_0^*M,1;i}(\varepsilon(1\otimes x))
=\sum_{\un\in \N^{\Lambda}, n_i>0}
\otau^{[\un-\uone_i]}\otimes\theta_{M,\un}(x)$.
Since the composition  $M\to p_1^*M
\xrightarrow{\varepsilon} p_0^*M\xrightarrow{\Delta\otimes\id_M}
M$ is the identity map by $\Delta^*(\varepsilon)=\id_M$
and $\Delta\circ p_1=\id_D$,
we obtain $\theta_{M,i}(x)=(\Delta\otimes\id_M)(\theta_{p_0^*M,1;i}(\varepsilon(1\otimes x)))
=\theta_{M,\uone_i}(x)$. For $(l,m)\in\{(0,1),(1,2),(0,2)\}$,
we define $\otau_{m,l}^{(2)[\un]}\in \oD(2)_0$ ($\un\in \N^d$) to be
$\prod_{i\in\Lambda}(\otau_{m,l;i}^{(2)})^{[n_i]}$. Then, since
$p_{lm}$ are PD-homomorphisms
by Proposition and Definition \ref{propdef:DSimpPDStr} (5) and
$p_{lm}(\otau_i)=\otau_{m,l;i}^{(2)}$, we have
\begin{align*}
p_{01}^*(\varepsilon)\circ p_{12}^*(\varepsilon)(1\otimes x)
=&p_{01}^*(\varepsilon)
(\sum_{\un\in \N^\Lambda}\otau_{2,1}^{(2)[\un]}\otimes\theta_{M,\un}(x))
=\sum_{\un\in\N^\Lambda}
\sum_{\um\in \N^\Lambda}\otau_{2,1}^{(2)[\un]}
\otau_{1,0}^{(2)[\um]}\otimes\theta_{M,\um}\circ\theta_{M,\un}(x),\\
p_{02}^*(\varepsilon)(1\otimes x)
=&\sum_{\ul\in \N^\Lambda}\otau_{2,0}^{(2)[\ul]}\otimes\theta_{M,\ul}(x)
\end{align*}
for $x\in M$. The equalities $\otau_{2,0;i}^{(2)}=
\otau_{2,1;i}^{(2)}+\otau_{1,0;i}^{(2)}$ $(i\in\Lambda)$ imply
$\otau_{2,0}^{(2)[\ul]}=\sum_{\un,\um\in \N^\Lambda,\ul=\un+\um}
\otau_{2,1}^{(2)[\un]}\otau_{1,0}^{(2)[\um]}$.
Since $\oD(2)_0$ regarded as an $\oD_0$-module via $q_0$ is 
free with basis $\otau^{(2)[\un]}_{2,1}\otau_{1,0}^{(2)[\um]}$
by Proposition and Definition \ref{propdef:DSimpPDStr} (1),
the above computation shows that 
the equality $p_{01}^*(\varepsilon)\circ p_{12}^*(\varepsilon)
=p_{02}^*(\varepsilon)$ is equivalent to
$\theta_{M,\um+\un}=\theta_{M,\um}\circ\theta_{M,\un}$
for all $\un,\um\in \N^\Lambda$. This implies 
$\theta_{M,\un}=\prod_{i\in\Lambda}\theta_{M,\uone_i}^{n_i}$.
\end{proof}

Next let us construct a stratification from a quasi-nilpotent $q$-Higgs module.
Let $(M,\theta_M)$ be an object of $q\HIG_{\qnilp}(D_n/R)$. 
For $l,m\in \{0,1\}$, let $(p_l^*M,\theta_{p_l^*M})$ be the
image of $(M,\theta_M)$ under the functor
$p_l^*\colon q\HIG(D_n/R)\to q\HIG(D(1)_n/R)$ 
\eqref{eq:SimpPrisEnvHIGPB}, and let
$(p_l^*M,\theta_{p_l^*M,p_m})$ be its image under
the functor $q\HIG(D(1)_n/R)
\to q\HIG(D(p_m)_n)$ defined by the composition with the projection
$q\Omega_{D(1)/R}\to q\Omega_{D(p_m)/R}$. 
We define $(q_l^*M,\theta_{q_l^*M})\in q\HIG(D_n(2)/R)$
and $(q_l^*M,\theta_{q_l^*M,q_m})\in q\HIG(D_n(q_m))$
for $l,m\in \{0,1,2\}$ similarly. 

\begin{proposition}\label{prop:HIGToStratKeyProp}
(1) The homomorphism 
$\Delta_M\colon p_0^*M\to \Delta^*p_0^*M\cong M$
defined by $\Delta_M(a\otimes x)=\Delta(a)x$ $(a\in D(1), x\in M)$
induces an isomorphism $c_M\colon (p_0^*M)^{\theta_{p_0^*M,p_1}=0}
\xrightarrow{\cong}M$, and we have
$H^m(p_0^*M\otimes_{D(1)}q\Omega^{\bullet}_{D(p_1)},
\theta^{\bullet}_{p_0^*M,p_1})=0$ for every $m\in \N, m>0$.\par
(2) Let $\Delta^{(2)}\colon D(2)\to D$ denote the $\delta$-$R$-homomorphism
corresponding to the unique map $[2]\to [0]$.  Then 
the homomorphism 
$\Delta^{(2)}_M\colon q_0^*M\to \Delta^*q_0^*M\cong M$
defined by $\Delta^{(2)}_M(a\otimes x)=\Delta^{(2)}(a)x$ $(a\in D(2), x\in M)$
induces an isomorphism $c_M^{(2)}\colon (q_0^*M)^{\theta_{q_0^*M,q_2}=0}
\xrightarrow{\cong}M$, and we have
$H^m(q_0^*M\otimes_{D(2)}q\Omega^{\bullet}_{D(q_2)},
\theta^{\bullet}_{q_0^*M,q_2})=0$ for every $m\in \N, m>0$.\par
\end{proposition}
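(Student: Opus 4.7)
The plan is to reduce both (1) and (2) to the $q$-Higgs Poincar\'e lemma Proposition \ref{prop:qprismEnvPL} (4) via a PD Taylor-type stratification; I describe (1), part (2) being parallel with $q_2\colon [0]\to [2]$ in place of $p_1\colon [0]\to [1]$. First apply Proposition \ref{prop:qprismEnvPL} (4) to the morphism of framed smooth $q$-pairs $((B,J)/R,\ut)\to((B(1),J(1))/R,\ut^{(1)})$ induced by $p_1$, for which the associated injective map $\psi\colon \Lambda\to\Lambda(1)$ has complement $\Lambda_\psi=\Lambda(p_1)$. Since $M$ is annihilated by $(p,\pq)^{n+1}$, the proposition yields that $(p_1^*M\otimes_{D(1)}q\Omega^\bullet_{D(p_1)},\theta^\bullet_{p_1^*M,p_1})$ is a resolution of $M$, where the $q$-Higgs field on $p_1^*M$ is the trivial one $\theta_{p_1^*M,0;i}=\id_M\otimes\theta_{D(1),0;i}$ and the augmentation in degree zero is induced by $\Delta\circ p_1=\id_D$.

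The remaining task is to produce a $D(1)$-linear isomorphism $\varepsilon\colon p_0^*M\xrightarrow{\cong} p_1^*M$ that intertwines $\theta_{p_0^*M,p_1}$ with the trivial $q$-Higgs field above and transports the augmentation to $\Delta_M$. By the flatness of $D(1)$ over $D$ via both $p_0$ and $p_1$ (Proposition \ref{prop:PrismEnvNervStr}), the formation of $p_0^*M$, $p_1^*M$ and their $q$-Higgs complexes is exact in $M$, and since $(p,\mu)^{p-1}\subset (p,\pq)$ implies that a power of $(p,\mu)$ annihilates $M$, d\'evissage on the $(p,\mu)$-filtration reduces the construction to the case $(p,\mu)M=0$. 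In this case Proposition \ref{propdef:DSimpPDStr} (1)--(2) with $m=1$ identifies $\oD(1)_0$ with the PD polynomial ring $\oD_0[X_i;i\in\Lambda]_{\PD}$ via $p_1$, sending $\otau^{(1)}_{0,1;i}$ to $X_i$ and $\theta_{D(1),0;i}$ to the PD derivation $\partial_{X_i}$; the $\mu$-term in \eqref{eq:DSimplqHIGPBFormula} drops, so that $\theta_{p_0^*M,0;i}(x\otimes f)=\theta_{M,i}(x)\otimes f+x\otimes\partial_{X_i}(f)$ becomes a classical integrable connection. I then define
$$\varepsilon(x\otimes_{p_0}1)=\sum_{\un\in\N^\Lambda}\theta_M^{\un}(x)\otimes_{p_1}X^{[\un]},$$
a sum finite on each $x$ by quasi-nilpotence, extended by $\oD(1)_0$-linearity. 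Standard PD computations---using $X^{[\un]}X^{[\um]}=\binom{\un+\um}{\um}X^{[\un+\um]}$ together with the PD Taylor identity $p_0(d)=\sum_{\um}\theta_D^{\um}(d)X^{[\um]}$ in $\oD(1)_0$ (which reflects $p_0(t_i)\equiv t_i+pX_i\bmod\mu$ and the characterization of $p_0$ as the $\delta$-homomorphism extending this coordinate shift)---verify that $\varepsilon$ is well-defined, $\oD(1)_0$-linear, an isomorphism, and intertwines the two connections. Since $\Delta(X^{[\un]})=0$ for $|\un|>0$, the trivial augmentation composes with $\varepsilon$ to give $\Delta_M$, yielding the iso $c_M$ and the vanishing of higher cohomology.

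For (2), the same scheme applies to the morphism of framed smooth $q$-pairs along $q_2\colon[0]\to[2]$, using the PD identification $\oD(2)_0\cong\oD_0[X_{l;i};(l,i)\in\Lambda(q_2)]_{\PD}$ via $q_2$ of Proposition \ref{propdef:DSimpPDStr} (1) with $m=2$; one builds the analogous Taylor series isomorphism $\varepsilon^{(2)}\colon q_0^*M\xrightarrow{\cong} q_2^*M$. The principal obstacle will be the verification that $\varepsilon$ is $\oD(1)_0$-linear rather than merely $\oD_0$-linear: this hinges on the PD Taylor identity relating the two $\oD_0$-algebra structures $p_0,p_1$ on $\oD(1)_0$, whose derivation from Proposition \ref{propdef:DSimpPDStr} and the $\delta$-algebra universal property is the key technical ingredient; the intertwining with the connection and the identification with $\Delta_M$ then follow by direct PD computations.
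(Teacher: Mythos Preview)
Your strategy is essentially the paper's: d\'evissage to the case $(p,\mu)M=0$ via flatness of $p_l,q_l$, then use the PD description of $\oD(r)_0$ (Proposition~\ref{propdef:DSimpPDStr}) to build an explicit $D(1)$-linear isomorphism $\varepsilon$ intertwining the two connections, and transfer the Poincar\'e lemma from the trivial side. Two points of comparison are worth noting.

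First, the direction of $\varepsilon$. The paper builds $\varepsilon\colon p_1^*M\to p_0^*M$ rather than your $p_0^*M\to p_1^*M$, and this reordering neatly sidesteps your ``principal obstacle''. Concretely, the paper first computes the kernel of $\theta_{p_0^*M,p_1}$ directly in the basis $\outau_{1,0}^{(1)[\un]}$ via $p_0$, obtaining the explicit inverse $c_M'\colon M\xrightarrow{\cong}(p_0^*M)^{\theta=0}$; then $c_M'$ is automatically $D$-linear for the $p_1$-action because its inverse $c_M=\Delta_M|_{\ker}$ is (from $\Delta\circ p_1=\id_D$), so the $D(1)$-linear extension to $p_1^*M$ is well-defined for free. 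Your Taylor identity $p_0(d)=\sum_{\um}p_1(\theta_D^{\um}(d))X^{[\um]}$ does hold, but the justification via ``$\delta$-algebra universal property'' is not the right mechanism: modulo $(p,\mu)$ one has $p_0(t_i)=p_1(t_i)$ on the nose, and the \'etale extension $R[t_i]\to\oD_0$ does not lift uniquely across the non-nilpotent PD-ideal. The clean proof uses instead the compatibility $\theta_{D(1),0;i}\circ p_0=p_0\circ\theta_{D,i}$ (the injective case of \eqref{eq:DsimplicialDerivFormula}) together with $\Delta\circ p_0=\id_D$ to read off the PD-coefficients of $p_0(d)$.

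Second, for (2) the paper does not redo the construction but simply pulls back the isomorphism $\varepsilon$ of (1) along the functor \eqref{eq:SimpPrisEnvHIGPB} for $[1]\to[2];\,0,1\mapsto 0,2$, obtaining $q_2^*M\xrightarrow{\cong}q_0^*M$ compatible with the full $q$-Higgs fields and reducing to identity under $\Delta^{(2)}$; the conclusion then follows from Proposition~\ref{propdef:DSimpPDStr} with $m=2$. Your parallel construction works, but note that $\Lambda(q_2)=\{0,1\}\times\Lambda$ has two layers while only the $\{0\}$-layer carries a nontrivial $\theta_M$-contribution on $x\otimes_{q_0}1$, so the Taylor formula for $\varepsilon^{(2)}$ should involve only the $X_{0;i}$ variables (as the pullback along $p_{02}$ makes manifest).
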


\begin{proof} 
(cf.~{\cite[Propositions 3.17, 3.20]{MT}})
Put $M_1=(\pq R+pR)M$ and $M_2=M/(\pq R+pR)M$. Then $\theta_M$
induces quasi-nilpotent $q$-Higgs fields $\theta_{M_1}$
and $\theta_{M_2}$ on $M_1$ and $M_2$. Since
$p_0\colon D\to D(1)$ and $q_0\colon D\to D(2)$
is $\pq R+pR$-adically flat by Proposition \ref{prop:PrismEnvNervStr}, 
the exact sequence $0\to M_1\to M \to M_2\to 0$
in $q\HIG(D_n/R)$ remains exact after taking its scalar extensions
in $q\HIG(D(1)_n/R)$ and $q\HIG(D(2)_n/R)$ under
$p_0$ and $q_0$, respectively. Hence we are reduced
to the case $n=1$ by induction on $n$. Assume $n=1$ in the following.
For $r\in \N$ and $(l,i)\in \Lambda(r)$, let 
$\theta_{\oD(r)_1,l;i}$ denote the reduction modulo $pR+\mu R$
of $\theta_{D(r), l;i}$, which is an $R$-linear derivation 
since $\mu t_{l;i}^{(r)}$ vanishes in $\oD(r)_1$.\par
(1) We prove the first isomorphism by explicit computation. 
By Proposition and Definition \ref{propdef:DSimpPDStr} (1) for $r=1$ and $m=0$, 
we have 
$p_0^*M=M\otimes_{\oD_1,p_0}\oD(1)_1
=\oplus_{\un\in \N^\Lambda}M\otimes \outau_{1,0}^{(1)[\un]}$,
where $\outau_{1,0}^{(1)[\un]}=\prod_{i\in\Lambda}(\otau_{1,0;i}^{(1)})^{[n_i]}$
for $\un=(n_i)_{i\in\Lambda}\in \N^{\Lambda}$. 
For an element $x=\sum_{\un\in \N^\Lambda} x_{\un}\otimes \outau_{1,0}^{(1)[\un]}$
of $p_0^*M$, we have 
$$\theta_{p_0^*M,0;i}(x)
=-\sum_{\un\in \N^\Lambda, \un\geq \uone_i}
\outau_{1,0}^{(1)[\un-\uone_i]}\otimes x_{\un}
+\sum_{\un\in \N^\Lambda}\outau_{1,0}^{(1)[\un]}\otimes \theta_{M,i}(x_{\un})\quad (i\in \Lambda)$$
by Proposition and Definition \ref{propdef:DSimpPDStr} (2) for $r=1$ and $m=1$. 
Hence $\theta_{p_0^*M,p_1}(x)=0$
if and only if $x_{\un+\uone_i}=\theta_{M,i}(x_{\un})$ for every $i\in\Lambda$ and
$\un\in \N^\Lambda$. 
Since $\theta_M$ is quasi-nilpotent, this shows that we have an isomorphism 
$c'_M\colon M\xrightarrow{\cong} (p_0^*M)^{\theta_{p_0^*M,p_1}=0}$
sending $x$ to $\sum_{\un\in \N^\Lambda}\outau_{1,0}^{(1)[\un]}\otimes(\prod_{i\in\Lambda}
\theta_{M,i}^{n_i})(x)$, whose composition with $\Delta_M$ is the identity map
as $\Delta(\otau^{(1)[n]}_{1,0;i})=0$ for $i\in \Lambda$ and a positive integer $n$
by Proposition and Definition \ref{propdef:DSimpPDStr} (3). \par
For the second claim, we show that $c'_M$ induces an isomorphism 
$\varepsilon\colon p_1^*M\xrightarrow{\cong} p_0^*M$ compatible
with $\theta_{p_l^*M}$ $(l=1,2)$; this implies the second vanishing thanks to
Proposition and Definition \ref{propdef:DSimpPDStr} (2) for $r=1$ and $m=1$. 
The submodule $(p_0^*M)^{\theta_{p_0^*M,p_1}=0}$ 
of $p_0^*M$ is stable under the action of $D$ via
$p_1\colon D\to D(1)$, and the isomorphism $c_M$ becomes  $D$-linear.
Hence the isomorphism $c_{M}'=c_M^{-1}$ extends to a $D(1)$-linear
homomorphism $\varepsilon \colon p_1^*M\to p_0^*M$, which is 
compatible with $\theta_{p_1^*M,m;i}$ and $\theta_{p_0^*M,m;i}$ 
for $m\in \{0,1\}$ and $i\in \Lambda$ by construction
for $m=0$ and by the following computation for $m=1$. 
$$\theta_{p_0^*M,1;i}(c'_M(x))=
\sum_{\un\in \N^\Lambda, \un\geq \uone_i}\outau_{1,0}^{(1)[\un-\uone_i]}\otimes(\prod_{j\in\Lambda}
\theta_{M,j}^{n_j})(x)=c'_M(\theta_{M,i}(x)).$$
By the same argument as the previous paragraph, we obtain an isomorphism 
$M\xrightarrow{\cong} p_1^*M^{\theta_{p_1^*M,p_0}=0}$ sending 
$x$ to $\sum_{\un\in \N^\Lambda}\outau_{0,1}^{(1)[\un]}\otimes(\prod_{i\in\Lambda}
\theta_{M,i}^{n_i})(x)$. It extends to a  $D(1)$-linear homomorphism 
$\varepsilon'\colon p_0^*M\to p_1^*M$ by the same argument as above. 
It is straightforward to verify
$\varepsilon\circ\varepsilon'=\id$ and $\varepsilon'\circ \varepsilon=\id$
by using $\tau_{0,1;i}^{(1)}=-\tau_{1,0;i}^{(1)}$. \par
(2) By taking the image of the isomorphism $\varepsilon$ constructed above
under the functor $q\HIG(D(1)_n/R)\to q\HIG(D(2)_n/R)$ \eqref{eq:SimpPrisEnvHIGPB} associated
to the map $[1]\to [2]; 0,1\mapsto 0,2$, we obtain an isomorphism 
$q_2^*M\xrightarrow{\cong}q_0^*M$ compatible with 
$\theta_{q_2^*M}$ and $\theta_{q_0^*M}$ such that its pull-back
under $\Delta^{(2)}$ is the identity map. Hence the claim follows from
Proposition and Definition \ref{propdef:DSimpPDStr} (1) and (2) 
for $r=2$ and $m=2$.
\end{proof}

We regard a $D(1)$-module as a $D$-bimodule 
by giving the left (resp.~right) action of $D$ via $p_0$ (resp.~$p_1$)
$D\to D(1)$. Since $\theta_{p_0^*M,p_1}$ is right
$D$-linear, $(p_0^*M)^{\theta_{p_0^*M,p_1}=0}$ is a
right $D$-submodule of $p_0^*M$. Since $\Delta\circ p_1=\id_D$,
$\Delta_M\colon p_0^*M\to \Delta^*p_0^*M\cong M$ is $D$-linear for the right $D$-module structure on 
$p_0^*M$. Hence  the composition $M\to p_0^*M$ 
of $c_M^{-1}$ with the inclusion $(p_0^*M)^{\theta_{p_0^*M,p_1}=0}
\hookrightarrow p_0^*M$ is $D$-linear for the right $D$-module
structure on $p_0^*M$, whence it uniquely extends to a $D(1)$-linear
map 
$$\varepsilon\colon p_1^*M\to p_0^*M$$
sending $1\otimes x$ to $c_M^{-1}(x)$ for $x\in M$.

\begin{proposition}\label{prop:HigToStrat}
The map $\varepsilon$ is a stratification on $M$ with respect
to $D(\bullet)$ (Definition \ref{def:PrismStrat}). 
\end{proposition}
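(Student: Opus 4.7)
The plan is to verify the two defining conditions of a stratification (Definition \ref{def:PrismStrat}): (i) $\Delta^*(\varepsilon) = \id_M$, and (ii) the cocycle condition $p_{01}^*(\varepsilon) \circ p_{12}^*(\varepsilon) = p_{02}^*(\varepsilon)$. Condition (i) is immediate: for $x \in M$, $\varepsilon(1 \otimes x) = c_M^{-1}(x)$ by construction, and $\Delta_M(c_M^{-1}(x)) = x$ since $c_M$ is the restriction of $\Delta_M$ from Proposition \ref{prop:HIGToStratKeyProp} (1).

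For (ii), both sides are $D(2)$-linear maps $q_2^*M \to q_0^*M$, so it suffices to check agreement on $\{1 \otimes x : x \in M\}$. By Proposition \ref{prop:HIGToStratKeyProp} (2), an $R$-linear map $M \to q_0^*M$ equals $(c_M^{(2)})^{-1}$ if and only if its image lies in $(q_0^*M)^{\theta_{q_0^*M, q_2} = 0}$ and its composition with $(\Delta^{(2)})_M$ is $\id_M$. I will verify both conditions for each side of (ii). The ``projection'' condition follows in both cases from $\Delta^{(2)} \circ p_{ij} = \Delta$ for $(i,j) \in \{(0,1),(0,2),(1,2)\}$ together with $\Delta_M \circ c_M^{-1} = \id_M$, via a direct computation with a representative $c_M^{-1}(x) = \sum_\alpha x_\alpha \otimes d_\alpha \in p_0^*M$. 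The ``vanishing'' condition for $p_{02}^*(\varepsilon)(1 \otimes x) = p_{02}^*(c_M^{-1}(x))$ is immediate from \eqref{eq:DSimplqHIGPBFormula} applied to the injective map $p_{02}$ (image $\{0,2\}$) together with the defining property $\theta_{p_0^*M, 0;i}(c_M^{-1}(x)) = 0$.

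The main obstacle is the vanishing condition for $\varepsilon_L := p_{01}^*(\varepsilon) \circ p_{12}^*(\varepsilon)$. I plan to show that $\varepsilon \colon p_1^*M \to p_0^*M$ is itself a morphism in $q\HIG(D(1)_n/R)$; granting this, the functoriality of \eqref{eq:SimpPrisEnvHIGPB} makes $p_{01}^*(\varepsilon)$ a morphism in $q\HIG(D(2)_n/R)$, and combined with the vanishing $\theta_{q_1^*M, l;i}(p_{12}^*(c_M^{-1}(x))) = 0$ for $l \in \{0, 1\}$ (from \eqref{eq:DSimplqHIGPBFormula} for the injective $p_{12}$, using $\theta_{p_0^*M, 0;i}(c_M^{-1}(x)) = 0$), this yields the required vanishing. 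To show $\varepsilon$ is a morphism, compatibility with $\theta_{\bullet, 0; i}$ is a direct Leibniz expansion of $\theta_{p_0^*M, 0; i}(c_M^{-1}(m) \cdot d)$: using $\theta_{p_0^*M, 0;i}(c_M^{-1}(m)) = 0$ and $\gamma_{D(1),0;i}(d) = d + \mu t_{0;i}^{(1)} \theta_{D(1),0;i}(d)$, the twisted cross terms cancel, leaving $c_M^{-1}(m) \cdot \theta_{D(1),0;i}(d) = \varepsilon(\theta_{p_1^*M, 0; i}(m \otimes d))$. A parallel Leibniz computation reduces compatibility with $\theta_{\bullet, 1; i}$ to the key identity
\begin{equation*}
\theta_{p_0^*M, 1; i}(c_M^{-1}(x)) = c_M^{-1}(\theta_{M, i}(x)).
\end{equation*}

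I will deduce this identity---the technical heart of the argument---from the simplicial relation $\Delta^* \circ p_0^* = \id$ on $q\HIG(D_n/R)$. The formula \eqref{eq:DSimplqHIGPBFormula} applied to the (non-injective) map $\Delta \colon [1] \to [0]$, with $\Delta^{-1}(0) = \{0,1\}$, gives, for $y \in p_0^*M$,
\begin{equation*}
\theta_{M,i}(\Delta_M(y)) = \Delta_M(\theta_{p_0^*M,0;i}(y)) + \Delta_M(\theta_{p_0^*M,1;i}(y)) + \mu t_i \cdot \Delta_M(\theta_{p_0^*M,0;i}\,\theta_{p_0^*M,1;i}(y)).
\end{equation*}
Setting $y = c_M^{-1}(x)$, the first and third terms on the right vanish---the latter because $\theta_{p_0^*M,0;i}$ and $\theta_{p_0^*M,1;i}$ commute by integrability of $\theta_{p_0^*M}$---and this same integrability shows $\theta_{p_0^*M,1;i}(c_M^{-1}(x)) \in (p_0^*M)^{\theta_{p_0^*M, p_1}=0}$, so $c_M$ applies. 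Hence $\theta_{M,i}(x) = c_M(\theta_{p_0^*M,1;i}(c_M^{-1}(x)))$, yielding the identity and completing the strategy.
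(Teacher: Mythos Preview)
Your proposal is correct and follows essentially the same approach as the paper: verify (i) directly, show $\varepsilon$ is a morphism in $q\HIG(D(1)/R)$ via the key identity $\theta_{p_0^*M,1;i}(c_M^{-1}(x)) = c_M^{-1}(\theta_{M,i}(x))$ derived from \eqref{eq:DSimplqHIGPBFormula} for $\Delta$, and then use Proposition \ref{prop:HIGToStratKeyProp} (2) to identify both compositions with $(c_M^{(2)})^{-1}$. What you have inlined is precisely the content of the paper's Lemma \ref{lem:qHIGStratHiggsFldComp}, with the same derivation of the key identity. One small omission: Definition \ref{def:PrismStrat} requires $\varepsilon$ to be an \emph{isomorphism}, not merely a $D(1)$-linear map satisfying (i) and (ii); you should invoke Remark \ref{rmk:PrismStrat}, which shows (i) and (ii) force bijectivity, as the paper does at the end of its proof.
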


\begin{lemma}\label{lem:qHIGStratHiggsFldComp}
(1) The $D(1)$-linear map $\varepsilon$ defines a morphism 
$(p_1^*M,\theta_{p_1^*M})\to (p_0^*M,\theta_{p_0^*M})$
in $q\HIG(D(1)/R)$.\par
(2) For $(l,m)\in \{(0,1),(1,2),(0,2)\}$, the $D(2)$-linear map
$p_{lm}^*(\varepsilon)$ defines a morphism \\
$(q_m^*M,\theta_{q_m^*M})\to (q_l^*M,\theta_{q_l^*M})$
in $q\HIG(D(2)/R)$.\par
\end{lemma}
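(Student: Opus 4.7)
The plan is to deduce Part~(2) from Part~(1) by functoriality of the scalar-extension functor, and to prove Part~(1) componentwise on the index set $\Lambda(1)=\{0,1\}\times\Lambda$, reducing each check to elements $x\otimes 1\in p_1^*M$ with $x\in M$.

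First, for each $(l,i)\in\Lambda(1)$, I would observe that both $\varepsilon\circ\theta_{p_1^*M,l;i}$ and $\theta_{p_0^*M,l;i}\circ\varepsilon$ satisfy the same $(t^{(1)}_{l;i}\mu,\theta_{D(1),l;i})$-Leibniz rule over $D(1)$ (the former because $\varepsilon$ is $D(1)$-linear and $\theta_{p_1^*M,l;i}$ is a connection, the latter by the connection property of $\theta_{p_0^*M,l;i}$ alone). Hence the two maps agree as soon as they agree on generators $x\otimes 1$ with $x\in M$. For $l=0$, both sides evaluated at $x\otimes 1$ vanish: on the source, formula~\eqref{eq:DSimplqHIGPBFormula} for $u=p_1$ (with $p_1^{-1}(0)=\emptyset$) gives $\theta_{p_1^*M,0;i}(x\otimes 1)=0$; on the target, $\varepsilon(x\otimes 1)=c_M^{-1}(x)$ lies by construction in $(p_0^*M)^{\theta_{p_0^*M,p_1}=0}$, so $\theta_{p_0^*M,0;i}(\varepsilon(x\otimes 1))=0$.

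The crux will be the case $l=1$, where the required identity is
\[
\theta_{p_0^*M,1;i}(c_M^{-1}(x))=c_M^{-1}(\theta_{M,i}(x)).
\]
I would first use integrability of $\theta_{p_0^*M}$ (the commutations $[\theta_{p_0^*M,0;j},\theta_{p_0^*M,1;i}]=0$) to conclude that $\theta_{p_0^*M,1;i}$ preserves the subspace $(p_0^*M)^{\theta_{p_0^*M,p_1}=0}$, so both sides of the identity lie in this subspace, on which $\Delta_M$ coincides with $c_M$. It therefore suffices to establish
\[
\Delta_M\bigl(\theta_{p_0^*M,1;i}(c_M^{-1}(x))\bigr)=\theta_{M,i}(x).
\]
To prove this, I would invoke the functoriality $\Delta^*\circ p_0^*=(\Delta\circ p_0)^*$, which is the identity functor on $q\HIG(D/R)$ since $\Delta\circ p_0=\id_D$; the canonical $D$-isomorphism $\Delta^*(p_0^*M)\xrightarrow{\cong}M$, given on elements by $y\otimes 1\mapsto\Delta_M(y)$, then intertwines $\theta_{\Delta^*(p_0^*M),i}$ with $\theta_{M,i}$. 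Applying the pullback formula~\eqref{eq:DSimplqHIGPBFormula} for $u=\Delta\colon[1]\to[0]$ at $y=c_M^{-1}(x)$ collapses every $\ul$-term containing the index $0$ (because $\theta_{p_0^*M,0;i}(y)=0$), leaving only $\theta_{\Delta^*(p_0^*M),i}(y\otimes 1)=\theta_{p_0^*M,1;i}(y)\otimes 1$; transporting through the identification $\Delta^*(p_0^*M)\cong M$ yields the desired equality.

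Finally, Part~(2) will follow formally from Part~(1): applying the scalar-extension functor $p_{lm}^*\colon q\HIG(D(1)/R)\to q\HIG(D(2)/R)$ to the morphism $\varepsilon$ of~(1), together with the canonical identifications $p_{lm}^*\circ p_0^*\cong q_l^*$ and $p_{lm}^*\circ p_1^*\cong q_m^*$ (from $p_{lm}\circ p_0=q_l$, $p_{lm}\circ p_1=q_m$, and the compatibility with composition noted after~\eqref{eq:qDolbCpxPB2}), produces a morphism $p_{lm}^*(\varepsilon)\colon q_m^*M\to q_l^*M$ in $q\HIG(D(2)/R)$. The main obstacle is therefore the case $l=1$ of Part~(1), specifically checking that the $\{0,1\}$-indexed sum in~\eqref{eq:DSimplqHIGPBFormula} for $u=\Delta$ truly collapses to its $\{1\}$-term on the $\theta_{p_0^*M,p_1}$-fixed subspace, and that the surviving term is transported to $\theta_{M,i}$ via the canonical identification $\Delta^*\circ p_0^*\cong\id$; the case $l=0$ and Part~(2) are both formal consequences.
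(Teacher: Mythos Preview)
Your proposal is correct and follows essentially the same approach as the paper's proof. Both arguments reduce to checking compatibility on generators $x\otimes 1$, handle the $l=0$ case by observing both sides vanish, and for $l=1$ reduce to showing $c_M$ intertwines $\theta_{p_0^*M,1;i}$ with $\theta_{M,i}$ via the identity $\Delta^*\circ p_0^*=\id$ and the pullback formula~\eqref{eq:DSimplqHIGPBFormula} for $u=\Delta$; Part~(2) is deduced from Part~(1) by applying $p_{lm}^*$ in both proofs. You are even slightly more explicit than the paper in noting that integrability is needed to ensure $\theta_{p_0^*M,1;i}$ preserves the $\theta_{p_0^*M,p_1}$-fixed subspace (so that $\Delta_M$ restricts to $c_M$ there), a point the paper uses without comment.
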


\begin{proof}
By the remark after \eqref{eq:DSimplqHIGPBFormula}, 
$(q_m^*M,\theta_{q_m^*M})$ and
$(q_l^*M,\theta_{q_l^*M})$ are the images of 
$(p_1^*M,\theta_{p_1^*M})$ and $(p_0^*M,\theta_{p_0^*M})$ under
the functor $p_{lm}^*\colon q\HIG(D(1)/R)\to q\HIG(D(2)/R)$. 
Hence the claim (2) follows from the claim (1). Let us prove (1).
The image of $M\to p_1^*M$ (resp.~$M\to p_1^*M\xrightarrow{\varepsilon}
p_0^*M$) is contained in the kernel of $\theta_{p_1^*M,p_1}$
(resp.~$\theta_{p_0^*M,p_1}$) by \eqref{eq:DSimplqHIGPBFormula}
(resp.~the definition of $\varepsilon$). Since $p_1^*M$ is generated
by the image of $M$ as a $D(1)$-module, this shows that
$\varepsilon$ is compatible with $\theta_{p_l^*M,p_1}$ $(l=0,1)$. 
It remains to prove the compatibility with $\theta_{p_l^*M,p_0}$ $(l=0,1)$. 
The map $M\to p_1^*M$ is compatible with $\theta_{M,i}$ and 
$\theta_{p_1^*M,1;i}$ by \eqref{eq:DSimplqHIGPBFormula},
$\theta_{p_l^*M,p_0}=\sum_{i\in\Lambda}\theta_{p_l^*M,1;i}\otimes\omega_{1;i}^{(1)}$,
and $p_1^*M$ is generated by the image of $M$.
Hence we are reduced to showing that $c_M\colon (p_0^*M)^{\theta_{p_0^*M,p_1}=0}
\xrightarrow{\cong} M$ induced by $\Delta_M\colon p_0^*M
\to M;a\otimes x\mapsto \Delta(a)x$ is compatible with $\theta_{p_0^*M,1;i}$
and $\theta_{M,i}$. Since the composition of functors
$q\HIG(D/R)\xrightarrow{p_0^*}q\HIG(D(1)/R)\xrightarrow{\Delta^*}
q\HIG(D/R)$ is the identity functor by the remark after
\eqref{eq:DSimplqHIGPBFormula}, the formula \eqref{eq:DSimplqHIGPBFormula} gives
$$\theta_{M,i}(c_M(x))
=\Delta_M(\theta_{p_0^*M,0;i}(x))
+\Delta_M(\theta_{p_0^*M,1;i}(x))+\mu t_i\Delta_M(\theta_{p_0^*M,1,i}
\circ \theta_{p_0^*M,0,i}(x))
=c_M(\theta_{p_0^*M,1;i}(x))$$
for $x\in p_0^*M^{\theta_{p_0^*M,p_1}=0}$. 
\end{proof}

\begin{proof}[Proof of Proposition \ref{prop:HigToStrat}]
The composition $M\to p_1^*M\xrightarrow[\varepsilon]{\cong}p_0^*M\xrightarrow{\Delta}M$
is the identity map by the definition of $\varepsilon$. Since the 
composition $M\to p_1^*M\xrightarrow{\Delta\otimes\id_M}M$ is the
identity map and $p_1^*M$ is generated by the image of $M$
as an $D(1)$-module, we see $\Delta^*(\varepsilon)=\id_M$.
Now we have the following commutative diagram
\begin{equation*}
\xymatrix@C=30pt{
M\ar[r]\ar[dr]_{\id_M}&
q_2^*M\ar[r]^{p_{12}^*(\varepsilon)}\ar[d]^{\Delta^{(2)}\otimes\id_M}&
q_1^*M\ar[r]^{p_{01}^*(\varepsilon)}\ar[d]^{\Delta^{(2)}\otimes\id_M}&
q_0^*M\ar[d]^{\Delta^{(2)}\otimes\id_M}\\
&M\ar@{=}[r]&M\ar@{=}[r]&M
}\quad
\xymatrix@C=30pt{
M\ar[r]\ar[dr]_{\id_M}&
q_2^*M\ar[r]^{p_{02}^*(\varepsilon)}\ar[d]^{\Delta^{(2)}\otimes\id_M}&
q_0^*M\ar[d]^{\Delta^{(2)}\otimes\id_M}\\
&M\ar@{=}[r]&M
}
\end{equation*}
and isomorphisms 
$M\xrightarrow{\cong}(q_2^*M)^{\theta_{q_2^*M,q_2}=0}$
(Proposition \ref{prop:qprismEnvPL} (4)) and
$c_M^{(2)}\colon (q_0^*M)^{\theta_{q_0^*M,q_2}=0}\xrightarrow{\cong}M$
(Proposition \ref{prop:HIGToStratKeyProp} (2)) induced by 
$\Delta^{(2)}\otimes\id_M$. 
Hence, by Lemma \ref{lem:qHIGStratHiggsFldComp} (2), 
we see that the compositions of $M\to q_2^*M$
with $p_{01}^*(\varepsilon)\circ p_{12}^*(\varepsilon)$
and $p_{02}^*(\varepsilon)$ both coincide with 
$(c_M^{(2)})^{-1}$. This implies
$p_{01}^*(\varepsilon)\circ p_{12}^*(\varepsilon)=p_{02}^*(\varepsilon)$ because
$q_2^*M$ is generated by the image of $M$ as a $D(2)$-module.
This completes the proof by Remark \ref{rmk:PrismStrat}.
\end{proof}

\begin{proof}[Proof of Theorem \ref{thm:StratHiggsEquiv}]
It remains to show that the two constructions in Lemma \ref{lem:StratToConnection}
and Proposition \ref{prop:HigToStrat} are the inverses of each other. \par

Let $(M,\theta_M)$ be an object of $q\HIG_{\qnilp}(D_n/R)$,
and let $\varepsilon\colon p_1^*M\xrightarrow{\cong} p_0^*M$
be the stratification on $M$ with respect to $D(\bullet)$
associated to $\theta_M$ by  Proposition \ref{prop:HigToStrat}. Then, 
by Lemma \ref{lem:qHIGStratHiggsFldComp} (1), 
the composition $M\to p_1^*M\xrightarrow{\varepsilon}p_0^*M$
is compatible with $\theta_{M,i}$ and $\theta_{D(1),1;i}\otimes\id_M$ for $i\in \Lambda$,
which means that $\theta_M$ is the $q$-Higgs field on $M$ associated to $\varepsilon$
by Lemma \ref{lem:StratToConnection}. 
\par
Let $(M,\varepsilon)$
be an object of $\Strat(D(\bullet)_n)$, 
and let $\theta_M$ be the quasi-nilpotent
$q$-Higgs field $M\to M\otimes_Dq\Omega_{D/R}$ associated to $\varepsilon$
by Lemma \ref{lem:StratToConnection}.
Let $\theta_{p_1^*M,p_1}$ be the 
$q$-Higgs field $\id_M\otimes_{D,p_1}\theta_{D(p_1)}
\colon p_1^*M\to p_1^*M\otimes_{D(1)}q\Omega_{D(p_1)}$
on $p_1^*M$, and let $\theta_{p_0^*M,p_1}$ be the
$q$-Higgs field $p_0^*M\to p_0^*M\otimes_{D(1)}q\Omega_{D(p_1)}$
on $p_0^*M$ obtained from $\theta_{p_1^*M,p_1}$
by the transport of structure via the
isomorphism $\varepsilon\colon p_1^*M \xrightarrow{\cong}p_0^*M$.
Then the composition $M\xrightarrow[\eqref{eq:StratToHiggs}]{\cong}
(p_1^*M)^{\theta_{p_1^*M,p_1}=0}
\xrightarrow[\varepsilon]{\cong}
(p_0^*M)^{\theta_{p_0^*M,p_1}=0}
\xrightarrow{\Delta_M} M$ is the identity map
by the condition (i) in Definition \ref{def:PrismStrat} on $\varepsilon$.
Hence, by the construction of the stratification associated to 
a quasi-nilpotent $q$-Higgs field given before Proposition \ref{prop:HigToStrat}, it suffices to prove 
that $\theta_{p_0^*M,p_1}$ is induced from $\theta_M$
by the scalar extension $p_0\colon D\to D(1)$. 
This is equivalent to the compatibility 
of the composition $M\to p_0^*M\xrightarrow[\varepsilon^{-1}]{\cong}
p_1^*M$ with $\theta_{M,i}$ and $\id_M\otimes_{D,p_1}\theta_{D(1),0;i}$.
The composition $M\to p_1^*M\xrightarrow{\varepsilon}p_0^*M$
is compatible with $\theta_{M,i}$ and $\id_M\otimes_{D,p_0}\theta_{D(1),1;i}$
by the construction of $\theta_{M,i}$ from $\varepsilon$.
By Remark \ref{rmk:PrismStrat}, we obtain the desired compatibility by 
taking the scalar extension under the 
homomorphism $D(\iota)\colon D(1)\xrightarrow{\cong}D(1)$,
which satisfies $\theta_{D(1),0;i}\circ D(\iota)=D(\iota)\circ\theta_{D(1),1;i}$ and 
$D(\iota)\circ p_l=p_{1-l}$ $(l=0,1)$. 
\end{proof}

Since the automorphism $\gamma_{D(r),i}=\id_{D(r)}+t_i\mu\theta_{D(r),i}$ 
of the $q$-prism $(D(r),\pq D(r))$ over the $q$-prism $(R,\pq R)$ 
induces an automorphism of the $A$-algebra
$D(r)/\pq D(r)$ by the proof of Proposition \ref{prop:qHiggsDeriv} (2),
it defines an automorphism of the object $((D(r),\pq D(r)),v_{D(r)})$ 
of $(\fX/R)_{\prism}$. 

\begin{proposition}\label{prop:HiggsFieldAndCrysAuto}
 Let $n\in \N$, let 
$\CF$ be an object of $\Crystal_{\prism}(\CO_{\fX/R,n})$,
and let $(M=\CF(D),\theta_M)$ be the object of 
$q\HIG_{\qnilp}(D_n/R)$ corresponding to $\CF$
by Proposition \ref{prop:CrysStratEquiv} and Theorem \ref{thm:StratHiggsEquiv}.
Then, for each $i\in \Lambda$, the $\gamma_{D,i}$-semilinear
automorphism $\gamma_{M,i}=\id_M+t_i\mu\theta_{M,i}$
of $M$ (Definition \ref{def:connection}) 
coincides with the $\gamma_{D,i}$-semilinear
automorphism $\CF(\gamma_{D,i})$ of $\CF(D)$.
Note that $\gamma_{M,i}$ is an automorphism since 
$\mu$ is nilpotent on $D_n$ and $t_i\theta_{M,i}$ is 
$R$-linear.
\end{proposition}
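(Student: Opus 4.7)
The plan is to use the $\delta$-automorphism $\gamma_{D(1),1;i}$ of $D(1)$ as a ``mediator'' between $\gamma_{D,i}$ on $D$ and the stratification $\varepsilon$: applying $\CF$ to the prism-level identity expressing $\gamma_{D(1),1;i}$ in terms of $\gamma_{D,i}$ will translate the claim into a computation of $\gamma_{p_1^*M,1;i}$ on the image of $M$ in $p_1^*M$, which can then be read off from the very construction of $\theta_M$ from $\varepsilon$ in Lemma \ref{lem:StratToConnection}.

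First I will verify the key identities $\gamma_{D(1),1;i} \circ p_0 = p_0$ and $\gamma_{D(1),1;i} \circ p_1 = p_1 \circ \gamma_{D,i}$ as $\delta$-homomorphisms $D \to D(1)$. Both sides are $\delta$-homomorphisms by Lemma \ref{lem:alphaDerivDeltaEquiv}(1) together with the $\delta$-compatibility of $\theta_{D,i}$ and $\theta_{D(1),1;i}$ from Proposition \ref{prop:qHiggsDeriv}(2), so by the universal property of the prismatic envelope $D$ of $(B,J)$ it suffices to check equality on the topological generators $t_j \in B$. Using $\theta_{D(1),1;i}(t^{(1)}_{l;j}) = \pq$ if $(l,j) = (1,i)$ and $0$ otherwise, both sides of the first identity send $t_j$ to $t^{(1)}_{0;j}$, while both sides of the second send $t_j$ to $t^{(1)}_{1;j}$ if $j \neq i$ and to $q^p t^{(1)}_{1;i}$ if $j = i$. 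Moreover, since $\theta_{D(1),1;i}(D(1)) \subset \pq D(1)$ (by the argument in the proof of Proposition \ref{prop:qHiggsDeriv}(2)), $\gamma_{D(1),1;i}$ reduces to the identity modulo $\pq D(1)$, so it defines an automorphism of the object $(D(1),v_{D(1)})$ in the prismatic site.

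Applying $\CF$ to the first identity and using the crystal isomorphism $\CF(p_0)\colon p_0^*M \xrightarrow{\cong} \CF(D(1))$, the $\gamma_{D(1),1;i}$-semilinear endomorphism $\CF(\gamma_{D(1),1;i})$ of $\CF(D(1))$ is identified with $\gamma_{p_0^*M,1;i} := \id_M \otimes_{D,p_0}\gamma_{D(1),1;i}$ on $p_0^*M$. Applying $\CF$ to the second identity and using both $\CF(p_0)$ and $\CF(p_1)$ then yields, for every $m \in M$,
\begin{equation*}
\gamma_{p_0^*M,1;i}(\varepsilon(m \otimes_{D,p_1} 1)) = \varepsilon(\CF(\gamma_{D,i})(m) \otimes_{D,p_1} 1) \quad \text{in } p_0^*M.
\end{equation*}
By the construction of $\theta_M$ from $\varepsilon$ in Lemma \ref{lem:StratToConnection}, the connection $\theta_{p_1^*M,1;i}$ on $p_1^*M$ is the transport of $\theta_{p_0^*M,1;i}$ via $\varepsilon^{-1}$, and hence so is $\gamma_{p_1^*M,1;i} = \varepsilon^{-1}\circ\gamma_{p_0^*M,1;i}\circ\varepsilon$; thus the displayed identity becomes $\gamma_{p_1^*M,1;i}(m\otimes 1) = \CF(\gamma_{D,i})(m)\otimes 1$ in $p_1^*M$.

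Now a direct computation gives $\gamma_{p_1^*M,1;i}(m \otimes 1) = m \otimes 1 + \mu t^{(1)}_{1;i}(\theta_{M,i}(m) \otimes 1) = \gamma_{M,i}(m) \otimes 1$, using $\mu t^{(1)}_{1;i} = p_1(\mu t_i)$ to pull $\mu t_i$ across the tensor and the fact that $\theta_{M,i}$ is by definition the restriction of $\theta_{p_1^*M,1;i}$ to $M \hookrightarrow p_1^*M$. Therefore $\gamma_{M,i}(m)\otimes 1 = \CF(\gamma_{D,i})(m)\otimes 1$ in $M\otimes_{D,p_1}D(1)$, and since $p_1\colon D\to D(1)$ is $(pR+\pq R)$-adically faithfully flat by Proposition \ref{prop:PrismEnvNervStr} and $M$ is annihilated by $(pR+\pq R)^{n+1}$, the map $M\to p_1^*M$ is injective, giving $\CF(\gamma_{D,i}) = \gamma_{M,i}$. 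The main technical obstacle is the first step: keeping track of which index $(l,j)$ labels which coordinate and which $q$-Higgs derivation, and pinning down correctly that it is $\gamma_{D(1),1;i}$ (rather than $\gamma_{D(1),0;i}$, or some combination) whose compatibility with $p_0$ and $p_1$ reproduces $\gamma_{D,i}$ on the $p_1$-side.
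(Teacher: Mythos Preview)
Your proof is correct and follows essentially the same approach as the paper: both arguments hinge on the identities $\gamma_{D(1),1;i}\circ p_0=p_0$ and $\gamma_{D(1),1;i}\circ p_1=p_1\circ\gamma_{D,i}$, then compare the two compositions $M\to p_0^*M$ (or equivalently $p_1^*M$) arising from $\varepsilon$ and from $\CF(p_0)^{-1}\circ\CF(p_1)$, and use injectivity of $M\hookrightarrow p_l^*M$.

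Two small corrections. First, the claim $\theta_{D(1),1;i}(D(1))\subset\pq D(1)$ is false: for instance $\theta_{D(1),1;i}(\tau^{(1)}_{1,0;i})=1$. What is true (and what the paper states just before this proposition) is that $\gamma_{D(1),1;i}$ is an $A$-algebra automorphism of $D(1)/\pq D(1)$, because $\theta_{B(1),1;i}(B(1))\subset\pq B(1)$ by the proof of Proposition~\ref{prop:qHiggsDeriv}(2) and hence $\gamma_{D(1),1;i}$ restricted to the image of $A$ is the identity; this is all that is needed for $\gamma_{D(1),1;i}$ to define an automorphism of $(D(1),v_{D(1)})$ in the site. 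Second, ``checking on the generators $t_j\in B$'' is not sufficient on its own for two $R$-algebra maps $B\to D(1)$ to agree: $B$ is only $(p,\pq)$-adically \'etale over $R[t_j]$, not generated by the $t_j$. You should either invoke Proposition~\ref{prop:qHiggsDerivFunct}(2) together with Lemma~\ref{lem:TwDerivSystemFunct}(2), which is what the paper does, or observe that both maps agree modulo $\mu$ (since each $\gamma$ is $1+\mu\cdot(\text{something})$) and then use formal \'etaleness with $\mu$ being $(p,\pq)$-adically nilpotent.
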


\begin{proof}
Let $\varepsilon$ be the stratification on $M$ associated to $\CF$
as before Proposition \ref{prop:CrysStratEquiv}. Then, by the construction 
of $\theta_M$ from $\varepsilon$ given before Lemma \ref{lem:StratToConnection}, the composition 
$M\hookrightarrow p_1^*M\xrightarrow[\varepsilon]{\cong}p_0^*M$
is compatible with $\theta_{M,i}$ and $\id_M\otimes\theta_{D(1),1;i}$,
whence with $\gamma_{M,i}$ and $\id_M\otimes\gamma_{D(1),1;i}$.
By Proposition \ref{prop:qHiggsDerivFunct} (2) applied
to the morphism of framed smooth $\delta$-pairs $p_l\colon 
(B,J)\to (B(1),J(1))$ over $R$ $(l\in \{0,1\})$ and Lemma 
\ref{lem:TwDerivSystemFunct} (2), we see that $p_0\colon D\to D(1)$
is stable under $\gamma_{D(1),1;i}$ and
$p_1\colon D\to D(1)$ is compatible with 
$\gamma_{D,i}$ and $\gamma_{D(1);1,i}$. 
Hence the composition $\CF(D)\xrightarrow[\CF(p_1)]{}
\CF(D(1))\xleftarrow[\CF(p_0)]{\cong}
p_0^*(\CF(D))$ is compatible with $\CF(\gamma_{D,i})$
and $\id_{\CF(D)}\otimes\gamma_{D(1),1;i}$.
This completes the proof because the two 
compositions considered above are the same
by the construction of $\varepsilon$ from $\CF$. 
\end{proof}
\begin{remark}\label{rmk:CrysFrobPBHiggs}
(1) Let $\CF$ be an object of $\Crystal_{\prism}(\CO_{\fX/R,n})$,
and let $\varphi_n^*\CF$ be its scalar extension by 
$\varphi_n\colon \CO_{\fX/R,n}\to \CO_{\fX/R,n}$ 
(Definition \ref{def:PrismaticSite} (1)), which is a crystal
of $\CO_{\fX/R,n}$-modules (Remark \ref{rmk:crystalFrobPBTensor} (1)).
Let $(M=\CF(D),\theta_M)$ and $(\varphi_{D_n}^*M=M\otimes_{D_n,\varphi_{D_n}}D_n,
\theta_{\varphi_{D_n}^*M}')$ be the objects
of $q\HIG_{\qnilp}(D_n/R)$ corresponding to $\CF$ and $\varphi_n^*\CF$, respectively,
by Proposition \ref{prop:CrysStratEquiv} and Theorem \ref{thm:StratHiggsEquiv}.
Then we see that $(\varphi^*_{D_n}M,\theta'_{\varphi^*_{D_n}M})$ coincides with
the image $(\varphi^*_{D_n}M,\theta_{\varphi^*_{D_n}M})$ of $(M,\theta_M)$
under the Frobenius pullback functor $\varphi_{D_n}^*$ \eqref{eq:FramedSmQPHigFPBFunct}
as follows. By \eqref{eq:qDervConnPBFormula}, it suffices to verify 
\begin{equation}\label{eq:CrystalHiggsFrobPullback}
\theta'_{\varphi_{D_n}^*M,i}(m\otimes 1)=\theta_{M,i}(m)\otimes \pq t_i^{p-1}
\quad (m\in M,\, i\in \Lambda).
\end{equation}
The stratification on $\varphi_{D_n}^*M$ associated to $\varphi_n^*\CF$
by Proposition \ref{prop:CrysStratEquiv} is the scalar
extension of that on $M$ associated to $\CF$
under $\varphi_{D(1)_n}$. Hence the claim is reduced to
$\theta_{D(1),1;i}\circ \varphi_{D(1)}=
\pq  t_{1;i}^{p-1}\cdot \varphi_{D(1)}\circ \theta_{D(1),1;i}$
which is obtained from  Lemma \ref{lem:qDerivFrobComp} (2). \par
(2) Let $\CF$ and $\CF'$ be crystals of $\CO_{\fX/R,n}$-modules
on $(\fX/R)_{\prism}$, and let $(M,\theta_M)$ and $(M',\theta_{M'})$
be the objects of $q\HIG_{\qnilp}(D_n/R)$ corresponding to
$\CF$ and $\CF'$, respectively, by Proposition \ref{prop:CrysStratEquiv}
and Theorem \ref{thm:StratHiggsEquiv}.
Then the $q$-Higgs field on $M\otimes_DM'=(\CF\otimes_{\CO_{\fX/R,n}}\CF')(D)$
corresponding to the crystal of $\CO_{\fX/R,n}$-modules
$\CF\otimes_{\CO_{\fX/R,n}}\CF'$ (Remark \ref{rmk:crystalFrobPBTensor} (2))
coincides with the tensor product of $\theta_M$ and $\theta_{M'}$
(Definition \ref{def:ConnTensorProdDef}). This is an immediate
consequence of the following observation and \eqref{eq:ConnProdSymmForm}: 
For $i\in \Lambda$, let $\theta_{p_0^*M\otimes p_0^*M', 1;i}$
be the $(t_{1;i}^{(1)}\mu,\theta_{D(1),1;i})$-connection on
$p_0^*M\otimes_{D(1)}p_0^*M'$ defined by 
$\id_{M\otimes_DM'}\otimes_{D,p_0}\theta_{D(1),1;i}$
by the transport of structure via the
isomorphism $p_0^*M\otimes_{D(1)}p_0^*M'\cong p_0^*(M\otimes_DM')$.
Then, for $x\in p_0^*M$ and  $x'\in p_0^*M'$, we have 
$$
\theta_{p_0^*M\otimes p_0^*M',1;i}(x\otimes x')
=\theta_{p_0^*M,1;i}(x)\otimes x'
+x\otimes \theta_{p_0^*M',1;i}(x')
+t_{1;i}^{(1)}\mu\cdot 
\theta_{p_0^*M,1;i}(x)\otimes\theta_{p_0^*M',1;i}(x').$$
This follows from the fact that $\theta_{D(1),1;i}$ is
a $t_{1;i}^{(1)}\mu$-derivation of $D(1)$ over $D$ via $p_0$
(Definition \ref{def:alphaDerivation} (1)).
\end{remark}

The equivalence \eqref{eq:StratHiggsEquiv} is 
functorial in $R$, $i\colon \fX=\Spf(A)\hookrightarrow\Spf(B)$
and $\ut=(t_i)_{i\in\Lambda}$ as follows. Let 
$(R',I')$, $i'\colon \fX'=\Spf(A')\to \Spf(B')$, $f\colon 
(R,I)\to (R',I')$, $g\colon \fX'\to \fX$, 
$h\colon \Spf(B')\to\Spf(B)$, $(B'(r),J'(r))$, 
$(D',v_{D'})$, $(D'(r),v_{D'(r)})$, $h_{B(\bullet)}$,
and $h_{D(\bullet)}$ be the same as before 
\eqref{eq:StratPullback}. Note that $R'$ becomes a 
$q$-prism via the morphism $f$. We further assume
that we are given  $pR'+[p]_qR'$-adic coordinates
$\ut'=(t'_{i'})_{i'\in \Lambda'}$, $\Lambda'=\N\cap [1,d']$ of $B'$ over $R'$ 
and a map $\psi\colon \Lambda\to \Lambda'$ of ordered sets such that $\delta(t'_{i'})=0$
$(i'\in\Lambda')$ and $h^*(t_i)=t'_{\psi(i)}$ $(i\in\Lambda)$. We define $\Lambda'(r)$, 
$\ut^{\prime (r)}=(t^{\prime(r)}_{l;i'})_{(l,i')\in\Lambda'(r)}$, 
$\theta_{D'(r),l;i'}$, and $\theta_{D'(r)}\colon D'(r)\to q\Omega_{D'(r)/R'}$
in the same way as $\Lambda(r)$, $\ut^{(r)}$, 
$\theta_{D(r),l;i}$, and $\theta_{D(r)}$ defined after 
Condition \ref{cond:qPrism} by using $R'$, $\fX'\to \Spf(B')$ and $\ut'$. 
The homomorphism $h_{B(r)}$ and $\id_{[r]}\times\psi\colon \Lambda(r)
\to \Lambda'(r)$ define a morphism of framed smooth $q$-pairs
$((B(r),J(r))/R,\ut^{(r)})\to ((B'(r),J'(r))/R',\ut'^{(r)}).$ Therefore 
$h_{D(r)}\colon D(r)\to D'(r)$ satisfies
\begin{equation}\label{eq:StratDerivFunct}
\theta_{D'(r),l;i'}(h_{D(r)}(a))=
\sum_{\emptyset\neq\ui\subset \psi^{-1}(i')} (\mu t_{i'}')^{\sharp\ui-1}
h_{D(r)}(\theta_{D(r),l;\ui}(a)), \quad (l,i')\in\Lambda'(r), \;a\in D(r)
\end{equation}
and gives a functor
$h^*_{D(r)}\colon q\HIG(D(r)/R)\to q\HIG(D'(r)/R')$
by Proposition \ref{prop:qHiggsDerivFunct} (2) and Lemma \ref{lem:TwDerivSystemFunct} (1),
where $\theta_{D(r),l;\ui}=\prod_{i\in \ui}\theta_{D(r),l;i}$. 

\begin{proposition}\label{prop:FunctStratConn}
Under the notation and assumption as above, the following diagram is commutative
up to canonical isomorphisms.
\begin{equation}
\xymatrix@C=50pt{
\Strat(D(\bullet)_n)\ar[r]_(.45){\eqref{eq:StratHiggsEquiv}}^(.45){\sim}
\ar[d]_{\eqref{eq:StratPullback}}^{h_{D(\bullet)}^*}&
q\HIG_{\qnilp}(D_n/R)\ar@{^{(}->}[r]&
q\HIG(D_n/R)\ar[d]^{h_D^*}\\
\Strat(D'(\bullet)_n)\ar[r]_(.45){\eqref{eq:StratHiggsEquiv}}^(.45){\sim}&
q\HIG_{\qnilp}(D'_n/R')\ar@{^{(}->}[r]&
q\HIG(D'_n/R')
}
\end{equation}
\end{proposition}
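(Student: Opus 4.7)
The plan is to verify that the two paths through the diagram produce the same object of $q\HIG(D'_n/R')$ on the common underlying $D'$-module $h_D^*M = M\otimes_{D,h_D}D'$ by comparing the associated $q$-Higgs derivations component by component. Starting from $(M,\varepsilon)\in\Ob\Strat(D(\bullet)_n)$ with associated $q$-Higgs field $\theta_M$, I need to show that for each $i'\in \Lambda'$ and $m\in M$, the derivation $\theta'_{h_D^*M,i'}$ extracted from the stratification $\varepsilon'=h_{D(1)}^*(\varepsilon)$ on $h_D^*M$ coincides with the derivation $\theta''_{h_D^*M,i'}$ obtained by applying $h_D^*$ to $(M,\theta_M)$ via Proposition \ref{prop:qHiggsDerivFunct}~(2). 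Since $p_1'\colon D'\to D'(1)$ is faithfully flat by Proposition \ref{prop:PrismEnvNervStr}, it suffices to check the equality of the images of both derivations in $p_1'^*h_D^*M$.

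First I will compute $\theta''$ directly. Applying Proposition \ref{prop:ConnPBFormula} to $(h_D,\psi,\uc=(1)_{i\in\Lambda})$ (noting $h_D(t_i\mu)=t'_{\psi(i)}\mu$, so the constants $c_i$ are all $1$), I obtain
\[
\theta''_{h_D^*M,i'}(m\otimes 1)=\sum_{\emptyset\neq J\subset\psi^{-1}(i')}\theta_{M,J}(m)\otimes (t'_{i'}\mu)^{\sharp J-1},
\]
where $\theta_{M,J}=\prod_{j\in J}\theta_{M,j}$. For $\theta'$, I use the defining characterization from Lemma \ref{lem:StratToConnection} applied to $(h_D^*M,\varepsilon')$: the image of $m\otimes 1\in h_D^*M$ in $p_1'^*h_D^*M$ under the embedding $x\mapsto x\otimes 1$ satisfies $\theta_{p_1'^*h_D^*M,1;i'}((m\otimes 1)\otimes 1)=\theta'_{h_D^*M,i'}(m\otimes 1)\otimes 1$, where $\theta_{p_1'^*h_D^*M,1;i'}$ is obtained by transport of structure via $\varepsilon'$ from $\id\otimes\theta_{D'(1),1;i'}$ on $p_0'^*h_D^*M$. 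Using the canonical isomorphisms $p_l'^*h_D^*M\cong h_{D(1)}^*p_l^*M = p_l^*M\otimes_{D(1),h_{D(1)}}D'(1)$ ($l=0,1$), under which $\varepsilon'$ corresponds to $\varepsilon\otimes\id_{D'(1)}$, writing $\varepsilon(m\otimes 1)=\sum_ky_k\otimes a_k$ in $p_0^*M$, I compute $\varepsilon'((m\otimes 1)\otimes 1)=\sum_ky_k\otimes h_{D(1)}(a_k)$. Applying the formula \eqref{eq:StratDerivFunct} for $r=1$, $l=1$ (where the coefficient is the image of $t'_{i'}\mu$ in $D'(1)$ via $B'(p_1)$, i.e.~$p_1'(t'_{i'})\mu$), together with the key identity $\sum_ky_k\otimes\theta_{D(1),1;J}(a_k)=\theta_{p_0^*M,1;J}(\varepsilon(m\otimes 1))=\varepsilon(\theta_{p_1^*M,1;J}(m\otimes 1))=\varepsilon(\theta_{M,J}(m)\otimes 1)$, I obtain
\[
\theta_{p_0'^*h_D^*M,1;i'}(\varepsilon'((m\otimes 1)\otimes 1))=\sum_{\emptyset\neq J\subset\psi^{-1}(i')}\varepsilon'((\theta_{M,J}(m)\otimes 1)\otimes 1)\cdot(p_1'(t'_{i'})\mu)^{\sharp J-1}.
\]
Applying $(\varepsilon')^{-1}$ (which is $D'(1)$-linear) and simplifying $(\theta_{M,J}(m)\otimes 1)\otimes(p_1'(t'_{i'})\mu)^{\sharp J-1}=(\theta_{M,J}(m)\otimes(t'_{i'}\mu)^{\sharp J-1})\otimes 1$ in $h_D^*M\otimes_{D',p_1'}D'(1)$ (transferring $p_1'(t'_{i'})^{\sharp J-1}$ via the $D'$-module structure and absorbing $\mu^{\sharp J-1}\in R$), I recover the formula for $\theta''$, completing the verification.

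The main obstacle is notational: one must carefully track the various canonical isomorphisms between $p_l'^*h_D^*M$, $h_{D(1)}^*p_l^*M$, and $p_l^*M\otimes_{D(1)}D'(1)$, and the competing $D'$-module structures arising from $p_0'$, $p_1'$, $p_0'\circ h_D$, and $p_1'\circ h_D$. A secondary subtlety is the interpretation of the coefficient $(\mu t'_{i'})^{\sharp J-1}$ on the two sides: on the $\theta''$ side it is an element of $D'$ arising from Proposition \ref{prop:ConnPBFormula}, while on the $\theta'$ side it appears as $(p_1'(t'_{i'})\mu)^{\sharp J-1}\in D'(1)$ from \eqref{eq:StratDerivFunct}, and the passage between the two requires exploiting that $\mu\in R$ commutes with the tensor, while $p_1'(t'_{i'})^{\sharp J-1}\cdot(x\otimes 1)=(x\cdot t'^{\sharp J-1}_{i'})\otimes 1$ for $x\in h_D^*M$ via the $D'$-module structure on $p_1'^*h_D^*M$ through $p_1'$.
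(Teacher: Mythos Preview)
Your proof is correct and follows essentially the same approach as the paper's: both arguments use the commutative square relating $\varepsilon$ and $\varepsilon'=h_{D(1)}^*(\varepsilon)$ via $h_{D(1)}\otimes h_M$, apply the formula \eqref{eq:StratDerivFunct} for $r=1$, $l=1$ to compute how $\theta_{D'(1),1;i'}$ acts after pushing through $h_{D(1)}$, and identify the resulting expression with the scalar-extension formula of Proposition \ref{prop:ConnPBFormula}. The paper condenses the verification into a single commutative diagram and one displayed equality, whereas you unpack the identifications $p_l'^*h_D^*M\cong h_{D(1)}^*p_l^*M$ and track the coefficient $(p_1'(t'_{i'})\mu)^{\sharp J-1}$ explicitly; your remark on faithful flatness of $p_1'$ (to deduce equality in $h_D^*M$ from equality in $p_1'^*h_D^*M$) is the explicit form of what the paper leaves implicit in saying that $\theta_{M',i'}$ is ``induced by'' the ambient derivation via the horizontal maps.
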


\begin{proof}
Let $(M,\varepsilon)$ be an object of $\Strat(D(\bullet)_n)$, 
let $(M',\varepsilon')$ be the object $h_{D(\bullet)}^*(M,\varepsilon)$
of $\Strat(D'(\bullet)_n)$, and let $(M,\theta_M)$
(resp.~$(M',\theta_{M'})$) be the object of 
$q\HIG(D_n/R)$ (resp.~$q\HIG(D'_n/R')$)
corresponding to $(M,\varepsilon)$
(resp.~$(M',\varepsilon')$) by \eqref{eq:StratHiggsEquiv}. 
Let $h_M$ denote the homomorphism $M\to M'=D'\otimes_DM;
x\mapsto 1\otimes x$. Then we have the following commutative
diagram.
\begin{equation*}
\xymatrix@C=70pt{
p_0^*M\ar[d]_{h_{D(1)}\otimes h_M}
\ar[r]^{\cong}_{\varepsilon}& 
p_1^*M\ar[d]^{h_{D(1)}\otimes h_M} 
& M\ar@{_{(}->}[l]\ar[d]^{h_M}\\
p_0^{\prime*}M'\ar[r]^{\cong}_{\varepsilon'}& p_1^{\prime*}M'&
M'\ar@{_{(}->}[l]
}
\end{equation*}
By definition, the endomorphism $\theta_{M,i}$ $(i\in\Lambda)$ of
$M$ (resp.~$\theta_{M',i'}$ $(i'\in\Lambda')$ of $M'$) is induced
by $\theta_{D(1),1;i}\otimes \id_M$ on $p_0^*M$
(resp.~$\theta_{D'(1),1;i'}\otimes \id_{M'}$ on $p_0^{\prime*}M'$) via the 
upper (resp.~lower) horizontal maps. Hence
the formula \eqref{eq:StratDerivFunct} for $r=1$, $l=1$ implies that we have
$$\theta_{M',i'}(h_M(x))
=\sum_{\emptyset\neq \ui\subset \psi^{-1}(i')} (\mu t'_{i'})^{\sharp \ui-1}
h_M(\theta_{M,\ui}(x))$$
for $i'\in \Lambda'$ and $x\in M$, where
$\theta_{M,\ui}=\prod_{i\in\ui}\theta_{M,i}$. 
By Proposition \ref{prop:ConnPBFormula}, 
we have $(M',\theta_{M'})=h_D^*(M,\theta_M)$. 
\end{proof}

\section{Linearization and \v{C}ech-Alexander complex}\label{sec:LinCAcpx}
As in Definition \ref{def:PrismaticSite}, 
let $(R,I)$ be a bounded prism (Definition \ref{def:prism} (2)), and
let $\fX$ be a $p$-adic formal scheme over $\Spf(R/I)$.

In \S\ref{sec:PrismCohqDolb}, we will give a Zariski local description of the cohomology
(i.e., a description of the derived direct image under the projection to the Zariski topos
of $\fX$) of a crystal on $(\fX/(R,I))_{\prism}$ in terms of 
the $q$-Higgs complex associated to the crystal 
(Theorem \ref{th:CrystalCohqHiggs}) 
when $(R,I)$ is a $q$-prism
(Definition \ref{def:FramedSmoothPrism} (1)). 
We follow an analogue of the proof of the Zariski local description
of the cohomology of a crystal on a crystalline site in terms
of the de Rham complex of the associated module with
integrable connection given in \cite[V]{BerthelotCrisCoh} and \cite[\S7]{BO}. More concretely, 
we proceed as follows. We can work with any $(R,I)$ for (i) and (ii), which we discuss in this section.\par\noindent
(i) A Zariski local description of the cohomology of a crystal
in terms of the associated \v{C}ech-Alexander complex (Proposition \ref{prop:CechAlexander}). \par\noindent
(ii) Linearization \eqref{eq:LinearizationFunctor}, 
which is a construction of a crystal associated to a module, 
and computation of its cohomology (Proposition \ref{prop:LinearizationLocCoh}).\par
\noindent
(iii) A construction of a resolution of a crystal by a linearization of the associated
$q$-Higgs complex \eqref{eq:qdRResol}.\par

Let us start by introducing the projection morphism from the prismatic topos of $\fX$ over
$(R,I)$ to the Zariski topos of $\fX$. 
Let $\fX_{\ZAR}$ denote the category of $p$-adic formal schemes
over $\fX$ equipped with the Zariski topology.
Then the functor $(\fX/(R,I))_{\prism}\to \fX_{\ZAR}$
sending $((P,IP),v)$ to $v\colon \Spf(P/IP)\to\fX$ is 
cocontinuous (\cite[III D\'efinition 2.1]{SGA4}) 
since $\Spf(P/IP)\to \Spf(P)$ is a
homeomorphism on the underlying topological spaces.
Let 
$$U_{\fX/(R,I)}\colon (\fX/(R,I))_{\prism}^{\sim}\longrightarrow \fX_{\ZAR}^{\sim}$$
be the induced morphism of topos (\cite[III Proposition 2.3]{SGA4}).
We have 
\begin{align*}
&(U_{\fX/(R,I)*}\CF)(\fY\to \fX)=\Gamma((\fY/(R,I))_{\prism},
\CF\vert_{(\fY/(R,I))_{\prism}}),\\
&(U_{\fX/(R,I)}^*\CG)((P,IP),v)=\CG(v\colon \Spf(P/I)\to \fX).
\end{align*}
Let $\fX_{\Zar}$ be the category of open formal subschemes of $\fX$
equipped with the Zariski topology. Then the inclusion functor 
$\fX_{\Zar}\to \fX_{\ZAR}$ is continuous and preserves finite inverse limits which 
exist in $\fX_{\Zar}$. Hence it defines a morphism 
of topos $\varepsilon_{\fX,\Zar}\colon 
\fX_{\ZAR}^{\sim}\to \fX_{\Zar}^{\sim}$. We define a morphism of topos 
\begin{equation}\label{eq:PrismToposZarProj}
u_{\fX/(R,I)}\colon (\fX/(R,I))_{\prism}^{\sim}\longrightarrow \fX_{\Zar}^{\sim}
\end{equation}
to be the composition $\varepsilon_{\fX,\Zar}\circ U_{\fX/(R,I)}$. 
We abbreviate $U_{\fX/(R,I)}$ and $u_{\fX/(R,I)}$ to $U_{\fX/R}$ and
$u_{\fX/R}$ if there is no risk of confusion. 

Assume that $I$ is generated by an element $\xi$ of $R$, $\fX$
is an affine formal scheme $\Spf(A)$, and we are given  a smooth
adic morphism $\fY=\Spf(B)\to \Spf(R)$ and a closed
immersion $i\colon \fX=\Spf(A)\to \fY=\Spf(B)$ over $\Spf(R)$ such that $R\to B$
admits $pR+I$-adic coordinates
(Definition \ref{def:formallyflat} (2)) and that 
the pair of $B$ and $J:=\Ker(B\to A)$ satisfies 
Condition \ref{cond:PrismCohDescription}. 
Put $D_n=D/(pD+ID)^{n+1}$ for $n\in \N$,
and let $\Mod(D_n)$ denote the category of $D_n$-modules.

We  construct a functor
\begin{equation}\label{eq:LinearizationFunctor}
\CL_{A,B}\colon \Mod(D_n)\longrightarrow \Crystal_{\prism}(\CO_{\fX/R,n}),
\end{equation}
which we call the linearization following
\cite[\S6]{Groth}, \cite[IV.3]{BerthelotCrisCoh}, and \cite[6.9]{BO}.

For an object $((P,IP),v)$ of $(\fX/(R,I))_{\prism}$, 
let $B_P$ be the $pR+I$-adic completion of $P\otimes_RB$ 
equipped with the unique $\delta$-structure compatible with
that of $P$ and $B$, let $i_{B_P}$ be the closed immersion 
$\Spf(P/IP)\to \Spf(B_P)$ induced by 
the closed immersion $\Spf(P/IP)\to \Spf(P)$ and
the morphism $i\circ v\colon \Spf(P/IP)\to \Spf(B)$.
Let $J_{B_P}$ be the kernel of $i_{B_P}^*\colon B_P\to P/IP$.
Then, by Corollary \ref{cor:PrismEnvSection} and Lemma \ref{lem:CompFinGenIdeal},  
the $\delta$-pair $(B_P, J_{B_P})$ has 
a bounded prismatic envelope over $(R,I)$, which is denoted by 
$(D_P,ID_P)$ in the following. The morphism 
of $\delta$-pairs $(B,J)\to (B_P,J_{B_P})$ induces
a morphism 
of bounded prisms $D\to D_P$ over $(R,I)$.
This construction is obviously 
functorial in $((P,IP),v)$, i.e., a morphism 
$u\colon ((P',IP'),v')\to ((P,IP),v)$ in $(\fX/R)_{\prism}$
induces a morphism of $\delta$-pairs
$u_B\colon (B_P,J_{B_P})\to (B_{P'},J_{B_{P'}})$
over $(B,J)$ 
compatible with the $\delta$-homomorphism 
$u\colon P\to P'$, and then a morphism of 
bounded prisms $u_D\colon (D_P,ID_P)\to (D_{P'},ID_{P'})$
over the bounded prism $(D,ID)$ compatible 
with $u\colon P\to P'$. 

\begin{lemma}\label{lem:LinearizationEnvBC}
Under the notation above, the homomorphism 
$u_D$ induces an isomorphism 
$D_P/KD_P\otimes_{P}P'\xrightarrow{\cong}
D_{P'}/KD_{P'}$
for any ideal $K$ of $R$ containing a power of $pR+I$. 
\end{lemma}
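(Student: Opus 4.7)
The plan is to reduce the claim to the explicit description of bounded prismatic envelopes provided by Corollary \ref{cor:PrismEnvSection}. Since each of $(P,IP)$ and $(P',IP')$ is itself a bounded prism over $(R,I)$ and hence its own bounded prismatic envelope, and since $P\to B_P$ and $P'\to B_{P'}$ are $\tI$-adically smooth base changes of $R\to B$ with $\tI$-adic coordinates $\tilde t_i := 1\otimes t_i$ inherited from the chosen coordinates of $B$ over $R$, and finally since by construction of $J_{B_P}$ and $J_{B_{P'}}$ the maps $P/IP\to B_P/J_{B_P}$ and $P'/IP'\to B_{P'}/J_{B_{P'}}$ are isomorphisms, the corollary applies to both $(P,IP)\to (B_P,J_{B_P})$ and $(P',IP')\to (B_{P'},J_{B_{P'}})$.

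Concretely, I would choose elements $a_1,\ldots,a_d\in P$ lifting the images of $t_1,\ldots,t_d$ in $P/IP$, and set $a'_i := u(a_i)\in P'$, which are corresponding lifts in $P'/IP'$ since $u$ descends to the canonical map $P/IP\to P'/IP'$. Setting $\tau_i = \xi^{-1}(\tilde t_i - a_i)\in D_P$ and $\tau'_i = \xi^{-1}(\tilde t_i - a'_i)\in D_{P'}$, Corollary \ref{cor:PrismEnvSection} then yields that $D_P/\tI^{N+1}D_P$ (resp.\ $D_{P'}/\tI^{N+1}D_{P'}$) is a free $P/\tI^{N+1}$-module (resp.\ $P'/\tI^{N+1}$-module) with basis indexed by $(m_i)\in\N^d$ via $\prod_i\tau_i^{\{m_i\}_{\delta}}$ (resp.\ $\prod_i(\tau'_i)^{\{m_i\}_{\delta}}$). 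By the functoriality built into the construction of the bounded prismatic envelope (Remark \ref{rmk:DeltaEnvRegSeq} (2)) and the compatibilities $u_B(\tilde t_i)=\tilde t_i$ and $u(a_i)=a'_i$, the $\delta$-homomorphism $u_D$ sends $\tau_i$ to $\tau'_i$ and hence, via the explicit formula \eqref{eq:TauDeltaPower}, sends $\tau_i^{\{m\}_{\delta}}$ to $(\tau'_i)^{\{m\}_{\delta}}$ for every $m\in\N$.

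To conclude, I would pick $N$ with $\tI^{N+1}\subset K$ and reduce both descriptions modulo $K$: then $D_P/KD_P$ and $D_{P'}/KD_{P'}$ are free over $P/K$ and $P'/K$ respectively with the same formal basis of products of divided $\delta$-powers, and the canonical map $D_P/KD_P\otimes_P P'\cong D_P/KD_P\otimes_{P/K}P'/K\to D_{P'}/KD_{P'}$ induced by $u_D$ sends basis to basis and is therefore an isomorphism. The only real bookkeeping obstacle I anticipate is threading the localization by some $f_P\in 1+J_{B_P}$ hidden in Proposition \ref{prop:bddPrismEnvSmSmEmb} compatibly to a localization at the $P'$-level; this is handled by taking $f_{P'}:=u_B(f_P)\in 1+J_{B_{P'}}$ and invoking the insensitivity of the bounded prismatic envelope to $\tI$-adically \'etale localization (Proposition \ref{prop:PrismEnvEtale}).
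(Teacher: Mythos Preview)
Your proof is correct and follows essentially the same approach as the paper: apply Corollary \ref{cor:PrismEnvSection} to the morphisms $(P,IP)\to (B_P,J_{B_P})$ and $(P',IP')\to (B_{P'},J_{B_{P'}})$ with the same choice of coordinates $\tilde t_i$ and compatible lifts $a_i\in P$, $a'_i=u(a_i)\in P'$, and compare the resulting free bases. The paper's proof is terser, leaving implicit the basis-to-basis verification and the handling of the localization $f\in 1+J$ that you spell out.
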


\begin{proof} Let $t_1,\ldots, t_d$ be 
$pR+I$-adic coordinates of $B$ over $R$,
and choose a lifting $a_i\in P$ $(i\in \N\cap [1,d])$
of the  image of $t_i$ in $P/IP$ under the homomorphism
$B\to A\xrightarrow{v^*} P/IP$. Then we obtain 
the claim by applying Corollary \ref{cor:PrismEnvSection} to 
the morphism of $\delta$-pairs $(P,IP)\to (B_P, J_{B_P})$
over $(R,I)$, the image of $t_i$ in $B_P$, and 
$a_i\in P$, and to 
the morphism of $\delta$-pairs $(P',IP')\to (B_{P'},J_{B_{P'}})$
over $(R,I)$, the image of $t_i$ in $B_{P'}$,
and the image of $a_i$ in $P'$ under $u$. 
 \end{proof}

Let $M$ be a $D_n$-module.
Then one can define a presheaf $\CL_{A,B}(M)$ of $\CO_{\fX/R,n}$-modules
on $(\fX/R)_{\prism}$ by 
$$\CL_{A,B}(M)((P,IP),v)=M\otimes_D D_P$$
for $((P,IP),v)\in \Ob(\fX/R)_{\prism}$, 
and $\CL_{A,B}(M)(u)=\id_M\otimes u_D\colon 
M\otimes_DD_P\to M\otimes_{D}D_{P'}$ for
$u\colon ((P',IP'),v')\to ((P,IP),v)\in \Mor(\fX/R)_{\prism}$. 
By Lemma \ref{lem:LinearizationEnvBC},
$\CL_{A,B}(M)$ is a crystal of $\CO_{\fX/R,n}$-modules
(Definition \ref{def:PrismaticSite} (2)). 
This construction is obviously functorial in $M$, and defines
the desired functor  \eqref{eq:LinearizationFunctor}.
The ring homomorphism 
$\CO_{\fX/R,n}((P,IP),v)=P/(pP+IP)^{n+1}\to D_n\otimes_DD_P$ is functorial in $((P,IP),v)$
and defines an $\CO_{\fX/R,n}$-algebra structure on 
$\CL_{A,B}(D_n)$. The $D_P$-module
structure on $M\otimes_DD_P$ for each $((P,IP),v)$ defines an 
$\CL_{A,B}(D_n)$-module structure
on $\CL_{A,B}(M)$ which is functorial in $M$ and is an extension
of the given $\CO_{\fX/R,n}$-module structure. 

Put $\fD_n=\Spec(D_n)$. Let $M$ be a $D_n$-module,
and let $\CM$ be the quasi-coherent $\CO_{\fD_n}$-module
defined by $M$. 
We can construct a canonical morphism 
\begin{equation}\label{eq:LinearizationZariskiProjMap}
v_{D*}\CM\longrightarrow u_{\fX/R*}\CL_{A,B}(M)
\end{equation}
as follows. Here $v_D$ denotes the canonical morphism
$\Spf(D/ID)\to \fX=\Spf(A)$, and $\CM$ is regarded
as a sheaf on $\Spf(D/ID)_{\Zar}$.
Let $j\colon \fX'=\Spf(A')\hookrightarrow 
\fX=\Spf(A)$ be an open affine formal 
subscheme of $\fX$, and let $\Spf(D')$ be
the open affine formal subscheme of $\Spf(D)$
whose underlying topological space is given by 
$v_D^{-1}(\fX')\subset \Spf(D/ID)$.
For an object $((P,IP),w)$ of $(\fX'/R)_{\prism}$,
the morphism $\Spf(D_P)\to \Spf(D)$ factors
through $\Spf(D')$ because the composition
$\Spf(D_P/ID_P)\to \Spf(D/ID)\to \fX=\Spf(A)$
coincides with the composition
$\Spf(D_P/I D_P)\to \Spf(P/IP)\xrightarrow{w}\fX'
\xrightarrow{j} \fX$. The homomorphism 
thus obtained $D'\to D_P$ induces a homomorphism 
$M\otimes_DD'\to M\otimes_DD_P=\CL_{A,B}(M)((P,IP),j\circ w)$.
This is compatible with the homomorphism 
$\CL_{A,B}(M)(u)$ for any morphism $u\colon ((P',IP'),w')
\to ((P,IP),w)$ in $(\fX'/R)_{\prism}$ as $u_D\colon D_P
\to D_{P'}$ is a $D'$-homomorphism. Varying 
$((P,IP),w)$, we obtain
a homomorphism $M\otimes_DD'\to
\Gamma((\fX'/R)_{\prism},\CL_{A,B}(M))$. 
This construction is functorial in $\fX'$, and 
gives the desired homomorphism 
\eqref{eq:LinearizationZariskiProjMap}. 

\begin{proposition}\label{prop:LinearizationLocCoh}
Under the notation and assumption as above, 
the morphism \eqref{eq:LinearizationZariskiProjMap}
induces an isomorphism 
$$v_{D*}\CM\overset{\cong}\longrightarrow Ru_{\fX/R*}\CL_{A,B}(M).$$
\end{proposition}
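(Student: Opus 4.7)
The strategy is to compute $Ru_{\fX/R*}\CL_{A,B}(M)$ on a basis of Zariski opens of $\fX$ via the \v{C}ech--Alexander complex of Proposition \ref{prop:CechAlexander} attached to the cover $((D,ID),v_D)\to *$ from Proposition \ref{prop:PrismCofFinObj}, to identify the resulting cosimplicial object with $M\otimes_D D(\bullet+1)_n$, and to collapse it via an extra codegeneracy together with an augmentation.

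First, I would verify that the construction of $\CL_{A,B}(M)$ and of the map \eqref{eq:LinearizationZariskiProjMap} is compatible with Zariski localization on $\fX$: for $f\in A$ admitting a lift $b\in B$, one sets $\widetilde{B}=(B\{1/b\})^{\wedge}_{pR+I}$ with the unique $\delta$-structure extending that of $B$ (Proposition \ref{prop:DeltaStrExtEtSm} (1)) and $\widetilde{A}=\widetilde{B}/J\widetilde{B}$. By Proposition \ref{prop:PrismEnvEtale}, the associated $q$-prismatic envelope $\widetilde{D}$ is the open formal subscheme $v_D^{-1}(\Spf\widetilde{A})$ of $\Spf D$, and a direct verification from the definition of $\CL_{A,B}(M)$ and Lemma \ref{lem:LinearizationEnvBC} identifies its restriction to the prismatic site of $\Spf\widetilde{A}$ with $\CL_{\widetilde{A},\widetilde{B}}(M\otimes_D\widetilde{D})$, compatibly with \eqref{eq:LinearizationZariskiProjMap}. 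Since the resulting opens form a basis of $\fX_{\Zar}$, the proof then reduces to showing that the canonical map
\[
 M\otimes_D D_n\longrightarrow R\Gamma((\fX/R)_{\prism},\CL_{A,B}(M))
\]
is an isomorphism, the argument applying verbatim to the localized data.

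By Proposition \ref{prop:CechAlexander}, the right-hand side is computed by the totalization of the cosimplicial $D_n$-module $r\mapsto\CL_{A,B}(M)(D(r))=M\otimes_D D_{D(r)}$. The crucial identification is $D_{D(r)}\cong D(r+1)$: writing $B(r+1)=B(r)\otimes_R B$ via the face $[r]\hookrightarrow[r+1]$, $i\mapsto i$, one has $B_{D(r)}=D(r)\widehat{\otimes}_{B(r)}B(r+1)$, and Lemma \ref{lem:bddPrismEnvBase} (2) realizes the $q$-prismatic envelope $D_{D(r)}$ as the envelope of the base change of $(B(r+1),J(r+1))$ along $B(r)\to D(r)$, which is $D(r+1)$ by uniqueness of the universal property. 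Tracing this identification through the functoriality of envelopes transports the cosimplicial structure on $\CL_{A,B}(M)(D(\bullet))$ to the one on $M\otimes_D D(\bullet+1)_n$ induced by the canonical simplicial shift. The augmented cosimplicial object $D_n\to D(\bullet+1)_n$, with augmentation $D\xrightarrow{p_1}D(1)$, then admits a contracting cohomotopy coming from the canonical degeneracy maps $D(r+2)\to D(r+1)$ of $D(\bullet)$ that identify the last two vertices, which are $(pR+I)$-adically faithfully flat by Proposition \ref{prop:PrismEnvNervStr}. The standard cosimplicial identities yield the desired quasi-isomorphism $M\otimes_D D_n\simeq\mathrm{Tot}(M\otimes_D D(\bullet+1)_n)$.

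The main technical obstacle will be establishing the identification $D_{D(r)}\cong D(r+1)$ compatibly with the \emph{full} cosimplicial structure of $D(\bullet)$, in particular both face and degeneracy maps (not just those entering the contracting cohomotopy), so that the \v{C}ech--Alexander complex is recognized as the shifted $D(\bullet+1)_n$; a secondary point is the compatibility of $\delta$-structures under the $(p,I)$-adically \'etale localization in the first step, addressed by Propositions \ref{prop:DeltaStrExtEtSm} (1) and \ref{prop:DeltaCompEtExt}.
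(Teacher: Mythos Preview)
Your approach is essentially the same as the paper's: identify $D_{D(r)}\cong D(r+1)$ via Lemma \ref{lem:bddPrismEnvBase}, recognize the \v{C}ech--Alexander complex of $\CL_{A,B}(M)$ as $M\otimes_D D(\bullet+1)_n$ with its shifted cosimplicial structure, and contract via the extra codegeneracy (the paper invokes \cite[V Lemme 2.2.1]{BerthelotCrisCoh} for this, which is exactly the homotopy you describe). The paper also checks at the end that the two candidate augmentations $v_{D*}\CM\to\CechA_{A,B}(\CL_{A,B}(M))^0$ agree.

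A few points where your write-up diverges or is imprecise. First, your preliminary localization step is unnecessary: Proposition \ref{prop:CechAlexander}(2) already gives a sheaf-level isomorphism $Ru_{\fX/R*}\CF\cong\CechA_{A,B}(\CF)$, and the paper works directly with this, computing $\CechA_{A,B}(\CL_{A,B}(M))^r=v_{D(r)*}(\CL_{A,B}(M))_{D(r)}$ as a sheaf on $\fX_{\Zar}$ without passing to global sections. Second, your citation of Proposition \ref{prop:PrismEnvEtale} for the localization is off: that proposition requires the induced map $A/J\to A'/J'$ to be an isomorphism, whereas here it is the open immersion $A\to A_f$; the claim that the envelope of $(\widetilde{B},\widetilde{J})$ is $v_D^{-1}(\Spf A_f)$ is true but follows from a direct check of the universal property, not from that proposition. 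Third, the invocation of Proposition \ref{prop:PrismEnvNervStr} is both irrelevant (the contracting-homotopy argument needs no flatness) and out of scope (that proposition is proved under Condition \ref{cond:qPrism}, which is not assumed in this section).
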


To prove Proposition \ref{prop:LinearizationLocCoh}, we use the \v{C}ech-Alexander
complex with respect the closed immersion
$\Spf(A)\to \Spf(B)$ over $R$. Recall that 
$((D,ID),v_D) \in (\fX/R)_{\prism}$ is a covering of the final object
of $(\fX/R)_{\prism}^{\sim}$ and that 
the product of $r+1$ copies of 
$((D,ID),v_D)$ in $(\fX/R)_{\prism}$ is represented
by $((D(r), ID(r)),v_{D(r)})$ for $r\in\N$.
(See Proposition \ref{prop:PrismCofFinObj} and the paragraph following it.) 
For a sheaf of abelian groups $\CF$ 
on $(\fX/R)_{\prism}$ and an object 
$((P,IP),v)$ of $(\fX/R)_{\prism}$, we define
a sheaf $\CF_{P}$ on $\Spf(P)_{\Zar}$ by 
$\CF_{P}(\Spf(P'))=\CF((P',IP'),v')$
for an open affine $\Spf(P')\subset\Spf(P)$,
where $P'$ is equipped with the unique 
$\delta$-$P$-algebra structure, and $v'$
denotes the composition $\Spf(P'/IP')\subset\Spf(P/IP)
\xrightarrow{v} \Spf(A)$. For a sheaf of 
abelian groups $\CF$ on $(\fX/R)_{\prism}$,
we define the cosimplicial sheaves  of abelian groups
$\CechA_{A,B}(\CF)^{\bullet}_{\cs}$ 
on $\fX_{\Zar}$ by 
$\CechA_{A,B}(\CF)^{r}_{\cs}
=v_{D(r)*}\CF_{D(r)}$ $(r\in \N)$ 
with the cosimplicial structure naturally 
induced by the simplicial structure of 
of the objects $((D(r),ID(r)),v_{D(r)})$ $(r\in\N)$
of $(\fX/R)_{\prism}$. We define
the \v{C}ech-Alexander complex 
$\CechA_{A,B}(\CF)^{\bullet}$ to be the
cochain complex associated to the
cosimplicial sheaves of abelian groups
$\CechA_{A,B}(\CF)^{\bullet}_{\cs}$.

We can construct a morphism 
\begin{equation}\label{eq:CechAlexCompMap}
u_{\fX/R*}\CF\longrightarrow
\CH^0(\CechA_{A,B}(\CF)^{\bullet})
\end{equation}
as follows. Let $\fX'=\Spf(A')$ be an open affine formal subscheme of $\fX=\Spf(A)$,
let $\Spf(D(r)')$ $(r\in \N)$ be the open affine formal subscheme of $\Spf(D(r))$
whose underlying topological space is given by $v_{D(r)}^{-1}(\fX')
\subset \Spf(D(r)/ID(r))$, and  let $v_{D(r)'}$ denote the morphism 
$\Spf(D(r)'/ID(r)')\to \Spf(A')$ induced by $v_{D(r)}$. Then 
$((D(r)',ID(r)'), v_{D(r)'})$ $(r\in\N)$ form a simplicial object of $(\fX'/R)_{\prism}$. 
Hence we have a natural morphism 
\begin{equation}\label{eq:CechAlexCompSect}
\Gamma((\fX'/R)_{\prism},\CF\vert_{(\fX'/R)_{\prism}})
\to \Ker(p_1^*-p_0^*\colon
\Gamma(D(0)',\CF)\to
\Gamma(D(1)',\CF))
=H^0(\Gamma(\fX',\CechA_{A,B}(\CF)^{\bullet})),
\end{equation}
where we abbreviate $((D(r)',ID(r)'),v_{D(r)'})$
$(r=0,1)$ to $D(r)'$ in the second term. This construction
is obviously functorial in $\fX'$ and gives the desired
morphism \eqref{eq:CechAlexCompMap}. 

\begin{proposition}\label{prop:CechAlexander}
Let $\CF$ be a sheaf of $\CO_{\fX/R,n}$-modules on $(\fX/R)_{\prism}$. \par
(1) If $\CF$ is injective, then the morphism \eqref{eq:CechAlexCompMap}
is an isomorphism and 
$\CH^q(\CechA_{A,B}(\CF)^{\bullet})=0$ for every $q>0$.\par
(2) If $\CF$ is a crystal, then we have a canonical isomorphism 
$Ru_{\fX/R*}\CF\cong \CechA_{A,B}(\CF)$
in  $D^+(\fX, u_{\fX/R*}\CO_{X/R,n})$ inducing \eqref{eq:CechAlexCompMap} 
on $\CH^0$.
\end{proposition}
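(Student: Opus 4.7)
My plan is to invoke the standard \v{C}ech theorem for a covering of the final object. By Proposition~\ref{prop:PrismCofFinObj} and the discussion following it, $((D,ID),v_D)$ covers the final object of $(\fX/R)_{\prism}^{\sim}$ and $((D(r),ID(r)),v_{D(r)})$ represents the $(r{+}1)$-fold self-product; the same remains true after restriction to any open affine $\fX'=\Spf(A')\subset\fX$, with $((D(r)',ID(r)'),v_{D(r)'})$ in place of $((D(r),ID(r)),v_{D(r)})$. For an injective sheaf $\CF$ of $\CO_{\fX/R,n}$-modules, the \v{C}ech-to-derived-functor spectral sequence for the covering $D(0)'\to$ final object in $(\fX'/R)_{\prism}$ degenerates, yielding an exact sequence functorial in $\fX'$:
\begin{equation*}
0\to \Gamma((\fX'/R)_{\prism},\CF|_{(\fX'/R)_{\prism}})\to \Gamma(\fX',\CechA_{A,B}(\CF)^0_{\cs})\to \Gamma(\fX',\CechA_{A,B}(\CF)^1_{\cs})\to\cdots.
\end{equation*}
Sheafifying in $\fX'$ over the basis of open affines yields the isomorphism $u_{\fX/R*}\CF\xrightarrow{\cong}\CH^0(\CechA_{A,B}(\CF)^{\bullet})$ and the vanishing of $\CH^q$ for $q>0$; that the leading arrow agrees with \eqref{eq:CechAlexCompMap} is immediate from the construction \eqref{eq:CechAlexCompSect}.

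\textbf{Plan for (2).} I would choose an injective resolution $\CF\to\CI^{\bullet}$ in the category of sheaves of $\CO_{\fX/R,n}$-modules on $(\fX/R)_{\prism}$, and form the double complex $C^{r,q}:=\CechA_{A,B}(\CI^q)^r=v_{D(r)*}(\CI^q)_{D(r)}$ on $\fX_{\Zar}$. By part (1), each row $C^{\bullet,q}$ is a resolution of $u_{\fX/R*}\CI^q$, so $\Tot(C^{\bullet\bullet})$ is quasi-isomorphic to $u_{\fX/R*}\CI^{\bullet}$, which represents $Ru_{\fX/R*}\CF$. It then suffices to show that the morphism $\CechA_{A,B}(\CF)^{\bullet}\to\Tot(C^{\bullet\bullet})$ induced by $\CF\to\CI^{\bullet}$ is a quasi-isomorphism, and this reduces to showing that for each $r$ the column $C^{r,\bullet}$ is a resolution of $v_{D(r)*}\CF_{D(r)}=\CechA_{A,B}(\CF)^r$; compatibility with \eqref{eq:CechAlexCompMap} on $\CH^0$ will then follow by a diagram chase against the construction of \eqref{eq:CechAlexCompMap}.

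\textbf{Column acyclicity and main obstacle.} The plan for the column claim is to factor $\CG\mapsto v_{D(r)*}\CG_{D(r)}$ through restriction to the slice site $j_{D(r)}^{\ast}\colon(\fX/R)_{\prism}^{\sim}\to((\fX/R)_{\prism}/D(r))^{\sim}$. Since $j_{D(r)}^{\ast}$ admits an exact left adjoint (``extension by zero''), it is exact and preserves injectives, so $C^{r,\bullet}$ represents the derived functor of this composition applied to $\CF$. Because $\CF$ is a crystal, Remark~\ref{rmk:CrystalCompQCohShf} identifies $\CF_{D(r)}$ with the quasi-coherent $\CO_{\Spec(D(r)_n)}$-module associated to $\CF(D(r))$; and because $v_{D(r)}$ restricts to a homeomorphism on underlying topological spaces $|\Spf(D(r)/ID(r))|\to|\fX|$, the higher derived pushforward of this quasi-coherent sheaf to $\fX_{\Zar}$ vanishes by the standard vanishing of Zariski cohomology of quasi-coherent sheaves on an affine (formal) scheme. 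The main obstacle I expect will be precisely this column acyclicity step: making the slice-topos factorization rigorous as a composition of morphisms of ringed topoi, identifying the derived pushforward on a crystal with quasi-coherent Zariski cohomology on $\Spec(D(r)_n)$, and verifying that this is indeed what $C^{r,\bullet}$ computes.
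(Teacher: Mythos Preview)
Your argument for (1) and the double-complex framework for (2) match the paper's. One small point on (1): you should record that the restriction $\CF|_{(\fX'/R)_{\prism}}$ remains injective (the paper cites \cite[V Proposition 4.11 1)]{SGA4}), since the \v{C}ech acyclicity is being applied on the restricted site $(\fX'/R)_{\prism}$, not on $(\fX/R)_{\prism}$.

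The real gap is in the column acyclicity for (2). Your slice-topos factorization writes the functor as $v_{D(r)*}\circ\pi_*\circ j_{D(r)}^*$, where $\pi\colon((\fX/R)_{\prism}/D(r))^{\sim}\to\Spf(D(r))^{\sim}_{\Zar}$ is the pushforward along the inclusion of open affines of $D(r)$ into the slice. Since $j_{D(r)}^*$ is exact and preserves injectives, the Grothendieck spectral sequence reads $R^pv_{D(r)*}\,R^q\pi_*(j_{D(r)}^*\CF)\Rightarrow \CH^{p+q}(C^{r,\bullet})$. Your quasi-coherence argument handles only the outer piece: $\CF_{D(r)}=\pi_*(j_{D(r)}^*\CF)$ is quasi-coherent and $v_{D(r)}$ is an affine morphism, so $R^pv_{D(r)*}(\CF_{D(r)})=0$ for $p>0$. (Incidentally, $v_{D(r)}$ is \emph{not} a homeomorphism in general; affineness is the correct reason.) But $R^q\pi_*(j_{D(r)}^*\CF)$ is the sheafification of $\Spf(P')\mapsto H^q\bigl((P',v'),\CF\bigr)$, which is \emph{prismatic} cohomology of $\CF$ at the object $(P',v')$, not Zariski cohomology of the quasi-coherent sheaf $\CF_{D(r)}$. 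Knowing that $\CF_{D(r)}$ is quasi-coherent says nothing about this term.

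This vanishing $H^m((P,v),\CF)=0$ for $m>0$ is precisely what the paper proves, and it is where the crystal hypothesis is used substantively: for every covering $\fU\in\Cov_{\fpqc}((P,v))$ the \v{C}ech complex $\check{H}^{m}(\fU,\CF)$ vanishes in positive degrees because, $\CF$ being a crystal, this complex is the Amitsur complex for a $(p,I)$-completely faithfully flat map and is therefore acyclic by fpqc descent for modules; one then invokes \cite[V Proposition 4.3]{SGA4} to pass from \v{C}ech to derived-functor cohomology. Once you have this, the column $C^{r,\bullet}$ is in fact exact on sections over every affine open of $\fX$, which is stronger than what your slice factorization would give and bypasses the spectral sequence entirely.
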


\begin{proof}
(1) Let $j\colon \fX'\hookrightarrow \fX$ be an open affine formal subscheme, 
and let $D(r)'$ and $v_{D(r)'}$ be as in the construction of 
\eqref{eq:CechAlexCompMap}. Then, for an object 
$((P,IP),w)$ of $(\fX'/R)_{\prism}$ and $r\in \N$, any morphism 
$((P,IP),j\circ w)\to (D(r),ID(r)),v_{D(r)})$ in $(\fX/R)_{\prism}$
is uniquely induced by a morphism 
$((P,IP),w)\to ((D(r)',ID(r)'),v_{D(r)'})$ in $(\fX'/R)_{\prism}$.
This implies that $((D(r)',ID(r)'),v_{D(r)'})$ $(r\in\N)$ represents 
the fiber product of $r+1$ copies of $((D',ID'),v_{D'})$ in $(\fX'/R)_{\prism}$
and that $((D',ID'),v_{D'})$ is a covering of the final object of 
$(\fX'/R)_{\prism}^{\sim}$ by Proposition \ref{prop:PrismCofFinObj}.
The $\CO_{\fX'/R,n}$-module 
$\CF':=\CF\vert_{(\fX'/R)_{\prism}}$ is injective by \cite[V Proposition 4.11 1)]{SGA4}.
See also \cite[Lemma IV.3.1.10]{AGT}.
Hence \eqref{eq:CechAlexCompSect} is an isomorphism 
and $H^m(\Gamma(\fX',\CechA_{A,B}(\CF')^{\bullet}))
=H^m(\Gamma(D(\bullet)',\CF'))$ vanishes if $m>0$
(\cite[V Corollaire 3.3]{SGA4}). Varying $\fX'$, we obtain the claim.\par
(2) Let $\CF\to \CI^{\bullet}$ be an injective resolution of $\CF$
in the category of $\CO_{\fX/R,n}$-modules. Then 
we have a quasi-isomorphism $u_{X/S*}\CI^{\bullet}
\xrightarrow{\sim} \Tot(\CechA_{A,B}(\CI^{\bullet})^{\bullet})$
by (1), and a morphism $\CechA_{A,B}(\CF)
\to \Tot(\CechA_{A,B}(\CI^{\bullet}))^{\bullet}$. 
Hence it suffices to show that 
$\CechA_{A,B}(\CF)^r(\fX')\to \CechA_{A,B}(\CI^{\bullet})^r(\fX')$
is a resolution for each $r\in\N$
and open formal subscheme $j\colon \fX'\hookrightarrow \fX$.
Let $D(r)'$ and $v_{D(r)'}$ be as in the construction of 
\eqref{eq:CechAlexCompMap}.
Then we have $\Gamma(\fX',\CechA_{A,B}(\CG)^r)
=\Gamma(((D(r)',ID(r)'),j\circ v_{D(r)'}),\CG)$ for a sheaf $\CG$ on $(\fX/R)_{\prism}$.
Hence it suffices to show $H^m(((P,IP),v),\CG)=0$ for every integer $m>0$
for an object $((P,IP),v)$ of $(\fX/R)_{\prism}$ and a crystal of 
$\CO_{\fX/R,n}$-modules $\CG$. For a covering
$\fU=(u_{\lambda}\colon
(P_{\lambda},IP_{\lambda}), v_{\lambda})\to ((P,IP),v))_{\lambda\in\Lambda}
\in \Cov_{\fpqc}((P,IP),v)$ (Definition \ref{def:PrismaticSite} (1)),
the \v{C}ech cohomology $H^m(\fU,\CG)$ vanishes for $m>0$.
Hence the claim holds by \cite[V Proposition 4.3]{SGA4}.
\end{proof}

\begin{proof}[Proof of Proposition \ref{prop:LinearizationLocCoh}]
For $r\in \N$, $B_{D(r)}$ is the $(pR+I)$-adic
completion of $D(r)\otimes_RB=D(r)\otimes_{B(r)}(B(r)\otimes_RB)$, 
the kernel of $D(r)\otimes_RB\to D(r)/ID(r)$ is generated by 
the kernel of $B(r)\otimes_RB\to A$ (because the image
of the kernel of $B(r)\to A$ in $D(r)$ is $ID(r)$), and
the $(pR+I)$-adic completion of $B(r)\otimes_RB$ is isomorphic to
$B(r+1)$. Hence, the $R$-homomorphism 
$B(r)\otimes_RB\to D(r)\otimes_RB$ induces a morphism of
$\delta$-pairs $(B(r+1),J(r+1))\to (B_{D(r)},J_{B_{D(r)}})$ over $(R,I)$,
and Lemma \ref{lem:bddPrismEnvBase} (2) and (3) imply that the latter morphism
gives an isomorphism $(D(r+1),ID(r+1))\xrightarrow{\cong}(D_{D(r)},ID_{D(r)})$
between the bounded prismatic envelopes over $(R,I)$.
Put $\fD(r)=\Spf(D(r))$ and $\fD_{D(r)}=\Spf(D_{D(r)})$ for $r\in\N$.
We omit $(r)$ if $r=0$.
Then $\fD(r)$ (resp.~$\fD_{D(r)}$) $(r\in \N)$ form a simplicial formal scheme 
$\fD(\bullet)\text{(resp.~}\fD_{D(\bullet)})\colon \Delta^{\circ}\to \FSch_R$ over $R$.
For $s\in \N$, let $p_s$ denote the morphism $\fD(s)\to \fD(0)=\fD$
corresponding to the map $[0]\to [s]$ sending $0$ to $s$.
Then $p_{r+1}\colon \fD(r+1)\to \fD$ $(r\in \N)$ form 
a simplicial formal scheme over $\fD$ by the 
composition of $\fD(\bullet)$ with the functor $\Delta^{\circ}\to \Delta^{\circ}$
defined by adding $n+1$ to $[n]$, i.e., by 
sending $[n]$ to $[n+1]$ and 
$f\colon [n]\to [m]$ to $g\colon [n+1]\to[m+1]$ 
such that $g\vert_{[n]}=f$ and $g(n+1)=m+1$.
With this structure given on $\fD(r+1)$ $(r\in \N)$, the isomorphisms 
$\fD_{D(r)}\xrightarrow{\cong}\fD(r+1)$ $(r\in \N)$
define an isomorphism of simplicial formal schemes over $\fD$.

Put $\fD(r)_n=\Spec(D(r)_n)$ for $r\in \N$. We omit $(r)$ if $r=0$.
For an object $((P,IP),v)$ of $(\fX/R)_{\prism}$, 
put  $\fD_{P,n}=\Spec(D_P/(pD_P+ID_P)^{n+1})$, and 
let $p_0^P$ and $p_1^P$
denote the morphism $\fD_{P,n}\to \Spf(P/(pP+IP)^{n+1})$ 
and $\fD_{P,n}\to \fD_n$.
Then, by 
Lemma \ref{lem:LinearizationEnvBC}, we have $\CL_{A,B}(M)_P\cong p_{0*}^P
p_1^{P*}\CM$,
which implies 
$$v_*\CL_{A,B}(M)_P
\cong v_*p_{0*}^Pp_1^{P*}\CM\cong v_{D*}p_{1*}^Pp_1^{P*}\CM
\cong v_{D*}(\CM\otimes_{\CO_{\fD_n}}p_{1*}^P\CO_{\fD_{P,n}}).$$ 
Hence the isomorphism $\fD(\bullet+1)\cong \fD_{D(\bullet)}$
of simplicial formal schemes over $D$ induces an isomorphism of 
cosimplicial $R$-modules on $\fX_{\Zar}$
\begin{equation*}
\CechA_{A,B}(\CL_{A,B}(\CM))_{\cs}
=v_{D(\bullet)*}\CL_{A,B}(M)_{D(\bullet)}
\cong v_{D*}(\CM\otimes_{\CO_{\fD_n}}p_{1*}^{D(\bullet)}\CO_{\fD_{D(\bullet),n}})
\cong v_{D*}(\CM\otimes_{\CO_{\fD_n}}p_{\bullet+1*}\CO_{\fD(\bullet+1)_n}).
\end{equation*}
By \cite[V Lemme 2.2.1]{BerthelotCrisCoh}, the complex 
$p_{\bullet+1*}\CO_{\fD(\bullet+1)_n}$ with $\CO_{\fD_n}\to p_{1*}\CO_{\fD(1)_n}$
added becomes a complex of $\CO_{\fD_n}$-modules homotopically equivalent to zero.
By taking $v_{D*}(\CM\otimes_{\CO_{\fD_n}}-)$, we obtain a complex
$v_{D*}\CM\to v_{D*}(\CM\otimes_{\CO_{\fD_n}}p_{\bullet+1*}\CO_{\fD(\bullet+1)_n})$
homotopically equivalent to zero. By Proposition \ref{prop:CechAlexander} (2), 
it remains to show that the
following two compositions are the same:
\begin{align*}
&v_{D*}\CM\to v_{D*}(\CM\otimes_{\CO_{\fD_n}}p_{1*}^D\CO_{\fD_{D,n}})
\cong \CechA_{A,B}(\CL_{A,B}(\CM))^0,\\
&v_{D*}\CM\xrightarrow{\eqref{eq:LinearizationZariskiProjMap}} u_{\fX/R*}\CL_{A,B}(\CM)
\xrightarrow{\eqref{eq:CechAlexCompMap}} \CechA_{A,B}(\CL_{A,B}(\CM))^0.
\end{align*}
Let $\fX'$ be an open affine formal subscheme of $\fX$,
let $\Spf(D')$ be the open formal subscheme of $\Spf(D)$
whose underlying topological space is given by $v_D^{-1}(\fX')
\subset \Spf(D/ID)$, and regard $(D',D'/ID')$ as an object
of $(\fX/R)_{\prism}$ by the morphism $\Spf(D'/ID')
\subset \Spf(D/ID)\xrightarrow{v_D}\fX$. Then
the morphism $p_1^{D'}\colon \Spf(D_{D'})\to \Spf(D)$ factors
through $\Spf(D')$, and we see that $\Gamma(\fX',-)$
of the above two compositions are
$\id_M\otimes p_1^{D'*}\colon M\otimes_DD'\to M\otimes_{D,p_1^{D'*}}D_{D'}$
by tracing the construction of each map. 
\end{proof}

We have the following analogue of Proposition \ref{prop:LinearizationLocCoh} for an inverse system $(M_n)_{n\in \N}$ of $D_n$-modules
such that the homomorphism 
$M_{n+1}\otimes_{D_{n+1}}D_n\to  M_{n}$
is an isomorphism for every $n\in \N$.
Let $(\CM_{n})_{n\in \N}$ be the inverse system of quasi-coherent $\CO_{\fD_n}$-modules
associated $(M_n)_{n\in \N}$. Then, by taking the inverse limit of \eqref{eq:LinearizationZariskiProjMap} for 
$M_n$ $(n\in \N)$ and using Proposition \ref{prop:LinearizationLocCoh}, we obtain isomorphisms
\begin{equation}\label{eq:LinZarProjMapProjlim}
v_{D*}\bigl(\varprojlim_n \CM_n\bigr)
\cong\varprojlim_n \left(v_{D*}\CM_n\right)\overset{\cong}\longrightarrow
\varprojlim_n \left(u_{\fX/R*}\CL_{A,B}(M_n)\right)\cong u_{\fX/R*}\bigl(\varprojlim_n \CL_{A,B}(M_n)\bigr).
\end{equation}

\begin{proposition}\label{prop:LinLocCohProjlim}
Under the notation and assumption as above, the isomorphisms \eqref{eq:LinZarProjMapProjlim}
induce an isomorphism 
$$v_{D*}\bigl(\varprojlim_n\CM_n\bigr)\xrightarrow{\;\;\cong\;\;} Ru_{\fX/R*}\bigl(\varprojlim_n\CL_{A,B}(M_n)\bigr).$$
\end{proposition}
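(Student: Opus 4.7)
The plan is to deduce the claim by applying the derived inverse limit $R\varprojlim_n$ to the family of isomorphisms $v_{D*}\CM_n \xrightarrow{\cong} Ru_{\fX/R*}\CL_{A,B}(M_n)$ from Proposition \ref{prop:LinearizationLocCoh}, which are natural in $n$. Since $Ru_{\fX/R*}$ is the right derived functor of a right adjoint, it commutes with $R\varprojlim_n$, and one obtains a natural isomorphism
\begin{equation*}
R\varprojlim_n v_{D*}\CM_n \xrightarrow{\;\cong\;} Ru_{\fX/R*}\bigl(R\varprojlim_n \CL_{A,B}(M_n)\bigr).
\end{equation*}
The remaining tasks are to establish two Mittag-Leffler type identifications: (i) $R\varprojlim_n v_{D*}\CM_n \cong v_{D*}\varprojlim_n \CM_n$, and (ii) $R\varprojlim_n \CL_{A,B}(M_n) \cong \varprojlim_n \CL_{A,B}(M_n)$; and then to check that the resulting composite agrees with \eqref{eq:LinZarProjMapProjlim} via the functoriality of the isomorphism in Proposition \ref{prop:LinearizationLocCoh}.

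For (i), the assumption $M_{n+1} \otimes_{D_{n+1}} D_n \xrightarrow{\cong} M_n$ forces $M_{n+1} \twoheadrightarrow M_n$, and for any affine open $\Spf(A') \subset \fX$ the sections $(v_{D*}\CM_n)(\Spf(A'))$ are identified with $M_n \otimes_{D_n} D'_n$ for the corresponding localization $D'_n$. The transition $M_{n+1}\otimes_{D_{n+1}}D'_{n+1} \to M_n \otimes_{D_n} D'_n$ is surjective, so the tower $(v_{D*}\CM_n)$ has epimorphic transitions on a generating family of opens; this gives $R^1\varprojlim_n v_{D*}\CM_n = 0$, and the commutation of $v_{D*}$ with the remaining ordinary $\varprojlim_n$ is automatic since $v_{D*}$ is a right adjoint. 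For (ii), on every object $((P,IP),v)$ of $(\fX/R)_{\prism}$ the transition $\CL_{A,B}(M_{n+1})(P)\to \CL_{A,B}(M_n)(P)$ is the surjection $M_{n+1}\otimes_D D_P \twoheadrightarrow M_n \otimes_D D_P$, so the tower is objectwise Mittag-Leffler, hence $R^1\varprojlim^{\mathrm{PSh}}_n$ vanishes by the classical Mittag-Leffler lemma; sheafification being exact, one concludes $R^1\varprojlim_n \CL_{A,B}(M_n) = 0$ in the prismatic topos. The presheaf limit sends $P$ to $\hM \hotimes_D D_P$ with $\hM = \varprojlim_n M_n$, which by Lemmas \ref{lem:LinearizationEnvBC} and \ref{lem:CompFinGenIdeal} is already a sheaf (indeed the complete crystal of $\CO_{\fX/R}$-modules associated to $(\CL_{A,B}(M_n))_n$ via the equivalence \eqref{eq:CompleteCrysInvSys}).

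The main obstacle will be to verify the last compatibility: matching the derived isomorphism obtained above with the map induced by \eqref{eq:LinZarProjMapProjlim} requires tracing the construction of the isomorphism of Proposition \ref{prop:LinearizationLocCoh} through the $R\varprojlim_n$ and identifying it with the explicit inverse-limit composition used to define \eqref{eq:LinZarProjMapProjlim}. This is a careful bookkeeping exercise using the \v{C}ech-Alexander model for $Ru_{\fX/R*}\CL_{A,B}(M_n)$ provided by Proposition \ref{prop:CechAlexander} and its behavior under inverse limits of adic towers, the upshot being that the homotopy equivalence of \cite[V Lemme 2.2.1]{BerthelotCrisCoh} used in the proof of Proposition \ref{prop:LinearizationLocCoh} is preserved term-by-term under $\varprojlim_n$ applied to the adic tower $(\CM_n \otimes_{\CO_{\fD_n}} p_{\bullet+1*}\CO_{\fD(\bullet+1)_n})$.
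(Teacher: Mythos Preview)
Your overall strategy is sound and close to the paper's, but there is one technical slip and one structural difference worth noting.

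The slip is in your argument for (ii). Surjectivity of transitions on every object gives vanishing of the objectwise classical $R^1\varprojlim$, but ``sheafification being exact'' does not by itself let you conclude that the sheaf-theoretic $R^1\varprojlim$ vanishes: the inclusion of sheaves into presheaves is only left exact, so $R\varprojlim$ on sheaves is not in general the sheafification of the objectwise derived limit. The correct argument, which is exactly Lemma~\ref{lem:crystalDProjlim}~(1), couples the surjectivity of transitions with the acyclicity $H^r(((P,IP),v),\CL_{A,B}(M_n))=0$ for $r>0$ on every object (a consequence of $\CL_{A,B}(M_n)$ being a crystal, cf.\ the proof of Proposition~\ref{prop:CechAlexander}~(2)); these two ingredients together allow one to invoke \cite[Lemma IV.4.2.3]{AGT}. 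The same remark applies to your step (i): you should also use that $v_{D*}\CM_n$ has vanishing higher cohomology on affine opens of $\fX$, not just surjective transitions.

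The structural difference: the paper avoids computing $R\varprojlim_n v_{D*}\CM_n$ altogether. Instead it exploits that the proof of Proposition~\ref{prop:LinearizationLocCoh} produces a \emph{homotopy equivalence} (not merely a quasi-isomorphism) of the inverse system of augmented complexes $\bigl(v_{D*}\CM_n \to \CechA_{A,B}(\CL_{A,B}(M_n))^{\bullet}\bigr)_{n}$ with zero; applying $\varprojlim_n$ preserves the homotopy, so the limit augmented complex is still contractible. This directly exhibits $v_{D*}\varprojlim_n\CM_n$ as quasi-isomorphic to $\varprojlim_n \CechA_{A,B}(\CL_{A,B}(M_n))^{\bullet}$, and the latter is then identified with $Ru_{\fX/R*}\bigl(\varprojlim_n\CL_{A,B}(M_n)\bigr)$ via Lemma~\ref{lem:crystalDProjlim}~(1),~(2) and Proposition~\ref{prop:CechAlexander}~(2). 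This also dispatches your compatibility concern: the augmentation map whose limit you are trying to match is precisely $\eqref{eq:CechAlexCompMap}\circ\eqref{eq:LinearizationZariskiProjMap}$ at each level $n$, so the homotopy-equivalence argument yields the map \eqref{eq:LinZarProjMapProjlim} on the nose, with no extra bookkeeping required.
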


\begin{lemma}\label{lem:crystalDProjlim}
Let $(\CF_n)_{n\in \N}\in \Ob (\Crystal^{\ad}_{\prism}(\CO_{\fX/R,\bullet}))$ 
be an adic inverse system of crystals of $\CO_{\fX/R,n}$-modules
on $(\fX/R)_{\prism}$ (Remark \ref{rmk:CompleteCrystalInvSystem} (3)).\par
(1) The morphism $\varprojlim_n\CF_n\to R\varprojlim_n\CF_n$ is an isomorphism.\par
(2) The morphism $\varprojlim_n \CechA_{A,B}(\CF_n)^r\to R\varprojlim_n\CechA_{A,B}(\CF_n)^r$
is an isomorphism for every $r\in \N$. 
\end{lemma}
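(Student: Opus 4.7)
The idea for both parts is to reduce everything to classical Mittag--Leffler vanishing in the category of abelian groups, by exploiting the Milnor-style two-term complex
$$\prod_n \CF_n \xrightarrow{\;1-\sigma\;} \prod_n \CF_n$$
representing $R\varprojlim_n \CF_n$ in the derived category of abelian sheaves on any site; this description relies only on exactness of countable products in a Grothendieck topos, and gives automatic vanishing of $R^q\varprojlim_n$ for $q\geq 2$ in the $\N$-indexed setting. Once this is in place, both (1) and (2) reduce to checking that the transition morphisms of the relevant inverse system are sectionwise surjective on a family of generating objects.

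For part (1), the adic condition on $(\CF_n)_{n\in\N}$ (Remark \ref{rmk:CompleteCrystalInvSystem} (3)) provides, for every object $((P,IP),v)$ of $(\fX/R)_{\prism}$, an isomorphism $\CF_{n+1}(P)\otimes_{P_{n+1}}P_n \xrightarrow{\cong} \CF_n(P)$; in particular the transition $\CF_{n+1}(P)\to \CF_n(P)$ is surjective with kernel $(pP+IP)^{n+1}\CF_{n+1}(P)$. Since countable products and kernels of abelian sheaves are computed sectionwise, evaluating the two-term complex at $P$ gives the two-term complex of the $\N$-indexed system $(\CF_n(P))_{n\in\N}$ in abelian groups, whose $H^1$ equals $R^1\varprojlim_n\CF_n(P)$ and vanishes by classical Mittag--Leffler. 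Hence the presheaf cokernel of $1-\sigma$ is identically zero, so its sheafification $R^1\varprojlim_n \CF_n$ vanishes as a sheaf, establishing (1).

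Part (2) is formally parallel, once one identifies the sections of $\CechA_{A,B}(\CF_n)^r=v_{D(r)*}\CF_{n,D(r)}$ on a suitable basis of $\fX_{\Zar}$. Because $\fX=\Spf(A)$ is affine and $v_{D(r)}\colon \Spf(D(r)/ID(r))\to \fX$ is a morphism of affine formal schemes, the preimage of any affine open $\fX'\subset \fX$ is of the form $\Spf(D(r)'/ID(r)')$ for a canonical $(pR+I)$-adic localization $D(r)'$ of $D(r)$; the $\delta$-structure extends uniquely to $D(r)'$ by Proposition~\ref{prop:DeltaStrExtEtSm}~(1), making $((D(r)',ID(r)'),v')$ an object of $(\fX/R)_{\prism}$ above $\fX'$. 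Consequently $\CechA_{A,B}(\CF_n)^r(\fX')=\CF_n((D(r)',ID(r)'),v')$, and the adic condition again yields surjectivity of transitions in $n$. Running the sectionwise Mittag--Leffler argument of (1) on this basis of affine opens of $\fX$ gives $R^1\varprojlim_n \CechA_{A,B}(\CF_n)^r=0$ in $\fX_{\Zar}^{\sim}$.

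The only conceptual input, as opposed to bookkeeping, is the validity of the Milnor two-term description of $R\varprojlim$ in sheaves on a site; everything else is a direct unwinding of the adic hypothesis together with the elementary observation that $v_{D(r)}$ is an affine morphism of affine formal schemes, so that localizations of $\fX$ lift canonically to objects of $(\fX/R)_{\prism}$ through the $\delta$-étale extension property.
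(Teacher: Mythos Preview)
Your argument has a genuine gap at its foundation. You assert that the Milnor two-term complex $\prod_n\CF_n\xrightarrow{1-\sigma}\prod_n\CF_n$ represents $R\varprojlim_n\CF_n$ in the derived category of abelian sheaves on any site, and you justify this by appealing to ``exactness of countable products in a Grothendieck topos.'' But countable products are \emph{not} exact in a general Grothendieck topos; the category of abelian sheaves on a site satisfies AB5 but typically not AB4*. Without AB4*, the inclusion of the heart $\text{Sh}\hookrightarrow D(\text{Sh})$ does not preserve infinite products, so the naive sheaf product $\prod_n\CF_n$ need not coincide with the derived product, and the Milnor complex need not model the homotopy limit. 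What your sectionwise Mittag--Leffler argument actually proves is only that $1-\sigma$ is an epimorphism of sheaves, i.e.\ that the two-term complex is quasi-isomorphic to the \emph{underived} $\varprojlim_n\CF_n$; it does not identify that complex with $R\varprojlim_n\CF_n$.

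The paper closes this gap by invoking the criterion of \cite[Lemma~IV.4.2.3]{AGT}, which requires, in addition to the sectionwise surjectivity of transitions that you do verify, the acyclicity $H^q(P,\CF_n)=0$ for $q>0$ on every object $P$ of the site. For part~(1), this is exactly the vanishing established in the proof of Proposition~\ref{prop:CechAlexander}~(2) (\v{C}ech cohomology of a crystal vanishes with respect to any covering in $\Cov_{\fpqc}$). For part~(2), one needs $H^s(\fX',v_{D(r)*}\CF_{n,D(r)})=0$ for $s>0$ on affine opens $\fX'\subset\fX$, which the paper obtains from the affineness of $v_{D(r)}$ together with the quasi-coherence of $\CF_{n,D(r)}$ on the thickening $\Spec(D(r)_n)$, arguing by induction on $n$. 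Your identification of the sections $\CechA_{A,B}(\CF_n)^r(\fX')=\CF_n((D(r)',ID(r)'),v')$ via the $\delta$-\'etale extension is correct and matches the paper; only the homological input needs to be supplied.
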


\begin{proof}
(1) For every object $((P,IP),v)$ of $(\fX/R)_{\prism}$, 
we have $H^r(((P,IP),v),\CF_n)=0$ for every $n\in \N$ and $r>0$ 
by the proof of Proposition \ref{prop:CechAlexander} (2),
and the morphism $\Gamma(((P,IP),v),\CF_{n+1})
\to \Gamma(((P,IP),v),\CF_n)$ is surjective by the assumption on $(\CF_n)_{n\in \N}$
and Remark \ref{rmk:CompleteCrystalInvSystem} (1).
Hence we have $\varprojlim_n \CF_n\xrightarrow{\cong} R\varprojlim_n \CF_n$
(\cite[Lemma IV.4.2.3]{AGT}). \par
(2) By definition, we have $\CechA_{A,B}(\CF_n)^r=v_{D(r)*}\CF_{n,D(r)}$.
It suffices to prove that, for any affine open formal subscheme
$\fX'$ of $\fX$, the morphism 
$\Gamma(\fX',v_{D(r)*}\CF_{n+1,D(r)})\to
\Gamma(\fX',v_{D(r)*}\CF_{n,D(r)})$ is surjective
and $H^s(\fX',v_{D(r)*}\CF_{n,D(r)})=0$ for $n\in\N$ and $s>0$
(\cite[Lemma IV.4.2.3]{AGT}).
Let $\Spf(D(r)')$ be the affine open formal subscheme of $\Spf(D(r))$
whose underlying topological space is $v_{D(r)}^{-1}(\fX')\subset \Spf(D(r)/ID(r))$,
and let $v_{D(r)'}$ denote the restriction of $v_{D(r)}$ to $\Spf(D(r)'/ID'(r))$.
Then we have $\Gamma(\fX',v_{D(r)*}\CF_{m,D(r)})=
\Gamma(D(r)',\CF_{m,D(r)})=\Gamma(((D(r)', ID(r)'),v_{D(r)'}), \CF_m)$
for $m=n$, $n+1$. Hence the assumption on $(\CF_n)_{n\in \N}$ and 
Remark \ref{rmk:CompleteCrystalInvSystem} (1) imply the first claim. 
Put $\fD(r)_n=\Spec(D(r)/(ID(r)+pD(r))^{n+1})$. 
Since $v_{D(r)}$ is an affine morphism,
we see $Rv_{D(r)*}\CG=\CG$ for a quasi-coherent $\CO_{\fD(r)_n}$-module
by induction on $n$.  Hence we have 
$R\Gamma(\fX',v_{D(r)*}\CF_{n,D(r)})=R\Gamma(\fX', Rv_{D(r)*}\CF_{n,D(r)})
=R\Gamma(\fD(r)'_n,\CF_{n,D(r)})=\Gamma(\fD(r)'_n,\CF_{n,D(r)})$,
where $\fD(r)'_n=\Spec(D(r)'/(ID(r)'+pD(r)')^{n+1})$.
This completes the proof.
\end{proof}

\begin{proof}[Proof of Proposition \ref{prop:LinLocCohProjlim}]
By the proof of Proposition \ref{prop:LinearizationLocCoh},
the inverse system of complexes
$(v_{D*}\CM_n
\xrightarrow{\eqref{eq:CechAlexCompMap}\circ\eqref{eq:LinearizationZariskiProjMap}}
\CechA_{A,B}(\CL_{A,B}(M_n))^{\bullet})_{n\in \N}$ is 
homotopic to $0$. Hence it also holds for its inverse limit over $n\in \N$.
Therefore the claim follows from 
\begin{multline*}
Ru_{\fX/R*}\bigl(\varprojlim_n\CL_{A,B}(M_n)\bigr)
\xrightarrow[\text{Lem.~\ref{lem:crystalDProjlim} (1)}]{\cong}
Ru_{\fX/R*}\bigl(R\varprojlim_n\CL_{A,B}(M_n)\bigr)
\cong R\varprojlim_n \bigl(Ru_{\fX/R*}\CL_{A,B}(M_n)\bigr)\\
\xleftarrow[\text{Prop.~\ref{prop:CechAlexander} (2)}]{\cong}  R\varprojlim_n \CechA_{A,B}(\CL_{A,B}(M_n))^{\bullet}
\xleftarrow[\text{Lem.~\ref{lem:crystalDProjlim} (2)}]{\cong}
\varprojlim_n \CechA_{A,B}(\CL_{A,B}(M_n))^{\bullet}.
\end{multline*}
Note that $\CL_{A,B}(M_n)$ is a crystal of $\CO_{\fX/R,n}$-modules
and the morphism $\CL_{A,B}(M_{n+1})\otimes_{\CO_{\fX/R,n+1}}\CO_{\fX/R,n}
\to \CL_{A,B}(M_n)$ is an isomorphism by $M_{n+1}\otimes_{D_{n+1}}D_n\cong M_n$
and the construction of $\CL_{A,B}(-)$. 
\end{proof}

\section{Prismatic cohomology and $q$-Higgs complex: Local case}\label{sec:PrismCohqDolb}
Let $(R,I)$, $\fX=\Spf(A)$, $i\colon \fX\to \fY=\Spf(B)$ be the same as
before \eqref{eq:LinearizationFunctor}. We keep the notation introduced in \S\ref{sec:LinCAcpx}, 
assume further that 
that $R$ and $B$ satisfy 
Condition \ref{cond:qPrism}, and follow the notation introduced in 
\S\ref{sec:CrystalStratqHiggs} under this condition. 
As it is mentioned at the beginning of \S\ref{sec:LinCAcpx}, 
we give a description of the derived direct image under the projection 
$u_{\fX/R}$ \eqref{eq:PrismToposZarProj} to the Zariski topos
$\fX_{\Zar}$ of a crystal on $(\fX/R)_{\prism}$ in terms of 
the $q$-Higgs complex associated to the crystal 
(Theorems \ref{th:CrystalCohqHiggs} and \ref{thm:CrystalCohqHigProj}).
We discuss compatibility
with scalar extension under the liftings of Frobenius and tensor products
as remarks every time we introduce a new complex or a new morphism 
of complexes.

Let $\CF$ be a crystal of $\CO_{\fX/R,n}$-modules
on $(\fX/R)_{\prism}$, and let $(M,\theta_{M})$ be the object of 
$q\HIG_{\qnilp}(D_n/R)$ corresponding to $\CF$ by the equivalences
of categories in Proposition \ref{prop:CrysStratEquiv}
and Theorem \ref{thm:StratHiggsEquiv}. 
Put $\fD=\Spf(D)$ and $\fD_n=\Spec(D_n)$ $(n\in \N)$. 
Let $\CM$ be the quasi-coherent $\CO_{\fD_n}$-module 
on $\fD_n$ associated to the $D_n$-module $M$. 
Then one can construct a complex 
\begin{equation}\label{eq:qHiggsShfCpx}
(\CM\otimes_{\CO_{\fD}}q\Omega^{\bullet}_{\fD/R},\theta^{\bullet}_{\CM})
\end{equation}
on $(\fD_n)_{\Zar}=\fD_{\Zar}$ as follows. 
We call it {\it the $q$-Higgs complex on $\fD_{\Zar}$} associated to
$(M,\theta_M)$ or $\CF$.

Let $\Spf(D')$ be an open affine formal subscheme of $\fD=\Spf(D)$.
Then the $t_{i}\mu$-derivation $\theta_{D,i}$ over $R$ uniquely
extends to a $t_i\mu$-derivation $\theta_{D',i}$ over $R$
by Proposition \ref{prop:TwistDerEtaleExt}, and 
$\theta_{D',i}$ $(i\in \Lambda)$ commute with each other
by Lemma \ref{lem:EtMapDerivComm} and 
$\theta_{D',i}(t_j\mu)=0$ $(i\neq j)$. 
By applying the construction after Lemma \ref{lem:ConnectionTwDeriv}
 to $D'$ and $(t_i\mu,\theta_{D',i})$ $(i\in \Lambda)$,
we obtain a differential graded algebra $(q\Omega^{\bullet}_{D'/R},\theta_{D'}^{\bullet})$ over
$(q\Omega^{\bullet}_{D/R},\theta_{D}^{\bullet})$. 
By Lemma \ref{lem:TwDerivSystemFunct} (1) and  Definition \ref{def:connectionScalarExt},
$(M,\theta_M)$ extends to an integrable  connection 
$(D'\otimes_DM, \theta_{D'\otimes_DM})$
over $(q\Omega^{\bullet}_{D'/R},\theta_{D'}^{\bullet})$.
For another open affine formal
subscheme $\Spf(D'')$ of $\fD=\Spf(D)$ contained in $\Spf(D')$, 
we see that the homomorphism $D'\to D''$ is compatible with 
$\theta_{D',i}$ and $\theta_{D'',i}$ by Lemma \ref{lem:TwDerivEtExtComp}.
By Proposition \ref{prop:dRCpxFunct} and Lemma \ref{lem:dRCpxFuncCocyc},
this implies that $(q\Omega^{\bullet}_{D'/R},\theta_{D'}^{\bullet})$
and the complex $((D'\otimes_DM)\otimes_{D'} q\Omega^{\bullet}_{D'/R}, 
\theta^{\bullet}_{D'\otimes_DM})$ associated to $(D'\otimes_DM,\theta_{D'\otimes_DM})$
are functorial in $D'$. Thus, varying $D'$,  we obtain a sheaf of differential graded
algebras $(q\Omega^{\bullet}_{\fD/R},\theta_{\fD}^{\bullet})$ over $R$
and a complex $(\CM\otimes_{\CO_{\fD}}q\Omega^{\bullet}_{\fD/R},\theta_{\CM}^{\bullet})$
on $\fD_{\Zar}$. 

\begin{remark}\label{rmk:qHiggsGammaAction}
Under the notation in Remark \ref{rmk:CrystalCompQCohShf}, we have $\CM=\CF_D$
because $M=\CF(D)$ by definition. 
Let $\theta_{\fD,i}\colon \CO_{\fD}\to \CO_{\fD}$ and
$\theta_{\CM,i}\colon \CM\to \CM$ $(i\in \Lambda)$
be the $R$-linear endomorphisms defined by 
$\theta_{\fD}(x)=\sum_{i\in \Lambda}\theta_{\fD,i}(x)\otimes\omega_i$
$(x\in \CO_{\fD})$, and 
$\theta_{\CM}(m)=\sum_{i\in \Lambda}\theta_{\CM,i}(m)\otimes\omega_i$
$(m\in \CM)$. Since $\gamma_{D,i}$ is trivial modulo $\mu$,
$\Spf(\gamma_{D,i})\colon \fD\to \fD$ is the identity map on the
underlying topological space. Hence it induces an automorphism
$\gamma_{\fD,i}\colon \CO_{\fD}\xrightarrow{\cong}\CO_{\fD}$
over $R$, and then the $\gamma_{D,i}$-semilinear
automorphism $\CF(\gamma_{D,i})\colon \CF(D)\xrightarrow{\cong}\CF(D)$
(see before Proposition \ref{prop:HiggsFieldAndCrysAuto}) 
induces a $\gamma_{\fD,i}$-semilinear
automorphism $\gamma_{\CF_D,i}\colon \CF_D\xrightarrow{\cong}\CF_D$.
We see that these endomorphisms and automorphisms of 
$\CO_{\fD}$ and $\CM=\CF_D$ satisfy
\begin{align}
\gamma_{\fD,i}&=1+t_i\mu\theta_{\fD,i},\label{eq:HigAutoShfComp}\\
\gamma_{\CF_D,i}&=1+t_i\mu\theta_{\CM,i}\label{eq:HigAutoShfComp2}
\end{align}
as follows. Let $\fD'=\Spf(D')$ be an open affine formal subscheme 
of $\fD=\Spf(D)$. Both $\gamma_{\fD,i}(\fD')$
and $(1+t_i\mu\theta_{\fD,i})(\fD')=1+t_i\mu\theta_{D',i}$
are endomorphisms of the $R$-algebra $D'$ compatible with $\gamma_{D,i}
=1+t_i\mu\theta_{D,i}$. Hence they coincide. This implies \eqref{eq:HigAutoShfComp}.
We have $\gamma_{\CF_D,i}(\fD')=\gamma_{\fD,i}(\fD')
\otimes\CF(\gamma_{D,i})$ by definition
and $(1+t_i\mu\theta_{\CM,i})(\fD')=1+t_i\mu\theta_{D'\otimes_DM,i}$
is a $(1+t_i\mu\theta_{D',i})$-semilinear endomorphism 
compatible with $1+t_i\mu\theta_{M,i}$. Hence
the equality \eqref{eq:HigAutoShfComp2} holds for sections 
on $\fD'$ by Proposition 
\ref{prop:HiggsFieldAndCrysAuto} and \eqref{eq:HigAutoShfComp}.
\end{remark}
\begin{remark}\label{rmk:qDolSheafCpxFrob}
(1) We consider the Frobenius pullbacks 
$\varphi_n^*\CF=\CF\otimes_{\CO_{\fX/R,n},\varphi_n}\CO_{\fX/R,n}$
and 
$(\varphi_{D_n}^*M,\theta_{\varphi_{D_n}^*M})$ of $\CF$ and $(M,\theta_M)$,
respectively, as in Remark \ref{rmk:CrysFrobPBHiggs}. 
They correspond to each other by Proposition \ref{prop:CrysStratEquiv} and 
Theorem \ref{thm:StratHiggsEquiv} as observed in loc.~cit.
The lifting of Frobenius 
$\varphi_{D_n}$ induces a lifting of Frobenius $\varphi_{\fD_n}\colon 
\fD_n\to \fD_n$, and the quasi-coherent $\CO_{\fD_n}$-module
associated to the $D_n$-module $\varphi_{D_n}^*M$ may be identified
with $\varphi_{\fD_n}^*\CM=\CM\otimes_{\CO_{\fD_n},\varphi_{\fD_n}^*}\CO_{\fD_n}$.
Let $((\varphi_{\fD_n}^*\CM)\otimes_{\CO_{\fD}}q\Omega_{\fD/R}^{\bullet},
\theta_{\varphi_{\fD_n}^*\CM}^{\bullet})$
be the complex associated to $(\varphi_{D_n}^*M,\theta_{\varphi_{D_n}^*M})$
similarly to $(M,\theta_M)$. Then we can construct a morphism of complexes
\begin{equation}\label{eq:qdRSheafCpxFrobPB}
\varphi_{\fD_n,\Omega}^{\bullet}(\CM)\colon \CM\otimes_{\CO_{\fD}}
q\Omega^{\bullet}_{\fD/R}\longrightarrow (\varphi_{\fD_n}^*\CM)\otimes_{\CO_{\fD}}q\Omega^{\bullet}_{\fD/R}
\end{equation}
as follows. 
For each affine open $\Spf(D')\subset\Spf(D)$, 
$\theta_{D',i}$ is $\delta$-compatible with respect to $t_i^{p-1}\eta$
by Proposition \ref{prop:TwistDerDeltaEtaleExt}. Therefore
we have $\theta_{D',i}\circ\varphi_{D'}=[p]_qt_i^{p-1}\cdot\varphi_{D'}\circ \theta_{D',i}$ by 
Lemma \ref{lem:qDerivFrobComp} (2), and obtain a Frobenius pullback functor 
$\varphi_{D'_n}^*$ \eqref{eq:qDerivConnFobPB} for modules with integrable connection
over $D'_n\otimes_{D'} q\Omega^{\bullet}_{D'/R}$,
which is functorial in $D'$ by \eqref{eq:qConnFrobPBFunct}. 
By \eqref{eq:qDervConnCpxPB}, we have a morphism of complexes
\begin{equation}\label{eq:qdRSheafCpxSecFrobPB}
\varphi_{D'_n,\Omega}^{\bullet}(D'\otimes_DM)\colon
(D'\otimes_DM)\otimes_{D'}q\Omega^{\bullet}_{D'/R}
\to (D'\otimes_D\varphi_{D_n}^*M)\otimes_{D'}q\Omega^{\bullet}_{D'/R},\end{equation}
which is functorial in $D'$ by \eqref{eq:qdRcpxFrobPBFunct};
in each degree $r\in \N$, this is simply given by the $\varphi_{D'_n}$-semilinear
extension of the $\varphi_{D_n}$-semilinear map
$\varphi^r_{D_n,\Omega}(M)\colon M\otimes_Dq\Omega^r_{D/R}
\to \varphi_{D_n}^*M\otimes_Dq\Omega^r_{D/R}$ under $D_n\to D_n'$. 
Varying $D'$, we obtain \eqref{eq:qdRSheafCpxFrobPB} from \eqref{eq:qdRSheafCpxSecFrobPB}.
\par
(2) Let $\CF$ and $\CG$ be crystals of $\CO_{\fX/R,n}$-modules
on $(\fX/R)_{\prism}$, and let $(M,\theta_M)$, $(N,\theta_N)$,
and $(L=M\otimes_DN,\theta_L)$ be the objects of 
$q\HIG_{\qnilp}(D_n/R)$ corresponding to the crystals
$\CF$, $\CG$, and $\CF\otimes_{\CO_{\fX/R,n}}\CG$
(Remark \ref{rmk:crystalFrobPBTensor} (2)) by 
Proposition \ref{prop:CrysStratEquiv} and Theorem \ref{thm:StratHiggsEquiv}.
By Remark \ref{rmk:CrysFrobPBHiggs} (2), $\theta_L$ is the tensor product of $\theta_M$
and $\theta_N$ (Definition \ref{def:ConnTensorProdDef}).
For an affine open $\Spf(D')\subset\Spf(D)$, the scalar extension
$\theta_{D'\otimes_DL}$ of $\theta_{L}$ under $D\to D'$ is the tensor
product of the scalar extensions of $\theta_{D'\otimes_DM}$ and $\theta_{D'\otimes_DN}$
of $\theta_M$ and $\theta_N$ by Remark \ref{rmk:ConnScalarExtStratTensProd} (2), and the product morphism
\eqref{eq:dRcpxProd} for $(D'\otimes_DM,\theta_{D'\otimes_DM})$ and 
$(D'\otimes_DN,\theta_{D'\otimes_DN})$ is functorial in $D'$ by 
Remark \ref{rmk:dRCpxProdScExtComp}. Thus, writing 
$\CM\otimes_{\CO_{\fD}}q\Omega^{\bullet}_{\fD/R}$,
$\CN\otimes_{\CO_{\fD}}q\Omega^{\bullet}_{\fD/R}$,
and $(\CM\otimes_{\CO_{\fD}}\CN)\otimes_{\CO_{\fD}}q\Omega^{\bullet}_{\fD/R}$
for the complexes of sheaves on $\fD_{\Zar}$ associated
to $(M,\theta_M)$, $(N,\theta_N)$, and $(M\otimes_DN,\theta_L)$, we obtain 
a product morphism 
\begin{equation}\label{eq:qDolShfCpxProd}
(\CM\otimes_{\CO_{\fD}}q\Omega^{\bullet}_{\fD/R})\otimes_R
(\CN\otimes_{\CO_{\fD}}q\Omega^{\bullet}_{\fD/R})
\longrightarrow
(\CM\otimes_{\CO_{\fD}}\CN)\otimes_{\CO_{\fD}}q\Omega^{\bullet}_{\fD/R}.
\end{equation}
In the case $\CF=\CG=\CO_{\fX/R,n}$, we have $(M,\theta_M)=(N,\theta_N)
=(D_n,\theta_{D}\bmod (p,\pq)^{n+1})$, and the product 
\eqref{eq:qDolShfCpxProd} coincides with the product structure of the
scalar extension of the sheaf of differential graded algebras
$(q\Omega^{\bullet}_{\fD/R},\theta^{\bullet}_{\fD})$ over
$R$ under $R\to R/(pR+\pq R)^{n+1}$. By the remark
after \eqref{eq:qDervConnCpxPB}, we see that the product
morphisms \eqref{eq:qDolShfCpxProd} for the pairs
$(\CF,\CG)$ an $(\varphi_n^*\CF,\varphi_n^*\CG)$ are
compatible with the morphisms \eqref{eq:qdRSheafCpxFrobPB}
for $\CF$, $\CG$, and $\CF\otimes_{\CO_{\fX/R,n}}\CG$.
\end{remark}

We prove the following theorem.

\begin{theorem}\label{th:CrystalCohqHiggs}
Under Condition \ref{cond:qPrism} and the notation  before Remark \ref{rmk:qHiggsGammaAction}, 
we have the following 
canonical isomorphism in $D^+(\fX_{\Zar},R)$ for a crystal $\CF$ of $\CO_{\fX/R,n}$-modules
on $(\fX/R)_{\prism}$ functorial in $\CF$. 
See Remark \ref{rmk:CrysZarProjCompFrobProd} 
for  compatibility
with scalar extensions by Frobenius and with products. 
\begin{equation}\label{eq:CrystalCohqHiggs}
Ru_{\fX/R*}\CF\cong v_{D*}(\CM\otimes_{\CO_{\fD}}q\Omega^{\bullet}_{\fD/R},
\theta_{\CM}^{\bullet})\end{equation}
\end{theorem}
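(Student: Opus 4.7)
The plan is to imitate the proof of the comparison between crystalline cohomology and de Rham cohomology due to Berthelot and Ogus: build a resolution of $\CF$ by a linearization of the $q$-Higgs complex of $(M,\theta_M)$, and then compute its derived Zariski pushforward term by term via Proposition \ref{prop:LinearizationLocCoh}.

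First I would define, for each $r\in\N$, a presheaf $\CK^r$ on $(\fX/R)_{\prism}$ by
$$\CK^r((P,IP),v)=(M\otimes_D D_P)\otimes_{D_P}q\Omega^r_{D_P/R},$$
where $(D_P,ID_P)$ is the bounded prismatic envelope used to construct $\CL_{A,B}$ in \S\ref{sec:LinCAcpx}, equipped with the $q$-Higgs derivations supplied by Proposition \ref{prop:qHiggsDeriv} (2), and where $M\otimes_D D_P$ carries the scalar extension of $\theta_M$ along $D\to D_P$ furnished by Proposition \ref{prop:qHiggsDerivFunct} (2). The $q$-Higgs differentials of Definition \ref{def:qHiggsMod} (2) turn $\CK^{\bullet}$ into a complex of presheaves; Lemma \ref{lem:LinearizationEnvBC} combined with the compatibility of scalar extensions of $q$-Higgs fields given in Proposition \ref{prop:qHiggsDerivFunct} implies that each $\CK^r$ is a crystal of $\CO_{\fX/R,n}$-modules, naturally isomorphic to $\CL_{A,B}(M\otimes_D q\Omega^r_{D/R})$. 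The two morphisms $((D_P,ID_P),v_{D_P})\to((P,IP),v)$ and $((D_P,ID_P),v_{D_P})\to((D,ID),v_D)$ in $(\fX/R)_{\prism}$ together with the crystal property of $\CF$ give a natural augmentation $\epsilon\colon\CF\to\CK^0$.

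Second, I would prove the $q$-Higgs analogue of the Poincar\'e lemma: the augmented complex $\CF\to\CK^{\bullet}$ is exact. Evaluated at $((P,IP),v)$, this amounts to showing that $\CF((P,IP),v)\to(M\otimes_D D_P)\otimes_{D_P}q\Omega^{\bullet}_{D_P/R}$ is a resolution. By induction on the smallest integer $N$ with $\mu^N\CF=0$, together with the flatness of $D/(pR+I)^{m+1}D\to D_P/(pR+I)^{m+1}D_P$ for each $m$ (Corollary \ref{cor:PrismEnvSection}), I would reduce to the case $(p,\mu)\CF=0$. There, Proposition \ref{propdef:DSimpPDStr}, Remark \ref{rmk:DeltaEnvPDEnv} and Proposition \ref{prop:qprismEnvPL} (5) identify $\oD_P=D_P/\mu D_P$ with a PD-polynomial algebra over $\oP=P/(pP+IP)$ in the coordinates $\tau_i$ produced by Corollary \ref{cor:PrismEnvSection}, and under this identification the reductions of the $\theta_{D_P,i}$ modulo $\mu$ become the PD-derivations in $\tau_i$; the required exactness then follows from the classical PD-Poincar\'e lemma, which is the $q=1$ specialization of Theorem \ref{thm:PoincareLem} and appears here as Proposition \ref{prop:qprismEnvPL} (4).

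Once the resolution is established, Proposition \ref{prop:LinearizationLocCoh} gives $Ru_{\fX/R*}\CK^r\cong v_{D*}(\CM\otimes_{\CO_{\fD}}q\Omega^r_{\fD/R})$ for each $r$, and chasing through the constructions shows that the induced differentials on the right are precisely the $\theta^{\bullet}_{\CM}$ of \eqref{eq:qHiggsShfCpx}; assembling these isomorphisms over $r$ yields the canonical isomorphism \eqref{eq:CrystalCohqHiggs} in $D^+(\fX_{\Zar},R)$, functorial in $\CF$. The hard part will be the Poincar\'e lemma of the second step: the twisted Leibniz rules satisfied by the $\theta_{D_P,i}$ do not degenerate to ordinary derivations except after reduction modulo $\mu$, and the clean identification modulo $\mu$ of $(M\otimes_D D_P)\otimes_{D_P}q\Omega^{\bullet}_{D_P/R}$ with the de Rham complex of a PD-polynomial algebra depends crucially on the interaction between the PD-structure constructed in Proposition \ref{propdef:DSimpPDStr} and the functoriality of $q$-Higgs connections developed in \S\ref{sec:ScalarExtConnection}.
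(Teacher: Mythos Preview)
Your overall strategy coincides with the paper's: build the linearized $q$-Higgs resolution $\CF\to\CL_{A,B}(M\otimes_D q\Omega^{\bullet}_{D/R})$ and apply Proposition \ref{prop:LinearizationLocCoh} termwise together with the compatibility \eqref{eq:qdRCpxLinZarProj}. Your $\CK^{\bullet}$ is exactly \eqref{eq:qdRCpxLinearization}. One notational slip: the framed smooth $q$-pair attached to $(P,v)$ is $((B_P,J_{B_P})/P,\ut_P)$, so the $q$-Higgs differential on $D_P$ is relative to $P$, and you want $q\Omega^{\bullet}_{D_P/P}$ throughout, not $q\Omega^{\bullet}_{D_P/R}$.

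There is, however, a genuine gap in the Poincar\'e step. The result you invoke, Proposition \ref{prop:qprismEnvPL} (4) (and behind it Theorem \ref{thm:PoincareLem}), asserts that $N\to N\otimes_{P}q\Omega^{\bullet}_{D_P/P}$ is a resolution for a $P$-module $N$ equipped with the \emph{trivial} $q$-Higgs field $\id_N\otimes\theta_{D_P}$. Your complex $\CK^{\bullet}(P,v)=(M\otimes_D D_P)\otimes_{D_P}q\Omega^{\bullet}_{D_P/P}$ carries instead the $q$-Higgs field $\theta_{M\otimes_D D_P}$ scalar-extended from $\theta_M$; even after reduction modulo $(p,\mu)$ one has $\theta_{M\otimes_D D_P,i}(m\otimes a)=\theta_{M,i}(m)\otimes a+m\otimes\theta_{D_P,i}(a)$, which is not $\id\otimes\theta_{D_P,i}$ unless $\theta_M$ vanishes. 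So the PD-Poincar\'e lemma does not apply to your complex as it stands, and for the same reason your augmentation $\epsilon$ is not yet known to be a map of complexes: you have not checked that the image of $\CF(P,v)$ lies in $\ker\theta_{M\otimes_D D_P}$.

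The paper fills this gap with Proposition \ref{prop:CrystalResolCpxComp}: the crystal isomorphism $\CF(P,v)\otimes_P D_P\cong M\otimes_D D_P$ of \eqref{eq:ResolLinCompMap} intertwines $\id_{\CF(P,v)}\otimes\theta_{D_P,i}$ with $\theta_{M\otimes_D D_P,i}$. This is not formal; its proof unwinds the construction of $\theta_M$ from the stratification $\varepsilon$ and chases a diagram through the simplicial object $D_P(\bullet)$ built from $\Spf(P/IP)\hookrightarrow\Spf(B_P)$. Once this compatibility is established, the resolution reduces to Lemma \ref{lem:CrystalPoincareLemma}, namely $\CF(P,v)\to\CF(P,v)\otimes_P q\Omega^{\bullet}_{D_P/P}$ with trivial coefficients, and there Proposition \ref{prop:qprismEnvPL} (4) applies directly. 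You should insert this step; everything else in your outline matches the paper.
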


We start by constructing a complex of $\CO_{\fX/R,n}$-modules
\begin{equation}\label{eq:qdRCpxLinearization}
(\CL_{A,B}(M\otimes_Dq\Omega^{\bullet}_{D/R}),\theta^{\bullet}_{\CL(M)}),
\end{equation}
in a manner functorial in $\CF$,  such that the following diagram 
is commutative for every $r\in \N$.
\begin{equation}\label{eq:qdRCpxLinZarProj}
\xymatrix@C=70pt{
u_{\fX/R*}(\CL_{A,B}(M\otimes_Dq\Omega^r_{D/R}))
\ar[r]^{u_{\fX/R*}(\theta^r_{\CL(M)})}&u_{\fX/R*}(\CL_{A,B}(M\otimes_Dq\Omega^{r+1}_{D/R}))
\\
v_{D*}(\CM\otimes_{\CO_{\fD}}q\Omega^r_{\fD/R})
\ar[r]^{v_{D*}(\theta_{\CM}^r)}
\ar[u]^{\eqref{eq:LinearizationZariskiProjMap}}&
v_{D*}(\CM\otimes_{\CO_{\fD}}q\Omega^{r+1}_{\fD/R})
\ar[u]^{\eqref{eq:LinearizationZariskiProjMap}}
}
\end{equation}

Let $(P,v)$ be an object of $(\fX/R)_{\prism}$,
let $(B_P,J_{B_P})$ and $D_P$ be as defined before Lemma \ref{lem:LinearizationEnvBC}, 
and let $\ut_P=(t_{P,i})_{i\in \Lambda}\in (B_P)^{\Lambda}$ be the image
of $\ut=(t_i)_{i\in \Lambda}\in B^{\Lambda}$. Then 
$((B_P,J_{B_P})/P,\ut_P)$ is a framed smooth $q$-pair 
(Definition \ref{def:FramedSmoothPrism} (3)).
Therefore we have the $q$-Higgs derivations, automorphisms, and differential
$\theta_{D_P,i}$, $\gamma_{D_P,i}$ and $\theta_{D_P}\colon D_P
\to q\Omega_{D_P/P}=\oplus_{i\in\Lambda}D_P\omega_{P,i}; x\mapsto 
\sum_{i\in \Lambda}\theta_{D_P,i}(x)\omega_{P,i}$, respectively,
of $D_P$ over $P$ with respect to $\ut_P$ (Definition \ref{def:qHiggsDifferential} (2)). 
Since the $\delta$-homomorphism $B\to B_P$ and $\id_{\Lambda}$ define a morphism
of framed smooth $q$-pairs (Definition \ref{def:FramedSmoothPrism} (3)) 
$((B,J)/R,\ut)\to ((B_P,J_{B_P})/P,\ut_P)$, we have a scalar extension functor
$q\HIG(D/R,\ut)\to q\HIG(D_P/P,\ut_P)$ (Proposition \ref{prop:qHiggsDerivFunct} (2)). By applying it 
to $(M,\theta_M)$, we obtain a $q$-Higgs field $\theta_{M\otimes_DD_P}$ on 
the scalar extension $M\otimes_DD_P$,
and its $q$-Higgs complex $(M\otimes_D{D_P})\otimes_{D_P}q\Omega^{\bullet}_{D_P/P}$,
whose differential maps are linear over $P=\CO_{\fX/R}(P,v)$. 
For a morphism $u\colon (P',v')\to (P,v)$ in $(\fX/R)_{\prism}$, 
the $\delta$-homomorphism $u_P\colon B_{P}\to B_{P'}$ induced by 
$u$, and $\id_{\Lambda}$ define a morphism of 
of framed smooth $q$-pairs $((B_P,J_{B_P})/P,\ut_P)
\to ((B_{P'},J_{B_{P'}})/P',\ut_{P'})$ compatible with the morphisms
from $((B,J)/R,\ut)$. Hence \eqref{eq:qDolbCpxPB2} and its 
compatibility with compositions of $(g/f,\psi)$'s imply that the complex
$(M\otimes_D{D_P})\otimes_{D_P}q\Omega^{\bullet}_{D_P/P}$
is functorial in $(P,v)$. Since we have a $D_P$-linear isomorphism
$q\Omega^r_{D/R}\otimes_DD_P\xrightarrow{\cong}q\Omega^r_{D_P/P}$
$(r\in \N)$ functorial in $(P,v)$, the degree $r$ term
of the complex above is isomorphic to $\Gamma((P,v),\CL_{A,B}(M\otimes_{D}q\Omega_{D/R}^r))$
in a manner functorial in $(P,v)$. Thus we obtain the complex 
\eqref{eq:qdRCpxLinearization} by the transport of structures via this isomorphism.
In the case $\CF=\CO_{\fX/R,n}$, we have $(M,\theta_M)=(D_n,\theta_D\bmod(p,\pq)^{n+1})$,
and $(D_n\otimes_DD_P)\otimes_{D_P}q\Omega^{\bullet}_{D_P/P}
=q\Omega^{\bullet}_{D_P/P}\otimes_PP/(pP+\pq P)^{n+1}$
is equipped with a structure of differential graded algebra over
$P/(pP+\pq P)^{n+1}$ in a manner functorial in $(P,v)$.
This structure for each $(P,v)$ makes $\CL_{A,B}(D_n\otimes_Dq\Omega^{\bullet}_{D/R})$
a sheaf of differential graded algebras over $\CO_{\fX/R,n}$.

\begin{remark}\label{rmk:LinqdRCpxFrob} 
(1) By applying the construction of $\CL_{A,B}(M\otimes_D\Omega_{D/R}^{\bullet})$
to the Frobenius pullback $(\varphi_n^*(M),\theta_{\varphi_n^*(M)})$ \eqref{eq:FramedSmQPHigFPBFunct}
 of $(M,\theta_M)$,
which corresponds to $\varphi_n^*\CF$ (Remark \ref{rmk:CrysFrobPBHiggs}), we obtain a complex
$\CL_{A,B}(\varphi_{D_n}^*M\otimes_Dq\Omega^{\bullet}_{D/R})$. 
Let $(P,v)$ be an object of $(\fX/R)_{\prism}$. 
By Remark \ref{rmk:FramedSmQPHigFPBComp}, the pullback of $(\varphi_{D_n}^*M,\theta_{\varphi_{D_n}^*M})$
to $q\HIG(D_P/P,\ut_P)$ is canonically isomorphic to 
the Frobenius pullback $\varphi_{D_{P,n}}^*(M\otimes_DD_P,\theta_{M\otimes_DD_P})$, 
$D_{P,n}=D_P/(pD_P+\pq D_P)^{n+1}$, and we have a morphism of complexes
\begin{equation}\label{eq:LinqDelCpxSecFrobMap}
\varphi_{D_{P,n},\Omega}^{\bullet}(M\otimes_DD_P)\colon
(M\otimes_DD_P)\otimes_{D_P}q\Omega^{\bullet}_{D_P/P}
\longrightarrow 
(\varphi_{D_n}^*M\otimes_{D}D_P)\otimes_{D_P}q\Omega^{\bullet}_{D_P/P}
\end{equation}
functorial in $(P,v)$; in each degree $r\in \N$, 
this is simply given by the $\varphi_{D_{P,n}}$-semilinear extension of the 
$\varphi_{D_n}$-semilinear
map 
$\varphi_{D_n,\Omega}^r(M)\colon
M\otimes_Dq\Omega^r_{D/R}
\longrightarrow \varphi_{D_n}^*M\otimes_Dq\Omega^r_{D/R}$
\eqref{eq:qDervConnCpxPB}
under $D_n\to D_{P,n}$. Varying $(P,v)$, we obtain a morphism of complexes
\begin{equation}\label{eq:LinqDelCpxFrobMap}
\CL_{A,B}(\varphi_{D_n,\Omega}^{\bullet}(M))\colon
\CL_{A,B}(M\otimes_Dq\Omega^{\bullet}_{D/R})
\longrightarrow
\CL_{A,B}(\varphi^*_{D_n}(M)\otimes_Dq\Omega^{\bullet}_{D/R}).
\end{equation}
\par
(2) Let $\CF$, $\CG$, $(M,\theta_M)$, $(N,\theta_N)$, and $(L=M\otimes_DN,\theta_L)$
be the same as in Remark \ref{rmk:qDolSheafCpxFrob} (2). Recall
that  $\theta_L$ is the tensor product of $\theta_M$ and $\theta_N$ 
(Remark \ref{rmk:CrysFrobPBHiggs} (2)). For 
an object $(P,v)$ of $(\fX/R)_{\prism}$, the scalar extension
$\theta_{L\otimes_DD_P}$ of $\theta_{L}$ under $D\to D_P$ is the tensor
product of the scalar extensions $\theta_{M\otimes_DD_P}$ and $\theta_{N\otimes_DD_P}$
of $\theta_M$ and $\theta_N$ by Remark \ref{rmk:ConnScalarExtStratTensProd} (2), and the product morphism
\eqref{eq:dRcpxProd} for $(M\otimes_DD_P,\theta_{M\otimes_DD_P})$ and 
$(N\otimes_DD_P,\theta_{N\otimes_DD_P})$ is functorial in $(P,v)$ by 
Remark \ref{rmk:dRCpxProdScExtComp}. Thus we obtain a product morphism
\begin{equation}\label{eq:LinqDolCpxProd}
\CL_{A,B}(M\otimes_Dq\Omega^{\bullet}_{D/R})
\otimes_{\CO_{\fX/R,n}}\CL_{A,B}(N\otimes_D q\Omega^{\bullet}_{D/R})
\longrightarrow
\CL_{A,B}((M\otimes_DN)\otimes_Dq\Omega^{\bullet}_{D/R}).
\end{equation}
In the case $\CF=\CG=\CO_{\fX/R,n}$, the product \eqref{eq:LinqDolCpxProd}
coincides with the product structure of the sheaf of 
differential graded algebras $\CL_{A,B}(D_n\otimes_Dq\Omega^{\bullet}_{D/R})$
over $\CO_{\fX/R,n}$.
\end{remark}

The commutativity of \eqref{eq:qdRCpxLinZarProj} is verified as follows. Let $j\colon \fX'\hookrightarrow\fX$
be an open affine formal subscheme of $\fX$, and
let $\fD'=\Spf(D')$ be the open affine formal subscheme of $\fD=\Spf(D)$ 
whose underlying topological space is given by 
$v_{D}^{-1}(\fX')\subset \Spf(D/\pq D)$. Let $(P,w)$ be an object of 
$(\fX'/R)_{\prism}$. Then, as we saw in the construction of 
\eqref{eq:LinearizationZariskiProjMap}, the morphism $\Spf(D_P)\to \Spf(D)$
associated to $(P,j\circ w)\in \Ob (\fX/R)_{\prism}$ 
factors through $\Spf(D')$. As in the construction of the complex
$\CM\otimes_{\CO_{\fD}}q\Omega^{\bullet}_{\fD/R}$ \eqref{eq:qHiggsShfCpx} before Remark
\ref{rmk:qHiggsGammaAction}, the $t_i\mu$-derivations $\theta_{D,i}$ uniquely extend
to $t_i\mu$-derivations $\theta_{D',i}$ commuting with each other, which define
a differential graded algebra $q\Omega^\bullet_{D'/R}$. 
By Lemma \ref{lem:TwDerivEtExtComp}, we see that 
the homomorphism $D'\to D_P$ is compatible with $\theta_{D',i}$
and $\theta_{D_P,i}$ and induces a morphism of 
differential graded  algebras $q\Omega^{\bullet}_{D'/R}
\to q\Omega^{\bullet}_{D_P/P}$ over $q\Omega^{\bullet}_{D/R}$.
By the construction of
the complex $(M\otimes_DD_P)\otimes_{D_P}q\Omega^{\bullet}_{D_P/P}$
in the paragraph before Remark \ref{rmk:LinqdRCpxFrob}, we see that the morphisms
\begin{equation}\label{eq:qDolqDolLinSecMap}
(M\otimes_{D}D')\otimes_{D'}q\Omega^{r}_{D'/R}
\to (M\otimes_DD_P)\otimes_{D_P}q\Omega^{r}_{D_P/P}\quad
(r\in \N)\end{equation} 
are compatible with the differential maps. Since this holds
for all $(P,w)$, the diagram \eqref{eq:qdRCpxLinZarProj} is commutative after 
taking $\Gamma(\fX',-)$ by the construction of \eqref{eq:LinearizationZariskiProjMap}.
Varying $\fX'$, we see that \eqref{eq:qdRCpxLinZarProj} is commutative.

\begin{remark}\label{rmk:LinZarProjFrobComp}
(1)
We can verify that the morphisms \eqref{eq:qdRSheafCpxFrobPB}
and \eqref{eq:LinqDelCpxFrobMap}  are compatible with the morphisms 
\eqref{eq:LinearizationZariskiProjMap} for 
$M\otimes_Dq\Omega^r_{D/R}$ and $\varphi_{D_n}^*M\otimes_D\Omega^r_{D/R}$
$(r\in\Z)$ as follows. Let $\fX'$ be an open formal subscheme of $\fX$. 
Then under the notation in the paragraph above, the homomorphism $D'\to D_P$
is a $\delta$-homomorphism by Proposition \ref{prop:DeltaCompEtExt}. Since
$\varphi_{D_n',\Omega}^r(M\otimes_DD')$ \eqref{eq:qdRSheafCpxSecFrobPB} and
$\varphi_{D_{P,n},\Omega}^r(M\otimes_DD_P)$ \eqref{eq:LinqDelCpxSecFrobMap}
are the semilinear extensions of $\varphi_{D_n,\Omega}^r(M)$ 
under $D_n\to D_n'$, $D_{P,n}$ as mentioned in 
Remarks \ref{rmk:qDolSheafCpxFrob} and \ref{rmk:LinqdRCpxFrob},
the homomorphisms \eqref{eq:qDolqDolLinSecMap} are compatible with 
$\varphi_{D_n',\Omega}^r(M\otimes_DD')$ and
$\varphi_{D_{P,n},\Omega}^r(M\otimes_DD_P)$.
Varying $(P,w)\in (\fX'/R)_{\prism}$ and then $\fX'$,
we obtain the desired compatibility.\par
(2) Let $\CF$, $\CG$, $(M,\theta_M)$, $(N,\theta_N)$, and $(L=M\otimes_DN,\theta_L)$
be the same as in Remark \ref{rmk:qDolSheafCpxFrob} (2). 
We see that the morphisms of complexes \eqref{eq:qdRCpxLinZarProj} 
for $\CF$, $\CG$, and $\CF\otimes_{\CO_{\fX/R,n}}\CG$ are
compatible with the product morphisms \eqref{eq:qDolShfCpxProd} and 
\eqref{eq:LinqDolCpxProd} as follows. Let $\fX'$ be an affine open of $\fX$.
Then, for an object $(P,w)$ of $(\fX'/R)_{\prism}$,
we have the product morphism \eqref{eq:dRcpxProd} for
$(M\otimes_DD',\theta_{M\otimes_DD'})$ and 
$(N\otimes_DD',\theta_{N\otimes_DD'})$ considered 
in Remark \ref{rmk:qDolSheafCpxFrob} (2), and
that for $(M\otimes_DD_P,\theta_{M\otimes_DD_P})$
and $(N\otimes_DD_P,\theta_{N\otimes_DD_P})$ 
considered in Remark \ref{rmk:LinqdRCpxFrob} (2);
they are compatible with the morphism \eqref{eq:qDolqDolLinSecMap}
by Remark \ref{rmk:dRCpxProdScExtComp}.
Therefore, varying $(P,w)$ over objects of $(\fX'/R)_{\prism}$,
and then $\fX'$ over affine opens of $\fX$, we
obtain the desired compatibility.
\end{remark}

Next we show that the complex \eqref{eq:qdRCpxLinearization} gives
an $\CO_{\fX/R,n}$-linear resolution of $\CF$ via a certain map 
$\CF\to \CL_{A,B}(M)$ in a manner functorial in $\CF$. 

The object of $q\HIG_{\qnilp}(D_n/R)$ associated to the crystal
$\CO_{\fX/R,n}$ is 
$(\theta_{D}\text{ mod }(p,[p]_q)^{n+1})\colon D_n\to D_n\otimes_Dq\Omega_{D/R}$
by the construction given before Lemma \ref{lem:StratToConnection}. 
By applying 
\eqref{eq:qdRCpxLinearization} to $\CF=\CO_{\fX/R,n}$, we obtain a complex
of $\CO_{\fX/R,n}$-modules 
$(\CL_{A,B}(D_n\otimes_Dq\Omega^{\bullet}_{D/R}),\theta^{\bullet}_{\CL(D_n)})$. 

\begin{lemma}\label{lem:CrystalPoincareLemma}
Via the $\CO_{\fX/R,n}$-linear map 
\begin{equation}\label{eq:CrysPLAugMap}
\CF\longrightarrow \CF\otimes_{\CO_{\fX/R,n}}\CL_{A,B}(D_n)
\end{equation}
induced by the $\CO_{\fX/R,n}$-algebra structure of 
$\CL_{A,B}(D_n)$, the complex 
$\CF\otimes_{\CO_{\fX/R,n}}(\CL_{A,B}(D_n\otimes_Dq\Omega^{\bullet}_{D/R}),\theta^{\bullet}_{\CL(D_n)})$ 
gives a resolution of $\CF$.
\end{lemma}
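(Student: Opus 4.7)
My plan is to establish exactness of the augmented complex pointwise, i.e., after evaluating at each object $(P,v)$ of $(\fX/(R,I))_{\prism}$. Both $\CF$ and $\CL_{A,B}(D_n \otimes_D q\Omega^r_{D/R})$ ($r\in\N$) are crystals of $\CO_{\fX/R,n}$-modules, the first by hypothesis and the second by the construction of $\CL_{A,B}$. By Remark \ref{rmk:crystalFrobPBTensor} (2), the presheaf tensor product over $\CO_{\fX/R,n}$ is already a crystal, hence a sheaf, so the sections at $(P,v)$ compute literally as $\CF(P) \otimes_{P_n} \CL_{A,B}(D_n \otimes_D q\Omega^{\bullet}_{D/R})(P,v)$. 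Since acyclicity of the presheaf complex implies acyclicity of the corresponding sheaf complex, it is enough to verify exactness at each object $(P,v)$.

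Next, I would unwind the definition of $\CL_{A,B}$ and the construction of the differentials in \eqref{eq:qdRCpxLinearization}: the sections at $(P,v)$ identify with $\CF(P) \otimes_{P_n} (D_{P,n} \otimes_{D_P} q\Omega^{\bullet}_{D_P/P})$, equipped with the differentials $\id_{\CF(P)} \otimes \theta^{\bullet}_{D_P}$ (reduced modulo $(p,\pq)^{n+1}$) and with the augmentation $\CF(P) \to \CF(P) \otimes_{P_n} D_{P,n}$ induced by the structure map $P_n \to D_{P,n}$. Using that $\CF(P)$ is annihilated by $(pP+IP)^{n+1}$, the $(p,\pq)^{n+1}$-truncation on the right is absorbed, and the augmented complex becomes canonically isomorphic to $\CF(P) \otimes_P q\Omega^{\bullet}_{D_P/P}$ with augmentation $\CF(P) \to \CF(P) \otimes_P D_P,\ x\mapsto x\otimes 1$ and differentials $\id_{\CF(P)} \otimes \theta^{\bullet}_{D_P}$.

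To establish acyclicity of this last complex I would apply Proposition \ref{prop:qprismEnvPL} (4) to the morphism of framed smooth $\delta$-pairs over the $q$-prism $P$
\begin{equation*}
((P, IP)/P, ()) \longrightarrow ((B_P, J_{B_P})/P, \ut_P),
\end{equation*}
in which the source carries the empty family of coordinates. All hypotheses are met: $B_P$ is $(pP+IP)$-adically smooth over $P$ with $(p,\pq)$-adic coordinates $\ut_P$, the map of index sets $\emptyset \hookrightarrow \Lambda$ is trivially injective, and $P/IP \xrightarrow{\cong} B_P/J_{B_P}$ by definition of $J_{B_P}$. The source trivially admits the bounded prismatic envelope $(P, IP)$, while the target admits $(D_P, ID_P)$, so $\Lambda_\psi = \Lambda$ and $q\Omega^{\bullet}_{D/D'} = q\Omega^{\bullet}_{D_P/P}$. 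Proposition \ref{prop:qprismEnvPL} (4) then yields that for every $P$-module $N$ annihilated by a power of $(p,\pq)$, the complex $N \otimes_P q\Omega^{\bullet}_{D_P/P}$ is a resolution of $N$; specializing to $N = \CF(P)$ gives the required pointwise acyclicity. The argument is a direct reduction to the prismatic envelope Poincaré lemma already proved in \S\ref{qprismenv}; the only mildly delicate point is to verify that $((P,IP)/P, ())$ with empty coordinates genuinely fits the definition of a framed smooth $\delta$-pair so that Proposition \ref{prop:qprismEnvPL} applies with $D' = P$.
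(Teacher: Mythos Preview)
Your proposal is correct and follows essentially the same approach as the paper: evaluate the augmented complex at an arbitrary object $(P,v)$ of the site, identify the result with $\CF(P)\otimes_P q\Omega^{\bullet}_{D_P/P}$, and apply Proposition~\ref{prop:qprismEnvPL}~(4) to the morphism $((P,IP)/P,\emptyset)\to((B_P,J_{B_P})/P,\ut_P)$ of framed smooth $q$-pairs with empty coordinates on the source. Your write-up is in fact more explicit than the paper's about why the presheaf tensor product suffices and how the $(p,\pq)^{n+1}$-truncation is absorbed.
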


\begin{proof}
Let $(P,v)$ be an object of $(\fX/R)_{\prism}$.
Then, by applying Proposition \ref{prop:qprismEnvPL} (4) to the morphism 
$((P,[p]_qP)/P,\emptyset)\to ((B_P,J_{B_P})/P,\ut_P)$ of framed smooth $q$-pairs, 
we obtain a resolution 
$\CF(P,v)\longrightarrow (\CF(P,v)\otimes_Pq\Omega^{\bullet}_{D_P/P},\id\otimes\theta
^{\bullet}_{D_P}),$ which is nothing but the section over $(P,v)$ of 
$\CF\to \CF\otimes_{\CO_{\fX/R,n}}\CL_{A,B}(D_n\otimes_Dq\Omega^{\bullet}_{D/R})$
in question as $\CL_{A,B}(D_n\otimes_Dq\Omega^r_{D/R})$ $(r\in \N)$
and $\CF$ are crystals of $\CO_{\fX/R,n}$-modules. 
\end{proof}

Let $(P,v)$ be an object of $(\fX/R)_{\prism}$, and 
let $v_{D_P}$ be the $R$-morphism $\Spf(D_P/[p]_qD_P)
\to \Spf(P/[p]_qP)\to \fX=\Spf(A)$. Then we have morphisms
\begin{equation}\label{eq:ProductDPmorphi}
(P,v)\longleftarrow (D_P,v_{D_P})\longrightarrow (D,v_D)
\end{equation}
in $(\fX/R)_{\prism}$, which induce isomorphisms of $D_P$-modules 
\begin{equation}\label{eq:ResolLinCompMap}
\CF(P,v)\otimes_PD_P\xrightarrow{\;\cong\;}\CF(D_P,v_{D_P})
\xleftarrow{\;\cong\;} \CF(D,v_D)\otimes_D D_P=M\otimes_DD_P.
\end{equation}
Since the morphisms \eqref{eq:ProductDPmorphi} are functorial in $(P,v)$,
the isomorphisms \eqref{eq:ResolLinCompMap} are functorial in $(P,v)$.
Therefore, by taking $-\otimes_{D}q\Omega^r_{D/R}$ of \eqref{eq:ResolLinCompMap} 
and varying $(P,v)$
we obtain the following isomorphism of $\CL_{A,B}(D_n)$-modules for $r\in \N$.
\begin{equation}\label{eq:CrystalResCpxComp}
\CF\otimes_{\CO_{\fX/R,n}}\CL_{A,B}(D_n\otimes_Dq\Omega^r_{D/R})
\xrightarrow{\quad\cong\quad}\CL_{A,B}(M\otimes_Dq\Omega^r_{D/R})
\end{equation}
By construction, this is obviously functorial in $\CF$.

\begin{proposition}\label{prop:CrystalResolCpxComp}
The isomorphisms \eqref{eq:CrystalResCpxComp} are compatible
with the differential maps $\id_{\CF}\otimes\theta_{\CL(D_n)}^{\bullet}$
and $\theta_{\CL(M)}^{\bullet}$.
\end{proposition}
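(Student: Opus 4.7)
The plan is to verify the compatibility pointwise at each object $((P, IP), v)$ of $(\fX/R)_{\prism}$. There the degree-$r$ section of the target complex identifies with the $q$-Higgs complex of $(M \otimes_D D_P, \theta_{M \otimes_D D_P})$, where the latter $q$-Higgs field is the scalar extension of $\theta_M$ under the morphism of framed smooth $q$-pairs $(h/f, \id_{\Lambda})\colon ((B, J)/R, \ut) \to ((B_P, J_{B_P})/P, \ut_P)$ (Proposition \ref{prop:qHiggsDerivFunct} (2)), while the degree-$r$ section of the source complex identifies with the $q$-Higgs complex of $(\CF(P,v) \otimes_P D_P, \id_{\CF(P,v)} \otimes \theta_{D_P})$. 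It therefore suffices to show that the isomorphism $\CF(P, v) \otimes_P D_P \cong M \otimes_D D_P$ from \eqref{eq:ResolLinCompMap} is an isomorphism of $q$-Higgs modules over $(D_P/P, \ut_P)$.

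To establish this, I consider the pullback crystal $\CG := v_{\prism}^*\CF$ on $(\fX'/P)_{\prism}$, where $\fX' = \Spf(P/IP)$ and $v_{\prism}$ is induced by $v\colon \fX' \to \fX$ together with $R \to P$ (Definition \ref{def:PrismaticSite} (3)). The pair $((B_P, J_{B_P}), \ut_P)$ is a framed smooth $\delta$-pair over $P$ with $q$-prismatic envelope $D_P$, so the equivalences of Proposition \ref{prop:CrysStratEquiv} and Theorem \ref{thm:StratHiggsEquiv} applied over the base $P$ yield a $q$-Higgs module $(\CG(D_P) = \CF(D_P, v_{D_P}), \theta')$ on $(D_P/P, \ut_P)$. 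By Proposition \ref{prop:FunctStratConn} applied to $(h/f, \id_\Lambda)$ above, the field $\theta'$ corresponds to $\theta_{M \otimes_D D_P}$ under the crystal isomorphism $\CF(D_P, v_{D_P}) \cong M \otimes_D D_P$.

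To compute $\theta'$ via the identification $\CF(D_P, v_{D_P}) \cong \CF(P, v) \otimes_P D_P$, I examine the associated stratification $\varepsilon'$ on $D_P(\bullet)$. In the site $(\fX'/P)_{\prism}$, both compositions $P \to D_P \xrightarrow{p_i} D_P(1)$ agree with the structure map $P \to D_P(1)$ for $i = 0, 1$, so the crystal property furnishes canonical isomorphisms $p_i^*\CG(D_P) \cong \CF(P, v) \otimes_P D_P(1) \cong \CG(D_P(1))$ under which $\varepsilon'$ becomes the identity on $\CF(P, v) \otimes_P D_P(1)$. The construction preceding Lemma \ref{lem:StratToConnection} then shows $\theta_{p_1^*\CG(D_P), 1;i} = \id_{\CF(P,v)} \otimes \theta_{D_P(1), 1; i}$; combining with the formula $\theta_{D_P(1), 1; i}(p_1(a)) = p_1(\theta_{D_P, i}(a))$, a direct consequence of \eqref{eq:DsimplicialDerivFormula} applied to $p_1\colon D_P \to D_P(1)$, and restricting to $\CG(D_P) \hookrightarrow p_1^*\CG(D_P)$ via Proposition \ref{prop:qprismEnvPL} (4), one finds $\theta'_i = \id_{\CF(P, v)} \otimes \theta_{D_P, i}$, as required.

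The main obstacle is the careful tracking of the two descriptions of $\varepsilon'$ and the associated $q$-Higgs fields on $\CG(D_P)$; once the pointwise compatibility at each $(P, v)$ is secured, the functoriality of all constructions in $(P, v)$ yields the sheaf-level compatibility of \eqref{eq:CrystalResCpxComp} with the differential maps.
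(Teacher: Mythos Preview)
Your proof is correct and follows essentially the same strategy as the paper: reduce to a pointwise check at each $(P,v)$, and verify that the isomorphism \eqref{eq:ResolLinCompMap} intertwines $\id_{\CF(P,v)}\otimes\theta_{D_P,i}$ with $\theta_{M\otimes_D D_P,i}$ by comparing two descriptions of the $q$-Higgs field on $\CG(D_P)$ coming from the stratification on $D_P(\bullet)$. The only difference is organizational: you package the compatibility of $\theta_{M,i}$ with $\theta_{D_P(1),1;i}$ through the map $D(1)\to D_P(1)$ into a single invocation of \eqref{eq:FunctCrystalStrat} together with Proposition~\ref{prop:FunctStratConn}, whereas the paper unfolds exactly this content into an explicit commutative diagram involving $\CF(D(1))$, $\CF(D_P(1))$, and the projections $p_{P,l}$, $q_l$.
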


\begin{proof}
Let $(P,v)$ be an object of $(\fX/R)_{\prism}$. By the construction of 
the complex \eqref{eq:qdRCpxLinearization}, it suffices to
prove that the isomorphism \eqref{eq:ResolLinCompMap}
is compatible with $\id_{\CF(P,v)}\otimes\theta_{D_P,i}$
and $\theta_{M\otimes_DD_P,i}$ for each $i\in \Lambda$. 
Since both endomorphisms are $(t_i\mu,\theta_{D_P,i})$
connections on $D_P$-modules (Definition \ref{def:ConnectionTwDeriv}) and the latter 
one is compatible
with $\theta_{M,i}$ on $M$ by its definition, we are reduced
to showing that the composition $M\to M\otimes_DD_P
\xrightarrow[\eqref{eq:ResolLinCompMap}]{\cong} \CF(P,v)\otimes_PD_P$
is compatible with $\theta_{M,i}$ and $\id_{\CF(P,v)}\otimes\theta_{D_P,i}$.

We have a commutative diagram
\begin{equation}\label{eq:ResolLinComp}
\xymatrix{
\Spf(P/[p]_qP)\ar@{^(->}[r]\ar[d]^{v}&\Spf(B_P) \ar[r]\ar[d]& \Spf(P)\ar[d]\\
\fX=\Spf(A)\ar@{^(->}[r]&\Spf(B)\ar[r]&\Spf (R),
}
\end{equation}
where the left two horizontal maps are closed immersions,
the right two are smooth, and the right two vertical maps are
$\delta$-morphisms. We define a cosimplicial $\delta$-pair
$(B_P(\bullet),J_{B_P(\bullet)})$ over $(P,\pq P)$, a simplicial object
$(D_P(\bullet),w_{D_P(\bullet)})$ of $(\Spf(P/[p]_qP)/P)_{\prism}$, 
$\ut_P^{(r)}=(t_{P,l;i}^{(r)})_{(l,i)\in\Lambda(r)}$ $(r\in\N)$, 
and $q$-Higgs derivations $\theta_{D_P(r),l;i}$ $((l,i)\in \Lambda(r))$
of $D_P(r)$ over $P$ by applying the constructions
after Proposition \ref{prop:PrismCofFinObj} and 
Condition \ref{cond:qPrism}
to the upper line of \eqref{eq:ResolLinComp} and the $pP+[p]_qP$-adic 
coordinates $\ut_P$ of $B_P$ over $P$. 
The commutative diagram \eqref{eq:ResolLinComp} induces
a morphism of cosimplicial $\delta$-pairs
$(B(\bullet),J_{B(\bullet)})\to (B_{P}(\bullet),J_{B_P(\bullet)})$
over $(R,\pq R)$, and 
a morphism of simplicial objects
$(D_P(\bullet), v_{D_P(\bullet)})\to (D(\bullet),v_{D(\bullet)})$
of $(\fX/R)_{\prism}$, where we put $v_{D_P(\bullet)}=v\circ w_{D_P(\bullet)}$. 
Since $(D_P(\bullet),v_{D_P(\bullet)})$ is a simplicial object over
$(P,v)$ in $(\fX/R)_{\prism}$, we obtain the following commutative  diagram 
$$\xymatrix{
&p_{P}^*(\CF(P,v))\ar[dl]_{\cong}\ar[d]^{\cong}\ar[dr]^{\cong}&&
D_P\otimes_P\CF(P,v)\ar[ll]_{p_{P,1}\otimes\id_{\CF(P,v)}}\ar[d]^{\cong}
\\
p_{P,0}^*(\CF(D_P, v_{D_P}))\ar[r]^{\cong}&
\CF(D_P(1), v_{D_P(1)})&
p_{P,1}^*(\CF(D_P,v_{D_P}))\ar[l]_{\cong}&
\CF(D_P,v_{D_P})\ar[l]
\\
p_0^*(\CF(D,v_D))\ar[r]^{\cong}\ar[u]&
\CF(D(1),v_{D(1)})\ar[u]&
p_1^*(\CF(D,v_D))\ar[l]_(.47){\cong}\ar[u]&
\CF(D,v_D),\ar[l]\ar[u]
}$$
where $p_{P,\nu}$ $(\nu=0,1)$ denotes the
morphism $D_P=D_P(0)\to D_P(1)$ corresponding to the map $[0]\to [1]$
sending $0$ to $\nu$, and $p_P$ denotes the structure morphism 
$P\to D_P(1)$. The homomorphism 
$B(1)\to B_P(1)$ and $\id_{\Lambda(1)}$
(resp.~$p_{P,1}\colon B_P\to B_P(1)$ and 
$\Lambda\to \Lambda(1); i\mapsto (1,i)$) define
a morphism of framed smooth $q$-pairs
$((B(1),J_{B(1)})/R,\ut^{(1)})\to 
((B_P(1), J_{B_P(1)})/P,\ut_P^{(1)})$
(resp.~$((B_P,J_{B_P})/P,\ut_P)\to ((B_P(1),J_{B_P(1)})/P,\ut_P^{(1)})$).
Hence, by Proposition \ref{prop:qHiggsDerivFunct} (2) and 
Lemma \ref{lem:TwDerivSystemFunct} (1), the homomorphism $D(1)\to D_P(1)$
(resp.~$p_{P,1}\colon D_P\to D_P(1)$) is compatible
with $\theta_{D(1),1;i}$ (resp.~$\theta_{D_P,i}$)
and $\theta_{D_P(1),1;i}$ for every $i\in \Lambda$. 
By the definition of $\theta_{M,i}$ 
before Lemma \ref{lem:StratToConnection}, 
$\theta_{M,i}$ on $M=\CF(D,v_D)$ is compatible 
with $\theta_{D(1),1;i}\otimes \id$ on 
$p_0^*(\CF(D,v_D))$ under the composition of the bottom 
homomorphisms. Since the top horizontal map is injective
by Proposition \ref{prop:PrismEnvNervStr}, 
the above compatibility of $\theta_{D(1),1;i}$, $\theta_{D_P(1),1;i}$,
and $\theta_{D_P,i}$ implies that the right vertical composition 
$M=\CF(D,v_D)\to \CF(D_P,v_{D_P})\xleftarrow{\cong}D_P\otimes_P\CF(P,v)$
is compatible with $\theta_{M,i}$ and $\theta_{D_P,i}\otimes \id_{\CF(P,v)}$
for every $i\in \Lambda$.
\end{proof}
\begin{remark}\label{rmk:LinResolTransFrobComp}
(1) For $\varphi_n^*\CF$ as in Remark \ref{rmk:CrysFrobPBHiggs}, let 
$\sigma_{n,\CF}\colon \CF\to \varphi_n^*\CF$ be the
morphism defined by $\CF(P)\to \varphi_n^*\CF(P)
=\CF(P)\otimes_{P_n,\varphi_{P_n}}P_n;
x\mapsto x\otimes1$, $P\in \Ob(\fX/R)_{\prism}$. Then
the isomorphisms \eqref{eq:ResolLinCompMap} for $\CF$ and $\varphi_n^*\CF$
are compatible with
$\sigma_{n,\CF}(P,v)\otimes\varphi_{D_P}$,
$\sigma_{n,\CF}(D_P,v_{D_P})$, and
$\sigma_{n,\CF}(D,v_D)\otimes\varphi_{D_P}$.
Therefore the isomorphism \eqref{eq:CrystalResCpxComp} for $(\CF,M)$ and 
$(\varphi_n^*\CF,\varphi_{D_n}^*M)$ is compatible with
$\sigma_{n,\CF}\otimes\CL_{A,B}(\varphi^{r}_{D_n,\Omega}(D_n))$
and $\CL_{A,B}(\varphi^{r}_{D_n,\Omega}(M))$ \eqref{eq:LinqDelCpxFrobMap} which are constructed
from $\varphi^r_{D_n,\Omega}(D_n)$
$\varphi_{D_n,\Omega}^r(M)$ as mentioned before \eqref{eq:LinqDelCpxFrobMap}.\par
(2) Let $\CF$, $\CG$, $(M,\theta_M)$, $(N,\theta_N)$, and $(L=M\otimes_DN,\theta_L)$
be the same as in Remark \ref{rmk:qDolSheafCpxFrob} (2). Then,
for an object $(P,v)$ of $(\fX/R)_{\prism}$, the isomorphism \eqref{eq:ResolLinCompMap}
for $\CF\otimes_{\CO_{\fX/R,n}}\CG$ is given by the tensor product of 
those for $\CF$ and $\CG$. Since the isomorphism \eqref{eq:ResolLinCompMap}
for $\CG$ is compatible with $\id_{\CG(P,v)}\otimes\theta_{D_P,i}$
and $\theta_{N\otimes_DD_P,i}$ for each $i\in \Lambda$
by the proof of Proposition \ref{prop:CrystalResolCpxComp},
we obtain the following compatibility by going back to
the construction of \eqref{eq:LinqDolCpxProd} via \eqref{eq:dRcpxProd}
for $(M\otimes_DD_P,\theta_{M\otimes_DD_P})$ and
$(N\otimes_DD_P,\theta_{N\otimes_DD_P})$ given in Remark 
\ref{rmk:LinqdRCpxFrob} (2): The isomorphisms \eqref{eq:CrystalResCpxComp} for 
$\CF$, $\CG$, and $\CF\otimes_{\CO_{\fX/R,n}}\CG$ are
compatible with the product \eqref{eq:LinqDolCpxProd} and the morphism
\begin{multline*}
(\CF\otimes_{\CO_{\fX/R,n}}\CL_{A,B}(D_n\otimes_Dq\Omega^{\bullet}_{D/R}))
\otimes_{\CO_{\fX/R,n}}(\CG\otimes_{\CO_{\fX/R,n}}\CL_{A,B}(D_n\otimes_Dq\Omega^{\bullet}_{D/R}))\\
\longrightarrow
(\CF\otimes_{\CO_{\fX/R,n}}\CG)\otimes_{\CO_{\fX/R,n}}\CL_{A,B}(D_n\otimes_Dq\Omega^{\bullet}_{D/R})
\end{multline*}
defined by the product structure of the sheaf of 
differential graded algebra $\CL_{A,B}(D_n\otimes_D q\Omega^{\bullet}_{D/R})$
over $\CO_{\fX/R,n}$.
\end{remark}

By combining Lemma \ref{lem:CrystalPoincareLemma} with 
Proposition \ref{prop:CrystalResolCpxComp}, we obtain the desired
$\CO_{\fX/R,n}$-linear resolution 
\begin{equation}\label{eq:qdRResol}
\CF\longrightarrow (\CL_{A,B}(M\otimes_Dq\Omega^{\bullet}_{D/R}),\theta^{\bullet}_{\CL(M)})
\end{equation}
functorial  in $\CF$.

\begin{proof}[Proof of Theorem \ref{th:CrystalCohqHiggs}]
We have the following isomorphisms in $D^+(\fX_{\Zar},R)$
$$Ru_{\fX/R*}\CF\xrightarrow[\eqref{eq:qdRResol}]{\cong}
Ru_{\fX/R*}(\CL_{A,B}(M\otimes_Dq\Omega^{\bullet}_{D/R}),\theta^{\bullet}_{\CL(M)})
\xleftarrow{\cong} 
v_{D*}(\CM\otimes_{\CO_{\fD}}q\Omega^{\bullet}_{\fD/R},\theta_{\CM}^{\bullet}),$$
where the second isomorphism follows from \eqref{eq:qdRCpxLinZarProj}
and Proposition \ref{prop:LinearizationLocCoh}.
\end{proof}

\begin{remark}\label{rmk:CrysZarProjCompFrobProd}
(1) We follow the notation in Theorem \ref{th:CrystalCohqHiggs}.
Let $\varphi_n^*\CF$ denote the pullback of $\CF$ by the
lifting of Frobenius $\varphi_n$ of $\CO_{\fX/R,n}$ as in Remark \ref{rmk:CrysFrobPBHiggs},
and let $\sigma_{n,\CF}\colon \CF\to \varphi_n^*\CF$ be the
natural $\varphi_n$-semilinear morphism defined in Remark \ref{rmk:LinResolTransFrobComp}.
Then, by Remark \ref{rmk:LinResolTransFrobComp} (1) 
(resp.~Remark \ref{rmk:LinZarProjFrobComp} (1)),
the resolutions \eqref{eq:qdRResol} for $(\CF,(M,\theta_M))$ and $(\varphi_n^*\CF,(\varphi_{D_n}^*M,
\theta_{\varphi_{D_n}^*M}))$ (resp.~the morphisms of complexes defined by 
\eqref{eq:LinearizationZariskiProjMap} and \eqref{eq:qdRCpxLinZarProj} 
for $(M,\theta_M)$ and $(\varphi_{D_n}^*M,\theta_{\varphi_{D_n}^*M})$)
are compatible with $\sigma_{\CF,n}$ (resp.~$\varphi^{\bullet}_{\fD_n,\Omega}(\CM)$ 
\eqref{eq:qdRSheafCpxFrobPB}) and $\CL_{A,B}(\varphi_{D_n,\Omega}^{\bullet}(M))$
\eqref{eq:LinqDelCpxFrobMap}. Therefore, by the proof of Theorem \ref{th:CrystalCohqHiggs},
we see that the following diagram is commutative.
\begin{equation}
\xymatrix@C=50pt{
Ru_{\fX/R*}\CF\ar[r]^(.4){\cong}_(.4){\eqref{eq:CrystalCohqHiggs}}
\ar[d]_{Ru_{\fX/R*}(\sigma_{\CF,n})}&
v_{D*}(\CM\otimes_{\CO_{\fD}}q\Omega^{\bullet}_{\fD/R})
\ar[d]^{v_{D*}(\varphi^{\bullet}_{\fD_n,\Omega}(\CM))}\\
Ru_{\fX/R*}\varphi_n^*\CF\ar[r]^(.4){\cong}_(.4){\eqref{eq:CrystalCohqHiggs}}&
v_{D*}((\varphi_{\fD_n}^*\CM)\otimes_{\CO_{\fD}}q\Omega^{\bullet}_{\fD/R})
}
\end{equation}
(2) 
Let $\CF$ and $\CG$ be crystals of $\CO_{\fX/R,n}$-modules on 
$(\fX/R)_{\prism}$, and let $\CM\otimes_{\CO_{\fD}}q\Omega^{\bullet}_{\fD/R}$,
$\CN\otimes_{\CO_{\fD}}q\Omega^{\bullet}_{\fD/R}$, and 
$(\CM\otimes_{\CO_{\fD}}\CN)\otimes_{\CO_{\fD}}q\Omega^{\bullet}_{\fD/R}$ be as
in Remark \ref{rmk:qDolSheafCpxFrob} (2). We assert that the following
diagram is commutative.
\begin{equation}
\xymatrix@R=15pt{
Ru_{\fX/R*}\CF\otimes^L_RRu_{\fX/R*}\CG\ar[r]&
Ru_{\fX/R}(\CF\otimes_{\CO_{\fX/R,n}}\CG)\\
v_{D*}(\CM\otimes_{\CO_{\fD}}q\Omega^{\bullet}_{\fD/R})
\otimes^L_Rv_{D*}(\CN\otimes_{\CO_{\fD}}q\Omega^{\bullet}_{\fD/R})
\ar[u]_{\cong}^{\eqref{eq:CrystalCohqHiggs}\otimes^L_R\eqref{eq:CrystalCohqHiggs}}
\ar[r]&
v_{D*}((\CM\otimes_{\CO_{\fD}}\CN)
\otimes_{\CO_{\fD}}q\Omega^{\bullet}_{\fD/R}),
\ar[u]_{\cong}^{\eqref{eq:CrystalCohqHiggs}}
}
\end{equation}
where the bottom horizontal map is induced by \eqref{eq:qDolShfCpxProd}.
Let $(M,\theta_M)$ and $(N,\theta_N)$ be as in Remark \ref{rmk:qDolSheafCpxFrob} (2).
Then, by Remark \ref{rmk:LinResolTransFrobComp} (2), 
we see that the composition of the morphism
$\CF\otimes_{\CO_{\fX/R,n}}\CG
\to \CL_{A,B}(M\otimes_Dq\Omega^{\bullet}_{D/R})\otimes_{\CO_{\fX/R,n}}
\CL_{A,B}(N\otimes_Dq\Omega^{\bullet}_{D/R})$ 
defined by \eqref{eq:qdRResol} with the product \eqref{eq:LinqDolCpxProd}
coincides with the resolution \eqref{eq:qdRResol} for
the crystal $\CF\otimes_{\CO_{\fX/R,n}}\CG$. Therefore the compatibility
of \eqref{eq:qDolShfCpxProd} and \eqref{eq:LinqDolCpxProd} observed
in Remark \ref{rmk:LinZarProjFrobComp} (2)
implies the assertion above.
\end{remark}

We have the following analogue of Theorem \ref{th:CrystalCohqHiggs} for a 
complete crystal of $\CO_{\fX/R}$-modules (Definition \ref{def:PrismaticSite} (2)).

\begin{theorem}\label{thm:CrystalCohqHigProj}
Let $\CF$ be a complete crystal of $\CO_{X/R}$-modules
on $(\fX/R)_{\prism}$ (Definition \ref{def:PrismaticSite} (2)), 
and let $(\CF_n)_{n\in\N}$ be the corresponding
adic inverse system of crystals of $\CO_{X/R,n}$-modules
on $(\fX/R)_{\prism}$ (Remark \ref{rmk:CompleteCrystalInvSystem} (3), (4)).
Let $(\CM_{n}\otimes_{\CO_{\fD}}q\Omega^{\bullet}_{\fD/R})_{n\in\N}$
be the inverse system consisting of the $q$-Higgs complexes on $\fD_{\Zar}$
associated to $\CF_n$ $(n\in\N)$ \eqref{eq:qHiggsShfCpx}. 
Then, we have the following canonical isomorphism 
in $D^+(\fX_{\Zar},R)$ functorial in $\CF$.
See Remark \ref{rmk:PrisCohqdRFrobProd} below
for  compatibility with scalar extensions by Frobenius and with products.
\begin{equation}\label{eq:CrystalCohqHigProj}
Ru_{\fX/R*}\CF\cong v_{D*}
\bigl(\varprojlim_n \CM_n\otimes_{\CO_{\fD}}q\Omega_{\fD/R}^{\bullet}\bigr)
\end{equation}
\end{theorem}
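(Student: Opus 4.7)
The plan is to imitate the proof of Theorem \ref{th:CrystalCohqHiggs} level by level in the adic inverse system $(\CF_n)_{n\in\N}$, and then pass to the inverse limit, using Proposition \ref{prop:LinLocCohProjlim} to compute the Zariski cohomology in each degree of the $q$-Higgs complex. Concretely, for each $n\in\N$ let $(M_n=\CF_n(D),\theta_{M_n})\in \Ob q\HIG_{\qnilp}(D_n/R)$ be the object corresponding to $\CF_n$ by Proposition \ref{prop:CrysStratEquiv} and Theorem \ref{thm:StratHiggsEquiv}. The adicness of $(\CF_n)_n$ (Remark \ref{rmk:CompleteCrystalInvSystem} (3), (4)) implies $M_{n+1}\otimes_{D_{n+1}}D_n\xrightarrow{\cong} M_n$, and since each $q\Omega^r_{D/R}$ is a finite free $D$-module, the inverse systems $(M_n\otimes_D q\Omega^r_{D/R})_n$ $(r\in\N)$ satisfy the hypothesis of Proposition \ref{prop:LinLocCohProjlim}.

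The functorial resolution \eqref{eq:qdRResol} produces a morphism of inverse systems of complexes of $\CO_{\fX/R}$-modules
\begin{equation*}
(\CF_n)_{n\in\N}\longrightarrow (\CL_{A,B}(M_n\otimes_D q\Omega^{\bullet}_{D/R}),\theta^{\bullet}_{\CL(M_n)})_{n\in\N}.
\end{equation*}
Taking $\varprojlim_n$ and using $\CF\cong\varprojlim_n\CF_n$ (Remark \ref{rmk:CompleteCrystalInvSystem} (4)) gives a morphism $\CF\to \varprojlim_n\CL_{A,B}(M_n\otimes_Dq\Omega^{\bullet}_{D/R})$. This is still a resolution: in each degree the transition maps of the inverse system $(\CL_{A,B}(M_n\otimes_Dq\Omega^r_{D/R}))_n$ are surjective (by the adicness and the freeness noted above), so by a standard Mittag-Leffler argument the inverse limit of short exact sequences obtained from the level-$n$ resolutions remains exact, whence the limit complex is exact in positive degrees and has $\CH^0=\varprojlim_n\CF_n=\CF$. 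Applying $Ru_{\fX/R*}$ and Proposition \ref{prop:LinLocCohProjlim} to each degree yields
\begin{equation*}
Ru_{\fX/R*}\CF\cong Ru_{\fX/R*}\bigl(\varprojlim_n \CL_{A,B}(M_n\otimes_Dq\Omega^{\bullet}_{D/R})\bigr)\xleftarrow[\sim]{\eqref{eq:LinZarProjMapProjlim}} v_{D*}\bigl(\varprojlim_n \CM_n\otimes_{\CO_{\fD}}q\Omega^{\bullet}_{\fD/R}\bigr),
\end{equation*}
where the compatibility of \eqref{eq:LinZarProjMapProjlim} with differentials follows from the degreewise commutativity of \eqref{eq:qdRCpxLinZarProj} combined with the functoriality of that diagram in $\CF_n$.

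The two main technical points that require care are, first, the passage from a levelwise resolution to a resolution of the inverse limit, which requires either verifying the Mittag-Leffler condition for the kernels/images at each step, or, equivalently, rewriting the argument using $R\varprojlim_n$ and invoking Lemma \ref{lem:crystalDProjlim} (1) applied to the adic systems $(\CL_{A,B}(M_n\otimes_Dq\Omega^r_{D/R}))_n$; and second, the compatibility of \eqref{eq:LinZarProjMapProjlim} with the differentials coming from the $q$-Higgs complex, which is not formal but is inherited from the corresponding statement at finite level together with an application of Lemma \ref{lem:crystalDProjlim} (2) to commute $\varprojlim$ with $Ru_{\fX/R*}$. The compatibility with Frobenius scalar extension and tensor products claimed in Remark \ref{rmk:PrisCohqdRFrobProd} follows by taking $\varprojlim_n$ of the analogous statements in Remark \ref{rmk:CrysZarProjCompFrobProd} for the truncations, using that the morphisms $\varphi^{\bullet}_{\fD_n,\Omega}(\CM_n)$ and the products \eqref{eq:qDolShfCpxProd} are compatible with the transition maps in the adic inverse systems.
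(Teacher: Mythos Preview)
Your proposal is correct and follows essentially the same approach as the paper: form the inverse system of resolutions \eqref{eq:qdRResol}, use presheaf-level surjectivity of the transition maps (which is exactly your Mittag-Leffler point) to conclude that the inverse limit is still a resolution of $\CF$, and then invoke Proposition \ref{prop:LinLocCohProjlim} together with the compatibility \eqref{eq:qdRCpxLinZarProj}. The paper's proof is in fact slightly terser than yours and does not spell out the alternative via $R\varprojlim$ and Lemma \ref{lem:crystalDProjlim}, but the substance is the same.
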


\begin{proof} Let $(M_{n},\theta_{M_n})_{n\in\N}$ be the inverse
system of objects of $q\HIG_{\qnilp}(D_n/R)$ associated to $(\CF_n)_{n\in \N}$
by Proposition \ref{prop:CrysStratEquiv} and Theorem \ref{thm:StratHiggsEquiv}. 
The homomorphism $M_{n+1}\otimes_{D_{n+1}}D_n\to M_n$ is an 
isomorphism for every $n\in \N$ by Remark \ref{rmk:CompleteCrystalInvSystem} (1). 
The transition map of the inverse system
$(\CF_n)_{n\in \N}$ (resp.~$(\CL_{A,B}(M_{n}\otimes_{D}q\Omega^r_{D/R}))_{n\in\N}$)
are epimorphisms in the category of presheaves by 
Remark \ref{rmk:CompleteCrystalInvSystem} (1) (resp.~the construction of $\CL_{A,B}(-)$ and the above remark
on $(M_n)_{n\in\N})$. Hence the resolution of inverse systems of $\CO_{\fX/R,n}$-modules
$(\CF_n)_{n\in\N}\to (\CL_{A,B}(M_{n}\otimes_{D}q\Omega^{\bullet}_{D/R}))_{n\in\N}$
defined by \eqref{eq:qdRResol} induces a resolution 
$\CF\to \varprojlim _n\CL_{A,B}(M_{n}\otimes_{D}q\Omega^{\bullet}_{D/R})$.
Hence the claim follows from Proposition \ref{prop:LinLocCohProjlim} and \eqref{eq:qdRCpxLinZarProj}.
\end{proof}

\begin{remark}\label{rmk:PrisCohqdRFrobProd}
(1) We follow the notation and assumption in Theorem \ref{thm:CrystalCohqHigProj}.
Then, the morphisms $\sigma_{\CF_n,n}$, $\varphi^{\bullet}_{\fD_n,\Omega}(\CM_n)$
and $\CL_{A,B}(\varphi_{D_n,\Omega}^{\bullet}(M_n))$ for $n$ define morphisms
of inverse systems by \eqref{eq:qdRcpxFrobPBFunct}. 
Therefore, by the proof of Theorem \ref{thm:CrystalCohqHigProj},
the compatibility on these morphisms mentioned in Remark 
\ref{rmk:CrysZarProjCompFrobProd} (1) implies that the following 
diagram is commutative. Here $\hvarphi^*\CF$ is defined as in 
Remark \ref{rmk:crystalFrobPBTensor} (1) and is canonically isomorphic to $\varprojlim_n\varphi_n^*\CF_n$.
\begin{equation}
\xymatrix@C=50pt{
Ru_{\fX/R*}(\CF)\ar[r]^(.4){\cong}_(.4){\eqref{eq:CrystalCohqHigProj}}
\ar[d]_{Ru_{\fX/R*}(\varprojlim_n\sigma_{\CF_n,n})}&
v_{D*}(\varprojlim_n\CM_n\otimes_{\CO_{\fD}}q\Omega^{\bullet}_{\fD/R})
\ar[d]^{v_{D*}(\varprojlim_n\varphi^{\bullet}_{\fD_n,\Omega}(\CM_n))}\\
Ru_{\fX/R*}(\hvarphi^*\CF)\ar[r]^(.35){\cong}_(.35){\eqref{eq:CrystalCohqHigProj}}&
v_{D*}(\varprojlim_n(\varphi_{\fD_n}^*\CM_n)\otimes_{\CO_{\fD}}q\Omega^{\bullet}_{\fD/R})
}
\end{equation}
\par
(2) Let $\CF$ and $\CG$ be complete crystals of $\CO_{\fX/R}$-modules,
let  $\uCF=(\CF_n)_{n\in\N}$ and $\uCG=(\CG_n)_{n\in\N}$ be the 
adic inverse systems of crystals of $\CO_{\fX/R,n}$-modules on $(\fX/R)_{\prism}$ 
corresponding to $\CF$ and $\CG$, respectively, by the equivalence \eqref{eq:CompleteCrysInvSys}.
We abbreviate $\CF\hotimes_{\CO_{\fX/R}}\CG$ and $\uCF\otimes_{\CO_{\fX/R,\bullet}}\uCG$
(Remark \ref{rmk:crystalFrobPBTensor} (2)) to 
$\CF\hotimes\CG$ and $\uCF\otimes\uCG$, respectively.
For an inverse system of crystals of $\CO_{\fX/R,n}$-modules 
$\uCH=(\CH_n)_{n\in\N}$, we write 
$q\Omega(\uCH)$  (resp.~$\CL\Omega(\uCH)$) for the
inverse system of complexes on $\fD_{\Zar}$ (resp.~$(\fX/R)_{\prism}$)
associated to $\uCH$ as \eqref{eq:qHiggsShfCpx} 
(resp.~\eqref{eq:qdRCpxLinearization}). 
To simplify the notation we abbreviate $u_{\fX/R}$, $v_D$, $\CO_{\fX/R}$,
and $\varprojlim_n$ to $u$, $v$, $\CO$, and $\varprojlim$, respectively.
Then we see that the diagram 
\begin{equation}\label{eq:CrysCohqHigProjProd}
\xymatrix@R=20pt{
Ru_*\CF\otimes_R^LRu_*\CG\ar[r]
\ar[d]^{\cong}_{\eqref{eq:CrystalCohqHigProj}
\otimes^L_R\eqref{eq:CrystalCohqHigProj}}&
Ru_*(\CF\otimes_{\CO}\CG)\ar[r]&
Ru_*(\CF\hotimes\CG)
\ar[d]^{\eqref{eq:CrystalCohqHigProj}}_{\cong}\\
v_*\varprojlim q\Omega(\uCF)\otimes^L_R v_*\varprojlim q\Omega(\uCG)\ar[r]&
v_*\varprojlim(q\Omega(\uCF)\otimes_Rq\Omega(\uCG))\ar[r]^(.58){\eqref{eq:qDolShfCpxProd}}&
v_*\varprojlim q\Omega(\uCF\otimes\uCG)
}
\end{equation}
is commutative as follows. By Remark 
\ref{rmk:LinResolTransFrobComp} (2) and Remark \ref{rmk:LinZarProjFrobComp} (2),
we have the following commutative diagrams, where the vertical morphisms
in the second one are defined by \eqref{eq:LinearizationZariskiProjMap} and 
\eqref{eq:qdRCpxLinZarProj}.
\begin{equation}
\xymatrix@R=20pt{
\CF\otimes_{\CO}\CG\ar[r]
\ar[d]_{\varprojlim\eqref{eq:qdRResol}\otimes_{\CO}\varprojlim\eqref{eq:qdRResol}}&
\CF\hotimes \CG
\ar[r]^(.45){\varprojlim \eqref{eq:qdRResol}}
\ar[d]_{\varprojlim\eqref{eq:qdRResol}\otimes_{\CO}\eqref{eq:qdRResol}}
&
\varprojlim \CL\Omega(\uCF\otimes\uCG)\\
\varprojlim \CL\Omega(\uCF)\otimes_{\CO}
\varprojlim \CL\Omega(\uCG)\ar[r]&
\varprojlim (\CL\Omega(\uCF)\otimes_{\CO}\CL\Omega(\uCG))
\ar[ur]_(.55){\varprojlim\eqref{eq:LinqDolCpxProd}}&
}
\end{equation}
\begin{equation}
\xymatrix@R=20pt@C=20pt{
\varprojlim u_*\CL\Omega(\uCF)\otimes^L_R\varprojlim u_*\CL\Omega(\uCG)
\ar[r]&
\varprojlim (u_*\CL\Omega(\uCF)\otimes_Ru_*\CL\Omega(\uCG))
\ar[r]^(.57){\eqref{eq:LinqDolCpxProd}}&
\varprojlim u_*\CL\Omega(\uCF\otimes\uCG)\\
\varprojlim v_*q\Omega(\uCF)\otimes^L_{R} \varprojlim v_*q\Omega(\uCG)
\ar[r]\ar[u]_{\cong}&
\varprojlim(v_*q\Omega(\uCF)\otimes_Rv_*q\Omega(\uCG))
\ar[r]^(.57){\eqref{eq:qDolShfCpxProd}}\ar[u]&
\varprojlim v_*q\Omega(\uCF\otimes\uCG)
\ar[u]_{\cong}
}
\end{equation}
We obtain the commutative diagram \eqref{eq:CrysCohqHigProjProd}
by combining $Ru_{*}$ of the upper diagram 
and the the lower one  with the horizontal compositions
replaced by the alternative ones given by Lemma \ref{lem:ToposDirectImVarProjProd}
below.
\end{remark}

\begin{lemma}\label{lem:ToposDirectImVarProjProd}
Let $(f^*,f_*)\colon (E',R')\to (E,R)$ be a morphism of ringed topos,
and let $(K_n^{\bullet})_{n\in\N}$ and $(L_n^{\bullet})_{n\in\N}$
be inverse systems of  complexes of $R'$-modules on $E'$.
Then the following diagram is commutative.
\begin{equation*}
\xymatrix@R=15pt{
\varprojlim_nf_*K^{\bullet}_n\otimes_R\varprojlim_nf_*L_n^{\bullet}\ar[r]\ar@{=}[d]&
\varprojlim_n(f_*K^{\bullet}_n\otimes_Rf_*L^{\bullet})\ar[r]&
\varprojlim_nf_*(K^{\bullet}_n\otimes_{R'}L^{\bullet}_n)\ar@{=}[d]\\
f_*\varprojlim_nK^{\bullet}_n\otimes_R f_*\varprojlim_nL_n^{\bullet}\ar[r]&
f_*(\varprojlim_nK_n^{\bullet}\otimes_{R'}\varprojlim_nL_n^{\bullet})\ar[r]&
f_*\varprojlim_n(K_n^{\bullet}\otimes_{R'}L_n^{\bullet})
}
\end{equation*}
\end{lemma}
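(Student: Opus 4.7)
The plan is to deduce the commutativity as a formal consequence of two general facts: (a) $f_\ast$ commutes with arbitrary inverse limits since it is a right adjoint, and (b) the canonical pairing $f_\ast(-)\otimes_R f_\ast(-)\to f_\ast(-\otimes_{R'}-)$ is a natural transformation of bifunctors. First I would reduce to the case of inverse systems of single $R'$-modules $(M_n)_{n\in\N}$ and $(N_n)_{n\in\N}$, because $\varprojlim$ and $\otimes$ of complexes are computed degreewise and the Koszul signs appearing in the total complex structure are induced identically in the two rows; thus it suffices to check commutativity of the diagram of underlying graded objects. Set $M=\varprojlim_n M_n$ and $N=\varprojlim_n N_n$. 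The vertical equalities in the diagram are precisely the identifications $f_\ast M=\varprojlim_n f_\ast M_n$ and $f_\ast N=\varprojlim_n f_\ast N_n$ provided by (a).

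To verify commutativity I would invoke the universal property of the inverse limit $f_\ast\varprojlim_n(M_n\otimes_{R'}N_n)=\varprojlim_n f_\ast(M_n\otimes_{R'}N_n)$: it is enough to show that the two composites $f_\ast M\otimes_R f_\ast N\to \varprojlim_n f_\ast(M_n\otimes_{R'}N_n)$ coincide after further composition with each projection $\pi_n$ to $f_\ast(M_n\otimes_{R'}N_n)$. Unwinding the definitions, the top-row composite followed by $\pi_n$ equals $f_\ast M\otimes_R f_\ast N\to f_\ast M_n\otimes_R f_\ast N_n\to f_\ast(M_n\otimes_{R'}N_n)$, while the bottom-row composite followed by $\pi_n$ equals $f_\ast M\otimes_R f_\ast N\to f_\ast(M\otimes_{R'}N)\to f_\ast(M_n\otimes_{R'}N_n)$; both fit into the square
\[
\xymatrix{
f_\ast M\otimes_R f_\ast N\ar[r]\ar[d] & f_\ast(M\otimes_{R'}N)\ar[d] \\
f_\ast M_n\otimes_R f_\ast N_n\ar[r] & f_\ast(M_n\otimes_{R'}N_n),
}
\]
whose commutativity is precisely the naturality (b) applied to the projections $M\to M_n$ and $N\to N_n$.

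I do not expect any real obstacle: the statement is a diagram chase among formal canonical maps, and the two substantive inputs (commutation of $f_\ast$ with $\varprojlim$, and functoriality of the pairing $f_\ast(-)\otimes_R f_\ast(-)\to f_\ast(-\otimes_{R'}-)$, the latter obtained as the adjoint of $f^\ast(f_\ast X\otimes_R f_\ast Y)\cong f^\ast f_\ast X\otimes_{R'}f^\ast f_\ast Y\to X\otimes_{R'}Y$ via the counit) are both automatic. The only bookkeeping to be careful about is tracking which comparison natural transformation sits on which side of the diagram so that the appeal to the universal property of $\varprojlim$ on the right-hand corner is correctly formulated; once the projections $\pi_n$ are used, the check collapses to the naturality square above.
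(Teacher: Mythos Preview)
Your proposal is correct and follows essentially the same approach as the paper: check equality of the two composites into $\varprojlim_n f_\ast(K_n^\bullet\otimes_{R'}L_n^\bullet)$ by composing with each projection $\pi_n$, where both reduce to the map $f_\ast K_n^\bullet\otimes_R f_\ast L_n^\bullet\to f_\ast(K_n^\bullet\otimes_{R'}L_n^\bullet)$ precomposed with the projection. The paper's proof is terser and does not spell out the degreewise reduction or the naturality square, but the argument is the same.
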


\begin{proof}
For both upper and lower horizontal maps, we see
that their compositions with  the projection 
$\varprojlim_nf_*(K_n^{\bullet}\otimes_{R'} L_n^{\bullet})
\to f_*(K_n^{\bullet}\otimes_{R'}L_n^{\bullet})$ coincide
with the composition of the projection 
to $f_*K_n^{\bullet}\otimes_Rf_*L_n^{\bullet}$
with the morphism 
$f_*K_n^{\bullet}\otimes_Rf_*L_n^{\bullet}
\to f_*(K_n^{\bullet}\otimes_{R'}L_n^{\bullet})$.
\end{proof}

\section{Prismatic cohomology and $q$-Higgs complex: 
Functoriality}\label{sec:PrismCohqDolbFunct}

We prove the functoriality of the comparison isomorphisms \eqref{eq:CrystalCohqHiggs} and 
\eqref{eq:CrystalCohqHigProj} (Propositions \ref{prop:PrismZarProjqDRFunct} and 
\ref{prop:CrystalCohqHigProjFunct}) with respect to 
$R$,  $\fX\hookrightarrow \Spf(B)$,
and $\ut=(t_i)_{i\in \Lambda}$
by showing the functoriality \eqref{eq:LineDRcpxPB} and \eqref{eq:qDolShfCpxFunct} 
of \eqref{eq:qdRCpxLinearization} and \eqref{eq:qHiggsShfCpx}, respectively,
and its compatibility with 
\eqref{eq:qdRCpxLinZarProj}, \eqref{eq:CrysPLAugMap}, and \eqref{eq:CrystalResCpxComp}
(Proposition \ref{eq:LindRCpxZarProjFunct}, \eqref{eq:CrystaldRResolFunct}, and 
Lemma \ref{lem:LinZarqDRProjFunct}). 
After each step in our arguments, we discuss relevant compatibility 
with scalar extension under the lifting of Frobenius and with tensor products
as remarks.

Let $(R',I')$, $i'\colon \fX'=\Spf(A')\hookrightarrow \fY'=\Spf(B')$, $f\colon (R,I)\to (R',I')$, $g\colon \fX'\to \fX$,
$h\colon \fY'\to \fY$, $(D',v_{D'})$, and $h_D\colon (D',g\circ v_{D'})\to (D,v_D)$
be the same as before \eqref{eq:StratPullback}. Note that $(R', I')$ is also a $q$-prism. 
As in the paragraph before Proposition \ref{prop:FunctStratConn}, we assume 
that we are given $pR'+[p]_qR'$-adic coordinates $\ut'=(t'_{i'})_{i'\in\Lambda'}$, 
$\Lambda'=\N\cap [1,d']$ of $B'$ over $R'$ and a map $\psi\colon \Lambda\to\Lambda'$
of ordered sets such that $\delta(t'_{i'})=0$ $(i'\in \Lambda')$ and $h^*(t_i)=t'_{\psi(i)}$ $(i\in \Lambda)$. 

We first discuss the functoriality of the complex \eqref{eq:qdRCpxLinearization}.
Let $\CF$ be a crystal of $\CO_{\fX/R,n}$-modules, and let $\CF'$ be the inverse image
$g^{-1}_{\prism}\CF$ of $\CF$ under the morphism of topos $g_{\prism}\colon 
(\fX'/R')^{\sim}_{\prism}\to (\fX/R)^{\sim}_{\prism}$ induced by $g$ and $f$
(Definition \ref{def:PrismaticSite} (3)). Let $(M,\theta_M)$ (resp. $(M',\theta_{M'})$)
be the object of $q\HIG_{\qnilp}(D_n/R)$ (resp.~$q\HIG_{\qnilp}(D'_n/R'))$
associated to the crystal $\CF$ (resp.~$\CF'$) by Proposition \ref{prop:CrysStratEquiv} 
and Theorem \ref{thm:StratHiggsEquiv}. By \eqref{eq:FunctCrystalStrat}
and Proposition \ref{prop:FunctStratConn}, $(M',\theta_{M'})$
is canonical isomorphic to the image of $(M,\theta_M)$ under
the scalar extension functor $h_D^*\colon 
q\HIG(D_n/R)\to q\HIG(D_n'/R')$ constructed before Proposition 
\ref{prop:FunctStratConn}. Then we can construct a morphism of 
complexes of crystals of $\CO_{\fX'/R',n}$-modules functorial in $\CF$
and compatible with compositions of $(f,g,h)$ and $\psi$
\begin{equation}\label{eq:LineDRcpxPB}
g_{\prism}^{-1}(\CL_{A,B}(M\otimes_Dq\Omega^{\bullet}_{D/R}))
\longrightarrow \CL_{A',B'}(M'\otimes_{D'}q\Omega^{\bullet}_{D'/R'})
\end{equation}
as follows. \par

Let $(P,w)$ be an object of $(\fX'/R')_{\prism}$,
and define the $\delta$-pair $(B'_P,J_{B'_P})$ (resp.~$(B_P,J_{B_P})$)
and $q$-prisms $D'_P$   (resp.~$D_P$) by applying the construction
in the paragraph before Lemma \ref{lem:LinearizationEnvBC} 
to $(P,w)$ and $\fX'\to \Spf(B')$ over $R'$
(resp.~$(P,g\circ w)$ and $\fX\to \Spf(B)$ over $R$). Let
$\ut_{P}=(t_{P,i})_{i\in \Lambda}\in B_P^{\Lambda}$ 
(resp.~$\ut_{P'}=(t_{P',i'}')_{i'\in\Lambda'}\in B_{P}^{\prime\Lambda'}$)
be the image of $\ut=(t_i)_{i\in\Lambda}$ (resp.~$\ut'=(t'_{i'})_{i'\in\Lambda'}$).
Then we have a commutative diagram of framed smooth $q$-pairs
whose horizontal maps are defined by using $(f, g,h)$ and $\psi$
\begin{equation}\label{eq:LinFramedqPairMaps}
\xymatrix{
((B_P,J_{B_P})/P,\ut_P)\ar[r]&
((B'_P,J_{B'_P})/P,\ut'_{P'})\\
((B,J)/R,\ut)\ar[u]\ar[r]&
((B',J')/R',\ut').\ar[u]
}
\end{equation}
Hence,  by \eqref{eq:qPrismEnvHigPB} and \eqref{eq:qDolbCpxPB2}, and 
their compatibility with compositions: Lemma \ref{lem:dRCpxFuncCocyc} (2), 
we obtain a commutative diagram of complexes
\begin{equation}\label{eq:LineDRcpxFunctDiag}
\xymatrix{
(M\otimes_D{D_P})\otimes_{D_P}q\Omega^{\bullet}_{D_P/P}
\ar[r]&
(M'\otimes_{D'} D'_{P})\otimes_{D'_P}q\Omega^{\bullet}_{D'_P/P}\\
M\otimes_Dq\Omega^{\bullet}_{D/R}\ar[r]\ar[u]&
M'\otimes_{D'}q\Omega^{\bullet}_{D'/R'}.\ar[u]
}
\end{equation}
The diagram \eqref{eq:LinFramedqPairMaps} is obviously 
functorial in $(P,w)$. Hence we see that the upper horizontal map in 
\eqref{eq:LineDRcpxFunctDiag} is also functorial in $(P,w)$
and defines the desired morphism \eqref{eq:LineDRcpxPB}, whose
functoriality in $\CF$ follows from that of \eqref{eq:LineDRcpxFunctDiag}.
The diagram \eqref{eq:LinFramedqPairMaps} satisfies the obvious cocycle 
condition for compositions 
of $(f,g,h)$ and $\psi$, which implies that of \eqref{eq:LineDRcpxFunctDiag} 
and then of \eqref{eq:LineDRcpxPB}. 

\begin{remark}\label{rmk:LinPBFrobComp}
(1) 
By Remark \ref{rmk:FramedSmQPHigFPBComp}, the
Frobenius pullback $(\varphi_{D_n'}^*M',\theta_{\varphi_{D_n'}^*M'})$
\eqref{eq:FramedSmQPHigFPBFunct} of $(M',\theta_{M'})$
is canonically isomorphic to the scalar extension of the Frobenius pullback
$(\varphi_{D_n}^*M,\theta_{\varphi_{D_n}^*M})$ of $(M,\theta_M)$
by the homomorphism $h_D$. By the commutative diagram
\eqref{eq:LinFramedqPairMaps} and Remark \ref{rmk:FramedSmQPHigFPBComp},
we see that the upper horizontal morphisms in \eqref{eq:LineDRcpxFunctDiag}
for $(M,\theta_M)$ and $(\varphi_{D_n}^*M,\theta_{\varphi_{D_n}^*M})$ is
compatible with $\varphi_{D_{P,n},\Omega}^{\bullet}(M\otimes_DD_P)$
and $\varphi^{\bullet}_{D'_{P,n},\Omega}(M'\otimes_{D'}D'_P)$ \eqref{eq:LinqDelCpxSecFrobMap}.
Varying $(P,w)\in \Ob (\fX'/R')_{\prism}$, we see that the morphisms
\eqref{eq:LineDRcpxPB} for $(M,\theta_M)$ and $(\varphi_{D_n}^*M,\theta_{\varphi_{D_n}^*M})$
is compatible with $g_{\prism}^{-1}(\CL_{A,B}(\varphi_{D_n,\Omega}^{\bullet}(M)))$
and $\CL_{A,B}(\varphi^{\bullet}_{D_n',\Omega}(M'))$ \eqref{eq:LinqDelCpxFrobMap}.\par
(2) Suppose that $\psi$ is injective. Let $\CF$ and $\CG$ be
crystals of $\CO_{\fX/R,n}$-modules on $(\fX/R)_{\prism}$. 
Then we see that the morphisms \eqref{eq:LineDRcpxPB} for 
$\CF$, $\CG$, and $\CF\otimes_{\CO_{\fX/R,n}}\CG$ are
compatible with the products \eqref{eq:LinqDolCpxProd}
for $\CF$, $\CG$ and for $g_{\prism}^{-1}(\CF)$ and $g_{\prism}^{-1}(\CG)$
as follows. Let $(M,\theta_M)$ and $(N,\theta_N)$
(resp.~$(M',\theta_{M'})$ and $(N',\theta_{N'})$) be the objects
 of $q\HIG_{\qnilp}(D_n/R)$ (resp.~$q\HIG_{\qnilp}(D_n'/R')$)
 associated to $\CF$ and $\CG$ (resp.~$g_{\prism}^{-1}(\CF)$
 and $g_{\prism}^{-1}(\CG)$). We follow the notation in the
 construction of \eqref{eq:LineDRcpxPB}. Then,
 as $\psi$ is assumed to be injective, 
Remark \ref{rmk:dRCpxProdScExtComp} implies
that  the product  morphism \eqref{eq:dRcpxProd} for the scalar extensions of
 $(M,\theta_M)$ and $(N,\theta_N)$ under 
 $D\to D_P$ and that of the scalar extensions
 of $(M',\theta_{M'})$ and $(N',\theta_{N'})$ under
 $D'\to D'_{P}$ are compatible with the upper horizontal maps
 in \eqref{eq:LineDRcpxFunctDiag} for $\CF$, $\CG$,
 and $\CF\otimes_{\CO_{\fX/R,n}}\CG$.
 Varying $(P,w)\in \Ob(\fX'/R')_{\prism}$, we
 obtain the desired compatibility.
 \end{remark}
\begin{remark}\label{rmk:LinPBProdComp2}
As in Remark \ref{rmk:LinPBFrobComp} (2),
the  morphism \eqref{eq:LineDRcpxPB} is compatible
with the product \eqref{eq:LinqDolCpxProd} only when 
$\psi$ is injective. However, thanks to Proposition \ref{prop:ProddRCpxFunct},
one can show compatibility for any $\psi$ by considering 
a product whose codomain is the linearization of the
$q$-Higgs complex associated to $\CF\otimes_{\CO_{\fX/R,n}}\CG$
with respect to the diagonal closed immersion $\fX\to \fY\times_{\Spf(R)}\fY$
as follows.\par
(1) We define $\fY(1)=\Spf(B(1))=\fY\times_{\Spf(R)}\fY$, 
the $\delta$-structure on $B(1)$, the closed immersion 
$i(1)\colon \fX\to \fY(1)$, the ideal $J_{B(1)}=\Ker(i(1)^*)$ of $B(1)$,
and the bounded prismatic envelope $(D(1),\pq D(1))$ of
$(B(1),J_{B(1)})$ over $(R,\pq R)$ as in the paragraph after
Proposition \ref{prop:PrismCofFinObj}. Then we follow the notation
$\Lambda(1)=\{0,1\}\times \Lambda$, $\ut^{(1)}=(t^{(1)}_{l;i})_{(l,i)\in \Lambda(1)}
\in B(1)^{\Lambda(1)}$, $\theta_{D(1),l;i}$ $((l,i)\in\Lambda(1))$,
$\theta_{D(1)}\colon D(1)\to q\Omega_{D(1)/R}$, and $q\HIG(D(1)_n/R)$ 
introduced in the paragraph after Condition \ref{cond:qPrism}. For $l\in \{0,1\}$,
let $p_{l,\fY}\colon \fY(1)\to \fY$ be the projection to the
$(l+1)$th component, which satisfies $p_{l,\fY}\circ i(1)=i$,
write $p_{l,B}$ for the morphism of $\delta$-pairs
$p_{l,\fY}^*\colon (B,J)\to (B(1),J_{B(1)})$ induced by $p_{l,\fY}$,
and let $p_{l,D}$ denote the $\delta$-homomorphism 
$D\to D(1)$ induced by $p_{l,B}$. We define a map of ordered
sets $\chi_{l}\colon \Lambda\to \Lambda(1)$
for $l\in\{0,1\}$ by $\chi_{l}(i)=(l,i)$, which satisfies
$p_{l,B}(t_i)=t^{(1)}_{l;i}$ for $i\in \Lambda$ by
the definition of $\ut^{(1)}$. Then $(\id_R,\id_{\fX},p_{l,\fY})$ and
$\chi_{l}$ induce a scalar extension functor
$p_{l,D}^*\colon q\HIG(D_n/R)\to q\HIG(D(1)_n/R)$
(Proposition \ref{prop:qHiggsDerivFunct} (2)).\par

Let $\CF_{l}$ $(l=0,1)$ be crystals of $\CO_{\fX/R,n}$-modules
on $(\fX/R)_{\prism}$, and let $(M_{l},\theta_{M_{l}})$ 
(resp.~$(M_{l}(1),\theta_{M_{l}(1)})$) be the object
of $q\HIG_{\qnilp}(D_n/R)$ (resp.~$q\HIG_{\qnilp}(D(1)_n/R)$)
associated to $\CF_{l}$ by Proposition \ref{prop:CrysStratEquiv} 
and Theorem \ref{thm:StratHiggsEquiv} applied to 
$(i\colon \fX\to \fY,\ut)$ (resp.~$(i(1)\colon \fX\to \fY(1)$).  Then, by applying 
\eqref{eq:LineDRcpxPB} to $\CF_{l}$, 
$(\id_R,\id_{\fX},p_{l,\fY})$ and $\chi_{l}$, we obtain an
$\CO_{\fX/R,n}$-linear morphism
\begin{equation}\label{eq:LinProdPB}
\CL\Omegab(p_{l,D}^*)\colon
\CL_{A,B}(M_{l}\otimes_Dq\Omega^{\bullet}_{D/R})
\longrightarrow 
\CL_{A,B(1)}(M_{l}(1)\otimes_{D(1)}q\Omega^{\bullet}_{D(1)/R}).
\end{equation}

Let $(M(1),\theta_{M(1)})$ be the object of $q\HIG_{\qnilp}(D(1)_n/R)$
associated to the crystal $\CF=\CF_0\otimes_{\CO_{\fX/R,n}}\CF_1$,
which is the tensor product of $(M_{l}(1),\theta_{M_{l}(1)})$
$(l=0,1)$ (Remark \ref{rmk:CrysFrobPBHiggs} (2)).
Then writing $\CL\Omega^{\bullet}(M_{l})$
(resp.~$\CL\Omega^{\bullet}(M_{l}(1))$,
resp.~$\CL\Omega^{\bullet}(M(1))$)
for the domain of \eqref{eq:LinProdPB},
(resp.~the codomain of \eqref{eq:LinProdPB}, 
resp.~$\CL_{A,B(1)}(M(1)\otimes_{D(1)}q\Omega^{\bullet}_{D(1)/R})$),
we consider the following composition.
\begin{multline}\label{eq:LinqdRCpxMdfProd}
\CL\Omega^{\bullet}(M_0)\otimes_{\CO_{\fX/R,n}}
\CL\Omega^{\bullet}(M_1)
\xrightarrow{\CL\Omegab(p_{0,D}^*)\otimes\CL\Omegab(p_{1,D}^*)}
\CL\Omega^{\bullet}(M_0(1))\otimes_{\CO_{\fX/R,n}}
\CL\Omega^{\bullet}(M_1(1))\\
\xrightarrow{\eqref{eq:LinqDolCpxProd}}
\CL\Omega^{\bullet}(M(1))
\end{multline}
(2) We keep the notation in (1). We define
$\fY'(1)$, $i'(1)$, $B'(1)$, $J_{B'(1)}$, $\theta_{D'(1),l;i'}$,
$D'(1)$, $p_{\fY',l}$, $p_{B',l}$, $p_{D',l}$, and $\chi'_{l}$
in the same way as (1) by using $i'\colon \fX'\to \fY'/R'$ and $\ut'$.
We define a morphism $h(1)\colon \fY'(1)\to \fY(1)$ over $f$ to be the product
of $h$, and a morphism of ordered sets $\psi(1)\colon \Lambda(1)
\to \Lambda'(1)$ by $(l,i)\mapsto (l,\psi(i))$.
Put $\CF'_{l}=g_{\prism}^{-1}\CF_{l}$ and
$\CF'=\CF_{0}'\otimes_{\CO_{\fX'/R',n}}\CF_1'=g_{\prism}^{-1}(\CF)$,
and let $(M'_{l},\theta_{M_{l}'})\in \Ob q\HIG_{\qnilp}(D'_n/R)$
and $(M'_{l}(1),\theta_{M'_{l}(1)})$, $(M'(1),\theta_{M'(1)})\in 
\Ob q\HIG_{\qnilp}(D'(1)_n/R)$ be the objects associated to 
$\CF'_{l}$ and $\CF'$ similarly  to (1). Then the 
following diagram is commutative.
\begin{equation}\label{eq:LinqDolMdfProdFunct}
\xymatrix@C=50pt@R=20pt{
g_{\prism}^{-1}(\CL\Omegab(M_0)\otimes_{\CO_{\fX/R,n}}
\CL\Omegab(M_1))\ar[r]^(.6){g_{\prism}^{-1}\eqref{eq:LinqdRCpxMdfProd}}\ar[d]&
g_{\prism}^{-1}(\CL\Omegab(M(1)))\ar[d]\\
\CL\Omegab(M_0')\otimes_{\CO_{\fX'/R',n}}\CL\Omegab(M'_1)
\ar[r]^(.6){\eqref{eq:LinqdRCpxMdfProd}}&
\CL\Omegab(M'(1)).
}
\end{equation}
Here the left (resp.~right) vertical morphism is induced by 
\eqref{eq:LineDRcpxPB} for $\CF_{l}$, $(f,g,h)$ and $\psi$
(resp.~$\CF$, $(f,g,h(1))$, and $\psi(1)$). \par

We can verify the commutativity
above as follows. Let $(P,w)$ be an object of $(\fX'/R')_{\prism}$. Then
we obtain commutative diagrams of framed smooth $q$-pairs
with the left one lying over the right one
\begin{equation*}
\xymatrix@C=15pt@R=20pt
{((B(1)_P,J_{B(1)_P})/P,\ut_P^{(1)})\ar[d]
&((B_P,J_{B_P})/P,\ut_P)\ar[d]\ar@<0.5ex>[l]\ar@<-0.5ex>[l]\\
((B'(1)_P,J_{B'(1)_P})/P,\ut_P^{\prime(1)})
&((B'_P,J_{B'_P})/P, \ut'_P)\ar@<0.5ex>[l]\ar@<-0.5ex>[l]
}
\xymatrix@C=15pt@R=20pt
{((B(1),J_{B(1)})/R,\ut^{(1)})\ar[d]
&((B,J)/R,\ut)\ar[d]\ar@<0.5ex>[l]\ar@<-0.5ex>[l]\\
((B'(1),J_{B'(1)})/R',\ut^{\prime(1)})
&((B',J')/R', \ut')\ar@<0.5ex>[l]\ar@<-0.5ex>[l]
}
\end{equation*}
by applying the construction of \eqref{eq:LinFramedqPairMaps} 
to the following data.
\begin{equation}\label{eq:FramedSmEmbProdMorph}
\xymatrix@C=100pt@R=20pt{
(i(1)\colon \fX\to \fY(1)/R,\ut^{(1)})
\ar@<0.5ex>[r]^{(\id_{R},\id_{\fX},p_{\fY,l}),\chi_{l}}
\ar@<-0.5ex>[r]
& (i\colon\fX\to \fY/R,\ut)\\
(i'(1)\colon\fX'\to \fY'(1)/R',\ut^{\prime (1)})
\ar[u]^{(f,g,h(1)),\psi(1)}
\ar@<0.5ex>[r]^{(\id_{R'},\id_{\fX'},p_{\fY',l}),\chi'_{l}}
\ar@<-0.5ex>[r]
&(i'\colon\fX'\to\fY'/R',\ut')
\ar[u]_{(f,g,h),\psi}
}
\end{equation}
By applying Proposition \ref{prop:qHiggsDerivFunct} (2) to the above  diagrams of
framed smooth $q$-pairs, 
we obtain the following commutative diagrams in the category
$\DAlg$ introduced before Proposition \ref{prop:ProddRCpxFunct}
with the upper one lying over the lower one.
\begin{gather}
\xymatrix@R=10pt{
\uD_P=(D_P/P,\Lambda,\mu\ut_P,\utheta_{D_P})\ar[d]
\ar@<0.5ex>[r]\ar@<-0.5ex>[r]&
\underline{D(1)}_P=(D(1)_P/P,\Lambda(1),\mu\ut_P^{(1)},\utheta_{D(1)_P})\ar[d]\\
\uD'_P=(D'_P/P,\Lambda',\mu\ut'_P,\utheta_{D'_P})
\ar@<0.5ex>[r]\ar@<-0.5ex>[r]&
\underline{D'(1)}_P=(D'(1)_P/P,\Lambda'(1),\mu\ut_P^{\prime(1)},\utheta_{D'(1)_P})
}\\
\xymatrix@R=10pt{
\uD=(D/R,\Lambda,\mu\ut,\utheta_{D})\ar[d]
\ar@<0.5ex>[r]\ar@<-0.5ex>[r]
&
\underline{D(1)}=(D(1)/R,\Lambda(1),\mu\ut^{(1)},\utheta_{D(1)})\ar[d]\\
\uD'=(D'/R',\Lambda',\mu\ut',\utheta_{D'})
\ar@<0.5ex>[r]\ar@<-0.5ex>[r]
&
\underline{D'(1)}=(D'(1)/R',\Lambda'(1),\mu\ut^{\prime(1)},\utheta_{D'(1)})
}\label{eq:FrSmEmbEnvProdMorph}
\end{gather}
By applying Proposition \ref{prop:ProddRCpxFunct} to the pullbacks of 
$(M_{l},\theta_{M_{l}})\in \Ob\MIC(\uD)$ to $\MIC(\uD_P)$ and
the upper diagram, we see that the section of \eqref{eq:LinqDolMdfProdFunct} over
$(P,w)$ is commutative.
\end{remark}

One can construct a homomorphism 
\begin{equation}\label{eq:qDolShfCpxFunct}
\fh_D^{-1}(\CM\otimes_{\CO_{\fD}}q\Omega^{\bullet}_{\fD/R})
\longrightarrow \CM'\otimes_{\CO_{\fD'}}q\Omega^{\bullet}_{\fD'/R'}
\end{equation}
functorial in $\CF$ and satisfying the cocycle condition for compositions
of $(f,g,h)$ and $\psi$ as follows. 
Here $\fD'=\Spf(D')$, $\fD'_n=\Spec(D'_n)$,  $\fh_D$
is the morphism $\fD'\to \fD$ defined by $h_D\colon D\to D'$, 
$\CM'$ denotes 
the quasi-coherent $\CO_{\fD'_n}$-module on $\fD'_n$ associated to $M'$,
and the target is the complex on $\fD'_{\Zar}$ associated to $(M', \theta_{M'})$. 

By taking the adjoint, it suffices to construct 
\begin{equation}\label{eq:qDolShfCpxFunct2}
\CM\otimes_{\CO_{\fD}}q\Omega^{\bullet}_{\fD/R}
\to \fh_{D*}(\CM'\otimes_{\CO_{\fD'}}q\Omega^{\bullet}_{\fD'/R'}),
\end{equation}
i.e., a morphism 
\begin{equation}\label{eq:qDolShfCpxFunctLoc}
(M\otimes_D\tD)\otimes_{\tD}q\Omega^{\bullet}_{\tD/R}
\longrightarrow (M'\otimes_{D'}\tD')\otimes_{\tD'}q\Omega^{\bullet}_{\tD'/R'}
\end{equation}
for an open affine formal subscheme $\Spf(\tD)$ of $\fD$ and its inverse image
$\Spf(\tD')$ in $\fD'$, in a way functorial in $\tD$. 
Put $\ualpha=(t_i\mu)_{i\in \Lambda}$ and $\ualpha'=(t'_{i'}\mu)_{i'\in\Lambda'}$.
Let $h_{\tD}$ be the homomorphism $\tD\to \tD'$ induced by $h_D$. 
As we have seen in  the construction of the complex $\CM\otimes_{\CO_{\fD}}q\Omega^{\bullet}_{\fD/R}$, 
the $t_i\mu$-derivation $\theta_{D,i}$ 
$(i\in \Lambda$) and the $t'_{i'}\mu$-derivation $\theta_{D',i'}$ $(i'\in \Lambda')$
have unique extensions  $\theta_{\tD,i}$ on $\tD$ and $\theta_{\tD',i'}$ on $\tD'$, 
respectively,   and they are functorial in $\tD$. 
Therefore,  by Definition \ref{def:connectionScalarExt} and Proposition \ref{prop:dRCpxFunct},
it suffices to show that the homomorphism 
$E^{\psi}(h_{\tD})\colon E^{\ualpha}(\tD)\to E^{\ualpha'}(\tD')$
\eqref{eq:TwExtAlgMap}
is a right algebra homomorphism over $h_{\tD}$ 
\eqref{eq:ConnFunctCond}, i.e., the following diagram is commutative.
\begin{equation}\label{eq:FunctZarLocDerivComp}
\xymatrix{
E^{\ualpha}(\tD)\ar[r]& E^{\ualpha'}(\tD')\\
\tD\ar[u]^{\ts}\ar[r]& \tD'\ar[u]^{\ts'},
}
\end{equation}
where $\ts$ and $\ts'$ denote the right algebra structures
\eqref{eq:ExtensionRightAlgStr} defined by $(t_i\mu,\theta_{\tD,i})$ $(i\in \Lambda)$ and
$(t'_{i'}\mu,\theta_{\tD',i'})$ $(i'\in \Lambda')$, respectively.
The composition with $D\to \tD$ which is $pR+[p]_qR$-adically \'etale 
is commutative because the homomorphism $E^{\psi}(h_{D})\colon E^{\ualpha}(D)\to E^{\ualpha'}(D')$ 
is a right algebra homomorphism
over $h_{D}$ by Proposition \ref{prop:qHiggsDerivFunct} (2)
applied to the morphism of framed smooth $q$-pairs
$((B,J)/R,\ut)\to ((B',J')/R',\ut')$ defined by $f$, $h$ and $\psi$.  
Since the composition with $\pi\colon E^{\ualpha'}(\tD')\to \tD'$
whose kernel is $pR+[p]_qR$-adically nilpotent is also commutative,
we see that the diagram \eqref{eq:FunctZarLocDerivComp}  itself is commutative. 

The functoriality of \eqref{eq:qDolShfCpxFunct} and \eqref{eq:qDolShfCpxFunct2} in $\CF$ follows from that
of \eqref{eq:qDolShfCpxFunctLoc}. By Lemma \ref{lem:dRCpxFuncCocyc} (2), 
the morphism \eqref{eq:qDolShfCpxFunctLoc} satisfies the
cocycle condition for compositions of $(f,g,h)$ and $\psi$. Varying $\Spf(\tD)\subset\fD$,
we see that it also holds for \eqref{eq:qDolShfCpxFunct} and \eqref{eq:qDolShfCpxFunct2}.

\begin{remark}\label{rmk:qDolCpxShfFbFunct}
(1) As mentioned in Remark \ref{rmk:LinPBFrobComp}, $(\varphi^*_{D_n'}M',\theta_{\varphi^*_{D_n'}M'})$
is canonically isomorphic to the scalar extension of $(\varphi^*_{D_n}M,\theta_{\varphi^*_{D_n}(M)})$
by $h_D$. 
By \eqref{eq:FunctZarLocDerivComp}, \eqref{eq:qConnFrobPBFunct},
and \eqref{eq:qdRcpxFrobPBFunct}, we see that the morphisms
\eqref{eq:qDolShfCpxFunctLoc} for $(M,\theta_M)$ and $(\varphi_{D_n}^*M,\theta_{\varphi_{D_n}^*M})$
are compatible with $\varphi^{\bullet}_{\tD_n,\Omega}(M\otimes_D\tD)$
and $\varphi^{\bullet}_{\tD_n',\Omega}(M'\otimes_{D'}\tD')$ \eqref{eq:qdRSheafCpxSecFrobPB}.
Varying $\Spf(\tD)\subset \Spf(D)$, we see that \eqref{eq:qDolShfCpxFunct2}
is compatible with $\fh_{D*}(\varphi_{\fD_n,\Omega}^{\bullet}(\CM))$
and $\varphi_{\fD'_n,\Omega}^{\bullet}(\CM')$ \eqref{eq:qdRSheafCpxFrobPB}.\par
(2) Suppose that $\psi$ is injective. Let $\CF$ and $\CG$ be crystals of $\CO_{\fX/R,n}$-modules
on $(\fX/R)_{\prism}$. Then we see that the morphisms \eqref{eq:qDolShfCpxFunct2}
for $\CF$, $\CG$, and $\CF\otimes_{\CO_{\fX/R,n}}\CG$ are compatible
with the product morphisms \eqref{eq:qDolShfCpxProd} for $\CF$ and $\CG$
and for $g_{\prism}^{-1}(\CF)$ and $g_{\prism}^{-1}(\CG)$ in the same way
as Remark \ref{rmk:LinPBFrobComp} (2); one can verify the compatibility of 
of the morphisms \eqref{eq:qDolShfCpxFunctLoc} for $\CF$, $\CG$, and $\CF\otimes_{\CO_{\fX/R,n}}\CG$
with the products by using Remark \ref{rmk:dRCpxProdScExtComp}.
\end{remark}

\begin{remark}\label{rmk:qdRModfProdFunct}
Similarly to Remark \ref{rmk:LinPBProdComp2}, one can prove
compatibility of the morphism \eqref{eq:qDolShfCpxFunct2} with
a product whose codomain is the $q$-Higgs complex with 
respect to the diagonal immersion $\fX\to\fY\times_{\Spf(R)}\fY$.
We follow the notation in Remark \ref{rmk:LinPBProdComp2}.\par
(1) Put $\fD(1)=\Spf(D(1))$, $\fD(1)_n=\Spec(D(1)/(p,\pq)^{n+1})$,
and $\overline{\fD(1)}=\Spf(D(1)/\pq D(1))$. Let
$p_{l,\fD}$ $(l=0,1)$ be the morphism $\Spf(p_{l,D})\colon \fD(1)\to \fD$,
and let $v_{D(1)}$ be the canonical morphism $\overline{\fD(1)}\to \Spf(B(1)/J_{B(1)})\cong\fX$.
Let $\CM_{l}$ $(l=0,1)$ (resp.~$\CM_{l}(1)$ $(l=0,1)$ and $\CM(1)$)
denote the quasi-coherent modules on $\fD_n$ (resp.~$\fD(1)_n$) associated
to $M_{l}$ (resp.~$M_{l}(1)$ and $M(1)$). By applying \eqref{eq:qDolShfCpxFunct2}
to $(M_{l},\theta_{M_{l}})$, $(\id_R,\id_{\fX},p_{l,\fY})$, and $\chi_{l}$,
we obtain a morphism 
\begin{equation}\label{eq:qDolProdPB}
q\Omegab(p_{l,D}^*)\colon \CM_{l}\otimes_{\CO_{\fD}}q\Omega^{\bullet}_{\fD/R}
\longrightarrow p_{l,\fD*}(\CM_{l}(1)\otimes_{\CO_{\fD(1)}}q\Omega^{\bullet}_{\fD(1)/R}).
\end{equation}
Then writing $q\Omega^{\bullet}(\CM_{l})$
(resp.~$q\Omega^{\bullet}(\CM_{l}(1))$, 
resp.~$q\Omega^{\bullet}(\CM(1))$) for the domain of \eqref{eq:qDolProdPB}
(resp.~the codomain of \eqref{eq:qDolProdPB}, 
resp.~$\CM(1)\otimes_{\CO_{\fD(1)}}q\Omega^{\bullet}_{\fD(1)/R}$),
we consider the following composition
\begin{multline}\label{eq:qDolMdfProd}
v_{D*}q\Omega^{\bullet}(\CM_0)\otimes_Rv_{D*}q\Omega^{\bullet}(\CM_1)
\xrightarrow{v_{D*}q\Omega^{\bullet}(p_{0,D}^*)\otimes v_{D*}q\Omega^{\bullet}(p_{1,D}^*)}
v_{D*}p_{0,\fD*}q\Omega^{\bullet}(\CM_0(1))\otimes_Rv_{D*}p_{1,\fD*}q\Omega^{\bullet}(\CM_1(1))\\
\longrightarrow v_{D(1)*}(q\Omega^{\bullet}(\CM_0(1))\otimes_Rq\Omega^{\bullet}(\CM_1(1)))
\xrightarrow{v_{D(1)*}\eqref{eq:qDolShfCpxProd}}v_{D(1)*}(q\Omega^{\bullet}(\CM(1)))
\end{multline}
Note  that we have $v_{D*}p_{l,\fD*}\cong v_{D(1)*}$ $(l=0,1)$. 
The morphism \eqref{eq:qDolMdfProd} is constructed explicitly as follows.
Let $\widetilde{\fX}$ be an affine open formal subscheme of $\fX$, 
let $\Spf(\tD)\subset \fD$ (resp.~$\Spf(\tD(1))\subset \fD(1)$) be the affine open
formal subscheme whose underlying set is $v_{D}^{-1}(\widetilde{\fX})$
(resp.~$v_{D(1)}^{-1}(\widetilde{\fX})$). Put $\tM_{l}=M_{l}\otimes_D\tD$,
$\tM_{l}(1)=M_{l}(1)\otimes_{D(1)}\tD(1)$, and
$\tM(1)=M(1)\otimes_{D(1)}\tD(1)$. Then we have the following morphisms of complexes
\begin{multline}\label{eq:qdRcpxMdfProd}
(\tM_0\otimes_{\tD}q\Omegab_{\tD/R})\otimes_R
(\tM_1\otimes_{\tD}q\Omegab_{\tD/R})\\
\longrightarrow
(\tM_0\otimes_{\tD(1)}\Omegab_{\tD(1)/R})
\otimes_R
(\tM_1(1)\otimes_{\tD(1)}\Omegab_{\tD(1)/R})
\longrightarrow
\tM(1)\otimes_{\tD(1)}\Omegab_{\tD(1)/R},
\end{multline}
where the first morphism is defined by \eqref{eq:qDolShfCpxFunctLoc}
for $(M_{l},\theta_{M_{l}})$, $(\id_R,\id_{\fX},p_{l,\fY})$,
and $\chi_{l}$, and the second one is the product \eqref{eq:dRcpxProd}.
We obtain \eqref{eq:qDolMdfProd} by varying $\widetilde{\fX}$ and taking
the associated sheaf.
\par
(2) We define $\fD'(1)$, $\overline{\fD'(1)}$, $p_{l,\fD'}$, and $v_{D'(1)}$ similarly to 
$\fD(1)$, $\overline{\fD(1)}$, $p_{l,\fD}$, and $v_{D(1)}$ by using $D'(1)$ and $p_{l,D'}$.
We define $\fh_{D(1)}$ to be the morphism $\fD'(1)\to\fD(1)$ induced by the
morphism of framed smooth $q$-pairs $(h(1)^*,\psi(1))\colon ((B(1),J_{B(1)})/R,\ut^{(1)})
\to ((B'(1),J_{B'(1)})/R',\ut^{\prime(1)})$. We have $p_{l,\fD}\circ \fh_{D(1)}
=\fh_{D}\circ p_{l,\fD'}$ $(l=0,1)$. We define $q\Omegab(\CM_l')$ and
$q\Omegab(\CM'(1))$ similarly to $q\Omegab(\CM_l)$ and $q\Omegab(\CM(1))$
in (1) by using $(M'_l,\theta_{M'_l})$ and $(M'(1),\theta_{M'(1)})$.
We assert that the following diagram is
commutative.
\begin{equation}\label{eq:qdRCpxMdfProdFunct}
\xymatrix@R=15pt@C=50pt{
v_{D*}q\Omegab(\CM_0)\otimes_Rv_{D*}q\Omegab(\CM_1)
\ar[r]^(.55){\eqref{eq:qDolMdfProd}}\ar[d]_{\eqref{eq:qDolShfCpxFunct2}}&
v_{D(1)*}q\Omegab(\CM(1))\ar[d]_{\eqref{eq:qDolShfCpxFunct2}}\\
v_{D*}\fh_{D*}q\Omegab(\CM_0')\otimes_R
v_{D*}\fh_{D*}q\Omegab(\CM_1 ')\ar[d]^{\cong}&
v_{D(1)*}\fh_{D(1)*}q\Omega^{\bullet}(\CM'(1))\ar[d]^{\cong}\\
g_*v_{D'*}q\Omegab(\CM_0')
\otimes_Rg_*v_{D'*}q\Omegab(\CM_1')
\ar[r]^(.58){\eqref{eq:qDolMdfProd}}&
g_*v_{D'(1)*}q\Omegab(\CM'(1))
}
\end{equation}
Let $\widetilde{\fX}\subset \fX$ be an affine open as in (1), 
and let $\widetilde{\fX'}$ be $\widetilde{\fX}\times_{\fX}\fX'$, which
is an affine open of $\fX'$. We define $\Spf(\tD')\subset \fD'$,
$\Spf(\tD'(1))\subset \fD'(1)$, $\tM_{l}'$, and $\tM'(1)$
similarly to $\tD$, $\tD(1)$, $\tM_{l}$ and $\tM(1)$ defined in (1)
by using $\widetilde{\fX'}\subset \fX'$, $M'_{l}$, and $M'(1)$.
By applying \eqref{eq:FunctZarLocDerivComp} to the 
diagram \eqref{eq:FramedSmEmbProdMorph}, $\widetilde{\fX}\subset \fX$, 
and $\widetilde{\fX}'\subset \fX'$, we obtain a commutative diagram
in the category $\DAlg$ introduced before Proposition \ref{prop:ProddRCpxFunct}
\begin{equation*}
\xymatrix@R=10pt{
\widetilde{\uD}=(\tD/R,\Lambda,\mu\ut,\utheta_{\tD})\ar[d]
\ar@<0.5ex>[r]\ar@<-0.5ex>[r]
&
\underline{\tD(1)}=(\tD(1)/R,\Lambda(1),\mu\ut^{(1)},\utheta_{\tD(1)})\ar[d]\\
\widetilde{\uD}'=(\tD'/R',\Lambda',\mu\ut',\utheta_{\tD'})
\ar@<0.5ex>[r]\ar@<-0.5ex>[r]
&
\underline{\tD'(1)}=(\tD'(1)/R',\Lambda'(1),\mu\ut^{\prime(1)},\utheta_{\tD'(1)})
}
\end{equation*}
lying over \eqref{eq:FrSmEmbEnvProdMorph}. Hence, by Proposition \ref{prop:ProddRCpxFunct},
we see that the morphism \eqref{eq:qdRcpxMdfProd} and
the corresponding one for $(\CF'_{l}, i'\colon \fX'\to \fY', \ut',
\widetilde{\fX}'\subset \fX')$ are compatible with the morphisms 
\eqref{eq:qDolShfCpxFunctLoc} 
$\tM_{l}\otimes_{\tD}q\Omegab_{\tD/R}
\to \tM'_{l}\otimes_{\tD'}q\Omegab_{\tD'/R'}$
and $\tM(1)\otimes_{\tD(1)}q\Omegab_{\tD(1)/R}
\to \tM'(1)\otimes_{\tD'(1)}q\Omegab_{\tD'(1)/R'}$
for $(\CF_{l},(f,g,h),\psi)$ and
$(\CF,(f,g,h(1)),\psi(1))$. Varying $\widetilde{\fX}$,
we see that the diagram \eqref{eq:qdRCpxMdfProdFunct} is commutative.
\end{remark}

We show that the morphisms \eqref{eq:LineDRcpxPB} and \eqref{eq:qDolShfCpxFunct} are compatible
with \eqref{eq:LinearizationZariskiProjMap}. 
We have the following diagrams of topos commutative up to canonical isomorphisms.
\begin{equation}\label{eq:PrismZarProjFunct}
\xymatrix@C=40pt{
(\fX'/R')^{\sim}_{\prism}\ar[r]^{U_{\fX'/R}}\ar[d]_{g_{\prism}}&
\fX^{\prime\sim}_{\ZAR}\ar[r]^{\varepsilon_{\fX',\Zar}}\ar[d]^{g_{\ZAR}}&
\fX^{\prime\sim}_{\Zar}\ar[d]^{g_{\Zar}}&
\fD^{\prime\sim}_{\Zar}\ar[l]_{v_{D',\Zar}}\ar[d]^{\fh_{D,\Zar}}\\
(\fX/R)^{\sim}_{\prism}\ar[r]^{U_{\fX/R}}&
\fX^{\sim}_{\ZAR}\ar[r]^{\varepsilon_{\fX,\Zar}}&
\fX^{\sim}_{\Zar}&
\fD^{\sim}_{\Zar}\ar[l]_{v_{D,\Zar}}
}
\end{equation}

\begin{proposition}\label{eq:LindRCpxZarProjFunct}
The following diagram is commutative for $r\in \N$.
\begin{equation*}
\xymatrix
{
g^{-1}u_{\fX/R*}(\CL_{A,B}(M\otimes_Dq\Omega^r_{D/R}))\ar[r]
&u_{\fX'/R'*}g_{\prism}^{-1}(\CL_{A,B}(M\otimes_Dq\Omega^r_{D/R}))
\ar[r]^{\eqref{eq:LineDRcpxPB}}
& u_{\fX'/R'*}(\CL_{A',B'}(M'\otimes_{D'}q\Omega^r_{D'/R'}))\\
g^{-1}(v_{D*}(\CM\otimes_{\CO_{\fD}}q\Omega^r_{\fD/R}))\ar[r]\ar[u]^{\eqref{eq:LinearizationZariskiProjMap}}
&v_{D'*}(\fh_D^{-1}(\CM\otimes_{\CO_{\fD}}q\Omega^r_{\fD/R}))\ar[r]^{\eqref{eq:qDolShfCpxFunct}}& 
v_{D'*}(\CM'\otimes_{\CO_{\fD'}}q\Omega^r_{\fD'/R'}),\ar[u]_{\eqref{eq:LinearizationZariskiProjMap}}
}
\end{equation*}
Here the upper (resp.~lower) left horizontal morphism is 
the base change morphism with respect to the 
left and middle squares (resp.~the right square) of the
diagram \eqref{eq:PrismZarProjFunct}. 
\end{proposition}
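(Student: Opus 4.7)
The plan is to reduce the commutativity to a local computation on each section, and then identify both compositions with the same scalar-extension morphism between $q$-Higgs complexes associated to a commutative diagram in the category $\DAlg$.

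First I would reduce to affine sections. Choose an affine open $\widetilde{\fX'}\subset\fX'$ and an affine open $\widetilde{\fX}\subset\fX$ with $g(\widetilde{\fX'})\subset\widetilde{\fX}$, and let $\Spf(\tD)\subset\fD$ and $\Spf(\tD')\subset\fD'$ be the affine open formal subschemes whose underlying spaces are $v_D^{-1}(\widetilde{\fX})$ and $v_{D'}^{-1}(\widetilde{\fX'})$; the morphism $\fh_D$ restricts to $\Spf(\tD')\to\Spf(\tD)$. Taking $\widetilde{\fX'}$-sections, both compositions in the diagram become maps from $(M\otimes_D\tD)\otimes_{\tD}q\Omega^r_{\tD/R}$ to $\varprojlim_{(P,w)}(M'\otimes_{D'}D'_P)\otimes_{D'_P}q\Omega^r_{D'_P/P}$, the limit taken over $(P,w)\in(\widetilde{\fX'}/R')_{\prism}$ (using Lemma~\ref{lem:LinearizationEnvBC} and the construction of $v_{D'*}$). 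Thus it suffices to show the section-level diagram over each object $(P,w)$ is commutative.

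For this, I would assemble the commutative diagram in the category $\DAlg$ introduced before Proposition~\ref{prop:ProddRCpxFunct}:
\begin{equation*}
\xymatrix{
\uD=(D/R,\Lambda,\mu\ut,\utheta_D)\ar[r]\ar[d] & \widetilde{\uD}=(\tD/R,\Lambda,\mu\ut,\utheta_{\tD})\ar[r]\ar[d] & \uD_P=(D_P/P,\Lambda,\mu\ut_P,\utheta_{D_P})\ar[d]\\
\uD'=(D'/R',\Lambda',\mu\ut',\utheta_{D'})\ar[r] & \widetilde{\uD}'=(\tD'/R',\Lambda',\mu\ut',\utheta_{\tD'})\ar[r] & \uD'_P=(D'_P/P,\Lambda',\mu\ut'_P,\utheta_{D'_P})
}
\end{equation*}
with horizontal maps determined by $\id_{\Lambda}$, $\id_{\Lambda'}$ and the algebra inclusions/envelope maps, and vertical maps determined by $\psi$, $h_D$, and the induced maps $h_{\tD}$, $h_{D_P}$. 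Commutativity of the outer rectangle on algebras is tautological; commutativity in $\DAlg$ follows from the uniqueness of extensions of twisted derivations along $(p,\pq)$-adically \'etale maps (Proposition~\ref{prop:TwistDerEtaleExt}) and along $q$-prismatic envelopes (Proposition~\ref{prop:TwDerivPrismExt}), together with Proposition~\ref{prop:qHiggsDerivFunct}~(2) applied to the morphism of framed smooth $q$-pairs~\eqref{eq:LinFramedqPairMaps}. In particular, the two compositions $\widetilde{\uD}\to\uD'_P$ coincide, as do the two compositions $\uD\to\uD'_P$.

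Applying the functoriality of de Rham complexes under scalar extension, namely Lemma~\ref{lem:dRCpxFuncCocyc}~(2), to $(M,\theta_M)\in\MIC(\uD)$ and the two rectangular decompositions of the outer square $\uD\to\uD'_P$, I obtain that the two composed morphisms of complexes at each $(P,w)$ agree. Tracing definitions, the composition going via $\uD_P$ is exactly the section over $(P,w)$ of the upper row (since \eqref{eq:LineDRcpxPB} was constructed through \eqref{eq:LineDRcpxFunctDiag} which uses precisely the scalar extension $\uD_P\to\uD'_P$), whereas the composition going via $\widetilde{\uD}'$ is the section of the lower row (since \eqref{eq:qDolShfCpxFunctLoc} was constructed via $\widetilde{\uD}\to\widetilde{\uD}'$, and the right-hand \eqref{eq:LinearizationZariskiProjMap} inserts the further scalar extension $\widetilde{\uD}'\to\uD'_P$). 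Varying $(P,w)$ and then $\widetilde{\fX'}$ yields the proposition. The main obstacle is the verification that the upper-right square in the $\DAlg$-diagram above is genuinely commutative: this is the point where one must reconcile the local Zariski extensions $\theta_{\tD,i},\theta_{\tD',i'}$ with the envelope extensions $\theta_{D_P,i},\theta_{D'_P,i'}$, and it is handled by uniqueness of $\pq$-derivation extensions together with Proposition~\ref{prop:DeltaCompEtExt} to ensure $\delta$-compatibility of the induced maps.
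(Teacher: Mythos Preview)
Your approach is correct and is essentially the paper's own proof: reduce to sections over an affine open and an object $(P,w)$ of the prismatic site, exhibit the commutative square of $\DAlg$-objects $\widetilde{\uD}\to\uD_P$ over $\widetilde{\uD}'\to\uD'_P$ (the paper phrases this as the commutative square of $E^{\ualpha}$-bialgebras), and conclude by the cocycle property Lemma~\ref{lem:dRCpxFuncCocyc}(2). The one place where the paper is more explicit than your sketch is the handling of the upper-left base-change morphism $g^{-1}u_{\fX/R*}\to u_{\fX'/R'*}g_{\prism}^{-1}$: rather than directly ``taking $\widetilde{\fX'}$-sections'' of $g^{-1}(\cdots)$, the paper first passes to the adjoint diagram \eqref{eq:LinedRZARProjFunct} and invokes Lemma~\ref{lem:PrismZarProjPB} to identify that base-change map on sections with the evaluation-restriction map; your phrasing glosses over this step, but the substance is the same.
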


\begin{proof}
Put $\CL=\CL_{A,B}(M\otimes_D q\Omega^r_{D/R})$,  
$\CL'=\CL_{A',B'}(M'\otimes_{D'}q\Omega^r_{D'/R'})$, 
$\CN=\CM\otimes_{\CO_{\fD}}q\Omega^r_{\fD/R}$,
and $\CN'=\CM'\otimes_{\CO_{\fD'}}q\Omega^r_{\fD'/R'}$
to simplify the notation. 
The adjoint of the upper (resp.~lower) left horizontal map with respect to 
the morphism of topos $g_{\Zar}\colon \fX_{\Zar}^{\prime\sim}\to \fX_{\Zar}^{\sim}$
is given by 
$u_{\fX/R*}\CL
\to u_{\fX/R*}g_{\prism*}g_{\prism}^{-1}\CL
\cong g_*u_{\fX'/R'*}g_{\prism}^{-1}\CL$
(resp.~$v_{D*}\CN\to v_{D*}\fh_{D*}\fh_D^{-1}\CN\cong
g_*v_{D'*}\fh_D^{-1}\CN$). Hence, by taking the adjoint with respect to $g_{\Zar}$, 
the commutativity in question is equivalent to that of the diagram
\begin{equation}\label{eq:LinedRZARProjFunct}
\xymatrix@C=40pt{
u_{\fX/R*}\CL\ar[r]&
u_{\fX/R*}g_{\prism*}g_{\prism}^{-1}\CL\cong
g_*u_{\fX'/R'*}g_{\prism}^{-1}\CL\ar[r]^(.67){\eqref{eq:LineDRcpxPB}}
&
g_*u_{\fX'/R'*}\CL'\\
v_{D*}\CN\ar[r]\ar[u]_{\eqref{eq:LinearizationZariskiProjMap}}&
v_{D*}\fh_{D*}\fh_D^{-1}\CN\cong
g_*v_{D'*}\fh_D^{-1}\CN\ar[r]^(.65){\eqref{eq:qDolShfCpxFunct}}&
g_*v_{D'*}\CN'.\ar[u]_{\eqref{eq:LinearizationZariskiProjMap}}
}
\end{equation}

Let $j\colon \fU\to \fX$ be an open immersion, let 
$j'\colon \fU'\to \fX'$ be the base change of $j$ under $g$,
and let $g_{\fU}\colon \fU'\to \fU$ be the morphism 
induced by $g\colon \fX'\to \fX$. 
Let $\fD_{\fU}=\Spf(D_{\fU})$
(resp.~$\fD'_{\fU'}=\Spf(D'_{\fU'})$) be the open formal subscheme of 
$\fD=\Spf(D)$ (resp.~$\fD'=\Spf(D')$) whose underlying topological space
is $v_D^{-1}(\fU)\subset \Spf(D/\pq D)$ (resp.~$v_{D'}^{-1}(\fU')\subset \Spf(D'/\pq D')$).
We have $\fh_D(\fD'_{\fU'})\subset \fD_{\fU}$.
By taking
$\Gamma(\fU,-)$ of the diagram \eqref{eq:LinedRZARProjFunct} 
and then evaluating the upper line at $(P, w')\in (\fU'/R')_{\prism}$
and $(P,g_{\fU}w')\in (\fU/R)_{\prism}$, we obtain a diagram 
\begin{equation*}
\xymatrix{
\txt{$(M\otimes_DD_P)\otimes_{D_P}q\Omega^r_{D_P/P}$\\
$=\Gamma((P,jg_{\fU}w'),\CL)$
}\ar@<1ex>@{=}[r]&
\txt{$(M\otimes_DD_P)\otimes_{D_P}q\Omega^r_{D_P/P}$\\
$=\Gamma((P, j' w'),g_{\prism}^{-1}\CL)$}\ar@<1ex>[r]^(.49){\eqref{eq:LineDRcpxFunctDiag}}
&
\txt{$(M'\otimes_{D'}D'_{P})\otimes_{D'_P}q\Omega^r_{D'_P/P}$\\
$=\Gamma((P,j'w'),\CL')$}\\
\Gamma((\fU/R)_{\prism},\CL\vert_{(\fU/R)_{\prism}})\ar[u]\ar[r]&
\Gamma((\fU'/R')_{\prism},g_{\prism}^{-1}(\CL)\vert_{(\fU'/R')_{\prism}})\ar[u]\ar[r]^(.53){\eqref{eq:LineDRcpxPB}}&
\Gamma((\fU'/R')_{\prism},\CL'\vert_{(\fU'/R')_{\prism}})\ar[u]
\\
\txt{$\Gamma(\fD_{\fU},\CN)=$\\
$(M\otimes_DD_{\fU})\otimes_{D_{\fU}}q\Omega_{D_{\fU}/R}^r$}\ar[u]\ar@<-2ex>[rr]^{\eqref{eq:qDolShfCpxFunctLoc}}&&
\txt{$\Gamma(\fD'_{\fU'},\CN')=$\\
$(M'\otimes_{D'}D'_{\fU'})\otimes_{D'_{\fU'}}q\Omega_{D'_{\fU'}/R'}$.}\ar[u]
}
\end{equation*}
The upper left (resp.~right) square is commutative by Lemma \ref{lem:PrismZarProjPB}
below (resp.~the definition of the morphism \eqref{eq:LineDRcpxPB}),
and the outer big rectangle is commutative by the commutativity of the diagram below
and Lemma \ref{lem:dRCpxFuncCocyc}.
\begin{equation*}\xymatrix{
E^{\ualpha}(D_P)\ar[r]& E^{\ualpha'}(D'_P)\\
E^{\ualpha}(D_{\fU})\ar[u]\ar[r]&E^{\ualpha'}(D'_{\fU'})\ar[u]
}
\end{equation*}
By considering all objects $(P,w')$ of $(\fU'/R')_{\prism}$, 
we see that the lower rectangle is commutative. As it holds for any 
$\fU$,  the diagram \eqref{eq:LinedRZARProjFunct} is also commutative.
This completes the proof.
\end{proof}

\begin{lemma}\label{lem:PrismZarProjPB}
Let $j\colon \fU\to \fX$ be an object of $\fX_{\ZAR}$, 
let $j'\colon \fU'=\fU\times_{\fX,g}\fX'\to \fX'$ be its base change under
$g\colon \fX'\to \fX$, and let $g_{\fU}\colon \fU'\to \fU$ denote the projection.
Then the following
diagram is commutative for $\CF\in \Ob (\fX/R)^{\sim}_{\prism}$. 
\begin{equation*}\xymatrix@C=10pt{
\Gamma((\fU/R)_{\prism},\CF\vert_{(\fU/R)_{\prism}})\ar[r]^(.43){g_{\fU,\prism}^*}&
\Gamma((\fU'/R')_{\prism},g_{\fU,\prism}^*(\CF\vert_{(\fU/R)_{\prism}}))\ar@{=}[r]&
\Gamma((\fU'/R')_{\prism},(g_{\prism}^*\CF)\vert_{(\fU'/R')_{\prism}})\\
\Gamma(\fU,U_{\fX/R*}\CF)\ar@{=}[u]\ar[r]&
\Gamma(\fU,U_{\fX/R*}g_{\prism*}g_{\prism}^*\CF)\cong
\Gamma(\fU,g_{\ZAR*}U_{\fX'/R'*}g_{\prism}^*\CF)\ar@{=}[r]&
\Gamma(\fU',U_{\fX'/R'*}g_{\prism}^*\CF)\ar@{=}[u]
}
\end{equation*}
\end{lemma}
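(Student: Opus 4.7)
The plan is to unfold every identification in the diagram back to its definition and verify that both routes compute the same map on global sections of the prismatic site of $\fU'$.

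First I would eliminate the four vertical equalities, all of which express the tautology $(U_{\fX/R*}\CH)(\fV\to\fX)=\Gamma((\fV/R)_{\prism},\CH\vert_{(\fV/R)_{\prism}})$ (and its primed analogue) for open immersions $\fV\hookrightarrow \fX$; none of them have any content beyond this definition. After this the claim becomes that two maps $\Gamma((\fU/R)_{\prism},\CF\vert_{(\fU/R)_{\prism}})\to \Gamma((\fU'/R')_{\prism},(g_{\prism}^*\CF)\vert_{(\fU'/R')_{\prism}})$ coincide: the top route is the morphism induced by the morphism of topos $g_{\fU,\prism}$ together with the canonical identification $g_{\fU,\prism}^*(\CF\vert_{(\fU/R)_{\prism}})=(g_{\prism}^*\CF)\vert_{(\fU'/R')_{\prism}}$, while the bottom route is the global sections over $\fU$ of the unit $\CF\to g_{\prism*}g_{\prism}^*\CF$ composed with the base change isomorphism $U_{\fX/R*}g_{\prism*}\cong g_{\ZAR*}U_{\fX'/R'*}$ and the definitional identification $\Gamma(\fU,g_{\ZAR*}-)=\Gamma(\fU',-)$.

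Next I would explicate the base change isomorphism: since $U_{\fX/R}\circ g_{\prism}=g_{\ZAR}\circ U_{\fX'/R'}$ as morphisms of topos, the canonical isomorphism $U_{\fX/R*}\circ g_{\prism*}\cong g_{\ZAR*}\circ U_{\fX'/R'*}$ evaluated at $\fU$ is, by the tautology above, induced by the identity of $(\fU'/R')_{\prism}^{\sim}$ viewed as both the localization over $\fU'$ in the prismatic topos of $\fX'$ and the fiber product of the prismatic topos of $\fU$ with $(\fX'/R')_{\prism}^\sim$ over $(\fX/R)_{\prism}^\sim$. Concretely, both the restriction functor $(\fX/R)_{\prism}\to (\fU/R)_{\prism}$ and the pullback $g_{\fU,\prism}^{-1}$ act on an object $((P,IP),w)\in (\fU'/R')_{\prism}$ by composing the structural map to $\fX$ with the given morphisms, so the resulting diagram of cocontinuous functors of underlying categories
\begin{equation*}
\xymatrix{
(\fU'/R')_{\prism}\ar[r]\ar[d]&(\fX'/R')_{\prism}\ar[d]\\
(\fU/R)_{\prism}\ar[r]&(\fX/R)_{\prism}
}
\end{equation*}
strictly commutes. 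This induces the required compatibility at the level of morphisms of topos and identifies the two compositions in the diagram.

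Finally I would feed an element $s\in \Gamma((\fU/R)_{\prism},\CF\vert_{(\fU/R)_{\prism}})$ through both routes: the top route sends $s$ to the section of $(g_{\prism}^*\CF)\vert_{(\fU'/R')_{\prism}}$ whose value on $((P,IP),w)\in (\fU'/R')_{\prism}$ is the image of $s(((P,IP),j\circ g_{\fU}\circ w))$ under the transition map defining $g_{\prism}^{-1}$; the bottom route computes the same value because the base change iso, when unwound via the commutative square of sites, is precisely this prescription. The main obstacle is purely the bookkeeping required to check that the various adjunctions and unit/counit morphisms are the ones I claim, but once the commutative square of underlying categories is in place this verification is formal and requires no calculation beyond tracing definitions.
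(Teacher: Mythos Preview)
Your proposal is correct and follows essentially the same strategy as the paper: unwind the definitions of $U_{\fX/R*}$ and the base change isomorphism, and verify the two routes agree by evaluating at an object $(P,w)$ of $(\fU'/R')_{\prism}$. The only presentational difference is that the paper organizes the argument via the representable sheaf $\fU^{\sim}$ on $\fX_{\ZAR}$, identifying $\Gamma((\fU/R)_{\prism},\CF\vert_{(\fU/R)_{\prism}})=\Hom(U_{\fX/R}^*\fU^{\sim},\CF)$ and then tracking the element $w'\in\Gamma((P,j'w'),U_{\fX'/R'}^*\fU'^{\sim})$ through the chain of identifications $g_{\prism}^*U_{\fX/R}^*\fU^{\sim}=U_{\fX'/R'}^*g_{\ZAR}^*\fU^{\sim}=U_{\fX'/R'}^*\fU'^{\sim}$, whereas you argue directly from the strict commutativity of the square of cocontinuous functors between underlying categories; these are two packagings of the same verification.
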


\begin{proof}
Let $\fU^{\sim}$
(resp.~$\fU^{\prime\sim}$) be the sheaf of sets on
$\fX_{\ZAR}$ (resp.~$\fX'_{\ZAR}$) represented by 
$\fU$ (resp.~$\fU'$). Then $\Gamma(\fU,-)$ of the morphism $U_{\fX/R*}\CF
\to U_{\fX/R*}g_{\prism*}g_{\prism}^*\CF
\cong g_{\ZAR*}U_{\fX'/R'*}g_{\prism}^*\CF$
is given by
\begin{multline}\label{eq:PrismZARProjPB}
\Gamma((\fU/R)_{\prism},\CF\vert_{(\fU/R)_{\prism}})
=\Hom_{(\fX/R)^{\sim}_{\prism}}(U_{\fX/R}^*\fU^{\sim},\CF)
\xrightarrow{g_{\prism}^*}
\Hom_{(\fX'/R')^{\sim}_{\prism}}(g_{\prism}^*U_{\fX/R}^*\fU^{\sim}, g_{\prism}^*\CF)\\
=\Hom_{(\fX'/R')^{\sim}_{\prism}}(U_{\fX'/R'}^*g_{\ZAR}^*\fU^{\sim}, g_{\prism}^*\CF)
=\Hom_{(\fX'/R')^{\sim}_{\prism}}(U_{\fX'/R'}^*\fU^{\prime\sim},g_{\prism}^*\CF)
=\Gamma((\fU'/R')_{\prism},(g^*_{\prism}\CF)\vert_{(\fU'/R')_{\prism}}).
\end{multline}
Let $(P,w')$ be an object of $(\fU'/R')_{\prism}$, which gives 
an object $(P,g_{\fU} w')$ of $(\fU/R)_{\prism}$. 
Then via the first (resp.~the last) equality above
the evaluation $\Gamma((\fU/R)_{\prism},\CF\vert_{(\fU/R)_{\prism}})\to\Gamma((P,j g_{\fU} w'),\CF)$
at $(P,g_{\fU} w')$ (resp.~$\Gamma((\fU'/R')_{\prism},(g_{\prism}^*\CF)\vert_{(\fU'/R')_{\prism}})
\to\Gamma((P,j' w'),g_{\prism}^*\CF)$ at $(P,w')$) is obtained by 
taking the value at $g_{\fU} w'\in \Gamma(\Spf(P/\pq P),\fU^{\sim})
=\Gamma((P, jg_{\fU} w'),U_{\fX/R}^*\fU^{\sim})$
(resp.~$w'\in \Gamma(\Spf(P/\pq P),\fU^{\prime\sim})
=\Gamma((P,j' w'),U_{\fX'/R'}^*\fU^{\prime\sim})$).
Since the image of $g_{\fU} w'$ under
$\Gamma((P,jg_{\fU}w'),U_{\fX/R}^*\fU^{\sim})
=\Gamma((P,j'w'),g_{\prism}^*U_{\fX/R}^*\fU^{\sim})
=\Gamma((P,j' w'), U_{\fX'/R'}^*\fU^{\prime\sim})$
is $w'$, the composition \eqref{eq:PrismZARProjPB}
is compatible with the evaluations at $(P,g_{\fU} w')$ and $(P,w')$
above. By varying $(P,w')$, we see that the diagram in the lemma
is commutative.
\end{proof}

\begin{remark}\label{rmk:LinqDolqDolMdfProdComp}
We see that the morphisms \eqref{eq:LinqdRCpxMdfProd} and \eqref{eq:qDolMdfProd} are compatible
with the morphisms of complexes \eqref{eq:qdRCpxLinZarProj} 
for $((i,\ut),\CF_l)$ $(l=0,1)$ and $((i(1),\ut^{(1)}),\CF)$
as follows.
Under the notation in Remarks \ref{rmk:LinPBProdComp2} (1) and 
\ref{rmk:qdRModfProdFunct} (1), we have the following
diagram of complexes of $R$-modules, where we abbreviate
$u_{\fX/R}$, $v_D$, $v_{D(1)}$,
$u_{\fX/R*}\CL\Omegab(p_{l,D}^*)$, and $v_{D*}q\Omegab(p_{l,D}^*)$
to $u$, $v$, $v^{(1)}$, $p_{l}^*$, and $p_{l}^*$.
\begin{equation*}
\xymatrix@R=20pt{
u_*\CL\Omegab(M_0)\otimes_Ru_*\CL\Omegab(M_1)
\ar[r]^(.45){p_0^*\otimes p_1^*}&
u_*\CL\Omegab(M_0(1))\otimes_Ru_*\CL\Omegab(M_1(1))
\ar[r]^(.65){\eqref{eq:LinqDolCpxProd}}&
u_*\CL\Omegab(M(1))\\
v_*q\Omegab(\CM_0)\otimes_R v_*q\Omegab(\CM_1)
\ar[r]^(.45){p_0^*\otimes p_1^*}\ar[u]^{\eqref{eq:qdRCpxLinZarProj}}&
v^{(1)}_*q\Omegab(\CM_0(1))\otimes_R
v^{(1)}_*\Omegab(\CM_1(1))
\ar[r]^(.65){\eqref{eq:qDolShfCpxProd}}\ar[u]^{\eqref{eq:qdRCpxLinZarProj}}&
v^{(1)}_*q\Omegab(\CM(1))\ar[u]^{\eqref{eq:qdRCpxLinZarProj}}
}
\end{equation*}
The left (resp.~right) square is commutative by \eqref{eq:LinedRZARProjFunct} 
(resp.~Remark \ref{rmk:LinZarProjFrobComp} (2)).
\end{remark}

By construction, the degree $0$-term
$g_{\prism}^{-1}(\CL_{A,B}(D_n))\to \CL_{A',B'}(D_n')$
of the morphism \eqref{eq:LineDRcpxPB} for $\CF=\CO_{\fX/R,n}$ is a homomorphism of  
algebras over $g_{\prism}^{-1}(\CO_{\fX/R,n})=\CO_{\fX'/R',n}$.
Hence the following diagram is commutative.
\begin{equation}\label{eq:CrystaldRResolFunct}
\xymatrix{
g_{\prism}^{-1}(\CF)\ar[r]^(.25){\eqref{eq:CrysPLAugMap}}\ar@{=}[d]
&
g_{\prism}^{-1}(\CF\otimes_{\CO_{\fX/R,n}}\CL_{A,B}(D_n\otimes_Dq\Omega^{\bullet}_{D/R}))
&
g_{\prism}^{-1}(\CF)\otimes_{\CO_{\fX'/R',n}}
g_{\prism}^{-1}(\CL_{A,B}(D_n\otimes_Dq\Omega^{\bullet}_{D/R}))
\ar[d]^{\id\otimes\eqref{eq:LineDRcpxPB}}
\ar[l]_(.52){\cong}
\\
g_{\prism}^{-1}(\CF)\ar[rr]^(.4){\eqref{eq:CrysPLAugMap}}
&&
g_{\prism}^{-1}(\CF)\otimes_{\CO_{\fX'/R',n}}\CL_{A',B'}(D_n'\otimes_{D'}q\Omega^{\bullet}_{D'/R'})
}
\end{equation}

\begin{lemma}\label{lem:LinZarqDRProjFunct}
The following diagram is commutative, where
the upper horizontal map is the composition of the right horizontal map
with the right vertical one in the diagram \eqref{eq:CrystaldRResolFunct}. 
\begin{equation*}
\xymatrix@C=60pt{
g_{\prism}^{-1}(\CF\otimes_{\CO_{\fX/R,n}}\CL_{A,B}(D_n\otimes_Dq\Omega^{\bullet}_{D/R}))
\ar[r]^(.47){\eqref{eq:LineDRcpxPB}}
\ar[d]^{\cong}_{\eqref{eq:CrystalResCpxComp}}
&
g_{\prism}^{-1}(\CF)\otimes_{\CO_{\fX'/R',n}}\CL_{A',B'}(D_n'\otimes_{D'}q\Omega^{\bullet}_{D'/R'})
\ar[d]^{\cong}_{\eqref{eq:CrystalResCpxComp}}
\\
g_{\prism}^{-1}(\CL_{A,B}(M\otimes_Dq\Omega^{\bullet}_{D/R}))
\ar[r]^{\eqref{eq:LineDRcpxPB}}
&
\CL_{A',B'}(M'\otimes_{D'}q\Omega^{\bullet}_{D'/R'})
}
\end{equation*}
\end{lemma}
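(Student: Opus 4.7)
The plan is to check the commutativity at the section over each object $(P,w)$ of $(\fX'/R')_{\prism}$. Set $v=g\circ w$, and construct $(B_P,J_{B_P}), (B'_P,J_{B'_P}), D_P, D'_P$ as in the paragraph before Lemma \ref{lem:LinearizationEnvBC}, regarding $D'_P$ as a prism over $(R,I)$ via $f$ (note $ID'_P=I'D'_P$ since $I'=f(R)I$). The morphism of framed smooth $q$-pairs \eqref{eq:LinFramedqPairMaps} induces a $P$-homomorphism $D_P\to D'_P$ compatible with $h_D\colon D\to D'$, which produces a morphism $u\colon (D'_P, g\circ v_{D'_P})\to (D_P,v_{D_P})$ in $(\fX/R)_{\prism}$ fitting, together with the natural morphisms of \eqref{eq:ProductDPmorphi} to $(P,v)$ and $(D,v_D)$, into a commutative diagram in $(\fX/R)_{\prism}$ with vertices $(P,v)$, $(D_P,v_{D_P})$, $(D'_P, g\circ v_{D'_P})$, and $(D,v_D)$.

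Taking sections at $(P,w)$ in degree zero, the upper path of the diagram in the lemma reads $\CF(P,v)\otimes_P D_P \to \CF(P,v)\otimes_P D'_P \xrightarrow{\cong} \CF(D'_P, g\circ v_{D'_P}) \xleftarrow{\cong} M\otimes_D D'_P = M'\otimes_{D'}D'_P$, while the lower path reads $\CF(P,v)\otimes_P D_P \xrightarrow{\cong} \CF(D_P,v_{D_P}) \xleftarrow{\cong} M\otimes_D D_P \to M\otimes_D D'_P = M'\otimes_{D'}D'_P$, where the rightmost arrow in the latter is scalar extension along $D_P\to D'_P$ and the identifications $M'\otimes_{D'}D'_P = M\otimes_D D'_P$ follow from $M'=M\otimes_D D'$. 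These two composites coincide by applying the crystal $\CF$ to the commutative diagram of prism morphisms constructed above: the two paths correspond to the two routes $(D'_P, g\circ v_{D'_P})\to (P,v)$ via $(D_P,v_{D_P})$ versus directly, combined with the identifications $M\otimes_D D_P\cong\CF(D_P,v_{D_P})$ and $M\otimes_D D'_P\cong\CF(D'_P, g\circ v_{D'_P})$ coming from the morphisms to $(D,v_D)$.

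Passing from degree zero to higher degrees is immediate because both \eqref{eq:CrystalResCpxComp} and \eqref{eq:LineDRcpxPB} are obtained in each degree by extending their degree zero parts using the functoriality of $q$-Higgs complexes under morphisms of framed smooth $q$-pairs, established in Proposition \ref{prop:qHiggsDerivFunct} (2) and Lemma \ref{lem:dRCpxFuncCocyc} (2); varying $(P,w)$ then yields the desired commutative diagram. The main technical obstacle will be to verify that the scalar extension map $M\otimes_D D_P\to M'\otimes_{D'}D'_P$ appearing in the upper horizontal map of \eqref{eq:LineDRcpxFunctDiag} coincides with the map induced by $\CF(u)$ under the identification $g_{\prism}^{-1}\CF(P,w)=\CF(P,v)$; this reconciles the ``prismatic'' and ``module-theoretic'' incarnations of scalar extension along $h_D$ and is essentially the canonical isomorphism between $h_D^*(M,\theta_M)$ and $(M',\theta_{M'})$ noted before Proposition \ref{prop:FunctStratConn}.
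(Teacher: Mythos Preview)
Your degree-zero argument is correct and matches the paper's second claim: the commutative diagram of objects in $(\fX/R)_{\prism}$ with vertices $(P,v)$, $(D_P,v_{D_P})$, $(D'_P,g\circ v_{D'_P})$, $(D,v_D)$, $(D',g\circ v_{D'})$ is exactly what the paper uses to establish the square
\[
\xymatrix{
\CF(P,g\circ w)\otimes_P D_P\ar[r]^{\cong}_{\eqref{eq:ResolLinCompMap}}\ar[d]&
M\otimes_D D_P\ar[d]\\
\CF(P,g\circ w)\otimes_P D'_P\ar[r]^{\cong}_{\eqref{eq:ResolLinCompMap}}&
M'\otimes_{D'}D'_P.
}
\]

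The gap is in your passage to higher degrees. The vertical maps \eqref{eq:CrystalResCpxComp} are \emph{not} instances of \eqref{eq:dRCpxFunct}; they are simply the degree-zero isomorphism \eqref{eq:ResolLinCompMap} tensored with $q\Omega^r_{D/R}$. So Lemma~\ref{lem:dRCpxFuncCocyc}~(2), which concerns compositions of morphisms of the form $\Omega^{\bullet}_{g/f,\psi}(-)$, does not apply. On the other hand, the horizontal maps \eqref{eq:LineDRcpxPB} in degree $r$ are given by the formula of Proposition~\ref{prop:dRCpxFunct}: they send $m\otimes\omega_{\bmI}$ to $\gamma^{<}_{\psi,\bmI}(m)\otimes\omega_{\psi^r(\bmI)}$, where the twist $\gamma^{<}_{\psi,\bmI}$ is nontrivial whenever $\psi$ fails to be injective. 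Consequently, commutativity in degree $r$ requires that the degree-zero isomorphism \eqref{eq:ResolLinCompMap} intertwine $\id_{\CF(P,v)}\otimes\gamma_{D_P,i}$ on $\CF(P,v)\otimes_P D_P$ with $\gamma_{M\otimes_D D_P,i}$ on $M\otimes_D D_P$ for every $i\in\Lambda$. This is precisely the first of the two claims the paper isolates; it does not come for free from the functoriality you cite, but rather from the compatibility of \eqref{eq:ResolLinCompMap} with $\theta_{D_P,i}$ and $\theta_{M\otimes_D D_P,i}$ established in the proof of Proposition~\ref{prop:CrystalResolCpxComp} (since $\gamma=1+t_i\mu\theta$). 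Once you add this observation, your argument is complete and coincides with the paper's.
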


\begin{proof}
By the construction of \eqref{eq:LineDRcpxPB} from 
the upper map in \eqref{eq:LineDRcpxFunctDiag} which goes
back to Proposition \ref{prop:dRCpxFunct} via \eqref{eq:qDolbCpxPB2}
applied to the upper horizontal map of \eqref{eq:LinFramedqPairMaps},
and that of \eqref{eq:CrystalResCpxComp} from \eqref{eq:ResolLinCompMap},
 we are reduced to showing the following two claims for
 each object $(P,w)$ of $(\fX'/R')_{\prism}$. The isomorphism
 \eqref{eq:ResolLinCompMap} for $\CF$ and $(P,g\circ w)$:
 $\CF(P,g\circ w)\otimes_DD_P\cong M\otimes_DD_P$
 is compatible with $\id_{\CF(P,g\circ w)}\otimes\gamma_{D_P,i}$
 and $\gamma_{M\otimes_DD_P,i}$ for $i\in \Lambda$, and the diagram
 \begin{equation*}
 \xymatrix@C=50pt{
 \CF(P,g\circ w)\otimes_PD_P
 \ar[r]^(.57){\cong}_(.57){\eqref{eq:ResolLinCompMap}}
 \ar[d]
&
 M\otimes_DD_P
 \ar[d]
 \\
 \CF(P,g\circ w)\otimes_PD'_P
 \ar[r]^(.57){\cong}_(.57){\eqref{eq:ResolLinCompMap}}
 &
 M'\otimes_{D'}D'_P
 }
 \end{equation*}
is commutative. Here $\gamma_{D_P,i}$
is the $q$-Higgs automorphism of $D_P$ associated 
to the framed smooth $q$-pair $((B_P,J_P)/P,\ut_P)$
(Definition \ref{def:qHiggsDifferential} (2)), 
$\gamma_{M\otimes_DD_P,i}$ 
is the $\gamma_{D_P,i}$-semilinear endomorphism 
$\id+t_{P,i}\mu\nabla_{M\otimes_DD_P,i}$
of $M\otimes_DD_P$ defined by the $(t_{i,P}\mu,\theta_{D_P,i})$-connection
 $\theta_{M\otimes_DD_P,i}$ associated to the 
$q$-Higgs field $\theta_{M\otimes_DD_P}$ on $M\otimes_DD_P$
over $(q\Omega^{\bullet}_{D_P/P},\theta_{D_P}^{\bullet})$
(Definition \ref{def:connection}),
and the vertical maps of the diagram are 
defined by the morphism $D_P\to D'_P$ of $q$-prisms
induced by the morphism of $\delta$-pairs $(B_P,J_{B_P})\to (B'_P,J_{B'_P})$
\eqref{eq:LinFramedqPairMaps} over $f\colon (R,I)\to (R',I')$. The first claim is reduced to
the compatibility with $\id_{\CF(P,g\circ w)}\otimes \theta_{D_P,i}$
and $\theta_{M\otimes_DD_P,i}$, which was verified in the
proof of Proposition \ref{prop:CrystalResolCpxComp}. 
The second claim follows from the commutative diagram in $(\fX/R)_{\prism}$
\begin{equation*}
\xymatrix{
(P,g\circ w)\ar@{=}[d]
&
\ar[l]
(D_P,v_{D_P})
\ar[r]
&
(D,v_D)\\
(P,g\circ w)
&
\ar[l]
\ar[u]
(D'_P,g\circ v_{D'_P})
\ar[r]
&
\ar[u]
(D',g\circ v_{D'}),
}
\end{equation*}
where the upper (resp.~lower) line is \eqref{eq:ProductDPmorphi} 
for $(P,g\circ w)$ and $B\to A$ over $R$ (resp.~for $(P,w)$ and $B'\to A'$ over $R'$
composed with $g\colon \fX'\to \fX$), and the right square
is induced by \eqref{eq:LinFramedqPairMaps}.
\end{proof}

By combining Lemma \ref{lem:LinZarqDRProjFunct}
with \eqref{eq:CrystaldRResolFunct}, we obtain the following commutative
diagram, a functoriality of \eqref{eq:qdRResol}.
\begin{equation}\label{eq:CrysqdRResolFunct}
\xymatrix@C=40pt{
g_{\prism}^{-1}(\CF)\ar[r]^(.3){\eqref{eq:qdRResol}}\ar@{=}[d]
&
g_{\prism}^{-1}(\CL_{A,B}(M\otimes_Dq\Omega^{\bullet}_{D/R}))
\ar[d]^{\eqref{eq:LineDRcpxPB}}
\\
\CF'\ar[r]^(.3){\eqref{eq:qdRResol}}
&
\CL_{A',B'}(M'\otimes_{D'}q\Omega^{\bullet}_{D'/R'})
}
\end{equation}
\begin{remark}\label{rmk:ResolMdfProdComp}
The morphism \eqref{eq:LinqdRCpxMdfProd} is compatible with
the resolution \eqref{eq:qdRResol} as follows. Under the notation 
in Remark \ref{rmk:LinPBProdComp2} (1), we have the following diagram, whose
left (resp.~right square) is commutative by \eqref{eq:CrysqdRResolFunct}
(resp.~Remark \ref{rmk:LinResolTransFrobComp} (2)).
We abbreviate $\CL\Omegab(p_{l,D}^*)$ to $p_{l}^*$.
\begin{equation*}\xymatrix@C=35pt@R=20pt{
\CF_0\otimes_{\CO_{\fX/R,n}}\CF_1\ar@{=}[r]\ar[d]^{\eqref{eq:qdRResol}}&
\CF_0\otimes_{\CO_{\fX/R,n}}\CF_1\ar@{=}[r]\ar[d]^{\eqref{eq:qdRResol}}&
\CF\ar[d]^{\eqref{eq:qdRResol}}\\
\CL\Omega^{\bullet}(M_0)\otimes_{\CO_{\fX/R,n}}
\CL\Omega^{\bullet}(M_1)
\ar[r]^(.45){p_0^*\otimes p_1^*}&
\CL\Omega^{\bullet}(M_0(1))\otimes_{\CO_{\fX/R,n}}
\CL\Omega^{\bullet}(M_1(1))
\ar[r]^(.67){\eqref{eq:LinqDolCpxProd}}&
\CL\Omega^{\bullet}(M(1))
}
\end{equation*}
\end{remark}

By using Proposition \ref{eq:LindRCpxZarProjFunct}
and \eqref{eq:CrysqdRResolFunct} above, we obtain the following functoriality of \eqref{eq:CrystalCohqHiggs}.

\begin{proposition}\label{prop:PrismZarProjqDRFunct}
Under the notation and assumption introduced before
\eqref{eq:LineDRcpxPB}, the following diagram is commutative.
\begin{equation*}
\xymatrix@C=50pt{
g^{-1}Ru_{\fX/R*}\CF\ar[r]&
Ru_{\fX'/R'*}g_{\prism}^{-1}(\CF)\ar@{=}[r]&
Ru_{\fX'/R'*}\CF'
\\
g^{-1}v_{D_*}(\CM\otimes_{\CO_{\fD}}q\Omega^{\bullet}_{\fD/R})
\ar[u]^{\eqref{eq:CrystalCohqHiggs}}_{\cong}
\ar[r]&
v_{D'*}(\fh_{D}^{-1}(\CM\otimes_{\CO_{\fD}}q\Omega^{\bullet}_{\fD/R}))
\ar[r]^{\eqref{eq:qDolShfCpxFunct}}&
v_{D'*}(\CM'\otimes_{\CO_{\fD'}}q\Omega^{\bullet}_{\fD'/R'})
\ar[u]^{\cong}_{\eqref{eq:CrystalCohqHiggs}}
}
\end{equation*}
\end{proposition}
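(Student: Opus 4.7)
The plan is to factor both vertical isomorphisms through the intermediate term $Ru_{\fX/R*}\CL_{A,B}(M\otimes_D q\Omega^{\bullet}_{D/R})$ (and analogously on the primed side), thereby splitting the square in question into two stacked sub-squares whose commutativities follow from results already recorded in the paper.

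Concretely, by the construction in the proof of Theorem \ref{th:CrystalCohqHiggs}, the isomorphism \eqref{eq:CrystalCohqHiggs} factors in $D^+(\fX_{\Zar},R)$ as
$$v_{D*}(\CM\otimes_{\CO_{\fD}}q\Omega^{\bullet}_{\fD/R}) \xrightarrow[\eqref{eq:LinearizationZariskiProjMap}]{\cong} Ru_{\fX/R*}\CL_{A,B}(M\otimes_Dq\Omega^{\bullet}_{D/R}) \xleftarrow[\eqref{eq:qdRResol}]{\cong} Ru_{\fX/R*}\CF,$$
and analogously for the primed data. Inserting the $g^{-1}$ of the middle term on the left column and its primed counterpart on the right column splits the square of the proposition into two stacked sub-squares. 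The upper sub-square has the two base change morphisms for $(g_{\Zar},g_{\prism})$ as horizontal arrows, and $Ru_{\fX/R*}$ and $Ru_{\fX'/R'*}$ applied to the resolutions \eqref{eq:qdRResol} as vertical arrows; it commutes by combining the commutative diagram \eqref{eq:CrysqdRResolFunct} (the functoriality of the resolution) with the naturality of the base change morphism in the sheaf. The lower sub-square has horizontal arrows given by the base change morphism followed by $Ru_{\fX'/R'*}\eqref{eq:LineDRcpxPB}$ (top) and $v_{D'*}\eqref{eq:qDolShfCpxFunct}$ (bottom), and vertical arrows induced by \eqref{eq:LinearizationZariskiProjMap}; its commutativity follows from Proposition \ref{eq:LindRCpxZarProjFunct} applied in each degree, once one notes that $u_{\fX/R*}\CL_{A,B}(M\otimes_D q\Omega^r_{D/R})$ already represents $Ru_{\fX/R*}$ of the same sheaf by Proposition \ref{prop:LinearizationLocCoh}, and that $v_{D*}$ coincides with $Rv_{D*}$ on quasi-coherent modules since $v_D$ is affine.

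The main subtlety will be bookkeeping: disentangling the base change morphisms associated with the four morphisms of topos $u_{\fX/R}$, $u_{\fX'/R'}$, $v_D$, $v_{D'}$ and the ``vertical'' morphisms $g_{\prism}$, $g_{\ZAR}$, $g_{\Zar}$, $\fh_{D,\Zar}$ appearing in the square \eqref{eq:PrismZarProjFunct}, and ensuring that the termwise base change morphisms composed with \eqref{eq:LinearizationZariskiProjMap} assemble into the single base change morphism for the total complex $v_{D*}(\CM\otimes_{\CO_{\fD}}q\Omega^{\bullet}_{\fD/R})$. Once Proposition \ref{eq:LindRCpxZarProjFunct} is invoked in each degree and assembled into a statement for the full complex via the quasi-isomorphism of Proposition \ref{prop:LinearizationLocCoh}, however, there are no further computations to perform, and the commutativity in the derived category $D^+(\fX'_{\Zar},R')$ follows formally.
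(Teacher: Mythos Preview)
Your proposal is correct and follows essentially the same approach as the paper: factor the isomorphism \eqref{eq:CrystalCohqHiggs} through the intermediate object $Ru_{\fX/R*}\CL_{A,B}(M\otimes_D q\Omega^{\bullet}_{D/R})$, then verify the two resulting sub-squares by invoking \eqref{eq:CrysqdRResolFunct} for the resolution part and Proposition~\ref{eq:LindRCpxZarProjFunct} for the linearization-to-Zariski-projection part, passing from $u_{\fX/R*}$ to $Ru_{\fX/R*}$ via Proposition~\ref{prop:LinearizationLocCoh}. The paper's proof is terser but uses exactly the same two ingredients in the same way.
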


\begin{proof}
By \eqref{eq:CrysqdRResolFunct}, we have  the following
commutative diagram, where
$\CL\Omega(M)$ and $\CL\Omega(M')$ denote
$\CL_{A,B}(M\otimes_Dq\Omega^{\bullet}_{D/R})$
and $\CL_{A',B'}(M'\otimes_{D'}q\Omega^{\bullet}_{D'/R'})$,
respectively.
\begin{equation*}
\xymatrix@C=40pt{
g^{-1}Ru_{\fX/R*}\CF\ar[r]\ar[d]_{\cong}^{\eqref{eq:qdRResol}}&
Ru_{\fX'/R'*}g_{\prism}^{-1}(\CF)\ar@{=}[r]\ar[d]_{\cong}^{\eqref{eq:qdRResol}}&
Ru_{\fX'/R'*}\CF'\ar[d]_{\cong}^{\eqref{eq:qdRResol}}
\\
g^{-1}Ru_{\fX/R*}\CL\Omega(M)\ar[r]&
Ru_{\fX'/R'*}g_{\prism}^{-1}(\CL\Omega(M))\ar[r]^(.53){\eqref{eq:LineDRcpxPB}}&
Ru_{\fX'/R'*}\CL\Omega(M')
}
\end{equation*}
We obtain the claim by combining the commutative diagram in 
Proposition \ref{eq:LindRCpxZarProjFunct} with the above one via the morphisms
$u_{\fX/R*}\to Ru_{\fX/R*}$ and $u_{\fX'/R'*}\to Ru_{\fX'/R'*}$.
\end{proof}

We have the following analogue of Proposition \ref{prop:PrismZarProjqDRFunct} for the 
isomorphism \eqref{eq:CrystalCohqHigProj}. 

\begin{proposition}\label{prop:CrystalCohqHigProjFunct}
Let $\CF$ be a complete crystal of $\CO_{\fX/R}$-modules
on $(\fX/R)_{\prism}$ (Definition \ref{def:PrismaticSite} (2)),
and let $(\CF_n)_{n\in \N}$ be the corresponding adic inverse system of crystals
of $\CO_{\fX/R,n}$-modules on $(\fX/R)_{\prism}$ 
(Remark \ref{rmk:CompleteCrystalInvSystem} (2), (3)).
Let $\CF'$ denote the complete crystal of $\CO_{\fX/R}$-modules
$g_{\prism}^{-1}\CF$, and let $(\CF'_n)_{n\in \N}$ denote the
corresponding adic inverse system of crystals of $\CO_{\fX/R,n}$-modules,
which coincides with $(g_{\prism}^{-1}\CF_n)_{n\in \N}$ (Definition \ref{def:PrismaticSite}(3)). 
Let $(\CM_n\otimes_{\CO_{\fD}}q\Omega_{\fD/R}^{\bullet})_{n\in \N}$
(resp.~$(\CM_n'\otimes_{\CO_{\fD'}}q\Omega_{\fD'/R}^{\bullet})_{n\in \N}$)
be the inverse system of $q$-Higgs complexes on $\fD_{\Zar}$ (resp.~$\fD'_{\Zar}$)
associated to $(\CF_n)_{n\in \N}$ (resp.~$(\CF_n')_{n\in \N}$) \eqref{eq:qHiggsShfCpx}.
Then the following diagram is commutative.
\begin{equation*}
\xymatrix{
Ru_{\fX/R*}\CF\ar[r]&
Ru_{\fX/R*}Rg_{\prism*}\CF'\ar[r]^{\cong}&
Rg_*Ru_{\fX'/R'*}\CF'\\
v_{D*}\varprojlim_n(\CM_n\otimes_{\CO_{\fD}}q\Omega^{\bullet}_{\fD/R})
\ar[rr]\ar[u]^{\cong}_{\eqref{eq:CrystalCohqHigProj}}
&&
Rg_*(v_{D'*}\varprojlim_n(\CM'_n\otimes_{\CO_{\fD'}}q\Omega^{\bullet}_{\fD'/R}))
\ar[u]^{\cong}_{\eqref{eq:CrystalCohqHigProj}},
}
\end{equation*}
where the bottom horizontal morphism is induced by the inverse
limit of $(v_{D*}(\CM_n\otimes_{\CO_{\fD}}q\Omega^{\bullet}_{\fD/R})
\to h_{\fD*}(\CM'_n\otimes_{\CO_{\fD'}}q\Omega^{\bullet}_{\fD'/R'}))_{n\in \N}$
\eqref{eq:qDolShfCpxFunct2}.
\end{proposition}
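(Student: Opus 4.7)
The plan is to mimic the proof of Proposition \ref{prop:PrismZarProjqDRFunct}, replacing the single-degree resolution by its $\varprojlim_n$ counterpart and then invoking Proposition \ref{prop:LinLocCohProjlim} (and the underlying $R\varprojlim$ vanishing of Lemma \ref{lem:crystalDProjlim}) to make sure the inverse limits interact correctly with $Ru_{\fX/R*}$, $Ru_{\fX'/R'*}$, and $Rg_*$.

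First I would note that, for each $n\in\N$, the compatibility diagram \eqref{eq:CrysqdRResolFunct} applied to the crystal $\CF_n$ of $\CO_{\fX/R,n}$-modules gives a commutative square relating the resolutions \eqref{eq:qdRResol} for $\CF_n$ and $\CF'_n=g_{\prism}^{-1}\CF_n$ via the morphism \eqref{eq:LineDRcpxPB}. Because these squares are functorial in the crystal (the construction of \eqref{eq:LineDRcpxPB} respects inverse systems, the transition maps in $(\CF_n)$ and in $(M_n,\theta_{M_n})$ being the natural ones coming from $\CO_{\fX/R,n+1}\to\CO_{\fX/R,n}$), one gets a commutative square of inverse systems of presheaves, to which I apply $\varprojlim_n$. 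As shown in the proof of Theorem \ref{thm:CrystalCohqHigProj}, the transition morphisms in $(\CF_n)$ and in $(\CL_{A,B}(M_n\otimes_D q\Omega^{\bullet}_{D/R}))$ (and likewise for the primes) are epimorphisms of presheaves, so $\varprojlim_n$ of each column still yields a resolution
\[
\CF\longrightarrow \varprojlim_n\CL_{A,B}(M_n\otimes_D q\Omega^{\bullet}_{D/R}),\qquad
\CF'\longrightarrow \varprojlim_n\CL_{A',B'}(M'_n\otimes_{D'} q\Omega^{\bullet}_{D'/R'}),
\]
and a morphism between them coming from $\varprojlim_n\eqref{eq:LineDRcpxPB}$.

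Next I would apply $Ru_{\fX/R*}$ to the resulting commutative square, use the natural map $Ru_{\fX/R*}\to Rg_*\circ Ru_{\fX'/R'*}\circ g_{\prism}^{-1}$, and combine with the $\varprojlim_n$-version of Proposition \ref{eq:LindRCpxZarProjFunct}. The latter yields a commutative diagram
\[
\xymatrix@C=18pt{
g^{-1}u_{\fX/R*}\varprojlim_n\CL_{A,B}(M_n\otimes_Dq\Omega^{\bullet}_{D/R})\ar[r]&
u_{\fX'/R'*}\varprojlim_n\CL_{A',B'}(M'_n\otimes_{D'}q\Omega^{\bullet}_{D'/R'})\\
g^{-1}v_{D*}\varprojlim_n(\CM_n\otimes_{\CO_{\fD}}q\Omega^{\bullet}_{\fD/R})\ar[u]\ar[r]
&v_{D'*}\varprojlim_n(\CM'_n\otimes_{\CO_{\fD'}}q\Omega^{\bullet}_{\fD'/R'}),\ar[u]
}
\]
obtained by $\varprojlim_n$-ing the square of Proposition \ref{eq:LindRCpxZarProjFunct} for each $n$ and pushing it through $g^{-1}$ and the adjunction $(g^{-1},g_*)$ exactly as in the proof of Proposition \ref{prop:PrismZarProjqDRFunct}. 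Splicing the two commutative squares together, and replacing $u_{\fX/R*}$ by $Ru_{\fX/R*}$ (and likewise for $\fX'/R'$) on top, then gives the desired commutativity, provided one knows that the vertical maps on top are quasi-isomorphisms after taking $R\varprojlim$.

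The main obstacle is precisely this last compatibility: upgrading the isomorphism \eqref{eq:CrystalCohqHigProj} from Theorem \ref{thm:CrystalCohqHigProj} to a statement where $\varprojlim_n$ commutes with $Ru_{\fX/R*}$ and with $Rg_*$ on the nose. This is handled exactly as in the proof of Theorem \ref{thm:CrystalCohqHigProj} via Proposition \ref{prop:LinLocCohProjlim}: Lemma \ref{lem:crystalDProjlim} ensures that $\varprojlim_n \CF_n \xrightarrow{\cong} R\varprojlim_n \CF_n$ and that the analogous statement holds for $(\CechA_{A,B}(\CF_n)^{r})_{n}$ (and, by the same argument, for $(\CechA_{A',B'}(\CF'_n)^{r})_{n}$), and similarly that $v_{D*}\varprojlim_n(\CM_n\otimes q\Omega^{\bullet}_{\fD/R})\cong Rv_{D*}R\varprojlim_n(\CM_n\otimes q\Omega^{\bullet}_{\fD/R})$. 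Given these, the composite square in the statement of the proposition becomes the outer rectangle of a commutative diagram whose inner squares have already been established, which completes the proof. The only real care needed is in checking that each intermediate $R\varprojlim$ (over $n$) evaluates to an ordinary $\varprojlim$; once this is in place the argument is formal from the already-established Proposition \ref{prop:PrismZarProjqDRFunct}.
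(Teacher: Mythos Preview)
Your proposal is correct and follows essentially the same approach as the paper: take the finite-level compatibility diagrams \eqref{eq:CrysqdRResolFunct} and Proposition \ref{eq:LindRCpxZarProjFunct} (more precisely its adjoint form \eqref{eq:LinedRZARProjFunct}) for each $\CF_n$, pass to the inverse limit, and splice the resulting squares together. The paper's proof is simply a terse packaging of this into a single two-square diagram; your additional discussion of the $R\varprojlim$ compatibility via Lemma \ref{lem:crystalDProjlim} and Proposition \ref{prop:LinLocCohProjlim} is more elaborate than strictly necessary, since that compatibility was already absorbed into the construction of the isomorphism \eqref{eq:CrystalCohqHigProj} in Theorem \ref{thm:CrystalCohqHigProj}.
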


\begin{proof} 
From \eqref{eq:LinedRZARProjFunct} and the adjoint commutative diagram of \eqref{eq:CrysqdRResolFunct} 
with respect to $g_{\prism}$, we obtain a commutative diagram
\begin{equation*}
\xymatrix@C=40pt{
Ru_{\fX/R*}\CF\ar[r]^(.45){\cong}_(.45){\eqref{eq:qdRResol}}
\ar[d]&
Ru_{\fX/R*}\varprojlim_n\CL_n^{\bullet}\ar[d]&
\ar[l]_(.45){\cong}^(.45){\eqref{eq:LinearizationZariskiProjMap}}
v_{D*}\varprojlim_n\CN_n^{\bullet}\ar[d]\\
Rg_*Ru_{\fX'/R'*}\CF'
\ar[r]^(.45){\cong}_(.45){\eqref{eq:qdRResol}}&
Rg_*Ru_{\fX'/R'*}\varprojlim_n\CL_n^{\prime\bullet}&
\ar[l]_(.45){\cong}^(.45){\eqref{eq:LinearizationZariskiProjMap}}
Rg_*(v_{D'*}\varprojlim_n\CN_n^{\prime\bullet}),
}
\end{equation*}
where the left (resp.~right) vertical morphism is
the upper (resp.~lower) horizontal  composition (resp.~morphism) in the claim,
and $\CL_n^{\bullet}$, $\CL_n^{\prime\bullet}$, $\CN_n^{\bullet}$ and 
$\CN_n^{\prime\bullet}$ denote
$\CL_{A,B}(M_n\otimes_Dq\Omega^{\bullet}_{D/R})$,
$\CL_{A',B'}(M'_n\otimes_{D'}q\Omega^{\bullet}_{D'/R'})$,
$\CM_n\otimes_{\CO_{\fD}}q\Omega^{\bullet}_{\fD/R}$,
and $\CM'_n\otimes_{\CO_{\fD'}}q\Omega^{\bullet}_{\fD'/R}$,
respectively.
\end{proof}

\section{Prismatic cohomology and $q$-Higgs complex: 
Global case}\label{sec:PrismCohqDolbGlobal}

Let $R$ be a $q$-prism (Definition \ref{def:FramedSmoothPrism} (1)), 
and let $\fX$ be a separated $p$-adic formal scheme over 
$\Spf(R/[p]_qR)$. Suppose that there exist a  Zariski hypercovering
$\fX_{\star}=([\nu]\mapsto \fX_{[\nu]}=\Spf(A_{[\nu]}))_{\nu\in\N}$ of $\fX$
by affine formal schemes and an
$R$-closed immersion $i_{\star}\colon \fX_{\star}\hookrightarrow \fY_{\star}
=([\nu]\mapsto \fY_{[\nu]}=\Spf(B_{[\nu]}))_{\nu\in\N}$
into a simplicial smooth $pR+[p]_q R$-adic affine formal scheme over $R$ 
such that the cosimplicial $R$-algebras with ideals $([\nu]\mapsto (B_{[\nu]},J_{[\nu]}))_{\nu\in \N}$, 
where $J_{[\nu]}=\Ker(i_{[\nu]}^*\colon B_{[\nu]}\to A_{[\nu]})$ for $\nu\in \N$,
underlies a cosimplicial framed smooth $q$-pairs 
$([\nu]\mapsto ((B_{[\nu]},J_{[\nu]})/R, \ut_{[\nu]}=(t_{[\nu], i})_{i\in \Lambda_{[\nu]}})_{\nu\in \N}$
(Definition \ref{def:FramedSmoothPrism} (3))
and that each $(B_{[\nu]},J_{[\nu]})$ has a $q$-prismatic envelope $D_{[\nu]}$
(Definition \ref{def:FramedSmoothPrism} (1)).

Choose and fix such data in the following. Under this assumption, we will give
a description of $Ru_{\fX/R*}\CF$ for a crystal of $\CO_{\fX/R,n}$-modules
$\CF$ on $(\fX/R)_{\prism}$ in terms of the $q$-Higgs complex 
on $\fD_{[\nu]}=\Spf(D_{[\nu]})$ associated to the pullback of $\CF$ on 
$(\fX_{[\nu]}/R)_{\prism}$ by using Zariski cohomological descent
(Theorem \ref{th:CrystalCohqHiggsSimpl}). 
We prove also a variant for inverse systems of crystals 
(Theorem \ref{thm:CrystalCohqHiggsSimplProj}). Then we discuss
compatibility with scalar extension under the lifting of Frobenius
and with tensor products in Remarks \ref{rmk:CrystZarProjqDolSimpFrob} and 
\ref{rmk:CrystZarProjqDolProd}, 
and the functoriality with respect to $i_{\star}\colon \fX_{\star}\to\fY_{\star}$
and $\ut_{\star}$ (Propositions \ref{eq:FunctCrysProjQDRHP} and \ref{prop:PrismZarProjqdRFunctHC}).

\begin{remark}\label{rmk:ExistSimpEmbedPrEnv}
If $\fX$ is quasi-compact and smooth over $\Spf(R/\pq R)$, we can
construct $\fX_{\star}\to \fX$, $i_{\star}\colon \fX_{\star}\hookrightarrow\fY_{\star}$,
and $((B_{\star},J_{\star})/R,\ut_{\star})$ as above in the following way.
Since $\fX$ is quasi-compact, there exist 
an affine open covering $\fU_{\alpha}$ $(\alpha=1,\ldots, N)$ of $\fX$,
an $R$-closed immersion $i_{\alpha}\colon \fU_{\alpha}\hookrightarrow \fV_{\alpha}=\Spf(B_{\alpha})$
into a smooth $pR+\pq R$-adic affine formal scheme over $R$ for each $\alpha\in \N\cap [1,N]$,
and $pR+\pq R$-adic coordinates $\ut_{\alpha}=(t_{\alpha,i})_{i\in\Lambda}$ of $B_{\alpha}$
over $R$ (Definition \ref{def:formallyflat} (2)) for each $\alpha\in \N\cap [1,N]$
with a common totally ordered set $\Lambda$.
We may assume that the conormal sheaf of the reduction modulo $pR+\pq R$ of 
the closed immersion $i_{\alpha}$ 
is free for every $\alpha\in \N\cap [1,N]$.
Let $\fX_{[0]}=\Spf(A_{[0]})$ (resp.~$\fY_{[0]}=\Spf(B_{[0]})$)
be the disjoint union of $\fU_{\alpha}$ (resp.~$\fV_{\alpha}$) $(\alpha\in \N\cap [1,N])$,
and let $\ut_{[0]}=(t_{[0],i})_{i\in\Lambda}$ be the $pR+\pq R$-adic
coordinates of $B_{[0]}$ over $R$
defined by $t_{[0],i}\vert_{\fV_{\alpha}}=t_{\alpha,i}$. 
The \v{C}ech nerve $\fX_{\star}=([\nu]\mapsto \fX_{[\nu]}=\Spf(A_{[\nu]}))$ of 
$\fX_{[0]}$ over $\fX$
is a Zariski hypercovering of $\fX$.
Since $\fX$ is separated, $\fX_{[\nu]}$ $(\nu\in \N)$ are affine,
and the closed immersions $i_{\alpha}$ $(\alpha\in\N\cap [1,N])$ induce
a closed immersion of simplicial formal schemes $i_{\star}$ of
$\fX_{\star}$ into the \v{C}ech nerve $\fY_{\star}=([\nu]\mapsto \fY_{[\nu]}=\Spf(B_{[\nu]}))$ 
of $\fY_{[0]}$ over $R$. For each $\nu\in \N$, the $R$-algebra $B_{[\nu]}$
with the ideal $J_{[\nu]}=\Ker(i_{[\nu]}^*\colon B_{[\nu]}\to A_{[\nu]})$ has a $q$-prismatic
envelope by Proposition \ref{prop:bddPrismEnvSmSmEmb} and the assumption on $i_{\alpha}$.
For $\nu\in \N$, let $\Lambda_{[\nu]}$  be the set $[\nu]\times\Lambda$ equipped
with the lexicographic order. 
Then $\Lambda_{[\nu]}$ $(\nu\in\N)$
naturally form a cosimplicial totally ordered set $\Lambda_{\star}$. 
We define the $pR+\pq R$-adic coordinates $\ut_{[\nu]}=(t_{[\nu],(l,i)})_{(l,i)\in \Lambda_{[\nu]}}$
of $B_{[\nu]}$ over $R$ by letting $t_{[\nu],(l,i)}$ 
be the inverse image of $t_{[0],i}$ by the projection
to the $(\ell+1)$th component $\fY_{[\nu]}\to \fY_{[0]}$.
Then the cosimplicial $R$-algebras $([\nu]\mapsto B_{[\nu]})_{\nu\in\N}$
and the sets of coordinates $\ut_{[\nu]}$ $(\nu\in \N)$ with the simplicial index set
$\Lambda_{\star}$ form a cosimplicial framed smooth 
$\delta$-$R$-algebra (Definition \ref{def:FramedSmoothPrism} (2)).
\end{remark}

Let $((\fX_{\star}/R)^{\sim}_{\prism}, \CO_{\fX_{\star}/R,n})$ and 
$(\fX_{\star,\Zar}^{\sim}, \CO_{\fX_{\star}})$ be the ringed topos associated 
to the simplicial ringed topos 
$([\nu]\mapsto ((\fX_{[\nu]}/R)^{\sim}_{\prism},\CO_{\fX_{[\nu]}/R,n}))_{\nu\in\N}$ and 
the simplicial Zariski topos $([\nu]\mapsto (\fX^{\sim}_{[\nu],\Zar},\CO_{\fX_{[\nu]}}))_{\nu\in\N}$, respectively,  
defined by the simplicial $p$-adic formal scheme $\fX_{\star}$ over $\Spf(R)$,  
and let $u_{\fX_{\star}/R}\colon (\fX_{\star}/R)_{\prism}^{\sim}
\to \fX^{\sim}_{\star,\Zar}$ be the morphism of topos induced
by the morphism of simplicial topos
$([\nu]\mapsto u_{\fX_{[\nu]}/R}\colon (\fX_{[\nu]}/R)^{\sim}_{\prism}\to \fX^{\sim}_{[\nu],\Zar})$. 
Let $\theta_{\prism}$ and $\theta$ denote the morphisms of 
ringed topos $((\fX_{\star}/R)^{\sim}_{\prism},\CO_{\fX_{\star}/R})\to 
((\fX/R)^{\sim}_{\prism},\CO_{\fX/R})$
and $(\fX_{\star,\Zar}^{\sim},\CO_{\fX_{\star}})\to (\fX_{\Zar}^{\sim},\CO_{\fX})$
(\cite[V\bis (2.2.1)]{SGA4}). 
We have $\theta_{\prism}^{-1}(\CO_{\fX/R,n})=\CO_{\fX_{\star}/R,n}$
and $\theta^{-1}(\CO_{\fX})=\CO_{\fX_{\star}}$. 

The $q$-prismatic envelopes $D_{[\nu]}$ of $(B_{[\nu]},J_{[\nu]})$ $(\nu\in \N)$
form a cosimplicial $q$-prism $D_{\star}=([\nu]\mapsto D_{[\nu]})_{\nu\in \N}$ over
$R$ and define simplicial $R$-formal schemes 
$\fD_{\star}=([\nu]\mapsto \fD_{[\nu]}=\Spf(D_{[\nu]}))_{\nu\in \N}$, 
$\ofD_{\star}=([\nu]\mapsto \ofD_{[\nu]}=\Spf(D_{[\nu]}/[p]_qD_{[\nu]}))_{\nu\in \N}$,
and simplicial $R$-schemes
$\fD_{\star,n}=([\nu]\mapsto \fD_{[\nu],n}=\Spec(D_{[\nu]}/(\pq,p)^{n+1}))_{\nu\in \N}$
$(n\in\N)$. 
Let $\ofD_{\star,\Zar}^{\sim}$ be the topos associated the simplicial 
topos $([\nu]\mapsto \ofD_{[\nu],\Zar})_{\nu\in \N}$.
The $R$-homomorphisms $A_{[\nu]}\cong B_{[\nu]}/J_{[\nu]} \to D_{[\nu]}/[p]_qD_{[\nu]}$
define a morphism of simplicial formal schemes
$v_{D_{\star}}=([\nu]\mapsto v_{D_{[\nu]}}\colon \ofD_{[\nu]}\to \fX_{[\nu]})_{\nu\in \N}
\colon \ofD_{\star}\to \fX_{\star}$
over $R$, which induces a morphism of topos 
$v_{D_{\star},\Zar}\colon \ofD_{\star, \Zar}^{\sim}\to \fX_{\star,\Zar}^{\sim}$. 

Let $\CF$ be a crystal  of $\CO_{\fX/R,n}$-modules on $(\fX/R)_{\prism}$,
and let $\CF_{\star}$ be the $\CO_{\fX_{\star}/R,n}$-module
$\theta_{\prism}^{-1}\CF$, which consists of the pull-back $\CF_{[\nu]}=g_{[\nu], \prism}^{-1}(\CF)$ of
$\CF$ under the morphism $g_{[\nu]}\colon \fX_{[\nu]}\to \fX$ for each $\nu\in \N$,
and the canonical isomorphism $g_{s,\prism}^{-1}(\CF_{[\nu]})
\xrightarrow{\cong} (g_{[\nu]}\circ g_s)^{-1}_{\prism}(\CF)
=g_{[\lambda],\prism}^{-1}(\CF)=\CF_{[\lambda]}$
for the morphism $g_s\colon \fX_{[\lambda]}\to \fX_{[\nu]}$
corresponding to  each non-decreasing map $s\colon [\nu]\to [\lambda]$.
By applying the resolution \eqref{eq:qdRResol}, the morphism of complexes 
\eqref{eq:qdRCpxLinZarProj}, and 
Proposition \ref{prop:LinearizationLocCoh} 
to $i_{[\nu]}\colon \fX_{[\nu]}\hookrightarrow \fY_{[\nu]}=\Spf(B_{[\nu]})$,
$((B_{[\nu]},J_{[\nu]})/R,\ut_{[\nu]})$, and $\CF_{[\nu]}$ for each $\nu\in \N$, 
and then using the functoriality of \eqref{eq:qdRResol} and \eqref{eq:qdRCpxLinZarProj}:
\eqref{eq:CrysqdRResolFunct} and Proposition \ref{eq:LindRCpxZarProjFunct},
we obtain a resolution 
of $\CO_{\fX_{\star}/R,n}$-modules
on $(\fX_{\star}/R)_{\prism}^{\sim}$
\begin{equation}\label{eq:CrystalqDRResolHC}
\CF_{\star}\longrightarrow 
\CL_{A_{\star},B_{\star}}(M_{\star}\otimes_{D_{\star}}q\Omega^{\bullet}_{D_{\star}/R}),
\end{equation}
an isomorphism in $C^+(\fX_{\star,\Zar}^{\sim},R)$
\begin{equation}\label{eq:LinqdRCpxProjHC}
v_{D_{\star*}}(\CM_{\star}\otimes_{\CO_{\fD_{\star}}}q\Omega_{\fD_{\star}/R}^{\bullet})
\xrightarrow{\quad\cong\quad}
u_{\fX_{\star}/R*}(\CL_{A_{\star},B_{\star}}(M_{\star}\otimes_{D_{\star}}q\Omega_{D_{\star}/R}^{\bullet})),
\end{equation}
and an isomorphism in  $D^+(\fX_{\star,\Zar}^{\sim},R)$
\begin{equation}\label{eq:LinqdRCpxProjHC2}
u_{\fX_{\star}/R*}(\CL_{A_{\star},B_{\star}}(M_{\star}\otimes_{D_{\star}}q\Omega_{D_{\star}/R}^{\bullet}))
\xrightarrow{\quad\cong\quad}
Ru_{\fX_{\star}/R*}(\CL_{A_{\star},B_{\star}}(M_{\star}\otimes_{D_{\star}}q\Omega_{D_{\star}/R}^{\bullet})).
\end{equation}
Note that $Ru_{\fX_{\star}/R*}$ can be computed on each component $\fX_{[\nu]}$ by
\cite[V\bis Corollaire (1.3.12)]{SGA4}. 
The composition of \eqref{eq:LinqdRCpxProjHC},
\eqref{eq:LinqdRCpxProjHC2}, and $Ru_{\fX_{\star}/R*}$\eqref{eq:CrystalqDRResolHC}
yields an isomorphism in $D^+(\fX_{\star,\Zar}^{\sim},R)$
\begin{equation}\label{eq:CrysZarProjDolCpxHC}
v_{D_{\star*}}(\CM_{\star}\otimes_{\CO_{\fD_{\star}}}q\Omega_{\fD_{\star}/R}^{\bullet})
\xrightarrow{\quad\cong\quad}
Ru_{\fX_{\star}/R*}\CF_{\star}
\end{equation}
This allows us to glue \eqref{eq:CrystalCohqHiggs} as follows.

\begin{theorem}\label{th:CrystalCohqHiggsSimpl}
Under the notation and assumption above, we have the following canonical isomorphism
in $D^+(\fX_{\Zar},R)$ for a crystal of $\CO_{\fX/R,n}$-modules $\CF$ on $(\fX/R)_{\prism}$
functorial in $\CF$. 
\begin{equation}\label{eq:CrystalCohqHiggsSimpl}
Ru_{\fX/R*}\CF\cong (g_{\star*}(v_{D_{\star}*}(\CM_{\star}\otimes_{\CO_{\fD_{\star}}}
q\Omega^{\bullet}_{\fD_{\star}/R})))_s
\end{equation}
Here $[\nu]\mapsto 
g_{[\nu]*}(v_{D_{[\nu]}*}(\CM_{[\nu]}\otimes_{\CO_{\fD_{[\nu]}}}
q\Omega^{\bullet}_{D_{[\nu]}/R}))$ forms a cosimplicial complex,
which defines a complex of complexes, and $(-)_s$ in the right-hand
side denotes its associated simple complex.
\end{theorem}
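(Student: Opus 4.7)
The plan is to deduce the theorem from the componentwise comparison \eqref{eq:CrysZarProjDolCpxHC}, which already lives in $D^+(\fX_{\star,\Zar}^{\sim},R)$, by invoking Zariski cohomological descent along the hypercovering $\fX_{\star}\to \fX$. Concretely, since $\fX_{\star}\to \fX$ is a Zariski hypercovering, the morphism of topos $\theta\colon \fX_{\star,\Zar}^{\sim}\to \fX_{\Zar}^{\sim}$ satisfies cohomological descent, so the adjunction unit $\CG\xrightarrow{\sim} R\theta_{*}\theta^{-1}\CG$ is an isomorphism in $D^+(\fX_{\Zar}^{\sim},R)$; moreover, for a bounded-below complex $\CK^{\bullet}$ on $\fX_{\star,\Zar}^{\sim}$ each of whose components $\CK^{r}_{[\nu]}$ is $g_{[\nu]*}$-acyclic, $R\theta_{*}\CK^{\bullet}$ is computed by the simple complex associated to the cosimplicial complex $[\nu]\mapsto g_{[\nu]*}\CK^{\bullet}_{[\nu]}$, which is precisely the notation $(g_{\star*}\CK^{\bullet})_s$ appearing in the statement.

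Applying the descent isomorphism to $\CG=Ru_{\fX/R*}\CF$ and combining it with the base change identification $\theta^{-1}Ru_{\fX/R*}\CF\cong Ru_{\fX_{\star}/R*}\theta_{\prism}^{-1}\CF= Ru_{\fX_{\star}/R*}\CF_{\star}$ — which holds componentwise because each $g_{[\nu]}\colon \fX_{[\nu]}\to \fX$ is built from open immersions on the hypercovering and the prismatic site is defined locally on the Zariski topology of the base — I would then transport via the already-constructed comparison \eqref{eq:CrysZarProjDolCpxHC} to obtain an isomorphism
\[
Ru_{\fX/R*}\CF\;\cong\; R\theta_{*}\bigl(v_{D_{\star}*}(\CM_{\star}\otimes_{\CO_{\fD_{\star}}}q\Omega^{\bullet}_{\fD_{\star}/R})\bigr)
\]
in $D^+(\fX_{\Zar}^{\sim},R)$. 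The final step is to identify this $R\theta_{*}$ with the simple complex $(g_{\star*}(v_{D_{\star}*}(\CM_{\star}\otimes q\Omega^{\bullet}_{\fD_{\star}/R})))_s$; this rests on the fact that each $v_{D_{[\nu]}*}(\CM_{[\nu]}\otimes q\Omega^{r}_{\fD_{[\nu]}/R})$ is a quasi-coherent $\CO_{\fX_{[\nu]}}$-module on the affine formal scheme $\fX_{[\nu]}$, and its pushforward along $g_{[\nu]}$ — which restricts on each hypercovering component to the inclusion of an affine intersection — has vanishing higher direct images, so that no spectral-sequence correction is needed. Functoriality in $\CF$ is automatic, since all of \eqref{eq:qdRResol}, \eqref{eq:qdRCpxLinZarProj}, Proposition \ref{prop:LinearizationLocCoh}, and the descent identifications are functorial in $\CF$, and their simplicial gluing (via \eqref{eq:CrysqdRResolFunct} and Proposition \ref{eq:LindRCpxZarProjFunct}) is precisely what produces \eqref{eq:CrysZarProjDolCpxHC}.

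The main obstacle I anticipate is this last identification of $R\theta_{*}$ with the associated simple complex. While cohomological descent for Zariski hypercoverings is standard, one must verify that the particular complex in question really is computed by the naive cosimplicial pushforward rather than by a Cartan–Eilenberg resolution: this amounts to a termwise $g_{[\nu]*}$-acyclicity statement for the quasi-coherent sheaves $v_{D_{[\nu]}*}(\CM_{[\nu]}\otimes q\Omega^{r}_{\fD_{[\nu]}/R})$. If direct acyclicity is subtle, one can instead appeal to the local isomorphism \eqref{eq:LinqdRCpxProjHC2} on the simplicial level — which already identifies these sheaves with $u_{\fX_{\star}/R*}$ applied to the linearization — and transfer the acyclicity from the prismatic side, where it follows from the construction of $\CL_{A_{\star},B_{\star}}$ on each affine component. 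Everything else is a formal consequence of hypercovering descent and the local theorem.
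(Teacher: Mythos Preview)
Your proposal is correct and follows essentially the same route as the paper: cohomological descent along the Zariski hypercovering combined with the base change $L\theta^{-1}Ru_{\fX/R*}\CF\cong Ru_{\fX_{\star}/R*}\CF_{\star}$, then transport via \eqref{eq:CrysZarProjDolCpxHC}, and finally identification of $R\theta_*$ with the simple complex by termwise $g_{[\nu]*}$-acyclicity. The paper makes the last step precise by factoring $\theta$ through the constant simplicial formal scheme $c(\fX)_{\star}$, reducing to the vanishing $R^rg_{[\nu]*}(v_{D_{[\nu]}*}\CN)=0$ for quasi-coherent $\CO_{\fD_{[\nu],n}}$-modules $\CN$, which holds because both $v_{D_{[\nu]}}$ and $g_{[\nu]}$ are affine (the latter since $\fX$ is separated and $\fX_{[\nu]}$ is affine); your phrasing of this step is slightly loose (these are not literally quasi-coherent $\CO_{\fX_{[\nu]}}$-modules on the formal scheme, and $g_{[\nu]}$ need not be an open immersion for a general hypercovering), but the underlying affineness argument is the right one.
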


\begin{proof}
Since $\fX_{\star}$ is a Zariski hypercovering of $\fX$ and $Ru_{\fX_{\star}/R*}$
can be computed on each $\fX_{[\nu]}$,  the base change morphism 
$L\theta^{-1}Ru_{\fX/R*}\CF\to Ru_{\fX_{\star}/R*}\CF_{\star}$
is an isomorphism in $D^+(X_{\star,\Zar}^{\sim},R)$. 
By cohomological descent \cite[V\bis Proposition (3.2.4), Proposition (3.3.1) a), Th\'eor\`eme (3.3.3)]{SGA4}, 
the morphism $Ru_{\fX/R*}\CF\to R\theta_*L\theta^{-1}(Ru_{\fX/R*}\CF)$
is an isomorphism in $D^+(\fX_{\Zar},R)$. By combining these with 
$R\theta_*$\eqref{eq:CrysZarProjDolCpxHC}, we obtain an isomorphism 
\begin{equation}
Ru_{\fX/R*}\CF\cong 
R\theta_*(v_{D_{\star*}}(\CM_{\star}\otimes_{\CO_{\fD_{\star}}}q\Omega_{\fD_{\star}/R}^{\bullet}))
\end{equation}
in $D^+(\fX_{\Zar},R)$. 
Let $c(\fX)_{\star}$ denote the constant simplicial formal scheme $([\nu]\mapsto \fX)_{\nu\in\N}$.
Then the morphism of topos $\theta\colon \fX_{\star,\Zar}^{\sim}\to \fX_{\Zar}^{\sim}$
decomposes as 
$\fX_{\star,\Zar}^{\sim}\xrightarrow{g_{\star,\Zar}} c(\fX)^{\sim}_{\star,\Zar}
\xrightarrow{\varepsilon} \fX^{\sim}_{\Zar}$. The derived functor
$R\varepsilon_*\colon D^+(c(\fX)_{\star,\Zar}^{\sim},R)\to D^+(\fX^{\sim}_{\Zar},R)$ is known
to be canonically isomorphic to the functor induced by the 
exact functor $K^+(c(\fX)_{\star,\Zar}^{\sim},R)
\to K^+(\fX^{\sim}_{\Zar},R)$ sending a cosimplicial complex 
$\CF_{\star}^{\bullet}=([\nu]\mapsto \CF_{[\nu]}^{\bullet})_{\nu\in\N}$ of
$R$-modules on $\fX_{\Zar}$ to the simple complex associated
to the complex of complexes 
$\CF_{[0]}^{\bullet}\to \CF_{[1]}^{\bullet}\to\cdots
\to \CF_{[\nu]}^{\bullet}\to \cdots$ defined by 
$\CF_{\star}^{\bullet}$ (\cite[V\bis Proposition (2.3.5), (2.3.9)]{SGA4}). 
Since the higher direct
image under $g_{\star,\Zar}$ can be computed for each $g_{[\nu],\Zar}$, 
it now remains to show the vanishing 
$R^rg_{[\nu],\Zar*}(v_{D_{[\nu]}, \Zar*}(\CN))=0$ $(r>0)$
for a quasi-coherent $\CO_{\fD_{[\nu],n}}$-module $\CN$.
This is reduced to the case $n=0$, where the claim
holds because the morphisms $v_{D_{[\nu]}}\colon 
\ofD_{[\nu]}\to \fX_{[\nu]}$ and $g_{[\nu]}\colon \fX_{[\nu]}\to \fX$ are
 affine as 
$\fX$ is assumed to be separated.
\end{proof}

Similarly to Theorem \ref{thm:CrystalCohqHigProj}, the following theorem holds for an
inverse system of crystals.

\begin{theorem}\label{thm:CrystalCohqHiggsSimplProj}
Let $\CF$ be a complete crystal of $\CO_{\fX/R}$-modules
(Definition \ref{def:PrismaticSite} (2)), and let
$(\CF_n)_{n\in \N}$ be the adic inverse system of
crystals of $\CO_{\fX/R,n}$-modules on $(\fX/R)_{\prism}$
corresponding to $\CF$ (Remark \ref{rmk:CompleteCrystalInvSystem} (3), (4)).
Let $\CF_{\star}$ be 
the complete crystal of $\CO_{\fX_{\star}/R}$-modules
$\theta^{-1}_{\prism}\CF$, let $(\CF_{\star,n})_{n\in\N}$ be 
the corresponding adic inverse system of crystals of
$\CO_{\fX_{\star}/R,n}$-modules, which coincides with 
$(\theta^{-1}_{\prism}\CF_n)_{n\in\N}$,
and let $(\CM_{\star,n}\otimes_{\CO_{\fD_{\star}}}
q\Omega^{\bullet}_{\fD_{\star}/R})_{n\in\N}$ be the inverse
system of complexes of $R$-modules on $\ofD_{\star,\Zar}$
associated to $(\CF_{\star,n})_{n\in \N}$ by 
\eqref{eq:qHiggsShfCpx} and \eqref{eq:qDolShfCpxFunct}. 
Then, we have the 
following isomorphism in $D^+(\fX_{\Zar},R)$ functorial
in $\CF$.
\begin{equation}\label{eq:CrystalCohqHiggsSimplProj}
Ru_{\fX/R*}\CF
\cong(g_{\star*}(v_{D_{\star}*}
(\varprojlim_n\CM_{\star,n}\otimes_{\CO_{\fD_{\star}}}q\Omega^{\bullet}_{\fD_{\star}/R})))_s
\end{equation}
Here the notation $(-)_s$ is the same as Theorem \ref{th:CrystalCohqHiggsSimpl}.
\end{theorem}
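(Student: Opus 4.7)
The plan is to adapt the proof of Theorem \ref{th:CrystalCohqHiggsSimpl} step by step, replacing the single-level crystal inputs by their projective-system analogues and drawing at each stage on the projective-limit results of \S\ref{sec:PrismCohqDolb} and \S\ref{sec:PrismCohqDolbFunct}. First, combining the componentwise inverse-limit resolutions $\CF_n \to \varprojlim_n \CL_{A,B}(M_n\otimes_Dq\Omega^{\bullet}_{D/R})$ from the proof of Theorem \ref{thm:CrystalCohqHigProj} with the cosimplicial functoriality \eqref{eq:CrysqdRResolFunct} across the index $[\nu]$ produces a resolution
$$\CF_{\star} \longrightarrow \varprojlim_n \CL_{A_{\star},B_{\star}}(M_{\star,n}\otimes_{D_{\star}}q\Omega^{\bullet}_{D_{\star}/R})$$
of $\CO_{\fX_{\star}/R}$-modules on $(\fX_{\star}/R)^{\sim}_{\prism}$, the resolution property following componentwise from the surjectivity of the transition maps (Remark \ref{rmk:CompleteCrystalInvSystem} (1)) exactly as in Theorem \ref{thm:CrystalCohqHigProj}.

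Next, applying $Ru_{\fX_{\star}/R*}$ to this resolution and invoking Proposition \ref{prop:LinLocCohProjlim} on each component (which is legitimate since $Ru_{\fX_{\star}/R*}$ decomposes over the simplicial index by \cite[V\bis Corollaire (1.3.12)]{SGA4}), together with the functorial projective-limit comparison read off from Proposition \ref{eq:LindRCpxZarProjFunct} via Proposition \ref{prop:CrystalCohqHigProjFunct}, would yield the simplicial inverse-limit analogue of \eqref{eq:CrysZarProjDolCpxHC}:
$$v_{D_{\star*}}\bigl(\varprojlim_n(\CM_{\star,n}\otimes_{\CO_{\fD_{\star}}}q\Omega^{\bullet}_{\fD_{\star}/R})\bigr) \xrightarrow{\;\cong\;} Ru_{\fX_{\star}/R*}\CF_{\star}$$
in $D^+(\fX^{\sim}_{\star,\Zar},R)$. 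Then applying $R\theta_*$ and invoking Zariski cohomological descent via \cite[V\bis Proposition (3.2.4), (3.3.1) a), Th\'eor\`eme (3.3.3)]{SGA4}, the left-hand side identifies canonically with $Ru_{\fX/R*}\CF$ exactly as in the proof of Theorem \ref{th:CrystalCohqHiggsSimpl}; factoring $\theta = \varepsilon\circ g_{\star,\Zar}$ and using the realization of $R\varepsilon_*$ as the simple-complex functor from \cite[V\bis Proposition (2.3.5), (2.3.9)]{SGA4} then delivers the stated formula, provided one can show the vanishing $R^r g_{[\nu]*}(v_{D_{[\nu]}*}(\varprojlim_n\CN_n))=0$ for $r>0$, where $\CN_n = \CM_{[\nu],n}\otimes_{\CO_{\fD_{[\nu]}}}q\Omega^{\bullet}_{\fD_{[\nu]}/R}$. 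Functoriality in $\CF$ is automatic, being inherited at each step from the corresponding functoriality of \eqref{eq:CrystalCohqHigProj} established in Proposition \ref{prop:CrystalCohqHigProjFunct}.

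The main obstacle is precisely the vanishing above, because the inverse limit $\varprojlim_n\CN_n$ is no longer quasi-coherent in general, so one cannot directly reduce to the single-level vanishing used in the proof of Theorem \ref{th:CrystalCohqHiggsSimpl}. The plan for overcoming this is to exploit two complementary facts: the morphisms $v_{D_{[\nu]}}$ and $g_{[\nu]}$ are both affine (the latter because $\fX$ is separated), so their pushforwards are right adjoints commuting with inverse limits; and the transition maps $\CN_{n+1}\to\CN_n$ are degree-wise surjective by Remark \ref{rmk:CompleteCrystalInvSystem} (1) together with the adic description of the $q$-Higgs complex, so Mittag-Leffler ensures $R\varprojlim_n\CN_n = \varprojlim_n\CN_n$ and likewise after pushforward. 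Combining these, the higher direct images in question reduce degree by degree to the single-level vanishing invoked in the proof of Theorem \ref{th:CrystalCohqHiggsSimpl}, completing the argument.
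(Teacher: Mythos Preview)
Your proposal is correct and follows essentially the same approach as the paper's proof. The paper packages the key vanishing step---that $g_{[\nu]*}(v_{D_{[\nu]}*}\varprojlim_n\CN_n)\to Rg_{[\nu]*}(v_{D_{[\nu]}*}\varprojlim_n\CN_n)$ is an isomorphism---into a separate lemma (Lemma \ref{lem:SimplPrismEnvZarProj}), whose proof is exactly the argument you sketch: use affineness of $v_{D_{[\nu]}}$ and $g_{[\nu]}$ to get single-level acyclicity for quasi-coherent sheaves, combine with surjectivity of transition maps to get $R\varprojlim=\varprojlim$ (via \cite[Lemma IV.4.2.3]{AGT} rather than an informal Mittag--Leffler appeal), and then chain these together.
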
 

\begin{proof}
By applying the argument in the proof of Theorem \ref{thm:CrystalCohqHigProj} to the
inverse system of crystals of $\CO_{\fX_{[\nu]}/R,n}$-modules $(\CF_{[\nu],n})_{n\in\N}$
on $(\fX_{[\nu]}/R)_{\prism}$,  
$i_{[\nu]}\colon \fX_{[\nu]}\hookrightarrow \fY_{[\nu]}=\Spf(B_{[\nu]})$,
and $\ut_{[\nu]}\in B_{[\nu]}^{\Lambda_{[\nu]}}$ for each $\nu\in\N$, we see that the
resolution of inverse systems of $\CO_{\fX_{\star}/R,n}$-modules
$ (\CF_{\star,n})_{n\in \N}\to 
(\CL_{A_{\star},B_{\star}}(M_{\star,n}\otimes_{D_{\star}}
q\Omega^{\bullet}_{D_{\star}/R}))_{n\in\N}$
defined by \eqref{eq:CrystalqDRResolHC} induces a resolution
$\CF_{\star}\cong\varprojlim_n\CF_{\star,n}\to \varprojlim_n \CL_{A_{\star},B_{\star}}
(M_{\star,n}\otimes_{D_{\star}}q\Omega^{\bullet}_{D_{\star}/R})$.
By \eqref{eq:qdRCpxLinZarProj} and Proposition \ref{eq:LindRCpxZarProjFunct}, we obtain
a morphism of inverse systems of complexes of $R$-modules on 
$\fX_{\star,\Zar}^{\sim}$
$$(v_{D_{\star}*}(\CM_{\star,n}\otimes_{\CO_{\fD_{\star}}}
q\Omega^{\bullet}_{\fD_{\star}/R}))_{n\in\N}
\longrightarrow (u_{\fX_{\star}/R*}(\CL_{A_{\star},B_{\star}}
(M_{\star,n}\otimes_{D_{\star}}q\Omega^{\bullet}_{D_{\star}/R})))_{n\in\N}.$$
By Proposition \ref{prop:LinLocCohProjlim},
we obtain an isomorphism 
\begin{equation}\label{eq:LinqdRCpxProjHCProj}
Ru_{\fX_{\star}/R*}\CF_{\star}
\cong v_{D_{\star}*}(\varprojlim_n (\CM_{\star,n}\otimes_{\CO_{\fD_{\star}}}
q\Omega^{\bullet}_{\fD_{\star}/R}))\end{equation} 
since $Ru_{\fX_{\star}/R*}$ can be computed on each component
$\fX_{[\nu]}$ as mentioned after \eqref{eq:LinqdRCpxProjHC2}.
By the 
same argument as the proof of Theorem \ref{th:CrystalCohqHiggsSimpl} deriving
\eqref{eq:CrystalCohqHiggsSimpl} from \eqref{eq:CrysZarProjDolCpxHC},
we obtain \eqref{eq:CrystalCohqHiggsSimplProj} from \eqref{eq:LinqdRCpxProjHCProj}
by applying Lemma \ref{lem:SimplPrismEnvZarProj} below to the inverse system of 
quasi-coherent $\CO_{\fD_{[\nu],n}}$-modules
$(\CM_{[\nu],n}\otimes_{\CO_{\fD_{[\nu]}}}q\Omega^r_{\fD_{[\nu]}/R})_{n\in \N}$
for each $\nu\in \N$ and $r\in \N$.
\end{proof}

\begin{lemma}\label{lem:SimplPrismEnvZarProj}
 Let $\nu\in \N$, 
and let $(\CN_n)_{n\in \N}$ be an inverse system of 
quasi-coherent $\CO_{\fD_{[\nu],n}}$-modules $\CN_n$ on $\ofD_{[\nu],\Zar}$
such that $\CN_{n+1}\to \CN_n$ is an epimorphism
for every $n\in \N$. Then the morphism 
$g_{[\nu]*}(v_{D_{[\nu]}*}\varprojlim_n\CN_n)
\to Rg_{[\nu]*}(v_{D_{[\nu]}*}\varprojlim_n\CN_n)$
is an isomorphism.
\end{lemma}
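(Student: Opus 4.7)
The plan is to prove the vanishing of $R^i g_{[\nu]*}(v_{D_{[\nu]}*}\varprojlim_n\CN_n)$ for $i>0$ by reducing to a local computation on affine opens, using the affineness of $g_{[\nu]}$ and $v_{D_{[\nu]}}$ together with the Mittag--Leffler condition that follows from the epimorphism hypothesis.

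First I would record that both $g_{[\nu]}\colon \fX_{[\nu]}\to \fX$ and $v_{D_{[\nu]}}\colon \ofD_{[\nu]}\to \fX_{[\nu]}$ are affine morphisms of formal schemes. For $v_{D_{[\nu]}}$ this is immediate since both source and target are affine. For $g_{[\nu]}$, given an affine open $\fU\subset \fX$, the preimage $g_{[\nu]}^{-1}(\fU)=\fU\times_{\fX}\fX_{[\nu]}$ sits, via the graph of $(\fU\hookrightarrow\fX,g_{[\nu]})$, as a closed formal subscheme of the affine formal scheme $\fU\times\fX_{[\nu]}$ because the diagonal of $\fX$ is a closed immersion by the assumed separatedness; hence it is affine. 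Consequently, for any quasi-coherent $\CO_{\fD_{[\nu],n}}$-module $\CN_n$, the pushforward $v_{D_{[\nu]}*}\CN_n$ is a quasi-coherent $\CO_{\fX_{[\nu]}}$-module, and pushforwards of quasi-coherent sheaves along $g_{[\nu]}$ have no higher Zariski cohomology on $\fX$.

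Next, since $v_{D_{[\nu]}*}$ is a right adjoint it commutes with the inverse limit, giving
\begin{equation*}
v_{D_{[\nu]}*}\bigl(\varprojlim_n\CN_n\bigr)\cong \varprojlim_n v_{D_{[\nu]}*}\CN_n.
\end{equation*}
For an affine open $\fU\subset\fX$, set $\fU_{[\nu]}=g_{[\nu]}^{-1}(\fU)$, which is affine by the first step. Because each $v_{D_{[\nu]}*}\CN_n$ is quasi-coherent on $\fU_{[\nu]}$, we have $H^i(\fU_{[\nu]},v_{D_{[\nu]}*}\CN_n)=0$ for $i>0$. Moreover, the epimorphism assumption $\CN_{n+1}\twoheadrightarrow\CN_n$ of quasi-coherent sheaves on the affine $\fD_{[\nu],n+1}$ is surjective on sections over every affine open of $\ofD_{[\nu]}$, hence the inverse system $\bigl(\Gamma(\fU_{[\nu]},v_{D_{[\nu]}*}\CN_n)\bigr)_{n\in\N}$ of sections satisfies the Mittag--Leffler condition. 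The Milnor-type short exact sequence
\begin{equation*}
0\to R^{1}\varprojlim_n H^{i-1}(\fU_{[\nu]},v_{D_{[\nu]}*}\CN_n)\to H^{i}\bigl(\fU_{[\nu]},\varprojlim_n v_{D_{[\nu]}*}\CN_n\bigr)\to \varprojlim_n H^{i}(\fU_{[\nu]},v_{D_{[\nu]}*}\CN_n)\to 0
\end{equation*}
then forces the middle term to vanish for all $i>0$.

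Finally, $R^{i}g_{[\nu]*}\CG$ is the sheaf associated to the presheaf $\fU\mapsto H^{i}(g_{[\nu]}^{-1}(\fU),\CG)$; applying this with $\CG=v_{D_{[\nu]}*}\varprojlim_n\CN_n$ and restricting to affine opens of $\fX$ (which form a basis), the previous paragraph yields the vanishing $R^{i}g_{[\nu]*}(v_{D_{[\nu]}*}\varprojlim_n\CN_n)=0$ for $i>0$, which is the claim. The only point that requires attention is the verification that epimorphisms of quasi-coherent sheaves on affine (formal) schemes are surjective on global sections of affine opens, so that the Mittag--Leffler step is legitimate; this is the mild obstacle, but it reduces to the well-known exactness of $\Gamma$ on quasi-coherent sheaves over affine (formal) schemes.
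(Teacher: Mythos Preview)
Your proof is correct and rests on the same ingredients as the paper's: affineness of $g_{[\nu]}$ (via separatedness of $\fX$) and of $v_{D_{[\nu]}}$, vanishing of higher cohomology of quasi-coherent sheaves on the resulting affine preimages, and Mittag--Leffler from the surjective transitions. The organization differs slightly: the paper threads the argument through a chain of derived-category identities, first showing $R\varprojlim_n v_{*}\CN_n\cong \varprojlim_n v_{*}\CN_n$ and $R\varprojlim_n (g\circ v)_{*}\CN_n\cong \varprojlim_n (g\circ v)_{*}\CN_n$ via \cite[Lemma IV.4.2.3]{AGT}, then concatenating
\[
g_{*}v_{*}\varprojlim_n\CN_n\cong R\varprojlim_n R(g\circ v)_{*}\CN_n\cong Rg_{*}R\varprojlim_n v_{*}\CN_n\cong Rg_{*}(v_{*}\varprojlim_n\CN_n),
\]
whereas you compute $H^{i}(g_{[\nu]}^{-1}(\fU),-)$ directly on affine opens $\fU\subset\fX$ and invoke a Milnor sequence. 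One remark: your Milnor short exact sequence as written implicitly uses $\varprojlim_n v_{*}\CN_n\xrightarrow{\sim} R\varprojlim_n v_{*}\CN_n$ (so that $R\Gamma(\fU_{[\nu]},\varprojlim_n v_{*}\CN_n)\cong R\varprojlim_n R\Gamma(\fU_{[\nu]},v_{*}\CN_n)$); this holds because you have already verified both higher-cohomology vanishing and surjective transitions on a basis of affine opens, but it would be cleaner to state that step explicitly rather than folding it into the Milnor sequence.
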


\begin{proof}
We abbreviate $g_{[\nu]}$ and $v_{D_{[\nu]}}$ to 
$g$ and $v$, respectively, to simplify the notation. 
For $n\in \N$ and a quasi-coherent $\CO_{\fD_{[\nu],n}}$-module $\CL$,
we see $v_*\CL\xrightarrow{\cong}Rv_*\CL$
and $\Gamma(\fU,\CL)\xrightarrow{\cong}R\Gamma(\fU,\CL)$
for any open affine formal subscheme $\fU\subset\fD_{[\nu]}$
by induction on $n$ since the morphism $v\colon 
\ofD_{[\nu]}\to \fX_{[\nu]}$ is affine.
Let $\fV$ be an open affine formal subscheme of $\fX_{[\nu]}$,
and let $\fU$ be $v^{-1}(\fV)$, which is an affine open 
of $\ofD_{[\nu]}$. Then
we have 
$R\Gamma(\fV,v_*\CN_n)
=R\Gamma(\fV,Rv_*\CN_n)
=R\Gamma(\fU,\CN_n)
=\Gamma(\fU,\CN_n)
=\Gamma(\fV,v_*\CN_n)$,
and the homomorphism 
$\Gamma(\fV,v_{*}\CN_{n+1})
\to\Gamma(\fV,v_{*}\CN_n)$
is surjective since $\Gamma(\fV,v_{*}\CN_m)
=\Gamma(\fU,\CN_m)$ $(m=n,n+1)$
and $H^1(\fU,\Ker[\CN_{n+1}\to \CN_n])=0$.
Hence $R\varprojlim_n(v_{*}\CN_n)
\cong \varprojlim_n (v_{*}\CN_n)$
(\cite[Lemma IV.4.2.3]{AGT}). Similarly
we see 
$(g\circ v)_*\CL
\cong R(g\circ v)_*\CL$
for a quasi-coherent $\CO_{\fD_{[\nu]},n}$-module $\CL$,
and 
$R\varprojlim_n(g\circ v)_*\CN_n
\cong \varprojlim_n(g\circ v)_*\CN_n$
since the composition $g\circ v$ is
also affine as $\fX$ is assumed to be separated. 
We obtain the desired isomorphism
as 
$g_*(v_*\varprojlim_n\CN_n)
\cong \varprojlim_n (g\circ v)_*\CN_n
\cong R\varprojlim_n (g\circ v)_*\CN_n
\cong R\varprojlim_n R(g\circ v)_*\CN_n
\cong R\varprojlim_n Rg_{*}(Rv_{*}\CN_n)
\cong R\varprojlim_n Rg_{*}(v_{*}\CN_n)
\cong Rg_{*}R\varprojlim_n(v_{*}\CN_n)
\cong Rg_{*}\varprojlim_n(v_{*}\CN_n)
\cong Rg_{*} (v_{*}\varprojlim_n\CN_n).
$
\end{proof}

\begin{remark}\label{rmk:CrystZarProjqDolSimpFrob}
(1) We keep the notation and assumption in Theorem \ref{th:CrystalCohqHiggsSimpl}.
Let $\varphi_n^*\CF$ be $\CF\otimes_{\CO_{\fX/R,n},\varphi_n}\CO_{\fX/R,n}$
as Remark \ref{rmk:crystalFrobPBTensor} (1). We have 
$\theta_{\prism}^{-1}(\varphi_n^*\CF)\cong\varphi_n^*\CF_{\star}
=\CF_{\star}\otimes_{\CO_{\fX_{\star}/R,n},\varphi_n}\CO_{\fX_{\star}/R,n}$.
We write $q\Omega(\CM_{\star})$ and $q\Omega(\varphi_{\fD_{\star,n}}^*\CM_{\star})$
for the complex $\CM_{\star}\otimes_{\CO_{\fD_{\star}}}q\Omega_{\fD_{\star}/R}^{\bullet}$
and $\varphi_{\fD_{\star,n}}^*\CM_{\star}\otimes_{\CO_{\fD_{\star}}}
q\Omega_{\fD_{\star}/R}^{\bullet}$ associated to 
$\CF_{\star}$ and $\varphi_n^*\CF_{\star}$, respectively. 
Let $\CF_{\star}\to\CL\Omega(M_{\star})$
and $\varphi_n^*\CF_{\star}\to \CL\Omega(\varphi_{D_{\star,n}}^*M_{\star})$
denote the resolution \eqref{eq:CrystalqDRResolHC} of $\CF_{\star}$
and $\varphi_n^*\CF_{\star}$. Then we have the following commutative
diagrams.
\begin{equation}\label{eq:LinResolInvSysSimpFrob}
\xymatrix@C=40pt@R=20pt{
\CF_{\star}\ar[r]^(.45){\eqref{eq:CrystalqDRResolHC}}\ar[d]_{\sigma_{\CF_{\star},n}}
&\CL\Omega(M_{\star})\ar[d]^{\CL(\varphi_{D_{\star,n},\Omega}^{\bullet}(M_{\star}))}
\\
\varphi_n^*\CF_{\star}\ar[r]^(.39){\eqref{eq:CrystalqDRResolHC}}&
\CL\Omega(\varphi_{D_{\star,n}}^*M_{\star})
}
\xymatrix@C=50pt@R=20pt{
v_{D_{\star}*}q\Omega(\CM_{\star})\ar[r]^{\cong}_{\eqref{eq:LinqdRCpxProjHC}}
\ar[d]^{v_{D_{\star}*}(\varphi^{\bullet}_{\fD_{\star,n},\Omega}(\CM_{\star}))}&
u_{\fX_{\star}/R*}\CL\Omega(M_{\star})
\ar[d]^{u_{\fX_{\star}/R*}\CL(\varphi_{D_{\star,n},\Omega}^{\bullet}(M_{\star}))}\\
v_{D_{\star}*}q\Omega(\varphi_{\fD_{\star,n}}^*\CM_{\star})
\ar[r]^{\cong}_{\eqref{eq:LinqdRCpxProjHC}}&
u_{\fX_{\star}/R*}\CL\Omega(\varphi_{D_{\star,n}}^*M_{\star})
}
\end{equation}
The left vertical map in the left diagram is the natural
$\varphi_n$-semilinear map (Remark \ref{rmk:LinResolTransFrobComp} (1)),
and the vertical morphism from $\CL\Omega(M_{\star})$
(resp.~$q\Omega(\CM_{\star})$) is defined by 
Remarks \ref{rmk:LinqdRCpxFrob} (1) and \ref{rmk:LinPBFrobComp} (1)
(resp.~\ref{rmk:qDolSheafCpxFrob} (1) and \ref{rmk:qDolCpxShfFbFunct} (1));
the left (resp.~right diagram) is commutative by 
Remark \ref{rmk:LinResolTransFrobComp} (1) (resp.~\ref{rmk:LinZarProjFrobComp} (1)).
Hence, by going back to the construction of \eqref{eq:CrystalCohqHiggsSimpl}
via \eqref{eq:CrystalqDRResolHC}, \eqref{eq:LinqdRCpxProjHC}, and \eqref{eq:LinqdRCpxProjHC2}, 
we can verify that the following diagram commutes.
\begin{equation}
\xymatrix@C=50pt@R=20pt{
Ru_{\fX_{\star}/R*}\CF\ar[r]^(.32){\cong}_(.32){\eqref{eq:CrystalCohqHiggsSimpl}}
\ar[d]_{Ru_{\fX_{\star}/R*}\sigma_{\CF,n}}&
(g_{\star*}(v_{D_{\star}*}(\CM_{\star}\otimes_{\CO_{\fD_{\star}}}q\Omega_{\fD_{\star}/R}^{\bullet})))_s
\ar[d]^{(g_{\star*}(v_{D_{\star}*}(\varphi_{\fD_{\star,n},\Omega}^{\bullet}(\CM_{\star}))))_s}
\\
Ru_{\fX_{\star}/R*}\varphi_n^*\CF\ar[r]^(.31){\cong}_(.31){\eqref{eq:CrystalCohqHiggsSimpl}}&
(g_{\star*}(v_{D_{\star}*}(\varphi_{\fD_{\star,n}}^*\CM_{\star}\otimes_{\CO_{\fD_{\star}}}q\Omega_{\fD_{\star}/R}^{\bullet})))_s
}
\end{equation}

(2) We follow the notation and assumption in Theorem \ref{thm:CrystalCohqHiggsSimplProj} 
and its proof. Then the morphisms $\sigma_{\CF_{\star,n},n}$, 
$\varphi_{\fD_{\star,n},\Omega}(\CM_{\star,n})$, and
$\CL_{A_{\star},B_{\star}}(\varphi^{\bullet}_{D_{\star,n},\Omega}(M_{\star,n}))$
obtained by applying the constructions in (1) above to $\CF_n$,
define morphisms of inverse systems with respect to $n$ by 
\eqref{eq:qdRcpxFrobPBFunct}. Therefore, by the construction of 
\eqref{eq:CrystalCohqHiggsSimplProj}, we see that the commutative diagrams
\eqref{eq:LinResolInvSysSimpFrob} for $\CF_n$ $(n\in \N)$ imply that the following diagram commutes.
Here $\hvarphi^*\CF$ is defined as in Remark \ref{rmk:crystalFrobPBTensor} (1), i.e.,
the object corresponding to $(\varphi_n^*\CF_n)_{n\in\N}\in 
\Ob(\Crystal_{\prism}^{\ad}(\CO_{\fX/R,\bullet}))$ 
via \eqref{eq:CompleteCrysInvSys}.

\begin{equation}
\xymatrix@C=40pt@R=20pt{
Ru_{\fX/R*}\CF
\ar[r]^(.3){\cong}_(.3){\eqref{eq:CrystalCohqHiggsSimplProj}}
\ar[d]_{Ru_{\fX/R*}\varprojlim_n\sigma_{\CF_n,n}}&
(g_{\star*}(v_{D_{\star}*}(\varprojlim_n\CM_{\star,n}\otimes_{\CO_{\fD_{\star}}}
q\Omega^{\bullet}_{\fD_{\star}/R})))_s
\ar[d]^{(g_{\star*}v_{D_{\star*}}\varprojlim_n\varphi^{\bullet}_{\fD_{\star,n},\Omega}(\CM_{\star,n}))_s}
\\
Ru_{\fX/R*}\hvarphi^*\CF
\ar[r]^(.3){\cong}_(.3){\eqref{eq:CrystalCohqHiggsSimplProj}}&
(g_{\star*}(v_{D_{\star}*}(\varprojlim_n\varphi_{\fD_{\star,n}}^*\CM_{\star,n}\otimes_{\CO_{\fD_{\star}}}
q\Omega^{\bullet}_{\fD_{\star}/R})))_s
}
\end{equation}
\end{remark}
\begin{remark}\label{rmk:CrystZarProjqDolProd}
(1) 
Since the product morphism of $q$-Higgs complexes
\eqref{eq:qDolShfCpxProd} is functorial with respect to $\fX\to \fY$
and $(t_i)_{i\in\Lambda}$ only when a map between index sets
of coordinates is injective (Remark \ref{rmk:qDolCpxShfFbFunct} (2)),
the complexes $\CM_{\star}\otimes_{\CO_{\fD_{\star}}}q\Omegab_{\fD_{\star}/R}$
on $\fD_{\star,\Zar}$ associated to $\CF$'s do not have product morphisms as
\eqref{eq:qDolShfCpxProd} in general. However, thanks to Remark 
\ref{rmk:qdRModfProdFunct}, we have a product whose codomain is 
the $q$-Higgs complex with respect to the diagonal immersion 
$\fX_{\star}\to \fY_{\star}\times_{\Spf(R)}\fY_{\star}$
which is compatible with the isomorphism \eqref{eq:CrysZarProjDolCpxHC}
as follows.\par
By applying some constructions given in Remarks \ref{rmk:LinPBProdComp2} 
and \ref{rmk:qdRModfProdFunct} to
$i_{[\nu]}\colon \fX_{[\nu]}\to \fY_{[\nu]}$, $\ut_{[\nu]}$ $([\nu]\in \Ob \Delta)$
and the morphisms among them associated to morphisms of the category $\Delta$,
we obtain a simplicial smooth $pR+\pq R$-adic affine formal scheme 
$\fY_{\star}(1)=\fY_{\star}\times_{\Spf(R)}\fY_{\star}$ over $R$,
a closed immersion $i_{\star}(1)\colon \fX_{\star}\hookrightarrow \fY(1)_{\star}$ over $R$,
a cosimplicial framed smooth $q$-pairs 
$((B_{\star}(1),J_{B_{\star}(1)}),\Lambda_{\star}(1),\ut^{(1)}_{\star})$,
a cosimplicial $q$-prism $(D_{\star}(1),\pq D_{\star}(1))$, simplicial
formal schemes $\fD_{\star}(1)$ and $\ofD_{\star}(1)$, a morphism 
$v_{D_{\star}(1)}\colon \ofD_{\star}(1)\to \fX_{\star}$, and
morphisms $p_{l,\fD_{\star}}\colon \fD_{\star}(1)\to \fD_{\star}$
$(l=0,1)$
compatible with $v_{D_{\star}}$ and $v_{D_{\star}(1)}$. \par
Let $\CF_l$ $(l=0,1)$ be crystals of $\CO_{\fX/R,n}$-modules
on $(\fX/R)_{\prism}$, set $\CF=\CF_0\otimes_{\CO_{\fX/R,n}}\CF_1$,
and let $\CF_{l\star}$ and $\CF_{\star}$ denote their pullbacks
on $(\fX_{\star}/R)_{\prism}$ under $\theta_{\prism}$. 
Then, by applying the constructions of \eqref{eq:CrystalqDRResolHC} and 
\eqref{eq:LinqdRCpxProjHC} to $\CF_l$ and $(i_{\star}\colon \fX_{\star}\to \fY_{\star},\ut_{\star})$
(resp.~$\CF$ and $(i_{\star}(1)\colon \fX_{\star}\to \fY_{\star}(1),\ut_{\star}^{(1)})$),
we obtain complexes
$\CL_{A_{\star},B_{\star}}(M_{l\star}\otimes_{D_{\star}}q\Omegab_{D_{\star}/R})$
and $\CM_{l\star}\otimes_{\CO_{\fD_{\star}}}q\Omegab_{\fD_{\star}/R}$
(resp.~
$\CL_{A_{\star},B_{\star}(1)}(M(1)_{\star}\otimes_{D_{\star}(1)}q\Omegab_{D_{\star}(1)/R})$
and $\CM(1)_{\star}\otimes_{\CO_{\fD_{\star}(1)}}q\Omegab_{\fD_{\star}(1)/R}$),
which we abbreviate to $\CL\Omegab(M_{l\star})$ and
$q\Omegab(\CM_{l\star})$ (resp.~$\CL\Omegab(M(1)_{\star})$
and $q\Omegab(\CM(1)_{\star})$).  By considering \eqref{eq:LinqdRCpxMdfProd} 
and \eqref{eq:qDolMdfProd} associated to $(i_{[\nu]},\ut_{[\nu]})$ and $\CF_{l[\nu]}$
for each $\nu\in \N$, and using \eqref{eq:LinqDolMdfProdFunct} and \eqref{eq:qdRCpxMdfProdFunct}
for each morphism in the category $\Delta$, we obtain morphisms
\begin{align}
&\CL\Omegab(M_{0\star})\otimes_{\CO_{\fX_{\star}/R},n}\CL\Omegab(M_{1\star})
\longrightarrow \CL\Omegab(M(1)_{\star}),
\label{eq:SimpLinqDolbProd}\\
&v_{D_{\star}*}q\Omegab(\CM_{0\star})\otimes_Rv_{D_{\star}*}q\Omegab(\CM_{1\star})
\longrightarrow 
v_{D_{\star}(1)*}q\Omegab(\CM(1)_{\star}).
\label{eq:SimpDolbProd}
\end{align}
By Remark \ref{rmk:ResolMdfProdComp}, the 
composition of \eqref{eq:SimpLinqDolbProd}
with the morphism 
$\CF_{\star}=\CF_{0\star}\otimes_{\CO_{\fX_{\star}/R},n}\CF_{1\star}
\to\CL\Omegab(M_{0\star})\otimes_{\CO_{\fX_{\star}/R,n}}
\CL\Omegab(M_{1\star})$
induced by \eqref{eq:CrystalqDRResolHC} for 
$(i_{\star},\ut_{\star})$ and $\CF_l$ $(l=0,1)$
coincides with the resolution \eqref{eq:CrystalqDRResolHC}
of $\CF_{\star}$ with respect to $(i_{\star}(1),\ut_{\star}^{(1)})$. 
On the other hand, the morphisms \eqref{eq:SimpLinqDolbProd} 
and \eqref{eq:SimpDolbProd} are compatible with the
morphisms \eqref{eq:LinqdRCpxProjHC} for 
$((i_{\star},\ut_{\star}),\CF_{l\star})$ and
$((i_{\star}(1),\ut_{\star}^{(1)}),\CF_{\star})$ by Remark \ref{rmk:LinqDolqDolMdfProdComp}.
Therefore, by the construction of \eqref{eq:CrysZarProjDolCpxHC},
we see that the following diagram is commutative.
\begin{equation}
\xymatrix@R=20pt@C=50pt{
Ru_{\fX_{\star}/R*}\CF_{0\star}\otimes^L_RRu_{\fX_{\star}/R*}\CF_{1\star}
\ar[r]\ar[d]_{\cong}^{\eqref{eq:CrysZarProjDolCpxHC}}&
Ru_{\fX_{\star}/R*}\CF_{\star}
\ar[d]_{\cong}^{\eqref{eq:CrysZarProjDolCpxHC}}\\
v_{D_{\star}*}q\Omegab(\CM_{0\star})
\otimes_R^Lv_{D_{\star}*}q\Omegab(\CM_{1\star})
\ar[r]^(.6){\eqref{eq:SimpDolbProd}}&
v_{D_{\star}(1)*}q\Omegab(\CM(1)_{\star}).
}
\end{equation}

(2) We can prove compatibility of the isomorphism \eqref{eq:LinqdRCpxProjHCProj}
with products similarly to (1) as follows. We keep the notation introduced in the second
paragraph of (1). Let $\CF_l$ $(l=0,1)$ be complete crystals of $\CO_{\fX/R}$-modules
on $(\fX/R)_{\prism}$, and let $\uCF_l=(\CF_{l,n})_{n\in\N}$ $(l=0,1)$ be the 
adic inverse systems of crystals of  $\CO_{\fX/R,n}$-modules on $(\fX/R)_{\prism}$
corresponding to $\CF_l$ by the equivalence \eqref{eq:CompleteCrysInvSys}.
Put $\CF=\CF_1\hotimes_{\CO_{\fX/R}}\CF_2$ and 
$\uCF=\uCF_1\otimes_{\CO_{\fX/R,\bullet}}\uCF_2$
(Remark \ref{rmk:crystalFrobPBTensor}  (2)).
Let $\CF_{l\star}$, $\CF_{\star}$, $\uCF_{l\star}$, and $\uCF_{\star}$ denote
the pullbacks of $\CF_l$, $\CF$, $\uCF_l$, and $\uCF$, respectively,
on $(\fX_{\star}/R)_{\prism}$ by $\theta_{\prism}$. Then
we obtain inverse systems of complexes
$\CL\Omegab(\uM_{l\star})$, $\CL\Omegab(\uM(1)_{\star})$,
$q\Omegab(\uCM_{l\star})$, and $q\Omegab(\uCM(1)_{\star})$
by applying the construction in the third paragraph of (1) to
$((i_{\star},\ut_{\star}),\CF_{l,n\star})$ and $((i_{\star}(1),\ut_{\star}^{(1)}),\CF_{n\star})$
for each $n\in \N$. The product morphisms \eqref{eq:SimpLinqDolbProd}
and \eqref{eq:SimpDolbProd} for $\CF_{l,n\star}$ and $\CF_{n\star}$ $(n\in\N)$
form inverse systems. Therefore, by the same argument as 
Remark \ref{rmk:PrisCohqdRFrobProd} (2), 
we can verify that the diagram below is commutative by 
using Remark \ref{rmk:ResolMdfProdComp} and Remark \ref{rmk:LinqDolqDolMdfProdComp}
instead of Remark \ref{rmk:LinResolTransFrobComp} (2) and Remark \ref{rmk:LinZarProjFrobComp} (2), 
respectively,
and by applying Lemma \ref{lem:ToposDirectImVarProjProd} to the morphism of ringed topos
$u_{\fX_{\star}/R}\colon ((\fX_{\star}/R)_{\prism}^{\sim},\CO_{\fX_{\star}/R})
\to (\fX_{\star,\Zar}^{\sim},R)$.
We abbreviate $u_{\fX_{\star}/R}$, $v_{D_{\star}}$, $v_{D_{\star}(1)}$,
$\CO_{\fX_{\star}/R}$, and $\varprojlim_n$ to 
$u$, $v$, $v^{(1)}$, $\CO$, and $\projl$, respectively. 
\begin{equation*}
\xymatrix@R=20pt{
Ru_*\CF_{0\star}\otimes_R^L
Ru_*\CF_{1\star}
\ar[r]
\ar[d]^{\cong}_{\eqref{eq:LinqdRCpxProjHCProj}\otimes\eqref{eq:LinqdRCpxProjHCProj}}
&
Ru_*(\CF_{0\star}\otimes_{\CO}\CF_{1\star})
\ar[r]&
Ru_*\CF_{\star}
\ar[d]^{\cong}_{\eqref{eq:LinqdRCpxProjHCProj}}\\
\projl v_*q\Omegab(\uCM_{0\star})\otimes^L_R
\projl v_*q\Omegab(\uCM_{1\star})
\ar[r]&
\projl (v_*q\Omegab(\uCM_{0\star})\otimes_R
v_*q\Omegab(\uCM_{1,\star}))
\ar[r]^(.61){\eqref{eq:SimpDolbProd}}&
\projl v_*^{(1)}q\Omegab(\uCM(1)_{\star})
}
\end{equation*}
\end{remark}

Finally we discuss the functoriality of the isomorphisms \eqref{eq:CrysZarProjDolCpxHC}
and \eqref{eq:LinqdRCpxProjHCProj} 
with respect to $\fX$, $\fX_{\star}$, $i_{\star}$ and $\ut_{\star}$.
Let $R'$ be another $q$-prism, and let $\fX'$, $\fX'_{\star}$,
$i'_{\star}\colon \fX'_{\star}\to \fY'_{\star}$, and $\ut'_{\star}$
be another set of objects over $R'$ satisfying the same condition as
$\fX$, $\fX_{\star}$, $i_{\star}$, and $\ut_{\star}$ over $R$.
We denote every object obtained from $\fX'$, $\fX'_{\star}$, ...
similarly to the constructions for $\fX$, $\fX_{\star}$, ... considered above
by the same symbol with a prime such as $D'$, $\fD'$. Suppose that
we are given a morphism of $q$-prisms $f\colon R\to R'$,
a morphism of formal schemes $g\colon \fX'\to \fX$ over
$f$, a morphism of simplicial formal schemes
$g_{\star}\colon \fX'_{\star}\to \fX_{\star}$ over $g$,
a morphism of simplicial formal schemes
$h_{\star}\colon \fY'_{\star}\to \fY_{\star}$
satisfying $h_{\star}\circ i'_{\star}=i_{\star}\circ g_{\star}$,
and a map of cosimplicial sets $\psi_{\star}\colon\Lambda_{\star}\to \Lambda'_{\star}$
with which $h_{\star}$ and $f$ define a morphism of cosimplicial framed smooth
$q$-pairs $((B_{\star},J_{\star})/R,\ut_{\star})
\to((B'_{\star},J'_{\star})/R',\ut'_{\star})$. 

Then the morphisms $g_{\star}$ and $h_{\star}$ induce a morphism of 
cosimplicial $q$-prisms $h_{D_{\star}}\colon D_{\star}\to D'_{\star}$
compatible with $g^*_{\star}\colon A_{\star}\to A'_{\star}$
and $h^*_{\star}\colon B_{\star}\to B'_{\star}$,
and then the following diagram of ringed topos 
commutative up to canonical isomorphisms
by the functoriality \eqref{eq:PrismZarProjFunct} 
of $u_{\fX/R}$ and $v_D$, where 
$R_n=R/(pR+\pq R)^{n+1}$ and $R'_n=R'/(pR'+\pq R')^{n+1}$.
\begin{equation}
\xymatrix@C=40pt@R=15pt{
((\fX'_{\star}/R')^{\sim}_{\prism},\CO_{\fX'_{\star}/R',n})
\ar[r]^(.57){u_{\fX'_{\star}/R}}\ar[d]_{g_{\star\prism}}&
(\fX^{\prime\sim}_{\star,\Zar},R'_n)\ar[d]^{g_{\star}}&
(\ofD^{\prime\sim}_{\star,\Zar},R'_n)\ar[l]_{v_{D'_{\star}}}\ar[d]^{\fh_{D_{\star}}}\\
((\fX_{\star}/R)^{\sim}_{\prism},\CO_{\fX_{\star}/R,n})\ar[r]^(.57){u_{\fX_{\star}/R}}&
(\fX^{\sim}_{\star,\Zar},R_n)&
(\ofD^{\sim}_{\star,\Zar},R_n)\ar[l]_{v_{D_{\star}}}
}
\end{equation}
The left square is compatible with the similar square
associated to $g\colon \fX'\to\fX$ via the morphisms $\theta_{\prism}$,
$\theta$ and the corresponding ones $\theta'_{\prism}$,
$\theta'$ associated to $\fX'_{\star}\to \fX'$. 

Let $\CF\in \Ob\Crystal_{\prism}(\CO_{\fX/R,n})$, and put
$\CF_{\star}=\theta_{\prism}^{-1}(\CF)$,
$\CF'=g_{\prism}^{-1}\CF\in \Ob\Crystal_{\prism}(\CO_{\fX'/R',n})$,
and $\CF'_{\star}=\theta_{\prism}^{\prime -1}\CF'$.
We have $g_{\star\prism}^{-1}\CF_{\star}=\CF'_{\star}$. Let
$\CM_{\star}\otimes_{\CO_{\fD_{\star}}}q\Omega_{\fD_{\star}/R}^{\bullet}$
(resp.~$\CM'_{\star}\otimes_{\CO_{\fD'_{\star}}}q\Omega_{\fD'_{\star}/R'}^{\bullet}$)
be the complex of $R_n$-modules on $\ofD_{\star,\Zar}$
(resp.~$R'_n$-modules on $\ofD'_{\star,\Zar})$ associated to
$\CF_{\star}$ and $\CF'_{\star}$, respectively, as mentioned
before Theorem \ref{th:CrystalCohqHiggsSimpl}. 
Then, by the functoriality \eqref{eq:qDolShfCpxFunct}
of this construction, we obtain a morphism of complexes of
$R_n$-modules on $\ofD_{\star,\Zar}$
\begin{equation}\label{eq:HPDolCpxFunct}
\CM_{\star}\otimes_{\CO_{\fD_{\star}}}q\Omega_{\fD_{\star}/R}^{\bullet}
\longrightarrow
\fh_{D_{\star}*}(\CM'_{\star}\otimes_{\CO_{\fD'_{\star}}}q\Omega_{\fD'_{\star}/R'}^{\bullet}).
\end{equation}

\begin{remark} By Remark \ref{rmk:qDolCpxShfFbFunct} (1), we see that the
morphisms $\varphi_{\fD_{\star,n},\Omega}^{\bullet}(\CM_{\star})$
and $\varphi_{\fD'_{\star,n},\Omega}^{\bullet}(\CM'_{\star})$
considered in \eqref{eq:LinResolInvSysSimpFrob} are compatible 
with the morphisms \eqref{eq:HPDolCpxFunct} for $\CM_{\star}$ and $\varphi_{\fD_{\star,n}}^*\CM_{\star}$.
\end{remark}
\begin{proposition}\label{eq:FunctCrysProjQDRHP}
Under the notation and assumption as above, the following diagram is commutative,
where the bottom morphism is induced by \eqref{eq:HPDolCpxFunct}
via $v_{D_{\star}*}\fh_{D_{\star}*}
\cong g_{\star*}v_{D'_{\star}*}$.
\begin{equation*}
\xymatrix@R=15pt{
Ru_{\fX_{\star}/R*}\CF_{\star}\ar[r]&
Ru_{\fX_{\star}/R*}Rg_{\star\prism*}\CF'_{\star}\ar[r]^{\cong}&
Rg_{\star*}Ru_{\fX'_{\star}/R'*}\CF'_{\star}\\
\ar[u]^{\eqref{eq:CrysZarProjDolCpxHC}}_{\cong}
v_{D_{\star}*}(\CM_{\star}\otimes_{\CO_{\fD_{\star}}}q\Omega_{\fD_{\star}/R}^{\bullet})
\ar[rr]&&
Rg_{\star*}(v_{D'_{\star}*}(\CM'_{\star}\otimes_{\CO_{\fD'_{\star}}}q\Omega_{\fD'_{\star}/R'}^{\bullet}))
\ar[u]^{\eqref{eq:CrysZarProjDolCpxHC}}_{\cong}
}
\end{equation*}
\end{proposition}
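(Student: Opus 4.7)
The plan is to reduce the proposition to the componentwise functoriality already established in Proposition \ref{prop:PrismZarProjqDRFunct} and the auxiliary compatibilities \eqref{eq:CrysqdRResolFunct} and Proposition \ref{eq:LindRCpxZarProjFunct}, and then to glue these across the simplicial degrees. Recall that the isomorphism \eqref{eq:CrysZarProjDolCpxHC} was constructed as the composition
$$v_{D_\star*}(\CM_\star\otimes_{\CO_{\fD_\star}}q\Omega^\bullet_{\fD_\star/R})
\xrightarrow[\eqref{eq:LinqdRCpxProjHC}]{\cong}
u_{\fX_\star/R*}\CL_{A_\star,B_\star}(M_\star\otimes_{D_\star}q\Omega^\bullet_{D_\star/R})
\xrightarrow[\eqref{eq:LinqdRCpxProjHC2}]{\cong}
Ru_{\fX_\star/R*}\CL_{A_\star,B_\star}(M_\star\otimes_{D_\star}q\Omega^\bullet_{D_\star/R})$$
followed by $Ru_{\fX_\star/R*}$ of the resolution \eqref{eq:CrystalqDRResolHC}. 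So it suffices to check that each of these three morphisms is functorial under $(f,g,g_\star,h_\star,\psi_\star)$.

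First I would apply \eqref{eq:CrysqdRResolFunct} to each pair $(i_{[\nu]},\ut_{[\nu]})\to(i'_{[\nu]},\ut'_{[\nu]})$ obtained from the given morphism of cosimplicial framed smooth $q$-pairs. Since both sides of \eqref{eq:CrysqdRResolFunct} and the morphism \eqref{eq:LineDRcpxPB} are functorial in morphisms of framed smooth $q$-pairs (by Lemma \ref{lem:dRCpxFuncCocyc} and the construction of \eqref{eq:LineDRcpxPB}), the degreewise diagrams glue into a commutative square
\begin{equation*}
\xymatrix@C=40pt{
\CF_\star\ar[r]^(.35){\eqref{eq:CrystalqDRResolHC}}\ar@{=}[d]&
\CL_{A_\star,B_\star}(M_\star\otimes_{D_\star}q\Omega^\bullet_{D_\star/R})\ar[d]^{\eqref{eq:LineDRcpxPB}_\star}\\
g_{\star\prism*}\CF'_\star\ar[r]^(.35){\eqref{eq:CrystalqDRResolHC}}&
g_{\star\prism*}\CL_{A'_\star,B'_\star}(M'_\star\otimes_{D'_\star}q\Omega^\bullet_{D'_\star/R'})
}
\end{equation*}
in the category of complexes of $\CO_{\fX_\star/R,n}$-modules on $(\fX_\star/R)^\sim_\prism$. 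Applying $Ru_{\fX_\star/R*}$ and using $Ru_{\fX_\star/R*}\circ Rg_{\star\prism*}\cong Rg_{\star*}\circ Ru_{\fX'_\star/R'*}$ reduces the claim to the functoriality of \eqref{eq:LinqdRCpxProjHC} under $(f,g_\star,h_\star,\psi_\star)$.

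Next I would apply Proposition \ref{eq:LindRCpxZarProjFunct} in each simplicial degree to the data $(i_{[\nu]},\ut_{[\nu]})\to(i'_{[\nu]},\ut'_{[\nu]})$. The compatibility of \eqref{eq:LinearizationZariskiProjMap}, \eqref{eq:qdRCpxLinZarProj} with base change along morphisms of $\Delta$ (which was used when building \eqref{eq:LinqdRCpxProjHC} in the first place) ensures that these degreewise squares assemble into a commutative square
\begin{equation*}
\xymatrix@C=40pt{
v_{D_\star*}(\CM_\star\otimes_{\CO_{\fD_\star}}q\Omega^\bullet_{\fD_\star/R})
\ar[r]^(.45){\eqref{eq:LinqdRCpxProjHC}}_(.45){\cong}
\ar[d]_{\eqref{eq:HPDolCpxFunct}}&
u_{\fX_\star/R*}\CL_{A_\star,B_\star}(M_\star\otimes_{D_\star}q\Omega^\bullet_{D_\star/R})
\ar[d]^{u_{\fX_\star/R*}\eqref{eq:LineDRcpxPB}_\star}\\
g_{\star*}v_{D'_\star*}(\CM'_\star\otimes_{\CO_{\fD'_\star}}q\Omega^\bullet_{\fD'_\star/R'})
\ar[r]^(.45){\eqref{eq:LinqdRCpxProjHC}}_(.45){\cong}&
g_{\star*}u_{\fX'_\star/R'*}\CL_{A'_\star,B'_\star}(M'_\star\otimes_{D'_\star}q\Omega^\bullet_{D'_\star/R'}),
}
\end{equation*}
where the right-hand side uses $u_{\fX_\star/R*}g_{\star\prism*}\cong g_{\star*}u_{\fX'_\star/R'*}$ and the left-hand side uses $v_{D_\star*}\fh_{D_\star*}\cong g_{\star*}v_{D'_\star*}$.

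Finally, I would concatenate these two squares and pass from $u_{\fX_\star/R*}$ to $Ru_{\fX_\star/R*}$ via the isomorphism \eqref{eq:LinqdRCpxProjHC2}; the latter step is harmless because the linearization sheaves $\CL_{A_\star,B_\star}(M_\star\otimes_{D_\star}q\Omega^r_{D_\star/R})$ are crystals and the comparison $u_{\fX_\star/R*}\to Ru_{\fX_\star/R*}$ on them is computed degreewise by Proposition \ref{prop:LinearizationLocCoh}, which is itself functorial. The main obstacle is essentially bookkeeping: one must verify that all the base-change isomorphisms $v_{D_\star*}\fh_{D_\star*}\cong g_{\star*}v_{D'_\star*}$ and $u_{\fX_\star/R*}g_{\star\prism*}\cong g_{\star*}u_{\fX'_\star/R'*}$ obtained from the commutative diagram of simplicial ringed topos are the ones implicitly used in the statement of the proposition. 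This is purely formal, using Lemma \ref{lem:PrismZarProjPB} applied degreewise together with the compatibility of $\theta_\prism$ with $g_{\star\prism}$; nothing essentially new beyond the componentwise case treated in Proposition \ref{prop:PrismZarProjqDRFunct} is required.
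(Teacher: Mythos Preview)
Your proposal is correct and follows essentially the same approach as the paper: both decompose \eqref{eq:CrysZarProjDolCpxHC} into the resolution \eqref{eq:CrystalqDRResolHC} and the comparison \eqref{eq:LinqdRCpxProjHC}, verify the functoriality of each piece degreewise (via \eqref{eq:CrysqdRResolFunct} for the resolution and the adjoint form \eqref{eq:LinedRZARProjFunct} of Proposition \ref{eq:LindRCpxZarProjFunct} for the comparison), and then concatenate through the map $u_{\star*}\to Ru_{\star*}$. The only cosmetic difference is that the paper cites \eqref{eq:CrystaldRResolFunct} and Lemma \ref{lem:LinZarqDRProjFunct} separately rather than the combined statement \eqref{eq:CrysqdRResolFunct}, and works with the adjoint diagram \eqref{eq:LinedRZARProjFunct} directly.
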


\begin{proof}
We abbreviate $\CM_{\star}\otimes_{\CO_{\fD_{\star}}}q\Omega_{\fD_{\star}/R}^{\bullet}$
and $\CM'_{\star}\otimes_{\CO_{\fD'_{\star}}}q\Omega_{\fD'_{\star}/R'}^{\bullet}$
to $q\Omega(\CM_{\star})$ and $q\Omega(\CM'_{\star})$. 
We write $\CF_{\star}\to \CL\Omega(M_{\star})$
and $\CF'_{\star}\to\CL\Omega(M'_{\star})$
for the resolution \eqref{eq:CrystalqDRResolHC} associated to $\CF_{\star}$ and $\CF'_{\star}$, respectively.
We have a morphism 
\begin{equation}\label{eq:LinDRCpxPBHC}
\CL\Omega(M_{\star})\to g_{\star\prism*}\CL\Omega(M'_{\star})
\end{equation}
defined by \eqref{eq:LineDRcpxPB} 
for $(f,g_{[\nu]},h_{[\nu]},\psi_{[\nu]})$ and $\CF_{[\nu]}$ $(\nu\in \N)$.
Then we see that it makes the following diagrams commutative
by \eqref{eq:CrystaldRResolFunct}, Lemma \ref{lem:LinZarqDRProjFunct}, and 
\eqref{eq:LinedRZARProjFunct},
where $u_{\star}$, $u'_{\star}$, $v_{\star}$, and $v'_{\star}$ denote
$u_{\fX_{\star}/R}$, $u_{\fX'_{\star}/R'}$, $v_{D_{\star}}$,
and $v_{D'_{\star}}$, respectively.
\begin{equation*}
\xymatrix@R=15pt@C=20pt{
\CF_{\star}\ar[r]\ar[d]& \CL\Omega(M_{\star})\ar[d]^{\eqref{eq:LinDRCpxPBHC}}\\
g_{\star\prism*}\CF'_{\star}\ar[r]&
g_{\star\prism*}\CL\Omega(M'_{\star})
}
\quad
\xymatrix@R=15pt@C=40pt{
u_{\star*}\CL\Omega(M_{\star})
\ar[r]^(.45){u_{\star*}\eqref{eq:LinDRCpxPBHC}}&
u_{\star*}g_{\star\prism*}\CL\Omega(M'_{\star})
\ar[r]^{\cong}&
g_{\star*}u'_{\star*}\CL\Omega(M'_{\star})\\
v_{\star*}q\Omega(\CM_{\star})\ar[r]^(.45){v_{\star*}\eqref{eq:HPDolCpxFunct}}
\ar[u]^{\eqref{eq:LinqdRCpxProjHC}}_{\cong}&
v_{\star*}\fh_{D_{\star*}}q\Omega(\CM'_{\star})\ar[r]^{\cong}&
g_{\star*}v'_{\star*}q\Omega(\CM'_{\star})\ar[u]^{g_{\star*}\eqref{eq:LinqdRCpxProjHC}}_{\cong}
}
\end{equation*}
We obtain the desired claim by combining 
$Ru_{\star*}$ of the left diagram with the right one
via the morphism $u_{\star*}\to Ru_{\star*}$ applied to
\eqref{eq:LinDRCpxPBHC}.
 \end{proof}
 
 Let $\CF$ be a complete crystal of $\CO_{\fX/R}$-modules on 
 $(\fX/R)_{\prism}$ (Definition \ref{def:PrismaticSite} (2)), and let 
 $(\CF_n)_{n\in\N}$ be the adic inverse system of crystals of
 $\CO_{\fX/R,n}$-modules on $(\fX/R)_{\prism}$ corresponding to 
 $\CF$ via the equivalence \eqref{eq:CompleteCrysInvSys}.
 By applying the constructions functorial in $\CF$ given 
 before Proposition \ref{eq:FunctCrysProjQDRHP} to $\CF_n$ $(n\in \N)$,
 we obtain inverse systems $(\CF_{\star,n})_{n\in\N}$,
 $(\CF'_{\star,n})_{n\in\N}$, 
 $(\CM_{\star,n}\otimes_{\CO_{\fD_{\star}}}q\Omega^{\bullet}_{\fD_{\star}/R})_{n\in\N}$,
 and $(\CM'_{\star,n}\otimes_{\CO_{\fD'_{\star}}}q\Omega^{\bullet}_{\fD'_{\star}/R'})_{n\in\N}$,
 and a morphism 
 \begin{equation}\label{eq:HPDolCpxFunctInvSys}
 (\CM_{\star,n}\otimes_{\CO_{\fD_{\star}}}q\Omega^{\bullet}_{\fD_{\star}/R})_{n\in\N},
\longrightarrow(
\fh_{\fD_{\star}*}(\CM'_{\star,n}\otimes_{\CO_{\fD'_{\star}}}q\Omega^{\bullet}_{\fD'_{\star}/R'}))_{n\in\N}
\end{equation}
of inverse systems of complexes of $R_n$-modules on $\fX_{\star,\Zar}$.
Put $\CF_{\star}=\theta_{\prism}^{-1}(\CF)$,
$\CF'=g_{\prism}^{-1}\CF\in \Ob\hCrystal_{\prism}(\CO_{\fX'/R'})$,
and $\CF'_{\star}=\theta_{\prism}^{\prime -1}\CF'$.
We have $\CF_{\star}\cong \varprojlim_n \CF_{\star,n}$
and $\CF'_{\star}\cong\varprojlim_n \CF'_{\star,n}$.

\begin{remark} By Remark \ref{rmk:qDolCpxShfFbFunct} (1), we see that
the inverse systems of morphisms 
$(\varphi_{\fD_{\star,n},\Omega}^{\bullet}(\CM_{\star,n}))_{n\in\N}$
and $(\varphi_{\fD'_{\star,n},\Omega}^{\bullet}(\CM'_{\star,n}))_{n\in\N}$
considered in Remark \ref{rmk:CrystZarProjqDolSimpFrob} (2) are compatible with \eqref{eq:HPDolCpxFunctInvSys}
for $(\CM_{\star,n})_{n\in\N}$ and $(\varphi_{\fD_{\star,n}}^*\CM_{\star,n})_{n\in\N}$.
\end{remark}
\begin{proposition}\label{prop:PrismZarProjqdRFunctHC}
 Under the notation and assumption as above, the following diagram is commutative,
where  the bottom morphism is induced by \eqref{eq:HPDolCpxFunctInvSys}
via $v_{D_{\star}*}\fh_{D_{\star}*}
\cong g_{\star*}v_{D'_{\star}*}$.
 \begin{equation*}
\xymatrix@C=10pt@R=20pt{
Ru_{\fX_{\star}/R*}\CF_{\star}\ar[r]&
Ru_{\fX_{\star}/R*}Rg_{\star\prism*}\CF'_{\star}\ar[r]^{\cong}&
Rg_{\star*}Ru_{\fX'_{\star}/R'*}\CF'_{\star}\\
\ar[u]^{\eqref{eq:LinqdRCpxProjHCProj}}_{\cong}
v_{D_{\star}*}(\varprojlim_n\CM_{\star,n}\otimes_{\CO_{\fD_{\star}}}q\Omega_{\fD_{\star}/R}^{\bullet})
\ar[rr]&&
Rg_{\star*}(v_{D'_{\star}*}(\varprojlim_n\CM'_{\star,n}\otimes_{\CO_{\fD'_{\star}}}q\Omega_{\fD'_{\star}/R'}^{\bullet}))
\ar[u]^{\eqref{eq:LinqdRCpxProjHCProj}}_{\cong}
}
\end{equation*}
 \end{proposition}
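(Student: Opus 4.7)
The plan is to mirror the proof of Proposition \ref{eq:FunctCrysProjQDRHP} but work uniformly in $n$ through inverse systems, and at the end pass to $\varprojlim_n$ using the ingredients assembled in the proof of Theorem \ref{thm:CrystalCohqHiggsSimplProj}. First I would apply the resolution \eqref{eq:CrystalqDRResolHC} and the morphism \eqref{eq:LinDRCpxPBHC} to each $\CF_n$ to obtain an inverse system of commutative diagrams
\begin{equation*}
\xymatrix@R=15pt@C=20pt{
\CF_{\star,n}\ar[r]\ar[d]& \CL\Omega(M_{\star,n})\ar[d]^{\eqref{eq:LinDRCpxPBHC}}\\
g_{\star\prism*}\CF'_{\star,n}\ar[r]& g_{\star\prism*}\CL\Omega(M'_{\star,n}),
}
\end{equation*}
where the horizontal maps are resolutions and the left square is commutative by \eqref{eq:CrystaldRResolFunct}. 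As observed in the proof of Theorem \ref{thm:CrystalCohqHiggsSimplProj}, the transition maps of these inverse systems are epimorphisms of presheaves, so the inverse limit remains a resolution of $\CF_\star$ (resp.~$g_{\star\prism*}\CF'_\star\cong \varprojlim_n g_{\star\prism*}\CF'_{\star,n}$).

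Next I would assemble the second diagram in the proof of Proposition \ref{eq:FunctCrysProjQDRHP}, now for inverse systems: Lemma \ref{lem:LinZarqDRProjFunct} and \eqref{eq:LinedRZARProjFunct} applied to each $n$ yield a commutative diagram
\begin{equation*}
\xymatrix@R=15pt@C=30pt{
u_{\star*}\varprojlim_n\CL\Omega(M_{\star,n})\ar[r]&
u_{\star*}g_{\star\prism*}\varprojlim_n\CL\Omega(M'_{\star,n})\ar[r]^{\cong}&
g_{\star*}u'_{\star*}\varprojlim_n\CL\Omega(M'_{\star,n})\\
v_{\star*}\varprojlim_n q\Omega(\CM_{\star,n})\ar[r]\ar[u]^{\cong}&
v_{\star*}\fh_{D_{\star}*}\varprojlim_n q\Omega(\CM'_{\star,n})\ar[r]^{\cong}&
g_{\star*}v'_{\star*}\varprojlim_n q\Omega(\CM'_{\star,n}),\ar[u]_{\cong}
}
\end{equation*}
where the vertical isomorphisms come from the inverse-limit version of \eqref{eq:LinqdRCpxProjHC} used in building \eqref{eq:LinqdRCpxProjHCProj}, and the bottom row is induced by \eqref{eq:HPDolCpxFunctInvSys}. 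The key point is that the upward vertical maps, after applying the natural transformation $u_{\star*}\to Ru_{\star*}$ (resp.~$u'_{\star*}\to Ru'_{\star*}$), become the isomorphism \eqref{eq:LinqdRCpxProjHCProj}; this relies on Proposition \ref{prop:LinLocCohProjlim} and the affineness of the morphisms $v_{D_{[\nu]}}$ and $g_{[\nu]}$ (together with Lemma \ref{lem:SimplPrismEnvZarProj}).

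The conclusion would then be drawn by combining $Ru_{\star*}$ of the resolution diagram with the second diagram above via $u_{\star*}\to Ru_{\star*}$ applied to $\varprojlim_n\CL\Omega(M_{\star,n})$ and $\varprojlim_n\CL\Omega(M'_{\star,n})$, exactly as in the end of the proof of Proposition \ref{eq:FunctCrysProjQDRHP}. The main obstacle, as expected, will be justifying the commutation of $Ru_{\star*}$ (and $Rg_{\star*}$ on the right-hand side) with $\varprojlim_n$ for the relevant inverse systems, and the compatibility of the base change $u_{\star*}g_{\star\prism*}\cong g_{\star*}u'_{\star*}$ at the derived level with the chosen inverse limits; these are technical but already handled in the proofs of Theorem \ref{thm:CrystalCohqHiggsSimplProj} and Proposition \ref{prop:LinLocCohProjlim}, so the argument should go through with only bookkeeping adaptations.
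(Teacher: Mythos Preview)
Your proposal is correct and follows essentially the same approach as the paper: the paper notes that the two commutative diagrams in the proof of Proposition~\ref{eq:FunctCrysProjQDRHP} are functorial in $\CF$, applies them to each $\CF_n$ to obtain inverse systems, passes to $\varprojlim_n$, and then argues as in Proposition~\ref{eq:FunctCrysProjQDRHP} via the construction of \eqref{eq:LinqdRCpxProjHCProj}. Your write-up spells out the same steps in more detail and correctly identifies the relevant inputs (Proposition~\ref{prop:LinLocCohProjlim}, Lemma~\ref{lem:SimplPrismEnvZarProj}, and the epimorphism observation from the proof of Theorem~\ref{thm:CrystalCohqHiggsSimplProj}).
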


\begin{proof}
The construction of the two commutative diagrams in the proof of Proposition \ref{eq:FunctCrysProjQDRHP}
is functorial in $\CF$. Therefore the two diagrams associated to $\CF_n$ for each 
$n\in \N$ form inverse systems. We take their inverse limits over $n$. 
Then one can show the claim in the same way as the proof of Proposition 
\ref{eq:FunctCrysProjQDRHP} reviewing the construction of \eqref{eq:LinqdRCpxProjHCProj}. 
Note that the direct image functor under a morphism of topos preserves inverse limits.
\end{proof}

\bibliographystyle{amsplain}
\bibliography{PrismCrysQHiggs}

\newcommand{\noopsort}[1]{}
\providecommand{\bysame}{\leavevmode\hbox to3em{\hrulefill}\thinspace}
\providecommand{\MR}{\relax\ifhmode\unskip\space\fi MR }
\providecommand{\MRhref}[2]{%
  \href{http://www.ams.org/mathscinet-getitem?mr=#1}{#2}
}
\providecommand{\href}[2]{#2}
\begin{thebibliography}{10}

\bibitem{AGT}
A.~Abbes, M.~Gros, and T.~Tsuji, \emph{The $p$-adic {S}impson correspondence},
  Annals of Math.~Studies, vol. 193, Princeton Univ.~Press, 2016.

\bibitem{Andre}
Y.~Andr\'{e}, \emph{Diff\'{e}rentielles non commutatives et th\'{e}orie de
  {G}alois diff\'{e}rentielle ou aux diff\'{e}rences}, Ann. Sci. \'{E}cole
  Norm. Sup. (4) \textbf{34} (2001), no.~5, 685--739.

\bibitem{BerthelotCrisCoh}
P.~Berthelot, \emph{Cohomologie cristalline des sch{\'e}mas de
  caract{\'e}ristique $p>0$}, Lecture Notes in Math., vol. 407, Springer, 1974.

\bibitem{BO}
P.~Berthelot and A.~Ogus, \emph{Notes on crystalline cohomology}, Princeton
  University Press, 1978.

\bibitem{BLPrismatization}
B.~Bhatt and J.~Lurie, \emph{The prismatization of {$p$}-adic formal schemes},
  arXiv:2201.06124v1 [math.AG].

\bibitem{BMS}
B.~Bhatt, M.~Morrow, and P.~Scholze, \emph{Integral {$p$}-adic {H}odge theory},
  Publ. Math. Inst. Hautes \'{E}tudes Sci. \textbf{128} (2018), 219--397.

\bibitem{BMS2}
\bysame, \emph{Topological {H}ochschild homology and integral {$p$}-adic
  {H}odge theory}, Publ. Math. Inst. Hautes \'{E}tudes Sci. \textbf{129}
  (2019), 199--310.

\bibitem{BS}
B.~Bhatt and P.~Scholze, \emph{Prisms and prismatic cohomology}, Ann. of Math.
  (2) \textbf{196} (2022), no.~3, 1135--1275.

\bibitem{Chat}
A.~Chatzistamatiou, \emph{q-crystals and q-connections}, arXiv:2010.02504v1
  [math.AG].

\bibitem{GLSQ}
M.~Gros, B.~Le~Stum, and A.~Quir\'{o}s, \emph{Cartier transform and prismatic
  crystals}, Tunis. J. Math. \textbf{5} (2023), no.~3, 405--456.

\bibitem{Groth}
A.~Grothendieck, \emph{Crystals and the de {R}ham cohomology of schemes}, Dix
  expos\'{e}s sur la cohomologie des sch\'{e}mas, Adv. Stud. Pure Math.,
  vol.~3, North-Holland, 1968, pp.~306--358.

\bibitem{SGA1}
\bysame, \emph{Rev{\^e}tements \'etales et groupe fondamental ({S}{G}{A}1)},
  Lecture Notes in Math., vol. 224, Springer, 1971.

\bibitem{EGAIV}
A.~Grothendieck and J.A. Dieudonn{\'e}, \emph{{\'E}l{\'e}ments de
  g{\'e}om{\'e}trie alg{\'e}brique, {IV} {\'e}tude locale des sch{\'e}mas et
  des morphismes de sch{\'e}mas}, Publ. Math. IH{\'E}S \textbf{20, 24, 28, 32}
  (1964, 1965, 1966, 1967).

\bibitem{JoyalDeltaRing}
A.~Joyal, \emph{{$\delta$}-anneaux et vecteurs de {W}itt}, C. R. Math. Rep.
  Acad. Sci. Canada \textbf{7} (1985), no.~3, 177--182.

\bibitem{SGA4}
J.~L.~Verdier M.~Artin, A.~Grothendieck, \emph{Th{\'e}orie des topos et
  cohomologie {\'e}tale des sch{\'e}mas ({S}{G}{A}4)}, Lecture Notes in Math.,
  vol. 269, 270, 305, Springer, 1972, 1973.

\bibitem{MT}
M.~Morrow and T.~Tsuji, \emph{Generalised representations as q-connections in
  integral {$p$}-adic {H}odge theory}, arXiv:2010.04059v2 [math.NT].

\bibitem{OgusCrysPrism}
A.~Ogus, \emph{Crystalline prisms: Reflections on the present and the past},
  arXiv:2204.06621v2 [math.AG].

\bibitem{YTian}
Y.~Tian, \emph{Finiteness and duality for the cohomology of prismatic
  crystals}, J. Reine Angew. Math. \textbf{800} (2023), 217--257.

\bibitem{TsujiPrismAinfCoh}
T.~Tsuji, \emph{Prismatic cohomology and {$A_{\inf}$}-cohomology with
  coefficients}, in preparation.

\end{thebibliography}

\end{document}